\newcommand\uuuline{\bgroup\markoverwith%
   {%
     \textcolor{black}{\rule[-0.3ex]{1pt}{0.3pt}}%
     \llap{\textcolor{black}{\rule[-0.6ex]{1pt}{0.3pt}}}%
     \llap{\textcolor{black}{\rule[-0.9ex]{1pt}{0.3pt}}}%
   }%
   \ULon}
\renewcommand{\@thesubfigure}{\hskip\subfiglabelskip}
\theoremstyle{plain}
\newtheorem{theorem}{Theorem}[section]
\newtheorem{corollary}[theorem]{Corollary}
\newtheorem{definition}[theorem]{Definition}
\newtheorem{lemma}[theorem]{Lemma}
\newtheorem{proposition}[theorem]{Proposition}
\newtheorem{remark}[theorem]{Remark}
\newtheorem{example}[theorem]{Example}
\def\bC{\mathbb{C}}
\def\bG{\mathbb{G}}
\def\bN{\mathbb{N}}
\def\bQ{\mathbb{Q}}
\def\bS{\mathbb{S}}
\def\bZ{\mathbb{Z}}
\def\cA{\mathcal{A}}
\def\cB{\mathcal{B}}
\def\cC{\mathcal{C}}
\def\cD{\mathcal{D}}
\def\cF{\mathcal{F}}
\def\cG{\mathcal{G}}
\def\cH{\mathcal{H}}
\def\cP{\mathcal{P}}
\def\cQ{\mathcal{Q}}
\def\cS{\mathcal{S}}
\def\fA{\mathfrak{A}}
\def\fB{\mathfrak{B}}
\def\fC{\mathfrak{C}}
\def\fL{\mathfrak{L}}
\def\fS{\mathfrak{S}}
\def\fW{\mathfrak{W}}
\def\Re{\text{Re}}
\def\i{\mathfrak{i}}
\def\<{\langle}
\def\>{\rangle}
\def\ind{\textsf{ind}}
\def\gcd{\textsf{gcd}}
\def\lcm{\textsf{lcm}}
\def\If{\text{if }}
\def\Otherwise{\text{otherwise}}
\def\Span{\textsf{Span}}
\def\PDM{\textsf{PDM}}
\def\PD{\textsf{PD}}
\def\rlh{\rightleftharpoons}
\newtheoremstyle{mythm}{1.5ex plus 1ex minus .2ex}{1.5ex plus 1ex minus .2ex}
    {\fangsong}{\parindent}{\songti\bfseries}{}{1em}{}
\begin{document}

\begin{center}
\large Natural Monoids and Non-commutative Arithmetics\footnote{MSC Class: 11N99; 20F05; 20F38
}
\end{center}

\begin{center}
Boqing Xue
\end{center}

\begin{center}
\textit{Academy of Mathematics and Systems Science, \\
Chinese Academy of Sciences, Beijing {\rm100190}, China,\\
boqing\_xue@hotmail.com}
\end{center}

\bigskip

\begin{quote}
{\footnotesize {
\begin{center}
\scshape Abstract
\end{center}
}
 \medskip

We introduce several classes of monoids satisfying up to five axioms and establish basic theories on their arithmetics. The one satisfying all the axioms is named natural monoid. Two typical examples are 1) the monoid $\bN$ of natural numbers in the group of positive rationals and 2) a certain monoid $\bS$ in one of Thompson's groups. The latter one is non-abelian, which serves as an important example for non-commutative arithmetics.

Defining primes in a non-abelian monoid $S$ is highly non-trivial, which relies on a concept we called ``castling''. Three types of castlings are essential to grasp the arithmetics on $S$. Multiplicative and completely multiplicative functions are defined. In particular, M\"{o}bius function is multiplicative, and Liouville function on a natural monoid is completely multiplicative. The divisor function has a sub-multiplicative property, which induces a non-trivial quantity $\tau_0(u)=\lim\nolimits_{n\rightarrow \infty}(\tau(u^n))^{1/n}$ in a non-abelian monoid $S$. Moreover, the quantity $\c{C}(S)=\sup\nolimits_{1\neq u\in S}\tau_0(u)/\tau(u)$ describes the complexity for castlings in $S$. We show that $\c{C}(\bN)=1/2$ and $\c{C}(\bS)=1$. The $C^\ast$-algebra obtained from the left regular representation of $S$ on $l^2(S)$, on which a particular trace can be defined, is also studied. Furthermore, we prove that a natural monoid having finitely many primes is amenable.
}
\end{quote}


\tableofcontents

\bigskip

\section{Introduction}

The purpose of this paper is to introduce several classes of monoids satisfying up to five axioms, and to establish basic theories on their arithmetics. The first three axioms are given below.

\begin{definition}
Let $G$ be a countable group and $S$ be a monoid with $S\subseteq G$. We say that $S$ is an integral monoid and $G$ is its fractional group, if the following conditions hold.

\textsc{Axiom} \uppercase\expandafter{\romannumeral1}. It satisfies $S\cap S^{-1}=\{1\}$, where $1$ is the identity of $G$.

\textsc{Axiom} \uppercase\expandafter{\romannumeral2}. For any $u\in G$, there exists a pair of elements $x,y\in S$ with $u=xy^{-1}$ such that, whenever $u=zw^{-1}$ for some $z,w\in S$, then $z=xc$ and $w=yc$ for some $c\in S$.

\textsc{Axiom} \uppercase\expandafter{\romannumeral3}. For any $u\in S$, it satisfies that $\#\{(v,w)\in S\times S:\, u=vw\}<+\infty$.
\end{definition}

Axiom \uppercase\expandafter{\romannumeral1} requires that $G$ have no torsion. In Axiom \uppercase\expandafter{\romannumeral2}, we call $zw^{-1}$ a (right) fraction of $u$ with numerator $z$ and denominator $w$. Combining Axioms \uppercase\expandafter{\romannumeral1} and \uppercase\expandafter{\romannumeral2}, one can deduce that the pair $x,y$ is unique. We call $xy^{-1}$ the (right) fraction of $u$ in lowest terms. In this paper, we always assume that $G,S$ satisfy Axioms \uppercase\expandafter{\romannumeral1}-\uppercase\expandafter{\romannumeral3} as above, and $G\neq \{1\}$.

For $u,w\in S$, we say that $u$ divides $w$, and write $u|w$, if there is some $v\in S$ such that $uv=w$. Indeed, Axiom \uppercase\expandafter{\romannumeral1} ensures ``$|$'' to be a partial order on $S$. Axiom \uppercase\expandafter{\romannumeral2} allows one to define the least common multiple. Axiom \uppercase\expandafter{\romannumeral3} makes the greatest common divisor well-defined, and allows an irreducible decomposition of each element in $S$. All these are foundations for exploring arithmetics.

Two typical examples of integral monoid $S$ with its fractional groups $G$ are \romannumeral1) the set of natural numbers $\bN$ in the positive rational numbers $\bQ^+$ with multiplication, and \romannumeral2) a certain monoid $\bS$ in Thompson's group $\bG$. In particular, the group $\bG$ is non-abelian, which serves as an important example for the non-commutative arithmetics. We will come back to explain the axioms after a brief introduction to these two examples.

\medskip

The natural numbers $\bN$, as a multiplicative monoid, has primes $\cP=\{p_0,p_1,p_2,\ldots\}$ being its generators. Here $p_0=2$, $p_1=3$, $p_2=5$, $p_3=7$, $p_4=11\ldots$. These numbers are irreducible in $\bN$, since each prime has only $1$ and itself as its divisors. They are called primes, since the condition $p_j|mn$ $(m,n\in \bN)$ implies that either $p_j|m$ or $p_j|n$ $(j=0,1,2,\ldots)$. The fundamental theorem of arithmetic states that every natural number greater than $1$ is a product of primes and such decomposition is unique up to reordering.

Around 300 BC, Euclid proved the infinitude of primes by showing that the natural number $p_0p_1\ldots p_{n-1}+1$ has at least one prime divisor other than $p_0,p_1,\ldots,p_{n-1}$ $(n\geq 1)$. By taking the logarithm of the product formula $\sum\nolimits_{n=1}^\infty n^{-s}=\prod\nolimits_{p\in \cP}(1-p^{-s})^{-1}$ $(s>1)$ and letting $s\rightarrow 1^+$, Euler showed that the series $\sum\nolimits_{p\in \cP}1/p$ diverges. Therefore, the primes can not ``too sparsely'' distributed in $\bN$. Around 1800, after mass statistics by hand, Gauss predicted that $\pi(x)$, the number of primes up to $x$, is asymptotic to $li(x)=\int_2^x (1/\log t)dt$ as $x\rightarrow \infty$. In 1859, Riemann \cite{Rie59} studied the function $\zeta(s)=\sum\nolimits_{n=1}^\infty \frac{1}{n^s}$, together with Euler product formula, as a complex function. By proving a functional equation, Riemann extended $\zeta(s)$ to a meromorphic function on the whole complex plane and establishes remarkably a connection between zeros of $\zeta(s)$ and the distribution of primes. More specifically, the function $\zeta(s)$ has no zeros in $1/2<\Re(s)\,(\leq 1)$ if and only if $\pi(x)=li(x)+\textit{O}(x^{1/2+\varepsilon})$ for any $\varepsilon>0$. The statement that all the non-trivial zeros of $\zeta(s)$ lie on the vertical line $\Re(s)=1/2$ is known as Riemann hypothesis. We refer to \cite{Bom06,Ge-Xue18} for surveys on Riemann hypothesis. Till now, people only have knowledge of the non-existence of zeros in the region ``very close'' to the vertical line $\Re(s)=1$ (see \cite{Vin58} for example).

In modern theoretical physics, people usually use operators instead of functions, to explain physical phenomena or demonstrate physical theories. And commutative structures are often lifted to some corresponding non-commutative structures. In \cite{DHX18}, the authors studied the multiplicative structure of natural numbers by operators and operator algebras through the left regular representation of $\bN$ on $l^2(\bN)$. One of the theorems says that the $C^\ast$-algebra generated by $\bN$ in $B(l^2(\bN))$ does not contain non-trivial projections of finite rank. Indeed, this statement is equivalent to the infinitude of primes.

We present some details of the left regular representation here. For $m\in \bN$, let $\delta_m$ be the function taking value $1$ at $m$ and $0$ elsewhere. Then $\{\delta_m:\, m\in \bN\}$ is an orthonormal basis for $l^2(\bN)$. For $k\in \bN$, define $L_k$ to be the operator on $\cH$ given by $L_k \delta_m = \delta_{mk}$ $(m\in \bN)$. Noting that $L_kL_l = L_{kl}=L_{l}L_k$ for all $k,l\in \bN$, the set $L_\bN:=\{L_n:\, n\in \bN\}$ is a monoid in $B(l^2(\bN))$ which keeps the multiplicative structure of $\bN$. The $C^\ast$-algebra is closed under taking adjoints. The adjoints are given by $L_k^\ast \delta_m = \delta_{m/k}$ for $k|m$ and $0$ for $k\nmid m$. At first glance, these operators provide the operation ``division'' and one would obtain the positive rational numbers $\bQ^+$ by combining $L_k$ and $L_k^\ast$ $(k\in \bN)$ together. However, the operators $L_j$ and $L_k^\ast$ $(j,k\in \bN)$ do not always commute. They satisfy
\begin{equation} \label{eq_LjLk_ast_in_natural_number}
L_{j_1} L_{j_2} = L_{j_1j_2},\quad  L_{k_1}^\ast L_{k_2}^\ast = L_{k_1k_2}^\ast,\quad L_k^\ast L_j = L_{j/\gcd(j,k)} L^\ast_{k/\gcd(j,k)}
\end{equation}
for $j,j_1,j_2,k,k_1,k_2\in \bN$. We use $\cQ$ to denote the monoid generated by $\{L_k,L_k^\ast:\, k\in \bN\}$ in $B(\cH)$, and call it the multiplicative monoid of non-commutative rationals. By \eqref{eq_LjLk_ast_in_natural_number}, one can deduce that $\cQ=\left\{L_j L_k^\ast:\, j,k\in \bN\right\}$, while $\bQ^+=\{j/k:\, j,k\in \bN\}$. This lift the commutative structure $\bQ^+$ to a non-commutative structure $\cQ$. This process requires few properties of natural numbers. It can be applied to other monoids or groups, such as one of the Thompson's groups.

\medskip

We use $\bG$ to denote Thompson's group $F$ in this paper, which was defined by Richard Thompson in 1965. It consists of piecewise linear homeomorphisms from the closed unit interval $[0,1]$ to itself with finitely many breakpoints with the following two conditions: (\romannumeral1) each breakpoint is a dyadic rational number; (\romannumeral2) each slope is a power of $2$. The identity element $1$ is the map $[0,1]\rightarrow [0,1],\, x\mapsto x$. Let $A,B$ be two elements in $\bG$ which are given below.
\begin{figure}[H]
\centering
  \subfigure[A]{\begin{tikzpicture}[scale = 4]
	\draw[->](-0.1,0)--(1.1,0)node[left,below,font=\tiny]{$x$};
	\draw[->](0,-0.1)--(0,1.1)node[right,font=\tiny]{$y$};
	\draw[dashed](0,1)--(1,1);
    \draw[dashed](1,0)--(1,1);
    \draw[dashed](0,0)--(1,1);
    \draw[dashed](1/2,0)--(1/2,1/4);
    \draw[dashed](0,1/4)--(1/2,1/4);
    \draw[dashed](3/4,0)--(3/4,1/2);
    \draw[dashed](0,1/2)--(3/4,1/2);
	\foreach \x in {0,1/4,3/4}{\draw(\x,0)--(\x,0.05)node[below,outer sep=2pt,font=\tiny]at(\x,0){};}
\foreach \x in {0,1/2,3/4,1}{\draw(\x,0)--(\x,0.05)node[below,outer sep=2pt,font=\tiny]at(\x,0){\x};}
	\foreach \y in {0,1/4,3/4}{\draw(0,\y)--(0.05,\y)node[left,outer sep=2pt,font=\tiny]at(0,\y){};}
\foreach \y in {1/4,1/2,1}{\draw(0,\y)--(0.05,\y)node[left,outer sep=2pt,font=\tiny]at(0,\y){\y};}
	\draw[domain=0:1/2]plot(\x,{\x/2});
    \draw[domain=1/2:3/4]plot(\x,{\x-1/4});
    \draw[domain=3/4:1]plot(\x,{2*\x-1});
	\end{tikzpicture}}
  \hspace{0.5in}
  \subfigure[B]{\begin{tikzpicture}[scale = 4]
	\draw[->](-0.1,0)--(1.1,0)node[left,below,font=\tiny]{$x$};
	\draw[->](0,-0.1)--(0,1.1)node[right,font=\tiny]{$y$};
	\draw[dashed](0,1)--(1,1);
    \draw[dashed](1,0)--(1,1);
    \draw[dashed](0,0)--(1,1);
    \draw[dashed](1/2,0)--(1/2,1/2);
    \draw[dashed](0,1/2)--(1/2,1/2);
    \draw[dashed](3/4,0)--(3/4,5/8);
    \draw[dashed](0,5/8)--(3/4,5/8);
    \draw[dashed](0,3/4)--(7/8,3/4);
    \draw[dashed](7/8,0)--(7/8,3/4);
	\foreach \x in {0,1/8,1/4,3/8,1/2,5/8,3/4,7/8}{\draw(\x,0)--(\x,0.05)node[below,outer sep=2pt,font=\tiny]at(\x,0){};}
\foreach \x in {0,1/2,3/4,7/8,1}{\draw(\x,0)--(\x,0.05)node[below,outer sep=2pt,font=\tiny]at(\x,0){\x};}
	\foreach \y in {0,1/8,1/4,3/8,1/2,5/8,3/4,7/8}{\draw(0,\y)--(0.05,\y)node[left,outer sep=2pt,font=\tiny]at(0,\y){};}
\foreach \y in {1/2,5/8,3/4,1}{\draw(0,\y)--(0.05,\y)node[left,outer sep=2pt,font=\tiny]at(0,\y){\y};}
	\draw[domain=0:1/2]plot(\x,{\x});
    \draw[domain=1/2:3/4]plot(\x,{\x/2+1/4});
    \draw[domain=3/4:7/8]plot(\x,{\x-1/8});
    \draw[domain=7/8:1]plot(\x,{2*\x-1});
	\end{tikzpicture}}
\end{figure}
The group $\bG$ is finitely-presented,
\[
\bG=\left\<A,B \, |\, [AB^{-1},A^{-1}BA],\, [AB^{-1},A^{-2}BA^2]\right\>.
\]
It was used by McKenzie and Thompson in \cite{McK-Tho73} to solve certain word problems. And it is the first example of a torsion-free infinite-dimensional $FP_\infty$ group, shown by Brown and Geoghegan \cite{Bro-Geo84}. Later Brin and Squier \cite{Bri-Squ85} proved that $\bG$ does not contain a free group of rank greater than one and does not satisfy any laws. Moreover, Geoghegan popularized the interest in knowing whether or not $\bG$ is amenable. This question is still open at present. The notes \cite{CFP96} by Cannon, Floyd and Parry gives a thorough introduction to Thompson's group and related works before the 21st century. Nowadays, Thompson's group is related to many branches of mathematics, and vast topics on Thompson's groups are studied (see \cite{Jon17,Moo13,Wu-Che11} for example).

Putting
\begin{equation} \label{eq_define_p_j_in_Thompson_introduction}
p_0=A, \quad p_1=B, \quad p_j=A^{-(j-1)}BA^{j-1},\,(j\geq 2),
\end{equation}
one obtains another presentation of $\bG$ as
\[
\bG=\<p_0,p_1, p_2,\ldots:\, p_jp_i= p_ip_{j+1} \, (0\leq i<j)\>.
\]
For $0\leq i<j$, one has
\begin{equation} \label{eq_Thompson_relation_expand}
p_i^{-1}p_j=p_{j+1}p_i^{-1},\quad p_j^{-1}p_i=p_ip_{j+1}^{-1},\quad p_jp_i=p_ip_{j+1}.
\end{equation}
Therefore, given an element in $\bG$, one can always move the $p_i$'s with negative powers or larger subscripts rightwards according to \eqref{eq_Thompson_relation_expand}. A carefully analysis leads to the conclusion that every non-trivial element of $\bG$ can be expressed in a unique normal form
\begin{equation} \label{eq_normal_form_introduction}
p_0^{a_0}p_1^{a_1}\ldots p_{n-1}^{a_{n-1}}p_n^{a_n}p_n^{-b_n}p_{n-1}^{-b_{n-1}}\ldots p_1^{-b_1}p_0^{-b_0},
\end{equation}
where $n,a_0,a_1,\ldots,a_n,b_0,b_1,\ldots,b_n$ are nonnegative integers such that (\romannumeral1) exactly one of $a_n$ and $b_n$ is nonzero, and (\romannumeral2) if $a_k>0$ and $b_k>0$ for some integer $k$ with $0\leq k<n$, then either $a_{k+1}>0$ or $b_{k+1}>0$. Moreover, each above normal form is non-trivial. (See Corollary-Definition 2.7 in \cite{CFP96}.)

\medskip

One sees that the normal form gives a right fraction of each element in $\bG$. It inspires us to choose the monoid generated by $\{p_0,p_1,p_2,\ldots\}$, i.e.,
\[
\bS=\{1\}\cup\left\{p_0^{a_0}p_1^{a_1}\ldots p_{n-1}^{a_{n-1}}p_n^{a_n}:\, n\geq 0,\, a_0,a_1,\ldots,a_n\geq 0\right\},
\]
and call it Thompson's monoid. In \cite{CFP96}, elements in $\bS$ are called ``positive elements''.

We will show in Section \ref{subsection_example_of_intergral_monoid} that $\bS$ is an integral monoid and $\bG$ is its fractional group. Now we show some examples about arithmetics on $\bS$. For each $j=0,1,2,\ldots$, it follows from the normal form that the divisors of $p_j$ are exactly $1$ and itself. So $p_0,p_1,p_2,\ldots$ are irreducible elements in $\bS$. The normal form also gives one irreducible decomposition of each element in $\bS$. However, an element many have different irreducible decompositions. For example, it satisfies $p_0p_2=p_1p_0$, which leads to $\lcm[p_0,p_1]=p_0p_2=p_1p_0$. Consider two elements $u=p_0^2p_1p_4$ and $v=p_0p_2p_3$, all the irreducible decompositions are
\begin{align*}
u=p_0^2p_1p_4=p_0^2p_3p_1=p_0p_2p_0p_1=p_1p_0^2p_1, \quad
v=p_0p_2p_3=p_1p_0p_3=p_1p_2p_0.
\end{align*}
The divisors of $u$ are exactly $1,p_0,p_1,p_0^2,p_0p_2,p_0^2p_1,p_0^2p_3$ and $u$. The divisors of $v$ are exactly $1,p_0,p_1,p_0p_2,p_1p_2$ and $v$. One obtains that $\gcd(u,v)=p_0p_2$. If one considers the left regular representation of $\bS$ on $l^2(\bS)$ as previous, and let $\cQ$ be the monoid generated by $\{L_u,L_u^\ast:\, u\in \bS\}$ in $B(l^2(\bS))$, then $\cQ=\{L_uL_v^\ast:\, u,v\in \bS\}$. The relations in \eqref{eq_LjLk_ast_in_natural_number} are replaced by
\[
L_{u_1}L_{u_2}=L_{u_1u_2},\quad L_{v_2}^\ast L_{v_1}^\ast = L_{v_1v_2}^\ast,\quad L_v^\ast L_u = L_{v^{-1}\lcm[u,v]}L^\ast_{u^{-1}\lcm[u,v]},
\]
where $u,v,u_1,u_2,v_1,v_2\in \bS$. Such a structure shares similar nature with that of the natural numbers.

In Section \ref{section_integral_monoids}, we will provide details about the divisors, multiples, irreducible decompositions and the left regular representations for an integral monoid $S$. Moreover, we also introduce the notion of co-divisors, co-multiples and show a duality between common divisors/multiples and common co-divisors/co-multiples. Such a duality is crucial for a non-abelian $S$. We will also prove that the $C^\ast$-algebra $\fA$ generated by $\cQ$ in $B(l^2(S))$ does not contain a certain projection if and only if $S$ has infinitely many irreducible elements. The $C^\ast$-algebra $\fA$ admits a certain trace, from which the GNS construction gives the reduced group $C^\ast$-algebra of $G$. Moreover, some basic properties of arithmetic functions on $S$ are studied in Section \ref{section_integral_monoids}.

\medskip

To explore arithmetics further, one may never avoid the notion of ``prime elements'', which will be abbreviated as ``primes'' in this paper. Recall that a prime $p$ in $\bN$ is defined by
\[
p|uv\quad \Longrightarrow \quad \text{either }p|u,\text{ or }p|v.
\]
What would happen for a non-abelian monoid $S$? When $u,v$ do not commute, there are no direct connections between $p|uv$ and $p|v$.  The idea is to understand the above expressions by ``either $p$ divides $u$, or $p$ is a divisor coming from $v$''. That is to say, we hope that $uv=\widetilde{v}\widetilde{u}$ for some $\widetilde{u},\widetilde{v}\in \bS$, and $p$ divides $\widetilde{v}$ instead of $v$. While it makes sense, the pair of elements $\widetilde{v},\widetilde{u}$ should be uniquely determined by the pair of elements $u,v$, and $\widetilde{v}$ should contain information exactly from $v$. We will call such a process a castling of elements. The word ``castling'' comes from chess, which is a move involving a player's king and one rook to ``jump over'' each other. The two locations of castled chess pieces are slightly different from their original locations (the two chess pieces become closer to each other). Here, the two castled element $\widetilde{u}$ and $\widetilde{v}$ may be ``slightly different'' from $u$ and $v$, respectively. To fulfill such a process mathematically, we need three types of castlings in all to make clear the whole arithmetics on $S$.

Consider $u=p_0p_3$ and $\widetilde{v}=p_1$ in the Thompson's monoid. One has $\gcd(u,\widetilde{v})=1$ and $\lcm[u,\widetilde{v}]=(p_0p_3)p_2=p_1(p_0p_4)$. Putting $v=p_2$ and $\widetilde{u}=p_0p_4$, we have that $p_1|uv$. Note that $\gcd(p_1,u)=1$. The element $p$ contains no information from $u$, so one can claim that $p_1$ is a divisor ``coming from'' $v$. Indeed, it satisfies that $uv=\widetilde{v}\widetilde{u}$ and $p_1|\widetilde{v}$.  We call this process a free castling. The concrete definition will be given in Section \ref{section_homogeneous_monoid} with Axiom \uppercase\expandafter{\romannumeral4}' formulated. At this stage, we can prove that
\begin{equation} \label{eq_tau_submultiplicative_introduction}
\tau(uv) \leq \tau(u)\tau(v),\quad (u,v\in S),
\end{equation}
where $\tau$ is the divisor function. And M\"{o}bius function appears as
\[
\mu(u) =
\begin{cases}
1,\quad &\If u=1,\\
(-1)^k,\quad &\If u=\lcm[q_1,\ldots,q_k] \text{ for distinct }q_1,\ldots,q_k\in \cP,\\
0,\quad &\Otherwise.
\end{cases}
\]
where $\cP$ is the set of irreducible elements.

\medskip

Next, consider $u=p_0p_2p_4$ and $v=p_2p_5$ in Thompson's monoid. In this example, we have $\gcd(u,v)=p_2\neq 1$. A distinct approach is needed to describe the castlings of two elements. Suppose that $v$ may ``jump over'' $u$ and become $\widetilde{v}$. Then $v$ should ``jump over'' $p_4,p_2,p_0$ successively. Indeed, we have
\[
p_0p_2p_4(p_2p_5) =  p_0p_2(p_2 p_5) p_5 = p_0(p_2 p_4) p_2p_5 = (p_1 p_3) p_0 p_2 p_5.
\]
Therefore $\widetilde{v}=p_1p_3$ and $\widetilde{u}=p_0p_2p_5$. Moreover, we also have
\[
(p_0p_2p_4)p_2p_5 = p_1(p_0 p_2 p_5)p_5 = p_1p_3 (p_0 p_2 p_5).
\]
That is to say, the element $u$ may also ``jump over'' $v$ and become $\widetilde{u}$. These processes give hints to define castlings in general.

Unluckily, we meet some difficulties in the following example. Consider $u=p_0$ and $v=p_0p_2$ in Thompson's monoid. We have $(p_0)p_0p_2=p_0(p_0)p_2 = p_0p_1(p_0)$, i.e., $\widetilde{v}=p_0p_1$ and $\widetilde{u}=p_0$. However, the element $v$ has the other irreducible decomposition $v=p_1p_0$. If $u$ may ``jump over'' $v$ in this case, then $u$ should ``jump over'' $p_1$ first. But the element $p_0p_1$ has only one irreducible decomposition, and $u$ is stuck by $p_1$.

In Section \ref{section_castlable_monoid}, we will define strong castlings and weak castlings, with Axiom \uppercase\expandafter{\romannumeral4}, to distinguish such circumstances. Here, Axiom \uppercase\expandafter{\romannumeral4} implies Axiom \uppercase\expandafter{\romannumeral4}'. At this stage, the irreducible elements are turned into primes. We deduce that a prime power $p^m$ has a unique irreducible decomposition, and $\tau(p^m)=m+1$. Multiplicative and completely multiplicative functions will be defined. In particular, M\"{o}bius function is multiplicative. And the convolution of two multiplicative functions is still multiplicative.


With previous axioms, distinct prime divisors will become distinct prime divisors after a castling. However, primes powers might change. In Section \ref{section_natural_monoid}, we put Axiom \uppercase\expandafter{\romannumeral5} to gain the power-preserving property. When Axioms \uppercase\expandafter{\romannumeral1}-\uppercase\expandafter{\romannumeral5} are satisfied, we call $S$ a natural monoid and $G$ its rational group. At this stage, Liouville function is completely multiplicative. We will also build up methods to determine prime divisors of an element with multiplicities from an arbitrary prime decomposition. A special class of natural monoids, which is said to be fully castlable, is investigated. One may regard it as the simplest class of natural monoids, in which the notion of weak and strong castlings coincide. We will prove that any natural monoid having finitely many primes is fully castlable, and is also amenable.

In section \ref{section_Thomspon}, we shall verify Axioms \uppercase\expandafter{\romannumeral4} and \uppercase\expandafter{\romannumeral5} for Thompson's monoid $\bS$. Constructing castlings in a concrete monoid is quite different from the abstract definition of castings in Sections \ref{section_homogeneous_monoid} and \ref{section_castlable_monoid}. We will apply a totally different way as follows. Regarding distinct prime decomposition of an element as distinct words, we first define castlings of words. Second, we establish a partial order on all words of a given element, and prove that castlings of words preserve this partial order. Third, we shall show that maximum and minimum words exist. Fourth, we define strong and weak castlings of elements with minimum and maximum words, respectively. Fifth, we prove the fundamental lemma for arithmetic and define free castling of elements in Thompson's monoid. Sixth, it is shown that these definitions coincide with that given in Sections \ref{section_homogeneous_monoid} and \ref{section_castlable_monoid}. Finally, we verify Axioms \uppercase\expandafter{\romannumeral4} and \uppercase\expandafter{\romannumeral5}, and prove that $\bS$ is a natural monoid.

By \eqref{eq_tau_submultiplicative_introduction}, the sequence $\{\log \tau(u^n)\}_{n=1}^\infty$ is sub-additive in a homogeneous monoid. Thus, for any $u\in S$, the limit
\[
\tau_0(u)=\lim\limits_{n\rightarrow \infty} \left(\tau(u^n)\right)^{1/n}
\]
exists. One may compare it with spectral radius of a bounded operator, or entropy of a dynamical system. The quantity
\[
\c{C}(S)=\sup\limits_{1\neq u\in S}\frac{\tau_0(u)}{\tau(u)}
\]
takes value in $[1/2,1]$ and reflects the complexity for castlings in the whole monoid. We study these quantities in Section \ref{section_complexity}. It is proved that $\c{C}(S)=1/2$ for any natural monoid containing finitely many primes, and $\c{C}(\bS)=1$ for Thompson's monoid.

\medskip

For a finite set $T$, both $|T|$ and $\#T$ stand for the cardinality of $T$. In most situations, the letters $u,v,w,x,y,z$ will denote an element in $S$, the letters $i,j,k,l,m,n$ will denote integers, and the letters $p,q$ may denote irreducible elements or primes. When a letter is used to present an element, without saying which set it belongs to, it always belongs to a corresponding monoid $S$. For example, ``for $u,w\in S$ with $u|w$, we write $w=uv$''. Here $v$ is an element in $S$. For basics in number theory, we refer to \cite{Nat}. For those in operator algebra and functional analysis, see \cite{Kad-Rin}.

\section{General Theory for Integral Monoids}
\label{section_integral_monoids}

\subsection{Examples}
\label{subsection_example_of_intergral_monoid}

In this subsection, we show some examples of integral monoids. Let us begin with Thompson's monoid $\bS$. For an element $u\in \bS$ with normal form $p_0^{a_0}p_1^{a_1}p_2^{a_2}\ldots p_m^{a_m}$, define $\ind(u)=\sum\nolimits_{j=0}^n a_j$, which counts the number of $p_j$'s involved. Also put $\ind(1)=0$. When
\begin{equation} \label{eq_def_representation_of_u}
u=p_{j_1}p_{j_2}\ldots p_{j_k},
\end{equation}
for some $j_1,j_2,\ldots,j_k\in \{0,1,2,\ldots\}$, we call the right-hand side of \eqref{eq_def_representation_of_u} a word of $u$ and each $p_{j_t}$ $(1\leq t\leq k)$ a letter in this word. Any two words may be turned into each other by applying the last equality in \eqref{eq_Thompson_relation_expand}. Therefore, the number of letters occurred remains the same, and the quantity $\ind(u)$ is independent of words chosen. For example, for $u=p_2^2p_4p_5=p_2p_3p_2p_5=p_2p_3p_4p_2$, one has $\ind(u)=4$.

Another way to understand this quantity is to define $\ind(A)=\ind(B)=1$ in the free group $F_{\{A,B\}}$ generated by $\{A,B\}$, and extends $\ind$ to be a group homomorphism from $F_{\{A,B\}}$ to $(\bZ,+)$. Note that
\[
\ind\left([AB^{-1},A^{-1}BA]\right)= \ind\left([AB^{-1},A^{-2}BA^2]\right)=0.
\]
So the normal subgroup $N$ generated by the above two elements in $F_{\{A,B\}}$ is contained in the kernel of $\ind$. So $\ind$ can be naturally defined on the quotient group $\bG=F_{\{A,B\}}/N$. That is to say, the map $\ind$ is a homomorphism from $\bG$ to $(\bZ,+)$, where $\ind(p_0)=\ind(p_1)=1$. Then $\ind(p_j)=\ind\left(p_0^{-(j-1)}p_1p_0^{j-1}\right)=1$ and
\[
\ind\left(p_0^{a_0}p_1^{a_1}p_2^{a_2}\ldots p_m^{a_m}\right)= a_0+a_1+\ldots +a_m.
\]
In particular, we have $\ind(uv)=\ind(u)+\ind(v)$ for $u,v\in S$. Now we shall verify that $\bS$ is an integral monoid.

\begin{theorem}
Thompson's monoid $\bS$ is an integral monoid with its fractional group $\bG$.
\end{theorem}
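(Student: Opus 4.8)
The plan is to verify Axioms I, II and III for the pair $(\bG,\bS)$, using two ingredients already available: the index homomorphism $\ind\colon\bG\to(\bZ,+)$ with $\ind(p_j)=1$, and the normal form theorem for $\bG$ (Corollary-Definition 2.7 of \cite{CFP96}). A preliminary observation is that $\bS$ really is a submonoid of $\bG$: it contains $1$, and the rules in \eqref{eq_Thompson_relation_expand} rewrite any product of two positive words as a single positive word. Axiom I is then immediate: a nontrivial $u\in\bS$ is a product of at least one generator, so $\ind(u)\geq 1$, and $1$ is the only element of $\bS$ of index $0$; hence if $u\in\bS\cap\bS^{-1}$ we may write $u=v^{-1}$ with $v\in\bS$, which gives $\ind(u)=-\ind(v)\leq 0$ together with $\ind(u)\geq 0$, so $\ind(u)=0$ and $u=1$.

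For Axiom III, fix $u\in\bS$ with normal form $p_0^{a_0}\cdots p_m^{a_m}$. Since $\ind$ is additive on $\bS$, every word of $u$ has exactly $\ind(u)=a_0+\cdots+a_m$ letters, and every factorization $u=vw$ satisfies $\ind(v)+\ind(w)=\ind(u)$. The remaining point is to bound the subscripts occurring in a word of $u$: using the rules \eqref{eq_Thompson_relation_expand} (or, equivalently, the forest-diagram model of $\bG$, cf. \cite{CFP96}) one checks that the largest subscript is attained by the normal form, so that every word of $u$ uses only $p_0,\dots,p_m$. Consequently $u$ has at most $(m+1)^{\ind(u)}$ words. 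Since any factorization $u=vw$ is obtained by cutting a word of $u$ at some position, there are only finitely many of them; this is Axiom III. (Cancellativity of $\bS$, inherited from $\bG$, additionally shows $w$ is determined by $v$.)

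For Axiom II, existence is read off the normal form \eqref{eq_normal_form_introduction}, which displays $u=xy^{-1}$ with $x=p_0^{a_0}\cdots p_n^{a_n}\in\bS$ and $y=p_0^{b_0}\cdots p_n^{b_n}\in\bS$; the content of the axiom is the ``lowest terms'' clause. I would first prove the dictionary that conditions (i)--(ii) on the normal form are equivalent to the assertion that $x$ and $y$ have no common right divisor in $\bS$ other than $1$: a failure of (i) makes $p_n$ a right divisor of both $x$ and $y$; a failure of (ii) produces, after a bounded application of \eqref{eq_Thompson_relation_expand}, a $p_k$ that can be cancelled from the right of both; and conversely any nontrivial common right divisor of $x$ and $y$ forces one of these failures. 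Granting this, the uniqueness half of the normal form theorem says that $xy^{-1}$ is the unique right fraction of $u$ with right-coprime numerator and denominator. Now let $z,w\in\bS$ satisfy $u=zw^{-1}$. If $z$ and $w$ are right-coprime then $(z,w)=(x,y)$ and we take $c=1$. Otherwise we cancel a nontrivial common right divisor $d$, writing $z=z_1d$, $w=w_1d$, so that $u=z_1w_1^{-1}$ and $\ind(z_1)+\ind(w_1)<\ind(z)+\ind(w)$; iterating, this strictly decreasing procedure terminates at a right-coprime pair $(x',y')$ with $z=x'c$ and $w=y'c$ for some $c\in\bS$, and by uniqueness $(x',y')=(x,y)$. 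Hence $z=xc$, $w=yc$, which is Axiom II; combined with Axiom I this also gives the uniqueness of the reduced pair $(x,y)$.

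I expect the main obstacle to be the dictionary lemma inside Axiom II — the precise translation between ``normal-form conditions (i)--(ii)'' and ``right-coprimeness'', which must be carried out with the rewriting rules \eqref{eq_Thompson_relation_expand}; this is, however, essentially the content of the normal-form analysis of \cite{CFP96}. A secondary point requiring care is the subscript bound used for Axiom III, which is the combinatorial heart of the finiteness assertion.
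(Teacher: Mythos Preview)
Your proposal is correct and follows essentially the same route as the paper. For Axiom~II the paper does exactly your descent, but phrased concretely: starting from $(v,w)$ it repeatedly locates an index $j_0$ with $a_{j_0}^{(k)},b_{j_0}^{(k)}>0$ and $a_{j_0+1}^{(k)}=b_{j_0+1}^{(k)}=0$ and cancels $p_{j_0}$ from the right of both, terminating (by the drop in $\ind$) at a pair whose combined expression satisfies (i)--(ii), whence uniqueness of the normal form gives $(x,y)$. This is precisely the ``failure of (i)--(ii) $\Rightarrow$ common right divisor'' half of your dictionary, and it is the only half actually needed; the paper never states or uses the converse. Your Axiom~III argument (bound the subscripts by those in the normal form, hence at most $(n+1)^{\ind(u)}$ words) matches the paper's verbatim, and your Axiom~I via $\ind$ simply spells out what the paper calls ``apparent''.
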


\begin{proof}
It is apparent that $\bS\cap\bS^{-1}=\{1\}$ and Axiom \uppercase\expandafter{\romannumeral1} holds.

Now we shall verify Axiom \uppercase\expandafter{\romannumeral2}. For $u=1$, the proof is trivial. For $u\neq 1$, it has the normal form as in \eqref{eq_normal_form_introduction}. Put $x=p_0^{a_0}p_1^{a_1}\ldots p_n^{a_n}$ and $p_n^{-b_n}\ldots p_1^{-b_1}p_0^{-b_0}$. Then $xy^{-1}$ is a fraction of $u$ in lowest terms with numerator $x$ and denominator $y$. Suppose that $u=vw^{-1}$ for some $v,w\in S$. We put $w^{(1)}=w$, $v^{(1)}=v$ and iterate as follows. For $k\geq 1$, suppose that $w^{(k)}$ and $v^{(k)}$ has normal form
\[
v^{(k)}=p_0^{a_0^{(k)}}p_1^{a_1^{(k)}}\ldots p_{n_k}^{a_{n_k}^{(k)}},\quad w^{(k)}=p_0^{b_0^{(k)}}p_1^{b_1^{(k)}}\ldots p_{m_k}^{b_{m_k}^{(k)}}.
\]
Without loss of generality, we set $a_j^{(k)}=0$ for $j>n_k$ and $b_j^{(k)}=0$ for $j>m_k$.

Case 1. Assume that for all $j=0,1,\ldots$, we have that $a_{j+1}^{(k)}>0$ or $b_{j+1}^{(k)}>0$ whenever both $a_j^{(k)}>0$ and $b_j^{(k)}>0$ hold. Then we stop the iterating process.

Case 2. Let $j_0$ be some number such that $a_{j_0}^{(k)}>0$, $b_{j_0}^{(k)}>0$ and $a_{j_0+1}^{(k)}=b_{j_0+1}^{(k)}=0$. Then
\[
v^{(k)}= p_0^{a_{0}^{(k)}}\ldots p_{j_0-1}^{a_{j_0-1}^{(k)}}p_{j_0}^{a_{j_0}^{(k)}-1}p_{j_0+1}^{a_{j_0+2}^{(k)}}\ldots p_{n_k-1}^{a_{n_k}^{(k)}}p_{j_0},\quad w^{(k)}= p_0^{b_{0}^{(k)}}\ldots p_{j_0-1}^{b_{j_0-1}^{(k)}}p_{j_0}^{b_{j_0}^{(k)}-1}p_{j_0+1}^{b_{j_0+2}^{(k)}}\ldots p_{m_k-1}^{b_{m_k}^{(k)}}p_{j_0}.
\]
In this case, we put
\[
v^{(k+1)}=v^{(k)}p_{j_0}^{-1},\quad w^{(k+1)}= w^{(k)}p_{j_0}^{-1}.
\]
It satisfies that $v^{(k+1)},w^{(k+1)}\in S$ and
\[
\ind(v^{(k+1)})=\ind(v^{(k)})-1,\quad \ind(w^{(k+1)})=\ind(w^{(k)})-1.
\]
Now we iterate with $k+1$ instead of $k$.

Since $\ind(v)$ is finite, the iterating process will stop, say, at step $K$. Then there exists some $c\in S$ such that $w=w^{(K)}c$ and $v=v^{(K)}c$. We have $u=vw^{-1}=v^{(K)}(w^{(K)})^{-1}$. By the construction of Case 1 in the iterating process, the term $v^{(K)}(w^{(K)})^{-1}$ is the normal form of $u$. By the uniqueness of normal form in Thompson's group, we deduce that $v^{(K)}=x$ and $w^{(K)}=y$. It follows that $v=xc$ and $w=yc$, and Axiom \uppercase\expandafter{\romannumeral2} holds.

Suppose that $u=vw$ for some $v,w\in S$. Write $v$ and $w$ in their normal form $p_0^{a_0}p_1^{a_1}\ldots p_k^{a_k}$ and $p_0^{a_0^\prime}p_1^{a_1^\prime}\ldots p_l^{a_l^\prime}$, respectively. Then $p_0^{a_0}p_1^{a_1}\ldots p_k^{a_k}p_0^{a_0^\prime}p_1^{a_1^\prime}\ldots p_l^{a_l^\prime}$ is one of the words of $u$. To verify Axiom \uppercase\expandafter{\romannumeral3}, it is sufficient to prove that $u$ has only finitely many words.

Suppose that $u\in S$ has normal form $p_0^{a_0}p_1^{a_1}\ldots p_n^{a_n}$ and $\ind(u)=m$. Recall that any word $p_{j_1}p_{j_2}\ldots p_{j_m}$ of $u$ may be transformed into the normal form by applying the relation $p_lp_k=p_lp_{l+1}$ with $0\leq k<l$ for finitely many times. During this process, we can require that the subscripts be non-decreasing. Therefore $j_1,j_2,\ldots,j_m\leq n$. Hence, the number of choices for such words are no larger than $(n+1)^m$. So Axiom \uppercase\expandafter{\romannumeral3} follows. The proof is completed.
\end{proof}

Next, we consider
\[
\cG=\<U,V:\, VU=UV^2\>.
\]
Note that
\begin{equation} \label{eq_relation_UV}
VU=UV^2,\, V^{-1}U = UV^{-2},\quad U^{-1}V^{-1}= V^{-2}U^{-1},\, U^{-1}V = V^2U^{-1}.
\end{equation}
One can always move $U$ to the left of $V,V^{-1}$ and $U^{-1}$ to the right of $V,V^{-1}$. So every element can be written in the form $U^a V^b U^{-c}$ with $a,c\geq 0$ and $b\in \bZ$. Moreover, if $a,c>0$ and $b$ is an even number, then the relations $V^k U=UV^{2k}$ $(k\in \bZ)$ ensures that $U^a V^b U^{-c}=U^{a-1}V^{b/2}U^{-(c-1)}$. So the normal form of an element in $\cG$ is given by $U^a V^b U^{-c}$ with $a,c\geq 0,\, b\in \bZ$, where either $ac=0$, or $ac\neq 0$ and $b$ is odd. We choose the monoid $\cS=\{U^mV^n:\,m,n\geq 0\}$.

\begin{theorem}
The monoid $\cS$ is an integral monoid with its fractional group $\cG$.
\end{theorem}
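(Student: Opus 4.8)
The plan is to mirror the proof just given for Thompson's monoid, using the normal form $U^aV^bU^{-c}$ of $\cG$ (with $a,c\geq 0$, $b\in\bZ$, and $ac=0$ or $b$ odd) together with its uniqueness. One first records the multiplication rule $(U^{m_1}V^{n_1})(U^{m_2}V^{n_2})=U^{m_1+m_2}V^{2^{m_2}n_1+n_2}$ inside $\cS$ (so that $\cS$ is indeed a submonoid of $\cG$), obtained by pushing each $U$ to the left via $V^kU=UV^{2k}$.

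Axiom I is immediate from uniqueness of the normal form: if $U^mV^n$ with $(m,n)\neq(0,0)$ also equals $V^{-n'}U^{-m'}$ for some $m',n'\geq 0$, then the normal forms $U^mV^nU^0$ and $U^0V^{-n'}U^{m'}$ must coincide, forcing $m=0$ and $n=-n'\leq 0$, hence $n=0$, a contradiction. Axiom III is also direct: any factorisation $U^mV^n=(U^{m_1}V^{n_1})(U^{m_2}V^{n_2})$ in $\cS$ forces $m_1+m_2=m$ and $2^{m_2}n_1+n_2=n$, which leaves at most $m+1$ choices for $(m_1,m_2)$ and, for each of them, at most $n+1$ choices for $n_1$ (which then determines $n_2$); so $\#\{(v,w)\in\cS\times\cS:u=vw\}\leq(m+1)(n+1)<\infty$.

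The substance is Axiom II. For $u\in\cG$ with normal form $U^aV^bU^{-c}$ I would take as reduced fraction $x=U^aV^b$, $y=U^c$ when $b\geq 0$, and $x=U^a$, $y=U^cV^{-b}$ when $b<0$; in both cases $x,y\in\cS$ and $xy^{-1}=u$ by a one-line computation. Given any other representation $u=zw^{-1}$ with $z=U^\alpha V^\beta$, $w=U^\gamma V^\delta$ in $\cS$, one has $u=U^\alpha V^{\beta-\delta}U^{-\gamma}$. I would then run a descent: whenever $z$ and $w$ share a common right divisor $\neq 1$ — namely $V$ when $\beta,\delta\geq 1$, and a suitable power of $U$ when one of $\beta,\delta$ is $0$ (using $V^kU=UV^{2k}$ and $V^kU^{-1}=U^{-1}V^{k/2}$ for even $k$) — divide it out from both. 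Each step strictly decreases the quantity $\alpha+\beta$ attached to $z$, so the process terminates; unwinding it exhibits $d\in\cS$ with $z=xd$ and $w=yd$, where one checks the terminal pair is exactly $(x,y)$ by matching with the normal form. Equivalently, and more efficiently, apply the reduction identity $U^iV^{2j}U^{-i'}=U^{i-1}V^jU^{-(i'-1)}$ (valid for $i,i'\geq 1$) to $U^\alpha V^{\beta-\delta}U^{-\gamma}$ until the middle exponent is odd or zero or an outer exponent vanishes; this reaches the normal form, shows $\alpha\geq a$ and $\beta-\delta=2^{\alpha-a}b$, and then $d:=x^{-1}z=y^{-1}w$ works out to $U^{\alpha-a}V^\delta$ (if $b\geq 0$, using $\beta-2^{\alpha-a}b=\delta\geq 0$) or $U^{\alpha-a}V^\beta$ (if $b<0$), both in $\cS$, so that $z=xd$, $w=yd$.

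I expect the only real obstacle to be the bookkeeping in Axiom II: one must track how the sign and the $2$-adic valuation of the middle exponent $\beta-\delta$ interact with the outer exponents $\alpha,\gamma$ as the reduction proceeds, and verify that the common right factor extracted always lands back in $\cS$, i.e. has non-negative exponents. This is precisely where the normal-form constraint ``$ac=0$ or $b$ odd'' is used — for instance to rule out $\alpha<a$ when $b$ is odd, since then $2^{\alpha-a}b=\beta-\delta$ could not be an integer — and it is the one point at which the argument is not purely formal.
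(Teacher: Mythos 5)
Your proposal is correct and follows essentially the same route as the paper: the same choice of reduced fraction $x,y$ from the normal form $U^aV^bU^{-c}$, the same reduction $U^iV^{2j}U^{-i'}=U^{i-1}V^jU^{-(i'-1)}$ to locate the common right factor $d=x^{-1}z=y^{-1}w$ in Axiom II (the paper phrases this as choosing the maximal $k\leq a_1,a_2$ with $2^{-k}(b_1-b_2)\in\bZ$, which is the same descent), and the same multiplication formula $U^{m_1+m_2}V^{2^{m_2}n_1+n_2}$ for Axiom III. Your treatment of Axioms I and III is a bit more explicit than the paper's one-line dismissals, but the substance is identical.
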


\begin{proof}
It is not hard to see that $\cS\cap \cS^{-1}=\{1\}$ and Axiom \uppercase\expandafter{\romannumeral1} holds.

For $u\in G$ with normal form $U^a V^b U^{-c}$, we set
\[
\begin{cases}
x=U^a V^b,\, y=U^{c},\quad &\If b\geq 0,\\
x=U^a,\, y=U^{c}V^{-b},\quad &\If b<0.
\end{cases}
\]
Let $\widetilde{x}=U^{a_1}V^{b_1}$ and $\widetilde{y}=U^{a_2}V^{b_2}$, where $a_1,b_1,a_2,b_2\geq 0$. And suppose that $\widetilde{x}\widetilde{y}^{-1}=u= xy^{-1}$. Let $k$ be the maximum number such that $k\leq a_1,a_2$ and $2^k(b_1-b_2)\in \bZ$. Then $U^{a_1}V^{b_1-b_2}U^{-a_2}= U^{a_1-k}V^{2^{-k}(b_1-b_2)}U^{-(a_2-k)}$, and on the right-hand side is the normal form of $u$. So $a=a_1-k$, $c=a_2-k$ and $b=2^{-k}(b_1-b_2)$. When $b_1\geq b_2$, we have
\begin{align*}
& x\cdot U^kV^{b_2} = U^{a_1-k}V^{2^{-k}(b_1-b_2)}\cdot U^kV^{b_2} = U^{a_1-k}U^kV^{b_1-b_2}V^{b_2} = \widetilde{x},\\
& y\cdot U^kV^{b_2} = U^{a_2-k}\cdot U^kV^{b_2} = \widetilde{y}.
\end{align*}
When $b_1<b_2$, we have
\begin{align*}
&x\cdot U^kV^{b_1} = U^{a_1-k} \cdot U^kV^{b_1} = \widetilde{x},\\
&y\cdot U^kV^{b_1} = U^{a_2-k}V^{-2^{-k}(b_1-b_2)}\cdot U^kV^{b_1} = U^{a_2-k}U^kV^{b_2-b_1}V^{b_1}=\widetilde{y}.
\end{align*}
This leads to Axiom \uppercase\expandafter{\romannumeral2}.

Consider the product $U^{m_3}V^{n_3}=U^{m_1}V^{n_1}\cdot U^{m_2}V^{n_2}$, where $m_1,m_2,m_3,n_1,n_2,n_3\geq 0$. Calculation shows that
\begin{equation} \label{eq_VU=UV^2_proof}
m_3=m_1+m_2,\quad n_3=2^{m_2}n_1+n_2.
\end{equation}
For given $m_3$ and $n_3$, there are only finitely many solutions to \eqref{eq_VU=UV^2_proof} for non-negative integers $m_1,m_2,n_1,n_2$. We conclude that $\#\{(x,y)\in \cS\times \cS:\, xy=U^{m_3}V^{n_3}\}<+\infty$ for any given $m_3,n_3\geq 0$. Now Axiom \uppercase\expandafter{\romannumeral3} follows. The proof is completed.
\end{proof}

There are many other examples. Huang \cite{Hua19} shows several classes of natural monoids, which includes the monoid generated by three matrices
\begin{equation} \label{eq_example_Huang}
p_0=
\left(
  \begin{array}{ccc}
    0 & 2 \\
    3 & 0
  \end{array}
\right), \quad p_1=
\left(
  \begin{array}{ccc}
    1 & 0 \\
    0 & 2
  \end{array}
\right), \quad p_2=
\left(
  \begin{array}{ccc}
    2 & 0 \\
    0 & 1
  \end{array}
\right).
\end{equation}

\subsection{Divisors, Multiples, and Irreducible Decompositions}

Now we start to explore general theories on integral monoids. 

\begin{definition}
Let $u,v,w$ be elements in $S$ such that $uv=w$.

We say that $u$ is a divisor of $w$, or $u$ divides $w$, or $w$ is a multiple of $u$, and denote $u|w$. Equivalently, we have $u|w$ if and only if $w\in uS$ if and only if $wS\subseteq uS$.

Moreover, we say that $v$ is a co-divisor of $w$, or $v$ co-divides $w$, or $w$ is a co-multiple of $v$, and denote $v \ddagger w$. Equivalently, we have $v\ddagger w$ if and only if $w\in Sv$ if and only if $Sw \subseteq Sv$.
\end{definition}

\begin{lemma} \label{lem_antisymmetry}
(\romannumeral1) If $u,v$ are two elements in $S$ satisfying $u S=v S$, then $u=v$.

(\romannumeral2) If $u,v$ are two elements in $S$ satisfying $Su=Sv$, then $u=v$.
\end{lemma}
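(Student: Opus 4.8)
The plan is to prove part (\romannumeral1) directly from Axiom \uppercase\expandafter{\romannumeral1}, exploiting the fact that $S$ sits inside the group $G$, so that left and right cancellation are available even though we have not (and need not) establish cancellativity of $S$ on its own. Part (\romannumeral2) is then entirely symmetric.

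For (\romannumeral1): assume $uS=vS$. Since $u=u\cdot 1\in uS=vS$, there is some $c\in S$ with $u=vc$; symmetrically, $v=v\cdot 1\in vS=uS$ gives some $d\in S$ with $v=ud$. Substituting the second into the first yields $u=vc=u(dc)$. Now $u$ is invertible in $G$, so left-cancelling gives $dc=1$ in $G$, i.e. $c=d^{-1}$. Hence $c\in S$ and $c^{-1}=d\in S$, so $c\in S\cap S^{-1}=\{1\}$ by Axiom \uppercase\expandafter{\romannumeral1}, whence $c=1$ and $u=vc=v$. For (\romannumeral2) one runs the mirror argument: $u\in Su=Sv$ gives $u=cv$ and $v\in Sv=Su$ gives $v=du$ with $c,d\in S$, so $u=(cd)u$; right-cancelling in $G$ gives $cd=1$, hence $c\in S\cap S^{-1}=\{1\}$, so $c=1$ and $u=v$. (Equivalently, (\romannumeral2) is just (\romannumeral1) applied to the opposite monoid $S^{\mathrm{op}}\subseteq G^{\mathrm{op}}$, which still satisfies Axiom \uppercase\expandafter{\romannumeral1}.)

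There is no real obstacle here; the only point requiring a moment's care is that the cancellations are carried out in the ambient group $G$, not inside $S$. It is also worth noting that this lemma is precisely the antisymmetry of the divisibility preorder ``$|$'' — indeed $uS=vS$ is equivalent, via the Definition, to $u\mid v$ and $v\mid u$ — so the content of the lemma is to make precise the remark in the introduction that Axiom \uppercase\expandafter{\romannumeral1} turns ``$|$'' into a partial order, and likewise for the co-divisibility relation ``$\ddagger$''.
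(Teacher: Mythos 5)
Your proof is correct and follows essentially the same route as the paper: both extract the element $c=v^{-1}u$ (via $u\in vS$) and its inverse $d=u^{-1}v$ (via $v\in uS$), place them in $S$, and invoke Axiom \uppercase\expandafter{\romannumeral1} to conclude $c\in S\cap S^{-1}=\{1\}$. The paper states this more tersely by working directly with $v^{-1}u$ in $G$ rather than introducing the auxiliary names $c,d$, but the content is identical.
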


\begin{proof}
(\romannumeral1) It follows from $u S=v S$ and $1\in  S$ that $v^{-1}u \in S$. Similarly, one has $(v^{-1}u)^{-1}=u^{-1}v\in  S$. By Axiom \uppercase\expandafter{\romannumeral1}, we conclude that $v^{-1}u\in S\cap S^{-1}=\{1\}$. So $u=v$. Similar arguments lead to (\romannumeral2).
\end{proof}

\begin{lemma}
The relations ``$|$'' and ``$\ddagger$'' are partial orders over $S$.
\end{lemma}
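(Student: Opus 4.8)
The plan is to verify the three defining properties of a partial order---reflexivity, antisymmetry, and transitivity---separately for "$|$" and for "$\ddagger$", handling the two relations in parallel, since they are mirror images of each other under interchanging left and right multiplication.

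For reflexivity, I would simply observe that $1\in S$, so $u=u\cdot 1=1\cdot u$ shows $u|u$ and $u\ddagger u$ for every $u\in S$. For transitivity, I would unwind the definitions: if $u|v$ and $v|w$, write $v=ua$ and $w=vb$ with $a,b\in S$; then $w=u(ab)$ and $ab\in S$ because $S$ is closed under multiplication, hence $u|w$. The argument for $\ddagger$ is the same after reversing the order of factors: from $v=a'u$ and $w=b'v$ with $a',b'\in S$ one gets $w=(b'a')u$, so $u\ddagger w$.

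Antisymmetry is the only step that invokes anything beyond the monoid axioms, namely Lemma \ref{lem_antisymmetry} (which itself uses Axiom I). Suppose $u|v$ and $v|u$. Using the equivalences recorded in the definition, $u|v$ means $vS\subseteq uS$ and $v|u$ means $uS\subseteq vS$, so $uS=vS$, and Lemma \ref{lem_antisymmetry}(\romannumeral1) gives $u=v$. For "$\ddagger$" the same reasoning with the inclusions $Su\subseteq Sv$ and $Sv\subseteq Su$, combined with Lemma \ref{lem_antisymmetry}(\romannumeral2), yields $u=v$. This completes the verification.

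I do not expect a genuine obstacle here; the proof is pure bookkeeping. The one place that warrants a little care is making sure the ``$wS\subseteq uS$''-style reformulations from the definition are applied in the correct direction, so that the two divisibility hypotheses combine into the set equality that Lemma \ref{lem_antisymmetry} requires.
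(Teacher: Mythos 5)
Your proposal is correct and essentially matches the paper's proof: both verify reflexivity, antisymmetry (via Lemma \ref{lem_antisymmetry}), and transitivity directly, then note the mirror-image argument for ``$\ddagger$''. The only cosmetic difference is that you write out transitivity with explicit elements $a,b$ rather than via the set inclusions $w\in vS\subseteq uS$, which is the same reasoning.
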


\begin{proof}
Let $u,v,w\in  S$.

(\romannumeral1) (Reflexivity.) Since $u\in u S$, one has $u|u$.

(\romannumeral2) (Antisymmetry.) If $u|v$ and $v|u$, then $uS=vS$. It follows from Lemma \ref{lem_antisymmetry} that $u=v$.

(\romannumeral3) (Transitivity.) Suppose that $u | v$ and $v|w$. That is to say, we have $w\in v S$ and $v\in u S$. So $w\in u S$, i.e., $u|w$.

As a result, we conclude that ``$|$'' is a partial order. Similar arguments show that ``$\ddagger$'' is also a partial order on $S$.
\end{proof}

Here are some basic properties about the divisibility.

\begin{lemma} \label{lem_basic_properties_of_divisibility}

Let $u,v\in S$, $w\in G$.

(\romannumeral1) Suppose that $wu\in  S$. If $u|v$, then $wu|wv$.

(\romannumeral2) If $uv|u$, then $v=1$.

(\romannumeral3) If $uv|v$, then $u=1$.
\end{lemma}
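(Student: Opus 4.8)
The plan is to treat the three parts separately, in increasing order of subtlety; (i) and (ii) will be immediate from the definition of divisibility together with Axiom \uppercase\expandafter{\romannumeral1}, while (iii) needs one genuine idea, a self-substitution. For (i), I would unwind the hypothesis $u\mid v$ as $v=uc$ with $c\in S$, so that $wv=(wu)c$; since $wu\in S$ by assumption and $c\in S$, this identity exhibits $wu\mid wv$ directly (and incidentally shows $wv\in S$). For (ii), from $uv\mid u$ pick $c\in S$ with $u=uvc$; cancelling $u$ on the left in the group $G$ gives $vc=1$, hence $v=c^{-1}\in S^{-1}$, and since also $v\in S$, Axiom \uppercase\expandafter{\romannumeral1} forces $v\in S\cap S^{-1}=\{1\}$, i.e.\ $v=1$.

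For (iii) I would argue as follows. The hypothesis $uv\mid v$ gives $c\in S$ with $v=uvc$. The key step is to substitute this relation into its own right-hand side: replacing the inner $v$ by $uvc$ yields $v=u^2vc^2$, and a routine induction then gives $v=u^nvc^n$ for every $n\geq 1$. In particular $u^n\mid v$ for all $n\geq 1$. Now Axiom \uppercase\expandafter{\romannumeral3} implies that $v$ has only finitely many divisors: each divisor $d$ of $v$ determines the unique pair $(d,d^{-1}v)\in S\times S$ with product $v$, and there are only finitely many such pairs. Hence the powers $u^n$ $(n\geq 1)$ cannot all be distinct, so $u^n=u^m$ for some $n>m$, giving $u^{n-m}=1$; writing $u\cdot u^{n-m-1}=1$ shows $u^{-1}=u^{n-m-1}\in S$, so $u\in S\cap S^{-1}=\{1\}$ by Axiom \uppercase\expandafter{\romannumeral1}, i.e.\ $u=1$.

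The only step I expect to require any thought is the self-substitution producing $v=u^nvc^n$ in (iii); once $u^n\mid v$ for all $n$ is in hand, the finiteness of the divisor set of $v$ (immediate from Axiom \uppercase\expandafter{\romannumeral3}) and Axiom \uppercase\expandafter{\romannumeral1} applied to the powers of $u$ finish the proof with no further effort. One could equivalently phrase the contradiction through the torsion-freeness of $G$, but invoking Axiom \uppercase\expandafter{\romannumeral1} on $u^{n-m}=1$ directly keeps the argument self-contained.
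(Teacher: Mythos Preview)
Your proof is correct and follows essentially the same approach as the paper. The only minor difference is in (ii): you cancel $u$ directly in $G$ and invoke Axiom~\uppercase\expandafter{\romannumeral1} on $v=c^{-1}$, whereas the paper applies part (i) with $w=u^{-1}$ to deduce $v\mid 1$ and then uses antisymmetry; both are one-line arguments, and your version is arguably more self-contained. For (iii) the paper obtains $u^k\mid v$ by the same inductive mechanism (though phrased via $u\mid uv\mid v$ rather than your explicit self-substitution $v=u^nvc^n$) and concludes by torsion-freeness of $G$, which is exactly what your final Axiom~\uppercase\expandafter{\romannumeral1} step unpacks.
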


\begin{proof} \label{lem_basic_properties}
(\romannumeral1) The conclusion follows since $v\in uS$ implies $wv \in wuS$.

(\romannumeral2) It follows from (\romannumeral1) that $v|1$. Combining the fact $1|v$ and Lemma \ref{lem_antisymmetry}, one obtains $v=1$.

(\romannumeral3) Since $u| uv$ and $uv|v$, one has $u|v$. Write $v=uw$ for some $w\in S$. Then $u^2w|v$, which leads to $u^2|v$. By similar argument, one obtains $u^k|v$ for all $k\geq 1$. By  Axiom \uppercase\expandafter{\romannumeral3}, the element $v$ has only finitely many divisors. Then there are some $k_1\neq k_2$ such that $u^{k_1}=u^{k_2}$. Noting that $G$ has no torsion, we conclude that $u=1$.
\end{proof}

The following lemma follows similarly, whose proof is omitted here.

\begin{lemma} \label{lem_basic_properties_of_divisibility}

Let $u,v\in S$, $w\in G$.

(\romannumeral1) Suppose that $uw\in  S$. If $u \ddagger v$, then $uw \ddagger vw$.

(\romannumeral2) If $uv \ddagger v$, then $u=1$.

(\romannumeral3) If $uv \ddagger u$, then $v=1$.
\end{lemma}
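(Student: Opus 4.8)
The plan is to prove all three parts as literal mirror images of the preceding lemma on divisibility, interchanging left and right multiplication throughout. The only fact needed is the unwinding of the definition: $a\ddagger b$ means $b\in Sa$, i.e. $b=ca$ for some $c\in S$.

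\textbf{Part (i).} From $u\ddagger v$ write $v=cu$ with $c\in S$. Then $vw=c(uw)$, and since $uw\in S$ by hypothesis this exhibits $vw\in S(uw)$, that is $uw\ddagger vw$.

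\textbf{Part (ii).} I would deduce this directly (the analogue of the step ``it follows from (i) that $v\mid 1$'' in the divisibility lemma, read on the other side). From $uv\ddagger v$ write $v=c\,uv$ with $c\in S$; cancelling $v$ on the right inside the group $G$ gives $1=cu$, hence $u^{-1}=c\in S$. Since also $u\in S$, Axiom \uppercase\expandafter{\romannumeral1} forces $u\in S\cap S^{-1}=\{1\}$, so $u=1$. (One may phrase this as an application of (i) with the group element $w=(uv)^{-1}$, for which $(uv)w=1\in S$.)

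\textbf{Part (iii).} Here I mirror the ``$u^k\mid v$ for all $k$'' argument. Note first that $v\ddagger uv$ always, since $uv=u\cdot v\in Sv$; combining this with the hypothesis $uv\ddagger u$ and transitivity of $\ddagger$ gives $v\ddagger u$. Now induct: assuming $v^{k}\ddagger u$, apply part (i) with $w=v$ (legitimate since $v^{k}\cdot v=v^{k+1}\in S$) to get $v^{k+1}\ddagger uv$, and then transitivity with $uv\ddagger u$ yields $v^{k+1}\ddagger u$. Thus $v^{k}\ddagger u$ for every $k\geq 1$. But $u$ has only finitely many co-divisors: a co-divisor $v'$ of $u$ is precisely the second coordinate of a factorization $u=c v'$, and there are finitely many such pairs by Axiom \uppercase\expandafter{\romannumeral3}. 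Hence the map $k\mapsto v^{k}$ cannot be injective, so $v^{k_1}=v^{k_2}$ for some $k_1<k_2$, whence $v^{k_2-k_1}=1$, and since $G$ is torsion-free, $v=1$.

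The argument is entirely formal; the only point demanding a moment's care is in (iii), namely checking that ``finitely many co-divisors'' genuinely follows from Axiom \uppercase\expandafter{\romannumeral3} (it does, via the correspondence between co-divisors of $u$ and factorizations $u=cv$) and that each induction step invokes (i) with a group element $w$ satisfying the hypothesis $(\,\cdot\,)w\in S$. Everything else is the left-right dual of the proof already given for divisibility.
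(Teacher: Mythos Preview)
Your proof is correct and is precisely the left--right mirror of the paper's proof of the preceding divisibility lemma, which is exactly what the paper intends (it omits the proof with the remark ``follows similarly''). Part (ii) you handle by a direct cancellation rather than by formally invoking (i) as in the paper's dual argument, but the two are equivalent and you note this yourself.
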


Next, we will show that the least common multiple and the greatest common divisor can be well-defined on an integral monoid $S$.

\begin{lemma} \label{lem_lcm_may_be_defined}
For any $u,v\in S$, there is a unique element $w\in S$ such that $uS\cap vS=wS$.
\end{lemma}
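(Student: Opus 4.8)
The plan is to produce the generator $w$ explicitly from the lowest-terms fraction of $u^{-1}v$ furnished by Axiom~\uppercase\expandafter{\romannumeral2}, and then to check the two inclusions $wS\subseteq uS\cap vS$ and $uS\cap vS\subseteq wS$.

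Uniqueness I would dispose of first: if $wS=w'S$ for $w,w'\in S$, then Lemma~\ref{lem_antisymmetry}(\romannumeral1) forces $w=w'$, so it suffices to construct one $w$ with $uS\cap vS=wS$. For existence, apply Axiom~\uppercase\expandafter{\romannumeral2} to the element $u^{-1}v\in G$, obtaining a fraction in lowest terms $u^{-1}v=xy^{-1}$ with $x,y\in S$ enjoying the universal property that any $a,b\in S$ with $u^{-1}v=ab^{-1}$ satisfy $a=xc$ and $b=yc$ for some $c\in S$. Put $w:=ux$. Rearranging $u^{-1}v=xy^{-1}$ gives $ux=vy$, so $w=vy$ as well; hence $w\in uS\cap vS$, and therefore $wS\subseteq uS\cap vS$. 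Conversely, for $t\in uS\cap vS$ write $t=ua=vb$ with $a,b\in S$; then $u^{-1}v=ab^{-1}$, so the universal property yields $c\in S$ with $a=xc$, whence $t=ua=uxc=wc\in wS$. Combining the two inclusions gives $uS\cap vS=wS$.

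I do not expect a genuine obstacle here: the only real decision is to feed $u^{-1}v$ (rather than something more elaborate) into Axiom~\uppercase\expandafter{\romannumeral2}, after which the statement is essentially a repackaging of that axiom, via the observation that the common right multiples of $u$ and $v$ are exactly the numerators $ua$ arising from $S$-denominator representations $u^{-1}v=ab^{-1}$. In passing this also shows the intersection is never empty, since $ux=vy$ always lies in it; and the element $w$ so produced is precisely what will later be called $\lcm[u,v]$.
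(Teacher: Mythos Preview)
Your proof is correct and follows essentially the same approach as the paper: both apply Axiom~\uppercase\expandafter{\romannumeral2} to the quotient of $u$ and $v$ (you use $u^{-1}v$, the paper uses $v^{-1}u$, which merely swaps the roles of $x$ and $y$), set $w$ to be the resulting common multiple $ux=vy$, and then use the universal property of the lowest-terms fraction to show any common right multiple lies in $wS$. Uniqueness via Lemma~\ref{lem_antisymmetry} is likewise the same.
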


\begin{proof}
By Axiom \uppercase\expandafter{\romannumeral2}, the element $v^{-1}u$ has a right fraction $xy^{-1}$ in lowest terms with numerator $x$ and denominator $y$. Put $w=uy=vx$. Then $wS\subseteq uS\cap vS$. On the other hand, let us consider any $z\in uS\cap vS$. Write $z=u\widetilde{y}=v\widetilde{x}$ for some $\widetilde{x},\widetilde{y}\in S$. Then $v^{-1}u= \widetilde{x}\widetilde{y}^{-1}$. The right-hand side of the above equality is also a fraction of $v^{-1}u$. By Axiom \uppercase\expandafter{\romannumeral2}, there is some $c\in S$ such that $\widetilde{x}=xc$ and $\widetilde{y}=yc$. So $z=u\widetilde{y}=uyc=wc\in wS$. We obtain that $uS\cap vS \subseteq wS$. Now the existence of such a $w$ is obtained. The uniqueness follows from Lemma \ref{lem_antisymmetry}. This completes the proof.
\end{proof}

\begin{definition}
For $u,v\in S$, we define the least common multiple of $u$ and $v$ to be $\lcm[u,v]=w$ with $w$ the unique element in $S$ such that $uS\cap vS=wS$.
\end{definition}

Note that
\[
\lcm[u,v]S = uS\cap vS = vS\cap uS = \lcm[v,u]S,\quad (u,v\in S).
\]
So $\lcm[u,v]=\lcm[v,u]$. Similarly,
\begin{equation} \label{eq_def_of_lcm_general}
u_1S\cap u_2S\cap u_3S = \lcm[u_1,u_2]S\cap u_3S = \lcm[\lcm[u_1,u_2],u_3]S,\quad (u_1,u_2,u_3\in S).
\end{equation}
Since an intersection of sets does not depend on the order, we obtain the same if we permute $u_1,u_2$ and $u_3$ in \eqref{eq_def_of_lcm_general}. Therefore, it is natural to define $\lcm[u_1,u_2,u_3]=\lcm[\lcm[u_1,u_2],u_3]$ and
\[
\lcm[u_1,u_2,\ldots,u_k] = \lcm[\lcm[u_1,u_2,\ldots, u_{k-1}],u_k],\quad (k\geq 2, u_1,\ldots,u_k\in S)
\]
in general. We also write $\lcm[u]=u$ for a single element $u\in S$ and $\lcm[F]=\lcm[u_1,u_2,\ldots,u_k]$ for a non-empty finite set $F=\{u_1,u_2,\ldots,u_k\}\subseteq S$. The following lemma follows immediately.

\begin{lemma} \label{lem_numbertheoretic_def_lcm}
Let $k\geq 1$ and $u_1,u_2,\ldots,u_k,v\in S$.

(\romannumeral1) For $1\leq j\leq k$, we have $u_j| \lcm[u_1,u_2,\ldots,u_k]$.

(\romannumeral2) If $u_1,u_2,\ldots,u_k|v$, then $\lcm[u_1,u_2,\ldots,u_k]|v$.
\end{lemma}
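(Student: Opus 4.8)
Both parts will follow quickly once one records the set-theoretic description
\[
\lcm[u_1,u_2,\ldots,u_k]\,S \;=\; u_1S\cap u_2S\cap\cdots\cap u_kS,
\]
valid for every $k\geq 1$, and then invokes the characterization of divisibility from the Definition, namely $a\mid b$ if and only if $bS\subseteq aS$. The whole argument is a short induction on $k$ combined with this dictionary between divisibility and inclusion of principal right ideals.

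\textbf{Step 1: the ideal identity.} I would prove $\lcm[u_1,\ldots,u_k]S=\bigcap_{j=1}^k u_jS$ by induction on $k$. For $k=1$ it is the convention $\lcm[u]=u$, and for $k=2$ it is exactly Lemma \ref{lem_lcm_may_be_defined} together with the Definition of $\lcm$. For the inductive step, using the recursive definition $\lcm[u_1,\ldots,u_k]=\lcm[\lcm[u_1,\ldots,u_{k-1}],u_k]$ and applying Lemma \ref{lem_lcm_may_be_defined} to the pair $\lcm[u_1,\ldots,u_{k-1}]$ and $u_k$, one gets
\[
\lcm[u_1,\ldots,u_k]S=\lcm[u_1,\ldots,u_{k-1}]S\cap u_kS=\Big(\bigcap_{j=1}^{k-1}u_jS\Big)\cap u_kS=\bigcap_{j=1}^{k}u_jS,
\]
where the middle equality is the induction hypothesis. (This is really just the reasoning already displayed in \eqref{eq_def_of_lcm_general}, carried out for general $k$.)

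\textbf{Step 2: deduce (i) and (ii).} For (i), fix $1\le j\le k$. From Step 1 we have $\lcm[u_1,\ldots,u_k]S=\bigcap_{i}u_iS\subseteq u_jS$, and $wS\subseteq u_jS$ is precisely the statement $u_j\mid w$ with $w=\lcm[u_1,\ldots,u_k]$. For (ii), suppose $u_i\mid v$ for every $i$, i.e.\ $vS\subseteq u_iS$ for every $i$. Then $vS\subseteq\bigcap_i u_iS=\lcm[u_1,\ldots,u_k]S$, which again by the Definition means $\lcm[u_1,\ldots,u_k]\mid v$.

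\textbf{Main obstacle.} There is essentially no obstacle: the content is entirely in having set up $\lcm$ via the intersection of principal ideals (Lemma \ref{lem_lcm_may_be_defined}) and in the equivalence $a\mid b\Leftrightarrow bS\subseteq aS$. The only point requiring a word of care is the bookkeeping in Step 1 — making sure the recursive definition of $\lcm$ of several elements genuinely matches the $k$-fold ideal intersection — but this is routine and was foreshadowed by \eqref{eq_def_of_lcm_general}. Hence the lemma "follows immediately," as stated.
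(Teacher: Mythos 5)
Your argument is correct and is exactly the ``immediate'' deduction the paper has in mind: the paper has already established the ideal identity $\lcm[u_1,\ldots,u_k]S=\bigcap_j u_jS$ in the lines surrounding \eqref{eq_def_of_lcm_general}, and both (i) and (ii) then drop out of the equivalence $a\mid b\Leftrightarrow bS\subseteq aS$ just as you say. Nothing to add.
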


\begin{example}
In Thompson's monoid $\bS$, we have $\lcm[p_0,p_1]= p_0p_2=p_1p_0$ and $\lcm[p_0^2,p_1^2]=p_0^2p_3^2=p_1^2p_0^2$.
\end{example}

Next, we turn to consider the notion of greatest common divisor of given elements.

\begin{definition}
For finitely many elements $u_1,u_2,\ldots,u_k\in S$ $(k\geq 2)$, we define their greatest common divisor to be
\begin{equation} \label{eq_def_gcd}
\gcd(u_1,u_2,\ldots,u_k) = \lcm[w\in S:\, w|u_j\, (1\leq j\leq k)].
\end{equation}
Or, equivalently,
\[
\gcd(u_1,u_2,\ldots,u_k)S=\bigcap\limits_{w|u_1,\ldots,u_k}wS.
\]
\end{definition}

By Axiom \uppercase\expandafter{\romannumeral3}, the set $\{w\in S:\, w|u_j\, (1\leq j\leq k)\}$ has finite cardinality. And it is non-empty, since $1$ is the divisor of any element of $S$. So the expression on the right-hand side of \eqref{eq_def_gcd} is well-defined. Moreover, the definition remains the same if we permute $u_1,u_2,\ldots,u_k$ in \eqref{eq_def_gcd}. We also write $\gcd(u)=u$ for a single element $u\in S$ and $\gcd(F)=\gcd(u_1,u_2,\ldots,u_k)$ for a non-empty finite set $F=\{u_1,u_2,\ldots,u_k\}\subseteq S$.

\begin{lemma} \label{lem_numbertheoretic_def_gcd}
Let $k\geq 1$ and $u_1,u_2,\ldots,u_k,v\in S$.

(\romannumeral1) For $1\leq j\leq k$, we have $\gcd(u_1,u_2,\ldots,u_k)|u_j$.

(\romannumeral2) If $v|u_1,u_2,\ldots,u_k$, then $v|\gcd(u_1,u_2,\ldots,u_k)$.
\end{lemma}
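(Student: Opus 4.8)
The plan is to derive both parts directly from the definition of $\gcd$ as the least common multiple of a finite set, together with Lemma~\ref{lem_numbertheoretic_def_lcm}. Write $D=\{w\in S:\ w|u_j\ (1\leq j\leq k)\}$ for the set of common divisors. As noted right after \eqref{eq_def_gcd}, Axiom~\uppercase\expandafter{\romannumeral3} makes $D$ finite and $1\in D$ makes it non-empty, so $\gcd(u_1,\ldots,u_k)=\lcm[D]$ is well-defined; the case $k=1$ is immediate since then $\gcd(u_1)=u_1$ and both assertions reduce to reflexivity of ``$|$'', so we may assume $k\geq 2$.

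For part (\romannumeral1), fix $j$ with $1\leq j\leq k$. Every $w\in D$ divides $u_j$ by the definition of $D$, so applying Lemma~\ref{lem_numbertheoretic_def_lcm}(\romannumeral2) to the finite family $D$ gives $\lcm[D]|u_j$, i.e. $\gcd(u_1,\ldots,u_k)|u_j$. Equivalently, in the set form of the definition: for each $w\in D$ one has $u_jS\subseteq wS$ because $w|u_j$, hence $u_jS\subseteq\bigcap_{w\in D}wS=\gcd(u_1,\ldots,u_k)S$, which says $\gcd(u_1,\ldots,u_k)|u_j$.

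For part (\romannumeral2), suppose $v|u_1,\ldots,u_k$. Then $v\in D$, so $v$ is one of the elements of the family whose least common multiple defines the gcd, and Lemma~\ref{lem_numbertheoretic_def_lcm}(\romannumeral1) yields $v|\lcm[D]=\gcd(u_1,\ldots,u_k)$. In set form: $vS$ is one of the sets in the intersection $\bigcap_{w\in D}wS$, so $\gcd(u_1,\ldots,u_k)S\subseteq vS$, i.e. $v|\gcd(u_1,\ldots,u_k)$.

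I do not expect a genuine obstacle here: the only point needing care is the bookkeeping that $D$ is finite and non-empty so that $\lcm[D]$ is meaningful, and that has already been established. The lemma is precisely the ``universal property'' counterpart of Lemma~\ref{lem_numbertheoretic_def_lcm}, and — as with $\lcm$ — it is what justifies treating $\gcd$ as associative and independent of the ordering of its arguments in the sequel.
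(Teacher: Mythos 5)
Your proof is correct and takes essentially the same approach as the paper's: both rest on the set form $\gcd(u_1,\ldots,u_k)S=\bigcap_{w|u_1,\ldots,u_k}wS$, with (\romannumeral1) following because $u_j$ lies in every $wS$ in the intersection and (\romannumeral2) because $vS$ is one of the sets being intersected. Your explicit invocation of Lemma~\ref{lem_numbertheoretic_def_lcm} and the $k=1$ remark are merely careful elaborations of the same argument.
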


\begin{proof}
(\romannumeral1) For any $1\leq j\leq k$, and any $w\in S$ with $w|u_1,u_2,\ldots,u_k$, we always have $u_j\in wS$. Then (\romannumeral1) holds due to the fact that
\[
u_j\in \bigcap\limits_{w|u_1,\ldots,u_k}wS =\gcd(u_1,u_2,\ldots,u_k)S.
\]

(\romannumeral2) The conclusion follows by noticing that
\[
\gcd(u_1,u_2,\ldots,u_k) S = \bigcap\limits_{w|u_1,\ldots,u_k}wS \subseteq vS.
\]
\end{proof}

\begin{lemma}
For $u_1,u_2,\ldots,u_k\in S$ $(k\geq 3)$, we have
\[
\gcd(u_1,u_2,\ldots,u_k)=\gcd(\gcd(u_1,\ldots,u_{k-1}),u_k).
\]
\end{lemma}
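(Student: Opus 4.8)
The plan is to characterize both sides via the universal property of the greatest common divisor recorded in Lemma \ref{lem_numbertheoretic_def_gcd}, and then to invoke antisymmetry of the divisibility order. Write $d=\gcd(u_1,u_2,\ldots,u_k)$ and $d'=\gcd(\gcd(u_1,\ldots,u_{k-1}),u_k)$. I would show $d\mid d'$ and $d'\mid d$; since $|$ is a partial order on $S$ (equivalently, since $dS=d'S$ forces $d=d'$ by Lemma \ref{lem_antisymmetry}), this yields the claim.

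For $d'\mid d$: by Lemma \ref{lem_numbertheoretic_def_gcd}(\romannumeral1), $d'$ divides $\gcd(u_1,\ldots,u_{k-1})$ and divides $u_k$, while $\gcd(u_1,\ldots,u_{k-1})$ divides each $u_j$ with $1\le j\le k-1$, again by Lemma \ref{lem_numbertheoretic_def_gcd}(\romannumeral1). By transitivity of divisibility, $d'$ divides every $u_j$ with $1\le j\le k$, so Lemma \ref{lem_numbertheoretic_def_gcd}(\romannumeral2) gives $d'\mid\gcd(u_1,\ldots,u_k)=d$. For $d\mid d'$: by Lemma \ref{lem_numbertheoretic_def_gcd}(\romannumeral1), $d$ divides each $u_j$; in particular $d$ divides $u_1,\ldots,u_{k-1}$, so Lemma \ref{lem_numbertheoretic_def_gcd}(\romannumeral2) gives $d\mid\gcd(u_1,\ldots,u_{k-1})$. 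Since also $d\mid u_k$, a second application of Lemma \ref{lem_numbertheoretic_def_gcd}(\romannumeral2), now to the two elements $\gcd(u_1,\ldots,u_{k-1})$ and $u_k$, gives $d\mid\gcd(\gcd(u_1,\ldots,u_{k-1}),u_k)=d'$.

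There is essentially no obstacle here; the argument is just the standard derivation of associativity of $\gcd$ from its universal property, and uses only Lemmas \ref{lem_antisymmetry} and \ref{lem_numbertheoretic_def_gcd}. The only points requiring a little care are that Lemma \ref{lem_numbertheoretic_def_gcd} applies equally to the two-element greatest common divisor (the case $k=2$), which is what lets me treat $\gcd(\gcd(u_1,\ldots,u_{k-1}),u_k)$ on the same footing as $d$, and that the passage from $d\mid d'$ and $d'\mid d$ to $d=d'$ is legitimate precisely because divisibility is antisymmetric on $S$.
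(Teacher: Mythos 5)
Your proof is correct and follows essentially the same route as the paper's: both show $d'\mid d$ and $d\mid d'$ by shuttling between parts (\romannumeral1) and (\romannumeral2) of Lemma \ref{lem_numbertheoretic_def_gcd} and then conclude by antisymmetry of the divisibility order. The only cosmetic difference is that you make the antisymmetry step explicit while the paper leaves it implicit.
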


\begin{proof}
By Lemma \ref{lem_numbertheoretic_def_gcd}(\romannumeral1), one obtains
\begin{align*}
&\gcd(\gcd(u_1,\ldots,u_{k-1}),u_k)\, |\, \gcd(u_1,\ldots,u_{k-1}),\\
&\gcd(\gcd(u_1,\ldots,u_{k-1}),u_k)| u_k.
\end{align*}
Since $\gcd(u_1,\ldots,u_{k-1})|u_j$ $(1\leq j\leq k-1)$, one obtains $\gcd(\gcd(u_1,\ldots,u_{k-1}),u_k)|u_j$ for $1\leq j\leq k-1$ as well. Now Lemma \ref{lem_numbertheoretic_def_gcd} (\romannumeral2) ensures that
\[
\gcd(\gcd(u_1,\ldots,u_{k-1}),u_k)| \gcd(u_1,u_2,\ldots,u_k).
\]

On the other hand, since $\gcd(u_1,u_2,\ldots,u_k)| u_j$ $(1\leq j\leq k)$, we have
\[
\gcd(u_1,u_2,\ldots,u_k)| \gcd(u_1,u_2,\ldots,u_{k-1})
\]
by Lemma \ref{lem_numbertheoretic_def_gcd} (\romannumeral2). It follows that
\[
\gcd(u_1,u_2,\ldots,u_k)|\gcd(\gcd(u_1,\ldots,u_{k-1}),u_k).
\]
This completes the proof.
\end{proof}

We explore some other properties below. Let $c\in G$ and $u_1,u_2,\ldots,u_k\in S$ $(k\geq 1)$. We have
\[
cu_1S\cap cu_2S\cap \ldots cu_kS = c(u_1S\cap u_2S\cap \ldots u_kS) = c\cdot \lcm[u_1,u_2,\ldots,u_k]S.
\]
When $cu_1,cu_2,\ldots,cu_k\in S$, the above formula becomes
\[
\lcm[cu_1,cu_2,\ldots,cu_k] = c\cdot \lcm[u_1,u_2,\ldots,u_k].
\]
Similar result also holds for the greatest common divisor.

\begin{lemma} \label{lem_c_dot_gcd}
Let $k\geq 1$ and $c,u_1,\ldots,u_k\in S$. Then $\gcd(cu_1,\ldots,cu_k)=c\cdot\gcd(u_1,\ldots,u_k)$.
\end{lemma}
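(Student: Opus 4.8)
The plan is to establish the two divisibility relations
\[
c\cdot\gcd(u_1,\ldots,u_k)\ \big|\ \gcd(cu_1,\ldots,cu_k)
\quad\text{and}\quad
\gcd(cu_1,\ldots,cu_k)\ \big|\ c\cdot\gcd(u_1,\ldots,u_k)
\]
separately, and then deduce equality from Lemma \ref{lem_antisymmetry}. The case $k=1$ is immediate because $\gcd(u)=u$, so the content is in the general case, although the argument below makes no use of $k\geq 2$.

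Set $d=\gcd(u_1,\ldots,u_k)$ and $e=\gcd(cu_1,\ldots,cu_k)$, and note $cd\in S$. For the first relation: by Lemma \ref{lem_numbertheoretic_def_gcd}(\romannumeral1) we have $d\mid u_j$, hence $u_j\in dS$ and $cu_j\in cdS$, i.e.\ $cd\mid cu_j$, for every $j$; Lemma \ref{lem_numbertheoretic_def_gcd}(\romannumeral2) then gives $cd\mid e$.

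For the second relation: since $c\mid cu_j$ for every $j$, Lemma \ref{lem_numbertheoretic_def_gcd}(\romannumeral2) yields $c\mid e$, so we may write $e=cf$ with $f\in S$. From $e\mid cu_j$ we obtain $cu_j\in cfS$, and left-multiplication by $c^{-1}$ inside $G$ gives $u_j\in fS$, that is $f\mid u_j$, for every $j$. Applying Lemma \ref{lem_numbertheoretic_def_gcd}(\romannumeral2) once more gives $f\mid d$, whence $e=cf\mid cd$. Now $cd\mid e$ and $e\mid cd$ force $eS=cdS$, so $e=cd$ by Lemma \ref{lem_antisymmetry}, which is exactly the assertion.

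I do not anticipate any real obstacle. The only step needing a word of care is the passage from $cf\mid cu_j$ to $f\mid u_j$: this is carried out by multiplying on the left by $c^{-1}$ in the fractional group $G$, and one checks that the resulting element $c^{-1}e=f$ and the quotients $c^{-1}cu_j=u_j$ all land back in $S$, so the manipulation is legitimate. Everything else is a direct invocation of the universal properties of $\gcd$ recorded in Lemma \ref{lem_numbertheoretic_def_gcd}.
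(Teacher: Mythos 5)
Your proof is correct, and it is a genuine (if modest) streamlining of the paper's argument. Both proofs establish the two divisibility relations $cd\mid e$ and $e\mid cd$ for $d=\gcd(u_1,\ldots,u_k)$, $e=\gcd(cu_1,\ldots,cu_k)$, and then conclude by antisymmetry, and the first half is identical. For the second half, the paper does not observe directly that $c\mid e$; instead it forms $\lcm[e,c]=ey=cx$, notes that each $cu_j$ is a common multiple of $e$ and $c$, and then passes through $cx\mid cu_j$, $x\mid u_j$, $x\mid d$, $e\mid cx\mid cd$. You short-circuit this by applying Lemma~\ref{lem_numbertheoretic_def_gcd}(\romannumeral2) once more to get $c\mid e$, writing $e=cf$, left-cancelling $c$ in $G$ to deduce $f\mid u_j$ for each $j$, then $f\mid d$ and $e=cf\mid cd$. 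Once one notices $c\mid e$, the paper's $\lcm[e,c]$ collapses to $e$ and its $x$ is exactly your $f$, so the two routes converge; your version just avoids introducing the least common multiple at all, which makes the second half shorter and more symmetric with the first. The one step needing care, namely $cf\mid cu_j\Rightarrow f\mid u_j$, is justified exactly as you say, by writing $cu_j=cfs$ with $s\in S$ and cancelling $c$ on the left in $G$.
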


\begin{proof}
For simplicity, we denote $d=\gcd(u_1,\ldots,u_k)$ and $e=\gcd(cu_1,\ldots,cu_k)$. Write $u_j=dw_j$ and $cu_j=ev_j$ for some $u_j,v_j\in S$ $(1\leq j\leq k)$. Note that $cd|cdw_j=cu_j$ $(1\leq j\leq k)$, which implies $cd|\gcd(cu_1,\ldots,cu_k)=e$ by Lemma \ref{lem_numbertheoretic_def_gcd}(\romannumeral2). One the other hand, write $\lcm[e,c]=ey=cx$ for some $x,y\in S$. Note that $ev_j=cu_j$ $(1\leq j\leq k)$, which are all common multiples of $e$ and $c$. So $cx|cu_j$ by Lemma \ref{lem_numbertheoretic_def_lcm}(\romannumeral2), which leads to $x|u_j$ $(1\leq j\leq k)$ by Lemma \ref{lem_basic_properties_of_divisibility}(\romannumeral1). One deduces that $x|d$ by Lemma \ref{lem_numbertheoretic_def_gcd}(\romannumeral2). Now $e|cx$ and $cx|cd$. So $e|cd$. The lemma now follows.
\end{proof}

\begin{corollary} \label{cor_remains_of_gcd_has_gcd_1}
Let $u_1,u_2,\ldots,u_k \in S$. Suppose that $u_j=\gcd(u_1,u_2,\ldots,u_k)\cdot v_j$ $(1\leq j\leq k)$. Then $\gcd(v_1,v_2,\ldots,v_k)=1$.
\end{corollary}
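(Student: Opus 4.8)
The plan is to read this off directly from Lemma~\ref{lem_c_dot_gcd}. First I would set $d=\gcd(u_1,u_2,\ldots,u_k)$ and observe that the hypothesis $u_j=dv_j$ already presupposes $d\mid u_j$ for each $j$ (which is Lemma~\ref{lem_numbertheoretic_def_gcd}(\romannumeral1)), so that each $v_j=d^{-1}u_j$ genuinely lies in $S$ and $\gcd(v_1,\ldots,v_k)$ makes sense.

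Next I would apply Lemma~\ref{lem_c_dot_gcd} with $c=d$ to the tuple $(v_1,\ldots,v_k)$, which gives
\[
\gcd(dv_1,dv_2,\ldots,dv_k)=d\cdot\gcd(v_1,v_2,\ldots,v_k).
\]
Since $dv_j=u_j$ for every $j$, the left-hand side equals $\gcd(u_1,\ldots,u_k)=d$, so we arrive at the identity $d=d\cdot\gcd(v_1,\ldots,v_k)$. Finally I would cancel $d$: since $d\in G$, left-multiplying $d\cdot 1=d\cdot\gcd(v_1,\ldots,v_k)$ by $d^{-1}$ yields $\gcd(v_1,\ldots,v_k)=1$. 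If one prefers to argue entirely inside $S$, the relation $d=d\cdot\gcd(v_1,\ldots,v_k)$ exhibits $d\cdot\gcd(v_1,\ldots,v_k)\mid d$, and then Lemma~\ref{lem_basic_properties_of_divisibility}(\romannumeral2) forces $\gcd(v_1,\ldots,v_k)=1$.

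I do not expect any real obstacle: essentially all the content is packaged in Lemma~\ref{lem_c_dot_gcd}, which already handled the interaction of $\gcd$ with left translation. The only step meriting a sentence of care is the cancellation of $d$, and that is immediate from the group structure of $G$; non-commutativity never intervenes here because $d$ is multiplied on the same (left) side throughout.
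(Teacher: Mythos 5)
Your proof is correct and follows exactly the paper's own argument: both set $d=\gcd(u_1,\ldots,u_k)$, invoke Lemma~\ref{lem_c_dot_gcd} to get $d = d\cdot\gcd(v_1,\ldots,v_k)$, and then cancel $d$. The only difference is that you make the final cancellation step explicit (either via left-multiplication by $d^{-1}$ in $G$, or via Lemma~\ref{lem_basic_properties_of_divisibility}(\romannumeral2) inside $S$), whereas the paper simply writes ``the corollary then follows.''
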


\begin{proof}
Write $c=\gcd(u_1,u_2,\ldots,u_k)$. It follows from Lemma \ref{lem_c_dot_gcd} that
\[
c=\gcd(u_1,u_2,\ldots,u_k) = \gcd(cv_1,cv_2,\ldots,cv_k) = c\cdot \gcd(v_1,v_2,\ldots,v_k).
\]
The corollary then follows.
\end{proof}

Suppose that $k\geq 2$ and $u_1,\ldots,u_k$ are elements in $S$ such that $\gcd(u_1,\ldots,u_k)=1$. Then we say that the elements $u_1,\ldots,u_k$ are free. The reason for not using the term ``coprime'' is because we have not established the notion of ``prime'' yet.

\begin{remark} \label{remark_lcm_to_gcd}
The statement that ``$\lcm[u,v]=uu_1=vv_1$ implies $\gcd(u_1,v_1)=1$'' is false. For example, in Thompson's monoid $\bS$, we have
\[
\lcm[p_0p_1p_2p_8,p_3]=p_0p_1p_2p_8\cdot p_6=p_3\cdot p_0p_1p_2p_9.
\]
Noting that $p_0p_1p_2p_9=p_6p_0p_1p_2$, we also have $\gcd(p_6,p_0p_1p_2p_9)=p_6$.
\end{remark}

\medskip

The divisor function is defined by
\[
\tau(z) = \#\{(z_1,z_2)\in S:\ z=z_1z_2\},
\]
which counts the number of divisors, or co-divisors, of $z$. We call an element $p$ in $S$ irreducible, if $\tau(p)=2$. That is to say, the only divisors of an irreducible element is $1$ and itself. We use $\cP$ to denote the set of all irreducible elements in $S$. For the natural numbers $\bN$, one has $\cP=\{2,3,5,7,11,\ldots\}$. For Thompson's monoid $\bS$, one has $\cP=\{p_0,p_1,p_2,\ldots\}$ as in \eqref{eq_define_p_j_in_Thompson_introduction}. Irreducible elements usually generate the monoid and also the group. However, it should be pointed out that $\bG$ can be generated by only two elements $\{p_0,p_1\}$, while $\bS$ has infinitely many irreducible elements $\cP=\{p_0,p_1,p_2,\ldots\}$.

\begin{lemma} [Irreducible divisors] \label{lem_prime_divisor_exsits}
Suppose that $u$ is an element in $S$ with $u\neq 1$. Then there is some $p\in \cP$ such that $p|u$.
\end{lemma}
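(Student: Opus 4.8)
The plan is to exploit Axiom \uppercase\expandafter{\romannumeral3}, which guarantees that $u$ has only finitely many divisors, together with the elementary fact that a nonempty finite partially ordered set has a minimal element. Concretely, I would consider the set
$D=\{w\in S:\ w\mid u,\ w\neq 1\}$.
It is nonempty because $u\in D$ (indeed $u\neq 1$ and $u\mid u$ by reflexivity), and it is finite by Axiom \uppercase\expandafter{\romannumeral3}. Since ``$\mid$'' is a partial order on $S$, it restricts to a partial order on the finite nonempty set $D$, so $D$ has a minimal element; call it $p$.

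The next step is to show $p\in\cP$, i.e. $\tau(p)=2$. First, $1\mid p$ and $p\mid p$ are two divisors of $p$, and $1\neq p$ because $p\in D$, so $\tau(p)\geq 2$. Suppose for contradiction that $\tau(p)\geq 3$. Then $p$ has a divisor $q$ with $q\neq 1$ and $q\neq p$. By transitivity, from $q\mid p$ and $p\mid u$ we get $q\mid u$; combined with $q\neq 1$ this shows $q\in D$. But $q\mid p$ with $q\neq p$ (and, by antisymmetry, $p\nmid q$) contradicts the minimality of $p$ in $D$. Hence $\tau(p)=2$, so $p$ is irreducible, and by construction $p\mid u$, which is exactly what is claimed.

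The argument is essentially immediate, so there is no real obstacle; the only point that genuinely needs the hypotheses is the appeal to Axiom \uppercase\expandafter{\romannumeral3} to know that $D$ is finite, which is what forces the existence of a minimal element. Without some finiteness or descending-chain condition one could imagine an infinite strictly descending tower of proper divisors of $u$ that never terminates at an irreducible element, so this is where the axiom does the essential work. (Alternatively, one could phrase the same proof as an induction on $\tau(u)$, splitting into the cases $u$ irreducible and $u$ reducible, but the minimal-element formulation avoids having to set up the induction.)
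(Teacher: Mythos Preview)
Your proof is correct and follows essentially the same approach as the paper. The paper phrases the argument as an iterative descent---pick a proper nontrivial divisor, note that $\tau$ strictly decreases, and use Axiom~\uppercase\expandafter{\romannumeral3} to guarantee termination at an irreducible element---which is exactly the induction on $\tau(u)$ you mention as an alternative; your minimal-element formulation is a clean repackaging of the same idea.
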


\begin{proof}
We write $u^{(1)}=u$ and use iteration. For $k\geq 1$, whenever $\tau(u^{(k)})> 2$, there are some divisor $u^{(k+1)}$ of $u^{(k)}$ with $u^{(k+1)}\neq 1$ and $u^{(k+1)}\neq u^{(k)}$. Noting that each divisor of $u^{(k+1)}$ is a divisor of $u^{(k)}$, one obtains $2\leq \tau(u^{(k+1)})<\tau(u^{(k)})$. By Axiom \uppercase\expandafter{\romannumeral3}, one has $\tau(u)<+\infty$. So the iterating process will stop at some step, say, $K$, with $u^{(K)}=2$. Now $u^{(K)}\in \cP$ and it is a divisor of $u$.
\end{proof}

\begin{lemma}  [Irreducible decompositions] \label{lem_irr_decomposition_of_any_element}
For any $u\neq 1$, there exists some $K\geq 1$ and $q_1,q_2,\ldots,q_K\in \cP$ such that $u=q_1q_2\ldots q_K$.
\end{lemma}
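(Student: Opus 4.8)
The plan is to argue by strong induction on the quantity $\tau(u)$, which is a positive integer by Axiom \uppercase\expandafter{\romannumeral3}. Since $u\neq 1$, the divisors $1$ and $u$ are distinct, so $\tau(u)\geq 2$. If $\tau(u)=2$, then $u$ is irreducible by definition, and we may take $K=1$ and $q_1=u$. This furnishes the base case.

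For the inductive step, suppose $\tau(u)\geq 3$ and that the statement holds for every element of $S$ whose divisor count is strictly less than $\tau(u)$. By Lemma \ref{lem_prime_divisor_exsits} there is some $p\in\cP$ with $p\mid u$; write $u=pv$ with $v\in S$. First I would check that $v\neq 1$: otherwise $u=p$, forcing $\tau(u)=\tau(p)=2$, contrary to assumption. Hence $2\leq\tau(v)$, and the induction hypothesis will be applicable once we show $\tau(v)<\tau(u)$.

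The key point — and really the only nontrivial step — is the strict inequality $\tau(v)<\tau(u)$. For each divisor $w$ of $v$ we have $pw\mid pv=u$ by Lemma \ref{lem_basic_properties_of_divisibility}(\romannumeral1), and the map $w\mapsto pw$ is injective (if $pw_1=pw_2$ then $w_1=w_2$ in the group $G$). So $u$ has at least $\tau(v)$ divisors of the form $pw$. Moreover $1$ is a divisor of $u$ that is not of this form: $pw=1$ would give $p=w^{-1}\in S\cap S^{-1}=\{1\}$ by Axiom \uppercase\expandafter{\romannumeral1}, contradicting $\tau(p)=2$. Therefore $\tau(u)\geq\tau(v)+1>\tau(v)$.

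Applying the induction hypothesis to $v$, we obtain $K-1\geq 1$ and $q_1,\dots,q_{K-1}\in\cP$ with $v=q_1\cdots q_{K-1}$. Setting $q_K$ aside and relabelling, $u=p\,q_1\cdots q_{K-1}$ is the desired irreducible decomposition with $p\in\cP$ as its first factor. No delicate estimates are needed; the whole argument rests on Lemma \ref{lem_prime_divisor_exsits}, the finiteness from Axiom \uppercase\expandafter{\romannumeral3}, and the counting observation above.
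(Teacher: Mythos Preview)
Your proof is correct and follows essentially the same approach as the paper: both peel off an irreducible divisor using Lemma~\ref{lem_prime_divisor_exsits} and terminate via the strict decrease of $\tau$, with the paper phrasing this as an iterative process and you as strong induction. Your argument is in fact slightly more careful, since you explicitly justify the inequality $\tau(v)<\tau(u)$ via the injection $w\mapsto pw$ and the extra divisor $1$, whereas the paper simply asserts this inequality.
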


\begin{proof}
We put $u^{(1)}=u$ and use iteration. For any $k\geq 1$, by Lemma \ref{lem_prime_divisor_exsits}, there is some $q_k\in \cP$ such that $q_k| u^{(k)}$. Whenever $\tau(u^{(k)})>2$, we write $u^{(k)}=q_k u^{(k+1)}$ for some $u^{(k+1)}\in S$ with $\tau(u^{(k+1)})\neq 1$. Then we iterate with $k+1$ instead of $k$. Since $\tau(u^{(k+1)})<\tau(u^{(k)})$ for each $k\geq 1$ and $\tau(u)<+\infty$ by \textsc{Axiom} \uppercase\expandafter{\romannumeral3}, the iteration process will stop at step, say $K$, with $\tau(u^{K})=2$. Then we denote $q_k=u^{(K)}$, which belongs to $\cP$. It appears that $u=q_1q_2\ldots q_K$ with $q_j\in \cP$ $(1\leq j\leq K)$.
\end{proof}

There may be many ways to write an element as a product of irreducible elements. When different irreducible decompositions of a given element are considered, we will call an irreducible element a letter, and call a composition of letters a word. 

\subsection{Co-divisors, Co-multiples and a Duality}

The least common co-divisor of two elements may not exist. However, we show below that such notation still works when we put some upper bound on the elements involved.

\begin{lemma} \label{lem_lcm_ddagger_exists}
Let $u_1,u_2,\ldots,u_k,w$ $(k\geq 1)$ be elements in $S$ with $u_1,u_2,\ldots,u_k\ddagger w$. Then there exists an element $z\ddagger w$ with the following two properties.

(\romannumeral1) It satisfies that $u_1,u_2,\ldots,u_k\ddagger z$.

(\romannumeral2) If $v$ is an element in $S$ such that $v\ddagger w$ and $ u_1,u_2,\ldots,u_k \ddagger v$, then $z\ddagger v$.
\end{lemma}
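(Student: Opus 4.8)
The plan is to produce $z$ explicitly as a ``quotient'' of $w$ by a greatest common divisor, exploiting the order-reversing correspondence between the co-divisors of $w$ and the divisors of $w$ (a co-divisor $v\ddagger w$ pairs with the divisor $wv^{-1}$, and the relation $\ddagger$ among co-divisors of $w$ becomes $\mid$ among the complementary divisors). Concretely: since $u_i\ddagger w$ for each $i$, write $w=x_iu_i$ with $x_i\in S$; here $x_i=wu_i^{-1}$ is uniquely determined because $G$ is a group. Set $d=\gcd(x_1,\ldots,x_k)$ --- well-defined by Axiom \uppercase\expandafter{\romannumeral3}, as in the definition preceding Lemma~\ref{lem_numbertheoretic_def_gcd} --- and define $z=d^{-1}w\in G$. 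The claim is that this $z$ has the required properties.

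\textbf{Property (\romannumeral1).} First I would verify $z\in S$. Since $d\mid x_i$, write $x_i=dc_i$ with $c_i\in S$; then $z=d^{-1}w=d^{-1}x_iu_i=c_iu_i\in S$. From $w=dz$ with $d\in S$ we get $z\ddagger w$, and from $z=c_iu_i$ we get $u_i\ddagger z$ for every $i$, which is (\romannumeral1).

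\textbf{Property (\romannumeral2).} Let $v\ddagger w$ with $u_i\ddagger v$ for all $i$, and write $w=ev$, $v=f_iu_i$ with $e,f_i\in S$. Then $ef_iu_i=w=x_iu_i$; cancelling $u_i$ in $G$ gives $x_i=ef_i$, so $e\mid x_i$ for every $i$, hence $e\mid d$ by Lemma~\ref{lem_numbertheoretic_def_gcd}(\romannumeral2). Writing $d=eg$ with $g\in S$ we obtain $egz=dz=w=ev$; cancelling $e$ in $G$ gives $v=gz$, i.e.\ $z\ddagger v$. This establishes (\romannumeral2). (If desired, uniqueness of $z$ then follows from antisymmetry of $\ddagger$, Lemma~\ref{lem_antisymmetry}(\romannumeral2).)

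\textbf{Main obstacle.} I do not anticipate a genuine difficulty. The only real choice is guessing $z$ correctly: recognizing that, with $w$ fixed, ``the least common co-multiple of the $u_i$ that still co-divides $w$'' dualizes to ``the greatest common divisor of the complementary divisors $x_i$ of $w$''. Beyond that one merely has to be careful about which identities are cancellations in the ambient group $G$ and which elements are certified to lie in $S$. It is also worth stressing (compare Remark~\ref{remark_lcm_to_gcd}) why the hypothesis $u_i\ddagger w$ cannot be dropped: an unbounded family need not possess any common co-multiple, let alone a least one, so the statement must be relativized to the upper bound $w$.
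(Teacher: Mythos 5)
Your proof is correct and follows essentially the same route as the paper's: in the paper's notation $c_i$ is your $x_i$, $c=\gcd(c_1,\ldots,c_k)$ is your $d$, and the verifications of both properties proceed by the identical cancellation argument. No further comment needed.
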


\begin{proof}
(\romannumeral1) We write $w=c_ju_j$ for $1\leq j\leq k$. Let $c=\gcd(c_1,c_2,\ldots,c_k)$ and $c_j=c x_j$ $(1\leq j\leq k)$. Noting that $c|w$, we write $w=cz$. Then $z=x_ju_j$ for all $1\leq j\leq k$. It follows that $z\ddagger w$ and $u_j\ddagger z$ for $1\leq j\leq k$.

(\romannumeral2) For $v\ddagger w$ with $u_1,u_2,\ldots,u_k \ddagger v$, we write $w=dv$ and $v=y_ju_j$. Then $c_ju_j=w=dy_ju_j$, which implies $c_j=dy_j$ $(1\leq j\leq k)$. Now $d|c_1,\ldots,c_k$ and so $d|\gcd(c_1,c_2,\ldots,c_j)=c$. Write $c=de$. Then $dv=w=cz=dez$. Thus, one obtains $v=ez$. The proof is completed.
\end{proof}

\begin{definition}
Let $k\geq 1$ and $u_1,u_2,\ldots,u_k,w$ be the elements in $S$ with $u_1,u_2,\ldots,u_k\ddagger w$. Define the least common co-multiple of $u_1,u_2,\ldots,u_k$ up to $w$ to be
\begin{equation} \label{eq_lcm_ddagger_definition}
\lcm_\ddagger[w;u_1,\ldots,u_k]= \left(\gcd\left(wu_1^{-1},\ldots,wu_k^{-1}\right)\right)^{-1}w.
\end{equation}
\end{definition}

\begin{lemma} \label{lem_gcd_ddagger_exists}
Let $u_1,u_2,\ldots,u_k,w$ $(k\geq 1)$ be elements in $S$ with $u_1,u_2,\ldots,u_k\ddagger w$. Then there exists an element $z\ddagger w$ with the following two properties.

(\romannumeral1) It satisfies that $z\ddagger u_1,u_2,\ldots,u_k$.

(\romannumeral2) If $v$ is an element in $S$ satisfying $v\ddagger u_1,u_2,\ldots,u_k$, then $v\ddagger z$.
\end{lemma}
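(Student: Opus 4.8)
The statement is the "co-divisor" dual of Lemma \ref{lem_numbertheoretic_def_gcd}: given $u_1,\ldots,u_k \ddagger w$, one wants a greatest common co-divisor of the $u_j$ that itself co-divides $w$. The natural strategy is to transport the whole problem to the "divisor" side by multiplying on the left by the common co-multiple $w$ and inverting, exactly as in the definition \eqref{eq_lcm_ddagger_definition}. Concretely, writing $w = c_j u_j$ with $c_j \in S$ (this is what $u_j \ddagger w$ means), I would set
\[
z = \left(\lcm[c_1,\ldots,c_k]\right)^{-1} w .
\]
The first task is to check $z \in S$ and $z \ddagger w$: since each $c_j \mid w$, Lemma \ref{lem_numbertheoretic_def_lcm}(ii) gives $\lcm[c_1,\ldots,c_k] \mid w$, so $z \in S$ and $z \ddagger w$ by the very definition of co-divisibility ($w = \lcm[c_1,\ldots,c_k]\cdot z$).

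For part (i), I must show $z \ddagger u_j$ for each $j$, i.e. $u_j \in S z$. Write $\lcm[c_1,\ldots,c_k] = c_j d_j$ for some $d_j \in S$ (valid since $c_j \mid \lcm$). Then $w = c_j d_j z$, and also $w = c_j u_j$; cancelling $c_j$ on the left (legitimate in the group $G$) gives $u_j = d_j z$, so $z \ddagger u_j$. For part (ii), suppose $v \ddagger u_1,\ldots,u_k$, say $u_j = e_j v$. Then $w = c_j u_j = c_j e_j v$, so $c_j e_j = w v^{-1}$ is independent of $j$; call it $c := w v^{-1} \in S$ (it lies in $S$ because $v \ddagger u_1 \ddagger w$, or directly since $c_1 e_1 \in S$). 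Now $c = c_j e_j$ shows $c_j \mid c$ for every $j$, hence $\lcm[c_1,\ldots,c_k] \mid c$ by Lemma \ref{lem_numbertheoretic_def_lcm}(ii). Writing $c = \lcm[c_1,\ldots,c_k]\cdot f$ with $f \in S$, we get $w v^{-1} = \lcm[c_1,\ldots,c_k] f$, i.e. $v^{-1} w = f^{-1}\,\lcm[c_1,\ldots,c_k]^{-1} w = f^{-1} z$ after rearranging — more carefully, $w = \lcm[c_1,\ldots,c_k] f v$ and $w = \lcm[c_1,\ldots,c_k] z$, so cancelling the common left factor gives $z = f v$, i.e. $v \ddagger z$, as required.

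The only subtlety — and the step I would be most careful about — is the bookkeeping of left-cancellation in $G$ versus staying inside $S$: every cancellation above happens in the group, but one must confirm each resulting element ($z$, $d_j$, $f$) actually lies in $S$ before invoking the $\ddagger$-relations, which is where Axiom I (via Lemma \ref{lem_antisymmetry}) and Lemma \ref{lem_numbertheoretic_def_lcm} do the real work. Uniqueness of $z$ is not asserted in the statement, but if desired it follows from Lemma \ref{lem_antisymmetry}(ii) applied to $Sz$. I expect no genuine obstacle here; the lemma is a routine dualization of the gcd/lcm correspondence already set up, and the proof should be only a few lines once the identification $z = \lcm[c_1,\ldots,c_k]^{-1} w$ is made.
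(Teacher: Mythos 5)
Your proof is correct and follows essentially the same route as the paper: you define $z$ by the formula $z = \left(\lcm[c_1,\ldots,c_k]\right)^{-1}w$, verify $z\in S$ and $z\ddagger w$ via $\lcm[c_1,\ldots,c_k]\mid w$, establish (i) by cancelling $c_j$ on the left of $w=c_jd_jz=c_ju_j$, and establish (ii) by showing $c_j\mid wv^{-1}$ for each $j$ so that $\lcm[c_1,\ldots,c_k]\mid wv^{-1}$, then cancelling again. This matches the paper's proof line by line, up to renaming the auxiliary elements.
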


\begin{proof}
(\romannumeral1) We write $w=c_ju_j$ for $1\leq j\leq k$. Let $c=\lcm[c_1,c_2,\ldots,c_k]=c_jx_j$ $(1\leq j\leq k)$. Since $c_1,c_2,\ldots,c_k|w$, one has $c|w$. We put $w=cz$. Then $z\ddagger w$. In view of $u_j=x_jz$, one obtains $z\ddagger u_j$ for $1\leq j\leq k$.

(\romannumeral2) Since $v\ddagger u_1,u_2,\ldots,u_k$, we write $u_j=y_jv$. Since $w=c_jy_jv$, one gets $c_j| c_jy_j=wv^{-1}$ for $1\leq j\leq k$. Here $wv^{-1}\in S$. It follows that $c=\lcm[c_1,\ldots,c_k]| wv^{-1}$. We write $wv^{-1}=cd$. Then $cz=w=cdv$, which implies that $z=dv$. This completes the proof.
\end{proof}

\begin{definition}
Let $k\geq 1$ and $u_1,u_2,\ldots,u_k,w$ be the elements in $S$ with $u_1,u_2,\ldots,u_k\ddagger w$. Define the greatest common co-divisor of $u_1,u_2,\ldots,u_k$ up to $w$ to be
\begin{equation} \label{eq_gcd_ddagger_definition}
\gcd_\ddagger(w;v_1,\ldots,v_k) = \left(\lcm\left[wv_1^{-1},\ldots,wv_k^{-1}\right]\right)^{-1}w.
\end{equation}
\end{definition}

From now on, whenever we write $\lcm_\ddagger[w;u_1,u_2,\ldots,u_k]$ or $\gcd_\ddagger(w;u_1,u_2,\ldots, u_k)$, we always mean that $k\geq 1$, the elements $u_1,u_2,\ldots,u_k,w$ belongs to $S$, and they satisfy $u_1,u_2,\ldots,u_k\ddagger w$. The following three lemmas can be verified by direct computation, and we omit the proofs here.

\begin{lemma}
Let $k\geq 1$ and $u_1,u_2,\ldots,u_k,w$ be the elements in $S$ with $u_1,u_2,\ldots,u_k\ddagger w$. Then

(\romannumeral1) $\lcm_\ddagger[w;u_1,u_2,\ldots, u_k] = \lcm_\ddagger[w;\lcm_\ddagger[w;u_1,\ldots,u_{k-1}],u_k]$;

(\romannumeral2) $\gcd_\ddagger(w;u_1,u_2,\ldots, u_k) = \gcd_\ddagger(w;\gcd_\ddagger(w;u_1,\ldots,u_{k-1}),u_k)$.
\end{lemma}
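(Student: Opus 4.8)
The plan is to reduce the two claimed identities to the associativity-type properties already established for $\lcm$ and $\gcd$ over $S$, transported through the definitions \eqref{eq_lcm_ddagger_definition} and \eqref{eq_gcd_ddagger_definition}. Since the two parts are formally dual (one via $\lcm$, the other via $\gcd$, each with the substitution $u_j\mapsto wu_j^{-1}$ or $v_j\mapsto wv_j^{-1}$), I would prove (i) in detail and remark that (ii) follows by the same computation with $\lcm$ and $\gcd$ interchanged.

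First I would record the ``translation dictionary'': for $u_1,\dots,u_k\ddagger w$ we have $wu_j^{-1}\in S$ for each $j$, so by definition $\lcm_\ddagger[w;u_1,\dots,u_k]=\bigl(\gcd(wu_1^{-1},\dots,wu_k^{-1})\bigr)^{-1}w$. Set $d=\gcd(wu_1^{-1},\dots,wu_k^{-1})$ and $z=d^{-1}w=\lcm_\ddagger[w;u_1,\dots,u_k]$; note $z\ddagger w$ and the proof of Lemma~\ref{lem_lcm_ddagger_exists} shows $u_1,\dots,u_k\ddagger z$, so the inner expression $\lcm_\ddagger[w;u_1,\dots,u_{k-1}]$ on the right-hand side is legitimate. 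The key step is then to compute, for $z'=\lcm_\ddagger[w;u_1,\dots,u_{k-1}]=\bigl(\gcd(wu_1^{-1},\dots,wu_{k-1}^{-1})\bigr)^{-1}w$, the quantity $wz'^{-1}=\gcd(wu_1^{-1},\dots,wu_{k-1}^{-1})$, and hence
\[
\lcm_\ddagger[w;z',u_k]=\bigl(\gcd(wz'^{-1},wu_k^{-1})\bigr)^{-1}w=\bigl(\gcd(\gcd(wu_1^{-1},\dots,wu_{k-1}^{-1}),wu_k^{-1})\bigr)^{-1}w.
\]
By the associativity lemma for $\gcd$ (the Lemma immediately preceding Lemma~\ref{lem_numbertheoretic_def_gcd}'s consequences, i.e. $\gcd(\gcd(a_1,\dots,a_{k-1}),a_k)=\gcd(a_1,\dots,a_k)$ applied with $a_j=wu_j^{-1}$), this equals $\bigl(\gcd(wu_1^{-1},\dots,wu_k^{-1})\bigr)^{-1}w=\lcm_\ddagger[w;u_1,\dots,u_k]=z$, which is exactly (i). Part (ii) is identical after replacing every $\gcd$ in this chain by $\lcm$ and invoking the relation \eqref{eq_def_of_lcm_general} (associativity of $\lcm$) in place of associativity of $\gcd$.

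The only point requiring a little care — and the main obstacle — is the well-definedness bookkeeping: one must check at each stage that the elements fed into $\lcm_\ddagger$ and $\gcd_\ddagger$ genuinely satisfy the hypothesis $\ast\ddagger w$, so that all the inverses $w(\cdot)^{-1}$ land back in $S$ and the formulas apply. This is handled by the observation (from Lemma~\ref{lem_lcm_ddagger_exists}(i) and Lemma~\ref{lem_gcd_ddagger_exists}(i)) that $\lcm_\ddagger[w;u_1,\dots,u_{k-1}]$ and $\gcd_\ddagger(w;u_1,\dots,u_{k-1})$ both co-divide $w$, together with the trivial fact that each $u_j\ddagger w$ by hypothesis; so every intermediate expression is of the form ``$(\text{something co-dividing }w)\ddagger w$'' and the substitution $x\mapsto wx^{-1}$ is an order-reversing bijection from $\{x\in S:x\ddagger w\}$ onto $\{y\in S:y|w\}$ carrying $\lcm_\ddagger$ to $\gcd$ and $\gcd_\ddagger$ to $\lcm$. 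Once this bijection is spelled out the two identities are immediate corollaries of the associativity of $\gcd$ and of $\lcm$ respectively, which is presumably why the author states that the lemma ``can be verified by direct computation'' and omits the proof.
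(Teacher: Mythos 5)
Your proof is correct, and it is precisely the ``direct computation'' the author alludes to when omitting the argument: unwind the definitions \eqref{eq_lcm_ddagger_definition} and \eqref{eq_gcd_ddagger_definition} via the substitution $x\mapsto wx^{-1}$, observe $wz'^{-1}=\gcd(wu_1^{-1},\ldots,wu_{k-1}^{-1})$ (resp.\ $wz''^{-1}=\lcm[wu_1^{-1},\ldots,wu_{k-1}^{-1}]$), and invoke the associativity of $\gcd$ proved just after Lemma~\ref{lem_numbertheoretic_def_gcd} (resp.\ the definitional associativity of $\lcm$ from \eqref{eq_def_of_lcm_general}). The well-definedness bookkeeping you carry out -- that each intermediate quantity co-divides $w$, so every $w(\cdot)^{-1}$ lands in $S$ -- is exactly what is needed and is handled correctly.
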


\begin{lemma} \label{lem_codivisor_pro}
Let $k\geq 1$. Suppose that $c,u_1,u_2,\ldots,u_k,w$ are elements in $S$ with $u_1,\ldots,u_k\ddagger w$. Then

(\romannumeral1) $\lcm_\ddagger[wc;u_1c,u_2c,\ldots,u_kc] = \lcm_\ddagger[w;u_1,u_2,\ldots,u_k]\cdot c$;

(\romannumeral2) $\gcd_\ddagger(wc;u_1c,u_2c,\ldots,u_kc) = \gcd_\ddagger(w;u_1,u_2,\ldots,u_k)\cdot c$.
\end{lemma}

\begin{lemma} \label{lem_ddagger_upperbound_transfer}
Let $k\geq 1$ and $u_1,u_2,\ldots,u_k,w,w^\prime\in S$. Suppose that $w\ddagger w^\prime$ and $u_1,u_2,\ldots,u_k$ are co-divisors of both $w, w^\prime$. Then

(\romannumeral1) $\lcm_\ddagger[w;u_1,u_2,\ldots,u_k] = \lcm_\ddagger[w^\prime;u_1,u_2,\ldots,u_k]$;

(\romannumeral2) $\gcd_\ddagger(w;u_1,u_2,\ldots,u_k)=\gcd_\ddagger(w^\prime;u_1,u_2,\ldots,u_k)$.
\end{lemma}

However, if there are no information about the relation of $w$ and $w^\prime$ in above lemma, then we do not know the relation of the least common co-multiples and greatest common co-divisors either.





\begin{corollary}
Let $k\geq 1$ and $u_1,u_2,\ldots,u_k,w$ are elements in $S$ with $u_1,\ldots,u_k\ddagger w$. Suppose that $\gcd_\ddagger(w;u_1,\ldots,u_k)=d$. Suppose further that $u_j=u_j^\prime d$ $(1\leq j\leq k)$ and $w=w^\prime d$. Then $\gcd_\ddagger(w^\prime; u_1^\prime,\ldots,u_k^\prime)=1$.
\end{corollary}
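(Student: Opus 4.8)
The plan is to treat this as the co-divisor analogue of Corollary~\ref{cor_remains_of_gcd_has_gcd_1}, with $\gcd$ replaced by $\gcd_\ddagger$ and with Lemma~\ref{lem_c_dot_gcd} replaced by Lemma~\ref{lem_codivisor_pro}(\romannumeral2). Concretely: write $d=\gcd_\ddagger(w;u_1,\ldots,u_k)$, peel off the common co-divisor $d$ from $w$ and from each $u_j$, invoke the ``$\gcd_\ddagger$ respects right multiplication'' identity of Lemma~\ref{lem_codivisor_pro}(\romannumeral2) in reverse, and then cancel $d$ in the group $G$.

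First I would make sure the quantity $\gcd_\ddagger(w';u_1',\ldots,u_k')$ is even defined, i.e.\ that $u_1',\ldots,u_k'\ddagger w'$ (and that $w',u_j'\in S$). That $d\ddagger w$ and $d\ddagger u_j$ for each $j$ — hence that the factorizations $w=w'd$ and $u_j=u_j'd$ with $w',u_j'\in S$ make sense — is part of Lemma~\ref{lem_gcd_ddagger_exists}(\romannumeral1). For the co-divisibility $u_j'\ddagger w'$: since $u_j\ddagger w$ we may write $w=c_ju_j$ with $c_j\in S$, so $w'd=c_ju_j'd$ as an equation in $G$; cancelling $d$ on the right gives $w'=c_ju_j'$, whence $u_j'\ddagger w'$.

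Now apply Lemma~\ref{lem_codivisor_pro}(\romannumeral2) to the tuple $w';u_1',\ldots,u_k'$ with the common right factor $c=d$: since $w=w'd$ and $u_j=u_j'd$, this gives
\[
d=\gcd_\ddagger(w;u_1,\ldots,u_k)=\gcd_\ddagger(w'd;u_1'd,\ldots,u_k'd)=\gcd_\ddagger(w';u_1',\ldots,u_k')\cdot d .
\]
Setting $e=\gcd_\ddagger(w';u_1',\ldots,u_k')\in S$, we obtain $ed=d$ in $G$, and cancelling $d$ (equivalently, applying Lemma~\ref{lem_basic_properties_of_divisibility}(\romannumeral3) to $ed\mid d$) yields $e=1$, which is exactly the assertion. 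There is no real obstacle here: the entire content is carried by Lemma~\ref{lem_codivisor_pro}(\romannumeral2), and the only thing requiring a line of care is the bookkeeping in the second paragraph confirming that the right-hand $\gcd_\ddagger$ is legitimate.
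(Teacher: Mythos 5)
Your proof is correct and matches the paper's own argument: apply Lemma~\ref{lem_codivisor_pro}(\romannumeral2) to get $d=\gcd_\ddagger(w';u_1',\ldots,u_k')\cdot d$ and cancel $d$. The extra bookkeeping you add (verifying $u_j'\ddagger w'$ so the right-hand $\gcd_\ddagger$ is well-defined) is a reasonable check that the paper leaves implicit, but the substance is identical.
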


\begin{proof}
Applying Lemma \ref{lem_codivisor_pro}(\romannumeral2), we obtain
\[
d=\gcd_\ddagger(w;u_1,\ldots,u_k) = \gcd_\ddagger(w^\prime d;u_1^\prime d,\ldots, u_k^\prime d)= \gcd_\ddagger(w^\prime;u_1^\prime,\ldots, u_k^\prime) \cdot d.
\]
The corollary then follows.
\end{proof}

Next we obtain a duality between common divisors/multiples and common co-divisors/co-multiples.

\begin{lemma} [Duality]\label{lem_gcd_gcddagger_transfer}
(\romannumeral1) Let $u,v,x,y,z$ be elements in $S$ satisfying $\gcd(u,v)=1$ and $z=\lcm[u,v]=uy=vx$. Then $\gcd_\ddagger(z;x,y)=1$ and $\lcm_\ddagger[z;x,y]=z$.

(\romannumeral2) Let $w,u,v,x,y,z$ be elements in $S$ satisfying $x,y\ddagger w$, $\gcd_\ddagger(w;x,y)=1$ and $z=\lcm_\ddagger[w;x,y]=uy=vx$. Then $\gcd(u,v)=1$ and $\lcm[u,v]=z$.
\end{lemma}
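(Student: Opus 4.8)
The plan is to treat the two parts separately: part (\romannumeral1) is a direct substitution into the defining formulas, and part (\romannumeral2) needs one bookkeeping step via the scaling identities for $\gcd$ and $\lcm$.

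For part (\romannumeral1), I would first record that $x,y\ddagger z$, since $z=vx\in Sx$ and $z=uy\in Sy$, so that $\lcm_\ddagger[z;x,y]$ and $\gcd_\ddagger(z;x,y)$ are defined. From $z=vx$ and $z=uy$ one has $zx^{-1}=v\in S$ and $zy^{-1}=u\in S$. Substituting into \eqref{eq_lcm_ddagger_definition} and \eqref{eq_gcd_ddagger_definition},
\[
\lcm_\ddagger[z;x,y]=\bigl(\gcd(v,u)\bigr)^{-1}z=z, \qquad \gcd_\ddagger(z;x,y)=\bigl(\lcm[v,u]\bigr)^{-1}z=z^{-1}z=1,
\]
using the hypotheses $\gcd(u,v)=1$ and $\lcm[u,v]=z$. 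That is all of part (\romannumeral1).

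For part (\romannumeral2), I would set $a=wx^{-1}$ and $b=wy^{-1}$. Since $x,y\ddagger w$, both $a,b$ lie in $S$, and $w=ax=by$, hence $x=a^{-1}w$ and $y=b^{-1}w$ as identities in $G$. The hypothesis $\gcd_\ddagger(w;x,y)=1$ unwinds, via \eqref{eq_gcd_ddagger_definition}, to $\lcm[a,b]=w$. Next put $d=\gcd(a,b)$ and write $a=da'$, $b=db'$; Corollary \ref{cor_remains_of_gcd_has_gcd_1} gives $\gcd(a',b')=1$, and the scaling identity $\lcm[da',db']=d\,\lcm[a',b']$ gives $w=d\,\lcm[a',b']$. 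Therefore, by \eqref{eq_lcm_ddagger_definition},
\[
z=\lcm_\ddagger[w;x,y]=\bigl(\gcd(a,b)\bigr)^{-1}w=d^{-1}w=\lcm[a',b'].
\]
It then remains only to identify $u$ and $v$: from $z=vx=v\,a^{-1}w$ one solves $v=z\,w^{-1}a=d^{-1}a=a'$ (using $zw^{-1}=d^{-1}$ and $a=da'$), and symmetrically $z=uy=u\,b^{-1}w$ yields $u=d^{-1}b=b'$. Hence $\gcd(u,v)=\gcd(b',a')=1$ and $\lcm[u,v]=\lcm[b',a']=z$, which is the claim.

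The only point requiring care — the closest thing to an obstacle — is the constant shuttling between $S$ and its fractional group $G$: the steps $x=a^{-1}w$, $v=zw^{-1}a$, and so on, are computed in $G$, and one should verify (it is automatic here, since $d$ divides $a$ and $b$) that the elements produced land back in $S$ so that the $\gcd$/$\lcm$ apparatus applies. Beyond that, the argument is carried entirely by the two scaling formulas and Corollary \ref{cor_remains_of_gcd_has_gcd_1}, so I expect no genuine difficulty.
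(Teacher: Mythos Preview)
Your proof is correct. Part (\romannumeral1) is exactly what the paper intends (the paper only writes out (\romannumeral2) and says ``similar arguments lead to (\romannumeral1)'').

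For part (\romannumeral2) you take a slightly different route than the paper. The paper's argument first invokes Lemma~\ref{lem_ddagger_upperbound_transfer}: since $z=\lcm_\ddagger[w;x,y]\ddagger w$ and $x,y\ddagger z$, one may replace the upper bound $w$ by $z$, obtaining $\gcd_\ddagger(z;x,y)=1$ and $\lcm_\ddagger[z;x,y]=z$. At that point $zx^{-1}=v$ and $zy^{-1}=u$ directly, and the defining formulas \eqref{eq_lcm_ddagger_definition}, \eqref{eq_gcd_ddagger_definition} yield $\lcm[u,v]=z$ and $\gcd(u,v)=1$ by pure substitution---symmetric with (\romannumeral1). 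Your version stays at the level of $w$: you introduce $a=wx^{-1}$, $b=wy^{-1}$, then use the scaling identity for $\lcm$ together with Corollary~\ref{cor_remains_of_gcd_has_gcd_1} to strip off $d=\gcd(a,b)$ and identify the quotients $a',b'$ with $v,u$. This works fine and avoids Lemma~\ref{lem_ddagger_upperbound_transfer}, at the cost of a few extra auxiliary elements; the paper's approach is a touch more economical because the transfer-of-upper-bound lemma makes (\romannumeral2) literally a mirror of (\romannumeral1).
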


\begin{proof}
We prove (\romannumeral2) here. Since $z\ddagger w$ and $x,y\ddagger z$, we obtain by Lemma \ref{lem_ddagger_upperbound_transfer} that $\gcd_\ddagger(w;x,y)=\gcd_\ddagger(z;x,y)$ and $\lcm_\ddagger[w;x,y]=\lcm_\ddagger[z;x,y]$. It follows from \eqref{eq_gcd_ddagger_definition} that
\[
1=\gcd_\ddagger(z;x,y)=\left(\lcm[zx^{-1},zy^{-1}]\right)^{-1}z = \left(\lcm[u,v]\right)^{-1}z,
\]
which shows that $\lcm[u,v]=z$. Similarly, by \eqref{eq_lcm_ddagger_definition}, we have
\[
z=\lcm_\ddagger[z;x,y] = \left(\gcd(zx^{-1},zy^{-1})\right)^{-1}z = \left(\gcd(u,v)\right)^{-1}z.
\]
Therefore $\gcd(u,v)=1$. Similar arguments lead to (\romannumeral1). The proof is completed.
\end{proof}

\begin{remark}
Let us reconsider the proposition in Remark \ref{remark_lcm_to_gcd}. Suppose that $\lcm[u,v]=uu_1=vv_1$. Let $d=\gcd(u,v)$ and $u=du_0$, $v=dv_0$. Then $\gcd(u_0,v_0)=1$ and $\lcm[u,v]=d\cdot \lcm[u_0,v_0]$. It follows that $\lcm[u_0,v_0]=u_0u_1=v_0v_1$. By Lemma \ref{lem_gcd_gcddagger_transfer}, we obtain that $\gcd_\ddagger(uu_1;u_1,v_1)=\gcd_\ddagger(u_0u_1;u_1,v_1)=1$.
\end{remark}

For an abelian $S$, it is not necessary to distinguish between divisors and co-divisors, or multiples and co-multiples. And one obtains above lemma immediately. However, for an non-abelian $S$, the duality in Lemma \ref{lem_gcd_gcddagger_transfer} is crucial. It shows that the information provided by divisors is nearly equivalent to that by co-divisors. However, information from only one side is not enough for arithmetics. This duality will play an important role in Section \ref{section_homogeneous_monoid}.

\subsection{Left Regular Representations of Integral Monoids}

Denote $\cH=l^2(S)$. For $w\in S$, let $\delta_w$ be the function that $\delta_w(w)=1$ and $\delta_w(z)=0$ for $z\neq w$. Then $\{\delta_w:\, w\in S\}$ is an orthonormal basis of the Hilbert space $\cH$. For $u\in S$, let $L_u$ be the operator induced by $L_u \delta_w = \delta_{uw}$. Equivalently,
\[
(L_u f)(z) =
\begin{cases}
f(u^{-1}z),\quad &\If z\in uS,\\
0,\quad &\If z\in S\setminus uS
\end{cases}
\]
for $f\in \cH$. Since
\[
\|L_u f\|^2 = \sum\limits_{z\in uS} |f(u^{-1}z)|^2 =\sum\limits_{z\in S} |f(z)|^2 = \|f\|^2,
\]
one has $\|L_u\|=1$ and $L_u$ is an isometry. In particular, $L_1=I$ is the identity operator. By calculation, one obtains that
\[
(L_v^\ast g)(z) = g(vz), \quad (g\in \cH, \, z\in S).
\]
Or, equivalently, the adjoint operator $L_v^\ast$ is given by $L_v^\ast \delta_w = \delta_{v^{-1}w}$ for $w\in vS$ and $L_v^\ast \delta_w=0$ for $w\in S\setminus vS$. Now, let $\cQ$ be the monoid of $B(\cH)$ generated by $L_u,L_u^\ast$ $(u\in S)$.

\begin{lemma}
We have $\cQ =\{L_uL_v^\ast: \, u,v \in S\}$. It satisfies that
\begin{equation} \label{eq_cQ_relation_section2}
L_{u_1}L_{u_2}=L_{u_1u_2},\quad L_{v_2}^\ast L_{v_1}^\ast = L_{v_1v_2}^\ast,\quad L_v^\ast L_u = L_{v^{-1}\lcm[u,v]}L^\ast_{u^{-1}\lcm[u,v]}
\end{equation}
for $u,v,u_1,u_2,v_1,v_2\in S$. Moreover, we have $L_{u_1}L_{v_1}^\ast=L_{u_2}L_{v_2}^\ast$ if and only if $u_1=u_2$ and $v_1=v_2$.
\end{lemma}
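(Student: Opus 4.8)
The plan is to verify the three identities of \eqref{eq_cQ_relation_section2} by evaluating both sides on the orthonormal basis $\{\delta_w:w\in S\}$, then to use them to collapse an arbitrary word in the generators into a single $L_uL_v^\ast$, and finally to obtain the uniqueness statement by a projection argument. The two easy relations come for free: $L_{u_1}L_{u_2}\delta_w=L_{u_1}\delta_{u_2w}=\delta_{u_1u_2w}$ gives $L_{u_1}L_{u_2}=L_{u_1u_2}$, and taking adjoints yields $L_{v_2}^\ast L_{v_1}^\ast=(L_{v_1}L_{v_2})^\ast=L_{v_1v_2}^\ast$. I will also use repeatedly that each $L_u$ is an isometry, so $L_u^\ast L_u=I$, and that $L_1=I$.

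The heart of the proof is the third relation. Fix $u,v\in S$, set $\ell=\lcm[u,v]$, and write $a=u^{-1}\ell$ and $b=v^{-1}\ell$; both lie in $S$ because $u\mid\ell$ and $v\mid\ell$ by Lemma~\ref{lem_numbertheoretic_def_lcm}(\romannumeral1). On a basis vector, $L_v^\ast L_u\delta_w=L_v^\ast\delta_{uw}$, which is $\delta_{v^{-1}uw}$ when $uw\in vS$ and $0$ otherwise. Since $uw\in uS$ always holds, the condition $uw\in vS$ is equivalent to $uw\in uS\cap vS=\ell S$ by Lemma~\ref{lem_lcm_may_be_defined}, i.e. to $\ell=ua$ dividing $uw$, i.e., after left cancellation in $G$, to $a\mid w$. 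When $w=ac$ with $c\in S$ we get $v^{-1}uw=v^{-1}\ell c=bc$, so $L_v^\ast L_u\delta_{ac}=\delta_{bc}$ while $L_v^\ast L_u\delta_w=0$ whenever $a\nmid w$. The right-hand side behaves identically: $L_bL_a^\ast\delta_w$ vanishes unless $w\in aS$, and for $w=ac$ it equals $L_b\delta_c=\delta_{bc}$. Hence $L_v^\ast L_u=L_bL_a^\ast=L_{v^{-1}\lcm[u,v]}L^\ast_{u^{-1}\lcm[u,v]}$. I expect this is where the only genuine care is needed, namely in the chain of equivalences $uw\in vS\iff uw\in\ell S\iff\ell\mid uw\iff a\mid w$.

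With the three relations available, I show that $\{L_uL_v^\ast:u,v\in S\}$ is a submonoid of $B(\cH)$ containing all generators. It contains $I=L_1L_1^\ast$, each $L_u=L_uL_1^\ast$, and each $L_v^\ast=L_1L_v^\ast$; and it is closed under multiplication, since with $\ell'=\lcm[u_2,v_1]$ the third relation gives
\[
(L_{u_1}L_{v_1}^\ast)(L_{u_2}L_{v_2}^\ast)=L_{u_1}\bigl(L_{v_1^{-1}\ell'}L_{u_2^{-1}\ell'}^\ast\bigr)L_{v_2}^\ast=L_{u_1v_1^{-1}\ell'}\,L_{v_2u_2^{-1}\ell'}^\ast,
\]
where $u_1v_1^{-1}\ell'$ and $v_2u_2^{-1}\ell'$ belong to $S$ because $v_1\mid\ell'$ and $u_2\mid\ell'$. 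Since $\cQ$ is the monoid generated by the $L_u$ and $L_u^\ast$, this forces $\cQ=\{L_uL_v^\ast:u,v\in S\}$, the inclusion $\subseteq$ being the content just proved and $\supseteq$ being obvious.

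It remains to prove uniqueness. Suppose $L_{u_1}L_{v_1}^\ast=L_{u_2}L_{v_2}^\ast$. Forming $T^\ast T$ on both sides and using $L_u^\ast L_u=I$ gives $L_{v_1}L_{v_1}^\ast=L_{v_2}L_{v_2}^\ast$. This operator is the orthogonal projection onto $\overline{\mathrm{span}}\{\delta_w:w\in vS\}$, so the equality of projections forces $v_1S=v_2S$, hence $v_1=v_2$ by Lemma~\ref{lem_antisymmetry}. Then $L_{u_1}L_{v_1}^\ast=L_{u_2}L_{v_1}^\ast$; multiplying on the right by $L_{v_1}$ and using $L_{v_1}^\ast L_{v_1}=I$ gives $L_{u_1}=L_{u_2}$, and evaluating at $\delta_1$ yields $\delta_{u_1}=\delta_{u_2}$, so $u_1=u_2$. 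The converse implication is trivial.
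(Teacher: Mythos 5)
Your proof is correct and follows essentially the same route as the paper: the first two relations are immediate, the third is verified by evaluating both sides on the basis vectors $\delta_w$ using the identity $S\cap u^{-1}vS=u^{-1}\lcm[u,v]S$, and closure under multiplication then gives $\cQ=\{L_uL_v^\ast\}$. The only small divergence is in the uniqueness step, where you extract $v_1=v_2$ from the projection $T^\ast T=L_{v_1}L_{v_1}^\ast$ and then cancel by right-multiplying with $L_{v_1}$, whereas the paper reads $u_1v_1^{-1}=u_2v_2^{-1}$ and $v_1S=v_2S$ directly off the formula $L_uL_v^\ast\delta_w=\delta_{uv^{-1}w}\mathbbm{1}_{w\in vS}$; both are equally short and the difference is cosmetic.
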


\begin{proof}
The first two equalities in \eqref{eq_cQ_relation_section2} follows immediately. We prove the third one below. For $u,v,w\in S$, calculation reveals that
\[
L_v^\ast L_u \delta_w =
\begin{cases}
\delta_{v^{-1}uw},\quad &\If w\in S\cap u^{-1}vS,\\
0,\quad &\Otherwise,
\end{cases}
\]
and
\[
L_{v^{-1}\lcm[u,v]}L^\ast_{u^{-1}\lcm[u,v]}\delta_w =
\begin{cases}
\delta_{v^{-1}\lcm[u,v](u^{-1}\lcm[u,v])^{-1}w},\quad &\If w\in u^{-1}\lcm[u,v]S,\\
0,\quad &\Otherwise,
\end{cases}
\]
Note that $v^{-1}\lcm[u,v](u^{-1}\lcm[u,v])^{-1}=v^{-1}u$ and
\[
S\cap u^{-1}vS = u^{-1}(uS\cap vS) = u^{-1}\lcm [u,v]S.
\]
We conclude that $L_v^\ast L_u = L_{v^{-1}\lcm[u,v]}L^\ast_{u^{-1}\lcm[u,v]}$.

Now for any finite product of $L_w$'s and $L_w^\ast$'s $(w\in S)$, we can always move the operators with a $\ast$ to the right-hand side, and obtain an operator of the form $L_uL_v^\ast$ with some $u,v\in S$. So $\cQ =\{L_uL_v^\ast: \, u,v \in S\}$.

Moreover, we have
\[
L_uL_v^\ast\delta_w =
\begin{cases}
\delta_{uv^{-1}w},\quad &\If w\in vS,\\
0,\quad &\Otherwise.
\end{cases}
\]
One sees that $L_{u_1}L_{v_1}^\ast=L_{u_2}L_{v_2}^\ast$ if and only if $u_1 v_1^{-1}=u_2 v_2^{-1}$ and $v_1S=v_2S$, if and only if $u_1=u_2,\, v_1=v_2$.
\end{proof}

Let $\fA$ be the $C^\ast$-algebra generated by $\cQ$ in $\cH$. Denote $E_w=L_wL_w^\ast$ for $w\in S$, which is the projection from $\cH$ onto the closed subspace spanned by $\{\delta_z:\,w|z\}$. Note that, for $w_1,w_2\in S$,
\[
E_{w_1}E_{w_2} = L_{w_1}L_{w_1}^\ast L_{w_2}L_{w_2}^\ast = L_{\lcm[w_1,w_2]}L_{\lcm[w_1,w_2]}^\ast = E_{w_2}E_{w_1}.
\]
For an element $u\in S$, we use $P_u$ to denote the projection from $\cH$ onto $\bC\delta_u$.

\begin{theorem}
The following statements are equivalent.

(\romannumeral1) There are infinitely many irreducible elements in $S$.

(\romannumeral2) The projection $P_1$ does not belong to $\fA$.
\end{theorem}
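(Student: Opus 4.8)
The plan is to prove the two implications of the equivalence separately, using throughout the explicit description $\cQ=\{L_uL_v^*:u,v\in S\}$ and the commutation rule \eqref{eq_cQ_relation_section2}. For the implication (\romannumeral2)$\Rightarrow$(\romannumeral1) I would establish the contrapositive: if $\cP$ is finite, then $P_1\in\fA$. The key observation is that $\fA$ is unital (it contains $L_1=I$) and contains every projection $E_p=L_pL_p^*$; since the $E_p$ pairwise commute, the operator $T_0:=\prod_{p\in\cP}(I-E_p)$ lies in $\fA$ and is the orthogonal projection onto $\bigcap_{p\in\cP}\ker E_p=\bigcap_{p\in\cP}\overline{\Span}\{\delta_z:p\nmid z\}$. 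By Lemma \ref{lem_prime_divisor_exsits} every $z\neq 1$ is divisible by some irreducible element, which must lie in the finite set $\cP$; hence this intersection is exactly $\bC\delta_1$, so $T_0=P_1$ and $P_1\in\fA$.

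For (\romannumeral1)$\Rightarrow$(\romannumeral2) I would argue by contradiction, assuming $P_1\in\fA$. First I would record that the $*$-algebra generated by $\cQ$ equals $\Span\{L_uL_v^*:u,v\in S\}$: it is closed under adjoints since $(L_uL_v^*)^*=L_vL_u^*$, and closed under products since \eqref{eq_cQ_relation_section2} gives $(L_aL_b^*)(L_cL_d^*)=L_{ab^{-1}\lcm[b,c]}L_{dc^{-1}\lcm[b,c]}^*$ with both indices in $S$. This algebra is norm-dense in $\fA$, so there is $T=\sum_{i=1}^n c_iL_{a_i}L_{b_i}^*$ with $\|P_1-T\|<\frac12$. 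Each $a_i,b_i$ has only finitely many divisors by Axiom \uppercase\expandafter{\romannumeral3}, hence only finitely many irreducible divisors; as $\cP$ is infinite, I may pick $p\in\cP$ dividing none of $a_1,\dots,a_n,b_1,\dots,b_n$. Using $L_aL_b^*\delta_w=\delta_{ab^{-1}w}$ when $b\mid w$ (and $0$ otherwise), and that the only divisors of the irreducible $p$ are $1$ and $p$, a short computation of matrix coefficients gives
\[
\langle T\delta_1,\delta_1\rangle=\langle T\delta_p,\delta_p\rangle=\sum_{i:\,a_i=b_i=1}c_i,
\]
since $b_i\mid 1$ forces $b_i=1$, $b_i\mid p$ forces $b_i\in\{1,p\}$ and hence $b_i=1$ (because $p\nmid b_i$), and then $a_i=1$ in either case. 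On the other hand $\langle P_1\delta_1,\delta_1\rangle=1$ and $\langle P_1\delta_p,\delta_p\rangle=0$, so $\|P_1-T\|\geq\max\{|1-\langle T\delta_1,\delta_1\rangle|,|\langle T\delta_p,\delta_p\rangle|\}\geq\frac12$, contradicting the choice of $T$. Therefore $P_1\notin\fA$.

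I expect the main obstacle to be the second implication --- specifically, the two structural facts it rests on: that the dense $*$-algebra is precisely the linear span of $\{L_uL_v^*\}$ (so that an arbitrary approximant of $P_1$ can be brought into the convenient normal form above), and that one can always find an irreducible $p$ ``disjoint'' from the finitely many elements occurring in a given approximant. This last step is exactly where the infinitude of $\cP$ is used, playing the role that Euclid's theorem on primes plays in the classical case; once it is in place, the remainder is a routine estimate on matrix coefficients.
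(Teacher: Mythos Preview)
Your proof is correct and follows essentially the same route as the paper's. Both directions match: for (\romannumeral2)$\Rightarrow$(\romannumeral1) you and the paper take the contrapositive and write $P_1=\prod_{p\in\cP}(I-E_p)$; for (\romannumeral1)$\Rightarrow$(\romannumeral2) you and the paper approximate $P_1$ by a finite sum $T=\sum c_iL_{a_i}L_{b_i}^*$, pick an irreducible $p$ avoiding all the $a_i,b_i$, and obtain contradictory estimates on the $(1,1)$-coefficient of $T$ --- the paper phrases this via the compressions $P_1TP_1$ and $P_pTP_p$ with threshold $1/10$, while you compute the diagonal matrix coefficients $\langle T\delta_1,\delta_1\rangle$ and $\langle T\delta_p,\delta_p\rangle$ directly with threshold $1/2$, but the content is identical.
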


\begin{proof}
We first prove that (\romannumeral2) implies (\romannumeral1). Suppose on the contrary that $\cP=\{q_1,q_2,\ldots,q_k\}$. Note that $(I-E_{q_1})(I-E_{q_2})\ldots (I-E_{q_k})$ is the projection onto the closed subspace spanned by
\[
\{\delta_w:\, q_1\nmid w,\, q_2\nmid w,\, \ldots, q_k\nmid w\}.
\]
By Lemma \ref{lem_prime_divisor_exsits}, the above set equals $\{\delta_1\}$. Then
\[
P_1=(I-E_{q_1})(I-E_{q_2})\ldots (I-E_{q_k}) = \sum\limits_{F\subseteq \{1,2,\ldots,k\}}(-1)^{|F|} E_{\lcm[q_j:\,j\in F]} \, \in \fA.
\]
A contradiction appears.

In the following, we shall prove that (\romannumeral1) implies (\romannumeral2). Assume on the contrary that $P_1\in \fA$. Then there is a finite sum $T=\sum\nolimits_{(u,v)\in F_0} c_{uv} L_uL_v^\ast$ such that $\|T-P_1\|<1/10$. In particular, we have
\[
\|P_1TP_1-P_1\|\leq \|P_1\|\cdot \|T-P_1\|\cdot \|P_1\| <1/10.
\]
Note that $P_1L_uL_v^\ast P_1 = P_1$ if $(u,v)=(1,1)$ and $P_1L_uL_v^\ast P_1=0$ otherwise. So $P_1TP_1=c_{11} P_1$. It follows that $|c_{11}-1|=\|c_{11}P_1-P_1\|<1/10$, which implies $|c_{11}|\geq 9/10$.

One the other hand, recall that the set $F_0$ has finite cardinality. Note that each element in $S$ has only finitely many irreducible divisors, and $\cP$ contains infinitely many elements. So there is some $q\in \cP$ such that $\gcd(q,u)=\gcd(q,v)=1$ for all $(u,v)\in F_0$. Calculations show that $P_q L_u L_v^\ast P_q = P_q$ for $(u,v)=(1,1)$ and $P_q L_u L_v^\ast P_q = 0$ for $(u,v)\in F_0\setminus \{(1,1)\}$. So
\[
|c_{11}|=\|c_{11} P_q -0\| = \|P_qTP_q-P_qP_1P_q\| \leq \|P_q\|\cdot \|T-P_1\|\cdot \|P_q\| <1/10.
\]
Now a contradiction appears.

\end{proof}

\subsection{A Trace on the $C^\ast$-algebra}

Let $\{F_l\}_{l=1}^\infty$ be a sequence of subsets of $S$ with $F_1\subseteq F_2\subseteq F_3\subseteq \ldots$ and $\bigcup\nolimits_{l=1}^\infty F_l = S$. Put $z_l = \lcm[z:\, z\in F_l]$. It is not hard to see that for any given $u\in S$, there is some $L>0$ such that $u|z_l$ for all $l\geq L$.

\begin{lemma}
The function $\tau:\, \fA\rightarrow \bC$ given by
\begin{equation} \label{eq_simplest_trace_definition}
\tau(A) = \lim\limits_{l\rightarrow \infty} \< A \delta_{z_l}, \, \delta_{z_l} \>
\end{equation}
is well-defined, and is a trace on $\fA$.
\end{lemma}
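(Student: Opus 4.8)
The plan is to verify the three things that ``well-defined trace'' requires: (a) the limit in \eqref{eq_simplest_trace_definition} exists for every $A\in\fA$; (b) $\tau$ is a bounded linear positive functional with $\tau(I)=1$; (c) $\tau(AB)=\tau(BA)$ for all $A,B\in\fA$. Since $\fA$ is the norm-closure of the $*$-algebra spanned by the $L_uL_v^\ast$, and since $|\<A\delta_{z_l},\delta_{z_l}\>|\le\|A\|$ uniformly in $l$, an $\varepsilon/3$-argument reduces every one of these assertions to the case $A=L_uL_v^\ast$ (resp. $A,B$ of this form): once the limit exists and is contractive on the dense subalgebra, it extends uniquely and continuously, and linearity, positivity (via $\tau(A^\ast A)=\lim\|A\delta_{z_l}\|^2\ge0$) and the trace identity pass to the closure. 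So the whole proof concentrates on explicit computations with the generators.

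First I would compute $\<L_uL_v^\ast\delta_{z_l},\delta_{z_l}\>$. Using the formula $L_uL_v^\ast\delta_w=\delta_{uv^{-1}w}$ if $w\in vS$ and $0$ otherwise (proved in the excerpt), this inner product is $1$ exactly when $z_l\in vS$ and $uv^{-1}z_l=z_l$, i.e. when $v\mid z_l$ and $u=v$; otherwise it is $0$. Now by the remark preceding the lemma, for any fixed $v$ there is an $L$ with $v\mid z_l$ for all $l\ge L$. Hence for $l$ large, $\<L_uL_v^\ast\delta_{z_l},\delta_{z_l}\>$ equals $1$ if $u=v$ and $0$ if $u\ne v$; in particular the limit exists and equals $[u=v]$. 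By linearity the limit exists on the dense subalgebra, and since each partial inner product has modulus $\le\|A\|$, the extension to all of $\fA$ is automatic and $\|\tau\|\le1$; also $\tau(I)=\tau(L_1L_1^\ast)=1$, and $\tau(A^\ast A)\ge0$ since it is a limit of squared norms.

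It remains to check the trace property $\tau(AB)=\tau(BA)$, and again it suffices to take $A=L_aL_b^\ast$, $B=L_cL_d^\ast$. Using the commutation relation $L_b^\ast L_c=L_{b^{-1}\lcm[b,c]}L_{c^{-1}\lcm[b,c]}^\ast$ from \eqref{eq_cQ_relation_section2}, together with $L_{x_1}L_{x_2}=L_{x_1x_2}$ and $L_{y_2}^\ast L_{y_1}^\ast=L_{y_1y_2}^\ast$, one finds $AB=L_{a\,b^{-1}\lcm[b,c]}\,L_{d\,c^{-1}\lcm[b,c]}^\ast$ (and $BA$ by interchanging the two pairs and using $\lcm[d,a]$). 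By the computation above, $\tau(AB)$ is $1$ precisely when $a\,b^{-1}\lcm[b,c]=d\,c^{-1}\lcm[b,c]$ in $G$, which simplifies to $ab^{-1}=dc^{-1}$; likewise $\tau(BA)$ is $1$ precisely when $cd^{-1}=ba^{-1}$, i.e. again $ab^{-1}=dc^{-1}$. (One should note that these equalities are symmetric, so no appeal to commutativity of $S$ is needed; only the group relation in $G$ and Axiom~\uppercase\expandafter{\romannumeral1} via uniqueness of $\lcm$ enter.) Hence $\tau(AB)=\tau(BA)$ on generators, and by bilinearity and continuity on all of $\fA$.

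The main obstacle is the bookkeeping in the last step: one must be careful that all the intermediate products $a b^{-1}\lcm[b,c]$, $d c^{-1}\lcm[b,c]$, etc.\ genuinely lie in $S$ (they do, since $b,c\mid\lcm[b,c]$), so that the generator computation of $\tau$ actually applies; and one must double-check that $\tau$ of a general element of $\cQ$, written canonically as $L_uL_v^\ast$ with $(u,v)$ unique by the last sentence of the preceding lemma, is unambiguous, so that linearity over the (possibly non-free) span is consistent. Everything else is a routine $\varepsilon/3$ density argument.
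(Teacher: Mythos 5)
Your proof is correct and follows essentially the same route as the paper: you evaluate $\langle L_uL_v^\ast\delta_{z_l},\delta_{z_l}\rangle$ to get $[u=v]$ for $l$ large, extend by a density/$\varepsilon$-argument, and verify the trace identity by pushing both products into the canonical form $L_xL_y^\ast$ via \eqref{eq_cQ_relation_section2} and observing that the two resulting conditions ($ab^{-1}=dc^{-1}$ and $cd^{-1}=ba^{-1}$) are inverses of each other. Your worry about well-definedness over a possibly non-free span is moot precisely because \eqref{eq_simplest_trace_definition} is an intrinsic formula in the operator $A$, not in a chosen expansion; you note this implicitly, and the paper handles it the same way.
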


\begin{proof}
First, we shall show that $\tau$ is a well-defined bounded linear functional on $\fA$. There are three steps: showing that $\tau$ can be defined on a dense subspace of $\fA$ by \eqref{eq_simplest_trace_definition}; showing that $\tau$ can be extended to the whole $\fA$; showing that $\tau$ is defined by \eqref{eq_simplest_trace_definition} on the whole $\fA$.

Let $A=\sum\limits_{(u,v)\in F}c_{uv}L_uL_v^\ast$, where $F$ is a finite set and $c_{uv}$'s are complex numbers. Then there exists an $L\geq 1$ such that $v|z_l$ for all $(u,v)\in F$ and $l\geq L$. Hence
\[
A\delta_{z_l} = \sum\limits_{(u,v)\in F}c_{uv}L_uL_v^\ast \delta_{z_l} = \sum\limits_{(u,v)\in F}c_{uv}\delta_{uv^{-1}z_l}.
\]
Since $\<\delta_{uv^{-1}z_l},\, \delta_{z_l}\>=0$ if and only if $u=v$, one deduces that $\<A\delta_{z_l},\, \delta_{z_l}\> =  \sum\nolimits_{(u,u)\in F} c_{uu}$ for $l\geq L$. Now we have
\begin{equation} \label{eq_simplest_trace_finite_sum_form}
\tau(A)= \lim\limits_{l\rightarrow \infty}\<A\delta_{z_l},\, \delta_{z_l}\> =  \sum\limits_{(u,u)\in F} c_uu.
\end{equation}
It is well-defined on $\Span\{L_uL_v^\ast:\, u,v\in S\}$, which is a dense subspace of $\fA$. Note that
\[
\tau(A)\leq \lim\limits_{l\rightarrow\infty} \|A\|\cdot \|\delta_{z_l}\|_\cH^2 \leq \|A\|
\]
for $A$ in this subspace. By Hahn-Banach theorem, we conclude that $\tau$ can be extended to a linear functional on $\fA$.

Next, we will show that \eqref{eq_simplest_trace_definition} holds for all operators in $\fA$. For any operator $A\in \fA$, there exists a sequence $A_n\in \Span\{L_uL_v^\ast:\, u,v\in S\}$ such that $\|A_n-A\|\rightarrow \infty$ as $n\rightarrow \infty$. For any $\varepsilon>0$, there exists some $N>0$ such that $\|A_n-A\|\leq \varepsilon/3$ whenever $n\geq N$. For this given $N$, there exists some $L>0$ such that $\left|\<A_N \delta_{z_l}, \, \delta_{z_l}\>- \<A_N \delta_{z_l^\prime}, \, \delta_{z_l^\prime}\>\right|<\varepsilon/3$ whenever $l,l^\prime \geq L$, since the limit $\lim\limits_{l\rightarrow \infty}\<A_N\delta_{z_l},\delta_{z_l}\>$ exists. Therefore,
\begin{align*}
&\left|\<A \delta_{z_l}, \, \delta_{z_l}\>-\<A \delta_{z_l^\prime}, \, \delta_{z_l^\prime}\>\right| \\
&\leq \left|\<(A-A_N) \delta_{z_l}, \, \delta_{z_l}\>\right|+ \left|\<A_N \delta_{z_l}, \, \delta_{z_l}\>- \<A_N \delta_{z_l^\prime}, \, \delta_{z_l^\prime}\>\right|+ \left|\<(A_N-A) \delta_{z_l^\prime}, \, \delta_{z_l^\prime}\>\right|\\
&\leq 2\|A_N-A\|+\left|\<A_N \delta_{z_l}, \, \delta_{z_l}\>- \<A_N \delta_{z_l^\prime}, \, \delta_{z_l^\prime}\>\right|<\varepsilon.
\end{align*}
So $\{\<A \delta_{z_l}, \, \delta_{z_l}\>\}_{l=1}^\infty$ is a Cauchy sequence. We denote its limit by $\lim\limits_{l\rightarrow\infty}\<A \delta_{z_l}, \, \delta_{z_l}\>=\alpha$. Moreover, one has
\[
|\tau(A_n)-\alpha| \leq \lim\limits_{l\rightarrow \infty} \left|\<A_n \delta_{z_l},\, \delta_{z_l}\> - \<A \delta_{z_l},\, \delta_{z_l}\>\right| \leq \|A_n-A\| \rightarrow 0,\quad (n\rightarrow \infty).
\]
It follows that $\tau(A) = \lim\limits_{n\rightarrow \infty}\tau(A_n) = \alpha$.

Second, if $A$ is a positive operator, then $\<A\delta_{\delta_{z_l},\delta_{z_l}}\>\geq 0$. One conclude further that $\tau(A)\geq 0$. Now we have shown that $\tau$ is a positive bounded linear functional on $\fA$.

Third, we need to prove that $\tau(AB)=\tau(BA)$ for any $A,B\in \fA$. Thanks to the fact that $\Span\{L_uL_v^\ast:\, u,v\in S\}$ is dense in $\fA$, it is sufficient to show that $\tau(L_{u_1}L_{v_1}^\ast L_{u_2}L_{v_2}^\ast)=\tau(L_{u_2}L_{v_2}^\ast L_{u_1}L_{v_1}^\ast)$ for any $u_1,v_1,u_2,v_2\in S$.  Calculation reveals that
\begin{align*}
&B_1:=L_{u_1}L_{v_1}^\ast L_{u_2}L_{v_2}^\ast = L_{u_1v_1^{-1}\lcm[v_1,u_2]}L_{v_2u_2^{-1}\lcm[v_1,u_2]}^\ast,\\
&B_2:=L_{u_2}L_{v_2}^\ast L_{u_1}L_{v_1}^\ast = L_{u_2v_2^{-1}\lcm[v_2,u_1]}L_{v_1u_1^{-1}\lcm[v_2,u_1]}.
\end{align*}
By \eqref{eq_simplest_trace_finite_sum_form}, we deduce that both $\tau(B_1)$ and $\tau(B_2)$ take value $1$ when $u_1v_1^{-1}=v_2u_2^{-1}$ and $0$ otherwise. This completes the proof.

\end{proof}

Let us recall Gelfand-Naimark-Segal construction with the $C^\ast$-algebra $\fA$ and the state $\tau$ (see Chapter 4.5 of \cite{Kad-Rin} for details). The set $\fL_\tau=\{A\in\fA:\, \tau(A^\ast A)=0\}$ is a closed left ideal in $\fA$. For $A\in \cA$, we write $[A]:=A+\fL_\tau$ for simplicity, which is an element in the quotient linear space $\fA/\fL_\tau$. The equation
\[
\<[A],[B]\>_\tau=\tau(B^\ast A),\quad (A,B\in \fA)
\]
defines a definite inner product on $\fA/\fL_\tau$. Denote its completion by $\cH_\tau$, which is a Hilbert space. Define the action $\pi_\tau$ of $\fA$ on $\fA/\fL_\tau$ by $\pi_\tau(A)([B])=[AB]$, which extends to a $\ast$-representation of $\fA$ on $\cH_\tau$ with the cyclic vector $[I]$.

\begin{lemma} \label{lem_GNS_H}
Let $\fS=\{(x,y)\in S\times S:\, xy^{-1} \text{ is the fraction of some } w\in G  \text{ in lowest terms}\}$. Then the set $\fB=
\left\{[L_xL_y^\ast]: \, (x,y)\in \fS\right\}$ is an orthonormal basis of $\cH_\tau$. Moreover, for $u_1,u_2,v_1,v_2\in S$, we have that $[L_{u_1}L_{v_1}^\ast]=[L_{u_2}L_{v_2}^\ast]$ if and only if $u_1v_1^{-1}$ and $u_2v_2^{-1}$ are fractions of a same element in $G$.
\end{lemma}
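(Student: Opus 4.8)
The plan is to reduce the whole statement to one computation: the inner products $\langle [L_{u_1}L_{v_1}^\ast],[L_{u_2}L_{v_2}^\ast]\rangle_\tau$ for $u_1,v_1,u_2,v_2\in S$. By the definition of the GNS inner product and $(L_{u_2}L_{v_2}^\ast)^\ast=L_{v_2}L_{u_2}^\ast$, this equals $\tau\big(L_{v_2}L_{u_2}^\ast L_{u_1}L_{v_1}^\ast\big)$. First I would apply the relations \eqref{eq_cQ_relation_section2}, starting with $L_{u_2}^\ast L_{u_1}=L_{u_2^{-1}\lcm[u_1,u_2]}L^\ast_{u_1^{-1}\lcm[u_1,u_2]}$ and then merging the surviving $L$'s and $L^\ast$'s, to collapse the product into a single monomial
\[
L_{v_2}L_{u_2}^\ast L_{u_1}L_{v_1}^\ast = L_{\,v_2u_2^{-1}\lcm[u_1,u_2]}\; L^\ast_{\,v_1u_1^{-1}\lcm[u_1,u_2]},
\]
where both subscripts lie in $S$ because $u_1,u_2\,|\,\lcm[u_1,u_2]$. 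By the explicit formula \eqref{eq_simplest_trace_finite_sum_form} for $\tau$ on $\Span\{L_uL_v^\ast\}$, the trace of a monomial $L_aL_b^\ast$ is $1$ when $a=b$ and $0$ otherwise; cancelling $\lcm[u_1,u_2]$ on the right and inverting, $v_2u_2^{-1}\lcm[u_1,u_2]=v_1u_1^{-1}\lcm[u_1,u_2]$ is equivalent to $u_1v_1^{-1}=u_2v_2^{-1}$ in $G$. Thus
\[
\big\langle [L_{u_1}L_{v_1}^\ast],[L_{u_2}L_{v_2}^\ast]\big\rangle_\tau=
\begin{cases}
1,& u_1v_1^{-1}=u_2v_2^{-1}\ \text{in }G,\\
0,& \text{otherwise}.
\end{cases}
\]

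From this formula the ``Moreover'' statement is immediate: if $u_1v_1^{-1}=u_2v_2^{-1}$ the two vectors have inner product $1$ and each has norm $1$, so their difference has norm $0$; otherwise the difference has squared norm $1-0-0+1=2\neq 0$. In particular every $[L_xL_y^\ast]$ is a unit vector. For the orthonormality of $\fB$, I would observe that if $(x_1,y_1),(x_2,y_2)\in\fS$ are distinct then $x_1y_1^{-1}\neq x_2y_2^{-1}$: were these group elements equal, both pairs would be the numerator and denominator of the lowest-terms fraction of that same element, contradicting its uniqueness (Axioms \uppercase\expandafter{\romannumeral1} and \uppercase\expandafter{\romannumeral2}). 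Hence distinct elements of $\fS$ give orthogonal unit vectors, so $\fB$ is an orthonormal set.

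It then remains to show $\fB$ is total in $\cH_\tau$. Since $\tau$ is a state, $\|[A]\|_\tau^2=\tau(A^\ast A)\leq\|A\|^2$, so the quotient map $A\mapsto[A]$ is contractive from $\fA$ into $\cH_\tau$; as $\Span\{L_uL_v^\ast:\,u,v\in S\}$ is dense in $\fA$, its image $\Span\{[L_uL_v^\ast]:\,u,v\in S\}$ is dense in $\fA/\fL_\tau$ and hence in $\cH_\tau$. Finally, each generator $[L_uL_v^\ast]$ is itself an element of $\fB$: taking $xy^{-1}$ to be the lowest-terms fraction of $w:=uv^{-1}$ (which exists by Axiom \uppercase\expandafter{\romannumeral2} and is unique by Axioms \uppercase\expandafter{\romannumeral1},\uppercase\expandafter{\romannumeral2}), we have $(x,y)\in\fS$ and $uv^{-1}=xy^{-1}$, so $[L_uL_v^\ast]=[L_xL_y^\ast]\in\fB$ by the ``Moreover'' part. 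Therefore $\overline{\Span\,\fB}=\cH_\tau$, and combined with orthonormality this makes $\fB$ an orthonormal basis.

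The main obstacle is the opening computation: one has to chain the three identities in \eqref{eq_cQ_relation_section2} correctly — keeping track of which subscripts remain in $S$ via $u_i\,|\,\lcm[u_1,u_2]$ — so as to reduce $L_{v_2}L_{u_2}^\ast L_{u_1}L_{v_1}^\ast$ to a single monomial $L_aL_b^\ast$, and then to recognize that the equality $a=b$ depends only on the group elements $u_1v_1^{-1},u_2v_2^{-1}$ and not on their particular fractional representations. Once this representation independence of the inner product is established, the collapse of equivalent monomials, the orthogonality of distinct lowest-terms monomials, and the density argument are all routine.
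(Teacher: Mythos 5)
Your proof is correct and follows essentially the same route as the paper's: both reduce to computing $\tau(L_{v_2}L_{u_2}^\ast L_{u_1}L_{v_1}^\ast)$ via the relations \eqref{eq_cQ_relation_section2} and the formula \eqref{eq_simplest_trace_finite_sum_form}, concluding that the inner product is $1$ exactly when $u_1v_1^{-1}=u_2v_2^{-1}$. The only difference is organizational — you establish the general inner-product formula once and derive the ``Moreover'' claim, orthonormality, and totality from it, whereas the paper carries out the difference-norm computation and the cross-inner-product computation as two separate steps — but the underlying calculation and the use of uniqueness of lowest-terms fractions are identical.
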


\begin{proof}
Suppose that $u,v,x,y\in S$ are elements in $S$ satisfying $(x,y)\in \fS$ and $uv^{-1}=xy^{-1}$. Applying \eqref{eq_cQ_relation_section2}, we have
\begin{align*}
&\tau\left((L_uL_v^\ast-L_xL_y^\ast)^\ast (L_uL_v^\ast - L_xL_y^\ast)\right) \\
= &\tau(L_vL_v^\ast) - \tau(L_{vu^{-1}\lcm[u,x]}L^\ast_{yx^{-1}\lcm[u,x]}) -\tau(L_{yx^{-1}\lcm[x,u]}L^\ast_{vu^{-1}\lcm[x,u]})+\tau(L_yL_y^\ast)\\
= & 1-1-1+1 =0.
\end{align*}
So $[L_xL_y^\ast]= [L_uL_v^\ast]$. Since $\Span\{L_uL_v^\ast:\, u,v\in S\}$ is dense in $\fA$, we have that $\Span\{[L_xL_y^\ast]:\, (x,y)\in \fS\}$ is dense in $\cH_\tau$. Moreover, for $(x_1,y_1),(x_2,y_2)\in \fS$, we have
\[
\<[L_{x_1}L_{y_1}^\ast],\, [L_{x_2}L_{y_2}^\ast]\>_{\cH_\tau} = \tau(L_{y_2}L_{x_2}^\ast L_{x_1}L_{y_1}^\ast) = \tau\left(L_{y_2x_2^{-1}\lcm[x_2,x_1]}L^\ast_{y_1x_1^{-1}\lcm[x_2,x_1]}\right),
\]
which equals $1$ when $x_1y_1^{-1}=x_2y_2^{-1}$ and $0$ otherwise. So $\fB$ is an orthonormal basis of $\cH_\tau$.
\end{proof}

By Lemma \ref{lem_GNS_H}, one deduces that
\[
\pi_\tau(L_uL_v^\ast)([L_xL_y^\ast]) = [L_uL_v^\ast L_xL_y^\ast] = [L_{uv^{-1}\lcm[v,x]}L^\ast_{yx^{-1}\lcm[v,x]}].
\]
And $\pi_\tau(L_uL_v^\ast)=\pi_\tau(L_{u^\prime}L_{v^\prime}^\ast)$ if and only if
\[
(uv^{-1}\lcm[v,x])(yx^{-1}\lcm[v,x])^{-1} =(u^\prime v^{\prime-1}\lcm[v^\prime,x])(yx^{-1}\lcm[v^\prime,x])^{-1}
\]
for all $(x,y)\in \fS$, if and only if $uv^{-1}=u^\prime v^{\prime-1}$. Therefore $\pi_\tau(\fA)$ is the closure of $\Span\{\pi(L_xL_y^\ast):\, (x,y)\in \fS\}$ in $B(\cH_\tau)$.

Let $C_r^\ast(G)$ be the reduced group $C^\ast$-algebra of $G$. More concretely, let $\widetilde{\cH}=l^2(G)$ and $\delta_z$ be the function taking value $1$ at $z$ and $0$ elsewhere. The operator $\widetilde{L_x}$ on $\widetilde{\cH}$ is defined by $\widetilde{L_w}\delta_z = \delta_{wz}$ $(z\in G)$. Then $C_r^\ast(G)$ is the $C^\ast$-algebra generated by $\{\widetilde{L_w}:\, w\in G\}$ in $B(\widetilde{\cH})$. Note that $\widetilde{L_w}^\ast \delta_z = \delta_{w^{-1}z}$ $(z\in G)$. If $uv^{-1}=uv^{\prime-1}$, then $\widetilde{L_u}\widetilde{L_v}^\ast = \widetilde{L_{u^\prime}}\widetilde{L_{v^\prime}}^\ast$. So $C_r^\ast(G)$ is the closure of $\Span\{\widetilde{L_x}\widetilde{L_y}^\ast:\, (x,y)\in \fS\}$ in $B(\widetilde{H})$. Now one arrives at the following conclusion immediately.

\begin{theorem}
The map $\phi:\, \pi_\tau(\fA)\rightarrow C_r^\ast(G)$ induced by $\pi_\tau(L_xL_y^\ast)\mapsto \widetilde{L_x}\widetilde{L_y}^\ast$ for $(x,y)\in \fS$ is a $\ast$-isomorphism.
\end{theorem}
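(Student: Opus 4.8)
The plan is to exhibit $\phi$ as the spatial isomorphism implemented by an explicit unitary $W\colon\cH_\tau\to\widetilde{\cH}=l^2(G)$. By Lemma~\ref{lem_GNS_H} the set $\fB=\{[L_xL_y^\ast]:(x,y)\in\fS\}$ is an orthonormal basis of $\cH_\tau$, and $[L_aL_b^\ast]=[L_{a'}L_{b'}^\ast]$ holds precisely when $ab^{-1}=a'b'^{-1}$ in $G$. Since, by Axioms I and II, every $w\in G$ has a unique right fraction in lowest terms, the assignment $(x,y)\mapsto xy^{-1}$ is a bijection from $\fS$ onto $G$; hence $[L_xL_y^\ast]\mapsto\delta_{xy^{-1}}$ carries $\fB$ bijectively onto the standard orthonormal basis $\{\delta_w:w\in G\}$ of $\widetilde{\cH}$, and therefore extends to a unitary $W$. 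Combining the definition of $W$ with the coincidence criterion of Lemma~\ref{lem_GNS_H} one obtains the clean formula $W[L_aL_b^\ast]=\delta_{ab^{-1}}$, valid for \emph{all} $a,b\in S$ and not only for $(a,b)\in\fS$.

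Next I would check that $W$ intertwines $\pi_\tau$ with the left regular representation of $G$. Fix $u,v\in S$ and $w\in G$, and let $(x,y)\in\fS$ be the lowest-terms fraction of $w$, so that $W^\ast\delta_w=[L_xL_y^\ast]$. Using the identity recorded just after Lemma~\ref{lem_GNS_H},
\[
\pi_\tau(L_uL_v^\ast)\big([L_xL_y^\ast]\big)=\big[L_{uv^{-1}\lcm[v,x]}\,L^\ast_{yx^{-1}\lcm[v,x]}\big],
\]
and then applying the formula of the previous paragraph, one finds
\[
W\pi_\tau(L_uL_v^\ast)W^\ast\delta_w=\delta_{(uv^{-1}\lcm[v,x])(yx^{-1}\lcm[v,x])^{-1}}=\delta_{uv^{-1}xy^{-1}}=\delta_{uv^{-1}w},
\]
the two copies of $\lcm[v,x]$ cancelling. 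Since $\widetilde{L_u}\widetilde{L_v}^\ast\delta_w=\widetilde{L_{uv^{-1}}}\delta_w=\delta_{uv^{-1}w}$, this proves $W\pi_\tau(L_uL_v^\ast)W^\ast=\widetilde{L_u}\widetilde{L_v}^\ast$ for all $u,v\in S$; in particular it holds for every $(x,y)\in\fS$.

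To finish, observe that $\mathrm{Ad}(W)\colon T\mapsto WTW^\ast$ is an isometric $\ast$-isomorphism $B(\cH_\tau)\to B(\widetilde{\cH})$. By the identity just proved it maps $\Span\{\pi_\tau(L_xL_y^\ast):(x,y)\in\fS\}$ onto $\Span\{\widetilde{L_x}\widetilde{L_y}^\ast:(x,y)\in\fS\}$, so, passing to operator-norm closures and invoking the descriptions of $\pi_\tau(\fA)$ and $C_r^\ast(G)$ recorded just before the theorem, it restricts to a $\ast$-isomorphism of $\pi_\tau(\fA)$ onto $C_r^\ast(G)$. This restriction agrees with $\pi_\tau(L_xL_y^\ast)\mapsto\widetilde{L_x}\widetilde{L_y}^\ast$ on a dense subalgebra and is continuous, hence it is exactly the map $\phi$ of the statement; in particular $\phi$ is well defined and is a $\ast$-isomorphism. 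The only genuinely non-mechanical point in this argument is the construction of $W$, i.e.\ verifying that it is well defined and unitary, but that is precisely the content of Lemma~\ref{lem_GNS_H}; everything after that is routine bookkeeping with lowest-terms fractions and the multiplication rules in \eqref{eq_cQ_relation_section2}.
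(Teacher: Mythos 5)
Your proof is correct and is essentially the argument the paper intends when it says the theorem follows ``immediately'' from the preceding discussion: the paper's preparatory computations already identify the orthonormal basis $\fB$ of $\cH_\tau$ with $\{\delta_w:w\in G\}$ via $[L_xL_y^\ast]\leftrightarrow\delta_{xy^{-1}}$ and show that $\pi_\tau(L_uL_v^\ast)$ depends only on $uv^{-1}$, and your construction of the unitary $W$ and the verification $W\pi_\tau(L_uL_v^\ast)W^\ast=\widetilde{L_u}\widetilde{L_v}^\ast$ (with the $\lcm[v,x]$ factors cancelling) makes that spatial implementation explicit. This is a welcome precision, since conjugation by $W$ is automatically an isometric $\ast$-isomorphism, and hence one gets the well-definedness and bicontinuity of $\phi$ for free rather than having to argue them separately.
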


\subsection{Arithmetic Functions on Integral Monoids}

For two functions $f$ and $g$ on $S$, the convolution $f\ast g$ is given by
\[
(f\ast g)(z) = \sum\limits_{z_1,z_2\in S\atop z_1z_2=z} f(z_1)g(z_2) = \sum\limits_{z_1|z} f(z_1)g(z_1^{-1}z)=\sum\limits_{z_2 \ddagger z} f(zz_2^{-1})g(z_2).
\]
It is associative, but may be non-commutative. For example, one has $L_u f = \delta_u\ast f$ for $u\in S$ and $f\in \cH$. And the divisor function satisfies that $\tau=1\ast 1$. For $u\in S$, we write $\PD(u)$ and $\PD_\ddagger(u)$ for the set of irreducible divisors and co-divisors of $u$, respectively. Let
\[
\omega(u)=\#\PD(u),\quad \omega_\ddagger(u)=\#PD_\ddagger(u),\quad (u\in S).
\]

By Lemma \ref{lem_irr_decomposition_of_any_element}, it is not hard to obtain the following lemma.

\begin{lemma} \label{lem_di_codi_reduce_one}
Let $u$ be an element in $S$.

(\romannumeral1) For any $d|u$ with $d\neq u$, there is some $q\in \PD_\ddagger(u)$ such that $p|uq^{-1}$.

(\romannumeral2) For any $d\ddagger u$ with $d\neq u$, there is some $q\in \PD(u)$ such that $d\ddagger q^{-1}u$.
\end{lemma}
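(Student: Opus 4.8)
The plan is to reduce both parts directly to the existence of an irreducible decomposition, i.e. Lemma \ref{lem_irr_decomposition_of_any_element}. For part (\romannumeral1), since $d\mid u$ with $d\neq u$, I would write $u=dv$ for some $v\in S$ with $v\neq 1$, and then invoke Lemma \ref{lem_irr_decomposition_of_any_element} to factor $v=q_1q_2\cdots q_K$ with each $q_j\in\cP$ and $K\geq 1$. Taking $q=q_K$, the equality $u=(dq_1\cdots q_{K-1})\,q_K$ exhibits $q$ as a co-divisor of $u$, so $q\in\PD_\ddagger(u)$; and since $uq^{-1}=dq_1\cdots q_{K-1}$ lies in $dS$, we obtain $d\mid uq^{-1}$, which is the desired conclusion. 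When $K=1$ the ``middle'' product $q_1\cdots q_{K-1}$ is empty, i.e. equal to $1$, and the argument still goes through (then $q=u\,d^{-1}$ itself is irreducible).

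For part (\romannumeral2), the argument is the mirror image, using co-divisors in place of divisors. Since $d\ddagger u$ with $d\neq u$, I would write $u=vd$ with $v\in S$, $v\neq 1$, factor $v=q_1q_2\cdots q_K$ with $q_j\in\cP$ by Lemma \ref{lem_irr_decomposition_of_any_element}, and set $q=q_1$. Then $u=q_1\,(q_2\cdots q_K d)$ gives $q\mid u$, hence $q\in\PD(u)$, while $q^{-1}u=q_2\cdots q_K d\in Sd$ gives $d\ddagger q^{-1}u$.

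I do not anticipate any real obstacle here: the statement is essentially a direct corollary of the irreducible decomposition lemma. The only points to be careful about are the degenerate case $K=1$ (empty middle product) and the observation that one must peel off the last (resp. first) irreducible factor of the complementary element $v$, rather than of $u$ itself, in order to produce an irreducible co-divisor (resp. divisor) of $u$ of the required form. Everything else is immediate from the definitions of ``$\mid$'', ``$\ddagger$'', $\PD$ and $\PD_\ddagger$.
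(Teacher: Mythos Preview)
Your proposal is correct and matches the paper's approach: the paper does not give a detailed proof but simply says the lemma follows from Lemma~\ref{lem_irr_decomposition_of_any_element}, and your argument is precisely the natural way to unpack that remark. Your observation about peeling off the last (resp.\ first) irreducible factor of the complementary element $v$ is exactly the point.
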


The function $\delta_1$ is the identity with respect to convolution, i.e., one has $f\ast \delta_1=\delta_1\ast f =f$ for any arithmetic function $f$. Next, we consider the inverse of an arithmetic function with respect to convolution.

\begin{theorem} \label{thm_inverse_formula_wrt_convolution}
Let $f$ be an arithmetic function on $S$ with $f(1)\neq 0$. Then $f$ has a unique inverse $g$, i.e., $f\ast g=g\ast f=1$, which is given by either of the following two iterating formulae,
\begin{equation} \label{eq_convolution_inverse_left}
g(1)=f(1)^{-1},\quad g(z) = - f(1)^{-1}\sum\limits_{v|z\atop v\neq z} g(v)f(v^{-1}z),
\end{equation}
or
\begin{equation} \label{eq_convolution_inverse_right}
g(1)=f(1)^{-1},\quad g(z) = - f(1)^{-1}\sum\limits_{v\ddagger z\atop v\neq z} f(zv^{-1})g(v).
\end{equation}
\end{theorem}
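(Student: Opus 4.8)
The plan is to adapt the classical inversion argument for Dirichlet convolution to the partially ordered, non-commutative setting of $S$; throughout I write $\delta_1$ for the convolution identity (the ``$1$'' appearing in the statement). First I would check that \eqref{eq_convolution_inverse_left} genuinely defines a function on $S$. Since ``$|$'' is a partial order and, by Axiom \uppercase\expandafter{\romannumeral3}, each $z\in S$ has only finitely many divisors, the recursion can be carried out by induction on $\tau(z)$: if $v\mid z$ with $v\neq z$, then every divisor of $v$ is a divisor of $z$ by transitivity, while $z$ is not a divisor of $v$ by antisymmetry (Lemma~\ref{lem_antisymmetry}), so $\tau(v)<\tau(z)$. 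Hence the finite sum on the right of \eqref{eq_convolution_inverse_left} refers only to values $g(v)$ already defined, and $g$ is well-defined with $g(1)=f(1)^{-1}$. The dual argument, using that $\tau(z)$ also counts co-divisors and that ``$\ddagger$'' is a partial order (Lemma~\ref{lem_antisymmetry}), shows \eqref{eq_convolution_inverse_right} defines a function $h$ with $h(1)=f(1)^{-1}$.

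Second, I would verify that $g$ is a left inverse and $h$ a right inverse of $f$. Writing $(g\ast f)(z)=\sum_{v\mid z} g(v)f(v^{-1}z)$ and splitting off the term $v=z$ gives $(g\ast f)(z)=g(z)f(1)+\sum_{v\mid z,\,v\neq z} g(v)f(v^{-1}z)$; this equals $f(1)^{-1}f(1)=1=\delta_1(1)$ when $z=1$, and equals $0=\delta_1(z)$ for $z\neq 1$ exactly because of the defining relation \eqref{eq_convolution_inverse_left}. Thus $g\ast f=\delta_1$. Dually, writing $(f\ast h)(z)=\sum_{v\ddagger z} f(zv^{-1})h(v)$ and splitting off $v=z$, relation \eqref{eq_convolution_inverse_right} gives $f\ast h=\delta_1$.

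Third, I would extract the remaining assertions from associativity of ``$\ast$'' (which holds despite non-commutativity). From $g\ast f=\delta_1=f\ast h$ we get $g=g\ast\delta_1=g\ast(f\ast h)=(g\ast f)\ast h=\delta_1\ast h=h$; so the two formulae \eqref{eq_convolution_inverse_left} and \eqref{eq_convolution_inverse_right} produce one and the same function, and it is a two-sided inverse of $f$. The same one-line computation shows that any left inverse of $f$ coincides with any right inverse of $f$, which gives uniqueness.

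The argument is essentially routine; the only points requiring genuine care are in the first step — confirming that $\tau$ strictly decreases along proper divisors and along proper co-divisors, so that both recursions are well-founded — and keeping the ``divisor'' versus ``co-divisor'' bookkeeping straight: in the non-commutative setting $(g\ast f)(z)$ is naturally a sum over divisors of $z$ whereas $(f\ast h)(z)$ is naturally a sum over co-divisors of $z$, which is precisely the reason \eqref{eq_convolution_inverse_left} yields a left inverse and \eqref{eq_convolution_inverse_right} a right inverse.
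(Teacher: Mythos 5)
Your proof is correct and follows essentially the same route as the paper: show that $\tau$ strictly decreases along proper divisors and co-divisors so the recursions are well-founded, observe that \eqref{eq_convolution_inverse_left} encodes $g\ast f=\delta_1$ and \eqref{eq_convolution_inverse_right} encodes $f\ast h=\delta_1$, and use associativity to identify the two and get a unique two-sided inverse. Your write-up is in fact somewhat more careful than the paper's terse version, which derives the two recursions but leaves the consistency of the two formulae and the two-sided-inverse conclusion implicit.
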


\begin{proof}
For $z\in S$, any proper divisor $v$ of $z$ satisfies $\tau(v)<\tau(z)$. Similarly, any proper co-divisor $v$ of $z$ satisfies $\tau(v)<\tau(z)$. The above iteration is well-defined. It is not hard to see that $g(1) = f(1)^{-1}$. Suppose that the value $f^{-1}(v)$ has been determined for all $v$ with $\tau(v)< k$ for some $k\geq 1$. Now for a $z$ with $\tau(z)=k$, one has
\[
0= (g\ast f)(z) = \sum\limits_{v|z} g(v)f(v^{-1}z) = g(z)f(1) + \sum\limits_{v|z\atop v\neq z} g(v)f(v^{-1}z),
\]
or
\[
0 =(f\ast g)(z) = \sum\limits_{v\ddagger z} f(zv^{-1})g(v) = f(1)g(z) + \sum\limits_{v\ddagger z\atop v\neq z} f(zv^{-1})g(v).
\]
Now \eqref{eq_convolution_inverse_left} and \eqref{eq_convolution_inverse_right} follows.
\end{proof}

The inverse of $1$ is one of the most significant arithmetic functions, which is known as M\"{o}bius function in classical arithmetics. We denote it by $\mu$, i.e., it satisfies $\mu\ast 1=1\ast \mu=\delta_1$.

\begin{theorem} \label{prop_mu_l_calculation}
For $z\in S$ with $z\neq 1$, let $\cF(z)=\{zp^{-1}:\, p\in \PD_\ddagger(z)\}$ and $\cF_\ddagger(z)=\{p^{-1}z:\, p \in \PD(z)\}$. We have $\mu(1)=1$ and
\[
\mu(z)= \sum\limits_{\emptyset \neq F\subseteq \cF(z)\atop \gcd(F)=1}(-1)^{|F|} = \sum\limits_{\emptyset \neq F\subseteq \cF_\ddagger(z)\atop \gcd_\ddagger(z;F)=1}(-1)^{|F|},\quad (z\neq 1).
\]

\end{theorem}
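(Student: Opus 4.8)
The plan is to reduce the entire statement to the convolution recursion for $\mu$. Applying Theorem~\ref{thm_inverse_formula_wrt_convolution} with $f=1$ gives immediately $\mu(1)=f(1)^{-1}=1$, and, for every $z\neq1$, the two recursions $\mu(z)=-\sum_{v|z,\,v\neq z}\mu(v)$ and $\mu(z)=-\sum_{v\ddagger z,\,v\neq z}\mu(v)$. I would also read off from $\mu\ast 1=1\ast\mu=\delta_1$ the pointwise identities $\sum_{v|w}\mu(v)=\delta_1(w)$ and $\sum_{v\ddagger w}\mu(v)=\delta_1(w)$, valid for all $w\in S$. These, together with the combinatorics of finite inclusion--exclusion, are all the input needed.

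For the first displayed equality, fix $z\neq1$ and write $\cF(z)=\{d_1,\dots,d_r\}$, where $d_i=zp_i^{-1}$ and $p_1,\dots,p_r$ are the distinct irreducible co-divisors of $z$; each $d_i$ lies in $S$ since $p_i\ddagger z$, the map $p\mapsto zp^{-1}$ is injective, and each $d_i$ is a proper divisor of $z$. The set $X=\{v\in S:\,v|z,\ v\neq z\}$ is finite by Axiom~\uppercase\expandafter{\romannumeral3}. By Lemma~\ref{lem_di_codi_reduce_one}(\romannumeral1) every $v\in X$ divides some $d_i$, and conversely every divisor of any $d_i$ lies in $X$ (if it equalled $z$, then $z|d_i|z$ would force $d_i=z$). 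Hence $X=\bigcup_{i=1}^r A_i$ with $A_i=\{v\in S:\,v|d_i\}$, and by Lemma~\ref{lem_numbertheoretic_def_gcd} one has $\bigcap_{i\in F}A_i=\{v\in S:\,v|\gcd(d_i:i\in F)\}$ for each $\emptyset\neq F\subseteq\{1,\dots,r\}$. Now apply ordinary inclusion--exclusion to $\sum_{v\in X}\mu(v)$: the $F$-term is $(-1)^{|F|+1}\sum_{v|\gcd(d_i:i\in F)}\mu(v)=(-1)^{|F|+1}\delta_1(\gcd(F))$, so $\sum_{v\in X}\mu(v)=\sum_{\emptyset\neq F\subseteq\cF(z),\,\gcd(F)=1}(-1)^{|F|+1}$, and the recursion $\mu(z)=-\sum_{v\in X}\mu(v)$ turns this into $\sum_{\emptyset\neq F\subseteq\cF(z),\,\gcd(F)=1}(-1)^{|F|}$.

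The second equality is the same computation carried out on the co-divisor side. Here $\cF_\ddagger(z)=\{p^{-1}z:\,p\in\PD(z)\}$ is a set of distinct proper co-divisors of $z$; the proper co-divisors of $z$ are covered by the co-divisors of the elements of $\cF_\ddagger(z)$ by Lemma~\ref{lem_di_codi_reduce_one}(\romannumeral2); and, crucially, since every element of $\cF_\ddagger(z)$ co-divides the fixed element $z$, the quantity $\gcd_\ddagger(z;F)$ is legitimately defined for every $F\subseteq\cF_\ddagger(z)$, and Lemma~\ref{lem_gcd_ddagger_exists} identifies $\bigcap_{e\in F}\{v:\,v\ddagger e\}$ with $\{v:\,v\ddagger\gcd_\ddagger(z;F)\}$ (one direction from property (\romannumeral2) of that lemma, the other by transitivity of $\ddagger$ using property (\romannumeral1)). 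Running inclusion--exclusion on $\{v\in S:\,v\ddagger z,\ v\neq z\}$ and using $\sum_{v\ddagger w}\mu(v)=\delta_1(w)$ then yields $\mu(z)=\sum_{\emptyset\neq F\subseteq\cF_\ddagger(z),\,\gcd_\ddagger(z;F)=1}(-1)^{|F|}$.

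The routine parts --- injectivity of $p\mapsto zp^{-1}$, checking that divisors (resp.\ co-divisors) of the $d_i$ (resp.\ the elements of $\cF_\ddagger(z)$) actually land in $X$ (resp.\ among the proper co-divisors of $z$), and the sign bookkeeping by which the leading minus turns $(-1)^{|F|+1}$ into $(-1)^{|F|}$ --- present no difficulty. The only point demanding genuine care is the co-divisor half: one must keep every element that appears below the common bound $z$, so that the ``up to $w$'' constructions of Section~\ref{section_integral_monoids} apply and $\gcd_\ddagger(z;F)$ is meaningful for all subsets $F$; this, via Lemma~\ref{lem_gcd_ddagger_exists}, is exactly what makes the equivalence ``common co-divisor of $F$ $\Longleftrightarrow$ co-divisor of $\gcd_\ddagger(z;F)$'' correct. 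That is where I expect the (mild) obstacle to lie.
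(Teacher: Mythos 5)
Your proposal is correct and follows essentially the same route as the paper's proof: starting from $\mu\ast 1=1\ast\mu=\delta_1$, breaking off the $\mu(z)$ term, using Lemma~\ref{lem_di_codi_reduce_one} to cover the proper divisors (resp.\ co-divisors) by the sets $\cF(z)$ (resp.\ $\cF_\ddagger(z)$), and then applying inclusion--exclusion together with $\sum_{v\mid w}\mu(v)=\delta_1(w)$ (resp.\ its $\ddagger$-analogue). The only difference is that you spell out a few details the paper leaves implicit (injectivity of $p\mapsto zp^{-1}$, the identification of $\bigcap_{i\in F}A_i$ with divisors of $\gcd(F)$ via Lemma~\ref{lem_numbertheoretic_def_gcd}, and the point that $\gcd_\ddagger(z;F)$ is well-defined because every element of $\cF_\ddagger(z)$ co-divides the common bound $z$), but the underlying argument is the same.
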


\begin{proof}
It is apparent that $\mu(1)=1$. For $z\neq 1$, by Lemma \ref{lem_di_codi_reduce_one}, any divisor $v$ of $z$ with $v\neq z$ divides at least one of the $y$'s with $y\in \cF(z)$. Applying the inclusion-exclusion principle, we deduce that
\begin{align*}
0&=(\mu\ast 1) (z)= \sum\limits_{v|u}\mu(v) = \mu(z)+ \sum\limits_{\emptyset \neq F\subseteq \cF}(-1)^{|F|+1}\sum\limits_{v|\gcd(F)} \mu(v)\\
 &= \mu(z)+ \sum\limits_{\emptyset \neq F\subseteq \cF}(-1)^{|F|+1} (\mu\ast 1)(\gcd(F))= \mu(z)+ \sum\limits_{\emptyset \neq F\subseteq \cF\atop \gcd(F)=1}(-1)^{|F|+1} .
\end{align*}

Similarly, any co-divisor $v$ of $z$ with $v\neq 1$ co-divides at least one of the $y$'s with $y\in \cF_\ddagger(z)$. Applying the inclusion-exclusion principle, we deduce that
\begin{align*}
0&=(1\ast \mu) (z)= \sum\limits_{v \ddagger u}\mu(v) = \mu(z)+ \sum\limits_{\emptyset \neq F\subseteq \cF_\ddagger(z)}(-1)^{|F|+1}\sum\limits_{v\ddagger \gcd_\ddagger(F)} \mu(v)\\
 &= \mu(z)+ \sum\limits_{\emptyset \neq F\subseteq \cF_\ddagger(z)}(-1)^{|F|+1} (1\ast \mu)(\gcd_\ddagger(z;F))= \mu(z)+ \sum\limits_{\emptyset \neq F\subseteq \cF_\ddagger(z)\atop \gcd_\ddagger(z;F)=1}(-1)^{|F|+1} .
\end{align*}
The proof is completed.
\end{proof}

\section{Axiom \uppercase\expandafter{\romannumeral4}' and Homogenous Monoids}
\label{section_homogeneous_monoid}

Define
\[
\fC_1=\left\{\left(u,u^{-1}\lcm[u,v]\right):\, u,v\in S,\, \gcd(u,v)=1\right\},
\]
which is a subset of $S\times S$, and
\[
\Gamma_1=\left\{\left(\left(u,u^{-1}\lcm[u,v]\right),\left(v,v^{-1}\lcm[u,v]\right)\right):\,u,v\in S,\, \gcd(u,v)=1 \right\},
\]
which is a subset of $\fC_1\times \fC_1$.

\medskip

\textsc{Axiom \uppercase\expandafter{\romannumeral4}'}. The set $\Gamma_1$ is a graph of a map $\eta:\, \fC_1\rightarrow \fC_1$.

\begin{definition}
We call an integral monoid $S$ homogenous, if Axiom \uppercase\expandafter{\romannumeral4}' holds.
\end{definition}

In this section, we always assume that $\Gamma_1$ is a graph of a map $\eta:\, \fC_1\rightarrow \fC_1$. It is not hard to see that $\eta^2=id$ on $\fC_1$. That is to say, if $(u,v)\in \fC_1$ and $\eta(u,v)=(\widetilde{v},\widetilde{u})$, then we also have $(\widetilde{v},\widetilde{u})\in \fC_1$ and $\eta(\widetilde{v},\widetilde{u})=(u,v)$. For simplicity, we will rewrite the above formulas by either of the following four expressions:
\[
\uuuline{u}v \rlh \widetilde{v} \uuuline{\widetilde{u}}, \quad u\uuuline{v} \rlh  \uuuline{\widetilde{v}}\widetilde{u},\quad \widetilde{v} \uuuline{\widetilde{u}} \rlh \uuuline{u}v,\quad \uuuline{\widetilde{v}}\widetilde{u} \rlh u\uuuline{v}.
\]
And whenever we put three underlines under $u$ or $v$ for an ordered pair of elements $u,v$, we always mean that $(u,v)\in \fC_1$. For example, for any $u\in S$, we have $\uuuline{1}u\rlh u\uuuline{1}$.

\begin{definition}
When $(u,v)\in \fC_1$, we say that $u,v$ are castled-free. When $\uuuline{u} v \rlh \widetilde{v} \uuuline{\widetilde{u}}$, we call it, or the element $((u,v),(\widetilde{v},\widetilde{u}))\in \Gamma_1$, a free castling.
\end{definition}

The arithmetic meaning of this notion is interpreted by the following lemma.

\begin{lemma} \label{lem_lcm_unique_from_eta_2=id}
The following statements are equivalent.

(\romannumeral1) The set $\Gamma_1$ is a graph of a map $\eta:\, \fC_1\rightarrow \fC_1$.

(\romannumeral2) Suppose that $u,v,w$ are elements in $S$ such that $\lcm[w,u]=\lcm[w,v]$ and $\gcd(w,u)=\gcd(w,v)=1$. Then $u=v$.

(\romannumeral3) Suppose that $u_0,v_0,w_0$ are elements in $S$ such that $\lcm[w_0,u_0]=\lcm[w_0,v_0]$ and $\gcd(w_0,u_0)=\gcd(w_0,v_0)$. Then $u_0=v_0$.

(\romannumeral4) Suppose that $x,y,z,r$ are elements in $S$ such that $\lcm_\ddagger[r;x,y]=\lcm_\ddagger[r;x,z]$ and $\gcd_\ddagger(r;x,y)=\gcd_\ddagger(r;x,z)=1$, then $y=z$.

(\romannumeral5) Suppose that $x_0,y_0,z_0,r_0$ are elements in $S$ such that $\lcm_\ddagger[r_0;x_0,y_0]=\lcm_\ddagger[r_0;x_0,z_0]$ and $\gcd_\ddagger(r_0;x_0,y_0)=\gcd_\ddagger(r_0;x_0,z_0)$, then $y_0=z_0$.
\end{lemma}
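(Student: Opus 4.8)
The plan is to prove the chain $(\mathrm{i})\Leftrightarrow(\mathrm{ii})\Leftrightarrow(\mathrm{iii})$, the equivalence $(\mathrm{ii})\Leftrightarrow(\mathrm{iv})$ through the duality Lemma~\ref{lem_gcd_gcddagger_transfer}, and $(\mathrm{iv})\Leftrightarrow(\mathrm{v})$ by a $\gcd_\ddagger$-stripping argument dual to $(\mathrm{ii})\Leftrightarrow(\mathrm{iii})$; together these make all five statements equivalent. For $(\mathrm{i})\Leftrightarrow(\mathrm{ii})$ the first observation is that the set of first coordinates of the elements of $\Gamma_1$ is exactly $\fC_1$, so $\Gamma_1$ is the graph of a map $\fC_1\to\fC_1$ if and only if the second coordinate of each element of $\Gamma_1$ is determined by the first. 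Unwinding the definitions, this says: whenever $\gcd(u,v)=\gcd(u,v')=1$ and $u^{-1}\lcm[u,v]=u^{-1}\lcm[u,v']$ (equivalently $\lcm[u,v]=\lcm[u,v']$), one has $(v,v^{-1}\lcm[u,v])=(v',v'^{-1}\lcm[u,v'])$, i.e.\ $v=v'$ --- which is precisely $(\mathrm{ii})$ after renaming $u\to w$.

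For $(\mathrm{ii})\Leftrightarrow(\mathrm{iii})$ and $(\mathrm{iv})\Leftrightarrow(\mathrm{v})$, the implications $(\mathrm{iii})\Rightarrow(\mathrm{ii})$ and $(\mathrm{v})\Rightarrow(\mathrm{iv})$ are just the cases of trivial common divisor, resp.\ co-divisor. For $(\mathrm{ii})\Rightarrow(\mathrm{iii})$, put $d=\gcd(w_0,u_0)=\gcd(w_0,v_0)$ and write $w_0=dw_1$, $u_0=du_1$, $v_0=dv_1$ with $w_1,u_1,v_1\in S$ (using Lemma~\ref{lem_numbertheoretic_def_gcd}(\romannumeral1)); the two factorizations $w_0=dw_1$ produced by the two $\gcd$'s agree by cancellation in $G$. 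Corollary~\ref{cor_remains_of_gcd_has_gcd_1} gives $\gcd(w_1,u_1)=\gcd(w_1,v_1)=1$, the identity $\lcm[cx,cy]=c\lcm[x,y]$ (established just before Lemma~\ref{lem_c_dot_gcd}) turns $\lcm[w_0,u_0]=\lcm[w_0,v_0]$ into $\lcm[w_1,u_1]=\lcm[w_1,v_1]$, and then $(\mathrm{ii})$ yields $u_1=v_1$, hence $u_0=v_0$. The proof of $(\mathrm{iv})\Rightarrow(\mathrm{v})$ mirrors this: with $d=\gcd_\ddagger(r_0;x_0,y_0)=\gcd_\ddagger(r_0;x_0,z_0)$, which co-divides $r_0,x_0,y_0,z_0$, write $r_0=r_0'd$, $x_0=x_0'd$, $y_0=y_0'd$, $z_0=z_0'd$, apply the $\gcd_\ddagger$-analogue of Corollary~\ref{cor_remains_of_gcd_has_gcd_1} (the corollary following Lemma~\ref{lem_ddagger_upperbound_transfer}) to get $\gcd_\ddagger(r_0';x_0',y_0')=\gcd_\ddagger(r_0';x_0',z_0')=1$, cancel $d$ from the $\lcm_\ddagger$ equality via Lemma~\ref{lem_codivisor_pro}(\romannumeral1), and finish with $(\mathrm{iv})$.

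For $(\mathrm{ii})\Leftrightarrow(\mathrm{iv})$ the idea is to translate the $\lcm/\gcd$ data into $\lcm_\ddagger/\gcd_\ddagger$ data and back with Lemma~\ref{lem_gcd_gcddagger_transfer}. For $(\mathrm{ii})\Rightarrow(\mathrm{iv})$: given the hypotheses of $(\mathrm{iv})$, set $m=\lcm_\ddagger[r;x,y]=\lcm_\ddagger[r;x,z]$ and $u=my^{-1}$, $v=mx^{-1}$, $u'=mz^{-1}$, all in $S$ since $x,y,z\ddagger m$; applying Lemma~\ref{lem_gcd_gcddagger_transfer}(\romannumeral2) to the pair $(x,y)$ and to the pair $(x,z)$ gives $\gcd(u,v)=\gcd(u',v)=1$ and $\lcm[u,v]=m=\lcm[u',v]$, so $(\mathrm{ii})$ applied with $w=v$ forces $u=u'$, whence $y=u^{-1}m=u'^{-1}m=z$. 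For $(\mathrm{iv})\Rightarrow(\mathrm{ii})$: from $m=\lcm[w,u]=\lcm[w,v]$ with $\gcd(w,u)=\gcd(w,v)=1$, apply Lemma~\ref{lem_gcd_gcddagger_transfer}(\romannumeral1) twice, to $(w,u)$ with the split $m=w\cdot(w^{-1}m)=u\cdot(u^{-1}m)$ and to $(w,v)$ with $m=w\cdot(w^{-1}m)=v\cdot(v^{-1}m)$, obtaining $\gcd_\ddagger(m;w^{-1}m,u^{-1}m)=\gcd_\ddagger(m;w^{-1}m,v^{-1}m)=1$ and $\lcm_\ddagger[m;w^{-1}m,u^{-1}m]=m=\lcm_\ddagger[m;w^{-1}m,v^{-1}m]$ (using that $\lcm_\ddagger$, $\gcd_\ddagger$ are symmetric in their non-bound arguments); then $(\mathrm{iv})$ with $r=m$, $x=w^{-1}m$ gives $u^{-1}m=v^{-1}m$, i.e.\ $u=v$.

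I expect the main difficulty to be the bookkeeping in the duality step: correctly matching the six letters of Lemma~\ref{lem_gcd_gcddagger_transfer} against the four letters of $(\mathrm{ii})$ and $(\mathrm{iv})$, and verifying at each point that the auxiliary elements (the $u,v,u'$ above, resp.\ $w^{-1}m,u^{-1}m,v^{-1}m$) genuinely lie in $S$ and not merely in $G$, since that is what licenses the application of the duality lemma and of Lemma~\ref{lem_codivisor_pro}. The remaining implications are routine once the reduction to the coprime (and co-coprime) cases is set up cleanly.
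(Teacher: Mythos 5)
Your proof is correct and follows essentially the same strategy as the paper: reduce (iii)/(v) to the coprime cases (ii)/(iv) by stripping the common (co-)divisor via Lemma~\ref{lem_c_dot_gcd}, Lemma~\ref{lem_codivisor_pro}, and Corollary~\ref{cor_remains_of_gcd_has_gcd_1}, and connect (ii) to (iv) via the duality Lemma~\ref{lem_gcd_gcddagger_transfer}. The only organizational difference is that you close the equivalence with a direct unwinding of (i)$\Leftrightarrow$(ii), whereas the paper travels (i)$\Rightarrow$(ii)$\Rightarrow$(iv)$\Rightarrow$(i); the content of the duality step is the same either way.
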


\begin{proof}
First, we prove that (\romannumeral1) implies (\romannumeral2). With the conditions in (\romannumeral2), we assume that
\[
\lcm[w,u]=\lcm[w,v]=wx=uy=vz
\]
for some $x,y,z\in S$. By the definition of $\fC_1$, we have $(w,x),(u,y),(v,z)\in \fC_1$, and $\eta((w,x))=(u,y)=(v,z)$. Since $\eta$ is a well-defined map, we have $(u,y)=(v,z)$. So $u=v$.

Second, we show that (\romannumeral2) implies (\romannumeral4). Under the conditions in (\romannumeral4), we set
\[
\lcm_\ddagger[r;x,y]=\lcm_\ddagger[r;x,z]=wx=uy=vz
\]
for some $w,u,z\in S$. Since $\gcd_\ddagger(r;x,y)=\gcd_\ddagger(r;x,z)=1$, one deduces by Lemma \ref{lem_gcd_gcddagger_transfer} that
\[
\gcd(w,u)=\gcd(w,v)=1, \quad \lcm[w,u]=\lcm_\ddagger[r;x,y]=\lcm_\ddagger[r;x,z]=\lcm[w,v].
\]
So $u=v$ by (\romannumeral2), which leads to $y=z$.

Third, we show that (\romannumeral4) leads to (\romannumeral1). Suppose that both $((w,x),(u,y))$ and $((w,x),(v,z))$ belongs to $\Gamma_1$. Then $\gcd(w,u)=\gcd(w,v)=1$ and $\lcm[w,u]=wx=uy$, $\lcm[w,v]=wx=vz$. Put $r=\lcm[w,u]=\lcm[w,v]$. By Lemma \ref{lem_gcd_gcddagger_transfer}, we have $\gcd_\ddagger(r;x,y)=\gcd_\ddagger(r;x,z)=1$ and
\[
\lcm_\ddagger[r;x,y]=\lcm[w,u]=\lcm[w,v]=\lcm_\ddagger[r;x,z].
\]
It follows that $y=z$ by (\romannumeral4) and then $u=v$. Hence the map $\eta$ is well-defined.

Next, we show that (\romannumeral4) is equivalent to (\romannumeral5). Let $\gcd_\ddagger(r_0;x_0,y_0)=\gcd_\ddagger(r_0;x_0,z_0)=d$ and $r_0=rd,\,x_0=xd,\, y_0=yd$, then $\gcd_\ddagger(r;x,y)=\gcd(r;x,z)=1$ and
\[
\lcm_\ddagger[r;x,y] = \lcm_\ddagger[r_0;x_0,y_0] \cdot d^{-1} =\lcm_\ddagger[r_0;x_0,z_0]\cdot d^{-1} =\lcm[r;x,z].
\]
Note that $y_0=z_0$ if and only if $y=z$. The statements (\romannumeral4) and (\romannumeral5) are equivalent. Similarly, one can deduce that (\romannumeral2) and (\romannumeral3) are equivalent. The proof is completed.
\end{proof}

From the proof of Lemma \ref{lem_lcm_unique_from_eta_2=id}, we obtain the following corollary.

\begin{corollary} \label{cor_C_1_defined_by_ddagger}
We have $\fC_1=\left\{\left(\lcm_\ddagger[w;x,y]y^{-1},y\right):\, x,y,w\in S,\, \gcd_\ddagger(w;x,y)=1\right\}$. Moreover, it satisfies that
\[
\eta\left(\left(\lcm_\ddagger[w;x,y]y^{-1},y\right)\right) = \left(\lcm_\ddagger[w;x,y]x^{-1},x\right).
\]
\end{corollary}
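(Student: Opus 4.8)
\textit{Proof plan.} The plan is to prove the displayed set identity by establishing the two inclusions separately, in each direction translating between the ``divisor'' data $(\gcd,\lcm)$ and the ``co-divisor'' data $(\gcd_\ddagger,\lcm_\ddagger)$ by means of the duality in Lemma~\ref{lem_gcd_gcddagger_transfer}; the formula for $\eta$ then falls out of the same computation. Write $\cT$ for the set on the right-hand side of the first displayed equation.

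First I would show $\fC_1\subseteq\cT$. Given $(u,u^{-1}\lcm[u,v])\in\fC_1$ with $\gcd(u,v)=1$, put $z=\lcm[u,v]$, $y=u^{-1}z$ and $x=v^{-1}z$, so that $z=uy=vx$, and in particular $x,y\ddagger z$. Lemma~\ref{lem_gcd_gcddagger_transfer}(\romannumeral1) then gives $\gcd_\ddagger(z;x,y)=1$ and $\lcm_\ddagger[z;x,y]=z$, whence $\lcm_\ddagger[z;x,y]\,y^{-1}=zy^{-1}=u$. Taking $w=z$, this exhibits $(u,u^{-1}\lcm[u,v])=(\lcm_\ddagger[w;x,y]\,y^{-1},y)$ as an element of $\cT$.

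For the reverse inclusion and the $\eta$ formula together, I would start from a point $(\lcm_\ddagger[w;x,y]\,y^{-1},y)$ of $\cT$, so $x,y\ddagger w$ and $\gcd_\ddagger(w;x,y)=1$, and set $z=\lcm_\ddagger[w;x,y]$. By Lemma~\ref{lem_lcm_ddagger_exists} one has $x,y\ddagger z$, so we may write $z=uy=vx$ for suitable $u,v\in S$; then $u=zy^{-1}=\lcm_\ddagger[w;x,y]\,y^{-1}$ and $v=zx^{-1}=\lcm_\ddagger[w;x,y]\,x^{-1}$. Since $z=\lcm_\ddagger[w;x,y]=uy=vx$ and $\gcd_\ddagger(w;x,y)=1$, the hypotheses of Lemma~\ref{lem_gcd_gcddagger_transfer}(\romannumeral2) are met, and it yields $\gcd(u,v)=1$ together with $\lcm[u,v]=z$. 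Hence $u^{-1}\lcm[u,v]=u^{-1}z=y$, so $(\lcm_\ddagger[w;x,y]\,y^{-1},y)=(u,u^{-1}\lcm[u,v])\in\fC_1$. Finally, by the definition of $\Gamma_1$, the map $\eta$ sends $(u,u^{-1}\lcm[u,v])$ to $(v,v^{-1}\lcm[u,v])=(v,v^{-1}z)=(v,x)=(\lcm_\ddagger[w;x,y]\,x^{-1},x)$, which is exactly the claimed identity.

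The argument is essentially bookkeeping with the partial order $\ddagger$; the one point to watch is that Lemma~\ref{lem_gcd_gcddagger_transfer} is stated with the ``upper bound'' of the co-divisor operations equal to the relevant least common co-multiple, so before invoking it one must verify --- as above, using $z=uy=vx$ and Lemma~\ref{lem_lcm_ddagger_exists} --- that all the $\ddagger$-relations required to form the various $\lcm_\ddagger$'s are in force. (If one wished, Lemma~\ref{lem_ddagger_upperbound_transfer} would also let one pass freely between the bounds $w$ and $z=\lcm_\ddagger[w;x,y]$, but this turns out not to be needed.) I do not expect any genuine obstacle beyond this.
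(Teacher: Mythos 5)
Your proof is correct and matches the paper's (implicit) argument: the paper derives the corollary directly from the proof of Lemma~\ref{lem_lcm_unique_from_eta_2=id}, which, as in your write-up, amounts to applying the duality Lemma~\ref{lem_gcd_gcddagger_transfer} in both directions to convert between $(\gcd,\lcm)$-data and $(\gcd_\ddagger,\lcm_\ddagger)$-data and then reading off $\eta$ from the definition of $\Gamma_1$. Your attention to verifying the $\ddagger$-relations via Lemma~\ref{lem_lcm_ddagger_exists} before invoking the duality is exactly the right bookkeeping, and your parenthetical that Lemma~\ref{lem_ddagger_upperbound_transfer} is not needed at the corollary level (only inside the proof of the duality lemma itself) is also accurate.
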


\begin{remark}
Let $u,v,x,y$ be elements in a homogenous monoid $S$. The following statements are equivalent.

(\romannumeral1) We have the free castling $\uuuline{u}y\rlh v\uuuline{x}$.

(\romannumeral2) It satisfies $\gcd(u,v)=1$ and $\lcm[u,v]=uy=vx$.

(\romannumeral3) It satisfies $\gcd_\ddagger(ux;y,x)=1$ and $\lcm_\ddagger[ux;y,x]=uy=vx$.
\end{remark}

\begin{remark} \label{remark_castled_free_determine_relation}
For a free castling $\uuuline{u}y\rlh v\uuuline{x}$, we have (\romannumeral1) $u,y$ uniquely determine $v,x$; (\romannumeral2) $v,x$ uniquely determine $u,y$; (\romannumeral3) $u,v$ uniquely determine $y,x$; (\romannumeral4) given a $w\in S$, under the condition that $uy,vx\ddagger w$, the elements $y,x$ uniquely determine $u,v$.
\end{remark}

\subsection{Index of an Element}

\begin{definition}
For $u\in S$, define
\[
\ind(u)= \min\{k: \, u= q_1q_2\ldots q_k \text{ with }q_1,\ldots,q_k\in \cP\}.
\]
\end{definition}

Here $\ind(1)=0$. One can verify that $\ind(uv)\leq \ind(u)+\ind(v)$ for $u,v\in S$. The following lemma shows that the number of letters in a word of a given element is an invariant, and the equality holds in above formula.

\begin{lemma} \label{lem_ind_eq}
Let $u\in S$. Suppose that $u=q_1q_2\ldots q_k$ for some $q_1,q_2,\ldots q_k\in \cP$. Then $k=\ind(u)$.
\end{lemma}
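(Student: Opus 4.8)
The plan is to isolate one \emph{local} fact -- that castling an irreducible element past another element again produces an irreducible element -- and then deduce the stated length-invariance by a minimal-counterexample induction on $\tau(u)$.

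\textbf{Step 1 (the key lemma).} I would first prove: \emph{if $q\in\cP$ and $b\in S$ with $\gcd(q,b)=1$, then $b^{-1}\lcm[q,b]\in\cP$} (and, by the symmetry of the roles of $q$ and $b$, also $q^{-1}\lcm[q,b]\in\cP$); in the terminology of this section this says that the element $\widetilde q$ obtained from the free castling of $q$ past $b$ lies in $\cP$. To prove it, set $\widetilde q=b^{-1}\lcm[q,b]$; then $\widetilde q\neq 1$, since $\widetilde q=1$ would mean $q\mid b$, hence $q\mid\gcd(q,b)=1$, impossible. Assume $\widetilde q\notin\cP$. Then $\tau(\widetilde q)\geq 3$, so $\widetilde q$ has a divisor $c\notin\{1,\widetilde q\}$; writing $\widetilde q=cc'$ we get $c\neq 1$ and $c'\neq 1$, and $\lcm[q,b]=b\widetilde q=(bc)c'$, so $b\mid bc\mid\lcm[q,b]$ with $b$, $bc$, $\lcm[q,b]$ pairwise distinct. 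Now $\gcd(q,bc)\mid q$, so it is $1$ or $q$; it cannot be $q$, for $q\mid bc$ together with $b\mid bc$ would give $\lcm[q,b]\mid bc$ by Lemma~\ref{lem_numbertheoretic_def_lcm}, contradicting $bc\mid\lcm[q,b]$ and $bc\neq\lcm[q,b]$. Hence $\gcd(q,bc)=1$. Moreover $\lcm[q,bc]=\lcm[q,b]$, since each divides the other: $q\mid\lcm[q,b]$ and $bc\mid\lcm[q,b]$ give $\lcm[q,bc]\mid\lcm[q,b]$, while $q\mid\lcm[q,bc]$ and $b\mid bc\mid\lcm[q,bc]$ give $\lcm[q,b]\mid\lcm[q,bc]$, and ``$\mid$'' is antisymmetric by Lemma~\ref{lem_antisymmetry}. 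Applying Lemma~\ref{lem_lcm_unique_from_eta_2=id}(\romannumeral2) with $w=q$, $u=bc$, $v=b$ now yields $bc=b$, whence $c=1$ -- a contradiction. This is the one place where Axiom~\uppercase\expandafter{\romannumeral4}' is essential.

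\textbf{Step 2 (minimal counterexample).} Suppose the statement fails, and choose $u$ with $\tau(u)$ minimal among elements admitting two irreducible decompositions of different lengths, say $u=q_1q_2\cdots q_k=r_1r_2\cdots r_m$ with $k\neq m$. Then $u\neq 1$ and $u\notin\cP$. If $q_1=r_1$, cancelling $q_1$ gives $q_2\cdots q_k=r_2\cdots r_m$, a proper divisor of $u$ (hence of strictly smaller $\tau$ by Axiom~\uppercase\expandafter{\romannumeral3}) with irreducible decompositions of lengths $k-1\neq m-1$, contradicting minimality. So $q_1\neq r_1$, and as in Step~1 this forces $\gcd(q_1,r_1)=1$. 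Since $q_1,r_1\mid u$, we have $\lcm[q_1,r_1]\mid u$; write $u=\lcm[q_1,r_1]\,t$, and $\lcm[q_1,r_1]=q_1a=r_1b$ with $a=q_1^{-1}\lcm[q_1,r_1]$, $b=r_1^{-1}\lcm[q_1,r_1]$. By Step~1, $a,b\in\cP$. Fix an irreducible decomposition of $t$ of length $\ell$ (with $\ell=0$ if $t=1$; use Lemma~\ref{lem_irr_decomposition_of_any_element} otherwise). Then $at=q_1^{-1}u=q_2\cdots q_k$ is a proper divisor of $u$, so $\tau(at)<\tau(u)$, and $at$ has the irreducible decomposition $q_2\cdots q_k$ of length $k-1$ as well as the one consisting of $a$ followed by the chosen decomposition of $t$, of length $1+\ell$; minimality forces $k-1=1+\ell$. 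Symmetrically $m-1=1+\ell$ from $bt=r_1^{-1}u=r_2\cdots r_m$. Hence $k=m$, a contradiction; so no counterexample exists.

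The entire difficulty lies in Step~1; once it is in hand, Step~2 is a routine induction. I expect Step~1 to be the main obstacle -- not for its length, but because it is exactly where one must invoke the uniqueness built into Axiom~\uppercase\expandafter{\romannumeral4}' (through Lemma~\ref{lem_lcm_unique_from_eta_2=id}). Without some such hypothesis the conclusion is simply false: in the integral monoid $\cS$ with fractional group $\cG=\langle U,V\mid VU=UV^{2}\rangle$ one has $UV^{2}=V\cdot U$, a product of two irreducibles, while $UV^{2}=U\cdot V\cdot V$ is a product of three, so Axioms~\uppercase\expandafter{\romannumeral1}--\uppercase\expandafter{\romannumeral3} alone cannot force $\ind$ to be well defined.
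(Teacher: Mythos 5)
Your proof is correct, and it is a genuinely different route from the one in the paper, although both ultimately rest on the same uniqueness fact from Axiom~\uppercase\expandafter{\romannumeral4}' packaged as Lemma~\ref{lem_lcm_unique_from_eta_2=id}. The paper inducts on $\ind(u)$ (the \emph{minimal} word length), compares a given decomposition $u=q_1\cdots q_k$ with a shortest one $u=r_1\cdots r_m$, strips the \emph{last} letters, sets $w=q_1\cdots q_{k-1}$, $v=r_1\cdots r_{m-1}$, and splits into three cases according to $\gcd(w,v)$ and whether $q_k=r_m$; the crucial Case~3 invokes Lemma~\ref{lem_lcm_unique_from_eta_2=id} with $w_0=q_1\cdots q_{k-2}$. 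You instead isolate as a standalone lemma (your Step~1) the fact that a free castling carries an irreducible to an irreducible, prove it directly from Lemma~\ref{lem_lcm_unique_from_eta_2=id}, and then do a minimal-counterexample descent on $\tau(u)$ that strips the \emph{first} letters and has essentially one real case ($\gcd(q_1,r_1)=1$). What your route buys is modularity and a cleaner descent: Step~1 is a purely local statement proved before any length-invariance is known, whereas the paper obtains the corresponding fact (Corollary~\ref{cor_divisor_decomposition_castlable_uv}) only \emph{after} Lemma~\ref{lem_ind_eq}, by using $\ind$-invariance (Corollary~\ref{cor_C_1_ind_transfer}). One small imprecision: your parenthetical in Step~1 claiming $q^{-1}\lcm[q,b]\in\cP$ ``by symmetry'' actually needs $b\in\cP$ as an extra hypothesis (it is $b$ that plays the role of the distinguished prime after the swap); this is harmless here since you only invoke it with both $q_1,r_1\in\cP$, but the parenthetical as stated overreaches. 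Similarly, the assertion ``Then $u\neq 1$ and $u\notin\cP$'' deserves a sentence of justification (one line from Axiom~\uppercase\expandafter{\romannumeral1} and $\tau(p)=2$), but it is routine. The closing observation that the monoid $\cS\subseteq\cG=\langle U,V\mid VU=UV^2\rangle$ violates the conclusion shows you correctly identified where Axiom~\uppercase\expandafter{\romannumeral4}' is indispensable.
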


\begin{proof}
We use induction on $\ind(u)$, First consider the case $\ind(u)=0$, i.e., $1=q_1q_2\ldots q_k$. It is immediate that $k\neq 1$. Assume that $k\geq 2$. Then $q_1=(q_2\ldots q_k)^{-1}\in S^{-1}$. So $q_1\in S\cap S^{-1}=\{1\}$, which is a contradiction. As a result, we have $k=0$.

When $\ind(u)=1$, one has $u\in \cP$ and $\tau(u)=2$. Assume on the contrary that $k\geq 2$. Then $q_1|u$ and $q_1\neq u$. The elements $1,q_1,u$ are distinct divisors of $u$. It follows that $\tau(u)\geq 3$, which is a contradiction.

Suppose that the result has been obtained for $\ind(u)\leq m-1$ for some $m\geq 2$. Now we deal with the case $\ind(u)=m$. By the definition of $\ind(u)$, there are some $r_1,r_2,\ldots,r_m\in \cP$ such that $u=r_1r_2\ldots r_m$, and $k\geq m$. Write $w=q_1q_2\ldots q_{k-1}$ and $v=r_1r_2\ldots r_{m-1}$.

\textsc{Case 1}. Suppose that $d:=\gcd(w, v)\neq 1$. Write $w = dx$ and $v=dy$. Moreover, write
\[
d=p_1p_2\ldots p_h,\quad x=\widetilde{q_1}\widetilde{q_2}\ldots \widetilde{q_l},\quad y=\widetilde{r_1}\widetilde{r_2}\ldots \widetilde{r_n}
\]
for some $h\geq 1, l,n\geq 0$ and $p_1,\ldots, p_h, \widetilde{q_1},\ldots \widetilde{q_l},\widetilde{r_1},\ldots \widetilde{r_n}\in \cP$. Noting that $\ind(v)\leq m-1$, we deduce by inductive hypothesis that $\ind(v)=m-1= h+n$, which implies $n\leq m-2$. Note that
\[
d^{-1}u=\widetilde{q_1}\widetilde{q_2}\ldots \widetilde{q_l} q_k = \widetilde{r_1}\widetilde{r_2}\ldots \widetilde{r_n}r_m,
\]
where $\ind(d^{-1}u)\leq n + 1\leq m-1$. By inductive hypothesis, one gets $\ind(d^{-1}u)=n+1 = l+1$. Thus, we have $\ind(w)\leq \ind(d)+\ind(x)\leq h+l =h+n=m-1$. By inductive hypothesis again, one concludes that $k-1=\ind(w)\leq m-1$, which implies $k=m$.

\textsc{Case 2}. Suppose that $r_m=q_k$. Then $w=v$ and $\ind(v)\leq m-1$. By inductive hypothesis, we have $\ind(w)=\ind(v)=m-1$. It follows that $k-1=m-1$. So $k=m$.

\textsc{Case 3}. Suppose that $\gcd(w, v)= 1$ and $r_m\neq q_k$. Note that $\lcm[v,w]|u$. Write $u=\lcm[v,w]d$ and $\lcm[v,w]=va=wb$ for some $a,b,d\in S$. Then $u=vad=wbd$, which leads to $ad=q_k$ and $bd=r_{m-1}$. In view of $r_m\neq q_k$, one deduces that $a=q_k$, $b=r_m$ and $d=1$. Hence $\lcm[v,w]=u$. Assume on the contrary that $k>m$. Note that $k\geq m+1\geq 3$. Let $w_0=q_1q_2\ldots q_{k-2}$. Then $w_0\neq 1$. We denote $\lcm[w_0,v]=vc$ for some $c\in S$. Since $\gcd(v,w_0)=1$, one has $w_0\nmid v$ and so $c\neq 1$. Moreover, one has $w_0|u$ and $v|u$, which implies $vc=\lcm[v,w_0]|u=vr_m$. It follows that $c=r_m$ and $\lcm[v,w_0]=vr_m=u=\lcm[v,w]$. By Lemma \ref{lem_lcm_unique_from_eta_2=id}, one has $w=w_0$, which is a contradiction. Thus, we conclude that $k=m$.

By induction, the lemma follows.
\end{proof}

Now, we know that the integral monoid $\cS$ in Section \ref{subsection_example_of_intergral_monoid} is not homogeneous. For Thompson's group $\bS$, we have already shown that $\ind(\cdot)$ can be extended to a group homomorphism from $(\bG,\cdot)$ to $(\bZ,+)$. Does this holds for any homogeneous monoid $S$?

The following two corollary follows immediately.

\begin{corollary} \label{cor_ind_eq}
For any $u,v\in S$, it satisfies $\ind(uv)=\ind(u)+\ind(v)$.
\end{corollary}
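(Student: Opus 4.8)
The plan is to deduce this immediately from Lemma \ref{lem_ind_eq}, which asserts that every word of a given element $w\in S$ has exactly $\ind(w)$ letters. The sub-additivity $\ind(uv)\leq \ind(u)+\ind(v)$ has already been observed just after the definition of $\ind$, so the real content is the reverse inequality; in fact both inequalities come out at once once we exhibit a single word of $uv$ having precisely $\ind(u)+\ind(v)$ letters.

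First I would dispose of the degenerate cases. If $u=1$, then $uv=v$ and $\ind(uv)=\ind(v)=0+\ind(v)=\ind(u)+\ind(v)$; the case $v=1$ is symmetric. So assume $u\neq 1$ and $v\neq 1$. By the definition of $\ind$, choose a word $u=q_1q_2\ldots q_a$ with $q_1,\ldots,q_a\in\cP$ and $a=\ind(u)$, and a word $v=r_1r_2\ldots r_b$ with $r_1,\ldots,r_b\in\cP$ and $b=\ind(v)$. Concatenating, $uv=q_1q_2\ldots q_a\,r_1r_2\ldots r_b$ is a word of $uv$ with $a+b$ letters, all irreducible. Applying Lemma \ref{lem_ind_eq} to this word yields $\ind(uv)=a+b=\ind(u)+\ind(v)$, which is the claim.

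There is no genuine obstacle here: the corollary is a direct consequence of the invariance of word length proved in Lemma \ref{lem_ind_eq}. The only point requiring a moment's care is the bookkeeping around the identity element, which admits no nonempty irreducible decomposition; this is why I handle $u=1$ and $v=1$ separately before forming the concatenated word in the generic case.
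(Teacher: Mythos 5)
Your proof is correct and matches the route the paper intends: the paper states this corollary "follows immediately" from Lemma \ref{lem_ind_eq}, and your argument — concatenate irreducible decompositions of $u$ and $v$ to obtain an explicit word of $uv$ of length $\ind(u)+\ind(v)$, then invoke the invariance of word length — is exactly that immediate deduction. The separate handling of $u=1$ or $v=1$ is sound, though strictly optional since Lemma \ref{lem_ind_eq} (and its proof) already covers the empty word $k=0$.
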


\begin{corollary}
(\romannumeral1) Suppose that $u,v$ are elements in $S$ with $u|v$ and $\ind(u)=\ind(v)$. Then $u=v$. (\romannumeral2) Suppose that $u,v$ are elements in $S$ with $u\ddagger v$ and $\ind(u)=\ind(v)$. Then $u=v$.
\end{corollary}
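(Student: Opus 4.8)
The plan is to deduce both statements directly from the additivity of the index (Corollary~\ref{cor_ind_eq}), after first isolating the elementary fact that $\ind(w)=0$ forces $w=1$. For this auxiliary claim I would argue contrapositively: if $w\neq 1$, then Lemma~\ref{lem_irr_decomposition_of_any_element} lets us write $w=q_1q_2\ldots q_K$ with $q_1,\ldots,q_K\in\cP$ and $K\geq 1$, and Lemma~\ref{lem_ind_eq} then gives $\ind(w)=K\geq 1$; hence $\ind(w)=0$ implies $w=1$.

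For part (\romannumeral1), suppose $u\mid v$ and write $v=uw$ for some $w\in S$. By Corollary~\ref{cor_ind_eq} we have $\ind(v)=\ind(u)+\ind(w)$, so the hypothesis $\ind(u)=\ind(v)$ forces $\ind(w)=0$, whence $w=1$ and $u=v$.

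For part (\romannumeral2), suppose $u\ddagger v$; by the definition of co-divisor we may write $v=wu$ for some $w\in S$. Again Corollary~\ref{cor_ind_eq} gives $\ind(v)=\ind(w)+\ind(u)$, so $\ind(w)=0$, whence $w=1$ and $v=u$.

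There is no real obstacle here: all the substance sits in Corollary~\ref{cor_ind_eq} (equivalently, in the word-length invariance of Lemma~\ref{lem_ind_eq}), which is already in hand. The only point worth stating explicitly is the reduction "$\ind(w)=0\Rightarrow w=1$'', which rests in turn on the existence of irreducible decompositions.
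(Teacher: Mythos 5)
Your proof is correct and follows exactly the route the paper intends: the paper introduces this corollary with ``follows immediately'' from the additivity of index, and you supply precisely those routine steps — write $v=uw$ (resp.\ $v=wu$), use $\ind(v)=\ind(u)+\ind(w)$ to force $\ind(w)=0$, and observe $\ind(w)=0\Rightarrow w=1$ via the existence of an irreducible decomposition (Lemma~\ref{lem_irr_decomposition_of_any_element}) together with Lemma~\ref{lem_ind_eq}. Nothing is missing.
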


\begin{lemma} \label{lem_gcd_lcm_u_v}
(\romannumeral1) For any $u,v\in S$, it satisfies
\[
\ind\left(\gcd(u,v)\right)+\ind\left(\lcm[u,v]\right)=\ind(u)+\ind(v).
\]

(\romannumeral2) Let $u,v,w$ be elements in $S$ satisfying $u,v\ddagger w$. Then
\[
\ind\left(\gcd_\ddagger(w;u,v)\right)+\ind\left(\lcm_\ddagger[w;u,v]\right)=\ind(u)+\ind(v).
\]
\end{lemma}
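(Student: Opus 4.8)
The plan is to reduce both statements to a single claim about coprime elements, and then to prove that claim by two nested inductions whose only substantial base case is settled by the injectivity of the castling map $\eta$.

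Part (ii) will follow from part (i) by a direct computation. Put $a=wu^{-1}$ and $b=wv^{-1}$; these lie in $S$ because $u,v\ddagger w$, and the definitions \eqref{eq_lcm_ddagger_definition}, \eqref{eq_gcd_ddagger_definition} give $\lcm_\ddagger[w;u,v]=\gcd(a,b)^{-1}w$ and $\gcd_\ddagger(w;u,v)=\lcm[a,b]^{-1}w$. Since $\gcd(a,b)\mid a\mid w$ and $\lcm[a,b]\mid w$, Corollary \ref{cor_ind_eq} turns the left-hand side of (ii) into $2\ind(w)-\ind(\gcd(a,b))-\ind(\lcm[a,b])$; applying (i) to $a,b$ and using $\ind(a)=\ind(w)-\ind(u)$ and $\ind(b)=\ind(w)-\ind(v)$, this equals $\ind(u)+\ind(v)$. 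For part (i), write $d=\gcd(u,v)$, $u=du_0$, $v=dv_0$; Corollary \ref{cor_remains_of_gcd_has_gcd_1} gives $\gcd(u_0,v_0)=1$, and combining $\lcm[du_0,dv_0]=d\cdot\lcm[u_0,v_0]$ with Corollary \ref{cor_ind_eq} reduces (i) to
\[
(\ast)\qquad \gcd(u,v)=1\ \Longrightarrow\ \ind(\lcm[u,v])=\ind(u)+\ind(v).
\]

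The driving tool for $(\ast)$ is a formula for an lcm after one side has been enlarged by a single irreducible. If $p\in\cP$, $a,v\in S$ and $\gcd(ap,v)=1$ (hence $\gcd(a,v)=1$), set $y=a^{-1}\lcm[a,v]$. A common multiple $w$ of $ap$ and $v$ is automatically a multiple of $\lcm[a,v]$, say $w=\lcm[a,v]\cdot c$, and then $ap\mid w$ is equivalent to $p\mid yc$, i.e.\ to $c\in\bigl(y^{-1}\lcm[p,y]\bigr)S$ (using Lemma \ref{lem_lcm_may_be_defined}); taking the least such $w$ yields
\[
\lcm[ap,v]=a\cdot\lcm[p,y],\qquad y=a^{-1}\lcm[a,v].
\]
I will combine this with the fact that here $\gcd(p,y)=1$: if instead $p\mid y$, then $ap\mid\lcm[a,v]$, so $\lcm[ap,v]=\lcm[a,v]$, and then $\eta$ sends both $\bigl(a,a^{-1}\lcm[a,v]\bigr)$ and $\bigl(ap,(ap)^{-1}\lcm[ap,v]\bigr)$ to $\bigl(v,v^{-1}\lcm[a,v]\bigr)$; since $\eta$ is an involution ($\eta^2=id$), hence injective, this forces $a=ap$, which is impossible.

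Now $(\ast)$ is proved in two stages. Stage~A treats $u=p\in\cP$, by induction on $\ind(v)$: the case $\ind(v)=0$ is trivial, the case $\ind(v)=1$ is the statement $\ind(\lcm[p,q])=2$ discussed below, and for $\ind(v)\geq 2$ one writes $v=bq$ with $q\in\cP$ and $b\mid v$ (Lemma \ref{lem_irr_decomposition_of_any_element}), uses the formula in the shape $\lcm[bq,p]=b\cdot\lcm[q,\bar y]$ with $\bar y=b^{-1}\lcm[b,p]$, applies the inductive hypothesis to $\lcm[b,p]$ (which gives $\ind(\bar y)=1$) and to $\lcm[q,\bar y]$, and uses $\gcd(q,\bar y)=1$ from the fact above. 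Stage~B treats a general $u$ by induction on $\ind(u)$: for $\ind(u)\geq 1$ write $u=ap$ with $p\in\cP$ (Lemma \ref{lem_irr_decomposition_of_any_element}), so $a\mid u$ and $\gcd(a,v)=1$; then $\lcm[u,v]=a\cdot\lcm[p,y]$ with $y=a^{-1}\lcm[a,v]$, the inductive hypothesis for $(a,v)$ gives $\ind(y)=\ind(v)$, Stage~A applied to $(p,y)$ (legitimate since $\gcd(p,y)=1$) gives $\ind(\lcm[p,y])=1+\ind(y)$, and summing gives $\ind(\lcm[u,v])=\ind(a)+1+\ind(v)=\ind(u)+\ind(v)$.

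The main obstacle is the base case $\ind(v)=1$ of Stage~A, namely $\ind(\lcm[p,q])=2$ for distinct irreducibles $p,q$; this is exactly where $\eta$ being an involution is used in an essential way. Write $m=\lcm[p,q]=ps$; one checks $s\neq 1$ (otherwise $q\mid m=p$ would give $q=p$), so if $\ind(m)\geq 3$ then $\ind(s)\geq 2$ and we may write $s=s'r$ with $r\in\cP$, $s'\neq 1$, $s'\mid s$. Put $m_1=ps'$, so $m=m_1r$ and $\ind(m_1)=1+\ind(s')\geq 2$, hence $m_1\neq p$. Any common multiple of $m_1$ and $q$ is a common multiple of $p$ (as $p\mid m_1$) and $q$, hence divisible by $\lcm[p,q]=m$; since $m$ itself is such a common multiple, $\lcm[m_1,q]=m$. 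Also $\gcd(m_1,q)=1$, for otherwise $q\mid m_1$ would give $m=\lcm[p,q]\mid m_1$, contradicting $\ind(m_1)<\ind(m)$. Thus $(m_1,r)\in\fC_1$ and $\eta(m_1,r)=(q,q^{-1}m)=\eta(p,s)$, and injectivity of $\eta$ forces $(m_1,r)=(p,s)$, contradicting $m_1\neq p$. Hence $\ind(\lcm[p,q])=2$, which closes Stage~A, then Stage~B, and finally, via the reductions above, parts (i) and (ii).
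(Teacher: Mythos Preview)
Your proof is correct. The reductions---deriving (ii) from (i) via the definitions \eqref{eq_lcm_ddagger_definition}, \eqref{eq_gcd_ddagger_definition}, and reducing (i) to the coprime case---are the same as in the paper, and the essential input in both arguments is Axiom~\uppercase\expandafter{\romannumeral4}' (equivalently, Lemma~\ref{lem_lcm_unique_from_eta_2=id}). The handling of the coprime case $(\ast)$, however, is organized differently. The paper runs a single induction on $\ind(u)$ and establishes the equality as two inequalities: the lower bound strips a prime off $u$ and uses Lemma~\ref{lem_lcm_unique_from_eta_2=id} to see that $\lcm[u_1,v]$ is a \emph{proper} divisor of $\lcm[u,v]$; the upper bound adjoins a prime $p$ to $v$, observes $\lcm[u,vp]=\lcm[u,v]$, deduces $\gcd(u,vp)\neq 1$ (again via Lemma~\ref{lem_lcm_unique_from_eta_2=id}), and reduces to a smaller coprime pair. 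You instead prove the exact recursive identity $\lcm[ap,v]=a\cdot\lcm[p,\,a^{-1}\lcm[a,v]]$ with $\gcd(p,a^{-1}\lcm[a,v])=1$, and then run a two-stage induction whose base case ($\ind(\lcm[p,q])=2$ for distinct primes) is where the injectivity of $\eta$ enters. Your route yields the value of $\ind(\lcm[u,v])$ directly rather than via two bounds, and the identity you isolate is of independent interest; the paper's route is a bit shorter and avoids setting up the formula, at the cost of two separate ad~hoc applications of Lemma~\ref{lem_lcm_unique_from_eta_2=id}.
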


\begin{proof}
(\romannumeral1) We first prove it under the condition $\gcd(u,v)=1$. Induction on $\ind(u)$ is applied. For $\ind(u)=0$ or $\ind(v)=0$, the proof is trivial. In the following, we always assume that $\ind(v)\geq 1$.

Write $z:=\lcm[u,v]=uy=vx$. Suppose that the result has been proved for $\ind(u)\leq m-1$ with some $m\geq 1$. Now we handle the case $\ind(u)=m$. Let $u=u_1q$ for some $q\in \cP$. Then $\gcd(u_1,v)=1$. Write $z_1:=\lcm[u_1,v]=u_1y_1=vx_1$. By inductive hypothesis, it satisfies $\ind(z_1)=\ind(u_1)+\ind(v)$. If $z_1=z$, then we obtain by Lemma \ref{lem_lcm_unique_from_eta_2=id} that $u_1=u$, which is a contradiction. Thus, we have $z_1|z$ and $z_1\neq z$. So $\ind(z)\geq \ind(z_1)+1= \ind(u)+\ind(v)$.

Since $u\nmid v$, one has $\ind(x)\geq 1$ and we write $x=px_0$ for some $p\in \cP$ and $x_0\in S$. Since $u|z$ and $vp|z$, one has $\lcm[u,vp]|z$. In view of $u|\lcm[u,vp]$ and $v|\lcm[u,vp]$, one deduces that $z=\lcm[u,v]|\lcm[u,vp]$. As a result, we have $\lcm[u,v]=\lcm[u,vp]$. Denote $d=\gcd(u,vp)$. If $d=1$, then we deduce by Lemma \ref{lem_lcm_unique_from_eta_2=id} that $vp=v$, which is a contradiction. So $d\neq 1$. Write $u=da$ and $vp=db$. Then $\gcd(a,b)=1$. And $\ind(a)=\ind(u)-\ind(d)\leq m-1$. By inductive hypothesis, we have
\[
\ind\left(\lcm[a,b]\right)= \ind(a)+\ind(b)=\ind(u)+\ind(vp)-2\cdot \ind(d)\leq \ind(u)+\ind(v)-\ind(d).
\]
It follows that
\[
\ind\left(\lcm[u,vp]\right) = \ind\left(d\cdot \lcm[a,b]\right)= \ind(d)+ \ind(\lcm[a,b])\leq  \ind(u)+\ind(v).
\]
Hence $\ind(\lcm[u,v])\leq \ind(u)+\ind(v)$.

We have shown that $\ind(\lcm[u,v])= \ind(u)+\ind(v)$ in the case $\gcd(u,v)=1$. Now, we turn to the general case that $\gcd(u,v)=e$. Write $u=eu_1$, $v=ev_1$. Then $\gcd(u_1,v_1)=1$. By above discussions, one gets $\ind\left(\lcm[u_1,v_1]\right)=\ind(u_1)+\ind(v_1)$. It follows that
\[
\ind\left(\lcm[u,v]\right) = \ind\left(e\cdot \lcm[u_1,v_1]\right) = \ind(e) + \ind(u_1)+\ind(v_1) = \ind(u)+\ind(v) - \ind(e).
\]
The proof is completed.

(\romannumeral2) Write $w=c_1u=c_2v$. Put $c=\gcd(c_1,c_2)$ and $d=\lcm[c_1,c_2]$. Write $w=cz=dz_0$. Then $\lcm_\ddagger [w;u,v]=z$ and $\gcd_\ddagger (w;u,v)= z_0$. Note that
\[
\ind(c)+\ind(z)=\ind(d)+\ind(z_0) = \ind(w) = \ind(c_1)+\ind(u)=\ind(c_2)+\ind(v).
\]
By (\romannumeral1), one has $\ind(c)+\ind(d)=\ind(c_1)+\ind(c_2)$. Then
\[
\ind(z_0)+\ind(z) = 2\ind(w)-\ind(c) - \ind(d) = 2\ind(w)-\ind(c_1)-\ind(c_2)= \ind(u)+\ind(v).
\]
This completes the proof.
\end{proof}



\begin{corollary} \label{cor_C_1_ind_transfer}
Suppose that $\uuuline{u} v \rlh \widetilde{v} \uuuline{\widetilde{u}}$. Then $\ind(u)=\ind(\widetilde{u})$, $\ind(v)=\ind(\widetilde{v})$ and $uv=\widetilde{v}\widetilde{u}$.
\end{corollary}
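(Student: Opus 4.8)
The plan is to unwind the free‑castling notation and then let the index identities already in hand do all the work. First I would recall the characterisation of free castlings (the Remark following Corollary~\ref{cor_C_1_defined_by_ddagger}, phrased there with variables $u,y,v,x$): the relation $\uuuline{u}v\rlh\widetilde v\uuuline{\widetilde u}$ is, after matching variables, equivalent to the two conditions
\[
\gcd(u,\widetilde v)=1,\qquad \lcm[u,\widetilde v]=uv=\widetilde v\widetilde u .
\]
In particular the chain $uv=\widetilde v\widetilde u$ is immediate, which settles the third assertion. The only point requiring a moment's care here is to match the variables correctly, so that it is the pair $u,\widetilde v$ (and not $u,v$) that is coprime with $\lcm$ equal to $uv$.

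Next I would evaluate $\ind$ of the common element $z:=\lcm[u,\widetilde v]=uv=\widetilde v\widetilde u$ in three ways. By Corollary~\ref{cor_ind_eq},
\[
\ind(z)=\ind(uv)=\ind(u)+\ind(v)\qquad\text{and}\qquad \ind(z)=\ind(\widetilde v\widetilde u)=\ind(\widetilde v)+\ind(\widetilde u).
\]
On the other hand, since $\gcd(u,\widetilde v)=1$ we have $\ind\big(\gcd(u,\widetilde v)\big)=0$, so Lemma~\ref{lem_gcd_lcm_u_v}(i) applied to the pair $u,\widetilde v$ gives $\ind(z)=\ind(u)+\ind(\widetilde v)$.

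Finally I would compare these expressions. Equating the first with the last yields $\ind(u)+\ind(v)=\ind(u)+\ind(\widetilde v)$, hence $\ind(v)=\ind(\widetilde v)$; equating the second with the last yields $\ind(\widetilde v)+\ind(\widetilde u)=\ind(u)+\ind(\widetilde v)$, hence $\ind(\widetilde u)=\ind(u)$. Together with $uv=\widetilde v\widetilde u$ this gives all three claims. There is no real obstacle in this argument; everything reduces to correctly reading off the $\gcd$/$\lcm$ data encoded in the castling notation and then invoking Corollary~\ref{cor_ind_eq} and Lemma~\ref{lem_gcd_lcm_u_v}(i).
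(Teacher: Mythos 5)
Your proposal matches the paper's own proof: both extract from the free‑castling data that $\gcd(u,\widetilde v)=1$ and $\lcm[u,\widetilde v]=uv=\widetilde v\widetilde u$, then evaluate $\ind$ of this common element via Corollary~\ref{cor_ind_eq} and Lemma~\ref{lem_gcd_lcm_u_v}(i) and compare. The only difference is that the paper compresses the three evaluations into a single chain of equalities; otherwise the argument is identical.
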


\begin{proof}
By the construction of $\fC_1$ and the definition of free castlings, we have $\gcd(u,\widetilde{v})=1$ and $\lcm[u,\widetilde{v}]=uv=\widetilde{v}\widetilde{u}$. Hence
\[
\ind(u)+\ind(\widetilde{v}) = \ind(\lcm[u,\widetilde{v}]) = \ind(u)+\ind(v)=\ind(\widetilde{v})+\ind(\widetilde{u}).
\]
Now the corollary follows.
\end{proof}

\begin{corollary}
Let $u,v,w\in S$. Then
\begin{align*}
&\ind\left(\lcm[uw,v]\right)\leq \ind\left(\lcm[u,v]\right)+\ind(w),\\
&\ind\left(\gcd(uw,v)\right)\leq \ind\left(\gcd(u,v)\right)+\ind(w).
\end{align*}
\end{corollary}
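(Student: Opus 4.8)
The plan is to reduce both inequalities to the additive identities for $\ind$ already established, namely Corollary~\ref{cor_ind_eq} ($\ind(uv)=\ind(u)+\ind(v)$) and Lemma~\ref{lem_gcd_lcm_u_v}(\romannumeral1) ($\ind(\gcd(u,v))+\ind(\lcm[u,v])=\ind(u)+\ind(v)$), together with the obvious monotonicity of $\ind$ along the divisibility order: if $a\mid b$, write $b=ac$, so $\ind(b)=\ind(a)+\ind(c)\ge\ind(a)$ by Corollary~\ref{cor_ind_eq}. Everything else is bookkeeping with the partial order ``$\mid$''.

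For the first inequality I would write $\lcm[u,v]=ua$ with $a\in S$, and then produce an explicit common multiple of $uw$ and $v$ that is easy to control, namely $u\cdot\lcm[w,a]$. Indeed $w\mid\lcm[w,a]$ by Lemma~\ref{lem_numbertheoretic_def_lcm}(\romannumeral1), so left-multiplication by $u$ (Lemma~\ref{lem_basic_properties_of_divisibility}(\romannumeral1)) gives $uw\mid u\lcm[w,a]$; also $a\mid\lcm[w,a]$ gives $ua\mid u\lcm[w,a]$, and since $v\mid\lcm[u,v]=ua$ we get $v\mid u\lcm[w,a]$. Hence $\lcm[uw,v]\mid u\cdot\lcm[w,a]$ by Lemma~\ref{lem_numbertheoretic_def_lcm}(\romannumeral2), so by monotonicity and Corollary~\ref{cor_ind_eq},
\[
\ind(\lcm[uw,v])\le\ind(u)+\ind(\lcm[w,a]).
\]
Now Lemma~\ref{lem_gcd_lcm_u_v}(\romannumeral1) bounds $\ind(\lcm[w,a])\le\ind(w)+\ind(a)$, while $\ind(u)+\ind(a)=\ind(ua)=\ind(\lcm[u,v])$; combining these yields $\ind(\lcm[uw,v])\le\ind(\lcm[u,v])+\ind(w)$.

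For the second inequality I would first note that $\lcm[u,v]\mid\lcm[uw,v]$, since $u\mid uw\mid\lcm[uw,v]$ and $v\mid\lcm[uw,v]$ (Lemma~\ref{lem_numbertheoretic_def_lcm}(\romannumeral2)), so $\ind(\lcm[u,v])\le\ind(\lcm[uw,v])$ by monotonicity. Then applying Lemma~\ref{lem_gcd_lcm_u_v}(\romannumeral1) to the pairs $(uw,v)$ and $(u,v)$ and using $\ind(uw)=\ind(u)+\ind(w)$,
\[
\ind(\gcd(uw,v))-\ind(\gcd(u,v))=\ind(w)+\ind(\lcm[u,v])-\ind(\lcm[uw,v])\le\ind(w),
\]
as desired. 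There is no real obstacle here; the only mildly delicate point is recognizing $u\cdot\lcm[w,a]$ as a common multiple of $uw$ and $v$, which lets the first inequality fall out without computing $\lcm[uw,v]$ explicitly, and the second inequality then costs nothing beyond the monotonicity remark and Lemma~\ref{lem_gcd_lcm_u_v}(\romannumeral1).
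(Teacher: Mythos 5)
Your argument is correct. The paper proves the first inequality via the rearrangement $\lcm[uw,v]=\lcm[uw,\lcm[u,v]]$ and then applies Lemma~\ref{lem_gcd_lcm_u_v} to the pair $(uw,\lcm[u,v])$, bounding $\ind(\gcd(uw,\lcm[u,v]))$ below by $\ind(u)$; for the second it mirrors this, using $\gcd(u,v)=\gcd(u,\gcd(uw,v))$ and bounding $\ind(\lcm[u,\gcd(uw,v)])$ above by $\ind(uw)$. You instead exhibit the explicit common multiple $u\cdot\lcm[w,a]$ of $uw$ and $v$ (with $ua=\lcm[u,v]$) and bound its index directly, and for the gcd inequality you simply note $\lcm[u,v]\mid\lcm[uw,v]$ and subtract the two instances of Lemma~\ref{lem_gcd_lcm_u_v}. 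Both approaches rest on the same two pillars --- the inclusion--exclusion identity of Lemma~\ref{lem_gcd_lcm_u_v} and monotonicity of $\ind$ along divisibility --- but your route avoids the $\lcm[uw,v]=\lcm[uw,\lcm[u,v]]$ rearrangement and instead builds an easy common multiple by hand, while your treatment of the second inequality is a cheap corollary of the two exact identities rather than a separate mirror argument. Either is perfectly sound; the paper's is a bit more symmetric, yours is a bit more economical on the gcd side.
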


\begin{proof} \label{cor_ind_uw_v}
Write $a=\lcm[uw,v]$ and $b=\lcm[u,v]$ for simplicity. Note that
\[
a=\lcm[\lcm[uw,u],v] = \lcm[uw,\lcm[u,v]] = \lcm[uw,b].
\]
Since $u|b$ and $u|uw$, we have $u|\gcd(uw,b)$ and $\ind(u)\leq \ind\left(\gcd(uw,b)\right)$. By Lemma \ref{lem_gcd_lcm_u_v}, we deduce that
\[
\ind(a) = \ind(uw)+\ind(b) - \ind\left(\gcd(uw,b)\right) \leq \ind(b)+\ind(w).
\]

Write $a^\prime=\gcd(uw,v)$ and $b^\prime=\gcd(u,v)$ for simplicity. Then
\[
b^\prime = \gcd(\gcd(u,uw),v) = \gcd(u,\gcd(uw,v)) = \gcd(u,a^\prime).
\]
Since $u|uw$ and $a^\prime| uw$, one has $\lcm[u,a^\prime]|uw$ and $\ind\left(\lcm[u,a^\prime]\right)\leq \ind(uw)$. By Lemma \ref{lem_gcd_lcm_u_v}, we obtain
\[
\ind(b^\prime) = \ind(u)+\ind(a^\prime) - \ind\left(\lcm[u,a^\prime]\right) \geq \ind(a^\prime)-\ind(w).
\]

Now the lemma follows.
\end{proof}

\subsection{Composition and Decomposition of Free Castlings}

In this subsection, we show some basic properties of free castlings, which will play a large part in the rest of this paper.

\begin{lemma}  [Fundamental lemma for arithmetic]  \label{lem_divisor_castling}
\quad

(\romannumeral1) Let $u,v\in S$. Suppose that $w$ is a divisor of $uv$ satisfying $\gcd(w,u)=1$. Then there exists some $v_1|v$ and $\widetilde{u}\in S$ such that $\uuuline{w}\widetilde{u} \rlh u\uuuline{v_1}$. Moreover, if $w^\prime$ is also a divisor of $uv$ such that $\gcd(w^\prime,u)=1$ and $\uuuline{w^\prime}\widetilde{u}^\prime \rlh u \uuuline{v_1}$ for some $\widetilde{u}^\prime\in S$, then $w^\prime=w$.

(\romannumeral2) Let $u,v\in S$. Suppose that $w$ is a co-divisor of $uv$ satisfying $\gcd_\ddagger(uv;w,v)=1$. Then there exists some $u_1\ddagger u$ and $\widetilde{v}\in S$ such that $\uuuline{\widetilde{v}}w \rlh u_1\uuuline{v}$. Moreover, if $w^\prime$ is also a co-divisor of $uv$ such that $\gcd_\ddagger(uv;w^\prime,v)=1$ and $\uuuline{\widetilde{v}^\prime}w^\prime \rlh u_1 \uuuline{v}$ for some $\widetilde{v}^\prime\in S$, then $w^\prime=w$.
\end{lemma}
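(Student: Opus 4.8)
The plan is to show that in both parts the required free castling is \emph{forced}: in (i) it is the free castling attached to $\lcm[w,u]$, and in (ii) the one extracted from $\lcm_\ddagger[uv;w,v]$ through the duality of Lemma~\ref{lem_gcd_gcddagger_transfer}; uniqueness is then immediate from Lemma~\ref{lem_lcm_unique_from_eta_2=id}. For (i): since $w\mid uv$ and $u\mid uv$, Lemma~\ref{lem_numbertheoretic_def_lcm}(ii) gives $\lcm[w,u]\mid uv$, so I may write $\lcm[w,u]=w\widetilde u=uv_1$ with $\widetilde u,v_1\in S$; cancelling $u$ in $G$ in the divisibility $uv_1=\lcm[w,u]\mid uv$ yields $v_1\mid v$. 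Because $\gcd(w,u)=1$ and $\lcm[w,u]=w\widetilde u=uv_1$, this is by definition a free castling $\uuuline{w}\widetilde u\rlh u\uuuline{v_1}$, which is the existence claim. For uniqueness, if $\gcd(w',u)=1$ and $\uuuline{w'}\widetilde u'\rlh u\uuuline{v_1}$ for some $\widetilde u'$, then that castling says $\lcm[w',u]=w'\widetilde u'=uv_1=\lcm[w,u]$ with $\gcd(w',u)=\gcd(w,u)=1$, and Lemma~\ref{lem_lcm_unique_from_eta_2=id}(ii) (using the symmetry of $\lcm$ and $\gcd$ to put $u$ in the distinguished slot) forces $w'=w$; the hypothesis $w'\mid uv$ is in fact automatic, since $w'\mid\lcm[w',u]=uv_1\mid uv$.

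For (ii) I would argue dually. Here $w\ddagger uv$ is the hypothesis and $v\ddagger uv$ holds trivially, so $z:=\lcm_\ddagger[uv;w,v]$ is defined; write $z=u_1v=\widetilde vw$ with $u_1,\widetilde v\in S$, and note that $u_1v=z\ddagger uv$ gives $u_1\ddagger u$ by right cancellation. Applying Lemma~\ref{lem_gcd_gcddagger_transfer}(ii) with upper bound $uv$, distinguished co-divisors $w,v$ and $z=u_1v=\widetilde vw$ (legitimate since $\gcd_\ddagger(uv;w,v)=1$) gives $\gcd(u_1,\widetilde v)=1$ and $\lcm[u_1,\widetilde v]=z=u_1v=\widetilde vw$, i.e.\ precisely the free castling $\uuuline{\widetilde v}w\rlh u_1\uuuline{v}$ with $u_1\ddagger u$. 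For the uniqueness part, given a competitor with $\gcd_\ddagger(uv;w',v)=1$ and $\uuuline{\widetilde v'}w'\rlh u_1\uuuline{v}$, I would run that castling back through Lemma~\ref{lem_gcd_gcddagger_transfer}(i) to rewrite it as $\gcd_\ddagger(\,\cdot\,;v,w')=1$ and $\lcm_\ddagger[\,\cdot\,;v,w']=u_1v$ (bound $u_1v$), normalize the bound to $uv$ using Lemma~\ref{lem_ddagger_upperbound_transfer} (valid since $u_1v\ddagger uv$ and $w',v\ddagger u_1v$), and then invoke Lemma~\ref{lem_lcm_unique_from_eta_2=id}(iv): from $\lcm_\ddagger[uv;v,w]=u_1v=\lcm_\ddagger[uv;v,w']$ and $\gcd_\ddagger(uv;v,w)=\gcd_\ddagger(uv;v,w')=1$ conclude $w'=w$. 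Alternatively, one can prove (ii) by copying the proof of (i) verbatim with $\mid$ replaced by $\ddagger$ and $\fC_1$ described as in Corollary~\ref{cor_C_1_defined_by_ddagger}.

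I do not expect a substantive obstacle: the statement is a repackaging of Axiom~\uppercase\expandafter{\romannumeral4}' and the duality of Section~\ref{section_integral_monoids}, and once $\lcm[w,u]$ (resp.\ $\lcm_\ddagger[uv;w,v]$) is in hand, existence is immediate from the definition of a free castling and uniqueness from Lemma~\ref{lem_lcm_unique_from_eta_2=id}. The one place requiring genuine care is the co-divisor bookkeeping in (ii): every $\lcm_\ddagger$ and $\gcd_\ddagger$ must be taken up to a bound that actually co-divides the element in question, and before comparing two such quantities taken up to different bounds (here $uv$ versus $\widetilde vw=u_1v$) one must equalize the bounds via Lemma~\ref{lem_ddagger_upperbound_transfer}. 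Beyond that, everything reduces to cancellation in the group $G$ together with the single structural fact that $\Gamma_1$ is the graph of the involution $\eta$ on $\fC_1$.
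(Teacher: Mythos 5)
Your proposal is correct and follows essentially the same route as the paper: the existence statement in (i) comes from identifying the free castling with the pair $(u,v_1)$ attached to $\lcm[w,u]$ and observing $\lcm[w,u]\mid uv$, while (ii) is the dual using $\lcm_\ddagger[uv;w,v]$ and right-cancellation for $u_1\ddagger u$, exactly as in the paper. The paper disposes of uniqueness in one line ("results from Axiom \uppercase\expandafter{\romannumeral4}'", i.e.\ $\eta$ is a function so $\eta((u,v_1))$ has only one value), whereas you route it through the equivalent reformulations in Lemma~\ref{lem_lcm_unique_from_eta_2=id}; this is slightly longer but says the same thing, and your care about normalizing the $\ddagger$-bounds via Lemma~\ref{lem_ddagger_upperbound_transfer} is a valid detail the paper leaves implicit.
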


\begin{proof}
The uniqueness results from Axiom \uppercase\expandafter{\romannumeral4}'. It is sufficient to prove the existence of corresponding elements.

(\romannumeral1) Denote $\lcm[w,u]=w\widetilde{u}=uv_1$ for some $\widetilde{u},v_1\in S$. Combining $\gcd(w,u)=1$, we see that $\uuuline{w}\widetilde{u}\rlh u \uuuline{v_1}$. Note that $w|uv$ and $u|uv$. One obtains $uv_1=\lcm[u,w]|uv$, which implies $v_1|v$.

(\romannumeral2) Denote $\lcm_\ddagger[uv;w,v]=\widetilde{v}w=u_1v$ for some $\widetilde{v},u_1\in S$. Combining $\gcd_\ddagger(uv;w,v)=1$, we see that $\uuuline{\widetilde{v}}w\rlh u_1 \uuuline{v}$. Note that $w\ddagger uv$ and $v\ddagger uv$. One obtains $u_1v=\lcm_\ddagger[uv;w,v]\ddagger uv$, which implies $u_1\ddagger u$.
\end{proof}

\begin{remark}
This lemma gives a first hint to turn irreducible elements into primes. Suppose that $p|uv$. Then either $p|u$ or $p\nmid u$. In the latter case, we have $\uuuline{p}\widetilde{u}\rlh u\uuuline{q}$ for some $q|v$, $q\in \cP$ and $\widetilde{u}\in S$. That is to say, the element $p$ either comes from $u$, or comes from $v$. We will give the concrete definition of a prime in Section \ref{section_castlable_monoid}, after we put into consider the elements that are not free.
\end{remark}

\begin{lemma} [Decomposition of free castlings] \label{lem_C1_subseteq_C_0}
\quad

Let $u,v,\widetilde{u},\widetilde{v}$ be elements in $S$ such that $\uuuline{u}v \rlh \widetilde{v} \uuuline{\widetilde{u}}$.

(\romannumeral1). For any $u_1,u_2\in S$ with $u_1u_2=u$, there exist elements $\widehat{u_1},\widehat{u_2},\widehat{v}$ in $S$ with $\widehat{u_1}\widehat{u_2}=\widetilde{u}$ such that
\[
\uuuline{u_2}v \rlh \widehat{v} \uuuline{\widehat{u_2}},\quad \uuuline{u_1} \widehat{v}  \rlh \widetilde{v} \uuuline{\widehat{u_1}}.
\]

(\romannumeral2). For any $v_1,v_2\in S$ with $v_1v_2=v$, there exist elements $\widehat{u},\widehat{v_1},\widehat{v_2}$ with $\widehat{v_1}\widehat{v_2}=\widetilde{v}$ such that
\begin{align*}
\uuuline{u}v_1 =\widehat{v_1}\uuuline{\widehat{u}},\quad \uuuline{\widehat{u}}v_2  =  \widehat{v_2} \uuuline{\widetilde{u}}.
\end{align*}
\end{lemma}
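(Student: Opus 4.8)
The plan is to write the intermediate elements explicitly as suitable least common multiples or greatest common divisors and then, for each of the two claimed castlings, verify the pair of conditions that characterise a free castling: recall from the construction of $\fC_1$ and $\eta$ that $\uuuline{a}b\rlh c\uuuline{d}$ is equivalent to $\gcd(a,c)=1$ together with $\lcm[a,c]=ab=cd$. The recurring tools will be: a divisor of one member of a divisor-coprime pair is again coprime to the other member; $\ind$ is additive on products (Corollary~\ref{cor_ind_eq}) with $\ind(w)=0\iff w=1$, and $\ind(\gcd(\cdot,\cdot))+\ind(\lcm[\cdot,\cdot])=\ind(\cdot)+\ind(\cdot)$ (Lemma~\ref{lem_gcd_lcm_u_v}(i)); and free left/right cancellation in equations (since $G$ is a group). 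Throughout, the hypothesis gives $\gcd(u,\widetilde v)=1$ and $uv=\widetilde v\widetilde u=\lcm[u,\widetilde v]$, and by Corollary~\ref{cor_C_1_ind_transfer} we have $\ind(\widetilde v)=\ind(v)$.

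For (i), with $u=u_1u_2$, I set $\widehat v:=u_1^{-1}\lcm[u_1,\widetilde v]$ and $\widehat{u_1}:=\widetilde v^{-1}\lcm[u_1,\widetilde v]$. Since $u_1|u$ forces $\gcd(u_1,\widetilde v)=1$, the pair $(u_1,\widehat v)$ lies in $\fC_1$ and $\uuuline{u_1}\widehat v\rlh\widetilde v\uuuline{\widehat{u_1}}$; Lemma~\ref{lem_gcd_lcm_u_v}(i) then gives $\ind(\widehat v)=\ind(\widetilde v)=\ind(v)$. Because $u_1\widehat v=\lcm[u_1,\widetilde v]$ divides $\lcm[u,\widetilde v]=u_1u_2v$, cancelling $u_1$ yields $\widehat v|u_2v$; and since $\lcm[u_1u_2,u_1\widehat v]=u_1\lcm[u_2,\widehat v]$ equals $\lcm[u,\lcm[u_1,\widetilde v]]=\lcm[u,\widetilde v]=u_1u_2v$ (the middle equality using $u_1|u$), cancelling $u_1$ gives $\lcm[u_2,\widehat v]=u_2v$. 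An index count then produces $\ind(\gcd(u_2,\widehat v))=\ind(u_2)+\ind(\widehat v)-\ind(u_2v)=0$, so $\gcd(u_2,\widehat v)=1$; hence $\uuuline{u_2}v\rlh\widehat v\uuuline{\widehat{u_2}}$ with $\widehat{u_2}:=\widehat v^{-1}u_2v$. Finally $\widehat{u_1}\widehat{u_2}=\widetilde v^{-1}(u_1\widehat v)\widehat v^{-1}(u_2v)=\widetilde v^{-1}u_1u_2v=\widetilde v^{-1}(uv)=\widetilde u$, giving $\widehat{u_1}\widehat{u_2}=\widetilde u$.

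For (ii), with $v=v_1v_2$, I set $\widehat{v_1}:=\gcd(uv_1,\widetilde v)$, $\widehat{v_2}:=\widehat{v_1}^{-1}\widetilde v$ (so $\widehat{v_1}\widehat{v_2}=\widetilde v$ automatically), and $\widehat u:=\widehat{v_1}^{-1}uv_1$. Associativity of $\gcd$ gives $\gcd(u,\widehat{v_1})=\gcd(u,\gcd(uv_1,\widetilde v))=\gcd(u,\widetilde v)=1$; since $uv_1|uv=\lcm[u,\widetilde v]$ one gets $\lcm[uv_1,\widetilde v]=\lcm[u,\widetilde v]=uv$, whence Lemma~\ref{lem_gcd_lcm_u_v}(i) yields $\ind(\widehat{v_1})=\ind(uv_1)+\ind(\widetilde v)-\ind(uv)=\ind(v_1)$, and then $\lcm[u,\widehat{v_1}]$ (which divides $uv_1$) has index $\ind(u)+\ind(\widehat{v_1})=\ind(uv_1)$, so $\lcm[u,\widehat{v_1}]=uv_1$. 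Thus $\uuuline{u}v_1\rlh\widehat{v_1}\uuuline{\widehat u}$, and $\widehat uv_2=\widehat{v_1}^{-1}uv_1v_2=\widehat{v_1}^{-1}uv=\widehat{v_1}^{-1}\widetilde v\widetilde u=\widehat{v_2}\widetilde u$. The coprimality $\gcd(\widehat u,\widehat{v_2})=1$ is the heart of the matter: writing $e:=\gcd(\widehat u,\widehat{v_2})$, the element $\widehat{v_1}e$ divides both $\widehat{v_1}\widehat u=uv_1$ and $\widehat{v_1}\widehat{v_2}=\widetilde v$, hence $\widehat{v_1}e\,|\,\gcd(uv_1,\widetilde v)=\widehat{v_1}$, so $e=1$ by Lemma~\ref{lem_basic_properties_of_divisibility}(ii). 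Then $\lcm[\widehat u,\widehat{v_2}]$ divides $\widehat uv_2$ and, using $\ind(\widehat{v_2})=\ind(\widetilde v)-\ind(\widehat{v_1})=\ind(v_2)$ and the index identity, has index $\ind(\widehat u)+\ind(\widehat{v_2})=\ind(\widehat uv_2)$, so $\lcm[\widehat u,\widehat{v_2}]=\widehat uv_2$; hence $\uuuline{\widehat u}v_2\rlh\widehat{v_2}\uuuline{\widetilde u}$, while $\widehat{v_1}\widehat{v_2}=\widetilde v$ holds by construction.

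The main obstacle, in both parts, is the coprimality of the newly produced pair, $\gcd(u_2,\widehat v)=1$ in (i) and $\gcd(\widehat u,\widehat{v_2})=1$ in (ii): the lcm identities, membership in $\fC_1$, and the factorizations $\widehat{u_1}\widehat{u_2}=\widetilde u$, $\widehat{v_1}\widehat{v_2}=\widetilde v$ are essentially forced once the intermediate elements are chosen, but coprimality requires either the index bookkeeping of Lemma~\ref{lem_gcd_lcm_u_v} — which in turn rests on first establishing $\ind(\widehat v)=\ind(v)$, $\ind(\widehat{v_1})=\ind(v_1)$, $\ind(\widehat{v_2})=\ind(v_2)$ — or, in case (ii), the cancellation trick built on $\widehat{v_1}=\gcd(uv_1,\widetilde v)$ and Lemma~\ref{lem_basic_properties_of_divisibility}(ii). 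A secondary point to watch is that all of $\widehat v,\widehat u,\widehat{v_1},\widehat{v_2},\widehat{u_1},\widehat{u_2}$ genuinely lie in $S$, which follows from $u_1|\lcm[u_1,\widetilde v]$, $\widetilde v|\lcm[u_1,\widetilde v]$, $\widehat v|u_2v$, $\widehat{v_1}|uv_1$, and $\widehat{v_1}|\widetilde v$.
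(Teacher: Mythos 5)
Your proof is correct, and it takes a genuinely different (and arguably more self-contained) route than the paper's. The paper defines the intermediate elements via $\lcm[u_1,\widetilde v]$ exactly as you do, but then switches to the dual side: from $\gcd_\ddagger(z;v,\widetilde u)=1$ and $\widehat{u_2}\ddagger\widetilde u$ it immediately reads off $\gcd_\ddagger(z;v,\widehat{u_2})=1$, then computes $\ind(\lcm_\ddagger[z;v,\widehat{u_2}])=\ind(u_2v)$ together with $\lcm_\ddagger[z;v,\widehat{u_2}]\ddagger u_2v$ to force equality, and concludes $\uuuline{u_2}v\rlh\widehat v\,\uuuline{\widehat{u_2}}$ via the duality of Lemma~\ref{lem_gcd_gcddagger_transfer}; part~(ii) is then dismissed by symmetry. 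You instead stay entirely on the primal side: in~(i) you derive $\lcm[u_2,\widehat v]=u_2v$ by the translation property $\lcm[cu_1,cu_2]=c\,\lcm[u_1,u_2]$ together with $\lcm$-associativity and left cancellation, and then get $\gcd(u_2,\widehat v)=1$ from the index identity $\ind(\gcd)+\ind(\lcm)=\ind+\ind$ (Lemma~\ref{lem_gcd_lcm_u_v}) once $\ind(\widehat v)=\ind(v)$ is known; in~(ii) you additionally replace the index count for coprimality by the clean cancellation trick $\widehat{v_1}e\,|\,\gcd(uv_1,\widetilde v)=\widehat{v_1}\Rightarrow e=1$, which does not even need the $\ind$ machinery. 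The paper's approach buys economy by reusing the $\ddagger$-duality systematically (one argument serves both castlings and both parts), while yours is more explicit, avoids $\gcd_\ddagger/\lcm_\ddagger$ entirely, and actually spells out part~(ii), which the paper only waves at. Both are correct; your proof could be tightened slightly in~(ii) by noting that the first castling's coprimality also follows from the cancellation style of argument rather than $\gcd$-associativity, but as written it is fine.
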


\begin{proof}
(\romannumeral1). Write $z:=uv=\widetilde{v}\widetilde{u}$. We know from $\uuuline{u}v \rlh \widetilde{v} \uuuline{\widetilde{u}}$ that
\[
\gcd(u,\widetilde{v})=\gcd_\ddagger(z;v,\widetilde{u})= 1,\quad \lcm[u,\widetilde{v}]= \lcm_\ddagger[z;v,\widetilde{u}]=z
\]
and $\ind(u)=\ind(\widetilde{u})$, $\ind(v)=\ind(\widetilde{v})$. It follows from $\gcd(u,\widetilde{v})=1$ that $\gcd(u_1,\widetilde{v})=1$. Write $\lcm[u_1,\widetilde{v}]= u_1 \widehat{v} = \widetilde{v}\widehat{u_1}$. Then $\uuuline{u_1}\widehat{v}\rlh \widetilde{v}\uuuline{\widehat{u_1}}$ and $\ind(u_1)=\ind(\widehat{u_1})$. Since $\lcm[u_1,\widetilde{v}]|\lcm[u,\widetilde{v}]$, i.e., $\widetilde{v}\widehat{u_1}|\widetilde{v}\widetilde{u}$, one obtains $\widehat{u_1}|\widetilde{u}$. Put $\widetilde{u}=\widehat{u_1}\widehat{u_2}$. Then
\[
u_1u_2v=uv=\widetilde{v}\widetilde{u}= \widetilde{v}\widehat{u_1}\widehat{u_2}=u_1\widehat{v}\widehat{u_2}.
\]
We deduce that $u_2v=\widehat{v}\widehat{u_2}$, and $\ind(u_2)=\ind(\widehat{u_2})$. In view of $\gcd_\ddagger(z;v,\widetilde{u})=1$, one has $\gcd_\ddagger(z;v,\widehat{u_2})=1$. Then
\begin{equation} \label{eq_decomposition_C_1_1}
\ind\left(\lcm_\ddagger[z;v,\widehat{u_2}]\right)=\ind(v)+\ind(\widehat{u_2})=\ind(v)+\ind(u_2) = \ind(u_2v).
\end{equation}
Moreover, since both $\widehat{u_2}$ and $v$ are co-divisors of $u_2v$, and $u_2v\ddagger z$, we obtain $\lcm_\ddagger[z;v,\widehat{u_2}]\ddagger u_2v$. Combining \eqref{eq_decomposition_C_1_1}, one obtains $\lcm_\ddagger[z;v,\widehat{u_2}]= u_2v$. Now, we conclude that $\uuuline{u_2}v \rlh \widehat{v} \uuuline{\widehat{u_2}}$.

(\romannumeral2). The conclusion follows from similar arguments as above.
\end{proof}

For simplicity, we abbreviate the formulae in Lemma \ref{lem_C1_subseteq_C_0} as follows. When $\uuuline{u}v \rlh \widetilde{v} \uuuline{\widetilde{u}}$, for any $u_1,u_2$ with $u_1u_2=u$, we have
\[
\uuuline{u}v=\uuuline{u_1u_2}v \rlh \uuuline{u_1}\widehat{v} \uuuline{\widehat{u_2}} \rlh \widetilde{v}\uuuline{\widehat{u_1}\widehat{u_2}} = \widetilde{v}\uuuline{\widetilde{u}}
\]
for some $\widehat{u_1},\widehat{u_2},\widehat{v}\in S$; and for any $v_1,v_2$ with $v_1v_2=v$, we have
\[
\uuuline{u}v=\uuuline{u}v_1v_2 \rlh \widehat{v_1}\uuuline{\widehat{u}}v_2  \rlh \widehat{v_1}\widehat{v_2}\uuuline{\widetilde{u}} = \widetilde{v}\uuuline{\widetilde{u}}.
\]
for some $\widehat{u},\widehat{v_1},\widehat{v_2}\in S$. Here $\uuuline{a}b = \uuuline{a^\prime}b^\prime$ always means that $a=a^\prime$ and $b=b^\prime$ as elements in $S$.

\begin{lemma} [Composition of free castlings] \label{lem_composition_C_1}
\quad

(\romannumeral1) Suppose $u_1,u_2,v,\widetilde{u_1},\widetilde{u_2},\widetilde{v},\widetilde{\widetilde{v}}$ are elements in $S$ such that
\[
\uuuline{u_2}v = \widetilde{v}\uuuline{\widetilde{u_2}},\quad \uuuline{u_1}\widetilde{v} = \widetilde{\widetilde{v}} \uuuline{\widetilde{u_1}}.
\]
Then $\uuuline{u_1u_2}v  = \widetilde{\widetilde{v}}\uuuline{\widetilde{u_1}\widetilde{u_2}}$.

(\romannumeral2) Suppose $u,v,v_1,v_2,\widetilde{u},\widetilde{\widetilde{u}},\widetilde{v_1},\widetilde{v_2}$ are elements in $S$ such that
\[
\uuuline{u} v_1 = \widetilde{v_1} \uuuline{\widetilde{u}}, \quad \uuuline{\widetilde{u}}v_2  = \widetilde{v_2}\uuuline{\widetilde{\widetilde{u}}}.
\]
Then $\uuuline{u}v_1v_2  =  \widetilde{v_1}\widetilde{v_2}\uuuline{\widetilde{\widetilde{u}}}$.
\end{lemma}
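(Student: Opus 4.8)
The plan is to verify, for each part, the two conditions that characterize a free castling — namely (by the characterization recorded just after Corollary~\ref{cor_C_1_defined_by_ddagger}) that a pair $(a,b)$ with outcome $(c,d)$ satisfies $\uuuline{a}b\rlh c\uuuline{d}$ iff $\gcd(a,c)=1$ and $\lcm[a,c]=ab=cd$ — applied to the composed data $\uuuline{u_1u_2}v\rlh\widetilde{\widetilde{v}}\uuuline{\widetilde{u_1}\widetilde{u_2}}$ in part (i), and to $\uuuline{u}v_1v_2\rlh\widetilde{v_1}\widetilde{v_2}\uuuline{\widetilde{\widetilde{u}}}$ in part (ii). First I would dispose of the product identity for free: the two hypotheses in (i) give $u_2v=\widetilde{v}\widetilde{u_2}$ and $u_1\widetilde{v}=\widetilde{\widetilde{v}}\widetilde{u_1}$, so by associativity $u_1u_2v=u_1\widetilde{v}\widetilde{u_2}=\widetilde{\widetilde{v}}\widetilde{u_1}\widetilde{u_2}$; likewise in (ii) one gets $uv_1v_2=\widetilde{v_1}\widetilde{u}v_2=\widetilde{v_1}\widetilde{v_2}\widetilde{\widetilde{u}}$. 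Thus all the work sits in the lcm- and gcd-statements.

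For the lcm in (i): one inclusion is immediate, since $u_1u_2v$ is a common multiple of $u_1u_2$ and of $\widetilde{\widetilde{v}}$ (the latter because $u_1u_2v=\widetilde{\widetilde{v}}\widetilde{u_1}\widetilde{u_2}$), so $w:=\lcm[u_1u_2,\widetilde{\widetilde{v}}]$ divides $u_1u_2v$; write $w=u_1u_2t$ with $t\in S$. For the reverse I would peel the numerator off one factor at a time: since $u_1\mid w$ and $\widetilde{\widetilde{v}}\mid w$, Lemma~\ref{lem_numbertheoretic_def_lcm} gives $\lcm[u_1,\widetilde{\widetilde{v}}]\mid w$, and the second hypothesis identifies $\lcm[u_1,\widetilde{\widetilde{v}}]=u_1\widetilde{v}$, so $u_1\widetilde{v}\mid u_1u_2t$ and hence $\widetilde{v}\mid u_2t$ after left-cancelling $u_1$ in $G$; repeating the move with $u_2$ and $\widetilde{v}$, whose lcm is $u_2v$ by the first hypothesis, yields $u_2v\mid u_2t$, hence $v\mid t$, so $u_1u_2v\mid w$ and $w=u_1u_2v$. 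Part (ii) follows the identical template applied to $w:=\lcm[u,\widetilde{v_1}\widetilde{v_2}]=ut$: stripping $u$ against $\widetilde{v_1}$ (using $\lcm[u,\widetilde{v_1}]=uv_1$) gives $v_1\mid t$, say $t=v_1s$ and $w=\widetilde{v_1}\widetilde{u}s$; then stripping $\widetilde{u}$ against $\widetilde{v_2}$ (using $\lcm[\widetilde{u},\widetilde{v_2}]=\widetilde{u}v_2$, and $\widetilde{v_2}\mid\widetilde{u}s$ from $\widetilde{v_1}\widetilde{v_2}\mid w$) gives $v_2\mid s$, hence $v_1v_2\mid t$ and $w=uv_1v_2$.

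With the lcm pinned down, the coprimality condition is cheapest to extract from an index count rather than from any direct divisibility manipulation. By Corollary~\ref{cor_C_1_ind_transfer} applied to each hypothesis, $\ind(\widetilde{\widetilde{v}})=\ind(\widetilde{v})=\ind(v)$, so Lemma~\ref{lem_gcd_lcm_u_v}(i) together with Corollary~\ref{cor_ind_eq} gives
\[
\ind\bigl(\gcd(u_1u_2,\widetilde{\widetilde{v}})\bigr)=\ind(u_1u_2)+\ind(\widetilde{\widetilde{v}})-\ind\bigl(\lcm[u_1u_2,\widetilde{\widetilde{v}}]\bigr)=\bigl(\ind(u_1)+\ind(u_2)+\ind(v)\bigr)-\ind(u_1u_2v)=0,
\]
and an element of index $0$ must be $1$ by Lemma~\ref{lem_ind_eq}, so $\gcd(u_1u_2,\widetilde{\widetilde{v}})=1$; the analogous count, using $\ind(\widetilde{v_1})=\ind(v_1)$ and $\ind(\widetilde{v_2})=\ind(v_2)$, yields $\gcd(u,\widetilde{v_1}\widetilde{v_2})=1$ for part (ii). Feeding the two verified conditions back into the characterization of free castlings completes both parts.

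The only genuine friction I anticipate is the bookkeeping in the peeling step — keeping straight which element is cancelled on which side and checking that each successive quotient ($t$, $s$, and so on) indeed lands in $S$ rather than merely in $G$ — together with the small but essential observation that the gcd statement should be deduced from the index identity rather than attacked head-on, since in a non-abelian monoid $\gcd(u_1u_2,\widetilde{\widetilde{v}})=1$ is not something one can read off directly from the factorwise coprimalities $\gcd(u_1,\widetilde{\widetilde{v}})=1$ and $\gcd(u_2,\widetilde{v})=1$.
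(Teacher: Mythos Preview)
Your proof is correct but follows a different route from the paper. The paper proves part~(i) by first establishing $\gcd(u_1u_2,\widetilde{\widetilde{v}})=1$ via a contradiction argument that invokes the fundamental lemma (Lemma~\ref{lem_divisor_castling}): assuming a prime $p$ divides the gcd, it uses $p\mid u_1u_2$, $p\nmid u_1$ to produce a castled prime $q\mid u_2$ with $\uuuline{p}\breve{u_1}\rlh u_1\uuuline{q}$, and separately $p\mid u_1\widetilde{v}$, $p\nmid u_1$ to produce $r\mid\widetilde{v}$ with $\uuuline{p}\check{u_1}\rlh u_1\uuuline{r}$; then Axiom~\uppercase\expandafter{\romannumeral4}' forces $q=r$, contradicting $\gcd(u_2,\widetilde{v})=1$. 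Only after the gcd is known does the paper use the index count to pin down the lcm. Part~(ii) is then dispatched by a change of variables reducing it to part~(i).

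You instead reverse the order: your peeling argument nails the lcm first using only the lattice identities $\lcm[u_1,\widetilde{\widetilde{v}}]=u_1\widetilde{v}$ and $\lcm[u_2,\widetilde{v}]=u_2v$ supplied by the hypotheses, and then the index identity of Lemma~\ref{lem_gcd_lcm_u_v} yields the gcd for free. This is more elementary in that it bypasses Lemma~\ref{lem_divisor_castling} entirely, and the peeling step is pleasantly transparent. What you give up is the paper's symmetry shortcut for part~(ii); you prove it directly by the same peeling template, which works but duplicates effort. Either ordering is legitimate --- the two conditions $\gcd=1$ and $\lcm=\text{product}$ are linked by the index formula, so establishing one directly suffices.
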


\begin{proof}
(\romannumeral1) From the given conditions, we deduce that
\[
u_1u_2v=u_1\widetilde{v}\widetilde{u_2}=\widetilde{\widetilde{v}}\widetilde{u_1}\widetilde{u_2}, \quad \gcd(u_1,\widetilde{\widetilde{v}})=\gcd(u_2,\widetilde{v})=1,
\]
and $\ind(v)=\ind(\widetilde{v})=\ind(\widetilde{\widetilde{v}})$. Therefore $\lcm[u_1u_2,\widetilde{\widetilde{v}}]|u_1u_2v$. If we may prove that $\gcd(u_1u_2,\widetilde{\widetilde{v}})=1$, then
\[
\ind(\lcm[u_1u_2,\widetilde{\widetilde{v}}])=\ind(u_1u_2)+\ind(\widetilde{\widetilde{v}})=  \ind(u_1u_2)+\ind(v) =\ind(u_1u_2v),
\]
which implies that $\lcm[u_1u_2,\widetilde{\widetilde{v}}]=u_1u_2v$ and $\uuuline{u_1u_2}v  = \widetilde{\widetilde{v}}\uuuline{\widetilde{u_1}\widetilde{u_2}}$.

Suppose on the contrary that there is some $p\in\cP$ such that $p|\gcd(u_1u_2,\widetilde{\widetilde{v}})$. Since $\gcd(u_1,\widetilde{\widetilde{v}})=1$, we have $p\nmid u_1$ and $p|u_1u_2$. By Lemma \ref{lem_divisor_castling}, there are some $q\in \cP$ with $q|u_2$ and $\breve{u_1}\in S$ such that $\uuuline{p}\breve{u_1}\rlh u_1 \uuuline{q}$. Similarly, in view of the facts that $p|\widetilde{\widetilde{v}}\widetilde{u_1}= u_1\widetilde{v}$ and $p\nmid u_1$, there are some $r|\widetilde{v}$ and $\check{u_1}\in S$ such that $\uuuline{p}\check{u_1}\rlh u_1 \uuuline{r}$. However, we deduce by Lemma \ref{lem_lcm_unique_from_eta_2=id} that $r=q$. Now $1\neq q| \gcd(u_2,\widetilde{v})$, which is a contradiction.

(\romannumeral2) The conclusion follows from (\romannumeral1) by changing the variables from $\widetilde{v_1},\widetilde{v_2},\widetilde{\widetilde{u}}, v_1,v_2,\widetilde{u},u$ to $u_1,u_2,v,\widetilde{u_1},\widetilde{u_2},\widetilde{v},\widetilde{\widetilde{v}}$, respectively.
\end{proof}

For simplicity, we abbreviate the formulae in Lemma \ref{lem_composition_C_1} in the following way. The composition of free castlings $\uuuline{u_2}v = \widetilde{v}\uuuline{\widetilde{u_2}}$ and $\uuuline{u_1}\widetilde{v} = \widetilde{\widetilde{v}} \uuuline{\widetilde{u_1}}$ gives

\[
\uuuline{u_1u_2}v \rlh \uuuline{u_1}\widetilde{v} \uuuline{\widetilde{u_2}} \rlh \widetilde{\widetilde{v}}\uuuline{\widetilde{u_1}\widetilde{u_2}}.
\]
Also, the composition of $\uuuline{u} v_1 = \widetilde{v_1} \uuuline{\widetilde{u}}$ and $\uuuline{\widetilde{u}}v_2  = \widetilde{v_2}\uuuline{\widetilde{\widetilde{u}}}$ leads to
\[
\uuuline{u}v_1v_2 \rlh \widetilde{v_1}\uuuline{\widetilde{u}}v_2  \rlh \widetilde{v_1}\widetilde{v_2}\uuuline{\widetilde{\widetilde{u}}}.
\]

\begin{remark} \label{remark_decomposition_chain}
Let $k,l\geq 1$ be given integers. A (de)composition-chain of depth $(k,l)$ is a sequence $\{(i_h,j_h)\}_{h=1}^{kl}$
such that
\[
\{i_h,\ldots,k-1,k\}\times \{1,2,\ldots,j_h\} \subseteq \{(i_1,j_1),(i_2,j_2)\ldots, (i_h,j_h)\}
\]
for all $1\leq h\leq kl$.

A stronger form of Lemma \ref{lem_C1_subseteq_C_0} can be stated as follows. Suppose that $\uuuline{u}v\rlh \widetilde{v}\uuuline{\widetilde{u}}$. Then for any $k,l\geq 1$, any decomposition chain $\{(i_h,j_h)\}_{h=1}^{kl}$ of depth $(k,l)$, and any decomposition of elements $u=u_{1,1}u_{2,1}\ldots u_{k,1},\, v=v_{1,k}v_{2,k}\ldots v_{l,k}$, there exist elements $u_{i,j}$ $(1\leq i\leq k,\, 2\leq j\leq l+1)$ and $v_{j,i}$ $(1\leq j\leq l , \,0\leq i\leq k-1)$ in $S$ such that $\uuuline{u_{i_h,j_h}}v_{j_h,i_h}\rlh v_{j_h,i_h-1}\uuuline{u_{i_h,j_h+1}}$. Moreover, we have $\widetilde{v}=v_{1,0}v_{2,0}\ldots,v_{l,0}$ and $\widetilde{u}=u_{1,l+1}u_{2,l+1}\ldots u_{k,l+1}$.

Abbreviations of above formulae are possible. For example, let $k=3$, $l=2$, $u=u_{1,1}u_{2,1}u_{3,1}$, $v=v_{1,3}v_{2,3}v_{3,3}$ and take the decomposition chain
\[
(3,1),\, (2,1),\,(3,2),\, (1,1),\, (2,2),\,(1,2),
\]
we can write
\begin{align*}
\uuuline{u}v = &\uuuline{u_{1,1}u_{2,1}u_{3,1}}v_{1,3}v_{2,3}\rlh \uuuline{u_{1,1}u_{2,1}}v_{1,2}\uuuline{u_{3,2}}v_{2,3} \rlh \uuuline{u_{1,1}}v_{1,1}\uuuline{u_{2,2}u_{3,2}}v_{2,3} \\
\rlh &\uuuline{u_{1,1}}v_{1,1}\uuuline{u_{2,2}}v_{2,2}\uuuline{u_{3,3}}v_{3,3}
\rlh v_{1,0}\uuuline{u_{1,2}u_{2,2}}v_{2,2}\uuuline{u_{3,3}} \rlh
v_{1,0}\uuuline{u_{1,2}}v_{2,1}\uuuline{u_{2,3}u_{3,3}} \\
\rlh &v_{1,0}v_{2,0}\uuuline{u_{1,3}u_{2,3}u_{3,3}} =\widetilde{v}\uuuline{\widetilde{u}}.
\end{align*}

A stronger form of Lemma \ref{lem_composition_C_1} may appear in the following way. Suppose that $\{(i_h,j_h)\}_{h=1}^{kl}$ is a composition chain of depth $(k,l)$ for some $k,l\geq 1$. Let $u_{i,j}$ $(1\leq i\leq k,\, 2\leq j\leq l+1)$ and $v_{j,i}$ $(1\leq j\leq l , \,0\leq i\leq k-1)$ be elements in $S$ such that $\uuuline{u_{i_h,j_h}}v_{j_h,i_h}\rlh v_{j_h,i_h-1}\uuuline{u_{i_h,j_h+1}}$ for $1\leq h\leq kl$. Then
\[
\uuuline{u_{1,1}u_{2,1}\ldots u_{k,1}}v_{1,k}v_{2,k}\ldots v_{l,k} \rlh v_{1,0}v_{2,0}\ldots,v_{l,0} \uuuline{u_{1,l+1}u_{2,l+1}\ldots u_{k,l+1}}.
\]
\end{remark}

\subsection{A Sub-multiplicative Property of Divisor Function}

Our main purpose of this subsection is to prove the following theorem.

\begin{theorem} \label{thm_divisor_submul}
For any $u,v\in S$, we have
\[
\tau(uv)\leq \tau(u)\tau(v).
\]
The equality holds if and only if $u,v$ are castled-free.
\end{theorem}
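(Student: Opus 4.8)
The plan is to bound $\tau(uv)$ from above by constructing an injection $\Phi$ from the set of divisors of $uv$ into the product $\{x\in S: x|u\}\times\{y\in S: y|v\}$; since (as noted after the definition of $\tau$) $\tau(z)$ equals the number of divisors of $z$, this product has cardinality $\tau(u)\tau(v)$, so such an injection immediately gives $\tau(uv)\le\tau(u)\tau(v)$, and the equality case becomes the question of when $\Phi$ is surjective. To define $\Phi$, given $w|uv$ I would set $d=\gcd(w,u)$, write $w=dw'$ and $u=du'$, and note that $\gcd(w',u')=1$ by Corollary \ref{cor_remains_of_gcd_has_gcd_1}, while $dw'=w|uv=du'v$ gives $w'|u'v$. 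The Fundamental Lemma \ref{lem_divisor_castling}(i) then yields a $v_1|v$ with $\uuuline{w'}\widetilde{u'}\rlh u'\uuuline{v_1}$, and I would set $\Phi(w)=(d,v_1)$. It should be observed explicitly that $v_1$ is forced, namely $v_1=(u')^{-1}\lcm[u',w']$, so $\Phi$ is genuinely well defined (independent of the auxiliary $\widetilde{u'}$).

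Injectivity of $\Phi$ is precisely the uniqueness clause of Lemma \ref{lem_divisor_castling}(i): if $\Phi(w_1)=\Phi(w_2)=(d,v_1)$, then $\gcd(w_i,u)=d$, and writing $w_i=dw_i'$, $u=du'$, both $w_1'$ and $w_2'$ are divisors of $u'v$ coprime to $u'$ that castle past $u'$ with the same $v_1$; the lemma forces $w_1'=w_2'$, hence $w_1=w_2$. This establishes the inequality and reduces the equality statement to: $\Phi$ surjective $\iff$ $u,v$ castled-free.

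The implication "$\Phi$ surjective $\Rightarrow$ castled-free" is immediate: a preimage $w$ of $(1,v)$ satisfies $\gcd(w,u)=1$ and $v=v_1=u^{-1}\lcm[u,w]$ with $\gcd(u,w)=1$, which is exactly the defining form of an element of $\fC_1$, so $(u,v)\in\fC_1$. For the converse, assume $(u,v)\in\fC_1$, say $\uuuline u v\rlh\widetilde v\uuuline{\widetilde u}$, and fix arbitrary $d|u$, $e|v$; write $u=du'$, $v=ev''$. I would apply the Decomposition Lemma \ref{lem_C1_subseteq_C_0} twice: part (i) with $u=d\cdot u'$ isolates an inner free castling $\uuuline{u'}v\rlh\widehat v\uuuline{\widehat{u_2}}$ (so $\gcd(u',\widehat v)=1$ and $u'v=\widehat v\widehat{u_2}$), and then part (ii) applied to this castling with $v=e\cdot v''$ gives a factorization $\widehat v=\widehat{v}_1\widehat{v}_2$ together with $\uuuline{u'}e\rlh\widehat{v}_1\uuuline{\widehat{\widehat u}}$, hence $\gcd(u',\widehat{v}_1)=1$ and $\lcm[u',\widehat{v}_1]=u'e$. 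Setting $w=d\,\widehat{v}_1$, one checks that $w|uv$ (since $\widehat{v}_1|\widehat v|u'v$), that $\gcd(w,u)=d\gcd(\widehat{v}_1,u')=d$ by Lemma \ref{lem_c_dot_gcd}, and that running the definition of $\Phi$ on this $w$ recovers $(u')^{-1}\lcm[u',\widehat{v}_1]=e$; thus $\Phi(w)=(d,e)$ and $\Phi$ is onto.

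I expect the surjectivity step under the castled-free hypothesis to be the main obstacle: one must choose the right order of decompositions (peel the $\gcd$ off $u$ first, then split off the target divisor of $v$) and track which castled elements appear where, and in particular confirm that the $v_1$ produced by the definition of $\Phi$ from the constructed $w$ is exactly the prescribed $e$. The remaining points — well-definedness of $\Phi$, injectivity, and the easy direction of the equality — are routine applications of Lemma \ref{lem_divisor_castling}, Corollary \ref{cor_remains_of_gcd_has_gcd_1}, and Lemma \ref{lem_c_dot_gcd}.
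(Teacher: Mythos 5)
Your proof is correct, and its skeleton is the paper's: both split $w=dw'$ by $d=\gcd(w,u)$, both invoke the Fundamental Lemma \ref{lem_divisor_castling} to extract the parameter $v_1|v$, and both use the Decomposition Lemma \ref{lem_C1_subseteq_C_0} to handle the castled-free converse. The worthwhile difference is which way you aim the bijection. The paper proves the inequality by summing Corollary \ref{coro_divisor_inequality} over $d|u$, and then for the converse cites Lemma \ref{lem_divisor_decomposition_castlable_uv}, whose map $\rho:(d,v_1)\mapsto d\breve{v_1}$ requires a non-trivial injectivity argument using Lemma \ref{lem_stronger_Axiom_1} and a decomposition-chain computation. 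You instead define $\Phi:\{w:w|uv\}\to\{d:d|u\}\times\{v_1:v_1|v\}$ going the other way, so that injectivity of $\Phi$ is automatic from the uniqueness clause of Lemma \ref{lem_divisor_castling}, and the converse is reduced to surjectivity of $\Phi$. Because you only need a right inverse of $\Phi$ rather than a full bijectivity statement, the castled-free direction is discharged by constructing $w=d\widehat{v}_1$ from two applications of Lemma \ref{lem_C1_subseteq_C_0} and verifying $\Phi(w)=(d,e)$ via Lemma \ref{lem_c_dot_gcd}, with no appeal to Lemma \ref{lem_stronger_Axiom_1}. This is a genuine, if modest, streamlining: the same decomposition structure, but the bijection is defined in the direction where Axiom \uppercase\expandafter{\romannumeral4}' hands you injectivity for free, so the harder half of the counting argument disappears. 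One small point of care you handled correctly: you must state that $\Phi$ depends only on $(u',w')$ and not on the auxiliary $\widetilde{u'}$, which is exactly the remark that $v_1=(u')^{-1}\lcm[u',w']$; without this the map would not be obviously well-defined.
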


Several lemmas and corollaries are needed before we prove the above theorem. The following one follows immediately from the fundamental lemma for arithmetic, i.e., Lemma \ref{lem_divisor_castling}.

\begin{corollary} \label{coro_divisor_inequality}
For given $u,v\in S$, we have
\[
\#\{w\in S:\, w|uv,\, \gcd(w,u)=1\}\leq \tau(v).
\]
\end{corollary}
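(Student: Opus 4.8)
The plan is to exhibit an injection from $D:=\{w\in S:\, w\mid uv,\ \gcd(w,u)=1\}$ into the set of divisors of $v$, whose cardinality is $\tau(v)$ by the definition of the divisor function. Both the map and its injectivity are handed to us directly by the fundamental lemma for arithmetic, Lemma~\ref{lem_divisor_castling}(i), so the work is essentially bookkeeping.

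Concretely, for $w\in D$, Lemma~\ref{lem_divisor_castling}(i) produces elements $\widetilde{u}\in S$ and $v_1\mid v$ with the free castling $\uuuline{w}\widetilde{u}\rlh u\uuuline{v_1}$. I would first note that $v_1$ is determined by $w$ (with $u$ fixed): the castling forces $uv_1=\lcm[u,w]$, i.e.\ $v_1=u^{-1}\lcm[u,w]$. Hence the assignment $\Phi\colon D\to\{v_1\in S:\, v_1\mid v\}$, $\Phi(w):=u^{-1}\lcm[u,w]$, is a well-defined map (well-definedness of the codomain is exactly the clause ``there exists $v_1\mid v$'' in the lemma). For injectivity I would invoke the uniqueness clause of the same lemma: if $w,w'\in D$ satisfy $\Phi(w)=\Phi(w')=:v_1$, then $w'\mid uv$, $\gcd(w',u)=1$, and $\uuuline{w'}\widetilde{u}'\rlh u\uuuline{v_1}$ for a suitable $\widetilde{u}'\in S$; the uniqueness clause then gives $w'=w$.

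Consequently,
\[
\#\{w\in S:\, w\mid uv,\ \gcd(w,u)=1\}\ \le\ \#\{v_1\in S:\, v_1\mid v\}\ =\ \tau(v),
\]
which is the assertion. I do not expect any real obstacle here; the only point meriting care is the identification of $\Phi(w)$ with the element $v_1$ appearing in Lemma~\ref{lem_divisor_castling}(i), after which well-definedness, the codomain, and injectivity are all immediate. (If one prefers to avoid castlings altogether, injectivity also follows straight from Lemma~\ref{lem_lcm_unique_from_eta_2=id}(ii), since $\Phi(w)=\Phi(w')$ forces $\lcm[u,w]=\lcm[u,w']$ with $\gcd(u,w)=\gcd(u,w')=1$, hence $w=w'$.)
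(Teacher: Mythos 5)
Your proof is correct and takes essentially the same approach the paper intends: the paper says the corollary ``follows immediately from the fundamental lemma for arithmetic, i.e., Lemma~\ref{lem_divisor_castling},'' and you have simply made explicit the injection $w\mapsto u^{-1}\lcm[u,w]$ into the divisors of $v$, with well-definedness from the existence clause and injectivity from the uniqueness clause (or equivalently Lemma~\ref{lem_lcm_unique_from_eta_2=id}(ii)).
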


\begin{corollary} \label{cor_divisor_decomposition_uv}
Let $u,v\in S$. Suppose that $w$ is a divisor of $uv$. Then there exist $d,w_1,u_1,\widetilde{u_1},v_1\in S$ such that
\[
w=dw_1,\quad u=du_1,\quad v_1|v, \quad \uuuline{w_1}\widetilde{u_1}\rlh u_1\uuuline{v_1} .
\]
\end{corollary}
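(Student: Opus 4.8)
The plan is to reduce the statement to the fundamental lemma for arithmetic (Lemma~\ref{lem_divisor_castling}(\romannumeral1)) by first peeling off the greatest common divisor of $w$ and $u$. Concretely, I would set $d=\gcd(w,u)$ and write $w=dw_1$, $u=du_1$ with $w_1,u_1\in S$. By Corollary~\ref{cor_remains_of_gcd_has_gcd_1} applied to the pair $w,u$, this choice forces $\gcd(w_1,u_1)=1$, which is exactly the hypothesis needed to invoke the fundamental lemma on the ``reduced'' data.

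Next I would transfer the divisibility hypothesis $w\mid uv$ down to $w_1,u_1$. Writing $uv=wc$ for some $c\in S$ gives $du_1v=dw_1c$; cancelling $d$ (left multiplication by $d$ is injective on $G$, which is torsion-free) yields $u_1v=w_1c$, i.e.\ $w_1\mid u_1v$. Alternatively, to avoid passing through $G$ one can use Lemma~\ref{lem_basic_properties_of_divisibility}(\romannumeral1) to move between $du_1v\in dw_1S$ and $u_1v\in w_1S$. At this point $w_1$ is a divisor of $u_1v$ with $\gcd(w_1,u_1)=1$, so Lemma~\ref{lem_divisor_castling}(\romannumeral1), applied to the pair $u_1,v$ and the divisor $w_1$, furnishes some $v_1\mid v$ and $\widetilde{u_1}\in S$ with $\uuuline{w_1}\widetilde{u_1}\rlh u_1\uuuline{v_1}$. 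Combining this with $w=dw_1$, $u=du_1$, and $v_1\mid v$ gives precisely the asserted decomposition.

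There is essentially no serious obstacle; the proof is a two-line bookkeeping argument once the right auxiliary results are lined up. The only point that deserves a word of care is the cancellation $du_1v=dw_1c\Rightarrow u_1v=w_1c$, which is legitimate because everything lives in the group $G$ (Axiom~\uppercase\expandafter{\romannumeral1}), and the verification that $\gcd(w_1,u_1)=1$, for which Corollary~\ref{cor_remains_of_gcd_has_gcd_1} (ultimately Lemma~\ref{lem_c_dot_gcd}) does all the work.
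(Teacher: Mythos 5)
Your proof is correct and takes essentially the same route as the paper: set $d=\gcd(w,u)$, peel it off to get $\gcd(w_1,u_1)=1$ (via Corollary~\ref{cor_remains_of_gcd_has_gcd_1}) and $w_1\mid u_1v$ (by cancelling $d$), then apply Lemma~\ref{lem_divisor_castling}(\romannumeral1). The paper's version states these two intermediate facts without spelling out the cancellation; you have merely filled in that bookkeeping.
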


\begin{proof}
Let $d=\gcd(w,u)$ and $w=dw_1$, $u=du_1$. Then $w_1|u_1v$ and $\gcd(w_1,u_1)=1$. By Lemma \ref{lem_divisor_castling}, there exists some $v_1|v$ and $\widetilde{u_1}\in S$ such that $\uuuline{w_1}\widetilde{u_1}\rlh u_1\uuuline{v_1}$.
\end{proof}

The next lemma gives another hint of a ``prime''.

\begin{lemma} \label{lem_p_divides_lcm_implies_p_divides_one_of_them}
(\romannumeral1) Let $k\geq 2$ and $u_1,u_2,\ldots,u_k\in S$. Suppose $p$ is an element in $\cP$ such that $p|\lcm[u_1,u_2,\ldots,u_k]$, then $p|u_j$ for some $1\leq j\leq k$.

(\romannumeral2) Let $k\geq 2$ and $u_1,u_2,\ldots,u_k,w\in S$. Suppose $p$ is an element in $\cP$ such that $p\ddagger \lcm[w;u_1,u_2,\ldots,u_k]$, then $p\ddagger u_j$ for some $1\leq j\leq k$.
\end{lemma}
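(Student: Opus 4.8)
The plan is to prove part (\romannumeral1) first, by induction on $k$, and then deduce part (\romannumeral2) from part (\romannumeral1) via the duality between common divisors/multiples and common co-divisors/co-multiples (Lemma \ref{lem_gcd_gcddagger_transfer}) together with the index identities of Lemma \ref{lem_gcd_lcm_u_v}. For the base case $k=2$ of (\romannumeral1), suppose $p \mid \lcm[u_1,u_2]$ and $p \nmid u_1$; I want to show $p \mid u_2$. Set $d = \gcd(p, u_1)$; since $p \in \cP$ its only divisors are $1$ and $p$, and $p \nmid u_1$ forces $d = 1$, so $\gcd(p,u_1) = 1$. Now $p \mid \lcm[u_1,u_2]$ and $u_1 \mid \lcm[u_1,u_2]$, hence $\lcm[p,u_1] \mid \lcm[u_1,u_2]$ by Lemma \ref{lem_numbertheoretic_def_lcm}(\romannumeral2). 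Apply the fundamental lemma for arithmetic (Lemma \ref{lem_divisor_castling}(\romannumeral1)): since $u_2 \mid \lcm[u_1,u_2] = u_1 \cdot (u_1^{-1}\lcm[u_1,u_2])$ — actually the cleaner route is: write $\lcm[u_1,u_2] = u_1 y = u_2 x$ and apply Lemma \ref{lem_divisor_castling}(\romannumeral1) with the divisor $p$ of $u_1 y$ and $\gcd(p,u_1)=1$, obtaining $y_1 \mid y$ and $\widetilde{u_1}$ with $\uuuline{p}\widetilde{u_1} \rlh u_1 \uuuline{y_1}$. Then by Corollary \ref{cor_C_1_ind_transfer}, $\ind(y_1) = \ind(p) = 1$, and since $y_1 \mid y = u_1^{-1}\lcm[u_1,u_2]$... here I need to connect $y_1$ back to $u_2$. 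The key point is that $\lcm[p,u_1] = p\widetilde{u_1} = u_1 y_1$, so $\lcm[p,u_1] \mid \lcm[u_1,u_2]$, and since also $\lcm[u_1,u_2] \mid \lcm[\lcm[p,u_1],u_2]$ trivially gives nothing; instead I argue that $u_2 \mid \lcm[u_1,u_2]$ and $p \mid \lcm[u_1, u_2]$ with $\gcd(p,u_1)=1$, so applying Lemma \ref{lem_divisor_castling}(\romannumeral1) to $u_2 x = \lcm[u_1,u_2]$ viewed as $u_1 \cdot (u_1^{-1} u_2 x)$ — the real content is that $p$ must "come from" $u_2$ in the sense of the Remark after Lemma \ref{lem_divisor_castling}, namely $p \mid u_2$ once we know $p \nmid u_1$.

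Let me restructure the base case more carefully. Given $p \mid \lcm[u_1,u_2]$, $p \nmid u_1$, write $\lcm[u_1,u_2] = u_1 v$. Then $p \mid u_1 v$ and $\gcd(p, u_1) = 1$, so by Lemma \ref{lem_divisor_castling}(\romannumeral1) there is $v_1 \mid v$ and $\widetilde{u}$ with $\uuuline{p}\widetilde{u} \rlh u_1 \uuuline{v_1}$, i.e.\ $\lcm[p,u_1] = p\widetilde{u} = u_1 v_1$. Since $\lcm[p,u_1] \mid \lcm[u_1,u_2]$ and both equal a multiple of $u_1$, we get $v_1 \mid v$; moreover $u_1 v_1 = \lcm[p,u_1]$ is a common multiple of $u_1$ and... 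I still need $v_1 = v$ essentially, or rather that $p \mid u_2$. The cleanest approach: since $\lcm[p,u_1] \mid \lcm[u_1,u_2]$, and $\lcm[u_1,u_2] = u_1 v$ while $\lcm[p,u_1] = u_1 v_1$, we have $v_1 \mid v$. But also $\lcm[u_1,u_2]$ is the \emph{least} common multiple of $u_1$ and $u_2$, and $\lcm[\lcm[p,u_1], u_2]$ is a common multiple of $u_1$ and $u_2$, hence $\lcm[u_1,u_2] \mid \lcm[\lcm[p,u_1],u_2]$. On the other hand $\lcm[p,u_1] \mid \lcm[p, u_1, u_2] = \lcm[\lcm[p,u_1], u_2]$, and $u_2 \mid$ the same; combining with minimality this shows $\lcm[p,u_1,u_2] = \lcm[u_1,u_2]$, i.e.\ $p \mid \lcm[u_1,u_2] = \lcm[\lcm[u_1,u_2]]$ — circular. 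I'll instead use the index: $\ind(\lcm[p,u_1]) = \ind(p) + \ind(u_1) = 1 + \ind(u_1)$ by Lemma \ref{lem_gcd_lcm_u_v}(\romannumeral1) since $\gcd(p,u_1)=1$. Now consider $\gcd(u_2, \lcm[u_1,u_2]) = u_2$; and the structure. Honestly, the most robust path for the base case is: apply Lemma \ref{lem_divisor_castling}(\romannumeral1) to get $\uuuline{p}\widetilde{u} \rlh u_1 \uuuline{v_1}$ with $v_1 \mid v$ and $\ind(v_1) = 1$ (so $v_1 \in \cP$), then observe $\lcm[p, u_1] = u_1 v_1 \mid u_1 v = \lcm[u_1, u_2]$, and since $\lcm[u_1,u_2]$ divides any common multiple while $\lcm[u_1, u_1 v_1 \cdot(\text{stuff})]$... — I will settle the base case by showing directly that $v_1 = v$ fails in general but $p \mid u_2$ still holds because: $u_2 \mid \lcm[u_1,u_2] = \lcm[p,u_1] \cdot c$ for some $c$, and by Lemma \ref{lem_divisor_castling} applied symmetrically, the prime $p$ divides $\lcm[u_1,u_2]$ with $\gcd(p,u_1)=1$ means $p$ is a co-divisor coming from the $u_2$-side, giving $p \mid u_2$ after using $\gcd(p, u_1) = 1$ and the uniqueness clause.

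For the inductive step of (\romannumeral1): assume the result for $k-1$ and suppose $p \mid \lcm[u_1,\dots,u_k] = \lcm[\lcm[u_1,\dots,u_{k-1}], u_k]$. By the base case, $p \mid \lcm[u_1,\dots,u_{k-1}]$ or $p \mid u_k$; in the first case the inductive hypothesis gives $p \mid u_j$ for some $j \le k-1$. This completes (\romannumeral1). For (\romannumeral2): given $p \ddagger \lcm_\ddagger[w; u_1,\dots,u_k]$, I factor out $d = \gcd_\ddagger(w; u_1,\dots,u_k)$ using Lemma \ref{lem_codivisor_pro}(\romannumeral2) to reduce to the case $\gcd_\ddagger(w;u_1,\dots,u_k) = 1$; then by the duality Lemma \ref{lem_gcd_gcddagger_transfer}(\romannumeral2), writing $\lcm_\ddagger[w;u_1,\dots,u_k] = \cdots$ as an ordinary $\lcm$ of the "numerator" elements (iterating the two-variable duality), I translate $p \ddagger (\text{co-multiple})$ into $p \mid (\text{ordinary lcm})$ and invoke part (\romannumeral1), then translate the conclusion $p \mid (\text{numerator}_j)$ back to $p \ddagger u_j$. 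The main obstacle I anticipate is making the base case $k=2$ airtight — specifically, cleanly extracting "$p \mid u_2$" rather than merely "$p \mid \lcm[p,u_2]$" from the castling $\uuuline{p}\widetilde{u} \rlh u_1 \uuuline{v_1}$; the resolution should be that $v_1 \mid v = u_1^{-1}\lcm[u_1,u_2]$ together with $\lcm[p,u_1] = u_1 v_1$ and minimality of $\lcm[u_1,u_2]$ forces $\lcm[u_1,u_2] = \lcm[u_1, u_2, p]$, whence $p \mid \lcm[u_1,u_2]$ contributes no new prime, and then a second application of Lemma \ref{lem_divisor_castling} on the $u_2$-side (using $\gcd_\ddagger$ and Lemma \ref{lem_gcd_gcddagger_transfer}) pins down $p \mid u_2$. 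A secondary subtlety in (\romannumeral2) is that the upper bound $w$ in $\lcm_\ddagger$ must be tracked consistently through Lemma \ref{lem_ddagger_upperbound_transfer} when iterating the duality, but this is routine.
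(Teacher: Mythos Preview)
Your inductive step for (\romannumeral1) is fine and matches the paper. The base case $k=2$, however, has a real gap that you yourself flag but do not close. Writing $\lcm[u_1,u_2]=u_1v$ and applying Lemma~\ref{lem_divisor_castling} gives you $\uuuline{p}\widetilde{u}\rlh u_1\uuuline{v_1}$ with $v_1\mid v$, $v_1\in\cP$; but $v=u_1^{-1}\lcm[u_1,u_2]$ is \emph{not} $u_2$ in a non-abelian monoid, and the relation $u_1v=u_2x$ is not a free castling unless $\gcd(u_1,u_2)=1$. Without a free castling structure you cannot invoke Lemma~\ref{lem_C1_subseteq_C_0} (decomposition of free castlings) to push the prime from the $v$-side over to $u_2$. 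Your proposed remedies (``minimality of $\lcm$ forces $\lcm[u_1,u_2]=\lcm[u_1,u_2,p]$'', ``a second application of Lemma~\ref{lem_divisor_castling} on the $u_2$-side'') only recover information you already have and never produce $p\mid u_2$.

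The missing idea is the reduction to the coprime case. Set $d=\gcd(u_1,u_2)$, $u_j=du_j'$, $e=\lcm[u_1',u_2']$; then $\lcm[u_1,u_2]=de$ and $\gcd(u_1',u_2')=1$. If $p\mid d$ you are done. Otherwise Lemma~\ref{lem_divisor_castling} gives a prime $q\mid e$ with $\uuuline{p}\widetilde{d}\rlh d\uuuline{q}$. Now the crucial point: since $\gcd(u_1',u_2')=1$, the equality $e=u_1'y=u_2'x$ \emph{is} a free castling $\uuuline{u_1'}y\rlh u_2'\uuuline{x}$. If $q\nmid u_1'$, apply Lemma~\ref{lem_divisor_castling} once more to get $\uuuline{q}\widetilde{u_1'}\rlh u_1'\uuuline{r}$ with $r\mid y$, write $y=rz$, and decompose the free castling via Lemma~\ref{lem_C1_subseteq_C_0}: $\uuuline{u_1'}rz\rlh q\uuuline{\widetilde{u_1'}}z\rlh q\widetilde{z}\uuuline{\cdot}=u_2'\uuuline{x}$, forcing $u_2'=q\widetilde{z}$, hence $q\mid u_2'$. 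Then $p\mid dq\mid du_j'=u_j$. This is exactly the paper's argument (written there for the dual statement (\romannumeral2)). For (\romannumeral2), rather than your duality-translation plan, the paper simply runs the mirror-image argument with $\gcd_\ddagger$, $\lcm_\ddagger$, and the co-divisor half of Lemma~\ref{lem_divisor_castling}; your approach via Lemma~\ref{lem_gcd_gcddagger_transfer} could in principle work but is only sketched.
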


\begin{proof}
Proof of (\romannumeral2) are shown below. Similar arguments lead to (\romannumeral1) and we omit the details here.

We use induction on $k$. For $k=2$, let $d=\gcd_\ddagger(w;u_1,u_2)$, $w=w^\prime d$, $u_1=u_1^\prime d$, $u_2=u_2^\prime d$ and $e=\lcm_\ddagger[w^\prime;u_1^\prime,u_2^\prime]$. Then $\gcd_\ddagger(w^\prime;u_1^\prime,u_2^\prime)=1$ and $\lcm_\ddagger[w;u_1,u_2]=ed$. If $p\ddagger d$, then both $p\ddagger u_1$ and $p\ddagger u_2$ hold. In the following, we assume that $p\not \ddagger d$. In view of the fact $p\ddagger ed$ and Lemma \ref{lem_divisor_castling}, there is some $q\ddagger e$ and $w\in S$ such that $\uuuline{w}p\rlh q\uuuline{d}$.

Since $\gcd_\ddagger(w^\prime; u_1^\prime,u_2^\prime)=1$, we write $\uuuline{y}u_1^\prime\rlh x\uuuline{u_2^\prime}$ for some $x,y\in S$, where $e= y u_1^\prime =x u_2^\prime $. We shall prove below that either $q\ddagger u_1^\prime$ or $q\ddagger u_2^\prime$. Suppose that $q\not \ddagger u_1^\prime$. Recall that $q\ddagger e= y u_1^\prime$.
Then ,by Lemma \ref{lem_divisor_castling}, there are some $r\ddagger y$ and $\widetilde{u_1}^\prime \in S$ such that $\uuuline{\widetilde{u_1}^\prime}q\rlh r\uuuline{u_1^\prime}$. Put $y=zr$. Combining Lemma \ref{lem_C1_subseteq_C_0}, we have
\[
y\uuuline{u_1^\prime} = zr\uuuline{u_1^\prime} \rlh z\uuuline{\widetilde{u_1}^\prime}q \rlh x\uuuline{ \widetilde{z}q} = x\uuuline{u_2^\prime}
\]
for some $\widetilde{z}\in S$. In particular, we have $u_2^\prime=\widetilde{z}q$. So $q\ddagger u_2^\prime$. Indeed, the above arguments show that $q\ddagger u_j^\prime$ for $j=1$ or $j=2$. Now $wp=qd\ddagger u_j^\prime d = u_j$. Thus, one has $p\ddagger u_j$ for $j=1$ or $j=2$.

Suppose that the lemma has been proved for $k\leq K-1$ with some $K\geq 3$. Now we consider the case $k=K$. Noting that $\lcm[w;u_1,\ldots,u_K]= \lcm_\ddagger[w;\lcm_\ddagger[w;u_1,\ldots,u_{K-1}],u_K]$. By inductive hypothesis, one deduces that either $p\ddagger u_K$ or $p\ddagger\lcm_\ddagger[w;u_1,\ldots,u_{K-1}]$. For the latter case, one obtains from inductive hypothesis again that $p\ddagger u_j$ for some $1\leq j\leq K-1$. This completes the proof.
\end{proof}

\begin{corollary} \label{cor_pairwise_gcd}
(\romannumeral1) Let $k\geq 2$ and $u_1,u_2,\dots,u_k,v\in S$. Suppose that $\gcd(u_i,v)=1$ for $1\leq i\leq k$. Then $\gcd\left(\lcm[u_1,u_2,\ldots,u_k],v\right)=1$.

(\romannumeral2) Let $k\geq 2$ and $u_1,u_2,\dots,u_k,v,w\in S$. Suppose that $\gcd_\ddagger(w;u_i,v)=1$ for $1\leq i\leq k$. Then $\gcd_\ddagger\left(w;\lcm_\ddagger[w;u_1,u_2,\ldots,u_k],v\right)=1$.
\end{corollary}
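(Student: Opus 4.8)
The plan is to prove both statements by contradiction, in each case extracting an irreducible (co-)divisor of the supposed non-trivial (co-)gcd and feeding it into Lemma~\ref{lem_p_divides_lcm_implies_p_divides_one_of_them}, which carries all the real weight.

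First I would do (\romannumeral1). Suppose $\gcd\bigl(\lcm[u_1,\ldots,u_k],v\bigr)\neq 1$. By Lemma~\ref{lem_prime_divisor_exsits} it has an irreducible divisor $p\in\cP$, and by Lemma~\ref{lem_numbertheoretic_def_gcd}(\romannumeral1) together with transitivity of ``$|$'' this $p$ divides both $\lcm[u_1,\ldots,u_k]$ and $v$. Lemma~\ref{lem_p_divides_lcm_implies_p_divides_one_of_them}(\romannumeral1) then produces a $j$ with $p\mid u_j$, so $p$ is a common divisor of $u_j$ and $v$; but $\gcd(u_j,v)=1$ forces $p\mid 1$, hence $p\in S\cap S^{-1}=\{1\}$ by Axiom \uppercase\expandafter{\romannumeral1}, contradicting $p\in\cP$.

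Then (\romannumeral2) is the mirror image with ``$|$'' replaced by ``$\ddagger$'', and I would only spell out the points where the co-divisor setting differs. One new ingredient is that every $z\neq 1$ in $S$ has an irreducible co-divisor: writing $z=q_1\cdots q_K$ with $q_i\in\cP$ via Lemma~\ref{lem_irr_decomposition_of_any_element} gives $q_K\ddagger z$. The nested expressions are legitimate: $e:=\lcm_\ddagger[w;u_1,\ldots,u_k]$ exists since every $u_i\ddagger w$, it satisfies $e\ddagger w$, and with $v\ddagger w$ this makes $d:=\gcd_\ddagger(w;e,v)$ well-defined. Assuming $d\neq 1$, pick $p\in\cP$ with $p\ddagger d$; since $d\ddagger e$ and $d\ddagger v$ by Lemma~\ref{lem_gcd_ddagger_exists}(\romannumeral1), transitivity of ``$\ddagger$'' gives $p\ddagger e$ and $p\ddagger v$. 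Lemma~\ref{lem_p_divides_lcm_implies_p_divides_one_of_them}(\romannumeral2) then yields $j$ with $p\ddagger u_j$, so $p\ddagger\gcd_\ddagger(w;u_j,v)$ by Lemma~\ref{lem_gcd_ddagger_exists}(\romannumeral2); but $\gcd_\ddagger(w;u_j,v)=1$ gives $p\ddagger 1$, so $p=1$ by Axiom \uppercase\expandafter{\romannumeral1}, a contradiction.

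I do not expect any genuine obstacle here: the combinatorial heart is entirely inside Lemma~\ref{lem_p_divides_lcm_implies_p_divides_one_of_them}, and what is left is routine bookkeeping --- confirming that the iterated $\gcd_\ddagger$/$\lcm_\ddagger$ in (\romannumeral2) are defined, that $p\mid 1$ (resp. $p\ddagger 1$) forces $p=1$, and threading the two transitivities correctly. If anything is slightly delicate it is making sure the co-divisibility chains in (\romannumeral2) are set up with the correct ``up to $w$'' bounds so that each $\gcd_\ddagger$ and $\lcm_\ddagger$ that appears is in fact well-defined.
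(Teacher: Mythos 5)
Your proof is correct and follows essentially the same route as the paper: both reduce the claim to Lemma~\ref{lem_p_divides_lcm_implies_p_divides_one_of_them} by extracting an irreducible (co-)divisor of the alleged nontrivial $\gcd$ (resp.\ $\gcd_\ddagger$). The paper omits the argument for (\romannumeral2); your careful tracking of the ``up to $w$'' bounds and the use of Lemma~\ref{lem_gcd_ddagger_exists}(\romannumeral2) to thread $p\ddagger\gcd_\ddagger(w;u_j,v)$ fills that gap exactly as intended.
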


\begin{proof}
(\romannumeral1) Assume on the contrary that there is some $p\in \cP$ such that $p|\gcd\left(\lcm[u_1,u_2,\ldots,u_k],v\right)$. Then $p|v$ and $p|\lcm[u_1,u_2,\ldots,u_k]$. By Lemma \ref{lem_p_divides_lcm_implies_p_divides_one_of_them}, we have $p|u_j$ for some $1\leq j\leq k$. Then $p|\gcd(u_j,v)$, which is a contradiction.

(\romannumeral2) The proof is similar as above and we omit it here.
\end{proof}

\begin{lemma} \label{lem_stronger_Axiom_1}
Suppose that $w_1,w_2,z_1,z_2$ are elements in $S$ such that $\lcm[w_1,z_1]=\lcm[w_2,z_2]$ and $\gcd(w_i,z_j)=1$ $(1\leq i,j\leq 2)$. Then $w_1=w_2$ and $z_1=z_2$.
\end{lemma}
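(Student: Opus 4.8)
The plan is to reduce the statement to the uniqueness result already established in Lemma~\ref{lem_lcm_unique_from_eta_2=id}(\romannumeral2) by first pinning down the indices. Set $z=\lcm[w_1,z_1]=\lcm[w_2,z_2]$. Since $\gcd(w_i,z_i)=1$, Lemma~\ref{lem_gcd_lcm_u_v}(\romannumeral1) gives $\ind(z)=\ind(w_1)+\ind(z_1)=\ind(w_2)+\ind(z_2)$, and then Corollary~\ref{cor_ind_eq} yields $\ind(w_2^{-1}z)=\ind(z_2)$ and $\ind(z_2^{-1}z)=\ind(w_2)$ (and symmetrically with the indices $1$).

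First I would show $\ind(z_1)=\ind(z_2)$, which is the crux. Since $w_2\mid z$, we may write $z=w_2\cdot(w_2^{-1}z)$, and $z_1\mid z$ together with $\gcd(z_1,w_2)=1$ lets us invoke the fundamental lemma for arithmetic, Lemma~\ref{lem_divisor_castling}(\romannumeral1): it produces a divisor $y^{\ast}$ of $w_2^{-1}z$ and a free castling $\uuuline{z_1}r\rlh w_2\uuuline{y^{\ast}}$ for some $r\in S$. By Corollary~\ref{cor_C_1_ind_transfer} this forces $\ind(z_1)=\ind(y^{\ast})$, and since $y^{\ast}\mid w_2^{-1}z$ we get $\ind(z_1)=\ind(y^{\ast})\le\ind(w_2^{-1}z)=\ind(z_2)$. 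The hypotheses are invariant under the simultaneous swap $(w_1,z_1)\leftrightarrow(w_2,z_2)$ --- in particular the ``cross'' conditions $\gcd(w_1,z_2)=\gcd(w_2,z_1)=1$ go over to themselves --- so the same argument gives $\ind(z_2)\le\ind(z_1)$, hence $\ind(z_1)=\ind(z_2)$ and, consequently, $\ind(w_1)=\ind(w_2)$.

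Next I would upgrade the hypothesis to the equality $\lcm[w_1,z_2]=z$. Writing $z=z_2\cdot(z_2^{-1}z)$, the divisibility $w_1\mid z$ with $\gcd(w_1,z_2)=1$ again lets us apply Lemma~\ref{lem_divisor_castling}(\romannumeral1), now producing a divisor $x^{\ast}$ of $z_2^{-1}z$ and a free castling $\uuuline{w_1}s\rlh z_2\uuuline{x^{\ast}}$; in particular $\lcm[w_1,z_2]=z_2x^{\ast}$, and Corollary~\ref{cor_C_1_ind_transfer} gives $\ind(x^{\ast})=\ind(w_1)=\ind(w_2)=\ind(z_2^{-1}z)$. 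Hence $x^{\ast}$ divides $z_2^{-1}z$ with equal index, so the cofactor has index $0$ and is therefore trivial (Lemma~\ref{lem_irr_decomposition_of_any_element}), giving $x^{\ast}=z_2^{-1}z$ and $\lcm[w_1,z_2]=z=\lcm[w_1,z_1]$. Now Lemma~\ref{lem_lcm_unique_from_eta_2=id}(\romannumeral2), applied with the fixed element $w_1$ and the coprimalities $\gcd(w_1,z_1)=\gcd(w_1,z_2)=1$, yields $z_1=z_2$; setting $z_0=z_1=z_2$ and applying the same lemma with the fixed element $z_0$, using $\gcd(z_0,w_1)=\gcd(z_0,w_2)=1$ and $\lcm[z_0,w_1]=z=\lcm[z_0,w_2]$, yields $w_1=w_2$.

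I expect the first step --- deriving $\ind(w_1)=\ind(w_2)$ --- to be the main obstacle, since before it there is no leverage to compare $z_1$ and $z_2$ directly; this is exactly where all four coprimality conditions are used, as they are the precise hypotheses needed to feed the fundamental lemma, Lemma~\ref{lem_divisor_castling}. Everything afterward is routine bookkeeping, but one should confirm that none of the invoked results (Lemmas~\ref{lem_gcd_lcm_u_v}, \ref{lem_divisor_castling}, \ref{lem_lcm_unique_from_eta_2=id} and Corollaries~\ref{cor_ind_eq}, \ref{cor_C_1_ind_transfer}) require their inputs to be nontrivial, so that no separate treatment of the degenerate cases $w_i=1$ or $z_i=1$ is needed.
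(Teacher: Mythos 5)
Your proof is correct, but it takes a genuinely different route from the paper's. The paper forms $w_0=\lcm[w_1,w_2]$, observes that $z=\lcm[w_1,z_1]$ absorbs $w_2$ (and vice versa) so that $\lcm[w_0,z_1]=z=\lcm[w_0,z_2]$, invokes Corollary~\ref{cor_pairwise_gcd}(\romannumeral1) --- which rests on the irreducible-divisor statement Lemma~\ref{lem_p_divides_lcm_implies_p_divides_one_of_them} --- to get $\gcd(w_0,z_1)=\gcd(w_0,z_2)=1$, and then applies Lemma~\ref{lem_lcm_unique_from_eta_2=id}(\romannumeral2) with the fixed element $w_0$. You instead first pin down $\ind(z_1)=\ind(z_2)$ by two applications of the fundamental lemma (Lemma~\ref{lem_divisor_castling}) together with Corollary~\ref{cor_C_1_ind_transfer}, then deduce $\lcm[w_1,z_2]=z$ and apply Lemma~\ref{lem_lcm_unique_from_eta_2=id}(\romannumeral2) directly with the fixed element $w_1$ (and then $z_1$). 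The paper's argument is shorter once its machinery is in place; yours is slightly longer but relies only on earlier material, avoiding Corollary~\ref{cor_pairwise_gcd} and Lemma~\ref{lem_p_divides_lcm_implies_p_divides_one_of_them} altogether. One small simplification available to you: once $\ind(z_1)=\ind(z_2)$ is known, the equality $\lcm[w_1,z_2]=z$ follows at once, since $w_1,z_2\mid z$ gives $\lcm[w_1,z_2]\mid z$, and $\ind(\lcm[w_1,z_2])=\ind(w_1)+\ind(z_2)=\ind(w_1)+\ind(z_1)=\ind(z)$ by Lemma~\ref{lem_gcd_lcm_u_v}(\romannumeral1); so the second invocation of Lemma~\ref{lem_divisor_castling} is unnecessary. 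Also, the citation for ``a divisor of equal index is the whole element'' should be the unlabeled corollary following Corollary~\ref{cor_ind_eq} (or Lemma~\ref{lem_ind_eq} applied to the index-$0$ cofactor) rather than Lemma~\ref{lem_irr_decomposition_of_any_element}.
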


\begin{proof}
Since $\lcm[w_1,z_1]=\lcm[w_2,z_2]$, one can deduce that
\[
\lcm[w_1,z_1] = \lcm[w_1,w_2,z_1] = \lcm[\lcm[w_1,w_2],z_1],
\]
\[
\lcm[w_2,z_2] = \lcm[w_1,w_2,z_2] = \lcm[\lcm[w_1,w_2],z_2].
\]
Moreover, it follows from $\gcd(w_i,z_j)=1$ $(1\leq i,j\leq 2)$ and Corollary \ref{cor_pairwise_gcd} that
\[
\gcd(\lcm[w_1,w_2],z_1)= \gcd(\lcm[w_1,w_2],z_2)=1.
\]
Thus, we conclude by Lemma \ref{lem_lcm_unique_from_eta_2=id} that $z_1=z_2$. By similar arguments, one also obtains that $w_1=w_2$.
\end{proof}

\begin{lemma} \label{lem_divisor_decomposition_castlable_uv}
Let $u,v\in S$. Denote $\cA=\{(d,v_1): \, d|u,\, v_1|v\}$ and $\cB=\{w\in S:\, w|uv\}$. Suppose that $u,v$ are castled-free. Then
\begin{equation} \label{eq_set_tau_u_tau_v}
\cA=\left\{(d,v_1): \, u=du_1, \, v=v_1v_2,\, \uuuline{u_1}v_1 \rightleftharpoons \breve{v_1} \uuuline{\breve{u_1}} \text{ for some }\breve{v_1},\breve{u_1}\in S\right\}.
\end{equation}
Moreover, the map $\rho:\, \cA\rightarrow \cB,\, (d,v_1)\mapsto d\breve{v_1}$ is bijective. Furthermore, the elements $d,\breve{v_1}$ are castled-free, and so are $\breve{u_1},v_2$.
\end{lemma}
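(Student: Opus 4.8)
The plan is to establish the description \eqref{eq_set_tau_u_tau_v} of $\cA$ first, then prove the bijectivity of $\rho$, and finally verify the castled-freeness of the two pairs. Since $u,v$ are castled-free we have $\gcd(u,v)=1$, hence $\gcd(u_1,v_1)=1$ for any $u_1\mid u$ and $v_1\mid v$; so for each $(d,v_1)\in\cA$, writing $u=du_1$ and $v=v_1v_2$, the least common multiple $\lcm[u_1,v_1]=u_1\breve{v_1}=v_1\breve{u_1}$ together with $\gcd(u_1,v_1)=1$ gives precisely the free castling $\uuuline{u_1}v_1\rlh\breve{v_1}\uuuline{\breve{u_1}}$. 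This shows the right-hand side of \eqref{eq_set_tau_u_tau_v} contains $\cA$; the reverse containment is trivial since its members are pairs $(d,v_1)$ with $d\mid u$, $v_1\mid v$. So the identity holds, and moreover the elements $\breve{v_1},\breve{u_1}$ attached to $(d,v_1)$ are canonically determined.

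Next I would show $\rho$ is well-defined into $\cB$ and bijective. For well-definedness: given $(d,v_1)\in\cA$, decompose $u=du_1$, $v=v_1v_2$, and use the free castling $\uuuline{u_1}v_1\rlh\breve{v_1}\uuuline{\breve{u_1}}$, which by Corollary~\ref{cor_C_1_ind_transfer} gives $u_1v_1=\breve{v_1}\breve{u_1}$, hence $uv=du_1v_1v_2=d\breve{v_1}\breve{u_1}v_2$, so $d\breve{v_1}\mid uv$, i.e.\ $\rho(d,v_1)\in\cB$. For surjectivity: take $w\mid uv$; by Corollary~\ref{cor_divisor_decomposition_uv} there exist $d,w_1,u_1,\widetilde{u_1},v_1$ with $w=dw_1$, $u=du_1$, $v_1\mid v$, and $\uuuline{w_1}\widetilde{u_1}\rlh u_1\uuuline{v_1}$; the latter is the same free castling as $\uuuline{u_1}v_1\rlh w_1\uuuline{\widetilde{u_1}}$ read in reverse, so $w_1=\breve{v_1}$ and $\rho(d,v_1)=dw_1=w$. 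For injectivity: suppose $\rho(d,v_1)=\rho(d',v_1')=w$, i.e.\ $d\breve{v_1}=d'\breve{v_1'}$. Here is where the duality and the uniqueness results enter. From $u=du_1$, $v=v_1v_2$ and the free castling we get $\gcd(\breve{v_1},u_1)$-type information; more precisely, $\breve{v_1}$ is a divisor of $uv$ with $\gcd(\breve{v_1},d)=1$ (since $\gcd(\breve{v_1}, u_1)=1$ follows from $\uuuline{u_1}v_1\rlh\breve{v_1}\uuuline{\breve{u_1}}$ via $\gcd_\ddagger$, and $d\mid u$; one uses Corollary~\ref{cor_pairwise_gcd} and Lemma~\ref{lem_gcd_gcddagger_transfer} to transfer). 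Then $d=\gcd(w,u)$ is forced: $d\mid\gcd(w,u)$ trivially, and any common divisor must be coprime to $\breve{v_1}$ hence divide $d$. So $d=d'$, whence $\breve{v_1}=\breve{v_1'}=w_1$, and by the uniqueness clause in Lemma~\ref{lem_divisor_castling}(i) applied to the castling $\uuuline{w_1}\widetilde{u_1}\rlh u_1\uuuline{v_1}$ (with $u_1$ and $w_1$ fixed) the element $v_1$ is uniquely determined, so $v_1=v_1'$.

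Finally, for the castled-freeness assertions: from $\uuuline{u_1}v_1\rlh\breve{v_1}\uuuline{\breve{u_1}}$ we have $\gcd(u_1,\breve{v_1})=1$. Combined with $d\mid u$ and the hypothesis $\gcd(u,v)=1$ I claim $\gcd(d,\breve{v_1})=1$: indeed any $p\in\cP$ dividing both would have $p\mid d\mid u$; but $p\mid\breve{v_1}$, and $\breve{v_1}\mid uv$ with $\gcd(\breve{v_1},u_1)=1$ forces $p$ to "come from $v$" by the fundamental lemma (Lemma~\ref{lem_divisor_castling}), contradicting $\gcd(u,v)=1$ — here I would run the argument through $\lcm[u_1,v_1]=u_1\breve{v_1}$ and Lemma~\ref{lem_p_divides_lcm_implies_p_divides_one_of_them}. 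Hence $d,\breve{v_1}$ are castled-free. For $\breve{u_1},v_2$: $\gcd(\breve{u_1},v_1)=1$ from the castling, and $v=v_1v_2$ with $\gcd(u,v)=1$, together with $\gcd(\breve{u_1},u_1)$-compatibility, gives $\gcd(\breve{u_1},v_2)=1$ by the same mechanism (note $\breve{u_1}\mid\lcm[u_1,v_1]\mid uv$, apply the fundamental lemma again). The main obstacle I anticipate is the injectivity step — specifically, cleanly arguing that $d=\gcd(w,u)$, which requires carefully transferring coprimality between the $|$-side and the $\ddagger$-side via Lemma~\ref{lem_gcd_gcddagger_transfer} and Corollary~\ref{cor_pairwise_gcd}, and then invoking the uniqueness in Lemma~\ref{lem_divisor_castling}(i) with exactly the right fixed data; the castled-freeness claims are variations on the "either $p\mid u$ or $p$ comes from $v$" dichotomy and should be routine once the fundamental lemma is set up correctly.
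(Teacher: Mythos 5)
There is a genuine gap, and it runs through the whole proposal: you conflate ``$\gcd(u_1,v_1)=1$'' with ``$u_1,v_1$ are castled-free,'' and this also makes you write the free castling with the wrong orientation. If $\gcd(u_1,v_1)=1$ and you set $\lcm[u_1,v_1]=u_1\breve{v_1}=v_1\breve{u_1}$, what Remark~\ref{remark_castled_free_determine_relation} gives you is the castling $\uuuline{u_1}\breve{v_1}\rlh v_1\uuuline{\breve{u_1}}$ (the pair $(u_1,\breve{v_1})$ is in $\fC_1$), \emph{not} $\uuuline{u_1}v_1\rlh\breve{v_1}\uuuline{\breve{u_1}}$ (which would require $(u_1,v_1)\in\fC_1$, i.e.\ $\gcd(u_1,\breve{v_1})=1$ and $\lcm[u_1,\breve{v_1}]=u_1v_1$). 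These are genuinely different: in Thompson's monoid $\gcd(p_0,p_1)=1$, yet $(p_0,p_1)\notin\fC_1$, since the only divisor of $p_0p_1$ coprime to $p_0$ is $1$ and $\lcm[p_0,1]\neq p_0p_1$. The opening assertion that castled-freeness of $u,v$ yields $\gcd(u,v)=1$ is also unproved and not used anywhere in the paper (the hypothesis gives $\gcd(u,\widetilde{v})=1$, not $\gcd(u,v)=1$). The correct mechanism for showing that the sub-pair $(u_1,v_1)$ is castled-free is Lemma~\ref{lem_C1_subseteq_C_0} (decomposition of free castlings), applied to the given $\uuuline{u}v\rlh\widetilde{v}\uuuline{\widetilde{u}}$ with the splittings $u=du_1$ and $v=v_1v_2$; the paper then gets $\uuuline{d}\breve{v_1}\rlh\widehat{v_1}\uuuline{\breve{d}}$ and $\uuuline{\breve{u_1}}v_2\rlh\dot{v_2}\uuuline{\dot{u_1}}$ out of the decomposition chain \eqref{eq_3correspondence}, which is exactly where the castled-freeness of $(d,\breve{v_1})$ and $(\breve{u_1},v_2)$ comes from. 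Your final paragraph tries instead to conclude these from coprimality, which by the same $(p_0,p_1)$ example is insufficient.

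Your injectivity plan is, in outline, a real alternative to the paper's and arguably cleaner: from $d\mid u$, $d\mid w$ one gets $d\mid\gcd(w,u)$; write $\gcd(w,u)=df$, cancel on the left to get $f\mid u_1$ and $f\mid\breve{v_1}$, then $f\mid\gcd(u_1,\breve{v_1})=1$, so $d=\gcd(w,u)$ is determined by $w$; then $\breve{v_1}=d^{-1}w$ is determined, and since $u_1,\breve{v_1}$ are coprime the cofactor $v_1=u_1^{-1}\lcm[u_1,\breve{v_1}]$ is determined (Remark~\ref{remark_castled_free_determine_relation}(iii)). This avoids Lemma~\ref{lem_stronger_Axiom_1} and the element $\widehat{v_1}$. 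But note the coprimality you actually need there is $\gcd(u_1,\breve{v_1})=1$, which is the one that comes from the (correctly oriented) free castling $\uuuline{u_1}v_1\rlh\breve{v_1}\uuuline{\breve{u_1}}$; so this step only goes through once the first part is fixed via the decomposition lemma. The surjectivity step via Corollary~\ref{cor_divisor_decomposition_uv} and $\eta^2=\mathrm{id}$ is fine as written.
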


\begin{proof}
Suppose that $\uuuline{u}v\rlh \widetilde{v}\uuuline{\widetilde{u}}$ for some $\widetilde{v},\widetilde{u}\in S$, where $\gcd(u,\widetilde{v})=1$. It is easy to see that the right-hand side of \eqref{eq_set_tau_u_tau_v} is a subset of $\cA$. On the other hand, for any $(d,v_1)\in \cA$, let $u=du_1$, $v=v_1v_2$. By Lemma \ref{lem_C1_subseteq_C_0}, the elements $u_1,v_1$ are also castled-free. So there are some $\breve{v_1},\breve{u_1}\in S$ such that $\uuuline{u_1}v_1 \rightleftharpoons \breve{v_1} \uuuline{\breve{u_1}}$. Then \eqref{eq_set_tau_u_tau_v} follows. Moreover, since $uv=du_1v_1v_2=d\breve{v_1}\breve{u_1}v_2$, one has $d\breve{v_1}|uv$. So $\rho(\cA) \subseteq \cB$.

Recalling Lemma \ref{lem_C1_subseteq_C_0} and Remark \ref{remark_decomposition_chain}, we take the decomposition chain $(2,1),\,(1,1),\,(1,2),\,(1,2)$ of depth $(2,2)$ and $u=du_1$, $v=du_2$. Then
\begin{equation} \label{eq_3correspondence}
\uuuline{u}v = \uuuline{du_1}v_1v_2 \rlh \uuuline{d}\breve{v_1}\uuuline{\breve{u_1}} v_2 \rlh  \widehat{v_1} \uuuline{\breve{d} \breve{u_1}} v_2 \rlh \widehat{v_1} \uuuline{\breve{d} } \dot{v_2} \uuuline{\dot{u_1}} \rlh  \widehat{v_1}\widehat{v_2} \uuuline{\dot{d}\dot{u_1}} = \widetilde{v} \uuuline{\widetilde{u}}
\end{equation}
for some $\widehat{v_1},\widehat{v_2},\breve{d},\dot{d},\dot{u_1},\dot{v_2}\in S$. So $d,\breve{v_1}$ are castled-free and $d\breve{v_1}=\lcm[d,\widehat{v_1}]$. And $\breve{u_1},v_2$ are also castled-free.

Suppose that $(d^\prime, v_1^\prime)$ is an element in $\cA$ such that $\rho((d^\prime,v_1^\prime))=\rho((d,v_1))$. Write $u=d^\prime u_1^\prime,\, v=v_1^\prime v_2^\prime$ and $\uuuline{u_1^\prime} v_1^\prime \rightleftharpoons \breve{v_1}^\prime\uuuline{\breve{u_1}^\prime}$ for some $\breve{v_1}^\prime,\breve{u_1}^\prime\in S$. And one obtains in a similar way that
\begin{align*}
\uuuline{u}v = \uuuline{d^\prime u_1^\prime}v_1^\prime v_2^\prime \rlh \uuuline{d^\prime}\breve{v_1}^\prime\uuuline{\breve{u_1}^\prime} v_2^\prime \rlh  \widehat{v_1}^\prime \uuuline{\breve{d}^\prime \breve{u_1}^\prime} v_2^\prime \rlh \widehat{v_1}^\prime \uuuline{\breve{d}^\prime } \dot{v_2}^\prime \uuuline{\dot{u_1}^\prime} \rlh  \widehat{v_1}^\prime\widehat{v_2}^\prime \uuuline{\dot{d}^\prime\dot{u_1}^\prime} = \widetilde{v} \uuuline{\widetilde{u}}
\end{align*}
for some $\widehat{v_1}^\prime,\widehat{v_2}^\prime,\breve{d}^\prime,\dot{d}^\prime,\dot{u_1}^\prime,\dot{v_2}^\prime\in S$. Then
\begin{equation} \label{eq_tau_uv=tau_u_tau_v_for_free_1}
\lcm[d^\prime,\widehat{v_1}^\prime]= d^\prime \breve{v_1}^\prime=d\breve{v_1} = \lcm[d,\widehat{v_1}].
\end{equation}
Note that $d,d^\prime$ are both divisors of $u$, and $\breve{v_1},\breve{v_1}^\prime$ are both divisors of $\widetilde{v}$. In view of the condition $\gcd(u,\widetilde{v})=1$, we have .
\begin{equation} \label{eq_tau_uv=tau_u_tau_v_for_free_2}
\gcd(d,\widehat{v_1}) = \gcd(d,\widehat{v_1}^\prime) = \gcd(d^\prime,\widehat{v_1}) = \gcd(d^\prime,\widehat{v_1}^\prime) =1.
\end{equation}
Combining \eqref{eq_tau_uv=tau_u_tau_v_for_free_1}, \eqref{eq_tau_uv=tau_u_tau_v_for_free_2} and Lemma \ref{lem_stronger_Axiom_1}, we conclude that $d=d^\prime$ and then $v_1=v_1^\prime$. Hence, the map $\rho$ is injective.

Finally, we deduce from Corollary \ref{cor_divisor_decomposition_uv} that $\rho$ is surjective. The proof is completed.
\end{proof}

Recall that, for $u\in S$, the sets $\PD(u)$ and $\PD_\ddagger(u)$ stand for the set of irreducible divisors and co-divisors of $u$, respectively. And $\omega(u)=\#\PD(u)$, $\omega_\ddagger(u)=\#PD_\ddagger(u)$.

\begin{corollary} \label{cor_divisor_decomposition_castlable_uv}
Suppose that $\uuuline{u}v \rlh \widetilde{v}\uuuline{\widetilde{u}}$. (\romannumeral1) There is a one-to-one correspondence between $\cC=\{v_1:\, v_1|v\}$ and $\cD=\{\breve{v_1}:\,\breve{v_1}|\widetilde{v}\}$ by $\rho^\prime: \cC\rightarrow \cD, v_1\mapsto \rho((1,v_1))$. (\romannumeral2) There is a one-to-one correspondence between $\PD(\widetilde{v})$ and $\PD(v)$. (\romannumeral3) There is a one-to-one correspondence between $\PD_\ddagger(\widetilde{v})$ and $\PD_\ddagger(v)$. In particular, we have
\[
\tau(\widetilde{v})=\tau(v),\quad \omega(\widetilde{v})=\omega(v),\quad \omega_\ddagger(\widetilde{v})=\omega_\ddagger(v).
\]
\end{corollary}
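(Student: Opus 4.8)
The plan is to derive all three correspondences by restricting the bijection $\rho\colon\cA\to\cB$ of Lemma~\ref{lem_divisor_decomposition_castlable_uv} (together with its co-divisor mirror image), controlling irreducibility via the index identity of Corollary~\ref{cor_C_1_ind_transfer}.

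For (i), the point is that $\rho^\prime$ is precisely the restriction of $\rho$ to the slice $\{1\}\times\cC\subseteq\cA$, hence injective. For its image: when $d=1$ the decomposition chain that defines $\rho$ degenerates to $\uuuline{u}v=\uuuline{u}v_1v_2\rlh\rho^\prime(v_1)\uuuline{\breve u}v_2\rlh\rho^\prime(v_1)\widehat{v_2}\uuuline{\widetilde u}=\widetilde v\uuuline{\widetilde u}$, so $\widetilde v=\rho^\prime(v_1)\widehat{v_2}$ and $\rho^\prime(v_1)|\widetilde v$; conversely, for $w|\widetilde v$ we have $w|uv$ (since $uv=\widetilde v\widetilde u$), so $w=\rho((d,v_1))=d\breve{v_1}$ for some $(d,v_1)\in\cA$, and then $d|w|\widetilde v$ together with $d|u$ forces $d|\gcd(u,\widetilde v)=1$, i.e. $d=1$ and $w=\rho^\prime(v_1)$. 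Thus $\rho^\prime\colon\cC\to\cD$ is a bijection, and $\tau(\widetilde v)=|\cD|=|\cC|=\tau(v)$.

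For (ii), restrict $\rho^\prime$ to $\PD(v)$: for $q\in\PD(v)$ we have $\uuuline{u}q\rlh\rho^\prime(q)\uuuline{\breve u}$, so $\ind(\rho^\prime(q))=\ind(q)=1$ by Corollary~\ref{cor_C_1_ind_transfer}, and since an element of index $1$ is irreducible (Lemma~\ref{lem_ind_eq}) and $\rho^\prime(q)|\widetilde v$, we get $\rho^\prime(q)\in\PD(\widetilde v)$; the reverse inclusion uses surjectivity of $\rho^\prime$ and again Corollary~\ref{cor_C_1_ind_transfer}, whence $\omega(\widetilde v)=\omega(v)$. For (iii), I would run the co-divisor version: via Lemma~\ref{lem_C1_subseteq_C_0}(ii), send $v_2\ddagger v$ (write $v=v_1v_2$) to $\sigma(v_2):=\widehat{v_2}$, where $\widehat{v_1}\widehat{v_2}=\widetilde v$, $\uuuline{u}v_1\rlh\widehat{v_1}\uuuline{\widehat u}$ and $\uuuline{\widehat u}v_2\rlh\widehat{v_2}\uuuline{\widetilde u}$. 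Well-definedness holds because $\eta$ is a map; injectivity because the pair $(\widehat u,v_2)$ is determined by $(\sigma(v_2),\widetilde u)$ via Remark~\ref{remark_castled_free_determine_relation}; surjectivity because, using $\eta^2=id$ to write $\uuuline{\widetilde v}\widetilde u\rlh u\uuuline{v}$ and applying Lemma~\ref{lem_C1_subseteq_C_0}(i) to a factorization $\widetilde v=\widehat{v_1}w$, one produces $w'\ddagger v$ with $\sigma(w')=w$ after matching the auxiliary castlings by Remark~\ref{remark_castled_free_determine_relation}. Since $\ind(\sigma(v_2))=\ind(v_2)$ by Corollary~\ref{cor_C_1_ind_transfer} and index-$1$ elements are exactly the irreducibles, $\sigma$ restricts to a bijection $\PD_\ddagger(v)\to\PD_\ddagger(\widetilde v)$, giving $\omega_\ddagger(\widetilde v)=\omega_\ddagger(v)$. (Equivalently, state the co-divisor analogue of Lemma~\ref{lem_divisor_decomposition_castlable_uv} and argue as in (i)--(ii).)

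The main obstacle is part (iii): parts (i) and (ii) are immediate once Lemma~\ref{lem_divisor_decomposition_castlable_uv} is available, but setting up the co-divisor map $\sigma$ — in particular proving it surjective — requires careful tracking of which two of the four entries of a free castling determine the remaining two (Remark~\ref{remark_castled_free_determine_relation}), together with repeated use of the involution $\eta^2=id$.
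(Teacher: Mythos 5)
Your proof is correct. Parts (i) and (ii) follow essentially the same reasoning as the paper — both rely on Lemma~\ref{lem_divisor_decomposition_castlable_uv} and index preservation via Corollary~\ref{cor_C_1_ind_transfer}. For surjectivity in (i) you go through the abstract bijectivity of $\rho$ and then show $d=1$; the paper instead produces the preimage $v_1$ directly from $\gcd(u,\breve{v_1})=1$ and the relation $\lcm[u,\breve{v_1}]\,|\,\lcm[u,\widetilde{v}]$, but both routes hinge on $\gcd(u,\widetilde{v})=1$ and are equivalent in length.

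For (iii) your route is genuinely different and considerably more work than necessary. You construct a separate co-divisor map $\sigma$ from the type-(ii) decomposition of Lemma~\ref{lem_C1_subseteq_C_0}, then establish well-definedness, injectivity via Remark~\ref{remark_castled_free_determine_relation}, and surjectivity via the involution $\eta^2=\mathrm{id}$. This can be carried through correctly, but the paper avoids all of it with a short observation: the same map $\rho^\prime$ already does the job. Since $\rho^\prime$ preserves $\ind$, it restricts to a bijection between divisors of $v$ with $\ind(v_1)=\ind(v)-1$ and divisors of $\widetilde{v}$ with $\ind(\breve{v_1})=\ind(\widetilde{v})-1$; composing with the obvious bijections $v_1\mapsto v_1^{-1}v$ and $\breve{v_1}\mapsto \breve{v_1}^{-1}\widetilde{v}$ gives the correspondence $\PD_\ddagger(v)\leftrightarrow\PD_\ddagger(\widetilde{v})$. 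Your approach buys nothing beyond what the reuse of $\rho^\prime$ gives for free; next time, check whether a map you have already built preserves enough invariants before constructing a new one.
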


\begin{proof}
Recall \eqref{eq_3correspondence} from the proof of Lemma \ref{lem_divisor_decomposition_castlable_uv}. When $d=1$, we have $u_1=u$, $\rho^\prime(v_1)=\rho((1,v_1))=\breve{v_1}$, and $\breve{v_1}=\widehat{v_1}$, which is a divisor of $\widetilde{v}$. So $\rho^\prime(\cC)\subseteq \cD$. For any $\breve{v_1}\in \cD$, we have $\gcd(u,\breve{v_1})=1$. Then there are some $v_1$ and $\breve{u}$ in $S$ such that $\uuuline{u}v_1 \rightleftharpoons \breve{v_1} \uuuline{\breve{u}}$. Indeed, we have $\lcm[u,\breve{v_1}]= uv_1=\breve{v_1}\breve{u}$. Since $\lcm[u,\breve{v_1}]|\lcm[u,\widetilde{v}]$, one has $uv_1|uv$. So $v_1\in \cC$ and $\rho^\prime(v_1)=\breve{v_1}$. Now we have shown that $\rho^\prime$ is surjective. In view of the fact that $\rho$ is injective, one concludes that $\rho^\prime$ is also injective. The first correspondence follows.

Since $\ind(v_1)=\ind(\breve{v_1})$, we have that $v_1\in \cP$ if and only if $\breve{v_1}\in \cP$. The second correspondence also holds. Similarly, we have that $\ind(v_1)=\ind(v)-1$ if and only if $\ind(\breve{v})=\ind(\widetilde{v})-1$. The third correspondence follows.
\end{proof}

\begin{remark}
Note that $\uuuline{u}v \rlh \widetilde{v}\uuuline{\widetilde{u}}$ is equivalent to $\widetilde{v}\uuuline{\widetilde{u}} \rlh \uuuline{u}v$. We also have
\[
\tau(\widetilde{u})=\tau(u),\quad \omega(\widetilde{u})=\omega(u),\quad \omega_\ddagger(\widetilde{u})=\omega_\ddagger(u).
\]
\end{remark}

Now we shall prove  Theorem \ref{thm_divisor_submul}.

\begin{proof} [Proof of Theorem \ref{thm_divisor_submul}]
For any divisor $w|uv$, suppose that $\gcd(w,u)=d$ and write $w=dw_1$ and $u=du_1$. Then $\gcd(w_1,u_1)=1$ and $w_1|u_1v$. It follows from Corollary \ref{coro_divisor_inequality} that
\begin{equation} \label{eq_w_1_inequality}
\#\{w_1\in S:\, w_1|u_1v, \gcd(w_1,u_1)=1\}\leq \tau(v).
\end{equation}
As a result, we deduce that
\begin{equation} \label{eq_tau_uv_inequality}
\tau(uv)= \sum\limits_{w|uv}1 =  \sum\limits_{d|u}\sum\limits_{w|uv\atop \gcd(w,u)=d} 1 =\sum\limits_{d|u}\sum\limits_{w_1|u_1 v\atop \gcd(w_1,u_1)=1} 1 \leq \sum\limits_{d|u} \tau(v) = \tau(u)\tau(v).
\end{equation}

When the equality in \eqref{eq_tau_uv_inequality} holds, the equality in \eqref{eq_w_1_inequality} also holds. In particular, we have $\#\{w\in S:\, w|uv, \, \gcd(w,u)=1\}= \tau(v)$. Combining the uniqueness stated in Lemma \ref{lem_divisor_castling}, there is some $w_0\in S$ with $w_0|uv$ and $\gcd(w_0,u)=1$ such that $\uuuline{w_0}\widetilde{u}\rlh u\uuuline{v}$ for some $\widetilde{u}\in S$. So $u,v$ are castled-free. On the other hand, when $u,v$ are castled-free, one deduces from Lemma \ref{lem_divisor_decomposition_castlable_uv} that $\tau(uv)=\tau(u)\tau(v)$. The theorem then follows.
\end{proof}

\begin{corollary}
For any $u\in S$, we have $\tau(u)\leq 2^{\ind(u)}$.
\end{corollary}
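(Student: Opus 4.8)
The plan is a one-line induction on $\ind(u)$, powered by the two structural facts already established: the sub-multiplicativity $\tau(ab)\le \tau(a)\tau(b)$ (Theorem \ref{thm_divisor_submul}) and the additivity $\ind(ab)=\ind(a)+\ind(b)$ (Corollary \ref{cor_ind_eq}), together with the defining equality $\tau(p)=2$ for $p\in\cP$.

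First I would handle the base case $\ind(u)=0$: by Lemma \ref{lem_ind_eq} (or directly from the definition of $\ind$) this forces $u=1$, and $\tau(1)=1=2^0$. For the inductive step, suppose the bound holds for all elements of index at most $k-1$, and let $\ind(u)=k\ge 1$. By the definition of $\ind$ there exist $q_1,\dots,q_k\in\cP$ with $u=q_1(q_2\cdots q_k)$; set $v=q_2\cdots q_k$, so that $\ind(v)=k-1$ by Corollary \ref{cor_ind_eq}. Then Theorem \ref{thm_divisor_submul} gives $\tau(u)=\tau(q_1 v)\le \tau(q_1)\tau(v)=2\,\tau(v)$, and the inductive hypothesis yields $\tau(v)\le 2^{k-1}$, hence $\tau(u)\le 2^{k}=2^{\ind(u)}$. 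Equivalently, one may avoid induction altogether: writing $u=q_1q_2\cdots q_k$ with $k=\ind(u)$ and iterating the sub-multiplicative inequality $k-1$ times gives $\tau(u)\le \prod_{j=1}^{k}\tau(q_j)=2^{k}$.

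There is essentially no obstacle here; the only point requiring the earlier theory is the sub-multiplicativity, which is exactly Theorem \ref{thm_divisor_submul}, and the invariance of the number of letters in a word, which is Lemma \ref{lem_ind_eq} (this is what guarantees the decomposition $u=q_1\cdots q_k$ with $k=\ind(u)$ actually exists and that $\ind$ is well behaved). I expect the writeup to be only a few lines.
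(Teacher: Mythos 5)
Your proof is correct and is the standard argument the paper intends: since $\tau$ is sub-multiplicative (Theorem \ref{thm_divisor_submul}) and $\tau(q)=2$ for every irreducible $q$, writing $u=q_1\cdots q_k$ with $k=\ind(u)$ and iterating gives $\tau(u)\le 2^k$, with the base case $\tau(1)=1$. The paper states this as a corollary without proof precisely because this one-line iteration is the whole content; your optional inductive phrasing and the note that $\ind$ is well defined by Lemma \ref{lem_ind_eq} are both accurate but not substantively different.
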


Next, we consider the relation of number of divisors of $u,v$ and $\lcm[u,v]$.

\begin{corollary} \label{cor_tau_lcm_coprime_u_j}
(\romannumeral1) Suppose that $k\geq 2$ and $u_1,u_2,\ldots,u_k$ are elements in $S$ with $\gcd(u_i,u_j)=1$ $(1\leq i<j\leq k)$. Then
\[
\tau(\lcm[u_1,u_2,\ldots, u_k])=\tau(u_1)\tau(u_2)\ldots \tau(u_k).
\]

(\romannumeral2) Suppose that $k\geq 2$ and $u_1,u_2,\ldots,u_k,w$ are elements in $S$ with $\gcd_\ddagger(w;u_i,u_j)=1$ $(1\leq i< j\leq k)$. Then
\[
\tau(\lcm_\ddagger[w;u_1,u_2,\ldots, u_k])=\tau(u_1)\tau(u_2)\ldots \tau(u_k).
\]
\end{corollary}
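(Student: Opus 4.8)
The plan is to prove both parts by induction on $k$, reducing everything to the case $k=2$, where the submultiplicative Theorem~\ref{thm_divisor_submul} applies directly. The case $k=2$ for part (i) is where the essential content sits; the inductive step is a routine combination of the reduction formula for $\lcm$ (resp.\ $\lcm_\ddagger$) with the coprimality-lifting statement of Corollary~\ref{cor_pairwise_gcd}; and part (ii) is obtained by transporting the co-divisor problem to the divisor setting via the duality Lemma~\ref{lem_gcd_gcddagger_transfer}.

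For part (i) with $k=2$: given $\gcd(u_1,u_2)=1$, write $\lcm[u_1,u_2]=u_1y=u_2x$. By the construction of $\fC_1$ the pair $u_1,y$ is castled-free, so Theorem~\ref{thm_divisor_submul} gives $\tau(\lcm[u_1,u_2])=\tau(u_1)\tau(y)$. Since the associated free castling is $\uuuline{u_1}y\rlh u_2\uuuline{x}$, Corollary~\ref{cor_divisor_decomposition_castlable_uv} yields $\tau(y)=\tau(u_2)$, and the base case follows. For the inductive step, set $L=\lcm[u_1,\ldots,u_{k-1}]$, so $\lcm[u_1,\ldots,u_k]=\lcm[L,u_k]$; Corollary~\ref{cor_pairwise_gcd}(i), applied to $u_1,\ldots,u_{k-1}$ and $u_k$ using $\gcd(u_i,u_k)=1$ for $i<k$, shows $\gcd(L,u_k)=1$. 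The $k=2$ case together with the inductive hypothesis then gives $\tau(\lcm[u_1,\ldots,u_k])=\tau(L)\tau(u_k)=\tau(u_1)\cdots\tau(u_k)$.

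For part (ii), I would again treat $k=2$ first. Given $u_1,u_2\ddagger w$ with $\gcd_\ddagger(w;u_1,u_2)=1$, put $z=\lcm_\ddagger[w;u_1,u_2]$; since $u_1,u_2\ddagger z$ (Lemma~\ref{lem_lcm_ddagger_exists}), write $z=uu_1=vu_2$ for suitable $u,v\in S$. Lemma~\ref{lem_gcd_gcddagger_transfer}(ii) then gives $\gcd(u,v)=1$ and $\lcm[u,v]=z$, which is precisely the data of a free castling $\uuuline{u}u_1\rlh v\uuuline{u_2}$. Hence $u,u_1$ are castled-free, so $\tau(z)=\tau(u)\tau(u_1)$ by Theorem~\ref{thm_divisor_submul}, while Corollary~\ref{cor_divisor_decomposition_castlable_uv} and the remark following it give $\tau(u)=\tau(u_2)$; therefore $\tau(z)=\tau(u_1)\tau(u_2)$. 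The inductive step mirrors part (i): using the reduction formula $\lcm_\ddagger[w;u_1,\ldots,u_k]=\lcm_\ddagger[w;\lcm_\ddagger[w;u_1,\ldots,u_{k-1}],u_k]$ and Corollary~\ref{cor_pairwise_gcd}(ii) to secure the hypothesis $\gcd_\ddagger(w;\lcm_\ddagger[w;u_1,\ldots,u_{k-1}],u_k)=1$, then applying the $k=2$ case and the inductive hypothesis.

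I expect the only real obstacle to be the bookkeeping in part (ii): one must verify that $z=\lcm_\ddagger[w;u_1,u_2]$ genuinely admits the factorizations $z=uu_1=vu_2$ with $u,v\in S$ (this is exactly $u_1,u_2\ddagger z$), and one must line up the variables in Lemma~\ref{lem_gcd_gcddagger_transfer}(ii) and in Corollary~\ref{cor_divisor_decomposition_castlable_uv} correctly, so that the $\tau$-preservation under castling is invoked for the right element ($u$ versus $u_2$, and $u_1$ versus $v$). Once that matching is fixed, the rest is a straightforward induction using only the reduction formulas for $\lcm$ and $\lcm_\ddagger$ together with Corollary~\ref{cor_pairwise_gcd}.
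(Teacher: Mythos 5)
Your proof is correct and matches the paper's approach: part (i) is essentially verbatim (the $k=2$ base case via the free castling $\uuuline{u_1}y\rlh u_2\uuuline{x}$ together with Theorem~\ref{thm_divisor_submul} and Corollary~\ref{cor_divisor_decomposition_castlable_uv}, then the reduction formula for $\lcm$ and Corollary~\ref{cor_pairwise_gcd} for the induction). For part (ii) the paper only writes ``similar arguments work''; your dualization via Lemma~\ref{lem_gcd_gcddagger_transfer}(ii) is a correct and natural way to spell that out, and your bookkeeping (identifying $\uuuline{u}u_1\rlh v\uuuline{u_2}$ and extracting $\tau(u)=\tau(u_2)$) lines up properly.
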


\begin{proof}
(\romannumeral1) Since $\gcd(u_1,u_2)=1$, we have $\uuuline{u_1}y\rlh u_2\uuuline{x}$ for some $x,y\in S$. By Theorem \ref{thm_divisor_submul} and Corollary \ref{cor_divisor_decomposition_castlable_uv}, we deduce that
\[
\tau(\lcm[u_1,u_2]) = \tau(u_1y) = \tau(u_1)\tau(y) = \tau(u_1)\tau(u_2).
\]
Suppose the lemma has been proved for $k-1$ elements $u_1,\ldots,u_{k-1}$. By Corollary \ref{cor_pairwise_gcd}, we have $\gcd(\lcm[u_1,\ldots,u_{k-1}],u_k)=1$. Then
\begin{align*}
&\tau(\lcm[u_1,\ldots,u_{k-1},u_k]) = \tau(\lcm[\lcm[u_1,\ldots,u_{k-1}],u_k]) \\
&\quad \quad = \tau(\lcm[u_1,\ldots,u_{k-1}])\tau(u_k) = \tau(u_1)\ldots \tau(u_{k-1})\tau(u_k).
\end{align*}

(\romannumeral2) Similar arguments as in (\romannumeral1) work.
\end{proof}

\begin{corollary} \label{cor_tau_lcm_p}
(\romannumeral1) Let $k\geq 1$ and $q_1,\ldots,q_k$ be distinct elements in $\cP$. Then $\tau(\lcm[q_1,\ldots,q_k])=2^k$. (\romannumeral2) Let $k\geq 1$, $w\in S$ and $q_1,\ldots,q_k$ be distinct elements in $\cP$ such that $q_1,\ldots,q_k\ddagger w$. Then $\tau(\lcm_\ddagger[w;q_1,\ldots,q_k])=2^k$.

\end{corollary}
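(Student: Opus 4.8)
The plan is to reduce both parts to Corollary \ref{cor_tau_lcm_coprime_u_j}, which already computes $\tau$ of an $\lcm$ (resp.\ $\lcm_\ddagger$) of pairwise free elements as the product of the individual $\tau$-values. Since $\tau(q)=2$ for every $q\in\cP$, the only work is to verify that distinct irreducible elements are pairwise free (resp.\ pairwise free ``up to $w$''), and to handle the degenerate case $k=1$, which Corollary \ref{cor_tau_lcm_coprime_u_j} does not formally cover.

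First I would record the elementary fact that an irreducible element $q$ has exactly two divisors and exactly two co-divisors, both sets being $\{1,q\}$: this is immediate from $\tau(q)=2$ together with the observation that $\tau$ counts divisors and co-divisors alike. In particular $q\neq 1$, since $\tau(1)=1$. For part (\romannumeral1), fix $i\neq j$ and put $d=\gcd(q_i,q_j)$. Then $d|q_i$ gives $d\in\{1,q_i\}$ and $d|q_j$ gives $d\in\{1,q_j\}$; if $d=q_i$ then $q_i|q_j$, so $q_i\in\{1,q_j\}$, and since $q_i\neq 1$ this forces $q_i=q_j$, contradicting distinctness. Hence $\gcd(q_i,q_j)=1$ for all $i\neq j$. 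For $k\geq 2$, Corollary \ref{cor_tau_lcm_coprime_u_j}(\romannumeral1) then yields $\tau(\lcm[q_1,\ldots,q_k])=\prod_{i=1}^{k}\tau(q_i)=2^{k}$, while for $k=1$ we simply have $\lcm[q_1]=q_1$ and $\tau(q_1)=2$.

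Part (\romannumeral2) follows the same scheme with ``$\ddagger$'' replacing ``$|$''. Since $q_1,\ldots,q_k\ddagger w$, the quantities $\gcd_\ddagger(w;q_i,q_j)$ are defined; by Lemma \ref{lem_gcd_ddagger_exists}(\romannumeral1) each $d:=\gcd_\ddagger(w;q_i,q_j)$ co-divides both $q_i$ and $q_j$, so $d\in\{1,q_i\}\cap\{1,q_j\}$, and $d=q_i$ would give $q_i\ddagger q_j$, whence $q_i\in\{1,q_j\}$ (the only co-divisors of $q_j$) and $q_i=q_j$, a contradiction. Thus $\gcd_\ddagger(w;q_i,q_j)=1$ for all $i\neq j$, and for $k\geq 2$ Corollary \ref{cor_tau_lcm_coprime_u_j}(\romannumeral2) gives $\tau(\lcm_\ddagger[w;q_1,\ldots,q_k])=2^{k}$; the case $k=1$ reduces to $\lcm_\ddagger[w;q_1]=q_1$ via the defining formula \eqref{eq_lcm_ddagger_definition}. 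There is no substantive obstacle; the only points requiring care are the pairwise-freeness step (which is exactly where distinctness of the $q_i$ enters) and the bookkeeping of the $k=1$ endpoint.
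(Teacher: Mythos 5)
Your proposal is correct and is precisely the argument the paper intends: the corollary is stated without proof immediately after Corollary \ref{cor_tau_lcm_coprime_u_j}, signaling that one should verify pairwise freeness of distinct irreducibles (which you do correctly by reading off that the divisors and co-divisors of any $q\in\cP$ are $\{1,q\}$ from $\tau(q)=2$) and then apply that corollary with each $\tau(q_i)=2$. Your handling of the $k=1$ case and of the $\ddagger$ variant via \eqref{eq_lcm_ddagger_definition} and Lemma \ref{lem_gcd_ddagger_exists} is the natural bookkeeping and raises no issues.
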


\medskip

We end this section by calculating the M\"{o}bius function.

\begin{theorem}
We have
\begin{equation} \label{eq_mu_expression}
\mu(u) =
\begin{cases}
1,\quad &\If u=1,\\
(-1)^k,\quad &\If u=\lcm[q_1,\ldots,q_k] \text{ for distinct }q_1,\ldots,q_k\in \cP,\\
0,\quad &\Otherwise.
\end{cases}
\end{equation}
In particular, we have $\mu(u)=(-1)^k$ if and only if $\omega(u)=k$, $\tau(u)=2^k$.
\end{theorem}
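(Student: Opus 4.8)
The plan is to read the formula off directly from the co-divisor expression for the M\"obius function in Theorem~\ref{prop_mu_l_calculation}, after identifying the quantity $\gcd_\ddagger$ appearing there with an $\lcm$ of irreducible divisors, and then to finish with Lemma~\ref{lem_p_divides_lcm_implies_p_divides_one_of_them} and Corollary~\ref{cor_tau_lcm_p}.

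First I would recast Theorem~\ref{prop_mu_l_calculation} in a cleaner form. For $z\neq1$ the map $p\mapsto p^{-1}z$ is injective on $\PD(z)$, so subsets $F\subseteq\cF_\ddagger(z)$ correspond bijectively to subsets $F'\subseteq\PD(z)$ with $|F|=|F'|$; and since $z\,(p^{-1}z)^{-1}=p$, the definition \eqref{eq_gcd_ddagger_definition} gives
\[
\gcd_\ddagger\bigl(z;\{p^{-1}z:\,p\in F'\}\bigr)=\bigl(\lcm[\,z\,(p^{-1}z)^{-1}:p\in F'\,]\bigr)^{-1}z=\bigl(\lcm[F']\bigr)^{-1}z,
\]
which equals $1$ precisely when $\lcm[F']=z$. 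Hence Theorem~\ref{prop_mu_l_calculation} takes the form
\[
\mu(z)=\sum_{\substack{\emptyset\neq F'\subseteq\PD(z)\\ \lcm[F']=z}}(-1)^{|F'|},\qquad(z\neq1).
\]

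Next I would handle the two cases. If $z\neq1$ is \emph{not} an $\lcm$ of finitely many distinct irreducibles, then no $F'\subseteq\PD(z)$ can satisfy $\lcm[F']=z$ (such an $F'$ would consist of distinct irreducible divisors of $z$ with $\lcm$ equal to $z$), so the displayed sum is empty and $\mu(z)=0$. If $z=\lcm[q_1,\dots,q_k]$ with the $q_i\in\cP$ distinct, then by Lemma~\ref{lem_p_divides_lcm_implies_p_divides_one_of_them} every irreducible divisor of $z$ divides, hence equals, one of the $q_j$, so $\PD(z)=\{q_1,\dots,q_k\}$; moreover, if $F'\subseteq\PD(z)$ had $\lcm[F']=z$ yet $q_m\notin F'$ for some $m$, then $q_m\mid z=\lcm[F']$ would, by Lemma~\ref{lem_p_divides_lcm_implies_p_divides_one_of_them} again, divide and hence equal some element of $F'$, which is absurd. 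Thus $\lcm[F']=z$ forces $F'=\{q_1,\dots,q_k\}$, the displayed sum reduces to the single term $(-1)^k$, and $\mu(z)=(-1)^k$; note this also shows that $k=\omega(z)$ is determined by $z$, so the middle case of the asserted formula is unambiguous. Together with $\mu(1)=1$, this proves \eqref{eq_mu_expression}.

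For the ``in particular'' clause I would argue both implications. If $u=\lcm[q_1,\dots,q_k]$ for distinct $q_i\in\cP$ (the case $k=0$ being $u=1$), then $\omega(u)=k$ by the previous step, $\tau(u)=2^k$ by Corollary~\ref{cor_tau_lcm_p}, and $\mu(u)=(-1)^k$. Conversely, suppose $\omega(u)=k$ and $\tau(u)=2^k$; let $q_1,\dots,q_k$ be the irreducible divisors of $u$ and set $v=\lcm[q_1,\dots,q_k]$, so $v\mid u$ and $\tau(v)=2^k$ again by Corollary~\ref{cor_tau_lcm_p}. Since a proper divisor of $u$ has strictly smaller $\tau$, we get $v=u$, and then $\mu(u)=(-1)^k$ by \eqref{eq_mu_expression}. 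I do not anticipate a genuine obstacle in this argument; the only point needing care is the bookkeeping that converts $\gcd_\ddagger(z;\{p^{-1}z:p\in F'\})=1$ into $\lcm[F']=z$, and the two appeals to Lemma~\ref{lem_p_divides_lcm_implies_p_divides_one_of_them} used to pin down $\PD(z)$ and to exclude proper subsets with full $\lcm$.
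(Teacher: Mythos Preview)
Your argument is correct, but it proceeds differently from the paper. The paper temporarily \emph{defines} $\mu$ by the claimed formula \eqref{eq_mu_expression} and then verifies $\mu\ast 1=\delta_1$ directly: for $u\neq 1$ with $\PD(u)=\{q_1,\dots,q_k\}$, the divisors $d|u$ with $\mu(d)\neq 0$ are exactly the $2^k$ elements $\lcm[q_i:i\in F']$ for $F'\subseteq\{1,\dots,k\}$, and the resulting sum $\sum_{F'}(-1)^{|F'|}=(1-1)^k=0$ finishes the computation. You instead start from the already-proved co-divisor formula of Theorem~\ref{prop_mu_l_calculation}, translate the condition $\gcd_\ddagger(z;F)=1$ into $\lcm[F']=z$ via \eqref{eq_gcd_ddagger_definition}, and then use Lemma~\ref{lem_p_divides_lcm_implies_p_divides_one_of_them} twice to show that at most one subset $F'$ can contribute. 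Your route is a clean exploitation of the duality and arguably makes the appearance of $(-1)^k$ more transparent (a single surviving term rather than a binomial cancellation); the paper's route is more self-contained in that it does not invoke Theorem~\ref{prop_mu_l_calculation}. The treatment of the ``in particular'' clause is essentially identical in both.
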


\begin{proof}
Let $\mu$ be given as in \eqref{eq_mu_expression}. It is sufficient to prove that $1\ast \mu=\delta_1$. For $u=1$, it satisfies $(1\ast \mu)(1)=1\cdot\mu(1)=1=\delta_1(1)$.

Now consider the case $u\neq 1$. Suppose that $\PD(u)=\{q_1,\ldots,q_k\}$. Then
\begin{equation} \label{eq_divisor_of_lcm_q}
\lcm[q_1^{l_1},\ldots,q_k^{l_k}], \quad (l_1,\ldots,l_k)\in \{0,1\}^k,
\end{equation}
are $2^k$ distinct divisors of $u$, which satisfy
\[
\mu\left(\lcm[q_1^{l_1},\ldots,q_k^{l^k}]\right)=(-1)^{l_1+\ldots +l_k}.
\]
And $\mu(d)=0$ for all other divisors $d|u$. It follows that
\begin{align*}
(\mu\ast 1)(u) &= \sum\limits_{d|u}\mu(d) = \sum\limits_{(l_1,\ldots,l_k)\in \{0,1\}^k} \mu\left(\lcm[q_1^{l_1},\ldots,q_k^{l^k}]\right)\\
& = \sum\limits_{(l_1,\ldots,l_k)\in \{0,1\}^k} (-1)^{l_1+\ldots+l_k} = (1-1)^k =0 = \delta_1(u).
\end{align*}
The proof of \eqref{eq_mu_expression} is completed.

If $u=\lcm[q_1,\ldots,q_k]$, then $\omega(u)=k$ and $\tau(u)=2^k$ by Corollary \ref{cor_tau_lcm_p}. On the other hand, suppose that $\omega(u)=k$ and $\tau(u)=2^k$. Write $q_1,\ldots,q_k$ for the $k$ distinct irreducible divisors of $u$. Then $\lcm[q_1,\ldots,q_k]$ is a divisor of $u$ and has $2^k$ distinct divisors as in \eqref{eq_divisor_of_lcm_q}. Since $\tau(u)=2^k$, one has $u= \lcm[q_1,\ldots,q_k]$ and then $\mu(u)=(-1)^k$.
\end{proof}

The following lemma shows that the least common multiple appeared in \eqref{eq_mu_expression} can be replaced by least common co-multiple.

\begin{lemma} \label{lem_lcm_dagger_power=1}
Suppose that $k\geq 2$ and $q_1,q_2,\ldots,q_k$ be distinct irreducible elements. Let $u=\lcm[q_1,q_2,\ldots,q_k]$. Then there exist distinct irreducible elements $r_1,r_2,\ldots,r_k$ such that $u=\lcm_\ddagger[u;r_1,r_2,\ldots,r_k]$.
\end{lemma}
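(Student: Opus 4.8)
The plan is to build the $r_i$ by castling each $q_i$ past the least common multiple of the other $q_\ell$'s. For $1\le i\le k$ put $c_i=\lcm[q_\ell:\ \ell\in\{1,\dots,k\},\ \ell\neq i]$, with $c_i=q_\ell$ (the one $\ell\neq i$) when $k=2$. Since $q_1,\dots,q_k$ are distinct irreducibles, $\gcd(q_i,q_\ell)$ divides the irreducible $q_i$, hence equals $1$ or $q_i$, and $q_i\mid q_\ell$ would force $q_i=q_\ell$; so $\gcd(q_i,q_\ell)=1$ for $\ell\neq i$, and Corollary \ref{cor_pairwise_gcd}(i) gives $\gcd(c_i,q_i)=1$. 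Also $u=\lcm[q_i,c_i]$ by associativity and commutativity of the least common multiple. Writing $u=q_iy_i=c_ix_i$ with $y_i,x_i\in S$ (legitimate since $q_i\mid u$ and $c_i\mid u$), the equalities $\gcd(q_i,c_i)=1$ and $\lcm[q_i,c_i]=q_iy_i=c_ix_i$ yield the free castling $\uuuline{q_i}y_i\rlh c_i\uuuline{x_i}$. I would then set $r_i:=x_i$, so that $u=c_ir_i$, whence $r_i\ddagger u$ and $ur_i^{-1}=c_i$.

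Next I would check the three properties making $r_1,\dots,r_k$ do the job. First, $r_i\in\cP$: Corollary \ref{cor_C_1_ind_transfer} applied to $\uuuline{q_i}y_i\rlh c_i\uuuline{x_i}$ gives $\ind(r_i)=\ind(x_i)=\ind(q_i)=1$, and an element of index $1$ is irreducible by the definition of $\ind$. Second, $\gcd(c_1,\dots,c_k)=1$: if not, Lemma \ref{lem_prime_divisor_exsits} supplies $p\in\cP$ with $p\mid c_i$ for every $i$; since $p\mid c_i=\lcm[q_\ell:\ell\neq i]$, Lemma \ref{lem_p_divides_lcm_implies_p_divides_one_of_them}(i) (trivially so when $k=2$) forces $p=q_\ell$ for some $\ell\neq i$; taking $i=1$ gives $p=q_m$ with $m\ge2$, and then taking $i=m$ gives $p=q_\ell$ with $\ell\neq m$, i.e. $q_m=q_\ell$, contradicting distinctness. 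Third, the $r_i$ are pairwise distinct: if $r_i=r_j$ with $i\neq j$, then $c_ir_i=u=c_jr_j=c_jr_i$ forces $c_i=c_j$ by cancellation in $G$; but $q_i$ occurs in the list defining $c_j$ (as $i\neq j$), so $q_i\mid c_j=c_i$, whereas $q_i\nmid c_i$ — else Lemma \ref{lem_p_divides_lcm_implies_p_divides_one_of_them}(i) would give $q_i=q_\ell$ for some $\ell\neq i$ — a contradiction.

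With these verified the conclusion is immediate: each $r_i$ co-divides $u$, so $\lcm_\ddagger[u;r_1,\dots,r_k]$ is defined, and by \eqref{eq_lcm_ddagger_definition}
\[
\lcm_\ddagger[u;r_1,\dots,r_k]=\Big(\gcd\big(ur_1^{-1},\dots,ur_k^{-1}\big)\Big)^{-1}u=\big(\gcd(c_1,\dots,c_k)\big)^{-1}u=u .
\]
I expect the only genuinely delicate points to be the coprimality $\gcd(c_1,\dots,c_k)=1$ and the pairwise distinctness of the $r_i$; both reduce to Lemma \ref{lem_p_divides_lcm_implies_p_divides_one_of_them} together with the hypothesis that $q_1,\dots,q_k$ are distinct. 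Everything else is a routine application of the free-castling apparatus (Axiom \uppercase\expandafter{\romannumeral4}$'$ and Corollary \ref{cor_C_1_ind_transfer}).
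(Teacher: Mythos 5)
Your proof is correct, and it starts from the same construction the paper uses: for each $i$, set $c_i=\lcm[q_\ell:\ell\neq i]$, observe $\gcd(q_i,c_i)=1$, and produce $r_i$ as the co-divisor complementary to $c_i$ in the free castling $\uuuline{q_i}y_i\rlh c_i\uuuline{r_i}$, with $r_i\in\cP$ by index-preservation (Corollary \ref{cor_C_1_ind_transfer}). Where you diverge from the paper is in the two verifications. For distinctness, the paper cancels to get $c_i=c_l$ and then invokes the well-definedness of the castling map $\eta$ (Axiom \uppercase\expandafter{\romannumeral4}$'$) to force $q_i=q_l$; you instead note that $c_i=c_j$ would make $q_i$ divide $c_j=c_i$ while $\gcd(q_i,c_i)=1$ forbids $q_i\mid c_i$, a purely divisibility-theoretic contradiction that sidesteps $\eta$ entirely (and in fact the appeal to Lemma \ref{lem_p_divides_lcm_implies_p_divides_one_of_them} there is redundant, since you already have $\gcd(q_i,c_i)=1$ from Corollary \ref{cor_pairwise_gcd}). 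For the final equality, the paper counts indices — $\ind(\lcm_\ddagger[u;r_1,\dots,r_k])=k=\ind(u)$ together with co-divisibility — which implicitly relies on the pairwise coprimality $\gcd_\ddagger(u;r_i,r_j)=1$; your route is to establish $\gcd(c_1,\dots,c_k)=1$ explicitly and plug directly into the defining formula \eqref{eq_lcm_ddagger_definition}, so that $\lcm_\ddagger[u;r_1,\dots,r_k]=(\gcd(c_1,\dots,c_k))^{-1}u=u$ in one line. This is arguably the more transparent conclusion, since the index-count argument leaves some book-keeping to the reader; the cost is the extra lemma $\gcd(c_1,\dots,c_k)=1$, which you handle cleanly via Lemma \ref{lem_p_divides_lcm_implies_p_divides_one_of_them} and the distinctness of the $q_i$.
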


\begin{proof}
For $1\leq j\leq k$, let $v_j=\lcm[q_1,\ldots,q_{j-1},q_{j+1},\ldots,q_k]$. By Corollary \ref{cor_pairwise_gcd}, one deduces that $\gcd(q_j,v_j)=1$. Write $u=\lcm[q_j,v_j]=q_jz_j=v_jr_j$ for some $z_j,r_j\in S$. Then $\uuuline{q_j}v_j\rlh v_j\uuuline{r_j}$, and $r_j\in \cP$. 
To prove that $r_1,\ldots,r_k$ are distinct, we assume on the contrary that $r_i=r_l$ for some $1\leq i\neq l\leq k$. Then $v_i=v_l$. By Axiom \uppercase\expandafter{\romannumeral4}, we conclude that $q_i=q_l$, which is a contradiction. Now we have $r_1,\ldots,r_k\ddagger u$, and
\[
\ind(u)=\ind(\lcm[q_1,\ldots,q_k])=k=\ind(\lcm_\ddagger[u;r_1,\ldots,r_k]).
\]
So $u=\lcm_\ddagger[u;r_1,\ldots,r_k]$.
\end{proof}

\section{Axiom \uppercase\expandafter{\romannumeral4} and Castlable Monoids}
\label{section_castlable_monoid}

Let $\fC\subseteq S\times S$ and $\Gamma\subseteq \fC\times \fC$ be sets generated by the following rules.
\begin{itemize}
\item The set $\fC$ contains $\fC_1$, and $\Gamma$ contains $\Gamma_1$.

\item For any $p\in \cP$, it satisfies
\begin{equation} \label{eq_C_definition_p_p}
(p,p)\in \fC,\quad ((p,p),(p,p))\in \Gamma.
\end{equation}

\item Suppose that $u_1,u_2,v,\widetilde{u_1},\widetilde{u_2},\widetilde{v},\widetilde{\widetilde{v}}$ are elements in $S$ such that $u_1,u_2\neq 1$ and $((u_2,v),(\widetilde{v},\widetilde{u_2}))\in \Gamma$, $((u_1,\widetilde{v}),(\widetilde{\widetilde{v}}, \widetilde{u_1}))\in \Gamma$. Then
\begin{equation} \label{eq_C_castling_composition_1}
(u_1u_2,v),\,(\widetilde{\widetilde{v}},\widetilde{u_1}\widetilde{u_2})\in \fC, \quad \left((u_1u_2,v), (\widetilde{\widetilde{v}},\widetilde{u_1}\widetilde{u_2})\right)\in \Gamma.
\end{equation}

\item Suppose that $u,v_1,v_2,\widetilde{u},\widetilde{v_1},\widetilde{v_2},\widetilde{\widetilde{u}}$ are elements in $S$ such that $v_1,v_2\neq 1$ and $((u,v_1),(\widetilde{v_1},\widetilde{u}))\in \Gamma$, $((\widetilde{u},v_2), (\widetilde{v_2}, \widetilde{\widetilde{u}}))\in \Gamma$. Then
\begin{equation} \label{eq_C_castling_composition_2}
(u,v_1v_2),\, (\widetilde{v_1}\widetilde{v_2}, \widetilde{\widetilde{u}})\in \fC, \quad \left((u,v_1v_2),(\widetilde{v_1}\widetilde{v_2}, \widetilde{\widetilde{u}})\right)\in \Gamma.
\end{equation}
\end{itemize}
That is to say, the set $\fC$ and $\Gamma$ are the minimum ones among the sets satisfying the above properties. Indeed, one can determine whether $(u,v)$ belongs to $\fC$ or not, by induction on $\tau(u)+\tau(v)$.

\medskip

\textsc{Axiom \uppercase\expandafter{\romannumeral4}.} The set $\Gamma$ is a graph of a map $\eta:\, \fC\rightarrow \fC$.

\begin{definition}
We call an integral monoid $S$ castlable, if Axiom \uppercase\expandafter{\romannumeral4} holds.
\end{definition}

For the natural numbers, if $\gcd(n,p)=1$ for $n\in \bN$ and $p$ a prime, then $\gcd(n,p^m)=1$ for all $m\geq 0$. This property plays an important role in classical arithmetics. However, this property may fail for general homogenous groups. For example, if $p^2 = q_1q_2$ for some $p,q_1,q_2\in \cP$ with $p\neq q_1$, then $\gcd(q_1,p)=1$ and $\gcd(q_1,p^2)\neq 1$. The construction of $\fC$ and the map $\eta$ prevent such situations. Indeed, when $p^2=q_1q_2$ with $q_1\neq p$, we have $(p,p)\in \fC_1$ and $\eta((p,p))= (q_1,q_2)$. While we also have $(p,p)\in \fC$ and $\eta((p,p))=(p,p)$. A contradiction appears since $\eta$ is a map.

In this section, we always assume that $\Gamma$ is a graph of a map $\eta:\, \fC\rightarrow \fC$. Then $\eta|_{\fC_1}$ is a well-defined map, which shows that $S$ is homogenous. Lemma \ref{lem_composition_C_1} shows that free castlings also satisfy \eqref{eq_C_castling_composition_1} and \eqref{eq_C_castling_composition_2}. This fact is  compatible with the requirements in the definition of $\fC$ and $\Gamma$. Note that $\eta^2|_{\fC_1}$ is the identity map. And $\eta^2((p,p))=(p,p)$. Combining \eqref{eq_C_castling_composition_1} and \eqref{eq_C_castling_composition_2}, one can verify that $\eta^2=id$ on $\fC$.

Let $(u,v)\in \fC$ and $\eta(u,v)=(\widetilde{v},\widetilde{u})$. Then we also have $(\widetilde{v},\widetilde{u})\in \fC$ and $\eta(\widetilde{v},\widetilde{u})=(u,v)$. For simplicity, we will rewrite the above formulas by either of the following four expressions.
\[
\underline{u}v \rlh \widetilde{v} \underline{\widetilde{u}}, \quad u\underline{v} \rlh  \underline{\widetilde{v}}\widetilde{u},\quad \widetilde{v} \underline{\widetilde{u}} \rlh \underline{u}v,\quad \underline{\widetilde{v}}\widetilde{u} \rlh u\underline{v}.
\]
Moreover, whenever we write $\uline{u} v \rlh \widetilde{v} \uline{\widetilde{u}}$, we mean that $(u,v),\, (\widetilde{v},\widetilde{v})\in \fC$. Now \eqref{eq_C_castling_composition_1} and \eqref{eq_C_castling_composition_2} may be simplified as the following. The composition of weak castlings $\uline{u_2}v\rlh \widetilde{v}\uline{\widetilde{u_2}}$ and $\uline{u_1}\widetilde{v}\rlh \widetilde{\widetilde{v}}\uline{\widetilde{u_1}}$ results in
\[
\uline{u_1u_2}v \rlh \uline{u_1} \widetilde{v} \uline{\widetilde{u_2}} \rlh \widetilde{\widetilde{v}}\uline{\widetilde{u_1}\widetilde{u_2}}.
\]
Similarly, the composition of weak castlings $\uline{u}v_1\rlh \widetilde{v_1}\uline{\widetilde{u}}$ and $\uline{u}v_2\rlh \widetilde{v_2}\uline{\widetilde{\widetilde{u}}}$ leads to
\[
\uline{u}v_1v_2\rlh \widetilde{v_1} \uline{\widetilde{u}} v_2 \rlh \widetilde{v_1}\widetilde{v_2}\uline{\widetilde{\widetilde{u}}}.
\]
The expression $\uline{a}b=\uline{a^\prime}b^\prime$ will always mean $a=a^\prime$ and $b=b^\prime$ as elements in $S$.

\medskip

Next, let $\fC_0$ be a subset of $\fC$ generated by the following rules.

\begin{itemize}
\item The set $\fC_0$ contains $\fC_1$.

\item The set $\fC_0$ contains $(p,p)$ for all $p\in \cP$.

\item If $\uline{u}v \rlh \widetilde{v}\uline{\widetilde{u}}$ and the following two statements both hold, then $(u,v)\in \fC_0$.

    (\romannumeral1) For any $u_1,u_2\neq 1$ with $u_1u_2=u$, there are elements $\widehat{u_1},\widehat{u_2},\widehat{v}$ with $\widehat{u_1}\widehat{u_2}=\widetilde{u}$ such that $(u_2,v),(u_1,\widehat{v})\in \fC_0$ and
\begin{equation}\label{eq_C0_castling_decomposition_1}
\uline{u_2} v \rlh \widehat{v} \uline{\widehat{v_2}}, \quad \uline{u_1}\widehat{v} \rlh \widetilde{v}\uline{\widehat{u_1}}.
\end{equation}

    (\romannumeral2) For any $v_1,v_2\neq 1$ with $v_1v_2=v$, there are elements $\widehat{v_1},\widehat{v_2},\widehat{u}$ with $\widehat{v_1}\widehat{v_2}=\widetilde{v}$ such that $(u,v_1),(\widehat{u},v_2)\in \fC_0$ and
\begin{equation}\label{eq_C0_castling_decomposition_2}
\uline{u} v_1 \rlh \widehat{v_1}\uline{\widehat{u}},\quad \uline{\widehat{u}} v_2  = \widehat{v_2} \uline{\widetilde{u}}.
\end{equation}
\end{itemize}
Moreover, let us put $\Gamma_0 = \Gamma \cap (\fC_0\times \fC_0)$.

The difference between the definition of $\fC_0$ and that of $\fC$ is that the latter requires existence of $u_1,u_2,v_1,v_2$, while the former requires arbitrariness of $u_1,u_2,v_1,v_2$. Indeed, one can also generate $\fC_0$, i.e., determine whether $(u,v)$ belongs to $\fC_0$ or not, by induction on $\ind(u)+\ind(v)$.

If $\uline{u}v\rlh \widetilde{v}\uline{\widetilde{u}}$ and we have further that $(u,v)\in \fC_0$. Then we put double underlines on the side involving $u,v$, i.e.,
\[
\uuline{u}v \rlh \widetilde{v}\uline{\widetilde{u}},\quad \text{or}\quad u\uuline{v}\rlh \uline{\widetilde{v}}\widetilde{u}.
\]
Now \eqref{eq_C0_castling_decomposition_1} and \eqref{eq_C0_castling_decomposition_2} may be abbreviated as
\[
\uuline{u}v = \uuline{u_1u_2}v\rlh \uuline{u_1}\widehat{v} \uline{\widehat{u_2}}\rlh \widetilde{v} \uline{\widehat{u_1}\widehat{u_2}}  = \widetilde{v}\uline{\widetilde{u}}, \quad u\uuline{v} = u\uuline{v_1v_2}\rlh \uline{\widehat{v_1}} \widehat{u}\uuline{v_2}\rlh \uline{\widehat{v_1}\widehat{v_2}} \widetilde{u} = \uline{\widetilde{v}}\widetilde{u}.
\]
For the second formula, we should avoid to write $\uuline{u}v_1v_2\rlh \widehat{v_1}\uuline{\widehat{u}}v_2$, since it would be confusing whether the double underlines under $\widehat{u}$ is paired with $\widehat{v_1}$ or $v_2$. When $(\widetilde{v},\widetilde{u})$ also belongs to $\fC_0$, we also draw double underlines on the other side of the above expressions. For example, we have $\uuline{p^k}p^l \rlh p^l \uuline{p^k}$ for any $p\in \cP$ and $k,l\geq 0$, which results from Lemma \ref{lem_uniqueness_prime_power}.

Note that Lemma \ref{lem_C1_subseteq_C_0} ensures that elements in $\fC_1$ satisfy \eqref{eq_C0_castling_decomposition_1} and \eqref{eq_C0_castling_decomposition_2}. Combining the construction of $\fC_0$, we deduce that any pair $(u,v)$ in $\fC_0$ satisfies \eqref{eq_C0_castling_decomposition_1} and \eqref{eq_C0_castling_decomposition_2}. That is to say, we have $\uuline{u}v \rlh \widetilde{v}\uline{\widetilde{u}}$ if and only if, for any $u_1,u_2$ with $u_1u_2=u$, there are elements $\widehat{u_1},\widehat{u_2},\widehat{v}$ such that $\uuline{u}v = \uuline{u_1u_2}v\rlh \uuline{u_1}\widehat{v} \widehat{u_2}\rlh \widetilde{v} \uline{\widehat{u_1}\widehat{u_2}}  = \widetilde{v}\uline{\widetilde{u}}$; and for any $v_1,v_2$ with $v_1v_2=v$, there are elements $\widehat{u},\widehat{v_1},\widehat{v_2}$ such that $u\uuline{v} = u\uuline{v_1v_2}\rlh \uline{\widehat{v_1}} \widehat{u}\uuline{v_2}\rlh \uline{\widehat{v_1}\widehat{v_2}} \widetilde{u} = \uline{\widetilde{v}}\widetilde{u}$.

\begin{definition}
(\romannumeral1) When $(u,v)\in \fC$, we say that $u,v$ are weakly castlable. When $\uline{u}v\rlh \widetilde{v}\uline{\widetilde{u}}$, we call it, or $((u,v),(\widetilde{v},\widetilde{u}))\in \Gamma$, a weak castling.

(\romannumeral2) When $(u,v)\in \fC_0$, we say that $u,v$ are strongly castlable. When $\uuline{u}v\rlh \widetilde{v}\uuline{\widetilde{u}}$, we call it, or $((u,v),(\widetilde{v},\widetilde{u}))\in \Gamma_0$, a strong castling.

(\romannumeral3) Let $w,u,v\in S$. When $\uline{w} \widetilde{u} \rlh u\uline{v_1}$ for some $v_1|v$ and $\widetilde{u}\in S$, we call $w$ a castled divisor of $v$ over $u$. When $\uline{\widetilde{v}}w  \rlh u_1\uline{v}$ for some $u_1\ddagger u$ and $\widetilde{v}\in S$, we call $w$ a castled co-divisor of $u$ over $v$.

(\romannumeral4) We call an element $p$ in $S\setminus \{1\}$ a prime, when for any $u,v\in S$ with $p|uv$, either $p|u$ or $p$ is a castled divisor of $v$ over $u$.
\end{definition}

\subsection{Basic Properties of Castlings}

\begin{lemma} \label{lem_ind_invariant}
Suppose that $\uline{u} v \rlh \widehat{v} \uline{\widehat{u}}$. Then $uv=\widehat{v}\widehat{u}$, and $\ind(\widehat{u})=\ind(u)$, $\ind(\widehat{v})=\ind(v)$.
\end{lemma}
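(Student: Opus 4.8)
The plan is to prove the statement by induction on the way $\fC$ and $\Gamma$ are generated, i.e., on a complexity measure such as $\tau(u)+\tau(v)$ (or, equivalently, by following the inductive construction of $\fC$). Since $\Gamma$ is generated by four rules — containing $\Gamma_1$, containing $((p,p),(p,p))$ for $p\in\cP$, closure under composition on the left factor \eqref{eq_C_castling_composition_1}, and closure under composition on the right factor \eqref{eq_C_castling_composition_2} — it suffices to verify the three assertions ($uv=\widehat v\widehat u$, $\ind(\widehat u)=\ind(u)$, $\ind(\widehat v)=\ind(v)$) for each generating rule, assuming they hold for the pieces used to build the current pair.

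First, the base cases. If $(u,v)\in\fC_1$, then $\uline u v\rlh\widehat v\uline{\widehat u}$ is a free castling, and Corollary \ref{cor_C_1_ind_transfer} gives exactly $uv=\widehat v\widehat u$, $\ind(u)=\ind(\widehat u)$, $\ind(v)=\ind(\widehat v)$. For the pair $(p,p)$ with $p\in\cP$, the castling is $\uline p p\rlh p\uline p$, so $\widehat v=\widehat u=p$ and all three identities are trivial (here $uv=p^2=\widehat v\widehat u$ and $\ind=1$ on each side). Next, the inductive step for rule \eqref{eq_C_castling_composition_1}: suppose $\uline{u_2}v\rlh\widetilde v\uline{\widetilde u_2}$ and $\uline{u_1}\widetilde v\rlh\widetilde{\widetilde v}\uline{\widetilde u_1}$, so the composed castling is $\uline{u_1u_2}v\rlh\widetilde{\widetilde v}\uline{\widetilde u_1\widetilde u_2}$. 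By the inductive hypothesis applied to the two smaller castlings, $u_2v=\widetilde v\widetilde u_2$, $u_1\widetilde v=\widetilde{\widetilde v}\widetilde u_1$, $\ind(u_2)=\ind(\widetilde u_2)$, $\ind(v)=\ind(\widetilde v)$, $\ind(u_1)=\ind(\widetilde u_1)$, $\ind(\widetilde v)=\ind(\widetilde{\widetilde v})$. Then $(u_1u_2)v=u_1(u_2v)=u_1\widetilde v\widetilde u_2=\widetilde{\widetilde v}\widetilde u_1\widetilde u_2=\widetilde{\widetilde v}(\widetilde u_1\widetilde u_2)$; and using additivity of $\ind$ (Corollary \ref{cor_ind_eq}), $\ind(u_1u_2)=\ind(u_1)+\ind(u_2)=\ind(\widetilde u_1)+\ind(\widetilde u_2)=\ind(\widetilde u_1\widetilde u_2)$, while $\ind(v)=\ind(\widetilde v)=\ind(\widetilde{\widetilde v})$. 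The inductive step for rule \eqref{eq_C_castling_composition_2} is entirely symmetric, composing on the $v$-side instead.

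The main point to be careful about — and the one mild obstacle — is that a given element $(u,v)\in\fC$ may be produced by the generation rules in more than one way, so the induction must be set up on the membership $((u,v),(\widehat v,\widehat u))\in\Gamma$ itself: since $\Gamma$ is defined as the \emph{smallest} set closed under the four rules, every element of $\Gamma$ arises from at least one application of a rule to strictly simpler elements of $\Gamma$, and we check the conclusion for whichever derivation is at hand. (Well-definedness of $\widehat v,\widehat u$ as functions of $u,v$ is exactly Axiom \uppercase\expandafter{\romannumeral4}, which we are assuming, so there is no circularity: we are verifying a property of the relation $\Gamma$, not using that $\eta$ is a map.) This reduces everything to the three cases above, each of which is immediate from Corollary \ref{cor_C_1_ind_transfer}, Corollary \ref{cor_ind_eq}, and the associativity of multiplication in $S$, completing the proof.
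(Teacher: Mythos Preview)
Your proof is correct and follows essentially the same approach as the paper's: structural induction over the generating rules of $\Gamma$, with base cases $\fC_1$ (via Corollary~\ref{cor_C_1_ind_transfer}) and $(p,p)$, and inductive steps for the two composition rules \eqref{eq_C_castling_composition_1}, \eqref{eq_C_castling_composition_2} using additivity of $\ind$ (Corollary~\ref{cor_ind_eq}). The only cosmetic difference is that the paper frames the induction on the numerical parameter $\ind(u)+\ind(v)$ rather than directly on the derivation tree of $\Gamma$; both organize the same case analysis, and your remark that the argument does not actually rely on $\eta$ being a map is a correct (and slightly sharper) observation.
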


\begin{proof}
For $\ind(u)=0$ or $\ind(v)=0$, the proof is trivial. In the following, we assume that $\ind(u),\ind(v)\geq 1$. We use induction on $\ind(u)+\ind(v)$. Suppose that the lemma has been proved for $\ind(u)+\ind(v)\leq m-1$ with some $m\geq 2$. Now we consider the case $\ind(u)+\ind(v)=m$.

For $(u,v)\in \fC_1$, the expected results follow from Corollary \ref{cor_C_1_ind_transfer}. For $(u,v)=(p,p)$ for some prime $p$, we have $(\widehat{v},\widehat{u})=(p,p)$ and the expected results hold. For $(u,v)\in \fC$ obtained from \eqref{eq_C_castling_composition_1} with some $u_1,u_2\neq 1$, i.e.,
\[
\underline{u}v=\underline{u_1u_2}v \rlh \underline{u_1} \widetilde{v} \underline{\widetilde{u_2}} \rlh \widetilde{\widetilde{v}}\underline{\widetilde{u_1}\widetilde{u_2}} = \underline{\widehat{v}}\widehat{u}
\]
for some $\widetilde{u_1},\widetilde{u_2},\widetilde{v},\widetilde{\widetilde{v}}\in S$, one deduces by inductive hypothesis that
\[
u_2v=\widetilde{v}\widetilde{u_2},\quad \ind(u_2)=\ind(\widetilde{u_2}),\quad \ind(\widetilde{v})=\ind(\widetilde{\widetilde{v}}),
\]
\[
u_1\widetilde{v}= \widetilde{\widetilde{v}} \widetilde{u_1}, \quad \ind(u_1)=\ind(\widetilde{u_1}), \quad \ind(v)=\ind(\widetilde{v}).
\]
It follows that $uv=u_1u_2v=u_1 \widetilde{v}\widetilde{v_2}=\widetilde{\widetilde{v}}\widetilde{u_1}\widetilde{u_2}=\widehat{v}\widehat{u}$ and
\[
\ind(\widehat{u})=\ind(\widetilde{u_1})+\ind(\widetilde{u_2})= \ind(u_1)+\ind(u_2)=\ind(u),\quad \ind(\widehat{v})=\ind(\widetilde{\widetilde{v}})=\ind(v).
\]
If $(u,v)\in \fC$ is obtained by \eqref{eq_C_castling_composition_2} with some $v_1,v_2\neq 1$, i.e.,
\[
\uline{u}v=\uline{u}v_1v_2\rlh \widetilde{v_1} \uline{\widetilde{u}} v_2 \rlh \widetilde{v_1}\widetilde{v_2}\uline{\widetilde{\widetilde{u}}}=\widehat{v}\uline{\widehat{u}}
\]
for some $\widetilde{v_1},\widetilde{v_2},\widetilde{u},\widetilde{\widetilde{u}}\in S$, then one deduces by inductive hypothesis that $uv_1=\widetilde{v_1}\widetilde{u}$, $\widetilde{u}v_2=\widetilde{v_2}\widetilde{\widetilde{u}}$ and
\[
\ind(v_1)=\ind(\widetilde{v_1}),\quad \ind(u)=\ind(\widetilde{u}),\quad \ind(v_2)=\ind(\widetilde{v_2}),\quad \ind(\widetilde{u})=\ind(\widetilde{\widetilde{u}}).
\]
One deduces similarly that $uv= uv_1v_2=\widetilde{v_1}\widetilde{u}v_2=\widetilde{v_1}\widetilde{v_2}\widetilde{\widetilde{u}}=\widehat{v}\widehat{u}$ and
\[
\ind(\widehat{v})=\ind(\widetilde{v_1})+\ind(\widetilde{v_2})= \ind(v_1)+\ind(v_2)=\ind(v),\quad \ind(\widehat{u})=\ind(\widetilde{\widetilde{u}})=\ind(u).
\]
By induction, the proof is completed.
\end{proof}

\begin{remark}
Suppose that $\uline{p}q\rlh r\uline{t}$ for $p,q,r,t\in \cP$. If $p\neq r$, then $\gcd(p,r)=1$ and $\uuuline{p}q\rlh r\uuuline{t}$. If $p=r$, then $q=t=p$ and $\uline{p}p\rlh p\uline{p}$. For all other cases, we have $((p,q),(r,t))\notin \Gamma$. Similarly, if $q\neq t$, then $\gcd_\ddagger(pq;q,t)=1$ and $\uuuline{p}q\rlh r\uuuline{t}$. If $q=t$, then $p=r=q$ and $\uline{q}q\rlh q\uline{q}$. For all other cases, we have $((p,q),(r,t))\notin \Gamma$.
\end{remark}

Recall that ``$w$ is a castled divisor of $v$ over $u$'' means that $\uline{w}\widetilde{u}\rlh u\uline{v_1}$ for some $v_1|v$ and $\widetilde{u}\in S$. Here $\ind(w)=\ind(v_1)$.

\begin{corollary} \label{coro_prime_insteadof_irr}
The set $\cP$ consists of all the primes.
\end{corollary}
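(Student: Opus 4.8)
The plan is to establish the two inclusions separately: every irreducible element of $S$ is a prime, and conversely every prime is irreducible. Throughout I will use that a castlable monoid is homogenous (Axiom \uppercase\expandafter{\romannumeral4} implies Axiom \uppercase\expandafter{\romannumeral4}', as noted after the definition of castlable), so the machinery of Section \ref{section_homogeneous_monoid} is available; in particular $\ind$ is additive by Corollary \ref{cor_ind_eq}, and $\ind(x)=0$ exactly when $x=1$. I will also use that, since $\fC_1\subseteq\fC_0\subseteq\fC$, a free castling $\uuuline{a}b\rlh\widetilde{b}\uuuline{\widetilde{a}}$ is in particular a weak castling $\uline{a}b\rlh\widetilde{b}\uline{\widetilde{a}}$, so a ``castled divisor'' produced via $\fC_1$ is a castled divisor in the sense of the definition preceding this subsection.

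For the inclusion \emph{irreducible $\Rightarrow$ prime}, I would take $p\in\cP$ and arbitrary $u,v\in S$ with $p\mid uv$, and note that $\gcd(p,u)$ is a divisor of $p$; since $\tau(p)=2$ this forces $\gcd(p,u)\in\{1,p\}$. If $\gcd(p,u)=p$ then $p\mid u$. If $\gcd(p,u)=1$, I would invoke the fundamental lemma for arithmetic, Lemma \ref{lem_divisor_castling}(i), with $w=p$: it yields $v_1\mid v$ and $\widetilde{u}\in S$ with $\uuuline{p}\widetilde{u}\rlh u\uuuline{v_1}$, hence $\uline{p}\widetilde{u}\rlh u\uline{v_1}$, which says precisely that $p$ is a castled divisor of $v$ over $u$. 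Thus $p$ satisfies the defining property of a prime.

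For the converse, \emph{prime $\Rightarrow$ irreducible}, let $p$ be a prime; by definition $p\neq1$, so $\tau(p)\geq2$ and $\ind(p)\geq1$. Suppose toward a contradiction that $\tau(p)>2$, so there exist $a,b\in S\setminus\{1\}$ with $p=ab$. Apply the defining property of $p$ to $p\mid ab$. In the case $p\mid a$: since also $a\mid p$, antisymmetry of ``$\mid$'' (Lemma \ref{lem_antisymmetry}) gives $a=p$, whence $b=1$, contradicting $b\neq1$. In the case that $p$ is a castled divisor of $b$ over $a$: there are $b_1\mid b$ and $\widetilde{a}\in S$ with $\uline{p}\widetilde{a}\rlh a\uline{b_1}$, so Lemma \ref{lem_ind_invariant} gives $\ind(p)=\ind(b_1)\leq\ind(b)$, while Corollary \ref{cor_ind_eq} gives $\ind(p)=\ind(a)+\ind(b)\geq1+\ind(b)$ — a contradiction. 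Hence $\tau(p)=2$, i.e. $p\in\cP$.

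I do not anticipate a genuine obstacle: the statement is essentially immediate once Lemma \ref{lem_divisor_castling} and the additivity of $\ind$ are in hand. The only points requiring a little care are the bookkeeping between the different underline conventions (confirming that the free castling delivered by Lemma \ref{lem_divisor_castling} qualifies as the weak castling demanded by the definition of a castled divisor) and the two uses of $\tau(p)=2$ — to eliminate $\gcd(p,u)=p$ in the first inclusion, and to extract a nontrivial factorization $p=ab$ in the second; neither is difficult.
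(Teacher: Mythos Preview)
Your proof is correct and follows essentially the same approach as the paper. The paper cites Corollary~\ref{cor_divisor_decomposition_uv} rather than Lemma~\ref{lem_divisor_castling}(i) for the forward direction, and for the converse it rules out $w\mid u$ via the index inequality rather than antisymmetry, but these are cosmetic differences; the underlying argument is identical.
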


\begin{proof}
Suppose that $w\in \cP$ and $w|uv$ for some $u,v\in S$. Denote $d=\gcd(w,u)$. Since $\tau(w)=2$, we have either $d=w$ or $d=1$. For the former case, we have $w|u$. For the latter case, an application of Corollary \ref{cor_divisor_decomposition_uv} shows that $w$ is a castled divisor of $v$ over $u$. On the other hand, if $w\notin\cP\cup \{1\}$, then $\ind(w)\geq 2$. We write $w=uv$, where $1<\ind(u),\ind(v)<\ind(w)$. Then neither $w|u$, nor $w$ is a castled divisor of $v$ over $u$.
\end{proof}

The following corollary can be proved with similar arguments as above. We omit the details here.

\begin{corollary}
An element $p$ in $S\setminus \{1\}$ is a prime if and only if, whenever $p\ddagger uv$, either $p\ddagger v$ or $p$ is a castled co-divisor of $u$ over $v$.
\end{corollary}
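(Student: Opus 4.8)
The plan is to deduce the statement from Corollary~\ref{coro_prime_insteadof_irr}, which already identifies the set of primes with $\cP$, by showing that the co-divisor condition in the statement is itself equivalent to $p\in\cP$; concretely I would run the dual of the proof of Corollary~\ref{coro_prime_insteadof_irr}, replacing $\gcd$, divisors and Lemma~\ref{lem_divisor_castling}(\romannumeral1) throughout by $\gcd_\ddagger$, co-divisors and Lemma~\ref{lem_divisor_castling}(\romannumeral2).

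For the direction ``$p\in\cP$ implies the condition'', suppose $p\in\cP$ and $p\ddagger uv$. Since both $p$ and $v$ co-divide $uv$, the element $d=\gcd_\ddagger(uv;p,v)$ is defined, and by Lemma~\ref{lem_gcd_ddagger_exists} it co-divides $p$; as $\tau(p)=2$ the only co-divisors of $p$ are $1$ and $p$, so $d\in\{1,p\}$. If $d=p$, then $d\ddagger v$ gives $p\ddagger v$. If $d=1$, then $\gcd_\ddagger(uv;p,v)=1$, so Lemma~\ref{lem_divisor_castling}(\romannumeral2) (with $w=p$) supplies $u_1\ddagger u$ and $\widetilde{v}\in S$ with $\uuuline{\widetilde{v}}p\rlh u_1\uuuline{v}$; since $\fC_1\subseteq\fC$ this is in particular a weak castling $\uline{\widetilde{v}}p\rlh u_1\uline{v}$, so $p$ is a castled co-divisor of $u$ over $v$. (If one wants to stay literally parallel to Corollary~\ref{coro_prime_insteadof_irr}, one can first record the co-divisor analogue of Corollary~\ref{cor_divisor_decomposition_uv}: for $w\ddagger uv$, write $d=\gcd_\ddagger(uv;w,v)$, $w=w_1d$, $v=v_1d$; then Lemma~\ref{lem_codivisor_pro}(\romannumeral2) gives $\gcd_\ddagger(uv_1;w_1,v_1)=1$, and Lemma~\ref{lem_divisor_castling}(\romannumeral2) applies to $w_1\ddagger uv_1$.)

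For the converse I would argue by contraposition, exactly as in Corollary~\ref{coro_prime_insteadof_irr}. If $p\notin\cP\cup\{1\}$, then by Lemma~\ref{lem_irr_decomposition_of_any_element} and Lemma~\ref{lem_ind_eq} we have $\ind(p)=K\geq 2$, so we may factor $p=uv$ with $\ind(u)=K-1\geq 1$ and $\ind(v)=1$. Trivially $p\ddagger uv=p$. But $p\not\ddagger v$, since $\ind$ is additive (Corollary~\ref{cor_ind_eq}) and $\ind(v)=1<K=\ind(p)$; and $p$ is not a castled co-divisor of $u$ over $v$, because any such relation $\uline{\widetilde{v}}p\rlh u_1\uline{v}$ would give $\ind(u_1)=\ind(p)=K$ by Lemma~\ref{lem_ind_invariant}, contradicting $u_1\ddagger u$ together with $\ind(u)=K-1$. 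Hence the co-divisor condition fails for this $p$, so any $p\in S\setminus\{1\}$ satisfying the condition lies in $\cP$ and is therefore a prime by Corollary~\ref{coro_prime_insteadof_irr}.

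I do not expect a genuine obstacle: the argument is a line-by-line dualization of the proof of Corollary~\ref{coro_prime_insteadof_irr}, and the only point that needs care is keeping track of which side of a castling the co-divisor occupies and of the auxiliary upper bound appearing in $\gcd_\ddagger(\,\cdot\,;\,\cdot\,)$, which is exactly what Lemma~\ref{lem_codivisor_pro} (and, if needed, Lemma~\ref{lem_ddagger_upperbound_transfer}) is for.
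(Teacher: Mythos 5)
Your proof is correct and follows the route the paper intends: you show the co-divisor condition is equivalent to $p\in\cP$ by dualizing the proof of Corollary~\ref{coro_prime_insteadof_irr} (replacing $\gcd$, divisors and Lemma~\ref{lem_divisor_castling}(\romannumeral1) by $\gcd_\ddagger$, co-divisors and Lemma~\ref{lem_divisor_castling}(\romannumeral2)), and then invoke Corollary~\ref{coro_prime_insteadof_irr} itself. This is exactly what the paper's ``similar arguments as above'' refers to, and the details you supply --- the dichotomy $d=\gcd_\ddagger(uv;p,v)\in\{1,p\}$ from $\tau(p)=2$ together with Lemma~\ref{lem_gcd_ddagger_exists}, and the $\ind$-invariance argument via Lemma~\ref{lem_ind_invariant} for the converse --- are the right ones.
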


From now on, one may say that $u,v$ are coprime when $\gcd(u,v)=1$, and say ``prime decomposition'' instead of ``irreducible decomposition''. The following corollary explains why we say $u,v$ castled-free when $(u,v)\in \fC_1$.

\begin{corollary}
Suppose that $\uline{u} v \rlh \widetilde{v} \uline{\widetilde{u}}$. If $u,v$ are not castled-free, then $\gcd(u,\widetilde{v})\neq 1$ and $\gcd_\ddagger(uv; v,\widetilde{u})\neq 1$.
\end{corollary}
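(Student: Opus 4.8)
The plan is to prove the contrapositive in the following sharper form: if \emph{either} $\gcd(u,\widetilde v)=1$ \emph{or} $\gcd_\ddagger(uv;v,\widetilde u)=1$, then $(u,v)\in\fC_1$, i.e.\ $u,v$ are castled-free. The whole argument is index-counting. As a preliminary, I would record that by Lemma~\ref{lem_ind_invariant} the weak castling $\uline{u}v\rlh\widetilde v\uline{\widetilde u}$ gives $uv=\widetilde v\widetilde u$ together with $\ind(\widetilde u)=\ind(u)$ and $\ind(\widetilde v)=\ind(v)$, while Corollary~\ref{cor_ind_eq} gives $\ind(uv)=\ind(u)+\ind(v)$. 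From $uv=\widetilde v\widetilde u$ we also read off $\widetilde v\mid uv$, $v\ddagger uv$ (since $uv=u\cdot v$) and $\widetilde u\ddagger uv$ (since $uv=\widetilde v\cdot\widetilde u$).

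For the first case, suppose $\gcd(u,\widetilde v)=1$. Since $u\mid uv$ and $\widetilde v\mid uv$, Lemma~\ref{lem_numbertheoretic_def_lcm}(ii) gives $\lcm[u,\widetilde v]\mid uv$, and Lemma~\ref{lem_gcd_lcm_u_v}(i) yields $\ind(\lcm[u,\widetilde v])=\ind(u)+\ind(\widetilde v)=\ind(u)+\ind(v)=\ind(uv)$. A divisor of $uv$ with the same index as $uv$ must equal $uv$ (the corollary following Lemma~\ref{lem_ind_eq}), so $\lcm[u,\widetilde v]=uv$; hence $v=u^{-1}\lcm[u,\widetilde v]$ with $\gcd(u,\widetilde v)=1$, which is exactly the defining condition for $(u,v)\in\fC_1$.

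For the second case, suppose $\gcd_\ddagger(uv;v,\widetilde u)=1$. Since $\lcm_\ddagger[uv;v,\widetilde u]\ddagger uv$ by construction, Lemma~\ref{lem_gcd_lcm_u_v}(ii) gives $\ind(\lcm_\ddagger[uv;v,\widetilde u])=\ind(v)+\ind(\widetilde u)=\ind(u)+\ind(v)=\ind(uv)$, and the co-divisor version of the corollary after Lemma~\ref{lem_ind_eq} forces $\lcm_\ddagger[uv;v,\widetilde u]=uv$. Then I would invoke the duality Lemma~\ref{lem_gcd_gcddagger_transfer}(ii) with the lemma's $w=z=uv$, its $x=\widetilde u$, $y=v$, and the factorizations $z=uy=vx$ read off from $uv=u\cdot v=\widetilde v\cdot\widetilde u$ (so the lemma's ``$u$'' is our $u$ and its ``$v$'' is our $\widetilde v$). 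The lemma returns $\gcd(u,\widetilde v)=1$ and $\lcm[u,\widetilde v]=uv$, which puts us back in the situation of the first case, so again $(u,v)\in\fC_1$.

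I do not expect a genuine obstacle here: both implications are short and mechanical. The only point requiring care is the bookkeeping in the second case — correctly matching the six named variables of Lemma~\ref{lem_gcd_gcddagger_transfer}(ii) to the present data and verifying all its hypotheses ($\widetilde u,v\ddagger uv$, the relevant $\gcd_\ddagger$ equal to $1$, and the two required factorizations $uv=u\cdot v=\widetilde v\cdot\widetilde u$ of the ambient element). Once those are in place, everything reduces to an index count and an appeal to the antisymmetry-type corollary after Lemma~\ref{lem_ind_eq}, and the two hypotheses $\gcd(u,\widetilde v)=1$ and $\gcd_\ddagger(uv;v,\widetilde u)=1$ collapse to the single conclusion $(u,v)\in\fC_1$.
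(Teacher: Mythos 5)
Your proof is correct and follows the paper's line of argument: the index count $\ind(\lcm[u,\widetilde v])=\ind(u)+\ind(\widetilde v)=\ind(u)+\ind(v)=\ind(uv)$ (via Lemma~\ref{lem_gcd_lcm_u_v} and Lemma~\ref{lem_ind_invariant}), combined with $\lcm[u,\widetilde v]\mid uv$, forces $\lcm[u,\widetilde v]=uv$ and hence $(u,v)\in\fC_1$, which is exactly the paper's argument for the first conclusion (stated there as a contradiction rather than the contrapositive, a cosmetic difference). For the second conclusion the paper merely writes \emph{similar arguments}, presumably meaning the mirror index count on the co-divisor side; you instead run that index count to get $\lcm_\ddagger[uv;v,\widetilde u]=uv$ and then feed it into the duality Lemma~\ref{lem_gcd_gcddagger_transfer}(ii) to reduce back to the first case, which is an equally valid and slightly cleaner way to discharge it, and your variable-matching for the duality lemma is done correctly.
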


\begin{proof}
Suppose on the contrary that $\gcd(u,\widetilde{v})=1$. Then, by Corollary \ref{lem_gcd_lcm_u_v} and Lemma \ref{lem_ind_invariant}, we have
\[
\ind(\lcm[u,\widetilde{v}])=\ind(u)+\ind(\widetilde{v})=\ind(u)+\ind(v)=\ind(uv).
\]
Moreover, we have $uv=\widetilde{v}\widetilde{u}$, which is a common divisor of $u$ and $\widetilde{v}$. So $\lcm[u,\widetilde{v}]|uv$. It follows that $\lcm[u,\widetilde{v}]=uv=\widetilde{v}\widetilde{u}$ and $u,\widetilde{v}$ are castled-free, which a contradiction. We conclude that $\gcd(u,\widetilde{v})\neq 1$. Similar arguments lead to the conclusion that $\gcd_\ddagger(uv; v,\widetilde{u})\neq 1$.
\end{proof}

\medskip

\begin{lemma} [Decomposition of strong castlings]
Let $u,v,\widetilde{u},\widetilde{v}$ be elements in $S$ such that $\uuline{u}v \rlh \widetilde{v} \uuline{\widetilde{u}}$.

(\romannumeral1). For any $u_1,u_2\in S$ with $u_1u_2=u$, there exist elements $\widehat{u_1},\widehat{u_2},\widehat{v}$ in $S$ with $\widehat{u_1}\widehat{u_2}=\widetilde{u}$ such that
\[
\uuline{u_2}v \rlh \widehat{v} \uuline{\widehat{u_2}},\quad \uuline{u_1} \widehat{v}  \rlh \widetilde{v} \uuline{\widehat{u_1}}.
\]

(\romannumeral2). For any $v_1,v_2\in S$ with $v_1v_2=v$, there exist elements $\widehat{u},\widehat{v_1},\widehat{v_2}$ with $\widehat{v_1}\widehat{v_2}=\widetilde{v}$ such that
\begin{align*}
\uuline{u}v_1 =\widehat{v_1}\uuline{\widehat{u}},\quad \uuline{\widehat{u}}v_2  =  \widehat{v_2} \uuline{\widetilde{u}}.
\end{align*}
\end{lemma}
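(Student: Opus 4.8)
This lemma is the ``only if'' half of the characterization of strong castlings recorded just after the definition of $\fC_0$, upgraded so that all four underlined positions carry double underlines and so that the degenerate factorizations $u_1=1$, $u_2=1$ (and $v_1=1$, $v_2=1$) are permitted. The plan is to prove it by a single induction on the index $n=\ind(u)+\ind(v)$; by Corollary~\ref{cor_ind_eq} and Lemma~\ref{lem_ind_invariant} this is the right parameter, since for a nontrivial factorization $u=u_1u_2$ one has $\ind(u_2)<\ind(u)$ and $\ind(\widehat v)=\ind(v)$, whence $\ind(u_1)+\ind(\widehat v)<n$ --- the very inequality that makes the recursion defining $\fC_0$ well-founded. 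Because turning the right-hand single underlines of \eqref{eq_C0_castling_decomposition_1}--\eqref{eq_C0_castling_decomposition_2} into double ones requires knowing that the $\eta$-images of the sub-pairs also lie in $\fC_0$, I would run the induction for three interlocking statements: $(\mathrm A)$ $\fC_0$ is stable under $\eta$; $(\mathrm B)$ the decomposition property (i)--(ii) of the lemma, with all underlines double; $(\mathrm C)$ $\fC_0$ is stable under the compositions \eqref{eq_C_castling_composition_1}--\eqref{eq_C_castling_composition_2} applied to pairs of $\fC_0$.

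The cheap inputs settle the base cases and two of the three cases at every level. The degenerate factorizations are immediate: when $u_1=1$ take $\widehat v=\widetilde v$, $\widehat{u_1}=1$, $\widehat{u_2}=\widetilde u$, using $\uuline{1}w\rlh w\uuline{1}$ and $(1,w),(w,1)\in\fC_1\subseteq\fC_0$, and symmetrically in the other degenerate subcases. If $(u,v)\in\fC_1$, Lemma~\ref{lem_C1_subseteq_C_0} supplies the decomposition, and since the sub-castlings it produces are again free --- the point already used in the proof of Lemma~\ref{lem_divisor_decomposition_castlable_uv} --- all of the sub-pairs and their $\eta$-images sit in $\fC_1\subseteq\fC_0$; moreover $\eta$ visibly maps $\fC_1$ onto $\fC_1$, so $(\mathrm A)$ holds in this case. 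If $(u,v)=(p,p)$ with $p\in\cP$, only the degenerate factorizations occur and $\eta(p,p)=(p,p)$. In particular $(\mathrm A)$, $(\mathrm B)$, $(\mathrm C)$ are settled for all pairs of index $\le 1$ (where $u$ or $v$ must be $1$), and the remaining work concerns the genuinely composed case.

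For the inductive step at level $n$ I would establish $(\mathrm C)$, then $(\mathrm A)$, then $(\mathrm B)$, using lower-level instances of all three. For $(\mathrm C)$: two $\fC_0$-pairs compatible along \eqref{eq_C_castling_composition_1} or \eqref{eq_C_castling_composition_2} have constituents of index $<n$, so $(\mathrm B)$ applies to them; feeding this into the defining rule of $\fC_0$, with the ``(de)composition chain'' bookkeeping in the spirit of Remark~\ref{remark_decomposition_chain} (i.e.\ the $\fC_0$-analogue of the strengthened Lemma~\ref{lem_composition_C_1} stated there) to match an arbitrary factorization of the composed pair with the two given layers, shows the composed pair satisfies rule~3, hence lies in $\fC_0$. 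For $(\mathrm A)$ at level $n$: case-split on how $(u,v)$ entered $\fC_0$; the $\fC_1$ and $(p,p)$ cases are done, and in the composed case a direct matching of relations exhibits $\eta(u,v)=(\widetilde v,\widetilde u)$ as a composition of the $\eta$-images of $(u,v)$'s index-$<n$ constituents (these images are in $\fC_0$ by $(\mathrm A)$ at level $<n$), so $(\mathrm C)$ at level $n$ places it in $\fC_0$; the subcase $u,v\in\cP$ needs only the remark following Lemma~\ref{lem_ind_invariant}, which shows such a pair is in $\fC_1$ or equals some $(p,p)$ anyway. For $(\mathrm B)$ at level $n$: for $(u,v)\in\fC_0$ of index $n$ and a nontrivial factorization, rule~3 directly hands over $\widehat u_1,\widehat u_2,\widehat v$ with $(u_2,v),(u_1,\widehat v)\in\fC_0$ and the single-underlined castlings $\uline{u_2}v\rlh\widehat v\uline{\widehat u_2}$, $\uline{u_1}\widehat v\rlh\widetilde v\uline{\widehat u_1}$; these two pairs have index $<n$, so $(\mathrm A)$ gives $(\widehat v,\widehat u_2)=\eta(u_2,v)\in\fC_0$ and $(\widetilde v,\widehat u_1)=\eta(u_1,\widehat v)\in\fC_0$, which promotes all four underlines to double. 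Part (ii) is entirely parallel.

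The main obstacle is not any single computation but the circularity: $(\mathrm A)$, $(\mathrm B)$, $(\mathrm C)$ depend on one another at a fixed index, and the loop is broken only by the ordering $(\mathrm C)\to(\mathrm A)\to(\mathrm B)$ within each level together with the choice of $\ind$ (rather than $\tau$) as induction parameter, so that $\widehat v$ having the same index as $v$ never inflates $n$. The one genuinely combinatorial ingredient is the chain form of $(\mathrm C)$ --- transporting the (de)composition-chain argument of Remark~\ref{remark_decomposition_chain} from free castlings to $\fC_0$.
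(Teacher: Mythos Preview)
Your scheme is correct in spirit but vastly more elaborate than necessary, and it glosses over the one step that is genuinely delicate in your framework.  The paper's proof is two lines, because you have overlooked what the hypothesis already hands you.

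The notation $\uuline{u}v\rlh\widetilde v\,\uuline{\widetilde u}$ means, by definition, that \emph{both} $(u,v)\in\fC_0$ and $(\widetilde v,\widetilde u)\in\fC_0$.  So there is no need to prove that $\eta$ preserves $\fC_0$ (your statement $(\mathrm A)$): membership of the image pair is part of the hypothesis.  The paper simply applies the defining rule of $\fC_0$ twice.  From $(u,v)\in\fC_0$ and the factorization $u=u_1u_2$, rule~(i) yields $(u_2,v),(u_1,\widehat v)\in\fC_0$ together with $\uline{u_2}v\rlh\widehat v\,\uline{\widehat{u_2}}$ and $\uline{u_1}\widehat v\rlh\widetilde v\,\uline{\widehat{u_1}}$, where $\widehat{u_1}\widehat{u_2}=\widetilde u$.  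Now from $(\widetilde v,\widetilde u)\in\fC_0$ and the factorization $\widetilde u=\widehat{u_1}\widehat{u_2}$, rule~(ii) yields $(\widetilde v,\widehat{u_1})\in\fC_0$ and $(\widehat v',\widehat{u_2})\in\fC_0$ for the element $\widehat v'$ with $\eta(\widetilde v,\widehat{u_1})=(u_1',\widehat v')$; but $\eta$ is a map (Axiom~\uppercase\expandafter{\romannumeral4}) and we already know $\eta(\widetilde v,\widehat{u_1})=(u_1,\widehat v)$ from the first step, so $\widehat v'=\widehat v$.  This promotes the right-hand underlines to double ones.  No induction, no $(\mathrm A)$, no $(\mathrm C)$.

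By contrast, your route requires establishing $(\mathrm C)$ --- that $\fC_0$ is closed under the compositions \eqref{eq_C_castling_composition_1}--\eqref{eq_C_castling_composition_2} --- and here you have waved your hands.  Given $(u_2,v),(u_1,\widetilde v)\in\fC_0$, to show $(u_1u_2,v)\in\fC_0$ you must verify rule~(i) for an \emph{arbitrary} factorization $u_1u_2=w_1w_2$, where the $w_i$ bear no a~priori divisibility relation to the $u_i$.  The free-castling chain argument of Remark~\ref{remark_decomposition_chain} does not transfer automatically, because in $\fC_1$ every pair decomposes along every factorization (Lemma~\ref{lem_C1_subseteq_C_0}), whereas in $\fC_0$ that is exactly what you are trying to prove.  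This can be made to work by refining down to prime letters and reassembling, but it is a genuine combinatorial argument, not a bookkeeping remark --- and it is entirely unnecessary for the lemma as stated.
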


\begin{proof}
(\romannumeral1) Since $u,v$ are strongly castlable, we have
\begin{equation} \label{eq_one_to_one_correspondence_11}
\uuline{u_2}v \rlh \widehat{v} \uline{\widehat{u_2}},\quad \uuline{u_1}\widehat{v} \rlh \widetilde{v}\uline{\widehat{u_1}}
\end{equation}
for some $\widehat{u_1},\widehat{u_2},\widehat{v}\in S$ with $\widehat{u_1}\widehat{u_2}=\widetilde{u}$. Note that $\widetilde{v},\widetilde{u}$ are also strongly castlable. It follows that $\widetilde{v_1},\widehat{u_1}$ are strongly castlable, and so are $\widehat{v},\widehat{u_2}$. Now \eqref{eq_one_to_one_correspondence_11} becomes
\[
\uuline{u_2}v \rlh \widehat{v} \uuline{\widehat{u_2}},\quad \uuline{u_1}\widehat{v} \rlh \widetilde{v}\uuline{\widehat{u_1}}.
\]
The proof is completed.

(\romannumeral2) Similar arguments as in (\romannumeral1) work.
\end{proof}

\begin{remark}
The decomposition of strong castlings and the composition of weak castlings may appear in a stronger form as in Remark \ref{remark_decomposition_chain}, with a (de)composition-chain.
\end{remark}

\begin{lemma} \label{lem_skew_uniqueness}
(\romannumeral1) Suppose that $u,\widetilde{v},v_1,v_2,\widetilde{u_1},\widetilde{u_2}$ are elements in $S$ such that $\uuline{u}v_i\rlh \widetilde{v}\uuline{\widetilde{u_i}}$ $(i=1,2)$. Then $v_1=v_2$ and $\widetilde{u_1}=\widetilde{u_2}$.

(\romannumeral2) Suppose that $v,\widetilde{u},u_1,u_2,\widetilde{v_1},\widetilde{v_2},w$ are elements in $S$ such that $\uuline{u_i}v \rlh \widetilde{v_i}\uuline{\widetilde{u}}$ $(i=1,2)$ and $u_1v, u_2v \ddagger w$. Then $u_1=u_2$ and $\widetilde{v_1}=\widetilde{v_2}$.
\end{lemma}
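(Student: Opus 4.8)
The plan is to prove (i) and (ii) by two separate inductions, each on a quantity $N$ formed from two indices, reducing every instance to strictly shorter ones by splitting a strong castling into two shorter strong castlings through the decomposition property of $\fC_0$ — namely the formulae \eqref{eq_C0_castling_decomposition_1} and \eqref{eq_C0_castling_decomposition_2}, which hold for every pair in $\fC_0$. The genuine base case in each induction is the one in which all elements in sight are primes, and there it is dispatched by the remark following Lemma \ref{lem_ind_invariant} together with the uniqueness properties of free castlings recorded in Remark \ref{remark_castled_free_determine_relation}.

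For (i) I would set $N=\ind(u)+\ind(v_1)$; this is well defined since $\ind(v_1)=\ind(\widetilde v)=\ind(v_2)$ by Lemma \ref{lem_ind_invariant}, and it suffices to prove $v_1=v_2$, because $uv_i=\widetilde v\widetilde{u_i}$ then forces $\widetilde{u_1}=\widetilde v^{-1}uv_1=\widetilde v^{-1}uv_2=\widetilde{u_2}$ in $G$. If $\ind(u)=0$ then $v_i=\widetilde v$; if $\ind(v_1)=0$ then $v_1=v_2=1$. If $\ind(u)=\ind(v_1)=1$, then $u,v_i,\widetilde v,\widetilde{u_i}\in\cP$, and by the remark after Lemma \ref{lem_ind_invariant} either $u=\widetilde v$ (forcing $v_i=\widetilde{u_i}=u$) or the castlings are free, $\uuuline{u}v_i\rlh\widetilde v\uuuline{\widetilde{u_i}}$, whence $u$ and $\widetilde v$ determine $v_i$ by Remark \ref{remark_castled_free_determine_relation}(\romannumeral3); either way $v_1=v_2$. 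In the remaining case $\max\{\ind(u),\ind(v_1)\}\ge2$, after possibly replacing the castlings $\uuline{u}v_i\rlh\widetilde v\uuline{\widetilde{u_i}}$ by the equivalent $\uuline{\widetilde v}\widetilde{u_i}\rlh u\uuline{v_i}$ (this interchanges $(u,v_1)$ with $(\widetilde v,\widetilde{u_1})$, leaves $N$ unchanged, and turns the goal into the equivalent ``$\widetilde{u_1}=\widetilde{u_2}$''), I may assume $\ind(u)\ge2$ and write $u=u_1u_2$ with $u_1,u_2\neq1$. Applying \eqref{eq_C0_castling_decomposition_1} to each $\uuline{u}v_i\rlh\widetilde v\uuline{\widetilde{u_i}}$ produces $\widehat{v}^{(i)},\widehat{u_1}^{(i)},\widehat{u_2}^{(i)}$ with $\widehat{u_1}^{(i)}\widehat{u_2}^{(i)}=\widetilde{u_i}$, $\uuline{u_2}v_i\rlh\widehat{v}^{(i)}\uuline{\widehat{u_2}^{(i)}}$ and $\uuline{u_1}\widehat{v}^{(i)}\rlh\widetilde v\uuline{\widehat{u_1}^{(i)}}$. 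The two castlings $\uuline{u_1}\widehat{v}^{(i)}\rlh\widetilde v\uuline{\widehat{u_1}^{(i)}}$ share a left element $u_1$ and an upper-left element $\widetilde v$, and $\ind(u_1)+\ind(\widehat{v}^{(1)})=\ind(u_1)+\ind(v_1)<N$, so the inductive hypothesis gives $\widehat{v}^{(1)}=\widehat{v}^{(2)}$; then the two castlings $\uuline{u_2}v_i\rlh\widehat{v}^{(1)}\uuline{\widehat{u_2}^{(i)}}$ share a left element $u_2$ and an upper-left element $\widehat{v}^{(1)}$, with $\ind(u_2)+\ind(v_1)<N$, so the inductive hypothesis gives $v_1=v_2$.

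For (ii) the argument is dual. I would set $N=\ind(u_1)+\ind(v)$, legitimate since $\ind(u_1)=\ind(\widetilde u)=\ind(u_2)$, and it suffices to prove $u_1=u_2$ (then $\widetilde{v_1}=\widetilde{v_2}$ from $u_iv=\widetilde{v_i}\widetilde u$). If $\ind(v)=0$ then $\widetilde u=u_i$, so $u_1=u_2$; if $\ind(u_1)=0$ then $u_1=u_2=1$; if $\ind(u_1)=\ind(v)=1$, then all elements are primes and, by the co-divisor half of the remark after Lemma \ref{lem_ind_invariant}, either $v=\widetilde u$ (forcing $u_i=\widetilde{v_i}=v$) or the castlings are free, $\uuuline{u_i}v\rlh\widetilde{v_i}\uuuline{\widetilde u}$, and here the bound enters decisively: since $u_iv=\widetilde{v_i}\widetilde u\ddagger w$, Remark \ref{remark_castled_free_determine_relation}(\romannumeral4) says that $v$, $\widetilde u$ and $w$ determine $u_i$, so $u_1=u_2$. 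In the remaining case $\max\{\ind(u_1),\ind(v)\}\ge2$, after possibly passing to the equivalent instance $\uuline{\widetilde{v_i}}\widetilde u\rlh u_i\uuline{v}$ (still covered by (ii), with the same bound $w$ because $\widetilde{v_i}\widetilde u=u_iv\ddagger w$), I may assume the common ``right'' element $c$ satisfies $\ind(c)\ge2$; I then write $c=c_1c_2$ with $c_1,c_2\neq1$ and apply \eqref{eq_C0_castling_decomposition_2} to split each castling into a ``$c_1$-layer'' and a ``$c_2$-layer''. The crucial verification is that each sub-castling produced still has its (left element)$\cdot$(right element) equal, via the product identities forced by the decomposition, to a co-divisor of $u_iv=\widetilde{v_i}\widetilde u$, hence of $w$; transitivity of $\ddagger$ handles this. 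Two applications of the inductive hypothesis — first to the $c_2$-layer pair (which shares a left and an upper-left element), then to the $c_1$-layer pair — give $u_1=u_2$.

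The step I expect to be the main obstacle is arranging these reductions so that every appeal to the inductive hypothesis genuinely lands at a strictly smaller $N$: this is what forces the occasional passage to the reversed castling before splitting, and one must keep track of exactly which of $u,v,\widetilde v,\widetilde u$ is split in each index regime. For (ii) there is the additional, genuinely asymmetric obstacle of carrying the upper bound $w$ through every splitting step and checking that it is inherited by all the shorter castlings that arise — which is precisely why the hypothesis ``$u_1v,u_2v\ddagger w$'' is present and cannot be dropped. Everything else (the index identities, the existence of the decompositions, the prime base case) is supplied by Lemma \ref{lem_ind_invariant}, the defining properties of $\fC_0$, and the remarks just cited.
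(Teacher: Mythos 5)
Your part (\romannumeral1) is, up to repackaging the paper's double induction on $\ind(u)$ and $\ind(\widetilde v)$ as a single induction on $N=\ind(u)+\ind(v_1)$, the same argument the paper gives: split the left element via \eqref{eq_C0_castling_decomposition_1}, apply the inductive hypothesis twice, and dispatch the prime base case using the remark after Lemma \ref{lem_ind_invariant} together with Remark \ref{remark_castled_free_determine_relation}. That part is fine.

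For part (\romannumeral2), however, there is a genuine gap in the ``crucial verification'' you correctly identify as the crux. With $c=v=c_1c_2$, the decomposition \eqref{eq_C0_castling_decomposition_2} produces $\uuline{u_i}c_1\rlh\widehat{c_1}^{(i)}\uuline{\widehat{u}^{(i)}}$ and $\uuline{\widehat{u}^{(i)}}c_2\rlh\widehat{c_2}^{(i)}\uuline{\widetilde{u}}$. You assert that the (left)$\cdot$(right) product of each sub-castling is a co-divisor of $u_iv$, hence of $w$, so that $w$ itself is inherited as the bound. This is true for the $c_2$-layer, since $u_iv=\widehat{c_1}^{(i)}\cdot\bigl(\widehat{u}^{(i)}c_2\bigr)$ shows $\widehat{u}^{(i)}c_2\ddagger u_iv\ddagger w$. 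But it is false for the $c_1$-layer: from $u_iv=(u_ic_1)\cdot c_2$ one sees that $u_ic_1$ is a \emph{left divisor} of $u_iv$, not a co-divisor, and in a non-abelian $S$ there is no transitivity step taking you from $u_ic_1c_2\ddagger w$ to $u_ic_1\ddagger w$.

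The repair is to produce a new bound for the $c_1$-layer rather than reusing $w$. Since $c_2\ddagger v\ddagger u_iv\ddagger w$, the element $w':=wc_2^{-1}$ lies in $S$; writing $w=x_i\cdot u_iv=x_i\cdot(u_ic_1)\cdot c_2$ gives $w'=x_iu_ic_1$, hence $u_ic_1\ddagger w'$ for $i=1,2$ with the same $w'$. The inductive hypothesis for the $c_1$-layer must therefore be invoked with $w'$, not $w$; the analogous correction is needed in the reversed regime where the split element is $\widetilde u$ and the $c_1$-layer product divides $\widetilde{v_i}\widetilde u=u_iv$. With this fix the rest of your outline for (\romannumeral2) goes through, and your observation that the base case truly requires $w$ via Remark \ref{remark_castled_free_determine_relation}(\romannumeral4) is exactly right.
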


\begin{proof}
(\romannumeral1) We use induction on $\ind(u)$ and $\ind(\widetilde{v})$. For $\ind(u)=0$ or $\ind(\widetilde{v})=0$, the proof is trivial. For $\ind(u)=\ind(\widetilde{v})=1$, if $u\neq \widetilde{v}$, then $\uuuline{u}v_i\rlh \widetilde{v}\uuuline{\widetilde{u_i}}$ and the expected results follows. Otherwise one has $u=\widetilde{v}=p$ for some $p\in \cP$, and then $v_i=\widetilde{u_i}=p$ $(i=1,2)$.

Now we suppose that the lemma has been proved for $\ind(u)\leq m-1$ and $\ind(\widetilde{v})\leq n$ for some $m\geq 2, n\geq 1$. Now we deal with the case $\ind(u)=m$, $\ind(v)=n$. Write $u=xy$, where $x,y\neq 1$. Then there are elements $\widehat{v_1},\widehat{v_2},\widehat{x_1},\widehat{x_2},\widehat{y_1},\widehat{y_2}\in S$ such that
\begin{align*}
&\uuline{u}v_1= \uuline{xy}v_1 \rlh \uuline{x}\widehat{v_1}\uuline{\widehat{y_1}}\rlh \widetilde{v} \uuline{\widehat{x_1}\widehat{y_1}}=\widetilde{v} \uuline{\widetilde{u_1}},\\
&\uuline{u}v_2= \uuline{xy}v_2 \rlh \uuline{x}\widehat{v_2}\uuline{\widehat{y_2}}\rlh \widetilde{v} \uuline{\widehat{x_2}\widehat{y_2}}=\widetilde{v} \uuline{\widetilde{u_2}}.
\end{align*}
Combining $\uuline{x}\widehat{v_1}\rlh \widetilde{v} \uuline{\widehat{x_1}}$, $\uuline{x}\widehat{v_2}\rlh \widetilde{v} \uuline{\widehat{x_2}}$ and the inductive hypothesis, one arrives at the conclusion that $\widehat{v_1}=\widehat{v_2}$ and $\widehat{x_1}=\widehat{x_2}$. It follows that $\uuline{y}v_1 \rlh \widehat{v_1}\uuline{\widehat{y_1}}$ and $\uuline{y}v_2 \rlh \widehat{v_1}\uuline{\widehat{y_2}}$. By inductive hypothesis again, one obtains $v_1=v_2$ and $\widehat{y_1}=\widehat{y_2}$. Now $\widetilde{u_1}=\widetilde{u_2}$.

Assume that the lemma has been proved for $\ind(u)\leq m$ and $\ind(\widetilde{v})\leq n-1$ for some $m\geq 1, n\geq 2$. We exchange the role of $u,v_1,v_2$ and $\widetilde{v},\widetilde{u_1},\widetilde{u_2}$, respectively, and consider the strong castlings $\uuline{\widetilde{v}}\widetilde{u_i}\rlh u\uuline{v_i}$ $(i=1,2)$. Similar arguments as previous ensure that the lemma also holds for the case $\ind(u)=m$, $\ind(v)=n$. The proof is completed.

(\romannumeral2) Similar argument also work in this case.
\end{proof}

\subsection{Uniqueness of Prime Powers}
\label{subsection_uniqueness_prime_powers}

The next lemma is an important consequence of Axiom \uppercase\expandafter{\romannumeral4}.

\begin{lemma} [Uniqueness of prime powers] \label{lem_uniqueness_prime_power}
If $k\geq 1$ and $p,q_1,q_2,\ldots,q_k$ are elements in $\cP$ such that $p^k=q_1q_2\ldots q_k$, then $q_1=q_2=\ldots=q_k=p$. In particular, we have $\tau(p^k)=k+1$.
\end{lemma}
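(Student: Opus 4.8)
The plan is to prove the statement by induction on $k$. The case $k=1$ is trivial since $p^1 = q_1$ forces $q_1 = p$. For the inductive step, assume the result holds for exponents up to $k-1$, and suppose $p^k = q_1 q_2 \ldots q_k$ with all factors in $\cP$. The key dichotomy is whether $q_1 = p$ or $\gcd(q_1, p) = 1$ (these are the only options since $p \in \cP$ has only $1$ and $p$ as divisors, and $q_1 \mid p^k$ with $\gcd(q_1,p)$ being a divisor of $p$). If $q_1 = p$, then by Lemma \ref{lem_basic_properties_of_divisibility} (or directly, since $G$ is torsion-free and cancellative) we get $p^{k-1} = q_2 \ldots q_k$, and the inductive hypothesis finishes the argument.

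The heart of the matter is to rule out the case $\gcd(q_1, p) = 1$. Here I would use the structure of $\fC$ together with Axiom IV. Suppose $\gcd(q_1, p) = 1$. Then $q_1 \mid p \cdot p^{k-1}$ with $\gcd(q_1, p) = 1$, so by the fundamental lemma for arithmetic (Lemma \ref{lem_divisor_castling}) there is a castling $\uuuline{q_1}\,\widetilde{p} \rlh p\,\uuuline{r}$ for some $r \mid p^{k-1}$ with $r \in \cP$ (since $\ind(r) = \ind(q_1) = 1$ by Corollary \ref{cor_C_1_ind_transfer}). This exhibits $(p,p)$ — or rather a pair built from $p$ and a power of $p$ — in a configuration where $\eta$ would have to send it to something nontrivial. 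More precisely, I would argue as in the commentary following Axiom IV in the excerpt: one shows $(p,p) \in \fC_1$ with $\eta((p,p)) = (q_1, q_2\ldots q_k / \text{stuff})$ being a pair $\neq (p,p)$, while the relation $(p,p) \in \fC$ with $\eta((p,p)) = (p,p)$ is forced by \eqref{eq_C_definition_p_p}. Since $\Gamma$ is the graph of the map $\eta$ by Axiom IV, $\eta$ cannot take two values at $(p,p)$, giving a contradiction. To make this rigorous I would set up: from $\gcd(q_1,p)=1$ one has $\lcm[p,q_1] = p\widetilde{u} = q_1\widetilde{v}$ with $\ind(\widetilde{u}) = 1$, hence $\lcm[p,q_1]$ has $\ind = 2$ and is a divisor of $p^k$; then $p \cdot (\text{something of index }1)$ equals $q_1 \cdot (\text{something})$, and by massaging with $p^2 = (\text{factors})$ one lands on a pair in $\fC_1$ conflicting with the $(p,p) \in \fC$ assignment.

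The main obstacle I anticipate is making the contradiction clean: one must carefully identify which pair lies in $\fC_1$ and which lies in the "diagonal" part of $\fC$ built from \eqref{eq_C_definition_p_p}, and verify that these are genuinely the *same* pair $(u,v)$ in $S \times S$ so that the well-definedness of $\eta$ applies. A cleaner route may be to avoid $\fC$ entirely at the level of $p^2$: first prove the lemma for $k=2$ directly (if $p^2 = q_1 q_2$ with $\gcd(q_1,p)=1$, then $(p,p)\in\fC_1$ with $\eta((p,p)) = (q_1,q_2)$, contradicting $\eta((p,p)) = (p,p)$ from \eqref{eq_C_definition_p_p}), and then bootstrap to general $k$ by the induction above, using that any occurrence $\gcd(q_j, p) = 1$ would, after reducing with the $k=2$ case and Lemma \ref{lem_ind_invariant}, produce a length-$2$ violation. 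The final claim $\tau(p^k) = k+1$ then follows: the divisors of $p^k$ are exactly $1, p, p^2, \ldots, p^k$ — each $p^j$ divides $p^k$, and conversely any divisor $d \mid p^k$ has a prime decomposition all of whose factors, by the uniqueness just proved applied to $d \cdot (\text{cofactor}) = p^k$, must equal $p$, so $d = p^j$ for some $0 \le j \le k$, and these $k+1$ powers are distinct since $G$ is torsion-free.
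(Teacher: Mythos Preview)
Your approach is correct and follows the same core idea as the paper: the crux is the $k=2$ case, where $p^2=q_1q_2$ with $q_1\neq p$ forces $\gcd(p,q_1)=1$, $\lcm[p,q_1]=p^2$, hence $((p,p),(q_1,q_2))\in\Gamma_1\subseteq\Gamma$, conflicting with $((p,p),(p,p))\in\Gamma$ from \eqref{eq_C_definition_p_p} and Axiom~\uppercase\expandafter{\romannumeral4}. Your ``cleaner route'' paragraph states this exactly as the paper does.

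Where you differ is in the inductive step for $k\geq 3$. The paper splits on whether $\gcd(q_1,p^{k-1})=1$, and in the nontrivial case applies Lemma~\ref{lem_divisor_castling} with $u=p^{k-1}$, $v=p$, then invokes the decomposition of free castlings (Lemma~\ref{lem_C1_subseteq_C_0}) to peel off $q_2$ and reduce via the inductive hypothesis. Your split on $\gcd(q_1,p)$ is simpler: applying Lemma~\ref{lem_divisor_castling} with $u=p$, $v=p^{k-1}$ gives $\uuuline{q_1}\,\widetilde{p}\rlh p\,\uuuline{r}$ with $r\in\cP$ and $r\mid p^{k-1}$, and you land directly on the pair $(p,r)$. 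The one step you leave implicit---and should make explicit---is why $r=p$: write $p^{k-1}=r\cdot s$, factor $s$ into $k-2$ irreducibles, and apply the inductive hypothesis to $p^{k-1}$ to conclude $r=p$. With that filled in, your argument avoids Lemma~\ref{lem_C1_subseteq_C_0} entirely and is shorter than the paper's Case~2. Your final paragraph on $\tau(p^k)=k+1$ is fine and matches the paper.
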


\begin{proof}
For $k=1$, we of course have $q_1=p$. For $k=2$, assume on the contrary that $p\neq q_1$. Then $\gcd(p,q_1)=1$ and $\lcm[p,q_1]| p^2=q_1q_2$. By Corollary \ref{lem_gcd_lcm_u_v}, one has  $\ind(\lcm[p,q_1])=\ind(p)+\ind(q_1)=2=\ind(p^2)$. So $\lcm[p,q]=p^2=q_1q_2$. Then $\uuuline{p}p\rlh q_1\uuuline{q_2}$. But we also have $\uline{p}p\rlh p\uline{p}$, which contradicts Axiom \uppercase\expandafter{\romannumeral4}.

Assume that the lemma has been proved for $k\leq K-1$ with some $K\geq 3$. Now we consider the case $k=K$.

\textsc{Case 1}. Suppose that $\gcd(q_1,p^{K-1})\neq 1$. Then $q_1|p^{K-1}$. Write $p^{K-1}=q_1q_2^\prime \ldots q_{K-1}^\prime$ for some $q_2^\prime, \ldots , q_{K-1}^\prime\in \cP$. By inductive hypothesis, we have $q_1=p$. Now $q_2q_3\ldots q_k=p^{K-1}$. By inductive hypothesis again, one gets that $q_2=q_3=\ldots =q_k = p$.

\textsc{Case 2}. Suppose that $\gcd(q_1,p^{K-1})= 1$. Note that $q_1|p^{K-1}\cdot p$. By Lemma \ref{lem_divisor_castling}, we have $\uuuline{q_1}z\rlh p^{K-1}\uuuline{p}$, where $z=q_1^{-1} \cdot p^{K-1}\cdot p = q_2\ldots q_K$. By Lemma \ref{lem_C1_subseteq_C_0}, one deduces that
\[
\uuuline{q_1}q_2(q_3\ldots q_K) \rlh p_2\uuuline{p_1}(q_3\ldots q_K) \rlh p_2 w \uuuline{p} = p^{K-1}\uuuline{p}
\]
for some $p_1,p_2\in \cP$ and $w\in S$. Now we have $p_2w=p^{K-1}$. By inductive hypothesis, one gets $p_2=p$. It follows from $p_2p_1q_3\ldots q_k= p^{K}$ that $p_1q_3\ldots q_k=p^{K-1}$. By inductive hypothesis again, we have $p_1=q_3=\ldots = q_k = p$. Now $q_1q_2=p_2p_1=p^2$, which implies $q_1=q_2=p$.

The divisors of $p^k$ are exactly $1,p,p^2,\ldots, p^k$. This completes the proof.
\end{proof}

Now the following corollary follows immediately.

\begin{corollary} \label{cor_p_k_q_l_coprime}
(\romannumeral1) Let $k,l\geq 0$ and $p,q$ be distinct elements in $\cP$. Then $\gcd(p^k,q^l)=1$. (\romannumeral1) Let $k,l\geq 0$, $w\in S$ and $p,q$ be distinct elements in $\cP$. Suppose that $p^k\ddagger w$ and $q^l \ddagger w$. Then $\gcd_\ddagger(w;p^k,q^l)=1$.
\end{corollary}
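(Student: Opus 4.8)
The plan is to derive both parts directly from the uniqueness of prime powers (Lemma \ref{lem_uniqueness_prime_power}), whose content is exactly that the divisors of $p^k$ are precisely $1,p,p^2,\dots,p^k$. The first thing I would record is a small auxiliary fact: every \emph{co-}divisor of $p^k$ is again a prime power $p^j$ with $0\le j\le k$. This is proved by writing $p^k=cz$ for a co-divisor $z$, taking prime decompositions $c=c_1\cdots c_a$ and $z=z_1\cdots z_b$, concatenating them into a word $c_1\cdots c_a z_1\cdots z_b$ of $p^k$, and applying Lemma \ref{lem_uniqueness_prime_power} to conclude every letter equals $p$; since $\ind(p^k)=k$ by Lemma \ref{lem_ind_eq}, this forces $z=p^b$. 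Together with the divisor statement, this yields: any divisor or co-divisor of $p^k$ is of the form $p^j$ for some $0\le j\le k$, and such a $p^j$ is irreducible only when $j=1$ (because $\tau(p^j)=j+1$).

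For part (i), I would set $d=\gcd(p^k,q^l)$ and argue by contradiction. If $d\neq 1$, then by Lemma \ref{lem_prime_divisor_exsits} there is an irreducible $r\in\cP$ with $r\mid d$. From $r\mid p^k$ and the auxiliary fact, $r=p^j$ for some $j$, and since $\tau(r)=2=\tau(p^j)$ only when $j=1$, we get $r=p$. Applying the same reasoning to $r\mid q^l$ gives $r=q$, whence $p=q$, contradicting that $p$ and $q$ are distinct; hence $d=1$. For part (ii), I would put $z=\gcd_\ddagger(w;p^k,q^l)$, which by Lemma \ref{lem_gcd_ddagger_exists} is a common co-divisor of $p^k$ and $q^l$. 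By the auxiliary fact, $z=p^i=q^j$ for suitable $0\le i\le k$ and $0\le j\le l$; if $z\neq 1$ then $i,j\ge 1$, so $p\mid z=q^j$, and the argument of (i) again forces $p=q$, a contradiction. Therefore $z=1$.

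The whole argument is routine once Lemma \ref{lem_uniqueness_prime_power} is in hand, so there is no real obstacle; the only point that needs a moment of care is the co-divisor half of the auxiliary fact, since co-divisibility of $z$ in $p^k$ is not literally the same as $z$ being a suffix of some fixed word, and one must pass through a prime decomposition of the complementary factor $c$ to invoke uniqueness of prime powers. As an alternative for (ii) one could instead combine part (i) with the duality of Lemma \ref{lem_gcd_gcddagger_transfer} and Corollary \ref{cor_remains_of_gcd_has_gcd_1}, but the direct route above seems shorter and avoids introducing the relevant $\lcm$'s.
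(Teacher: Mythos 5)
Your proof is correct, and it is the natural elaboration of the paper's terse ``Now the following corollary follows immediately'' after Lemma~\ref{lem_uniqueness_prime_power}. The auxiliary observation that every co-divisor of $p^k$ is again a power of $p$ is exactly the co-divisor analogue of the divisor classification stated at the end of that lemma's proof, and your derivation of it (concatenating irreducible decompositions of the two factors of $p^k$, invoking uniqueness of prime powers, and using $\ind$-additivity via Lemma~\ref{lem_ind_eq} to see that the total length is $k$) is the right way to fill that in. Parts (\romannumeral1) and (\romannumeral2) then follow cleanly. No gaps.
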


\begin{corollary} \label{cor_gcd_p_m}
(\romannumeral1) Let $u\in S$ and $p\in \cP$. Suppose that $\gcd(u,p)=1$, then $\gcd(u,p^m)=1$ for any $m\geq 0$. (\romannumeral2) Let $m\geq 1$, $u,w\in S$ and $p\in \cP$. Suppose that $p^m\ddagger w$, $u\ddagger w$ and $\gcd_\ddagger(w;u,p)=1$, then $\gcd_\ddagger(w;u,p^m)=1$.
\end{corollary}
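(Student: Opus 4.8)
The plan is to reduce both parts to the hypothesis at exponent $1$, exploiting the fact (Lemma~\ref{lem_uniqueness_prime_power}) that every prime decomposition of $p^m$ consists of $m$ copies of $p$; consequently the divisors, and likewise the co-divisors, of $p^m$ are exactly $1,p,p^2,\dots,p^m$, so that $p$ is the only irreducible (co-)divisor of $p^m$.

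For part (\romannumeral1), write $d=\gcd(u,p^m)$. Since $d\mid p^m$, Lemma~\ref{lem_uniqueness_prime_power} gives $d=p^j$ for some $0\le j\le m$. If $j\ge 1$ then $p\mid d\mid u$, so $p$ divides both $u$ and $p$, whence $p\mid\gcd(u,p)=1$ by Lemma~\ref{lem_numbertheoretic_def_gcd}(\romannumeral2); then $1\mid p$ and $p\mid 1$ force $p=1$ by Lemma~\ref{lem_antisymmetry}, contradicting $p\in\cP$ (for which $\tau(p)=2$). Hence $j=0$ and $\gcd(u,p^m)=1$; the case $m=0$ is trivial since $p^0=1$.

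For part (\romannumeral2), set $d=\gcd_\ddagger(w;u,p^m)$, which is legitimate since the hypothesis $\gcd_\ddagger(w;u,p)=1$ already carries $u,p\ddagger w$ and we are given $p^m\ddagger w$. From $d\ddagger p^m$, writing $p^m=ed$ and comparing prime decompositions (using $\ind(e)+\ind(d)=\ind(p^m)=m$ from Corollary~\ref{cor_ind_eq}, together with Lemma~\ref{lem_uniqueness_prime_power}, which forces every letter involved to be $p$) we get $d=p^j$ for some $0\le j\le m$. If $j\ge 1$, then $p\ddagger p^j=d$, and since $d\ddagger u$ (being a common co-divisor), transitivity of ``$\ddagger$'' gives $p\ddagger u$; also $p\ddagger p^m\ddagger w$ gives $p\ddagger w$. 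Thus $\gcd_\ddagger(w;u,p)$ is defined and, as $p\ddagger u$ and $p\ddagger p$, Lemma~\ref{lem_gcd_ddagger_exists}(\romannumeral2) yields $p\ddagger\gcd_\ddagger(w;u,p)=1$; but $p\ddagger 1$ means $e'p=1$ for some $e'\in S$, so $p^{-1}\in S$ and $p\in S\cap S^{-1}=\{1\}$ by Axiom~\uppercase\expandafter{\romannumeral1}, a contradiction. Hence $j=0$ and $\gcd_\ddagger(w;u,p^m)=1$.

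The argument is short; the only delicate point is the bookkeeping in part (\romannumeral2): one must check that all the ``$\ddagger w$'' bounds are present so that $\gcd_\ddagger(w;u,p)$ is well-defined before invoking Lemma~\ref{lem_gcd_ddagger_exists}(\romannumeral2), and one must use transitivity of ``$\ddagger$''---not of ``$\mid$''---to pass from $p\ddagger d$ and $d\ddagger u$ to $p\ddagger u$. I do not expect any genuine obstacle beyond this.
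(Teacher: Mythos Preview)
Your argument is correct and is exactly the intended one: the paper states this corollary without proof, immediately after Lemma~\ref{lem_uniqueness_prime_power}, and your derivation---observing that every (co-)divisor of $p^m$ is a power of $p$, then pushing $p\mid\gcd(u,p)=1$ (resp.\ $p\ddagger\gcd_\ddagger(w;u,p)=1$) to a contradiction---is the natural way to unpack it. The care you take in part~(\romannumeral2) with the $\ddagger w$ hypotheses and with using transitivity of ``$\ddagger$'' rather than ``$\mid$'' is appropriate.
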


\begin{corollary} \label{cor_tau_lcm_p_m}
(\romannumeral1) Let $k\geq 1$ and $q_1,q_2,\ldots,q_k$ be distinct elements in $\cP$. Let $m_1,m_2,\ldots,m_k\geq 0$. Then
\begin{equation} \label{eq_tau_p_m_(m+1)}
\tau\left(\lcm[q_1^{m_1},q_2^{m_2},\ldots, q_k^{m_k}]\right)= (m_1+1)(m_2+1)\ldots (m_k+1).
\end{equation}
Moreover, the corresponding divisors are exactly
\[
\lcm[q_1^{l_1},q_2^{l_2},\ldots, q_k^{l_k}],\quad (0\leq l_j \leq m_j,\quad 1\leq j\leq k).
\]

(\romannumeral2) Let $k\geq 1$, $m_1,m_2,\ldots,m_k\geq 0$, $w\in S$, and $q_1,q_2,\ldots,q_k$ be distinct elements in $\cP$. Suppose that $q_j^{m_j}\ddagger w$ for $1\leq j\leq k$. Then
\[
\tau\left(\lcm_\ddagger[w;q_1^{m_1},q_2^{m_2},\ldots, q_k^{m_k}]\right)= (m_1+1)(m_2+1)\ldots (m_k+1).
\]
Moreover, the corresponding co-divisors are exactly
\[
\lcm_\ddagger[w;q_1^{l_1},q_2^{l_2},\ldots, q_k^{l_k}],\quad (0\leq l_j \leq m_j,\quad 1\leq j\leq k).
\]
\end{corollary}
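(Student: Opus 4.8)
The plan is to obtain the two numerical identities immediately from the multiplicativity results already proved, and to extract the one genuinely new point --- that the displayed $\lcm$'s (resp.\ $\lcm_\ddagger$'s) are \emph{all} the divisors (resp.\ co-divisors) --- from an index count.

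For (i), since $q_1,\dots,q_k$ are distinct primes, Corollary~\ref{cor_p_k_q_l_coprime}(i) gives $\gcd(q_i^{m_i},q_j^{m_j})=1$ whenever $i\ne j$, so Corollary~\ref{cor_tau_lcm_coprime_u_j}(i) together with Lemma~\ref{lem_uniqueness_prime_power} yields $\tau\big(\lcm[q_1^{m_1},\dots,q_k^{m_k}]\big)=\prod_j\tau(q_j^{m_j})=\prod_j(m_j+1)$, which is \eqref{eq_tau_p_m_(m+1)}. Write $N=\lcm[q_1^{m_1},\dots,q_k^{m_k}]$. For any tuple with $0\le l_j\le m_j$ one has $q_j^{l_j}\mid q_j^{m_j}\mid N$, hence $\lcm[q_1^{l_1},\dots,q_k^{l_k}]\mid N$ by Lemma~\ref{lem_numbertheoretic_def_lcm}; thus the box $\prod_j\{0,1,\dots,m_j\}$ maps into the set of divisors of $N$, which has cardinality $\prod_j(m_j+1)$ by the formula just proved. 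So it suffices to show the map $(l_1,\dots,l_k)\mapsto\lcm[q_1^{l_1},\dots,q_k^{l_k}]$ is injective; it is then automatically a bijection onto the divisors of $N$, which is exactly the ``moreover'' claim.

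Injectivity follows once we can recover each $l_i$ from $d:=\lcm[q_1^{l_1},\dots,q_k^{l_k}]$ via $\gcd(d,q_i^{m_i})=q_i^{l_i}$. The inclusion $q_i^{l_i}\mid\gcd(d,q_i^{m_i})$ is clear, while $\gcd(d,q_i^{m_i})\mid q_i^{m_i}$ forces $\gcd(d,q_i^{m_i})=q_i^{s}$ with $s\ge l_i$ by Lemma~\ref{lem_uniqueness_prime_power}; so everything reduces to the key claim $q_i^{\,l_i+1}\nmid d$. Relabel so that $i=k$, put $M'=\lcm[q_1^{l_1},\dots,q_{k-1}^{l_{k-1}}]$, and note $d=\lcm[M',q_k^{l_k}]$. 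Since $q_k$ differs from $q_1,\dots,q_{k-1}$, a relation $q_k\mid M'$ would, by Lemma~\ref{lem_p_divides_lcm_implies_p_divides_one_of_them}(i) and Lemma~\ref{lem_uniqueness_prime_power}, force $q_k=q_h$ for some $h<k$, a contradiction; hence $\gcd(M',q_k)=1$, and then $\gcd(M',q_k^{l_k+1})=1$ by Corollary~\ref{cor_gcd_p_m}(i). Now $M'\mid d$ (Lemma~\ref{lem_numbertheoretic_def_lcm}), so write $d=M'y$; from $\gcd(M',q_k^{l_k})=1$, Lemma~\ref{lem_gcd_lcm_u_v}(i) and Corollary~\ref{cor_ind_eq} one gets $\ind(y)=\ind(d)-\ind(M')=\ind(q_k^{l_k})=l_k$ (using $\ind(q_k^{l_k})=l_k$ from Lemma~\ref{lem_ind_eq}). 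If now $q_k^{l_k+1}\mid d=M'y$, the fundamental lemma for arithmetic (Lemma~\ref{lem_divisor_castling}(i), applied with $u=M'$, $v=y$, $w=q_k^{l_k+1}$, which is legitimate since $\gcd(w,M')=1$) produces $v_1\mid y$ and $\widetilde u\in S$ with $\uuuline{q_k^{l_k+1}}\widetilde u\rlh M'\uuuline{v_1}$, whence $\ind(v_1)=\ind(q_k^{l_k+1})=l_k+1$ by Corollary~\ref{cor_C_1_ind_transfer}; but $v_1\mid y$ gives $\ind(v_1)\le\ind(y)=l_k$, a contradiction. This proves the key claim, hence injectivity, hence (i).

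Part (ii) should be obtained by running the same argument with ``divisor'' and $\lcm$ replaced by ``co-divisor'' and $\lcm_\ddagger$ throughout, using the co-divisor counterparts already at hand --- Corollary~\ref{cor_p_k_q_l_coprime}(ii), Corollary~\ref{cor_tau_lcm_coprime_u_j}(ii), Corollary~\ref{cor_gcd_p_m}(ii), Lemma~\ref{lem_gcd_lcm_u_v}(ii), and the co-divisor half of the fundamental lemma, Lemma~\ref{lem_divisor_castling}(ii) --- or, alternatively, by transporting (i) through the duality of Lemma~\ref{lem_gcd_gcddagger_transfer}. The main obstacle is precisely the key claim $q_i^{l_i+1}\nmid\lcm[q_1^{l_1},\dots,q_k^{l_k}]$: this is exactly where Axiom~\uppercase\expandafter{\romannumeral4} enters, through the uniqueness of prime powers and through the index invariance of castlings, which together say that castling a prime power over an element coprime to it cannot manufacture a higher power. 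Everything else --- the two product formulas and the verification that the listed elements are genuine divisors/co-divisors --- is a routine consequence of the results of Sections~\ref{section_integral_monoids} and~\ref{section_castlable_monoid}.
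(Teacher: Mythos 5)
Your proof is correct, and it departs from the paper's argument at exactly one place: the injectivity of the map $(l_1,\dots,l_k)\mapsto\lcm[q_1^{l_1},\dots,q_k^{l_k}]$. The paper disposes of this step in three lines: assuming two tuples with $l_1<l_1'$ produce the same $v$, it writes $v=\lcm[q_1^{l_1},w]=\lcm[q_1^{l_1'},w]$ with $w=\lcm[q_2^{l_2},\dots,q_k^{l_k}]$, notes that $\gcd(q_1^{l_1},w)=\gcd(q_1^{l_1'},w)=1$ by Corollaries~\ref{cor_pairwise_gcd} and~\ref{cor_p_k_q_l_coprime}, and then invokes the uniqueness statement of Lemma~\ref{lem_lcm_unique_from_eta_2=id} to force $q_1^{l_1}=q_1^{l_1'}$, a contradiction. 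You instead show that each $l_i$ is recoverable from $d=\lcm[q_1^{l_1},\dots,q_k^{l_k}]$ as $\gcd(d,q_i^{m_i})=q_i^{l_i}$, which you reduce to the key bound $q_i^{l_i+1}\nmid d$ and then prove via the index count $\ind(d)-\ind(M')=l_k$ combined with the fundamental lemma for arithmetic (Lemma~\ref{lem_divisor_castling}) and index-invariance of free castlings (Corollary~\ref{cor_C_1_ind_transfer}). Both approaches lean on Axiom~\uppercase\expandafter{\romannumeral4}' in the end, but yours buys a bit more: the bound $q_i^{l_i+1}\nmid\lcm[q_1^{l_1},\dots,q_k^{l_k}]$ that you prove along the way is precisely the content of the next result in the paper (Corollary~\ref{cor_prime_keeping_in_lcm}), whose given proof actually depends on the present corollary — so your route establishes that fact independently and non-circularly, at the cost of being longer. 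The paper's appeal to Lemma~\ref{lem_lcm_unique_from_eta_2=id} is the shorter and more idiomatic path, since the statement is designed to be applied in exactly this configuration. Your treatment of part~(ii) — running the co-divisor analogues — matches the paper's "similar arguments" remark; the cited duality (Lemma~\ref{lem_gcd_gcddagger_transfer}) would require some care to deploy directly, so the parallel-argument route is the safer of your two suggestions.
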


\begin{proof}
(\romannumeral1) The equation \eqref{eq_tau_p_m_(m+1)} follows by combining Corollaries \ref{cor_tau_lcm_coprime_u_j}, \ref{cor_p_k_q_l_coprime} and Lemma \ref{lem_uniqueness_prime_power}. It is sufficient to prove that $\lcm[q_1^{l_1},q_2^{l_2},\ldots, q_k^{l_k}]\neq \lcm[q_1^{l_1^\prime},q_2^{l_2^\prime},\ldots, q_k^{l_k^\prime}]$ whenever $(l_1,l_2,\ldots,l_k)\neq (l_1^\prime,l_2^\prime,\ldots,l_k^\prime)$. Without loss of generality, we assume on the contrary that
\[
v=\lcm[q_1^{l_1},q_2^{l_2},\ldots, q_k^{l_k}]= \lcm[q_1^{l_1^\prime},q_2^{l_2^\prime},\ldots, q_k^{l_k^\prime}]
\]
and $l_1< l_1^\prime$. Then $q_1^{l_1^\prime}|v$. One deduces that
\[
v= \lcm\left[q_1^{l_1}, \lcm[q_2^{l_2},\ldots,q_k^{l_k}]\right] = \lcm\left[q_1^{l_1^\prime}, \lcm[q_2^{l_2},\ldots,q_k^{l_k}]\right].
\]
By corollaries \ref{cor_pairwise_gcd} and \ref{cor_p_k_q_l_coprime}, we have
\[
\gcd(q_1^{l_1},\lcm[q_2^{l_2},\ldots,q_k^{l_k}])=\gcd(q_1^{l_1^\prime},\lcm[q_2^{l_2},\ldots,q_k^{l_k}])=1. \]
It follows from Lemma \ref{lem_lcm_unique_from_eta_2=id} that $q_1^{l_1}=q_1^{l_1^\prime}$, which is a contradiction.

(\romannumeral2) The conclusion follows from similar arguments as above.
\end{proof}

\begin{corollary} \label{cor_prime_keeping_in_lcm}
(\romannumeral1) Let $k\geq 1$. Suppose that $q_1,\ldots,q_k$ are distinct primes and $m_1,\ldots,m_k\geq 1$. Let $u=\lcm[q_1^{m_1},\ldots,q_k^{m_1}]$. If $p\in \cP$ and $m\geq 1$ satisfy that $p^m|u$, then $p=p_j$ for some $1\leq j\leq k$ and $m\leq m_j$.

(\romannumeral2) Let $k\geq 1$ and $w\in S$. Suppose that $m_1,\ldots,m_k\geq 1$ and $q_1,\ldots,q_k$ are distinct primes satisfying $q_j^{m_j}\ddagger w$ $(1\leq j\leq k)$. Let $u=\lcm_\ddagger[w;q_1^{m_1},\ldots,q_k^{m_1}]$. If $p\in \cP$ and $m\geq 1$ satisfy that $p^m\ddagger u$, then $p=p_j$ for some $1\leq j\leq k$ and $m\leq m_j$.
\end{corollary}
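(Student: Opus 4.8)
The plan is to reduce both parts to the explicit list of divisors (resp.\ co-divisors) of an $\lcm$ of pairwise coprime prime powers furnished by Corollary~\ref{cor_tau_lcm_p_m}, combined with the uniqueness of prime powers, Lemma~\ref{lem_uniqueness_prime_power} (which is where Axiom~\uppercase\expandafter{\romannumeral4} enters). The point is that once we know every divisor of $u$ is literally some $\lcm[q_1^{l_1},\dots,q_k^{l_k}]$ with $0\le l_i\le m_i$, the conclusion is pure bookkeeping.

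First I would prove (\romannumeral1). Since $m\ge 1$ and $p^m\mid u=\lcm[q_1^{m_1},\dots,q_k^{m_k}]$, the element $p^m$ is a divisor of $u$, so by the ``moreover'' clause of Corollary~\ref{cor_tau_lcm_p_m}(\romannumeral1) there are integers $0\le l_i\le m_i$ with
\[
p^m=\lcm[q_1^{l_1},q_2^{l_2},\dots,q_k^{l_k}].
\]
Each $q_i^{l_i}$ divides the right-hand side, hence divides $p^m$; but by Lemma~\ref{lem_uniqueness_prime_power} the divisors of $p^m$ are exactly $1,p,p^2,\dots,p^m$, and among these only $p$ itself is a prime, since $\tau(p^s)=s+1$. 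Therefore, whenever $l_i\ge 1$, the prime $q_i$ divides $p^m$ and forces $q_i=p$. As the $q_i$ are pairwise distinct, at most one index $i$ can have $l_i\ge 1$; and since $m\ge 1$ gives $p^m\ne 1$, not all $l_i$ vanish. Hence there is a unique index $j$ with $l_j\ge 1$, for which $q_j=p$ and $p^m=q_j^{l_j}=p^{l_j}$; as $G$ is torsion-free and $p\ne 1$ this yields $m=l_j$, and so $m=l_j\le m_j$, which is the assertion of (\romannumeral1).

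Part (\romannumeral2) is the same argument on the co-divisor side. If $m\ge 1$ and $p^m\ddagger u=\lcm_\ddagger[w;q_1^{m_1},\dots,q_k^{m_k}]$, then by Corollary~\ref{cor_tau_lcm_p_m}(\romannumeral2) there are integers $0\le l_i\le m_i$ with $p^m=\lcm_\ddagger[w;q_1^{l_1},\dots,q_k^{l_k}]$, and by the definition of $\lcm_\ddagger$ each $q_i^{l_i}$ co-divides $p^m$. The co-divisors of $p^m$ are again precisely $1,p,\dots,p^m$: the inclusions $p^i\ddagger p^m$ $(0\le i\le m)$ give $m+1$ distinct co-divisors, and $\tau(p^m)=m+1$ counts co-divisors. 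So, exactly as above, $l_i\ge 1$ forces $q_i=p$, whence there is a unique $j$ with $l_j\ge 1$, $q_j=p$, and $p^m=\lcm_\ddagger[w;q_j^{l_j}]=q_j^{l_j}$, giving $m=l_j\le m_j$.

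I do not anticipate a real obstacle here: the only substantive inputs are the identification of the divisors and co-divisors of a prime power as literal powers (Lemma~\ref{lem_uniqueness_prime_power}) and the complete divisor/co-divisor list of $u$ from Corollary~\ref{cor_tau_lcm_p_m}. The only mild care needed is (a) to invoke the ``moreover'' clauses so as to pass from ``$p^m$ divides/co-divides $u$'' to ``$p^m$ is one of the listed elements'', and (b) to remember that a one-term $\lcm$ or $\lcm_\ddagger$ collapses to that single term, so that the surviving index $j$ really does give $p^m=q_j^{l_j}$.
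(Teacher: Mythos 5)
Your proof is correct, and it takes a genuinely different (if modest) route from the paper's. The paper proves the statement by first invoking Lemma~\ref{lem_p_divides_lcm_implies_p_divides_one_of_them} to find a $j$ with $p\ddagger q_j^{m_j}$ (hence $p=q_j$ by Lemma~\ref{lem_uniqueness_prime_power}), and then establishes $m\le m_j$ by a counting argument: it forms $v=\lcm_\ddagger[w;q_1^{m_1},\ldots,q_j^m,\ldots,q_k^{m_k}]$, notes $v\ddagger u$, and compares $\tau(v)=(m_1+1)\cdots(m+1)\cdots(m_k+1)$ with $\tau(u)=(m_1+1)\cdots(m_k+1)$ via Corollary~\ref{cor_tau_lcm_p_m}. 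You instead use the ``moreover'' clause of the same Corollary~\ref{cor_tau_lcm_p_m} to get the complete enumeration of (co-)divisors as $\lcm[q_1^{l_1},\ldots,q_k^{l_k}]$ with $0\le l_i\le m_i$, so that $p^m$ is literally one of these and Lemma~\ref{lem_uniqueness_prime_power} forces a unique surviving index with $q_j=p$ and $m=l_j\le m_j$. This avoids Lemma~\ref{lem_p_divides_lcm_implies_p_divides_one_of_them} and the $\tau$-comparison entirely; it is a slightly more direct bookkeeping argument, at the cost of explicitly tracking which indices have $l_i\ge 1$ and justifying the collapse $\lcm[1,\ldots,1,q_j^{l_j},1,\ldots,1]=q_j^{l_j}$ (which you correctly flag). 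Both proofs ultimately lean on the same two ingredients — Corollary~\ref{cor_tau_lcm_p_m} and uniqueness of prime powers — so the difference is more one of packaging than of substance, but your version is self-contained given those two.
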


\begin{proof}
We prove (\romannumeral2) below. One can prove (\romannumeral1) with similar arguments.

Since $p\ddagger u$, it follows from Lemma \ref{lem_p_divides_lcm_implies_p_divides_one_of_them} that $p\ddagger q_j^{m_j}$ for some $1\leq j\leq k$. By Lemma \ref{lem_uniqueness_prime_power}, we have $p=q_j$. Write $v=\lcm_\ddagger[w;q_1^{m_1},\ldots,q_{j-1}^{m_{j-1}}, q_j^m, q_{j+1}^{m_{j+1}},\ldots,q_k^{m_k}]$. Since $q_j^m\ddagger u$, we have $v\ddagger u$. By Corollary \ref{cor_tau_lcm_p_m}, one deduces that
\[
(m_1+1)\ldots (m_{j-1}+1)(m+1)(m_{j+1}+1)\ldots(m_k+1)=\tau(v)\leq \tau(u)= (m_1+1)\ldots (m_k+1).
\]
Thus, we have $m\leq m_j$.
\end{proof}

\begin{corollary} \label{cor_gcd_lcm_of_two_lcms}
(\romannumeral1) Let $k\geq 1$ and $q_1,\ldots, q_k\in \cP$. Let $n_1,\ldots,n_k,m_1,\ldots,m_k$ be non-negative integers. Let $u=\lcm[q_1^{n_1},\ldots, q_k^{n_k}]$ and $v=\lcm[q_1^{m_1},\ldots, q_k^{m_k}]$. Then
\begin{align*}
&\lcm[u,v] = \lcm\left[q_1^{\max\{n_1,m_1\}},\ldots, q_k^{\max\{n_k,m_k\}}\right],\\ &\gcd(u,v)= \lcm\left[q_1^{\min\{n_1,m_1\}},\ldots, q_k^{\min\{n_k,m_k\}}\right].
\end{align*}

(\romannumeral2) Let $k\geq 1$, $w\in S$ and $q_1,\ldots, q_k\in \cP$. Let $n_1,\ldots,n_k,m_1,\ldots,m_k$ be non-negative integers. Let $u=\lcm[w;q_1^{n_1},\ldots, q_k^{n_k}]$ and $v=\lcm[w;q_1^{m_1},\ldots, q_k^{m_k}]$. Then
\begin{align*}
&\lcm_\ddagger[w;u,v] = \lcm_\ddagger\left[w;q_1^{\max\{n_1,m_1\}},\ldots, q_k^{\max\{n_k,m_k\}}\right],\\
&\gcd_\ddagger(w;u,v)= \lcm_\ddagger\left[w;q_1^{\min\{n_1,m_1\}},\ldots, q_k^{\min\{n_k,m_k\}}\right].
\end{align*}
\end{corollary}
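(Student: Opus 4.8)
The plan is to derive the two $\lcm$ identities directly from the ideal description of least common (co\nobreakdash-)multiples, and then to obtain the two $\gcd$ identities from the already-established classification of the divisors of $\lcm[q_1^{n_1},\dots,q_k^{n_k}]$. For the $\gcd$ assertions one must assume $q_1,\dots,q_k$ pairwise distinct (otherwise group equal primes; without distinctness the $\gcd$ formula genuinely fails, e.g.\ $k=2$, $q_1=q_2$, $(n_1,n_2)=(2,0)$, $(m_1,m_2)=(0,2)$), so that Corollaries~\ref{cor_p_k_q_l_coprime}, \ref{cor_tau_lcm_p_m} and \ref{cor_prime_keeping_in_lcm} are available. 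Throughout write $N_j=\max\{n_j,m_j\}$ and $M_j=\min\{n_j,m_j\}$.

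For the $\lcm$ identity in (i): since $\lcm[\,\cdot\,]$ of a finite family is the unique generator of the intersection of the corresponding principal right ideals and intersection of sets is associative and commutative,
\[
\lcm[u,v]\,S=uS\cap vS=\bigcap_{j=1}^{k}\bigl(q_j^{n_j}S\cap q_j^{m_j}S\bigr)=\bigcap_{j=1}^{k}q_j^{N_j}S=\lcm[q_1^{N_1},\dots,q_k^{N_k}]\,S,
\]
using $q_j^{n_j}S\cap q_j^{m_j}S=q_j^{N_j}S$ (because $q_j^{M_j}\mid q_j^{N_j}$); Lemma~\ref{lem_antisymmetry} then gives the claim. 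The identity $\lcm_\ddagger[w;u,v]=\lcm_\ddagger[w;q_1^{N_1},\dots,q_k^{N_k}]$ in (ii) is the mirror statement, proved from the universal property of $\lcm_\ddagger[w;\,\cdot\,]$ in Lemma~\ref{lem_lcm_ddagger_exists}: writing $D=\lcm_\ddagger[w;q_1^{N_1},\dots,q_k^{N_k}]$ and $E=\lcm_\ddagger[w;u,v]$, one has $q_j^{n_j}\ddagger q_j^{N_j}\ddagger D$, hence $u\ddagger D$ and $v\ddagger D$, hence $E\ddagger D$; conversely $q_j^{n_j}\ddagger u\ddagger E$ and $q_j^{m_j}\ddagger v\ddagger E$ force $q_j^{N_j}\ddagger E$ for every $j$, hence $D\ddagger E$; antisymmetry of $\ddagger$ finishes it. The co-divisibility side conditions ($q_j^{N_j}\ddagger w$ and $u,v\ddagger w$) are immediate from transitivity of $\ddagger$ and the standing hypothesis $q_j^{n_j},q_j^{m_j}\ddagger w$.

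For the $\gcd$ identity in (i): put $d=\lcm[q_1^{M_1},\dots,q_k^{M_k}]$. From $q_j^{M_j}\mid q_j^{n_j}\mid u$ and $q_j^{M_j}\mid q_j^{m_j}\mid v$ (Lemma~\ref{lem_numbertheoretic_def_lcm}) one gets $d\mid u$, $d\mid v$, hence $d\mid\gcd(u,v)$ by Lemma~\ref{lem_numbertheoretic_def_gcd}. For the reverse divisibility let $c=\gcd(u,v)$; since $c\mid u$, Corollary~\ref{cor_tau_lcm_p_m} writes $c=\lcm[q_1^{l_1},\dots,q_k^{l_k}]$ with $l_j\le n_j$, and for each $j$ with $l_j\ge 1$ the chain $q_j^{l_j}\mid c\mid v$ together with Corollary~\ref{cor_prime_keeping_in_lcm} forces $l_j\le m_j$; thus $l_j\le M_j$ for all $j$, so $q_j^{l_j}\mid q_j^{M_j}\mid d$ and $c\mid d$ by Lemma~\ref{lem_numbertheoretic_def_lcm}. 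Antisymmetry of $\mid$ yields $\gcd(u,v)=d$. Part (ii) for $\gcd_\ddagger$ runs identically with $\ddagger$ in place of $\mid$, using Corollary~\ref{cor_tau_lcm_p_m}(ii) for the canonical co-divisor form of $\gcd_\ddagger(w;u,v)$, Corollary~\ref{cor_prime_keeping_in_lcm}(ii) to bound the exponents by $M_j$, and Lemmas~\ref{lem_lcm_ddagger_exists} and~\ref{lem_gcd_ddagger_exists} as the substitutes for Lemmas~\ref{lem_numbertheoretic_def_lcm} and~\ref{lem_numbertheoretic_def_gcd}.

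The step I expect to take the most care is the reverse inclusion in the $\gcd$ parts: producing a \emph{single} exponent tuple $(l_1,\dots,l_k)$ that simultaneously records ``$\le n_j$'' (from $c\mid u$) and ``$\le m_j$'' (from $c\mid v$). This is exactly what the canonical-form classification of Corollary~\ref{cor_tau_lcm_p_m} together with the power-keeping Corollary~\ref{cor_prime_keeping_in_lcm} supply, and it is where the distinctness of $q_1,\dots,q_k$ is truly needed; in the $\ddagger$-version one must, in addition, carry the auxiliary ``$\,\cdot\,\ddagger w$'' hypotheses through every invocation of Lemma~\ref{lem_lcm_ddagger_exists}. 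Everything else is routine regrouping of principal ideals.
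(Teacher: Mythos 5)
The paper gives no proof of this corollary, so there is no official argument to compare against. Your proof is correct and uses precisely the machinery the author has just developed for this purpose: the ideal description $\lcm[u_1,\ldots,u_k]\,S=\bigcap_j u_jS$ for the $\lcm$ halves, together with the universal properties in Lemmas~\ref{lem_lcm_ddagger_exists} and~\ref{lem_gcd_ddagger_exists}, and Corollaries~\ref{cor_tau_lcm_p_m} and~\ref{cor_prime_keeping_in_lcm} for the $\gcd$ halves. Your observation that the $\gcd$ identities genuinely fail when the $q_j$ are not distinct is a real catch: the corollary as printed omits this hypothesis even though both ingredient corollaries assume it explicitly, and your counterexample is decisive.

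One loose end worth tightening in the $\gcd$ step: Corollary~\ref{cor_prime_keeping_in_lcm} is stated under the hypothesis that all exponents in the defining $\lcm$ are at least $1$, whereas some $m_j$ (or $n_j$) in your setting may be $0$. The fix is routine: delete from $v=\lcm[q_1^{m_1},\ldots,q_k^{m_k}]$ every index with $m_j=0$ (these contribute $q_j^0=1$ and change nothing), apply Corollary~\ref{cor_prime_keeping_in_lcm} to the remaining strictly positive exponents, and use distinctness to conclude that a positive power of $q_j$ cannot divide $v$ when $m_j=0$ — so $l_j=0\leq m_j$ automatically in those slots. The analogous repair carries over verbatim to the co-divisor halves. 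With that filled in, the argument is complete as written.
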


With the unique irreducible decomposition of prime powers, we are able to count the multiplicities of prime divisors, or co-divisors, of $u$. Let $\PDM(u)$ and $\PDM_\ddagger(u)$ be the multi-set of prime divisors and prime co-divisors of $u$, respectively. Define
\begin{align*}
&\Omega(u)=\#PDM(u)=\sum\nolimits_{p\in \cP} \max\{k\geq 0:\, p^k|u\}, \\
&\Omega_\ddagger(u)=\#PDM_\ddagger(u)=\sum\nolimits_{p\in \cP} \max\{k\geq 0:\, p^k \ddagger u\}.
\end{align*}
Also write $\lambda(u)=(-1)^{\Omega(u)}$, which is known as Liouville function in classical arithmetics. We also define $\lambda_\ddagger(u) = (-1)^{\Omega_\ddagger(u)}$. Then
\begin{align*}
&\Omega\left(\lcm[q_1^{m_1},q_2^{m_2},\ldots, q_k^{m_k}]\right)= m_1+m_2+\ldots m_k,\\
&\Omega_\ddagger\left(\lcm[w;q_1^{m_1},q_2^{m_2},\ldots, q_k^{m_k}]\right)= m_1+m_2+\ldots m_k,
\end{align*}
where $w\in S$, $q_1,\ldots,q_k$ are distinct primes and $m_1,\ldots,m_k\geq 0$.

\medskip

The following lemma will be applied in later sections.

\begin{lemma} \label{lem_q_m_r_requires_q=r}
Let $q\in \cP$, $w,z\in S$ and $m,n\geq 1$.

(\romannumeral1) Suppose that $\uline{q^m}w\rlh q^n\uline{z}$. Then $w=q^n$ and $z=q^m$.

(\romannumeral2) Suppose that $\uline{w} q^m\rlh z\uline{q^n}$. Then $w=q^n$ and $z=q^m$.
\end{lemma}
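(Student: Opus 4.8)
The plan is to prove part~(i) by strong induction on $m+n$; here $n=\ind(w)$ and $m=\ind(z)$ are forced by Lemma~\ref{lem_ind_invariant}, so these really are the index data attached to the castling. Part~(ii) will then follow by the mirror argument, with co-divisors, $\gcd_\ddagger$ and the relation $\uline{w}q^m\rlh z\uline{q^n}$ replacing divisors, $\gcd$ and $\uline{q^m}w\rlh q^n\uline{z}$ throughout. No separate base case is needed: when $m+n$ is smallest, i.e., $m=n=1$, the case analysis below shows that the only way such a pair can have entered $\fC$ is through rule~\eqref{eq_C_definition_p_p}.

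For the inductive step I would invoke the generation of $\fC$: a pair $(q^m,w)\in\fC$ either lies in $\fC_1$, or equals $(p,p)$ for some $p\in\cP$ (rule~\eqref{eq_C_definition_p_p}), or is produced by a composition of type~\eqref{eq_C_castling_composition_1}, or by one of type~\eqref{eq_C_castling_composition_2}. The first alternative is impossible: if $(q^m,w)\in\fC_1$ then $\uline{q^m}w\rlh q^n\uline{z}$ is a free castling, and by the definition of $\fC_1$ this forces $\gcd(q^m,q^n)=1$, contradicting $q\mid q^m$ and $q\mid q^n$. The second alternative forces $q^m=p$, hence $m=1$, $p=q$ and $w=q$, and then $\eta((q,q))=(q,q)$ yields $n=1$ and $z=q$, so $w=q^n$ and $z=q^m$.

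In the two composition cases I would first pin down the shape of the factors. Combining the additivity of $\ind$ (Corollary~\ref{cor_ind_eq}, Lemma~\ref{lem_ind_eq}) with the uniqueness of prime powers (Lemma~\ref{lem_uniqueness_prime_power}): in case~\eqref{eq_C_castling_composition_1} a factorization $q^m=u_1u_2$ with $u_1,u_2\neq1$ must read $u_1=q^{m_1}$, $u_2=q^{m_2}$ with $m_1+m_2=m$ and $m_1,m_2\geq1$; in case~\eqref{eq_C_castling_composition_2} the factorization $\widetilde{v_1}\widetilde{v_2}=q^n$ read off from $\eta((q^m,w))=(\widetilde{v_1}\widetilde{v_2},\widetilde{\widetilde u})=(q^n,z)$ must read $\widetilde{v_1}=q^{n_1}$, $\widetilde{v_2}=q^{n_2}$ with $n_1+n_2=n$ and $n_1,n_2\geq1$ (the bounds $n_i=\ind(\widetilde{v_i})=\ind(v_i)\geq1$ come from Lemma~\ref{lem_ind_invariant} and $v_i\neq1$). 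After these substitutions (and using $\widetilde{\widetilde v}=q^n$ in the first case, $\widetilde{\widetilde u}=z$ in the second) the two constituent weak castlings are $\uline{q^{m_2}}w\rlh\widetilde v\uline{\widetilde{u_2}}$ and $\uline{q^{m_1}}\widetilde v\rlh q^n\uline{\widetilde{u_1}}$ in case~\eqref{eq_C_castling_composition_1}, and $\uline{q^m}v_1\rlh q^{n_1}\uline{\widetilde u}$ and $\uline{\widetilde u}v_2\rlh q^{n_2}\uline{\widetilde{\widetilde u}}$ in case~\eqref{eq_C_castling_composition_2}. In the first situation the second constituent already has the form of~(i) with parameter sum $m_1+n<m+n$, so the inductive hypothesis gives $\widetilde v=q^n$ and $\widetilde{u_1}=q^{m_1}$; substituting $\widetilde v=q^n$ turns the first constituent into $\uline{q^{m_2}}w\rlh q^n\uline{\widetilde{u_2}}$, again of the form~(i) with smaller parameter sum, whence $w=q^n$, $\widetilde{u_2}=q^{m_2}$ and therefore $z=\widetilde{u_1}\widetilde{u_2}=q^m$. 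In the second situation the first constituent $\uline{q^m}v_1\rlh q^{n_1}\uline{\widetilde u}$ has the form of~(i) with parameter sum $m+n_1<m+n$, giving $v_1=q^{n_1}$ and $\widetilde u=q^m$; substituting $\widetilde u=q^m$ turns the second constituent into $\uline{q^m}v_2\rlh q^{n_2}\uline{\widetilde{\widetilde u}}$, giving $v_2=q^{n_2}$, $\widetilde{\widetilde u}=q^m$, and so $w=v_1v_2=q^n$, $z=q^m$. For part~(ii) the only change is that the $\fC_1$ alternative is excluded by the co-divisor identity (Lemma~\ref{lem_gcd_gcddagger_transfer}): it would give $\gcd_\ddagger(wq^m;q^m,q^n)=1$, contradicting $q\ddagger q^m$ and $q\ddagger q^n$.

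The main obstacle is bookkeeping rather than ideas. One must apply the inductive hypothesis to the two constituents in the correct order, so that the linking element ($\widetilde v$ in case~\eqref{eq_C_castling_composition_1}, $\widetilde u$ in case~\eqref{eq_C_castling_composition_2}) is recognised as a prime power \emph{before} the other constituent is recast into the form demanded by the lemma; applying it in the wrong order leaves an element of unknown shape. One must also verify that every parameter pair handed to the inductive hypothesis has both entries $\geq1$ and a strictly smaller sum, which is exactly the point where the non-triviality conditions $u_1,u_2\neq1$ in~\eqref{eq_C_castling_composition_1} and $v_1,v_2\neq1$ in~\eqref{eq_C_castling_composition_2} (transported through Lemma~\ref{lem_ind_invariant}) are used.
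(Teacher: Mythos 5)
Your proposal is correct and follows essentially the same route as the paper's proof: strong induction on $m+n$, first ruling out the $\fC_1$ alternative via $\gcd(q^m,q^n)\neq 1$ (resp.\ $\gcd_\ddagger(wq^m;q^m,q^n)\neq 1$ for part~(ii)), then using Lemma~\ref{lem_uniqueness_prime_power} to pin the factors of $q^m$ (or of $q^n$) as prime powers of $q$, and finally applying the inductive hypothesis to the two constituent castlings in the order you describe, so that the linking element is recognised as a power of $q$ before the other constituent is treated. The only cosmetic difference is that the paper handles $m=n=1$ as a separate base case (deriving $w=z$ directly from $qw=qz$ and then appealing to the construction of $\fC$), whereas you absorb it into the general case analysis by noting that when $\ind(q^m)=\ind(w)=1$ neither composition rule can have fired, forcing the $(p,p)$ alternative; both treatments are valid.
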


\begin{proof}
(\romannumeral1) For $m=n=1$, we have $\ind(w)=\ind(z)=1$ and $qw=qz$. So $w=z$ and $\uline{q}w\rlh q\uline{w}$. By the construction of $\fC$, the only possibility is $w=q$ and $\uline{q}q\rlh q\uline{q}$.

Suppose that the lemma has been proved for $m+n\leq K-1$ with some $K\geq 3$. Now we consider the case $m+n=K$. Note that $\gcd(q^m,q^n)\neq 1$. So $q^m,w$ are not castled-free. By the constriction of $\fC$, the fact that $(q^m,w)\in \fC$ comes from either \eqref{eq_C_castling_composition_1} or \eqref{eq_C_castling_composition_2}.

For the former case, there are some $u_1,u_2\neq 1$ with $u_1u_2=q^m$ and $\widehat{w},\widehat{w_1},\widehat{w_2}\in S$ such that
\[
\uline{q^m}w = \uline{u_1u_2} w \rlh \uline{u_1} \widehat{w} \uline{\widehat{u_2}} \rlh q^n\uline{\widehat{u_1}\widehat{u_2}} = q^n\uline{z}.
\]
Suppose that $\ind(u_1)=l$ and $\ind(u_2)=m-l$, where $0<l<m$. Since $u_1u_2=q^m$, one deduces by Lemma \ref{lem_uniqueness_prime_power} that $u_1=q^l$ and $u_2=q^{m-l}$. By inductive hypothesis, we deduce from $q^l\widehat{w}\rlh q^n\uline{\widehat{u_1}}$ that $\widehat{w} = q^n$ and $\widehat{u_1}=q^l$. Then the fact $\uline{q^{m-l}}w\rlh q^n\uline{\widehat{u_2}}$ results in $w=q^n$ and $\widehat{u_2}=q^{m-l}$. Now $z=\widehat{u_1}\widehat{u_2}=q^m$.

For the latter case, there are some $w_1,w_2\neq 1$ with $w_1w_2=w$ and $\widehat{w_1},\widehat{w_2},y\in S$ such that
\[
\uline{q^m}w = \uline{q^m}w_1w_2 \rlh \widehat{w_1}\uline{y} w_2 \rlh \widehat{w_1}\widehat{w_2} \uline{z} \rlh q^n \uline{z}.
\]
Suppose that $\ind(w_i)=l_i$ $(i=1,2)$, where $l_1+l_2=n$. Since $q^n=\widehat{w_1}\widehat{w_2}$, one has $\widehat{w_1}=q^{l_1},\, \widehat{w_2}=q^{l_2}$. By inductive hypothesis, it follows from $\uline{q^m}w_1\rlh q^{l_1}\uline{y}$ that  $w_1=q^{l_1}$ and $y=q^m$. And the condition $\uline{q^m}w_2\rlh q^{l_2}\uline{z}$ implies that $w_2=q^{l_2}$ and $z=q^m$. Now $w=w_1w_2= q^n$.
The conclusion follows by induction.

(\romannumeral2) Similar arguments as above also work.
\end{proof}

\subsection{Multiplicative and Completely Multiplicative Functions}

In a strong castling $\uuline{u} v \rightleftharpoons \widetilde{v} \uuline{\widetilde{u}}$, the elements $u$ and $\widetilde{u}$ share ``same'' information. Therefore, it is natural to define the following.

\begin{definition}
We say that a (complex-valued) arithmetic function $f$ on $S$ is castled-invariant, if $\uuline{u} v \rightleftharpoons \widetilde{v} \uuline{\widetilde{u}}$ implies $f(u)=f(\widetilde{u})$.
\end{definition}

Note that the condition $\uuline{u} v \rightleftharpoons \widetilde{v} \uuline{\widetilde{u}}$ is same with $\uuline{\widetilde{v}}\widetilde{u}\rlh u\uuline{v}$. When $f$ is castled-invariant, we have $f(v)=f(\widetilde{v})$ as well. When $S$ is commutative, all weak castlings have the form $\uline{u}v \rlh v\uline{u}$. All arithmetic functions are castled-invariant in this case.

\begin{definition}
We say that an arithmetic function $f$ on $S$ is multiplicative, if $f$ is castled-invariant and $f(uv)=f(u)f(v)$ for each pair of castled-free elements $u,v\in S$. We say that $f$ is completely multiplicative, if $f$ is castled-invariant and $\uuline{u} v \rightleftharpoons \widetilde{v} \uuline{\widetilde{u}}$ implies $f(uv)=f(u)f(v)$ $(u,v\in S)$.
\end{definition}

Note that $\uuuline{1}u\rlh u\uuuline{1}$ for any $u\in S$. For a non-zero multiplicative function $f$, we have $f(u)=f(1\cdot u)=f(1)f(u)$ for all $u\in S$. It follows that $f(1)=1$. As a result, a multiplicative function $f$ is invertible with respect to convolution.

A completely multiplicative function on $\bN$ is determined by its value on all the primes. In particular, the group $\bQ^+$ is generated by all these primes. For Thompson's monoid $\bS$, it follows from $\uuline{p_i}p_0\rlh p_0\uuline{p_{i+1}}$ $(i\geq 1)$ that $f(p_i)=f(p_1)$ for a castled-invariant function $f$ and all $i\geq 1$. So a completely multiplicative function on $\bS$ is determined by its value on $p_0$ and $p_1$. Indeed, the group $\bG$ can be generated by these two elements. Completely multiplicative functions on $S$ characterize the structure of underling group $G$.

\begin{lemma} \label{lem_1-1_strong_castling}
Suppose that $\uuline{u} v \rightleftharpoons \widetilde{v} \uuline{\widetilde{u}}$. Let $\cA=\{(u_1,u_2)\in S\times S:\, u_1u_2=u\}$ and $\cB=\{(\widetilde{u_1},\widetilde{u_2})\in S\times S:\, \widetilde{u_1}\widetilde{u_2}=\widetilde{u}\}$. Then $\rho:\,\cA\rightarrow \cB,\, (u_1,u_2)\rightarrow (\widehat{u_1},\widehat{u_2})$ is a bijection, where
\begin{equation} \label{eq_bijection_u1u2_u1u2tilede}
\uuline{u}v = \uuline{u_1u_2}v \rlh \uuline{u_1}\widehat{v}\uuline{\widehat{u_2}} \rlh \widetilde{v}\uuline{\widehat{u_1}\widehat{u_2}} = \widetilde{v} \uuline{\widetilde{u}}.
\end{equation}
Moreover, there is a one-to-one correspondence between $\{r\in \cP:\, r|\widetilde{u}\}$ and $\{q\in \cP:\, q|u\}$. Furthermore, there is also a one-to-one correspondence between $\{r\in \cP:\, r\ddagger\widetilde{u}\}$ and $\{q\in \cP:\, q\ddagger u\}$. In particular, we have
\[
\tau(u)=\tau(\widetilde{u}),\quad \omega(u)=\omega(\widetilde{u}),\quad \omega_\ddagger(u)=\omega(\widetilde{u}).
\]
\end{lemma}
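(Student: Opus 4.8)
## Proof proposal for Lemma 4.16 (one-to-one correspondence under strong castlings)

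The plan is to build the bijection $\rho:\cA\to\cB$ directly from the decomposition property of strong castlings, and then transfer the three finer correspondences (divisor sets, irreducible divisors, irreducible co-divisors) through it. First I would recall that since $(u,v)\in\fC_0$, for any factorization $u=u_1u_2$ the Decomposition of strong castlings lemma produces $\widehat{u_1},\widehat{u_2},\widehat{v}\in S$ with $\widehat{u_1}\widehat{u_2}=\widetilde{u}$ and
\[
\uuline{u_2}v \rlh \widehat{v}\uuline{\widehat{u_2}},\quad \uuline{u_1}\widehat{v}\rlh \widetilde{v}\uuline{\widehat{u_1}},
\]
so that \eqref{eq_bijection_u1u2_u1u2tilede} holds. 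This defines $\rho(u_1,u_2)=(\widehat{u_1},\widehat{u_2})$. Well-definedness of $\rho$ as a map is exactly Lemma \ref{lem_skew_uniqueness}: given $u_1$ (hence $u_2=u_1^{-1}u$) fixed, the pair $\widehat{v},\widehat{u_2}$ with $\uuline{u_2}v\rlh\widehat{v}\uuline{\widehat{u_2}}$ is uniquely determined, and then $\widehat{u_1}$ with $\uuline{u_1}\widehat{v}\rlh\widetilde{v}\uuline{\widehat{u_1}}$ is uniquely determined. To see $\rho$ is injective, note $\widehat{u_1}$ determines $u_1$: running the castling $\uuline{\widetilde{v}}\widehat{u_1}\rlh u_1\uuline{\widehat{v}}$ (the reverse of the strong castling, which again lies in $\fC_0$) and invoking Lemma \ref{lem_skew_uniqueness}(ii) recovers $u_1,\widehat{v}$ uniquely from $\widehat{u_1}$, hence $u_2=u_1^{-1}u$ is forced. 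For surjectivity I would run the symmetric argument starting from $\widetilde{u}=\widehat{u_1}\widehat{u_2}$ and the reversed strong castling $\uuline{\widetilde{v}}\widetilde{u}\rlh u\uuline{v}$: decomposing $\widetilde{u}$ produces a preimage, and the uniqueness clause of Lemma \ref{lem_skew_uniqueness} guarantees the two constructions are mutually inverse. This gives $|\cA|=|\cB|$, i.e. $\tau(u)=\tau(\widetilde{u})$, once we recall $\tau$ counts ordered factorizations.

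For the correspondence between $\{r\in\cP: r|\widetilde{u}\}$ and $\{q\in\cP: q|u\}$, I would restrict $\rho$ to factorizations with $u_1\in\cP$: if $u_1=q\in\cP$, then by Lemma \ref{lem_ind_invariant} the corresponding $\widehat{u_1}$ satisfies $\ind(\widehat{u_1})=\ind(q)=1$, so $\widehat{u_1}\in\cP$; conversely every $r|\widetilde{u}$ with $r\in\cP$ arises this way by surjectivity. However, a divisor $q$ of $u$ may admit several factorizations $u=q\cdot u_2$ only if... no — the pair $(q,q^{-1}u)$ is unique, so $q\mapsto \widehat{u_1}$ is a well-defined map $\{q\in\cP:q|u\}\to\{r\in\cP:r|\widetilde{u}\}$, and its inverse is obtained by reversing the castling; bijectivity follows from the bijectivity of $\rho$ together with the index-preservation of Lemma \ref{lem_ind_invariant} applied in both directions. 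For irreducible co-divisors I would argue dually, using the decomposition of the strong castling in the second variable and Lemma \ref{lem_ind_invariant} again: writing $\widetilde{u}=\widehat{u_1}\widehat{u_2}$ with $\widehat{u_2}=r\in\cP$ corresponds to $u=u_1u_2$ with $u_2=q\in\cP$, and co-divisors of $\widetilde{u}$ versus $u$ match up. Then $\omega(u)=\omega(\widetilde{u})$ and $\omega_\ddagger(u)=\omega_\ddagger(\widetilde{u})$ are immediate consequences since $\#\PD(u)$ counts precisely these irreducible divisors.

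The main obstacle I anticipate is bookkeeping the mutual-inverse property cleanly: one must show that the construction $\rho$ built from decomposing $u$, and the construction built from decomposing $\widetilde{u}$ via the reversed strong castling $\uuline{\widetilde{v}}\widetilde{u}\rlh u\uuline{v}$, genuinely undo each other rather than merely both being bijections. This is where the uniqueness statements in Lemma \ref{lem_skew_uniqueness}(i) and (ii) do the real work — the point is that a strong castling together with a prescribed factorization of one side uniquely determines everything else, so there is no room for the two constructions to disagree. A secondary subtlety is that $\fC_0$ is closed under the operations used (reversal, and passing to the sub-castlings $\uuline{u_2}v\rlh\widehat{v}\uuline{\widehat{u_2}}$ and $\uuline{u_1}\widehat{v}\rlh\widetilde{v}\uuline{\widehat{u_1}}$), which is guaranteed by the very definition of $\fC_0$ and was already recorded in the discussion following the definition of $\fC_0$; I would cite that rather than reprove it. Everything else is a routine application of $\ind$-additivity (Corollary \ref{cor_ind_eq}) and the fact that $\ind(q)=1$ characterizes $q\in\cP$.
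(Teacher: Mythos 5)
Your proposal follows essentially the same route as the paper's proof: use the decomposition of strong castlings to define $\rho$, construct the inverse by decomposing the reversed castling $\uuline{\widetilde{v}}\widetilde{u}\rlh u\uuline{v}$, and then transfer the finer correspondences (irreducible divisors/co-divisors) through $\ind$-preservation. The one place where your account is more explicit than the paper's (which essentially just asserts bijectivity after exhibiting both constructions) is in justifying that the two constructions are mutually inverse; your choice of Lemma~\ref{lem_skew_uniqueness} for this is a slight overkill but correct --- what is actually doing the work is that $\eta$ is a map (Axiom~\uppercase\expandafter{\romannumeral4}) together with $\eta^2=\mathrm{id}$, so applying $\eta$ to $(\widetilde{v},\widehat{u_1})\in\fC_0$ already recovers $(u_1,\widehat{v})$ uniquely; Lemma~\ref{lem_skew_uniqueness} addresses a different kind of uniqueness (fixing one entry of the input and one of the output). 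This is a minor misattribution, not a gap, and the rest of the argument (restriction of $\rho$ to factorizations with first or last factor a prime, via $\ind$-invariance) matches the paper.
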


\begin{proof}
For any $(u_1,u_2)\in \cA$, the decomposition of strong castlings gives
\[
\uuline{u_2}v \rlh \widehat{v} \uuline{\widehat{u_2}},\quad \uuline{u_1}\widehat{v} \rlh \widetilde{v}\uuline{\widehat{u_1}}
\]
for some $(\widehat{u_1},\widehat{u_2})\in \cB$ and $\widehat{v}\in S$. Conversely, for any $(\widehat{u_1},\widehat{u_2})\in \cB$, one has
\[
\widetilde{v} \uuline{\widetilde{u_1}} \rlh \uuline{u_1} \widehat{v},\quad \widehat{v}\uuline{\widehat{u_2}}\rlh \uuline{u_2} \widetilde{v}
\]
for some $(u_1,u_2)\in \cA$ and $\widehat{v}\in S$. So $\rho$ is a bijection. Noting that $\ind(u_1)=\ind(\widehat{u_1})$ and $\ind(u_2)=\ind(\widehat{u_2})$, one can also verify the remaining two correspondences.
\end{proof}

\begin{theorem} \label{thm_castled_invariant_convolution}
(\romannumeral1) Let $f,g$ be two castled-invariant functions on $S$. Then so is $f\ast g$.

(\romannumeral2) Suppose that $f$ is a castled-invariant function on $S$. Also suppose that $f(1)\neq 0$ and $h$ is its inverse with respect to convolution. The $h$ is also castled-invariant.
\end{theorem}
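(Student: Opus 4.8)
The plan is to deduce both parts from the factorization bijection of Lemma~\ref{lem_1-1_strong_castling} together with the recursion for the convolution inverse in Theorem~\ref{thm_inverse_formula_wrt_convolution}; no genuinely new machinery should be needed.

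For (\romannumeral1), fix a strong castling $\uuline{u}v \rlh \widetilde{v}\uuline{\widetilde{u}}$ and expand $(f\ast g)(u)=\sum_{u_1u_2=u}f(u_1)g(u_2)$. For each factorization $u=u_1u_2$, run the strong-castling decomposition recorded in Lemma~\ref{lem_1-1_strong_castling}, so that $\uuline{u}v=\uuline{u_1u_2}v\rlh\uuline{u_1}\widehat{v}\uuline{\widehat{u_2}}\rlh\widetilde{v}\uuline{\widehat{u_1}\widehat{u_2}}=\widetilde{v}\uuline{\widetilde{u}}$; in particular $\uuline{u_2}v\rlh\widehat{v}\uuline{\widehat{u_2}}$ and $\uuline{u_1}\widehat{v}\rlh\widetilde{v}\uuline{\widehat{u_1}}$ are strong castlings. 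Castled-invariance of $g$ and $f$ then gives $g(u_2)=g(\widehat{u_2})$ and $f(u_1)=f(\widehat{u_1})$, so each summand is preserved, while $(u_1,u_2)\mapsto(\widehat{u_1},\widehat{u_2})$ is, by Lemma~\ref{lem_1-1_strong_castling}, a bijection from the factorizations of $u$ onto those of $\widetilde{u}$. Re-indexing the sum yields $(f\ast g)(u)=(f\ast g)(\widetilde{u})$, which is (\romannumeral1).

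For (\romannumeral2), argue by induction on $\ind(u)$, noting $\ind(\widetilde{u})=\ind(u)$ by Lemma~\ref{lem_ind_invariant} so that the induction is well-founded. The case $\ind(u)=0$ forces $u=\widetilde{u}=1$, where $h(1)=f(1)^{-1}$. For the inductive step, since $h$ is a two-sided convolution inverse of $f$, the recursion of Theorem~\ref{thm_inverse_formula_wrt_convolution} gives, for $u\neq1$,
\[
h(u)=-f(1)^{-1}\sum_{\substack{u_1u_2=u\\ u_2\neq1}}h(u_1)f(u_2).
\]
Apply the same decomposition as in (\romannumeral1): from $\uuline{u_2}v\rlh\widehat{v}\uuline{\widehat{u_2}}$ and castled-invariance of $f$ we get $f(u_2)=f(\widehat{u_2})$; and whenever $u_2\neq1$ we have $\ind(u_1)=\ind(u)-\ind(u_2)<\ind(u)$ by Corollary~\ref{cor_ind_eq}, so the strong castling $\uuline{u_1}\widehat{v}\rlh\widetilde{v}\uuline{\widehat{u_1}}$ is covered by the inductive hypothesis and $h(u_1)=h(\widehat{u_1})$. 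Since $\ind(u_2)=\ind(\widehat{u_2})$ (Lemma~\ref{lem_ind_invariant}), the condition $u_2\neq1$ matches $\widehat{u_2}\neq1$, so Lemma~\ref{lem_1-1_strong_castling} permits re-indexing over the nontrivial factorizations of $\widetilde{u}$, giving $h(u)=-f(1)^{-1}\sum_{\widetilde{u_1}\widetilde{u_2}=\widetilde{u},\,\widetilde{u_2}\neq1}h(\widetilde{u_1})f(\widetilde{u_2})=h(\widetilde{u})$.

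The work is essentially bookkeeping rather than conceptual. I expect the two delicate points to be: (a) checking that the bijection of Lemma~\ref{lem_1-1_strong_castling} restricts to a bijection between the \emph{nontrivial} factorizations on the two sides, which relies on the index-invariance $\ind(u_1)=\ind(\widehat{u_1})$ of each half of a castling; and (b) matching the summation index of the recursion in Theorem~\ref{thm_inverse_formula_wrt_convolution} to the factor that the strong-castling decomposition peels off first — using the co-divisor form of the recursion is what makes the first castling $\uuline{u_2}v\rlh\widehat{v}\uuline{\widehat{u_2}}$ govern the $f$-factor and the second castling $\uuline{u_1}\widehat{v}\rlh\widetilde{v}\uuline{\widehat{u_1}}$ govern the recursive $h$-factor.
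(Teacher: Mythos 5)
Your proof is correct and follows essentially the same route as the paper's: both parts invoke the factorization bijection of Lemma~\ref{lem_1-1_strong_castling}, and part~(\romannumeral2) runs the same induction on $\ind(u)$ using the co-divisor recursion from Theorem~\ref{thm_inverse_formula_wrt_convolution} (your condition $u_2\neq1$ is the same as the paper's $u_1\neq u$). The two ``delicate points'' you flag at the end are exactly the ones the paper handles, and you handle them the same way, so there is no genuine difference.
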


\begin{proof}
Let $u,v,\widetilde{u},\widetilde{v}$ be elements in $S$ satisfying $\uuline{u} v \rightleftharpoons \widetilde{v} \uuline{\widetilde{u}}$. Recall the bijection given in Lemma \ref{lem_1-1_strong_castling}, and $u_1,u_2,\widetilde{u_1},\widetilde{u_2}$ given in \eqref{eq_bijection_u1u2_u1u2tilede}.

(\romannumeral1) Since $f,g$ are castled-invariant, we have $f(u_1)=f(\widehat{u_1})$ and $g(u_2)=g(\widehat{u_2})$. It follows that
\[
(f\ast g)(u) = \sum\limits_{u_1u_2=u} f(u_1)g(u_2) = \sum\limits_{\widehat{u_1} \widehat{u_2} = \widetilde{u}} f(\widehat{u_1})g(\widehat{u_2}) = (f\ast g)(\widetilde{u}).
\]
Therefore $f\ast g$ is castled-invariant.

(\romannumeral2) By Theorem \ref{thm_inverse_formula_wrt_convolution}, the function $h$ is given by the iterating formulae
\[
h(1)=f(1)^{-1},\quad h(u) = - f(1)^{-1}\sum\limits_{u_1u_2=u\atop u_1\neq u} h(u_1)f(u_2).
\]
We use induction on $\ind(u)$. For $u=1$, the proof is trivial. Suppose that it has been verified in all strong castlings $\uuline{u_0}v_0\rlh \widetilde{v_0}\uuline{\widetilde{u_0}}$ with $\ind(u_0)\leq k-1$ for some $k\geq 1$. Consider $\uuline{u}v\rlh \widetilde{v}\uuline{\widetilde{u}}$ with $\ind(u)=k$. Since $u_1\neq u$, one has $\ind(u_1)\leq k-1$. Applying inductive hypothesis on $\uuline{u_1}\widehat{v}\rlh \widetilde{v}\uuline{\widehat{u_1}}$, we obtain $h(u_1)=h(\widehat{u_1})$. Moreover, it follows from the fact that $f$ is castled-invariant and $\uuline{u_2}v\rlh\widehat{v}\uuline{\widehat{v_2}}$ that $f(u_2)=f(\widehat{u_2})$. Now we have
\[
h(u) = - f(1)^{-1}\sum\limits_{u_1u_2=u\atop u_1\neq u} h(u_1)f(u_2)
= - f(1)^{-1}\sum\limits_{\widehat{u_1}\widehat{u_2}=\widetilde{u}\atop \widehat{u_1}\neq \widetilde{u}} h(\widehat{u_1})f(\widehat{u_2}) = h(\widetilde{u}).
\]
Hence $h$ is castled-invariant.
\end{proof}

\begin{theorem} \label{thm_pre-mulitplicative_convolution}
Suppose that $f$ and $g$ are two multiplicative functions on $S$. Then so is $f\ast g$.
\end{theorem}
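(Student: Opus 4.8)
The plan is to reduce everything to two facts already in hand: Theorem \ref{thm_castled_invariant_convolution}(i), which gives that $f\ast g$ is castled-invariant as soon as $f$ and $g$ are (and multiplicative functions are by definition castled-invariant), and Lemma \ref{lem_divisor_decomposition_castlable_uv}, which provides the combinatorial bijection needed to factor the convolution over a castled-free product. So the only thing left to prove is the identity $(f\ast g)(uv)=(f\ast g)(u)\,(f\ast g)(v)$ for every castled-free pair $u,v$.

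Fix such a pair, so $(u,v)\in\fC_1$ and $\gcd(u,v)=1$. I would apply Lemma \ref{lem_divisor_decomposition_castlable_uv}: the map $\rho\colon\cA\to\cB$, $(d,v_1)\mapsto d\breve{v_1}$, is a bijection from $\cA=\{(d,v_1):d|u,\ v_1|v\}$ onto the set $\cB$ of all divisors of $uv$, where, writing $u=du_1$ and $v=v_1v_2$, the elements $\breve{v_1},\breve{u_1}$ come from the free castling $\uuuline{u_1}v_1\rlh\breve{v_1}\uuuline{\breve{u_1}}$, and moreover $d,\breve{v_1}$ are castled-free and $\breve{u_1},v_2$ are castled-free. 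Substituting $w=d\breve{v_1}$ into $(f\ast g)(uv)=\sum_{w|uv}f(w)g(w^{-1}uv)$ and using $uv=du_1v_1v_2=d\breve{v_1}\breve{u_1}v_2$ (so that the co-divisor factor is $w^{-1}uv=\breve{u_1}v_2$), I get $(f\ast g)(uv)=\sum_{(d,v_1)\in\cA}f(d\breve{v_1})\,g(\breve{u_1}v_2)$.

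Now I would invoke multiplicativity and castled-invariance termwise. Since $d,\breve{v_1}$ are castled-free, $f(d\breve{v_1})=f(d)f(\breve{v_1})$; since $\breve{u_1},v_2$ are castled-free, $g(\breve{u_1}v_2)=g(\breve{u_1})g(v_2)$. Applying castled-invariance of $f$ and of $g$ to $\uuuline{u_1}v_1\rlh\breve{v_1}\uuuline{\breve{u_1}}$ gives $f(\breve{v_1})=f(v_1)$ and $g(\breve{u_1})=g(u_1)$. Hence each summand equals $f(d)g(u_1)\cdot f(v_1)g(v_2)$ with $u_1=d^{-1}u$ and $v_2=v_1^{-1}v$. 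As $(d,v_1)$ ranges over $\cA$, the index $d$ ranges over divisors of $u$ and, independently, $v_1$ over divisors of $v$, so the double sum splits: $(f\ast g)(uv)=\bigl(\sum_{d|u}f(d)g(d^{-1}u)\bigr)\bigl(\sum_{v_1|v}f(v_1)g(v_1^{-1}v)\bigr)=(f\ast g)(u)(f\ast g)(v)$. Taking $u=v=1$ also covers the normalization $(f\ast g)(1)=(f\ast g)(1)^2$, and together with castled-invariance from Theorem \ref{thm_castled_invariant_convolution}(i) this is exactly the definition of multiplicativity for $f\ast g$.

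I do not expect a genuine obstacle here: the substantive content—the bijection $\rho$ and the castled-freeness of the four pieces $d,\breve{v_1},\breve{u_1},v_2$—is precisely what Lemma \ref{lem_divisor_decomposition_castlable_uv} supplies. The only point demanding care is the bookkeeping: identifying the co-divisor factor $w^{-1}uv=\breve{u_1}v_2$ correctly and matching which of the castled-invariance identities ($f(v_1)=f(\breve{v_1})$ for the first factor, $g(u_1)=g(\breve{u_1})$ for the second) is used where, so that after the dust settles the summand genuinely factors as a function of $(d,v_1)\in\cA$ into an $u$-part and a $v$-part.
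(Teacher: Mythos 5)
Your proof is correct and follows essentially the same route as the paper: Theorem \ref{thm_castled_invariant_convolution}(i) for castled-invariance, then Lemma \ref{lem_divisor_decomposition_castlable_uv} to parameterize the divisors of $uv$ by $\cA$, factor each summand via multiplicativity on the two castled-free pairs and castled-invariance across $\uuuline{u_1}v_1\rlh\breve{v_1}\uuuline{\breve{u_1}}$, and split the double sum. The only differences from the paper's proof are notational ($\breve{v_1},\breve{u_1}$ in place of $w_1,z$) and a slight reordering of the termwise manipulations.
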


\begin{proof}
By Theorem \ref{thm_castled_invariant_convolution}, the function $f\ast g$ is castled-invariant. Let $u,v$ be castled-free elements with $\uuuline{u} v \rightleftharpoons \widetilde{v}\uuuline{\widetilde{u}}$. By Lemma \ref{lem_divisor_decomposition_castlable_uv}, there is a one-to-one correspondence between a pair $(d,v_1)$ of divisors of $u$,$v$ and a divisor $w$ of $uv$ by $(d,v_1) \, \mapsto\, w$, where $u=du_1$, $v=v_1v_2$, $w=dw_1$ and $\uuuline{u_1}v_1 \rightleftharpoons w_1\uuuline{z}$ for some $z\in S$. Since $f,g$ are castled-invariant, we have $f(w_1)=f(v_1)$ and $g(u_1)=g(z)$.
It follows from Lemma \ref{lem_divisor_decomposition_castlable_uv} that $d,w_1$ are castled-free, and so are $z,v_2$. Since $f$ is multiplicative, one deduces that
\[
f(w)=f(dw_1)=f(d)f(w_1) = f(d)f(v_1).
\]
Similarly, we have
\[
g(w^{-1}uv)= g(w_1^{-1}u_1v_2)=g(z v_2)= g(z)g(v_2)=g(u_1)g(v_2).
\]

Now we have
\begin{align*}
(f\ast g)(uv) &= \sum\limits_{w|uv}f(w)g(w^{-1}uv) =  \sum\limits_{du_1=u} \sum\limits_{v_1v_2=v}
f(d)f(v_1)g(u_1)g(v_2) \\
&= \sum\limits_{du_1=u}f(d)g(u_1)\sum\limits_{v_1v_2=v} f(v_1)g(v_2) = (f\ast g)(u)(f\ast g)(v).
\end{align*}
The theorem follows.
\end{proof}

The function $1$ is definitely completely multiplicative. Since $\tau=1\ast 1$, it follows from the Theorem \ref{thm_pre-mulitplicative_convolution} that $\tau$ is multiplicative.

\begin{proposition} \label{prop_multiplicative_fh_implies_g}
Let $f,g$ be castled-invariant functions on $S$. Let $h=f\ast g$.

(\romannumeral1) Suppose that $g,h$ are multiplicative. Then so is $f$.

(\romannumeral2) Suppose that $f,h$ are multiplicative. Then so is $g$.
\end{proposition}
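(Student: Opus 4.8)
\emph{Proof proposal.} Both conclusions will follow the classical deconvolution argument, with the commutative multiplication of divisors replaced by the castling bijection of Lemma~\ref{lem_divisor_decomposition_castlable_uv}, and with the elementary observation that, although $S$ is non-commutative, the values $f(\cdot),g(\cdot),h(\cdot)$ lie in $\bC$ so their products commute. By hypothesis $f$ and $g$ are castled-invariant, so in each part it suffices to establish the product identity ($f(uv)=f(u)f(v)$ for part (i), $g(uv)=g(u)g(v)$ for part (ii)) for every castled-free pair $u,v$. I would argue by induction on $\ind(u)+\ind(v)$, which is legitimate because $\ind$ is additive on a castlable (hence homogeneous) monoid by Corollary~\ref{cor_ind_eq}. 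The base case is $\ind(u)+\ind(v)=0$, i.e. $u=v=1$: since $\uuuline{1}1\rlh1\uuuline{1}$ is a free castling of castled-free elements, $h(1)=(f\ast g)(1)=f(1)g(1)$, while $h(1)=g(1)=1$ by multiplicativity; in part (i) this gives $f(1)=1$, and part (ii) is symmetric.

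For the inductive step of part (i), fix castled-free $u,v$ with $\uuuline{u}v\rlh\widetilde v\uuuline{\widetilde u}$. Lemma~\ref{lem_divisor_decomposition_castlable_uv} provides a bijection $(d,v_1)\mapsto w=d\breve v_1$ between pairs $d\mid u$, $v_1\mid v$ and divisors $w\mid uv$; here $u=du_1$, $v=v_1v_2$, and $\uuuline{u_1}v_1\rlh\breve v_1\uuuline{\breve u_1}$, with $d,\breve v_1$ castled-free, $\breve u_1,v_2$ castled-free, and $u_1,v_1$ castled-free (Lemma~\ref{lem_C1_subseteq_C_0}). Since $u_1v_1=\breve v_1\breve u_1$ one gets $w^{-1}uv=\breve u_1v_2$, so
\[
h(uv)=\sum_{w\mid uv}f(w)\,g(w^{-1}uv)=\sum_{d\mid u}\ \sum_{v_1\mid v}f(d\breve v_1)\,g(\breve u_1 v_2).
\]
Now I would use: $g$ multiplicative and castled-invariant gives $g(\breve u_1v_2)=g(\breve u_1)g(v_2)=g(u_1)g(v_2)$; the single term with $(d,v_1)=(u,v)$ (then $u_1=v_2=1$, $\breve v_1=v$, $\breve u_1=1$) contributes $f(uv)$; and every other term satisfies $(d,v_1)\neq(u,v)$, meaning $d\neq u$ or $v_1\neq v$, whence $\ind(d)<\ind(u)$ or $\ind(v_1)<\ind(v)$ and therefore $\ind(d)+\ind(v_1)<\ind(u)+\ind(v)$ (using $\ind(\breve v_1)=\ind(v_1)$ from Lemma~\ref{lem_ind_invariant}), so the inductive hypothesis and castled-invariance of $f$ give $f(d\breve v_1)=f(d)f(\breve v_1)=f(d)f(v_1)$. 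Using commutativity of $\bC$, the off-diagonal sum $\sum_{(d,v_1)\neq(u,v)}f(d)f(v_1)g(u_1)g(v_2)$ equals $h(u)h(v)-f(u)f(v)$, where $h(u)=\sum_{du_1=u}f(d)g(u_1)$ and $h(v)=\sum_{v_1v_2=v}f(v_1)g(v_2)$. Hence $h(uv)=f(uv)+h(u)h(v)-f(u)f(v)$, and $h(uv)=h(u)h(v)$ forces $f(uv)=f(u)f(v)$.

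Part (ii) runs through the same identity but with the distinguished term at the other end: $f$ is already multiplicative, so $f(d\breve v_1)=f(d)f(v_1)$ for every term unconditionally; the term with $(d,v_1)=(1,1)$ (i.e. $w=1$, so $u_1=u$, $v_2=v$, $\breve u_1=u$) contributes $g(uv)$; and every other term satisfies $(d,v_1)\neq(1,1)$, meaning $d\neq1$ or $v_1\neq1$, whence $\ind(u_1)<\ind(u)$ or $\ind(v_2)<\ind(v)$ and so $\ind(\breve u_1)+\ind(v_2)=\ind(u_1)+\ind(v_2)<\ind(u)+\ind(v)$, so the inductive hypothesis together with castled-invariance of $g$ gives $g(\breve u_1v_2)=g(u_1)g(v_2)$. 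As before the off-diagonal sum equals $h(u)h(v)-g(u)g(v)$, whence $h(uv)=g(uv)+h(u)h(v)-g(u)g(v)$, and multiplicativity of $h$ yields $g(uv)=g(u)g(v)$.

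The step I expect to require the most care is the bookkeeping of the castling decomposition: correctly identifying which of the pairs $(d,v_1)$, $(\breve u_1,v_2)$, $(d,\breve v_1)$ degenerates on the ``diagonal'' in each part, and verifying via Lemma~\ref{lem_ind_invariant} that $\ind(\breve v_1)=\ind(v_1)$ and $\ind(\breve u_1)=\ind(u_1)$ so that the total index strictly decreases off the diagonal and the induction is well-founded. Once the non-commutative reordering is handled by the bijection of Lemma~\ref{lem_divisor_decomposition_castlable_uv} and the scalar products are rearranged using commutativity of $\bC$, the remainder is the textbook M\"obius-deconvolution computation.
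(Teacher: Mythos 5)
Your proposal is correct and follows essentially the same route as the paper: both rest on the bijection from Lemma~\ref{lem_divisor_decomposition_castlable_uv} and the identity $h(uv)=h(u)h(v)+f(uv)-f(u)f(v)$ (resp.\ with $g$ in place of $f$), the only stylistic difference being that you run a direct strong induction on $\ind(u)+\ind(v)$ while the paper phrases the same argument as a proof by contradiction via a minimal counterexample $u,v$ with $\ind(u)+\ind(v)$ minimal — two equivalent presentations of one idea. Your bookkeeping of the degenerate "diagonal" term in each part, and the use of Lemma~\ref{lem_ind_invariant} to ensure $\ind(\breve v_1)=\ind(v_1)$ and $\ind(\breve u_1)=\ind(u_1)$ so the index strictly drops off-diagonal, matches the paper's reasoning.
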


\begin{proof}
(\romannumeral1) Suppose on the contrary that $f$ is not multiplicative. Then there exists some pair of castled-free elements $u,v$ such that $f(uv)\neq f(u)f(v)$. We can choose above $u,v$ such that $\ind(u)+\ind(v)=k$ attains minimum among all such pairs. That is to say, $f(dw_1)=f(d)f(w_1)$ for all castled-free pair of elements $d,w_1$ with $\ind(d)+\ind(w_1)<k$.

Suppose that $\uuuline{u} v \rightleftharpoons \widetilde{v}\uuuline{\widetilde{u}}$. By the proof of Theorem \ref{thm_pre-mulitplicative_convolution}, we have already shown that each divisor $w$ of $uv$ gives
\[
w=dw_1,\quad u=du_1,\quad v=v_1v_2,\quad \uuuline{w_1} z \rightleftharpoons u_1 \uuuline{v_1}
\]
for some $z\in S$, and $f(w_1)=f(v_1)$, $g(u_1)=g(z)$. Moreover, the elements $d,w_1$ are castled-free, and so are $z,v_2$. Now
\begin{align*}
h(uv) &= \sum\limits_{w|uv} f(w)g(w^{-1}uv) =  \sum\limits_{du_1=u\atop v_1v_2=v}
f(dw_1)g(zv_2) \\
&= \sum\limits_{{du_1=u\atop v_1v_2=v}\atop \ind(d)+\ind(v_1)<k}
f(d)f(w_1)g(z)g(v_2) + f(uv)g(1)\\
&= \sum\limits_{du_1=u}f(d)g(u_1)\sum\limits_{v_1v_2=v} f(v_1)g(v_2) -f(u)f(v)+f(uv) \\
&= h(u)h(v)+f(uv)-f(u)f(v).
\end{align*}
Since $f(uv)\neq f(u)f(v)$, one obtains that $h(uv)\neq h(u)h(v)$, which is a contradiction.

(\romannumeral2) The proof is similar to that in (\romannumeral1) and we omit it here.
\end{proof}

It is easy to verify that the function $\delta_1$ is multiplicative. Then we obtain the following corollary immediately.

\begin{corollary} \label{cor_multiplicative_inverse_function}
Suppose that $f$ is a multiplicative function on $S$ and $g$ is its inverse with respect to convolution. Then $g$ is also multiplicative. In particular, the M\"{o}bius function $\mu$ is multiplicative.
\end{corollary}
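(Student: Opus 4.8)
The plan is to reduce the statement to Proposition \ref{prop_multiplicative_fh_implies_g}(\romannumeral2) together with Theorem \ref{thm_castled_invariant_convolution}(\romannumeral2), since the substantive work has already been done. First I would record that, $f$ being multiplicative and non-zero, the observation preceding Lemma \ref{lem_1-1_strong_castling} gives $f(1)=1\neq 0$; hence by Theorem \ref{thm_inverse_formula_wrt_convolution} the convolution inverse $g$ with $f\ast g=g\ast f=\delta_1$ exists and is unique. Next, since a multiplicative function is in particular castled-invariant, Theorem \ref{thm_castled_invariant_convolution}(\romannumeral2) applies to $f$ and shows that $g$ is castled-invariant as well.

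Now I would set $h=f\ast g=\delta_1$. We have already noted that $\delta_1$ is multiplicative, and $f$ is multiplicative by hypothesis, while $f$ and $g$ are both castled-invariant. Thus all hypotheses of Proposition \ref{prop_multiplicative_fh_implies_g}(\romannumeral2) are met, and that proposition immediately yields that $g$ is multiplicative, which is the first assertion.

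For the final assertion, recall that $\mu$ is by definition the inverse of the constant function $1$ with respect to convolution, and $1$ is completely multiplicative, hence multiplicative. Applying the first part of the corollary with $f=1$ gives that $\mu$ is multiplicative.

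I do not expect a genuine obstacle here: Theorem \ref{thm_pre-mulitplicative_convolution}, Proposition \ref{prop_multiplicative_fh_implies_g} and Theorem \ref{thm_castled_invariant_convolution} already carry the weight. The only point requiring a moment's care is the castled-invariance of $g$, which is precisely where the hypothesis $f(1)\neq 0$ — automatic for a non-zero multiplicative function — is used; without it Theorem \ref{thm_castled_invariant_convolution}(\romannumeral2) would not apply and Proposition \ref{prop_multiplicative_fh_implies_g}(\romannumeral2) could not be invoked.
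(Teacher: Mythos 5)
Your proof is correct and follows the same route the paper takes: the paper's entire proof is the remark that $\delta_1$ is multiplicative followed by an immediate appeal to Proposition \ref{prop_multiplicative_fh_implies_g}(\romannumeral2), with the needed castled-invariance of $g$ implicitly coming from Theorem \ref{thm_castled_invariant_convolution}(\romannumeral2) exactly as you spell out. Your proof simply makes explicit the steps the paper leaves to the reader (the fact that $f(1)=1$ and the castled-invariance of the inverse), and the application to $\mu=1^{-1}$ is likewise identical.
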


\section{Axiom \uppercase\expandafter{\romannumeral5} and Natural Monoids}
\label{section_natural_monoid}

The previous axioms preserve prime divisors in castlings. However, prime powers are not maintained. For example, suppose that $u=p^2x=pwq=rzq$ for $p,x,w,q,r,z$ distinct elements in $\cP$ and $u$ has no other irreducible decompositions. We have $\uuuline{p^2}x\rlh r\uuuline{zq}$, while $\Omega(p^2)=2$ and $\Omega(zq)=1$. The function $\Omega$ is not castled-invariant in this example. For arithmetic interests, we add the following axiom to exclude such situations.

\textsc{Axiom \uppercase\expandafter{\romannumeral5}.} Let $p,q\in\cP$ and $k,l\geq 0$. If $p^k,q^l$ are weakly castlable, then $\uline{p^k}q^l \rlh r^l \uline{t^k}$ for some $r,t\in \cP$.

\begin{definition}
We say that $S$ is a natural monoid with $G$ its rational group, when Axioms \uppercase\expandafter{\romannumeral1}-\uppercase\expandafter{\romannumeral5} all hold.
\end{definition}

In the statement of Axiom \uppercase\expandafter{\romannumeral5}, the primes $r,t$ may depend on $p,q,k,l$.  This axiom may be called ``power-preserving property''. In this section, the monoid $S$ is assumed to be natural unless we point it out explicitly.

\subsection{Castling of Prime Powers}

\label{subsection_Liouville}

Our main purpose of this subsection is to obtain the following theorem.

\begin{theorem} \label{thm_Liouville_function}
The functions $\lambda$ and $\lambda_\ddagger$ are completely multiplicative.
\end{theorem}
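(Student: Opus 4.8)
The plan is to establish that $\lambda$ is completely multiplicative (the argument for $\lambda_\ddagger$ being entirely parallel, using co-divisors and least common co-multiples in place of divisors and least common multiples). Recall that $\lambda(u)=(-1)^{\Omega(u)}$, so complete multiplicativity amounts to two facts: that $\lambda$ is castled-invariant, i.e. $\uuline{u}v\rlh\widetilde{v}\uuline{\widetilde{u}}$ forces $\Omega(u)\equiv\Omega(\widetilde{u})\pmod 2$; and that $\uuline{u}v\rlh\widetilde{v}\uuline{\widetilde{u}}$ forces $\Omega(uv)=\Omega(u)+\Omega(v)$. Since $uv=\widetilde{v}\widetilde{u}$ and (by Lemma \ref{lem_ind_invariant}) $\ind(uv)=\ind(u)+\ind(v)$, and since $\ind$ is additive by Corollary \ref{cor_ind_eq}, the point is really to control how prime powers behave under a single strong castling and then bootstrap via the decomposition/composition lemmas for castlings.

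First I would prove the key lemma: if $\uuline{u}v\rlh\widetilde{v}\uuline{\widetilde{u}}$, then $\Omega(u)=\Omega(\widetilde{u})$ and $\Omega(v)=\Omega(\widetilde{v})$ (not merely mod $2$). The strategy is induction on $\ind(u)+\ind(v)$. The base cases $(u,v)\in\fC_1$ and $(u,v)=(p,p)$ are handled directly: for a free castling $\uuuline{u}v\rlh\widetilde{v}\uuuline{\widetilde{u}}$, Corollary \ref{cor_divisor_decomposition_castlable_uv} already gives a bijection between the divisors of $v$ and those of $\widetilde{v}$ preserving $\ind$, and I would upgrade this to show it preserves prime-power divisibility: for each $q\in\cP$ and $m\geq 1$, the element $q^m\mid v$ iff there is a corresponding $r^m\mid\widetilde{v}$ — this is exactly where Axiom \uppercase\expandafter{\romannumeral5} enters, via the decomposition of the free castling through $q^m$ using Lemma \ref{lem_C1_subseteq_C_0} together with Lemma \ref{lem_q_m_r_requires_q=r} and the power-preserving form $\uline{q^m}w\rlh r^m\uline{t^k}$. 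Consequently $\Omega(v)=\Omega(\widetilde{v})$, and symmetrically $\Omega(u)=\Omega(\widetilde{u})$. For the inductive step, a general weak castling in $\fC$ is built from \eqref{eq_C_castling_composition_1} or \eqref{eq_C_castling_composition_2}; writing $\uuline{u_1u_2}v\rlh\uuline{u_1}\widetilde{v}\uuline{\widetilde{u_2}}\rlh\widetilde{\widetilde{v}}\uuline{\widetilde{u_1}\widetilde{u_2}}$ and applying the inductive hypothesis to the two shorter castlings gives $\Omega(\widetilde{u_i})=\Omega(u_i)$ and $\Omega(\widetilde{\widetilde v})=\Omega(\widetilde v)=\Omega(v)$; additivity of $\Omega$ on a product — which holds because $\Omega(xy)=\Omega(x)+\Omega(y)$ whenever $xy$ is written as such a product, a fact itself provable by the same induction using Corollary \ref{cor_tau_lcm_p_m} to pin down prime-power multiplicities — then yields $\Omega(\widetilde{u})=\Omega(u)$ and $\Omega(\widetilde{\widetilde v})=\Omega(v)$. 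The case \eqref{eq_C_castling_composition_2} is symmetric.

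Granting the lemma, castled-invariance of $\lambda$ is immediate: $\lambda(u)=(-1)^{\Omega(u)}=(-1)^{\Omega(\widetilde u)}=\lambda(\widetilde u)$. For the multiplicativity identity, suppose $\uuline{u}v\rlh\widetilde{v}\uuline{\widetilde{u}}$. Then $uv=\widetilde{v}\widetilde{u}$, and I claim $\Omega(uv)=\Omega(u)+\Omega(v)$. This follows from the additivity of $\Omega$ on products established above applied to $\widetilde v\widetilde u$, combined with $\Omega(\widetilde v)=\Omega(v)$ and $\Omega(\widetilde u)=\Omega(u)$ from the lemma. Hence $\lambda(uv)=(-1)^{\Omega(u)+\Omega(v)}=\lambda(u)\lambda(v)$, so $\lambda$ is completely multiplicative. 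Replacing divisors by co-divisors, $\lcm$ by $\lcm_\ddagger$, Corollary \ref{cor_divisor_decomposition_castlable_uv} by its co-divisor analogue, and Corollary \ref{cor_tau_lcm_p_m}(\romannumeral1) by (\romannumeral2), the same argument shows $\lambda_\ddagger$ is completely multiplicative.

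The main obstacle is the additivity of $\Omega$ over an arbitrary product $u=q_1\cdots q_k$ of primes, equivalently the statement $\Omega(uv)=\Omega(u)+\Omega(v)$ for all $u,v$: unlike $\ind$, the quantity $\Omega$ counts prime-power multiplicities, and a castling can a priori scramble a $p^2$ into two unrelated primes — precisely the pathology Axiom \uppercase\expandafter{\romannumeral5} is designed to kill. Making the induction airtight therefore requires carefully threading Axiom \uppercase\expandafter{\romannumeral5} through the decomposition of castlings (Lemma \ref{lem_C1_subseteq_C_0} and its strong analogue), invoking Lemma \ref{lem_q_m_r_requires_q=r} to recognize that a prime power can only castle to a prime power of the same exponent, and using Corollary \ref{cor_tau_lcm_p_m} to identify the prime-power divisors of the relevant least common multiples; all the genuinely delicate bookkeeping lives here, while the final assembly into complete multiplicativity of $\lambda$ and $\lambda_\ddagger$ is then formal.
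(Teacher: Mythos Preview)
Your proposal contains a genuine gap: the claim that $\Omega(xy)=\Omega(x)+\Omega(y)$ for arbitrary products is false in a natural monoid. In Thompson's monoid, $u=p_0p_1^2$ has only one word and $\PDM(u)=\{p_0\}$, so $\Omega(p_0p_1^2)=1$, whereas $\Omega(p_0)+\Omega(p_1^2)=1+2=3$. Thus $\Omega$ is not additive in general, and your inductive step collapses: after decomposing $\uuline{u_1u_2}v\rlh\uuline{u_1}\widehat{v}\uuline{\widehat{u_2}}\rlh\widetilde{v}\uuline{\widehat{u_1}\widehat{u_2}}$ and obtaining $\Omega(u_i)=\Omega(\widehat{u_i})$ from the shorter castlings, you still need $\Omega(u_1u_2)=\Omega(\widehat{u_1}\widehat{u_2})$, and there is no reason the factorizations $u=u_1u_2$ or $\widetilde{u}=\widehat{u_1}\widehat{u_2}$ consist of strongly castlable pairs. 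The ``same induction'' you invoke to establish additivity is therefore circular.

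The paper avoids this trap by not attempting general additivity at all. Castled-invariance is obtained directly from Lemma~\ref{lem_prime_power_castling}: the divisor bijection $\rho$ of Lemma~\ref{lem_1-1_strong_castling} sends each prime-power divisor $q^h\mid u$ to a prime-power divisor $r^h\mid\widetilde{u}$ (via Lemma~\ref{lem_power_preserving_strong}, which is Axiom~\uppercase\expandafter{\romannumeral5} pushed through a strong castling), and powers of a fixed $q$ go to powers of a fixed $r$. This immediately gives $\Omega(u)=\Omega(\widetilde{u})$. For the identity $\Omega(uv)=\Omega(u)+\Omega(v)$ under $\uuline{u}v\rlh\widetilde{v}\uuline{\widetilde{u}}$, the paper computes the exact multiplicity of each prime $q_j$ in $uv$: writing $u=q_j^{n_j}u_2$ and pulling $r_j^{m_j}\mid v$ back through $u$, Lemma~\ref{lem_q_m_r_requires_q=r} forces the castled image of $r_j^{m_j}$ past $q_j^{n_j}$ to remain $q_j^{m_j}$, yielding $q_j^{m_j+n_j}\mid uv$; a separate contradiction argument, again via Lemma~\ref{lem_q_m_r_requires_q=r} and Lemma~\ref{lem_skew_uniqueness}, rules out $q_j^{m_j+n_j+1}\mid uv$. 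The additivity thus holds only under the strong-castlability hypothesis, and is proved by this direct multiplicity count rather than by any structural induction on $\fC$.
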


We will turn back to prove Theorem \ref{thm_Liouville_function} after exploring Axiom \uppercase\expandafter{\romannumeral5} and proving some lemmas.

\begin{lemma} \label{lem_power_preserving_strong}
(\romannumeral1) Let $k\geq 1$, $p\in \cP$ and $u\in S$. Suppose that $p^k,u$ are strongly castlable, then $\uuline{p^k}u \rlh v \uline{q^k}$ for some $v\in S$ and $q\in \cP$.

(\romannumeral2) Let $k\geq 1$, $q\in \cP$ and $v\in S$. Suppose that $v,q^k$ are strongly castlable, then $\uuline{v}q^k \rlh p^k\uline{u}$ for some $u\in S$ and $p\in \cP$.
\end{lemma}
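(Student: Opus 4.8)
The plan is to prove both parts by induction on the index of the element that is being castled over, peeling off one prime at a time, with Axiom \uppercase\expandafter{\romannumeral5} playing the role of the atomic step; I describe part (\romannumeral1) in detail, part (\romannumeral2) being entirely dual. For (\romannumeral1) the base case $\ind(u)=0$ is immediate, since then $u=1$ and $\uuuline{p^k}1\rlh 1\uuuline{p^k}$, so $v=1$ and $q=p$ work. For the inductive step, suppose $\ind(u)=m\geq 1$ and, using Lemma \ref{lem_irr_decomposition_of_any_element} together with additivity of $\ind$ (Corollary \ref{cor_ind_eq}), write $u=u'q_m$ with $q_m\in\cP$ and $\ind(u')=m-1$. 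Applying the decomposition of strong castlings (the lemma above) to $\uuline{p^k}u$ along the splitting $u=u'\cdot q_m$ of the right-hand factor produces elements $\widehat{v_1},\widehat{v_2},\widehat{u}\in S$ with $\uuline{p^k}u'=\widehat{v_1}\uuline{\widehat{u}}$ and $\uuline{\widehat{u}}q_m=\widehat{v_2}\uuline{\widetilde{u}}$; in particular the double-underline convention records that $(p^k,u')\in\fC_0$ and $(\widehat{u},q_m)\in\fC_0$. The induction hypothesis applied to $\uuline{p^k}u'$ (legitimate since $\ind(u')<m$) yields its result in the shape $\widehat{v_1}\uline{s^k}$ for some $s\in\cP$, and since $\eta$ is a well-defined map this forces $\widehat{u}=s^k$. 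Now $\uuline{s^k}q_m$ is a strong castling, hence also a weak one, of a prime power over the prime $q_m=q_m^{1}$; Axiom \uppercase\expandafter{\romannumeral5} (with exponents $k$ and $1$) says the weak castling has the form $r\uline{t^k}$ for suitable $r,t\in\cP$, and because the weak and strong castling maps are both restrictions of the single map $\eta$ (Axiom \uppercase\expandafter{\romannumeral4}), the strong castling has the same form. Thus $\widetilde{u}=t^k$, and setting $v:=\widehat{v_1}\widehat{v_2}$ and $q:=t$ gives $\uuline{p^k}u\rlh v\uline{q^k}$, closing the induction.

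Part (\romannumeral2) is proved symmetrically by induction on $\ind(v)$: write $v=q_1'\cdot v'$ with $q_1'\in\cP$ and $\ind(v')=\ind(v)-1$, apply the decomposition of strong castlings to $\uuline{v}q^k$ along the splitting of the left-hand factor, invoke the induction hypothesis on $\uuline{v'}q^k$ to see that the intermediate element produced on the left is again a $k$-th power of a prime, and finish by applying Axiom \uppercase\expandafter{\romannumeral5} to the strong castling of $q_1'$ over that prime power. The base case $\ind(v)=0$ is $\uuuline{1}q^k\rlh q^k\uuuline{1}$, with $p=q$ and $u=1$.

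The only delicate bookkeeping — and the place where the earlier structure is genuinely used — is tracking that every intermediate element generated during the peeling is a prime power whose exponent is still exactly $k$. The exponent is preserved because that is built into the conclusion of Axiom \uppercase\expandafter{\romannumeral5}, so the induction carries it for free; what one must be careful about is (a) invoking the decomposition of strong castlings in the form that keeps the intermediate castlings strong, so that both the induction hypothesis and Axiom \uppercase\expandafter{\romannumeral5} remain applicable, and (b) transporting the conclusion of Axiom \uppercase\expandafter{\romannumeral5}, phrased for weak castlings, back to the strong castling, which is legitimate since $\eta$ is a single map. Beyond index invariance (Lemma \ref{lem_ind_invariant}) no quantitative input is needed.
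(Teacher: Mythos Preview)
Your proof is correct and follows essentially the same inductive strategy as the paper: decompose $u$, use that the intermediate factors remain strongly castlable with the relevant prime power, and invoke Axiom~\uppercase\expandafter{\romannumeral5} at the atomic step. The paper writes the induction slightly differently (base cases $\ind(u)=0,1$ and then an arbitrary proper splitting $u=u_1u_2$ with the inductive hypothesis applied to both pieces), while you peel off a single prime $q_m$ each time; these are equivalent.

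One small technical point: the ``decomposition of strong castlings'' lemma you cite is stated under the hypothesis $\uuline{u}v\rlh\widetilde v\uuline{\widetilde u}$, i.e.\ \emph{both} sides in $\fC_0$, whereas here you only know $(p^k,u)\in\fC_0$. What you actually need---that $(p^k,u')\in\fC_0$ and $(\widehat u,q_m)\in\fC_0$---is exactly condition~(\romannumeral2) in the \emph{definition} of $\fC_0$ (made explicit in the paragraph preceding that lemma), and this needs only the one-sided hypothesis. Correspondingly, the intermediate expression should read $\uuline{p^k}u'\rlh\widehat{v_1}\uline{\widehat u}$ with a single underline on $\widehat u$, not a double one; your argument never uses the stronger claim, so nothing is damaged.
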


\begin{proof}
(\romannumeral1) We use induction on $\ind(u)$. For $\ind(u)=0$, the proof is trivial. For $\ind(u)=1$, the result follows from Axiom \uppercase\expandafter{\romannumeral5}. Assume that the lemma has been proved for $\ind(u)\leq m-1$ with some $m\geq 2$. Now we consider the case $\ind(u)=m$. Write $u=u_1u_2$ with for some $u_1,u_2\in S$ with $u_1,u_2\neq 1$. Noting that $p^k,u$ are strongly castlable, we have that
\[
p^k\uuline{u} = p^k  \uuline{u_1u_2} \rlh \uline{\widetilde{u_1}}a \uuline{u_2} \rlh \uline{\widetilde{u_1}\widetilde{u_2}}b
\]
for some $\widetilde{u_1},\widetilde{u_2},a,b\in \cP$. By inductive hypothesis, we have $a=r^k$ for some $r\in \cP$, and then $b=q^k$ for some $q\in \cP$.

(\romannumeral2) The proof is similar as in (\romannumeral1).
\end{proof}

Axiom \uppercase\expandafter{\romannumeral5} states that if $p^k$, $q^l$ are weakly castlable, then $\uline{p^k}q^l \rlh r^l \uline{t^k}$ for some $r,t\in \cP$. In this statement, the prime $r,t$ may depend on $p,q,k,l$. The following lemma gives a stronger result.

\begin{lemma} \label{lem_for_amenable_proof}
Let $k\geq 1$ be given and $p,q\in \cP$ be two different primes such that $p^k,q$ are weakly castlable. Then for any $l\geq 1$, there are some $r,t_l\in \cP$ such that $\uuuline{p^k}q^l\rlh r^l \uuuline{t_l^k}$. Here, besides on $p,q$, the primes $r,t_l$ may also depend on $k$, while $r$ is independent of $l$.
\end{lemma}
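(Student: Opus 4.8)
The plan is to induct on $l$, the case $l=1$ being essentially Axiom~V, and to set up the induction so that independence of $r$ from $l$ is forced automatically by the uniqueness of irreducible decompositions of prime powers.

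\emph{Base case $l=1$.} Since $p\neq q$ and $q\in\cP$, Lemma~\ref{lem_uniqueness_prime_power} gives $q\nmid p^k$, so $\gcd(p^k,q)=1$. Axiom~V applied to the weakly castlable pair $p^k,q$ yields a weak castling $\uline{p^k}q\rlh r\uline{t^k}$ with $r,t\in\cP$. First one checks $r\neq p$: otherwise $p^kq=pt^k$, hence $p^{k-1}q=t^k$, which is an irreducible decomposition of $t^k$ containing a letter $q\neq p$, contradicting Lemma~\ref{lem_uniqueness_prime_power} when $k\geq 2$, while for $k=1$ the Remark following Lemma~\ref{lem_ind_invariant} forces $q=t=p$, again impossible. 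Thus $\gcd(p^k,r)=1$; since $r\mid p^kq\ (=rt^k)$ we get $\lcm[p^k,r]\mid p^kq$ by Lemma~\ref{lem_numbertheoretic_def_lcm}, and comparing indices via Lemma~\ref{lem_gcd_lcm_u_v} gives $\ind(\lcm[p^k,r])=k+1=\ind(p^kq)$, so $\lcm[p^k,r]=p^kq=rt^k$. By the characterisation of free castlings (the Remark following Corollary~\ref{cor_C_1_defined_by_ddagger}) this upgrades the weak castling to a free one, $\uuuline{p^k}q\rlh r\uuuline{t^k}$. Fix this $r$, set $t_1:=t$, and note $t_1\neq q$: if $t_1=q$ then $\uuuline{r}q^k\rlh p^k\uuuline{q}$, so Lemma~\ref{lem_q_m_r_requires_q=r}(ii) gives $r=q$ and $p^k=q^k$, hence $p=q$.

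\emph{Inductive step; constancy of $r$ for free.} Assume $\uuuline{p^k}q^l\rlh r^l\uuuline{t_l^k}$ with $t_l\in\cP$, $t_l\neq q$, and $(t_l^k,q)\in\fC_1$. Applying the base case verbatim to the pair $t_l,q$ (legitimate, since $(t_l^k,q)\in\fC_1$ is exactly weak castlability of $t_l^k,q$ and $t_l\neq q$) gives $\uuuline{t_l^k}q\rlh r'\uuuline{t_{l+1}^k}$ with $r',t_{l+1}\in\cP$. Composing the two free castlings by Lemma~\ref{lem_composition_C_1}(ii) yields
\[
\uuuline{p^k}q^{l+1}=\uuuline{p^k}q^l\,q\rlh r^l r'\uuuline{t_{l+1}^k},
\]
so $p^k,q^{l+1}$ are weakly castlable and Axiom~V applies to them, forcing the castled first component to be $s^{l+1}$ for a single $s\in\cP$. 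Then $r^l r'=s^{l+1}$, and Lemma~\ref{lem_uniqueness_prime_power} forces every letter of this decomposition to be $s$, i.e.\ $r=r'=s$; hence $r'=r$ and $\uuuline{p^k}q^{l+1}\rlh r^{l+1}\uuuline{t_{l+1}^k}$. Moreover $t_{l+1}\neq q$, since $t_{l+1}=q$ would give $\uuuline{r}q^k\rlh t_l^k\uuuline{q}$ and Lemma~\ref{lem_q_m_r_requires_q=r}(ii) would yield $t_l^k=q^k$, hence $t_l=q$, a contradiction.

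\emph{The remaining input --- main obstacle.} To close the induction one still needs $(t_{l+1}^k,q)\in\fC_1$ at the new stage, which reduces to the purely structural implication: for every $p'\in\cP$ with $(p'^k,q)\in\fC_1$, writing $\uuuline{p'^k}q\rlh\rho\uuuline{\sigma^k}$, one has $(\sigma^k,q)\in\fC_1$ (apply it with $p'=t_l$, $\sigma=t_{l+1}$). My plan here is to reverse to $\uuuline{\rho}\sigma^k\rlh p'^k\uuuline{q}$, observe $(\rho,\sigma^k)\in\fC_1\subseteq\fC_0$, and decompose along $\sigma^k=\sigma^{k-1}\cdot\sigma$ using Lemma~\ref{lem_C1_subseteq_C_0}(ii); index bookkeeping forces the two halves to split $p'^k$ as $p'^{k-1}\cdot p'$, so one half reads $\uuuline{w}\sigma\rlh p'\uuuline{q}$ (equivalently $\uuuline{p'}q\rlh w\uuuline{\sigma}$) with $w\in\cP$, and Lemma~\ref{lem_power_preserving_strong}(ii) shows the castled component of $\sigma^k$ in the other half is again a $k$-th power of a prime. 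Iterating this peeling (a finite induction on $k$, using Lemma~\ref{lem_power_preserving_strong} together with the decomposition and composition lemmas for free castlings to track which prime powers occur) should express $\sigma$ through a chain of prime-through-prime free castlings issuing from $p'$ and then read off a free castling of $\sigma^k$ with $q$. I expect this step --- essentially the non-abelian substitute for the classical fact that coprimality to $q$ is preserved under multiplication by primes $\neq q$, now upgraded to castlability of $k$-th powers --- to be the technically delicate part; granting it, the induction closes and $\uuuline{p^k}q^l\rlh r^l\uuuline{t_l^k}$ holds for every $l\geq 1$ with $r$ independent of $l$.
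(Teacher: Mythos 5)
Your base case and inductive composition step (pasting $\uuuline{p^k}q^l\rlh r^l\uuuline{t_l^k}$ with $\uuuline{t_l^k}q\rlh r'\uuuline{t_{l+1}^k}$, then invoking Axiom~V on $p^k,q^{l+1}$ to force $r^l r' = s^{l+1}$, hence $r=r'$ by Lemma~\ref{lem_uniqueness_prime_power}) are correct. The difficulty you flag at the end is, however, a genuine gap, and your sketch for filling it does not close it. After you peel one prime off $\uuuline{\rho}\sigma^k\rlh p'^k\uuuline{q}$, what you obtain is the prime-to-prime free castling $\uuuline{p'}q\rlh w\uuuline{\sigma}$; but the ordered pair appearing there is $(p',q)$, equivalently $(w,\sigma)$, not $(\sigma,q)$. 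Nothing in the abstract axioms transports weak castlability across coordinates in the way you need, so there is no visible route from $(p'^k,q)\in\fC_1$ to $(\sigma^k,q)\in\fC$. Your finite induction on $k$ runs into the same wall at every level.

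The paper's proof avoids the step-by-step induction entirely, which is why it needs no such castlability at intermediate stages. Once $r\neq p$ is established from the $l=1$ application of Axiom~V and Lemma~\ref{lem_q_m_r_requires_q=r} (exactly as you do), Corollary~\ref{cor_p_k_q_l_coprime}(\romannumeral1) gives $\gcd(p^k,r^l)=1$ for \emph{every} $l$, so a free castling $\uuuline{p^k}y_l\rlh r^l\uuuline{x_l}$ exists \emph{a priori} for every $l$, with no castlability hypothesis on the pair $(p^k,q^l)$. Lemma~\ref{lem_power_preserving_strong} (applied to both orientations of this free castling) forces $y_l=q_l^l$ and $x_l=t_l^k$ for some primes $q_l,t_l$. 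Decomposing the free castling along $q_l^l=q_l\cdot q_l^{l-1}$ via Lemma~\ref{lem_C1_subseteq_C_0}(\romannumeral2) and using uniqueness of prime powers peels off $\uuuline{p^k}q_l\rlh r\uuuline{z_l}$; since $\uuuline{p^k}q\rlh r\uuuline{s^k}$ also holds (it is free because $\gcd(p^k,r)=1$), Remark~\ref{remark_castled_free_determine_relation}(\romannumeral3) --- the two left entries $p^k$ and $r$ determine the remaining two --- forces $q_l=q$. Hence $y_l=q^l$ and the conclusion $\uuuline{p^k}q^l\rlh r^l\uuuline{t_l^k}$ follows for all $l$ at once. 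In short: replace your induction on $l$ by the single observation that $r\neq p$ already guarantees the needed free castling for every $l$, and then identify its first slot with $q^l$ by decomposition and uniqueness.
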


\begin{proof}
By Axiom \uppercase\expandafter{\romannumeral5}, there is some $r,s\in \cP$ such that $\uline{p^k}q\rlh r\uline{s^k}$. Since $p\neq q$, one deduces by Lemma \ref{lem_q_m_r_requires_q=r} that $r\neq p$. Hence $\gcd(p^k,r^l)=1$ for any $l\geq 1$. Suppose that $\uuuline{p^k}y_l\rlh r^l\uuuline{x_l}$ for some $y_l,x_l\in S$. By Lemma \ref{lem_prime_power_castling}, we have $y_l=q_l^l$ for some $q_l\in \cP$, and $x_l=t_l^k$ for some $t_l\in \cP$. Now a decomposition of free castling leads to
\[
\uuuline{p^k}q_lq_l^{l-1} \rlh \widehat{q_l} \uuuline{z_l} q_l^{l-1} \rlh \widehat{q_l} w_l \uuuline{x_l} = r^l \uuuline{x_l}
\]
for some $\widehat{q_l}\in \cP$ and $z_l,w_l\in S$. Since $\widehat{q_l}w_l= r^l$, one has $\widehat{q_l}=r$ and $w_l=r^{l-1}$. Now both $\uuuline{p^k}q_l\rlh r\uuuline{z_l}$ and $\uuuline{p^k}q\rlh r\uuuline{s^k}$ hold. Hence $q_l=q$. It follows that $\uuuline{p^k}q^l\rlh r^l \uuuline{t_l^k}$. The proof is completed.
\end{proof}

The following lemma strengthens Lemma \ref{lem_lcm_dagger_power=1}.

\begin{lemma} \label{lem_lcm_prime_powers_equivalent_lcm_ddagger_prime_powers}
Let $k\geq 1$, $m_1,\ldots,m_k\geq 1$ and $q_1,\ldots, q_k$ be distinct primes. Let $w=\lcm[q_1^{m_1},\ldots, q_k^{m_k}]$. Then there exist distinct primes $r_1,\ldots,r_k$ such that $w=\lcm_\ddagger[w;r_1^{m_1},\ldots,r_k^{m_k}]$. Here $r_1,\ldots,r_k$ may depend on $q_1,\ldots,q_k$ and $m_1,\ldots,m_k$.
\end{lemma}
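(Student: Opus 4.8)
The plan is to generalise the proof of Lemma~\ref{lem_lcm_dagger_power=1}, the one new ingredient being Axiom~\uppercase\expandafter{\romannumeral5} (applied through Lemma~\ref{lem_power_preserving_strong}), which is exactly what upgrades the primes produced by a castling to prime \emph{powers} with the prescribed exponents. For each $j\in\{1,\dots,k\}$ I set $v_j=\lcm[q_1^{m_1},\dots,q_{j-1}^{m_{j-1}},q_{j+1}^{m_{j+1}},\dots,q_k^{m_k}]$. By Corollaries~\ref{cor_p_k_q_l_coprime} and \ref{cor_pairwise_gcd} one has $\gcd(q_j^{m_j},v_j)=1$, and $w=\lcm[q_j^{m_j},v_j]$; writing $w=q_j^{m_j}z_j=v_j s_j$ therefore gives a free castling $\uuuline{q_j^{m_j}}z_j\rlh v_j\uuuline{s_j}$ with $s_j\ddagger w$. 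Since $\fC_1\subseteq\fC_0$ this is also a strong castling and $q_j^{m_j},z_j$ are strongly castlable, so Lemma~\ref{lem_power_preserving_strong}(\romannumeral1) (with $p=q_j$ and exponent $m_j$) produces a strong castling of $q_j^{m_j}$ over $z_j$ whose right co-factor is a prime power $q^{m_j}$, $q\in\cP$. As $\eta$ is a genuine map, this castling must coincide with $\uuuline{q_j^{m_j}}z_j\rlh v_j\uuuline{s_j}$, so $s_j=r_j^{m_j}$ with $r_j:=q\in\cP$.

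The primes $r_1,\dots,r_k$ are pairwise distinct. Indeed, suppose $r_i=r_j=:r$ with $i\ne j$ and, after relabelling, $m_i\le m_j$. From $w=v_i r^{m_i}=v_j r^{m_j}$ we get $v_i=v_j r^{m_j-m_i}$, hence $v_j\mid v_i$. But $i\ne j$, so $q_i^{m_i}$ is one of the factors defining $v_j$, giving $q_i^{m_i}\mid v_j\mid v_i$; since $m_i\ge 1$ this contradicts $\gcd(q_i^{m_i},v_i)=1$. (This is the place where I expected the real difficulty to lie; the $v_j\mid v_i$ observation is what makes it painless, and it is worth stating carefully since a naive induction on $k$ runs into the usual mismatch between divisibility and co-divisibility.)

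It then remains to assemble. We now have distinct primes $r_1,\dots,r_k$ with $r_j^{m_j}\ddagger w$ for all $j$. By Corollary~\ref{cor_tau_lcm_p_m}(\romannumeral2), $\tau\big(\lcm_\ddagger[w;r_1^{m_1},\dots,r_k^{m_k}]\big)=(m_1+1)\cdots(m_k+1)$, while Corollary~\ref{cor_tau_lcm_p_m}(\romannumeral1), applied to $w=\lcm[q_1^{m_1},\dots,q_k^{m_k}]$, gives $\tau(w)=(m_1+1)\cdots(m_k+1)$. Since $z:=\lcm_\ddagger[w;r_1^{m_1},\dots,r_k^{m_k}]\ddagger w$, every co-divisor of $z$ is a co-divisor of $w$, so the two elements have the same finite number of co-divisors; the inclusion of co-divisor sets is thus an equality, whence $w\ddagger z$, and antisymmetry of $\ddagger$ yields $w=z$, which is the assertion. (Alternatively one can compare indices directly: $\gcd_\ddagger(w;r_i^{m_i},r_j^{m_j})=1$ by Corollary~\ref{cor_p_k_q_l_coprime}(\romannumeral2), so by Corollary~\ref{cor_pairwise_gcd}(\romannumeral2) and repeated use of Lemma~\ref{lem_gcd_lcm_u_v}(\romannumeral2) one gets $\ind(z)=m_1+\dots+m_k=\ind(w)$, and $z\ddagger w$ with equal index forces $z=w$.) The single genuinely axiom-dependent step is the identification $s_j=r_j^{m_j}$ above, which is where Axiom~\uppercase\expandafter{\romannumeral5} is essential: without the power-preserving property the $s_j$ need not be prime powers and the statement would fail.
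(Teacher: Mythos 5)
Your proof is correct and follows the same skeleton as the paper's (the $v_j$-lcm setup, Lemma~\ref{lem_power_preserving_strong} to identify $s_j$ as a prime power, distinctness, then comparison of two least common co-multiples). The one genuine divergence is the distinctness step, and yours is cleaner. The paper splits into two cases: for $m_i>m_l$ it derives $r_l\ddagger v_l$ and then invokes the decomposition of free castlings to deduce that $r_l,r_l^{m_l}$ would be castled-free, a contradiction; for $m_i=m_l$ it gets $v_i=v_l$ and hence $q_i^{m_i}=q_l^{m_l}$ directly. Your observation that $v_j\mid v_i$ (so $q_i^{m_i}\mid v_j\mid v_i$, contradicting $\gcd(q_i^{m_i},v_i)=1$) handles both cases at once and avoids the somewhat delicate ``prime and its power cannot be castled-free'' argument entirely; it is a real simplification. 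Your final step uses $\tau$-counting via Corollary~\ref{cor_tau_lcm_p_m} where the paper uses $\ind$-counting via Lemma~\ref{lem_gcd_lcm_u_v}; you already note these are interchangeable here, and indeed the paper's index argument is marginally more direct since it needs no appeal to the finiteness of the co-divisor set. Everything you invoke is available and applied correctly.
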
{

\begin{proof}
For $1\leq j\leq k$, let $v_j=\lcm[q_1^{m_1},\ldots,q_{j-1}^{m_{j-1}},q_{j+1}^{m_{j+1}},\ldots,q_k^{m_k}]$. Then $\gcd(q_j^{m_j},v_j)=1$. Suppose that $w=\lcm[q_j^{m_j},v_j]=q_j^{m_j}y_j=v_jx_j$, i.e., $\uuuline{q_j^{m_j}}y_j\rlh v_j\uuuline{x_j}$ for some $x_j,y_j\in S$. By Lemma \ref{lem_power_preserving_strong}, one has $x_j=r_j^{m_j}$ for some $r_j\in \cP$ $(1\leq j\leq k)$.

Next, we shall show that $r_1,r_2,\ldots,r_k$ are distinct. Assume on the contrary that $r_i=r_l$ for some $1\leq i\neq l\leq k$. For the case $m_i\neq m_l$, we suppose without loss of generality that $m_i> m_l$. Since $v_lr_l^{m_l}=v_ir_i^{m_i}$,  we have $v_l=v_ir_l^{m_i-m_l}$ and then $r_l\ddagger v_l$. Note that $v_l,r_l^{m_l}$ are castled-free. We deduce that $r_l$, $r_l^{m_l}$ are also castled-free, which is a contradiction. For the case $m_i=m_l$, we have $v_i=v_l$ and then $q_i^{m_i}=q_l^{m_l}$, which is also a contradiction. As a result, the primes $r_1,r_2,\ldots,r_k$ are distinct.

Combining the facts $\lcm_\ddagger[w;r_1^{m_1},\ldots,r_k^{m_k}]\ddagger w$, and
\[
\ind(w)= \ind(\lcm[q_1^{m_1},\ldots, q_k^{m_k}])=m_1+\ldots+m_k = \ind(\lcm_\ddagger[w;r_1^{m_1},\ldots,r_k^{m_k}]),
\]
we conclude that $w=\lcm_\ddagger[w;r_1^{m_1},\ldots,r_k^{m_k}]$.
\end{proof}

The following lemma establishes corresponding of prime powers in a strong castling.

\begin{lemma} \label{lem_prime_power_castling}
Let $\uuline{u}v\rlh \widetilde{v}\uuline{\widetilde{u}}$ and $\rho$ be the bijection given in Lemma \ref{lem_1-1_strong_castling}.

(\romannumeral1) Suppose that $q^l|u$, where $q\in \cP$ and $l\geq 0$. Then there is an $r\in \cP$ such that $r^l|\widetilde{u}$. Moreover, for any $0\leq h\leq l$, there are $u_{2,h},\widehat{u}_{2,h}\in S$ with $u=q^h u_{2,h}$, $\widetilde{u}=r^h \widehat{u}_{2,h}$ such that $\rho((q^h,u_{2,h}))=(r^h,\widehat{u}_{2,h})$.

(\romannumeral2) Suppose that $q^l\ddagger u$, where $q\in \cP$ and $l\geq 0$. Then there is an $r\in \cP$ such that $r^l\ddagger \widetilde{u}$. Moreover, for any $0\leq h\leq l$, there are $u_{1,h},\widehat{u_{1,h}}\in S$ with $u= u_{1,h}q^h$, $\widetilde{u}=\widehat{u}_{1,h} r^h$ such that $\rho((u_{1,h},q^h))=(\widehat{u}_{1,h},r^h)$.
\end{lemma}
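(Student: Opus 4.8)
I would prove (i) and (ii) in parallel, doing (i) in detail; I expect (ii) to follow by the same argument applied to the reversed strong castling $\uuline{\widetilde v}\widetilde u\rlh u\uuline v$ together with the evident symmetry between divisors and co-divisors in Lemma \ref{lem_1-1_strong_castling}. The proof of (i) will be an induction on the exponent $l$, the base case $l=0$ being trivial (take $r$ arbitrary, $u_{2,0}=u$, $\widehat u_{2,0}=\widetilde u$, and recall $\rho((1,u))=(1,\widetilde u)$ since $\uuuline 1 u\rlh u\uuuline 1$). The case $l=1$ is the heart of the matter: given $q\mid u$, write $u=q\,u_{2,1}$ and apply the decomposition of strong castlings (the analogue of Lemma \ref{lem_C1_subseteq_C_0} for $\fC_0$) along the factorization $u=q\cdot u_{2,1}$. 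This produces $\widehat u_{2,1},\widehat v$ with $\uuline q\widehat v\rlh \widetilde v\uuline{\widehat u_1}$ and $\uuline{u_{2,1}}v\rlh\widehat v\uuline{\widehat u_{2,1}}$, where $\widehat u_1\widehat u_{2,1}=\widetilde u$ and, by the decomposition chain, $\rho((q,u_{2,1}))=(\widehat u_1,\widehat u_{2,1})$. Since $\ind(\widehat u_1)=\ind(q)=1$ by Lemma \ref{lem_ind_invariant}, the element $\widehat u_1$ is irreducible; set $r:=\widehat u_1\in\cP$. Then $r\mid\widetilde u$, which is exactly (i) for $l=1$.

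For the inductive step, suppose the claim holds for $l-1$ and $q^l\mid u$. Write $u=q^{l-1}\cdot(q\,u')$ where $u=q^l u'$, i.e. $u_{2,l}=q u'$. Applying the inductive hypothesis to $q^{l-1}\mid u$ gives a prime $r'$ with $r'^{\,l-1}\mid\widetilde u$ and $\rho((q^{l-1},q u'))=(r'^{\,l-1},\widehat u_{2,l-1})$ where $\widetilde u=r'^{\,l-1}\widehat u_{2,l-1}$; moreover the decomposition of the strong castling along $u=q^{l-1}\cdot(qu')$ gives $\uuline{qu'}v\rlh \widehat v'\uuline{\widehat u_{2,l-1}}$ and $\uuline{q^{l-1}}\widehat v'\rlh\widetilde v\uuline{r'^{\,l-1}}$ for a suitable $\widehat v'$. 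Now $q\mid qu'=u_{2,l-1}$, and the pair $(q^{l-1},\widehat v')$ is strongly castlable, so I want to further decompose inside $\uuline{qu'}v\rlh\widehat v'\uuline{\widehat u_{2,l-1}}$ along $qu'=q\cdot u'$, producing a prime $r''$ (by the $l=1$ case, applied to the strong castling $\uuline{qu'}v\rlh\widehat v'\uuline{\widehat u_{2,l-1}}$) with $r''\mid\widehat u_{2,l-1}$ and with the appropriate $\rho$-compatibility. The remaining task — and the place I expect to need Axiom \uppercase\expandafter{\romannumeral5} via Lemma \ref{lem_power_preserving_strong}, Lemma \ref{lem_q_m_r_requires_q=r}, or Lemma \ref{lem_for_amenable_proof} — is to show that $r'$ and $r''$ are the same prime, so that $r'^{\,l-1}$ and $r''$ assemble into $r^l\mid\widetilde u$ with $r:=r'=r''$. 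Concretely, I would look at the strong castling $\uuline{q^{l-1}}\widehat v'\uuline{r'^{\,l-1}}$ sitting next to $\uuline q{(\cdot)}\uuline{r''}$ and invoke Lemma \ref{lem_power_preserving_strong}(i): $\uuline{q^l}(\,\cdot\,)\rlh(\,\cdot\,)\uline{s^l}$ forces the output to be a genuine $l$-th power of a single prime, and then uniqueness of prime powers (Lemma \ref{lem_uniqueness_prime_power}) forces $s=r'=r''$. The $\rho$-compatibility statements for all intermediate $0\le h\le l$ then come for free by composing the decomposition chains: $\rho((q^h,u_{2,h}))=(r^h,\widehat u_{2,h})$ is just the restriction of the big bijection to the factorization $u=q^h\cdot u_{2,h}$, using that $\rho$ respects the composition/decomposition structure (Remark \ref{remark_decomposition_chain}).

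The main obstacle is the "single prime" bookkeeping: naively, decomposing $q^l\mid u$ one factor of $q$ at a time produces a string of primes $r_1,\dots,r_l$ dividing successive pieces of $\widetilde u$, and one must show they all coincide. Axiom \uppercase\expandafter{\romannumeral5} (power-preservation) and its strengthening in Lemma \ref{lem_power_preserving_strong} are exactly designed for this; the care needed is to set up the decomposition chain so that at each stage one is castling a pure prime power $q^h$ (not a mixed word) over the relevant middle element, so that Lemma \ref{lem_power_preserving_strong}(i) applies and yields a pure prime power on the other side, after which Lemma \ref{lem_uniqueness_prime_power} pins down the prime uniquely. A secondary, purely notational, obstacle is managing the doubly-indexed families $u_{2,h},\widehat u_{2,h},\widehat v$ produced by the chain; I would state the induction so that the chain of depth $(l,\ind v)$ is fixed once and for all (as in Remark \ref{remark_decomposition_chain}) and the intermediate data are read off from it, rather than re-deriving them at each step.
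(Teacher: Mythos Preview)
Your proposal is correct and uses the same key ingredients as the paper: Lemma \ref{lem_power_preserving_strong} to force the image of a prime power under a strong castling to be a prime power, and Lemma \ref{lem_uniqueness_prime_power} to pin down which prime. The organization differs slightly. You run an induction on $l$, peeling off one factor of $q$ at a time and then reconciling the primes $r'$ (from the inductive hypothesis for $q^{l-1}$) and $r''$ (from the extra factor) by composing $\uuline{q^{l-1}}\widehat v'\rlh\widetilde v\uuline{r'^{\,l-1}}$ and $\uuline q\widehat v''\rlh\widehat v'\uuline{r''}$ into $\uuline{q^l}\widehat v''\rlh\widetilde v\uuline{r'^{\,l-1}r''}$, and then invoking Lemma \ref{lem_power_preserving_strong} plus Lemma \ref{lem_uniqueness_prime_power}. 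The paper instead avoids induction: for each $1\le h\le l$ it directly decomposes $\uuline u v\rlh\widetilde v\uuline{\widetilde u}$ along $u=q^h\cdot u_{2,h}$, obtaining $\rho((q^h,u_{2,h}))=(r_h^h,\widehat u_{2,h})$ for some prime $r_h$ by Lemma \ref{lem_power_preserving_strong} applied to $\uuline{q^h}\widehat v_h\rlh\widetilde v\uuline{\widehat u_{1,h}}$; it then sets $r:=r_l$ and shows all $r_h$ equal $r$ by one further decomposition, writing $q^l=q^h\cdot q^{l-h}$ to get $r_h^h w_h=r^l$ and invoking Lemma \ref{lem_uniqueness_prime_power}. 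The paper's route gets all the $\rho$-values in one pass and checks a single consistency relation per $h$, whereas your inductive route tracks fewer objects at each step but has to thread the two primes through the induction; both are sound and of comparable length.
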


\begin{proof}
(\romannumeral1) When $l=0$, the proof is trivial. We suppose that $l\geq 1$. For any $1\leq h\leq l$, Write $u=q^hu_{2,h}$. By Lemma \ref{lem_1-1_strong_castling}, we have
\begin{equation} \label{eq_power_preserving_1}
\uuline{u}v = \uuline{q^h u_{2,h}}v \rlh \uuline{q^h}\widehat{v_h}\uuline{\widehat{u_{2,h}}} \rlh \widetilde{v}\uuline{\widehat{u_{1,h}}\widehat{u_{2,h}}} = \widetilde{v} \uuline{\widetilde{u}}
\end{equation}
for some $\widehat{u_{1,h}},\widehat{u_{2,h}},\widehat{v}_h\in S$. By Lemma \ref{lem_power_preserving_strong}, we deduce that $\widehat{u_{1,h}}=r_h^h$ for some $r_h\in \cP$. Then $\rho((q^h,u_{2,h}))=(r_h^h, \widehat{u_{2,h}})$.

Denote $r=r_l$. Now we decompose the strong castlings in \eqref{eq_power_preserving_1} further. For any $0\leq h<l$, we have
\[
\uuline{u_{2,h}}v\rlh \uuline{q^{l-h}u_{2,l}}v \rlh \uuline{q^{l-h}}\widehat{v_l}\uuline{\widehat{u_{2,l}}} \rlh \widehat{v_h} \uuline{w_h\widehat{u_{2,l}}} = \widehat{v_h}\uuline{\widehat{u_{2,h}}}
\]
for some $w_h\in S$, and
\[
\uuline{q^l }\widehat{v_l} = \uuline{q^h q^{l-h}}\widehat{v_l} \rlh \uuline{q^h}\widehat{v_h}\uuline{w_h} \rlh \widetilde{v} \uuline{r_h^h w_h}=\widetilde{v}\uuline{r^l}.
\]
In view of Lemma \ref{lem_uniqueness_prime_power} and $r_h^h w_h=r^l$, we have $r_h=r$ and $w_h=r_h^{l-h}$. The proof is completed.

(\romannumeral2) The conclusion follows from similar arguments.
\end{proof}

The arithmetic meaning of Lemma \ref{lem_prime_power_castling} is that, after a strong castling, prime powers becomes prime powers with same multiplicity, and powers of a same prime becomes powers of a same prime. In particular, when $\uuline{u}v \rlh \widetilde{v}\uuline{\widetilde{u}}$, we have
\[
\Omega(u)=\Omega(\widetilde{u}), \quad \Omega(v)=\Omega(\widetilde{v}),\quad \Omega_\ddagger(u)=\Omega_\ddagger(\widetilde{u}),\quad \Omega_\ddagger(v)=\Omega_\ddagger(\widetilde{v}).
\]

In a strong castling $\uuline{u}v \rlh \widetilde{v}\uuline{\widetilde{u}}$. Suppose that $u=p^ku_2$ and $r^l|v$ for some $p,k\in \cP$, $u_2\in S$, $k,l\geq 1$. We have
\[
\uuline{u}r^l= \uuline{p^ku_2}r^l \rlh \uuline{p^k}\breve{r}^l \uuline{\breve{u_2}} \rlh \breve{\breve{r}}^l \uuline{\breve{p}^k \breve{u_2}}
\]
for some $\breve{r},\breve{\breve{r}},\breve{p}\in \cP$ and $\breve{u_2}\in S$. Here $\breve{\breve{r}}^l$ is a divisor of $\widetilde{v}$. Notice that, during the above castling process, a prime power $p^k$ may become a power $\breve{p}^k$ of another prime $\breve{p}$. Luckily, Lemma \ref{lem_q_m_r_requires_q=r} ensures that, when $\breve{\breve{r}}=p$, we have $\breve{p}=p$ and do not encounter another prime. Then $p$ has multiplicity no smaller than $k+l$ as a divisor of $uv$. This observation helps us to prove Theorem \ref{thm_Liouville_function}.

\begin{proof} [Proof of Theorem \ref{thm_Liouville_function}]
In the following, we will show that $\lambda$ is completely multiplicative. Similar arguments work for the function $\lambda_\ddagger$ and we omit the details here.

Let $\widetilde{v}\uuline{\widetilde{u}}\rlh \uuline{u}v$. It follows from Lemma \ref{lem_prime_power_castling} that $\Omega(u)=\Omega(\widetilde{u})$ and then $\lambda(u)=\lambda(\widetilde{u})$. So both $\Omega$ and $\lambda$ are castled-invariant. Suppose that all the distinct prime divisors of $u$ and $\widetilde{v}$ are $q_1,\ldots,q_{k}$, with multiplicity $n_1,\ldots,n_k$ and $m_1,\ldots,m_k$, respectively. Here $k\geq 0$, $n_1,\ldots,n_k,m_1,\ldots,n_k\geq 0$ and $\max\{n_i,m_i\}\geq 1$ for $1\leq i\leq k$. By Lemma \ref{lem_prime_power_castling}, there are distinct primes $r_1,r_2,\ldots,r_k$ such that all the prime divisors of $v$ belongs to $\{r_1,r_2,\ldots,r_k\}$, with multiplicity $m_1,\ldots,m_k$ (one can ignore those $r_j$ with $m_j=0$), respectively. More precisely, we have
\[
\rho\left((q_j^{m_j}, q_j^{-m_j}\widetilde{v})\right)= (r_j^{m_j},r_j^{-m_j}v),\quad (1\leq j\leq k),
\]
where $\rho$ is the map given in Lemma \ref{lem_1-1_strong_castling}. Then $\Omega(v)=\Omega(\widetilde{v})= m_1+\ldots+m_k$ and
\[
\lambda(u)\lambda(v)=  (-1)^{n_1+\ldots+n_k+m_1+\ldots+m_k}.
\]
For any prime divisor $p$ of $uv$, we have either $p|u$ or $p$ is a castled-divisor of $v$ over $u$. So $\PD(uv)\subseteq \{q_1,\ldots,q_k\}$. In order to show that $\lambda(uv)=\lambda(u)\lambda(v)$, we shall prove below that $q_j^{m_j+n_j}|uv$ and $q_j^{m_j+n_j+1}\nmid uv$ for $1\leq j\leq k$.

Given $1\leq j\leq k$ and $q_j^{m_j}|\widetilde{v}$, by Lemma \ref{lem_prime_power_castling},
\[
 \uuline{\widetilde{v}}\widetilde{u} = \uuline{q_j^{m_j} \widehat{v_2}}\widetilde{u} \rlh \uuline{q_j^{m_j}} \widehat{u} \uuline{v_2} \rlh u \uuline{r_j^{m_j}v_2} = u\uuline{v}
\]
for some $\widehat{u},\widehat{v_2},v_2\in S$. Putting $u=q_j^{n_j}u_2$, one has
\begin{equation} \label{eq_Liouville_multiplicative_1}
u\uuline{r_j^{m_j}} = q_j^{n_j}u_2 \uuline{r_j^{m_j}} \rlh q_j^{n_j} \uuline{w} \breve{u_2} \rlh \uuline{q_j^{m_j}}\breve{u_1} \breve{u_2} = \uuline{q_j^{m_j}}\widehat{u}
\end{equation}
for some $w,\breve{u_1},\breve{u_2}\in S$. In particular, we have $q_j^{n_j} \uuline{w} \rlh \uuline{q_j^{m_j}}\breve{u_1}$. Applying Lemma \ref{lem_q_m_r_requires_q=r}, one deduces that $w=q_j^{m_j}$ and $\breve{u_1}=q_j^{n_j}$. That is to say, the expression \eqref{eq_Liouville_multiplicative_1} becomes
\begin{equation} \label{eq_Liouville_multiplicative_2}
u\uuline{r_j^{m_j}} = q_j^{n_j}u_2 \uuline{r_j^{m_j}} \rlh q_j^{n_j} \uuline{q_j^{m_j}} \breve{u_2} \rlh \uuline{q_j^{m_j}}q_j^{n_j} \breve{u_2} = \uuline{q_j^{m_j}}\widehat{u}.
\end{equation}
Now $q_j^{m_j+n_j}| q_j^{m_j}q_j^{n_j} \breve{u_2} = ur_j^{m_j}$ and $ur_j^{m_j}|uv$, which leads to $q_j^{m_j+n_j}|uv$.

Assume on the contrary that $q_j^{m_j+n_j+1}|uv$ for some $1\leq j\leq k$. Noting that $u=q_j^{n_j}u_2$ with $q_j\nmid u_2$, one has $q_j^{m_j+1}|u_2v$ and $\gcd(q_j^{m_j+1},u_2)=1$. By Lemma \ref{lem_divisor_castling}, there is some $v_1|v$ and $\widetilde{u_2}\in S$ such that $\uuuline{q_j^{m_j+1}}\widetilde{u_2}\rlh u_2\uuuline{v_1}$. By Lemma \ref{lem_power_preserving_strong}, one deduces that $v_1=t^{m_j+1}$ for some $t\in \cP$. Applying a decomposition of this free castling, we obtain
\[
u_2\uuuline{t^{m_j+1}} = u_2 \uuuline{t^{m_j}t} \rlh \uuuline{w_1} \dot{u_2} \uuuline{t} \rlh \uuuline{w_1w_2} \widetilde{u_2} = \uuuline{q_j^{m_j+1}}\widetilde{u_2}
\]
for some $w_1,\dot{u_2}\in S$ and $w_2\in \cP$. Since $w_1w_2=q_j^{m_j+1}$, one deduces that $w_1=q_j^{m_j}$ and $w_2=q_j$. In particular, we have $u_2\uuuline{t_j^{m_j}}\rlh \uuuline{q_j^{m_j}}\dot{u_2}$. From \eqref{eq_Liouville_multiplicative_2}, we also have $u_2\uuline{r_j^{m_j}}\rlh \uuline{q_j^{m_j}}\breve{u_2}$. By Lemma \ref{lem_skew_uniqueness}, we conclude that $t=r_j$. Now $r_j^{m_j+1}=v_1|v$, which is a contradiction.

The proof is completed.
\end{proof}

\subsection{Capturing Prime Divisors}

In Section \ref{subsection_Liouville}, we have studied the castling of prime powers in a strong castling. However, locating prime powers from weak castlings requires much harder work. Recall that the multi-set of prime divisors and prime co-divisors of $u$ with multiplicity are denoted by $\PDM(u)$ and $\PDM_\ddagger(u)$, respectively. The main purpose of this section is to prove the following theorem.

\begin{theorem} \label{thm_prime_multiplicity_from}
Suppose that $u=q_1 q_2\ldots q_k$ for some $k\geq 1$ and $q_1,q_2,\ldots,q_k\in \cP$.

(\romannumeral1)  We have
\[
\PDM(u)=\bigcup\limits_{j=1}^{k} \left\{p\in \cP: \underline{p} w_j \rlh q_1\ldots q_{j-1}\uuline{q_j} \text{ for some }w_j\in S \right\}.
\]
Here an empty product, $q_1\ldots q_{j-1}$ with $j=1$, is defined to be $1$. And a set in the union is non-empty if and only if the corresponding castling is satisfied. In particular, we have $\Omega(u)\leq \ind(u)$.

(\romannumeral2) We also have
\[
\PDM_\ddagger(u)=\bigcup\limits_{r=1}^{k} \left\{p\in \cP:  \uuline{q_r}q_{r+1}\ldots q_k \rlh z_r\uline{p } \text{ for some }z_r\in S \right\}.
\]
In particular, we have $\Omega_\ddagger (u)\leq \ind(u)$.
\end{theorem}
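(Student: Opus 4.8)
The plan is to prove part (i) by induction on $k=\ind(u)$, writing $u = q_1 \cdots q_k$ and peeling off the last letter. Set $w = q_1 \cdots q_{k-1}$, so that $u = w q_k$. Given a prime $p \in \cP$ with $p \mid u$, I would first split into two cases according to the fundamental lemma for arithmetic (Lemma~\ref{lem_divisor_castling}): either $p \mid w$, or $\gcd(p,w)=1$ and $\uuuline{p}\widetilde{w} \rlh w \uuuline{v_1}$ for some $v_1 \mid q_k$, $\widetilde w \in S$. In the latter case $v_1 \neq 1$ forces $v_1 = q_k$ (since $q_k$ is irreducible), which is exactly the statement $\uline{p}\widetilde w \rlh w \uuline{q_k}$, i.e.\ $p$ is captured by the $j=k$ term of the claimed union. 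In the former case, $p$ is a prime divisor of $w = q_1\cdots q_{k-1}$, and the inductive hypothesis applied to $w$ gives $\uline{p} w_j \rlh q_1\cdots q_{j-1}\uuline{q_j}$ for some $1 \le j \le k-1$. The subtle point is to account for \emph{multiplicity}: I need the union (as a multi-set, counting a prime once for each $j$ for which the castling exists) to reproduce $\PDM(u)$ exactly, not just $\PD(u)$. This is where I expect the main difficulty: one must show that the multiplicity $m = \max\{h : p^h \mid u\}$ equals the number of indices $j$ for which the castling $\uline{p} w_j \rlh q_1\cdots q_{j-1}\uuline{q_j}$ holds.

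To handle the multiplicity bookkeeping, the cleanest route is to combine the submultiplicativity machinery with the prime-power results of Section~\ref{subsection_Liouville}. Write $d = \gcd(p^m, u)$; since $p^m \mid u$ we have $d = p^m$. Iteratively applying Lemma~\ref{lem_divisor_castling} and the decomposition of free castlings (Lemma~\ref{lem_C1_subseteq_C_0}), together with Lemma~\ref{lem_q_m_r_requires_q=r} (which guarantees that a prime power $p^h$ castling into a power $q^h$ with $q = p$ forces the intermediate prime to stay $p$), I would track at which positions $j$ the "copies" of $p$ get pushed out of the running prefix. More precisely, for each $j$ from $1$ to $k$, I would maintain a free castling $\uline{p^{m_j}} (\cdots) \rlh q_1\cdots q_{j-1} \uuline{(\cdots)}$ where $m_j$ counts how many factors of $p$ have not yet been "emitted" by crossing the first $j-1$ letters; Lemma~\ref{lem_prime_power_castling} and Lemma~\ref{lem_q_m_r_requires_q=r} ensure these intermediate powers are honest powers of $p$ and that $m_j$ drops by exactly one precisely when the $j$-th letter $q_j$ is "responsible" for a copy of $p$ (equivalently, when $\uline{p}w_j \rlh q_1\cdots q_{j-1}\uuline{q_j}$ holds). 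Summing, $\sum_j [\text{castling at } j \text{ holds}] = m$, which is the desired multiplicity identity. The bound $\Omega(u) \le \ind(u)$ then follows because each $j \in \{1,\dots,k\}$ contributes at most one prime (with multiplicity one) to the multi-set, so $\Omega(u) = \#\PDM(u) \le k = \ind(u)$ by Lemma~\ref{lem_ind_eq}.

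Part (ii) is entirely dual: I would run the same argument with co-divisors in place of divisors, peeling off the \emph{first} letter $q_1$ rather than the last, using the co-divisor version of the fundamental lemma (Lemma~\ref{lem_divisor_castling}(ii)), the duality Lemma~\ref{lem_gcd_gcddagger_transfer}, and the co-divisor analogues already recorded (Lemma~\ref{lem_prime_power_castling}(ii), Lemma~\ref{lem_q_m_r_requires_q=r}(ii), and $\Omega_\ddagger(u) = \Omega_\ddagger(\widetilde u)$ under strong castling). The bound $\Omega_\ddagger(u) \le \ind(u)$ follows the same way. The genuinely hard step, and the one I would write out in full detail, is the multiplicity-counting in part (i): verifying that pushing the prime power $p^m$ through the word $q_1\cdots q_k$ one letter at a time, the power decreases by exactly one at each position where the single-prime castling is realizable, and never gets "replaced" by a different prime — this is exactly what Axiom~\uppercase\expandafter{\romannumeral5} (via Lemmas~\ref{lem_power_preserving_strong}, \ref{lem_prime_power_castling}, \ref{lem_q_m_r_requires_q=r}) was introduced to guarantee, so the proof should be assembled so that each invocation of these lemmas is explicit.
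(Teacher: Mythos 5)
Your sketch of the existence step (split into $p\mid w$ versus $\gcd(p,w)=1$, use Lemma~\ref{lem_divisor_castling} to force $v_1=q_k$, note that a free castling is in particular strong, so the $j=k$ term captures $p$) is correct and matches the paper's treatment. The problem is the multiplicity bookkeeping, which you acknowledge is the hard part but whose key step you state as though it were already a consequence of the cited lemmas. Your central claim — that if you ``push $p^m$ through $q_1\cdots q_k$ one letter at a time,'' the running power $m_j$ ``drops by exactly one precisely when $\uline{p}w_j\rlh q_1\cdots q_{j-1}\uuline{q_j}$ holds'' — is not something that falls out of Lemmas~\ref{lem_prime_power_castling} and~\ref{lem_q_m_r_requires_q=r}; it is essentially a restatement of the theorem, and both implications of that ``iff'' need independent proofs. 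The paper handles this by two separate inductions: (a) if the castling produces $p$ at $l$ distinct positions $i_1<\cdots<i_l$, then $p^t\mid q_1\cdots q_{i_t}$ for each $t$, proved by induction on $t$ via a careful decomposition of the strong castling $\uuline{q_1\cdots q_{i_t-1}}q_{i_t}$ and an application of Lemma~\ref{lem_q_m_r_requires_q=r} to the innermost piece $\uuline{p^{t-1}}\widehat{\widehat{q_{i_t}}}\rlh p\uline{c}$; and (b) if $p^m\mid u$, $p^{m+1}\nmid u$, then at least $m$ of the castlings produce $p$, proved by induction on $\ind(u)$, peeling off the last letter and splitting according to whether $p^m$ already divides $q_1\cdots q_{k-1}$.

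Concretely, there is also a missing citation that exposes the gap: the bridge that turns a free castling of the $p$-free part $u_1$ with $q_k$ into the strong castling of the full prefix $q_1\cdots q_{k-1}=p^{m-1}u_1$ with $q_k$ is Lemma~\ref{lem_power_preserving_strong_castling} (``if $\uuline{w}q\rlh p\uline{u}$ then $\uuline{p^kw}q\rlh p\uline{p^ku}$''), which you never invoke; you cite Lemma~\ref{lem_power_preserving_strong} instead, a different statement that only asserts $\uuline{p^k}u\rlh v\uline{q^k}$ and does not let you prepend a prime power to an existing strong castling. Without Lemma~\ref{lem_power_preserving_strong_castling} the ``if a $p$ is emitted then the castling at $j=k$ gives $p$'' half of your iff simply does not go through, and you cannot close the induction. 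Your sweeping bookkeeping $\uline{p^{m_j}}(\cdots)\rlh q_1\cdots q_{j-1}\uuline{(\cdots)}$ is also underspecified (the unnamed factors are never pinned down), so as written this is not yet a proof strategy for the multiplicity identity — it is the identity itself dressed up in notation.
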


\begin{remark}
(1) The castlings occurred in this theorem involve both a strong one and a weak one. One can not change the strong one to a weak one, or change the weak one to a strong one. Examples will be given in Section \ref{subsection_verifying_axioms_thompson} to illustrate this fact.

(2) The statements do not require a particular irreducible decomposition of $u$. That is to say, to get prime divisors of $u$ with multiplicity, one can start with any irreducible decomposition $u=q_1q_2\ldots q_k$.
\end{remark}

The difficulty of this theorem is to make clear the relation of prime powers in strong and weak castlings. We will come back to prove Theorem \ref{thm_prime_multiplicity_from} after the following lemma.

\begin{lemma} \label{lem_power_preserving_strong_castling}
(\romannumeral1) Suppose that $p,q\in \cP$ and $w,u\in S$. If $\uuline{w}q \rlh p\uline{u}$, then $\uuline{p^kw}q \rlh p\uline{p^ku}$ for all $k\geq 1$.
(\romannumeral2) Suppose that $p,q\in \cP$ and $w,u\in S$. If $\uline{w}q \rlh p\uuline{u}$, then $\uline{wq^k}q \rlh p\uuline{uq^k}$ for all $k\geq 1$.
\end{lemma}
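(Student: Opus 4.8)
The two parts are mirror images of each other, so the plan is to prove (i) carefully and to obtain (ii) from it by reading castlings from the co‑divisor side, i.e. by applying (i) to the relation $p\uuline{u}\rlh w\uline{q}$ (equivalently, by checking the decomposition conditions \eqref{eq_C0_castling_decomposition_1}--\eqref{eq_C0_castling_decomposition_2} for the pair $(p,uq^k)$, which are dual to those used below); I would record this as ``similar arguments'' rather than repeat everything. For (i) I would first reduce to the case $k=1$: if we know that $\uuline{w}q\rlh p\uline{u}$ forces $\uuline{pw}q\rlh p\uline{pu}$ whenever $(w,q)\in\fC_0$ and $q\in\cP$, then applying this with $(w,u)$ replaced by $(p^{k-1}w,p^{k-1}u)$ — legitimate by the induction hypothesis on $k$ — yields $\uuline{p^kw}q\rlh p\uline{p^ku}$. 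The degenerate case $w=1$ (where necessarily $p=q$, $u=1$) reduces directly to Lemma \ref{lem_uniqueness_prime_power}, which also supplies $\uuline{p^m}p^l\rlh p^l\uuline{p^m}$; so assume $w\neq1$.

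So fix $\uuline{w}q\rlh p\uline{u}$ with $q\in\cP$. The \emph{weak} castling $\uline{pw}q\rlh p\uline{pu}$ is immediate: compose $\uline{p}p\rlh p\uline{p}$ with the given castling via \eqref{eq_C_castling_composition_1}. The real content is to show $(pw,q)\in\fC_0$, i.e. that the two decomposition conditions in the definition of $\fC_0$ hold for $(pw,q)$. Condition \eqref{eq_C0_castling_decomposition_2} is vacuous because $q$ is irreducible and admits no factorization $q=v_1v_2$ with $v_1,v_2\neq1$. For condition \eqref{eq_C0_castling_decomposition_1} I would induct on $\ind(w)$ (the base $\ind(w)\le1$ being checked by hand, using Lemma \ref{lem_uniqueness_prime_power} and the remark classifying prime-to-prime castlings after Lemma \ref{lem_ind_invariant}), and take an arbitrary factorization $pw=u_1u_2$, $u_1,u_2\neq1$; note that the middle term $\widehat v$ to be produced is forced to be irreducible since $\ind(\widehat v)=\ind(q)=1$. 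I split on whether $p\mid u_1$.

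If $p\mid u_1$, write $u_1=pu_1'$, so $u_1'u_2=w$; applying the decomposition of the strong castling $\uuline{w}q\rlh p\uline{u}$ along $w=u_1'u_2$ gives $\uuline{u_2}q\rlh\widehat v\uuline{\widehat{u_2}}$ and $\uuline{u_1'}\widehat v\rlh p\uuline{\widehat{u_1'}}$ with $\widehat{u_1'}\widehat{u_2}=u$ and $\widehat v\in\cP$. When $u_1'=1$ this is already what is needed; when $u_1'\neq1$ we have $\ind(u_1')<\ind(w)$, so the $\ind(w)$-induction hypothesis applied to $\uuline{u_1'}\widehat v\rlh p\uline{\widehat{u_1'}}$ gives $(pu_1',\widehat v)\in\fC_0$ and $\uuline{pu_1'}\widehat v\rlh p\uline{p\widehat{u_1'}}$; together with $\widehat{u_1}:=p\widehat{u_1'}$, $\widehat{u_2}$, $\widehat v$ (so $\widehat{u_1}\widehat{u_2}=pu$) this verifies \eqref{eq_C0_castling_decomposition_1} for this $u_1$. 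If $p\nmid u_1$, then since $p\in\cP$ and $p\mid pw=u_1u_2$, Lemma \ref{lem_divisor_castling}(i) produces $q_0\in\cP$ with $q_0\mid u_2$ and $\uuuline{p}\widetilde{u_1}\rlh u_1\uuuline{q_0}$; writing $u_2=q_0u_2'$ and hence $w=\widetilde{u_1}u_2'$, I decompose $\uuline{w}q\rlh p\uline{u}$ along $w=\widetilde{u_1}u_2'$ and reassemble the castlings $\uline{u_2}q=\uline{q_0u_2'}q$ and $\uline{u_1}\widehat v$ using the composition rules, identifying the outputs by the uniqueness built into Axiom \uppercase\expandafter{\romannumeral4} (via Lemma \ref{lem_skew_uniqueness}) and controlling the prime powers of $p$ inside $pw$ and $pu$ by Lemma \ref{lem_uniqueness_prime_power} and Lemma \ref{lem_q_m_r_requires_q=r} (the latter guaranteeing the prime $p$ does not turn into another prime).

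\textbf{Main obstacle.} I expect the delicate point to be exactly this second case of \eqref{eq_C0_castling_decomposition_1}, where the factorization $pw=u_1u_2$ ignores the leading $p$: one must follow a castled prime divisor $q_0$ of $u_2$, reorganize several nested castlings, and appeal to Axiom \uppercase\expandafter{\romannumeral4}-uniqueness to pin down the pieces — all while keeping the invocations of the induction hypotheses strictly index-decreasing so the recursion is well-founded.
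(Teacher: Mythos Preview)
Your overall plan is sound and matches the paper's: the weak castling $\uline{pw}q\rlh p\uline{pu}$ is immediate by composing with $\uline{p}p\rlh p\uline{p}$, condition \eqref{eq_C0_castling_decomposition_2} is vacuous since $q\in\cP$, and everything reduces to verifying \eqref{eq_C0_castling_decomposition_1} by induction, splitting on whether $p\mid u_1$. Your reduction to $k=1$ via an outer induction on $k$ is a legitimate simplification of the paper's joint induction on $\ind(w)+k$, and your treatment of the case $p\mid u_1$ is correct and agrees with the paper's Case~1.

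The genuine gap is in the case $p\nmid u_1$, and it already bites in your base case $\ind(w)=1$. You cite only Section~4 tools (Lemmas \ref{lem_uniqueness_prime_power}, \ref{lem_skew_uniqueness}, \ref{lem_q_m_r_requires_q=r}), but these do not suffice: the crucial step uses Axiom~\uppercase\expandafter{\romannumeral5} via Lemma~\ref{lem_power_preserving_strong}. Concretely, after free-castling $p$ past $u_1$ to extract $q_0\mid u_2$ and decomposing $\uuline{w}q\rlh p\uline{u}$ along $w=\widetilde{u_1}u_2'$ to obtain the intermediate prime $\breve q$, you must show $q_0=\breve q$ in order to assemble the strong castling $\uuline{u_2}q=\uuline{q_0u_2'}q$. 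The paper gets this by noting $p^{2}\mid u_1(q_0\breve q)$ with $\gcd(p^{2},u_1)=1$, hence $\uuuline{p^{2}}c\rlh u_1\uuuline{q_0\breve q}$; Lemma~\ref{lem_power_preserving_strong} then forces $q_0\breve q$ to be a prime square, so $q_0=\breve q$ by Lemma~\ref{lem_uniqueness_prime_power}. Neither Lemma~\ref{lem_skew_uniqueness} nor Lemma~\ref{lem_q_m_r_requires_q=r} yields this identification, because both presuppose the \emph{same} prime appearing on the two sides of a castling, whereas here $q_0$ arises from factorizing $u_2$ and $\breve q$ arises from castling $q$ through $u_2'$---there is no a~priori link between them without the power-preserving axiom. (For general $k$ the paper runs the identical argument with $p^{k+1}$ in place of $p^2$, and with $w_3\breve q=t^{k+1}$.) Once you insert this use of Axiom~\uppercase\expandafter{\romannumeral5}, your outline completes exactly as the paper's does.
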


\begin{proof}
(\romannumeral1) Note that $\uline{p^k w}q \rlh \uline{p^k}p \uline{u} \rlh p\uline{p^k u}$. In order to show that $p^kw,q$ are strongly castlable, we need to prove that for any $w_1,w_2\neq 1$ with $w_1w_2=p^k w$, there are elements  $\acute{q},\acute{w_1},\acute{w_2}\in S$ with $\acute{w_1}\acute{w_2}=p^k u$ such that $\uuline{w_2} q \rlh  \acute{q} \uline{\acute{w_2}}$ and $\uuline{w_1} \acute{q}  \rlh p \uline{\acute{w_1}}$.

For $\ind(w)=0$, the proof is trivial. In the following, we always assume that $\ind(w)\geq 1$. Now we deal with the case that $k=1$ and $\ind(w)=1$. If $\tau(pw)=3$, then the only non-trivial decomposition of $pw=w_1w_2$ is $w_1=p,w_2=w$. Indeed, we have
\[
\uuline{w}q \rlh p\uline{u},\quad \uuline{p}p\rlh p\uline{p}.
\]
If $\tau(pw)=4$, then $pw=ab$ for some $a,b\in \cP$ with $p\neq a, w\neq b$. Now we have the other non-trivial decomposition of $pw=w_1w_2$, i.e., $w_1=a,w_2=b$. Indeed, we have $\uuuline{p}w\rlh a\uuuline{b}$. Note that $p^2u = pwq =abq$. We have $\gcd(p^2,a)=1$ and $\uuuline{p^2}u\rlh a\uuuline{bq}$. By Axiom \uppercase\expandafter{\romannumeral5}, one concludes that $bq=t^2$ for some $t\in \cP$, which leads to $b=q=t$. Now $\uuuline{p}w\rlh a\uuuline{q}$. We have $wq=pu$ and
\[
\uuline{q}q\rlh q\uline{q},\quad \uuline{a}q\rlh p\uline{w}.
\]
So $pw,q$ are strongly castlable.

Suppose that the lemma has been proved for $\ind(w)+k \leq m-1$ for some $m\geq 3$. Now we consider the case that $\ind(w)+k=m$. Suppose that $w_1,w_2\neq 1$ are elements in $S$ such that $w_1w_2=p^k w$.

\textsc{Case 1.} Suppose that $p| w_1$. Write $w_1=p z$. Then $zw_2=p^{k-1}w$. Note that $k-1+\ind(w)\leq m-1$. By inductive hypothesis, we have $\uuline{zw_2}q=\uuline{p^{k-1}w}q \rlh p\uline{p^{k-1}u}$. Since the left-hand side is a strong castling, we can decompose it to obtain
\[
\uuline{zw_2}q \rlh \uuline{z}\widehat{q}\uline{\widehat{w_2}} \rlh p \uline{\widehat{z}\widehat{w_2}} = p\uline{p^{k-1}u}
\]
for some $\widehat{q}\in \cP$ and $\widehat{w_2},\widehat{z}\in S$. Moreover, noting that $\ind(z)+1=\ind(w_1)\leq m-1$, we deduce by inductive hypothesis and $\uuline{z}\widehat{q}\rlh p\uline{\widehat{z}}$ that $\uuline{w_1}\widehat{q} = \uuline{p z}\widehat{q} \rlh p \uline{p \widehat{z}}$. To sum up, in this case we have
\[
\uuline{w_2}q \rlh \widehat{q}\uline{\widehat{w_2}},\quad \uuline{w_1}\widehat{q} \rlh p \uline{p\widehat{z}},
\]
where $p \widehat{z}\widehat{w_2}= p p^{k-1}u=p^k u$.

\textsc{Case 2.} Suppose that $p\nmid w_1$. Combining Corollary \ref{cor_gcd_p_m}, we have $\gcd(p^k,w_1)=1$. Since $p^k|w_1w_2$, by Lemma \ref{lem_divisor_castling} we have $\uuuline{p^k}v\rlh w_1\uuuline{w_3}$ for some $w_3|w_2$ and $v\neq 1$. Write $w_2=w_3 e$. We have $p^k w=w_1w_2=w_1w_3e=p^k v e$, which implies $w=v e$. Note that $\uuline{w}q\rlh p\uline{u}$. We have
\[
\uuline{w}q = \uuline{ve}q \rlh \uuline{v}\breve{q}\uline{\breve{e}} \rlh p \uline{\breve{v}\breve{e}} = p\uline{u}
\]
for some $\breve{q}\in \cP$ and $\breve{v},\breve{e}\in S$.

\textsc{Case 2-1.} Suppose that $e\neq 1$. Then $\ind(v)+k  \leq m-1$. By inductive hypothesis, we deduced from $\uuline{v}\breve{q} \rlh p\uline{\breve{v}}$ that $\uuline{w_1w_3}\breve{q}=\uuline{p^kv}\breve{q}\rlh p\uline{p^k\breve{v}}$. Since the left-hand side is a strong castling, we have that
\begin{equation} \label{eq_power_needing_1}
\uuline{w_1w_3}\breve{q} \rlh \uuline{w_1} \dot{q} \uline{\dot{w_3}} \rlh p \uline{\dot{w_1}\dot{w_3}} = p \uline{p^k\breve{v}}
\end{equation}
for some $\dot{q}\in \cP$ and $\dot{w_1},\dot{w_3}\in S$. Note that $p^{k+1}|p^{k+1}\breve{v}=w_1(w_3\breve{q})$ and $\gcd(p^{k+1},w_1)=1$ by corollary \ref{cor_gcd_p_m}. Also note that $\ind(w_3\breve{q})=k+1$, which is due to $\ind(w_3)=\ind(p^k)=k$. There is some $c\in S$ such that $\uuuline{p^{k+1}}c\rlh w_1\uuuline{w_3\breve{q}}$. By Lemma \ref{lem_power_preserving_strong}, one deduces that $w_3\breve{q}=t^{k+1}$ for some $t\in \cP$. So $t=\breve{q}$ and $w_3=\breve{q}^k$ by Lemma \ref{lem_uniqueness_prime_power}. Now \eqref{eq_power_needing_1} becomes
\[
\uuline{w_1\breve{q}^k}\breve{q} \rlh \uuline{w_1} \breve{q} \uline{\breve{q}^k} \rlh p \uline{\dot{w_1}\breve{q}^k} = p \uline{p^k\breve{v}}.
\]
Moreover, since $e\neq w$, one has $k+\ind(e)\leq m-1$. It follows by $\uuline{e}q\rlh \breve{q}\uline{\breve{e}}$ and the inductive hypothesis that $\uuline{w_2}q = \uuline{\breve{q}^k e}q \rlh \breve{q}\uline{\breve{q}^{k}\breve{e}}$. To sum up, in this case we have
\[
\uuline{w_2}q \rlh \breve{q}\uline{\breve{q}^{k}\breve{e}},\quad \uuline{w_1}\breve{q} \rlh p \uline{\dot{w_1}},
\]
where $\dot{w_1}\breve{q}^k\breve{e}=p^k\breve{v}\breve{e}=p^k u$.

\textsc{Case 2-2.} Suppose that $e=1$. Then $\uuuline{p^k}w\rlh w_1\uuuline{w_2}$. We have $\ind(w_2)=k$ and then $\ind(w_2q)=k+1$. Note that $p^{k+1}|p^{k+1}u=w_1(w_2q)$ and $\gcd(w_1,p^{k+1})=1$ by corollary \ref{cor_gcd_p_m}. There is some $c\in S$ such that $\uuuline{p^{k+1}}c\rlh w_1\uuuline{w_2q}$. By Lemma \ref{lem_power_preserving_strong}, one deduces that $w_2q=t^{k+1}$ for some $t\in \cP$. So $t=q$ and $w_2=q^k$ by Lemma \ref{lem_uniqueness_prime_power}. Now we have
\[
w_1\uuuline{w_2} = w_1\uuuline{q\cdot q^{k-1}} \rlh \uuuline{\widetilde{q}} \widetilde{w_1} \uuuline{q^{k-1}} \rlh \uuuline{\widetilde{q}y} w = \uuuline{p^k}w
\]
for some $\widetilde{q}\in \cP$ and $\widetilde{w_1},y\in S$. Since $p^k=\widetilde{q}y$, we have $\widetilde{q}=p$ and $y=p^{k-1}$. Now
\[
\uuline{w_2}q = \uuline{q^k}q \rlh q\uline{q^k}, \quad \uuline{w_1}q \rlh \widetilde{q}\uline{\widetilde{w_1}} = p\uline{\widetilde{w_1}},
\]
where $\widetilde{w_1}q^k = (\widetilde{w_1}q^{k-1})q= ywq = p^{k-1}pu=p^k u$. By inductive hypothesis, the proof is completed.

(\romannumeral2) The arguments as above work, and we omit the details here.
\end{proof}

\medskip

\begin{proof} [Proof of Theorem \ref{thm_prime_multiplicity_from}]
(\romannumeral1) If $\underline{p} w_j \rlh q_1\ldots q_{j-1}\uuline{q_j}$ for some $1\leq j\leq k$ and some element $w_j$, then $p$ is definitely a prime divisor of $u$. Suppose that a same prime $p$ is obtained exactly from castlings $\underline{p} w_j \rlh q_1\ldots q_{j-1}\uuline{q_j}$ with $j\in \{i_1,i_2,\ldots, i_l\}$, where $1\leq i_1<i_2<\ldots< i_l\leq k$. We need to show that $p^l| u$. In the following, induction is used to show that $p^{t}| q_1 q_2\ldots q_{i_t}$ for all $1\leq t\leq l$. For $t=1$, one already has $p|q_1q_2\ldots q_{i_1}$. Suppose the conclusion for $t-1$ has been proved, i.e., $p^{t-1}|q_1 q_2\ldots q_{i_{t-1}}$. Write $q_1 q_2\ldots q_{i_{t-1}} = p^{t-1}a$ and $b= q_{i_{t-1}+1}\ldots q_{i_t-1}$.
We have
\begin{equation} \label{eq_prime_power_proof}
\uuline{q_1\ldots q_{i_t-1}}q_{i_t} =\uuline{p^{t-1}ab}q_{i_t} \rlh \uuline{p^{t-1}a} \widehat{q_{i_t}} \underline{\widehat{b}} \rlh \uuline{p^{t-1}}\widehat{\widehat{q_{i_t}}} \uline{\widehat{a} \widehat{b}} \rlh  p \uline{c\widehat{a} \widehat{b}} = p \underline{w_{i_t}}
\end{equation}
for some prime $\widehat{q_{i_t}},\widehat{\widehat{q_{i_t}}}\in \cP$ and $\widehat{a},\widehat{b},c\in S$. Applying Lemma \ref{lem_q_m_r_requires_q=r} with $\uuline{p^{t-1}}\widehat{\widehat{q_{i_t}}}\rlh p\uline{c}$, one deduces that $\widehat{\widehat{q_{i_t}}}=p$. So $p^t| q_1 q_2\ldots q_{i_t}$. Now we obtain that $p^l|u$.

On the other hand, suppose that $p^m | u$ and $p^{m+1}\nmid u$ for some $m\geq 1$. We prove by induction on $K=\ind(u)$ that there exist at least $m$ numbers of $r$'s such that $p$ is obtained from the castlings $\underline{p} w_r \rlh q_1\ldots q_{r-1}\uuline{q_r}$ with $1\leq r\leq K$. For $K=1$, the proof is trivial. Suppose that the result has been proved when $K\leq k-1$ for some $k\geq 2$. Next we consider the case $K=k$ with $u=q_1q_2\ldots q_k$. Let $m_1$ be the non-negative integer satisfying $p^{m_1}|q_1\ldots q_{k-1}$ and $p^{m_1+1}\nmid q_1\ldots q_{k-1}$.

Case 1. Suppose that $m_1=m$. By inductive hypothesis, the prime $p$ is induced by at least $m$ castlings of type $\underline{p} w_r \rlh q_1\ldots q_{r-1}\uuline{q_r}$ for $1\leq r\leq k-1$.

Case 2. Suppose that $m_1<m$. We write $q_1\ldots q_{k-1} = p^{m_1} u_1$, where $p\nmid u_1$. Now one has $p^{m-m_1}| u_1q_k$ and $\gcd(p^{m-m_1},u_1)= 1$. It follows from Lemma \ref{lem_divisor_castling} that $\uuuline{u_1} q_k \rlh p^{m-m_1} \uuuline{z}$ for some $z\in S$. Since $\ind(p^{m-m_1})=\ind(q_k)$, one has $m_1=m-1$. Moreover, the castlings $\uuuline{u_1} q_k \rlh p\uuuline{z}$ and Lemma \ref{lem_power_preserving_strong_castling} results in the conclusion that
\[
\uuline{q_1\ldots q_{k-1}} q_k = \uuline{p^{m-1} u_1} q_k \rlh p\underline{p^{m-1}z}.
\]
So the prime $p$ is induced by one of the castlings expected. In view of the fact that $p^{m-1}|q_1\ldots q_{k-1}$, it follows from inductive hypothesis that the prime $p$ is induced by at least another $m-1$ castlings of expected type. The proof is completed.

(\romannumeral2) The conclusion follows from similar arguments as in (\romannumeral1).
\end{proof}

\subsection{Fully Castlable Elements}
\label{subsection_fully_castlable_elements}

In this subsection, we will introduce a special class of monoids with good properties.

\begin{definition} \label{def_fully_castlable}
We call an element $u$ in a castlable monoid $S$ fully castlable, if for any $u_1,u_2,u_3,u_4\in S$ with $u=u_1u_2u_3u_4$, the elements $u_2,u_3$ are weakly castlable.
\end{definition}

The purpose of this definition is as follows: if $u$ is fully castlable, then each part of $u$ would contribute to a divisor and a co-divisor of $u$.

\begin{lemma} \label{lem_fully_ca_di_co_also_fully_ca}
In a castlable monoid, suppose that $v$ is fully castlable and $v_1v_2=v$. Then both $v_1$ and $v_2$ are fully castlable.
\end{lemma}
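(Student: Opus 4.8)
The plan is to reduce an arbitrary four‑fold factorization of $v_1$ (or of $v_2$) to a four‑fold factorization of $v$ by absorbing the complementary factor, and then invoke the hypothesis that $v$ is fully castlable. The key point to keep in mind is that Definition \ref{def_fully_castlable} quantifies over \emph{all} quadruples $u_1,u_2,u_3,u_4\in S$ with product $v$, with no restriction that the factors be non‑trivial; this is what makes the padding argument legitimate.

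First I would treat $v_1$. Take any $a_1,a_2,a_3,a_4\in S$ with $v_1=a_1a_2a_3a_4$. Since $S$ is a monoid, $a_4v_2\in S$, and hence
\[
v=v_1v_2=a_1\,a_2\,a_3\,(a_4v_2)
\]
is a factorization of $v$ into four elements of $S$. Because $v$ is fully castlable, the middle pair $a_2,a_3$ is weakly castlable, i.e.\ $(a_2,a_3)\in\fC$. As the factorization $v_1=a_1a_2a_3a_4$ was arbitrary, $v_1$ is fully castlable.

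Symmetrically, for $v_2$: given any $a_1,a_2,a_3,a_4\in S$ with $v_2=a_1a_2a_3a_4$, note $v_1a_1\in S$ and write $v=(v_1a_1)\,a_2\,a_3\,a_4$, again a four‑fold factorization of $v$ in $S$; full castlability of $v$ gives $(a_2,a_3)\in\fC$, so $v_2$ is fully castlable. There is no real obstacle in this argument — the only subtlety is the one already noted, namely that trivial factors are permitted in Definition \ref{def_fully_castlable}, so that inserting the leftover block $a_4v_2$ (resp.\ $v_1a_1$) into a single factor is a valid move.
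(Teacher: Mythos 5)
Your argument is correct and is essentially identical to the paper's proof: both absorb the complementary factor ($a_4v_2$ or $v_1a_1$) into one of the four slots in the factorization of $v$ and then appeal directly to the full castlability of $v$. The only difference is that you make explicit the (harmless but worth noting) point that Definition \ref{def_fully_castlable} allows trivial factors, which the paper leaves implicit.
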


\begin{proof}
Let $w_1,w_2,w_3,w_4$ be any elements in $S$ such that $w_1w_2w_3w_4=v_1$. Take $u_i=w_i$ $(1\leq i\leq 3)$ and $u_4=w_4v_2$. Since $v$ is fully castlable, the elements $u_2,u_3$ are weakly castlable. So $v_1$ is fully castlable. Similar arguments show that $v_2$ is also fully castlable.
\end{proof}

\begin{lemma} \label{lem_fully_castlable_equivalent_definition}
Suppose that $S$ is castlable. An element $u$ in $S$ is fully castlable, if and only if $w_1,w_2$ are strongly castlable for any $w_1,w_2\in S$ with $w_1w_2=u$.
\end{lemma}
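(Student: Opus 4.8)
The plan is to prove the two implications separately: the direction ``every factorization of $u$ strongly castlable $\Rightarrow$ $u$ fully castlable'' is a short consequence of the decomposition property of strong castlings, while the converse is proved by induction on $\ind(u)$, using Lemma~\ref{lem_fully_ca_di_co_also_fully_ca} and the single-valuedness of $\eta$.

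For the easy direction, assume $w_1,w_2$ are strongly castlable whenever $w_1w_2=u$, and let $u=u_1u_2u_3u_4$; we must show $(u_2,u_3)\in\fC$. If $u_2=1$ or $u_3=1$ this is immediate since $(1,v),(v,1)\in\fC_1\subseteq\fC$, so assume otherwise. Put $w_1=u_1u_2$ and $w_2=u_3u_4$, so $(w_1,w_2)\in\fC_0$. Since every pair in $\fC_0$ satisfies \eqref{eq_C0_castling_decomposition_1}, the factorization $w_1=u_1\cdot u_2$ gives $(u_2,w_2)\in\fC_0$ (trivially when $u_1=1$); applying \eqref{eq_C0_castling_decomposition_2} to $(u_2,w_2)\in\fC_0$ with $w_2=u_3\cdot u_4$ gives $(u_2,u_3)\in\fC_0\subseteq\fC$, as desired.

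For the converse, suppose $u$ is fully castlable and $u=w_1w_2$; I claim $(w_1,w_2)\in\fC_0$, arguing by induction on $\ind(u)=\ind(w_1)+\ind(w_2)$ (Corollary~\ref{cor_ind_eq}). If $w_1=1$ or $w_2=1$ then $(w_1,w_2)\in\fC_1\subseteq\fC_0$, so assume $w_1,w_2\neq1$. Taking $u_1=u_4=1$, $u_2=w_1$, $u_3=w_2$ in the definition of ``fully castlable'' gives $(w_1,w_2)\in\fC$, hence a weak castling $\uline{w_1}w_2\rlh\widetilde{w_2}\uline{\widetilde{w_1}}$; I verify the two conditions in the generation rule for $\fC_0$. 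For condition~(i), fix $w_1=a_1a_2$ with $a_1,a_2\neq1$. Since $u=a_1(a_2w_2)$, the suffix $a_2w_2$ is fully castlable by Lemma~\ref{lem_fully_ca_di_co_also_fully_ca}, and $\ind(a_2w_2)<\ind(u)$ because $\ind(a_1)\geq1$; by the induction hypothesis $(a_2,w_2)\in\fC_0$, so $\uline{a_2}w_2\rlh\widehat{w_2}\uline{\widehat{a_2}}$ with $a_2w_2=\widehat{w_2}\widehat{a_2}$ and $\ind(\widehat{w_2})=\ind(w_2)$ (Lemma~\ref{lem_ind_invariant}). Then $u=a_1\widehat{w_2}\widehat{a_2}$, so $a_1\widehat{w_2}$ is a prefix of $u$, hence fully castlable, with $\ind(a_1\widehat{w_2})=\ind(a_1)+\ind(w_2)<\ind(u)$ since $\ind(a_2)\geq1$; by the induction hypothesis $(a_1,\widehat{w_2})\in\fC_0$, say $\uline{a_1}\widehat{w_2}\rlh Z\uline{W}$. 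Composing via \eqref{eq_C_castling_composition_1} (legitimate as $a_1,a_2\neq1$) gives $\uline{w_1}w_2\rlh Z\uline{W\widehat{a_2}}$, and since $\Gamma$ is a graph of a map (Axiom~IV) this forces $Z=\widetilde{w_2}$, $W\widehat{a_2}=\widetilde{w_1}$; condition~(i) holds with $\widehat{a_1}:=W$. Condition~(ii) is symmetric: for $w_2=b_1b_2$ with $b_1,b_2\neq1$, the prefix $w_1b_1$ is fully castlable of smaller index, giving $(w_1,b_1)\in\fC_0$, say $\uline{w_1}b_1\rlh P\uline{Q}$ with $w_1b_1=PQ$; then the suffix $Qb_2$ (as $u=P(Qb_2)$) is fully castlable of smaller index, giving $(Q,b_2)\in\fC_0$; composing via \eqref{eq_C_castling_composition_2} and invoking Axiom~IV identifies the result with $\uline{w_1}w_2\rlh\widetilde{w_2}\uline{\widetilde{w_1}}$, verifying condition~(ii) with $\widehat{w_1}:=Q$. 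Hence $(w_1,w_2)\in\fC_0$, completing the induction.

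\textbf{Main obstacle.} The difficulty is bookkeeping rather than conceptual: one must repeatedly combine Lemma~\ref{lem_ind_invariant} with $\ind(uv)=\ind(u)+\ind(v)$ to recognize the auxiliary elements $\widehat{w_2}$, $Q$, etc. as genuine prefixes or suffixes of $u$, so that Lemma~\ref{lem_fully_ca_di_co_also_fully_ca} applies with a strictly smaller index, and one must check at each composition step that the composed weak castling actually coincides with the original $\uline{w_1}w_2\rlh\widetilde{w_2}\uline{\widetilde{w_1}}$ — this last point is precisely where single-valuedness of $\eta$ (Axiom~IV) enters.
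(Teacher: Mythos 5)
Your proof is correct and follows essentially the same route as the paper: the easy direction decomposes the strong castling $(u_1u_2,\,u_3u_4)\in\fC_0$ through \eqref{eq_C0_castling_decomposition_1} and \eqref{eq_C0_castling_decomposition_2} to reach $(u_2,u_3)\in\fC_0$, and the hard direction inducts on $\ind(u)$, using Lemma~\ref{lem_fully_ca_di_co_also_fully_ca} to pass to the strictly shorter fully castlable prefix/suffix at each step. One small point where you are slightly more careful than the paper's own write-up: you explicitly invoke Axiom~IV to identify the composed weak castling $\uline{w_1}w_2\rlh Z\uline{W\widehat{a_2}}$ with the original $\uline{w_1}w_2\rlh\widetilde{w_2}\uline{\widetilde{w_1}}$, which is genuinely needed to verify $\widehat{u_1}\widehat{u_2}=\widetilde{u}$ in the generation rule for $\fC_0$ but is left implicit in the paper.
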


\begin{proof}
We first prove the ``$\Rightarrow$''-part by induction on $\ind(u)$. For $\ind(u)\leq 1$, the prove is trivial. Suppose that the result has been proved for $\ind(u)\leq m-1$ with some $m\geq 2$. Consider the case that $\ind(u)=m$. For any $x,y\in S$ with $xy=w_1$ and $x,y\neq 1$, the element $yw_2$ is a co-divisor of $u$. By Lemma \ref{lem_fully_ca_di_co_also_fully_ca}, it is fully castlable. By inductive hypothesis, we have that $y,w_2$ are strongly castlable, which we denote by $\uuline{y}w_2\rlh \widehat{w_2}\uline{\widehat{y}}$ for some $\widehat{w_2},\widehat{y}\in S$. Now $u=xyw_2=x\widehat{w_2}\widehat{y}$. By Lemma \ref{lem_fully_ca_di_co_also_fully_ca}, the element $x\widehat{w_2}$ is fully castlable. By inductive hypothesis again, we have $x,\widehat{w_2}$ are strongly castlable, which we denote by $\uuline{x}\widehat{w_2}\rlh \widehat{\widehat{w_2}}\uline{\widehat{x}}$ for some $\widehat{\widehat{w_2}},\widehat{x}\in S$. Similarly, for any $a,b\in S$ with $ab=w_2$ and $a,b\neq 1$, one can also prove that $\uuline{w_1}a\rlh \widehat{a}\uline{\widehat{w_1}}$ and $\uuline{\widehat{w_1}}b\rlh \widehat{b}\uline{\widehat{\widehat{w_1}}}$ for some $\widehat{a},\widehat{b},\widehat{w_1},\widehat{\widehat{w_1}}\in S$. Therefore, the elements $w_1,w_2$ are strongly castlable.

Now we deal with the ``$\Leftarrow$''-part. For any $u_1,u_2,u_3,u_4$ with $u_1u_2u_3u_4=u$, we have that $u_1u_2, u_3u_4$ are strongly castlable. By decomposition of strong castlings, one deduces that $u_2,u_3$ are strongly castlable. This completes the proof.
\end{proof}

Suppose that $u$ is fully castlable and $w_1w_2=u$. Then $\uuline{w_1}w_2\rlh \widetilde{w_2}\uline{\widetilde{w_1}}$ for some $\widetilde{w_1},\widetilde{w_2}\in S$ by Lemma \ref{lem_fully_castlable_equivalent_definition}. Here $\widetilde{w_2}\widetilde{w_1}=u$. So $\widetilde{w_2},\widetilde{w_1}$ are also strongly castlable. We have $\uuline{w_1}w_2\rlh \widetilde{w_2}\uuline{\widetilde{w_1}}$. Lemma \ref{lem_fully_castlable_equivalent_definition} gives another definition of fully castlable elements. Both have advantages. In the following, we turn back to natural monoids.

\begin{theorem} \label{thm_equivalence_fully_castlable}
Suppose that $S$ is a natural monoid.

(\romannumeral1) Let $u$ be an element in $S$, which has prime divisors $q_1,q_2,\ldots,q_k$ with multiplicities $m_1,m_2,\ldots,m_k$. Then $u$ is fully castlable if and only if $u=\lcm[q_1^{m_1},q_2^{m_2},\ldots,q_k^{m_k}]$.

(\romannumeral2) Let $u$ be an element in $S$, which has prime co-divisors $q_1,q_2,\ldots,q_k$ with multiplicities $m_1,m_2,\ldots,m_k$. Then $u$ is fully castlable if and only if $u=\lcm_\ddagger[u;q_1^{m_1},q_2^{m_2},\ldots,q_k^{m_k}]$.
\end{theorem}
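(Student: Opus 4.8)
The plan is to prove (i) and to note that (ii) follows by the same argument applied on the co-divisor side (using the duality of Lemma \ref{lem_gcd_gcddagger_transfer} together with Lemma \ref{lem_lcm_prime_powers_equivalent_lcm_ddagger_prime_powers}, and the co-divisor versions of all the prime-power lemmas). So I focus on (i). The key characterization in hand is Lemma \ref{lem_fully_castlable_equivalent_definition}: $u$ is fully castlable iff every factorization $u=w_1w_2$ consists of strongly castlable elements. I would exploit this in both directions via the divisor count $\tau$.

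For the ``only if'' direction, suppose $u$ is fully castlable with prime divisors $q_1,\dots,q_k$ of multiplicities $m_1,\dots,m_k$. Let $v=\lcm[q_1^{m_1},\dots,q_k^{m_k}]$; by Corollary \ref{cor_tau_lcm_p_m}(i) one has $\tau(v)=\prod_j(m_j+1)$ and $v|u$ (each $q_j^{m_j}|u$, so their lcm divides $u$ by Lemma \ref{lem_numbertheoretic_def_lcm}). It therefore suffices to show $\tau(u)\le\prod_j(m_j+1)$, since then $\tau(u)=\tau(v)$ and $v|u$ force $u=v$ (a proper divisor has strictly fewer divisors, by Axiom \uppercase\expandafter{\romannumeral3} reasoning as in Lemma \ref{lem_prime_divisor_exsits}). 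To bound $\tau(u)$: take any divisor $w|u$, write $u=w w'$; since $u$ is fully castlable, $w,w'$ are strongly castlable, so by Lemma \ref{lem_prime_power_castling} the prime-power structure of $w$ (as prime divisors of $u$) is controlled — more precisely, iterating the decomposition of strong castlings and Lemma \ref{lem_prime_power_castling}(i), one shows every divisor $w$ of $u$ satisfies $w=\lcm[q_1^{l_1},\dots,q_k^{l_k}]$ for some $0\le l_j\le m_j$: indeed $\PD(w)\subseteq\{q_1,\dots,q_k\}$, the $q_j$-multiplicity of $w$ is at most $m_j$ (from Lemma \ref{lem_prime_power_castling}(i) applied to the strong castling coming from $u=ww'$), and fully-castlability of $w$ (Lemma \ref{lem_fully_ca_di_co_also_fully_ca}) together with Lemma \ref{lem_fully_castlable_equivalent_definition} forces $w$ itself to be the lcm of its prime powers. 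Distinctness of these lcm's for distinct exponent tuples is Corollary \ref{cor_tau_lcm_p_m}(i). Hence $\tau(u)\le\prod_j(m_j+1)$, giving $u=v$.

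For the ``if'' direction, suppose $u=\lcm[q_1^{m_1},\dots,q_k^{m_k}]$ with the $q_j$ distinct primes. I must show any factorization $u=w_1w_2$ is into strongly castlable elements (Lemma \ref{lem_fully_castlable_equivalent_definition}). The clean route is induction on $\ind(u)$, peeling off one prime at a time: pick $q_1$, let $v_1=\lcm[q_2^{m_2},\dots,q_k^{m_k}]$, so $\gcd(q_1^{m_1},v_1)=1$ by Corollaries \ref{cor_pairwise_gcd}, \ref{cor_p_k_q_l_coprime} and $u=\lcm[q_1^{m_1},v_1]=q_1^{m_1}y_1=v_1 x_1$ with $x_1=r_1^{m_1}$ by Lemma \ref{lem_power_preserving_strong}; so $u$ is a free (hence strong, via the composition/decomposition machinery) castling-reachable from $q_1^{m_1}v_1'$ for an lcm-of-prime-powers $v_1'$ of smaller index. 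Then use: free castlings are strong, the decomposition of strong castlings (so divisors $w_1$ of $u$ pull back to divisors of $q_1^{m_1}$ and of $v_1$, which are strongly castlable against their complements by the inductive hypothesis and by Lemma \ref{lem_q_m_r_requires_q=r}/Lemma \ref{lem_uniqueness_prime_power} for the pure prime-power factor), and the composition of strong castlings (Lemma \ref{lem_composition_C_1} and the $\fC_0$-closure rules) to assemble strong castlability of $w_1,w_2$. The base cases $\ind(u)\le1$ are immediate.

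The main obstacle I anticipate is the ``if'' direction: one has to promote the free-castling description $u=\lcm[q_1^{m_1},v_1]$ into a verified membership $(w_1,w_2)\in\fC_0$ for every splitting $u=w_1w_2$, carefully tracking how a general $w_1$ distributes across the $q_1^{m_1}$-part and the $v_1$-part and invoking Lemma \ref{lem_prime_power_castling} to keep the two parts prime-power-shaped after castling. This is where Axiom \uppercase\expandafter{\romannumeral5} and Lemmas \ref{lem_q_m_r_requires_q=r}, \ref{lem_power_preserving_strong_castling} do the real work of preventing a prime power from fragmenting into two distinct primes mid-castling; organizing the bookkeeping (likely via a double induction on $k$ and on $\ind(w_1)$, using the (de)composition-chain formalism of Remark \ref{remark_decomposition_chain}) is the delicate part.
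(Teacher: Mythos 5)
Your proposal has genuine gaps in both directions.

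In the ``only if'' direction, the pivotal step reads: ``fully-castlability of $w$ (Lemma \ref{lem_fully_ca_di_co_also_fully_ca}) together with Lemma \ref{lem_fully_castlable_equivalent_definition} forces $w$ itself to be the lcm of its prime powers.'' This is circular as written: Lemma \ref{lem_fully_castlable_equivalent_definition} only recasts full castlability in terms of strong castlability of all splittings; it does not say a fully castlable element equals the lcm of its prime powers --- that is precisely the ``only if'' half of the theorem you are proving. You could repair this with a strong induction on $\ind$, but then the inductive hypothesis covers only proper divisors, and the estimate $\tau(u)\le\prod_j(m_j+1)$ still needs a separate argument to exclude $u$ itself from contributing an extra divisor. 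The paper avoids this tangle entirely: assuming $u_0=\lcm[q_1^{m_1},\dots,q_k^{m_k}]$ divides $u$ properly, write $u=u_0 r w_0$ with $r\in\cP$; strong castlability of $u_0,r$ yields $\uuline{u_0}r\rlh q_j\uline{\widetilde{u_0}}$ for some $j$, and a decomposition through $q_j^{m_j}\,|\,u_0$ plus Lemma \ref{lem_q_m_r_requires_q=r} forces the intermediate castled letter to be $q_j$ again, so $q_j^{m_j+1}\mid u_0 r\mid u$, contradicting the multiplicity $m_j$. This is the clean contradiction; the $\tau$-counting detour is not needed.

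In the ``if'' direction the bookkeeping you anticipate as ``delicate'' is in fact where the argument is missing. You do not explain how an arbitrary factorization $u=w_1w_2$ is to be matched with the decomposition $u=q_1^{m_1}y_1$ --- a general $w_1$ need not split neatly across the $q_1^{m_1}$-block and the $y_1$-block, and the composition machinery cannot be applied until such a splitting is produced. The tool you are missing is Theorem \ref{thm_prime_multiplicity_from}: for $u=\lcm[q_1^{m_1},\dots,q_k^{m_k}]$ one has $\Omega(u)=m_1+\dots+m_k=\ind(u)$, so for \emph{any} irreducible word $u=r_1\cdots r_l$ (hence any $u=u_1u_2u_3u_4$) each prefix satisfies $\uuline{r_1\cdots r_{j-1}}r_j\rlh p_j\uline{w_j}$ --- every consecutive prefix is strongly castlable with the next letter. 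The paper then peels the letters of $u_3$ one at a time past $u_2$ (each step is a decomposition of one of these prefix strong-castlings) and composes the resulting weak castlings to conclude $u_2,u_3$ are weakly castlable. Without Theorem \ref{thm_prime_multiplicity_from} you have no uniform way to guarantee that the required castlings of $u_2$ with the individual letters of $u_3$ exist for an arbitrary word of $u$.
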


\begin{proof}
(\romannumeral1) We first prove the ``$\Rightarrow$''-part. Denote $u_0=\lcm[q_1^{m_1},q_2^{m_2},\ldots,q_k^{m_k}]$. Then $u_0|u$. Assume on the contrary that $u_0\neq u$. Then $u=u_0w$ for some $w\neq 1$. Write $w=rw_0$, where $r\in \cP$. Since $u$ is fully castlable, then $u_0,w$ are strongly castlable by Lemma \ref{lem_fully_castlable_equivalent_definition}. It follows that $u_0,r$ are also strongly castlable. We write $\uuline{u_0}r\rlh \widetilde{r}\uline{\widetilde{u_0}}$ for some $\widetilde{r}\in \cP$ and $\widetilde{u_0}\in S$. Now $\widetilde{r}$ is a prime divisor of $u_0r$, which is also a prime divisor of $u$. So $\widetilde{r}=q_j$ for some $1\leq j\leq k$. Noting that $q_j^{m_j}|u_0$, we can write $u_0=q_j^{m_j}u_0^\prime$ for some $u_0^\prime \in S$. A decomposition of the castling gives
\[
\uuline{u_0}r = \uuline{q_j^{m_j} u_0^\prime}r\rlh \uuline{q_j^{m_j}}\widehat{r}\uline{\widehat{u_0}^\prime} \rlh q_j\uline{c\widehat{u_0}^\prime}=q_j\uline{\widetilde{u_0}}
\]
for some $\widehat{r},\widehat{u_0}^\prime,c\in S$. It follows from  $\uline{q_j^{m_j}}\widehat{r}\rlh q_j\uline{c}$ and Lemma \ref{lem_q_m_r_requires_q=r} that $\widehat{r}=q_j$. So $q_j^{m_j+1}|u_0r$, which is a divisor of $u$. This contradicts the fact that $q_j$ is a prime divisor of $u$ with multiplicity $m_j$.

\medskip

Now we shall prove the ``$\Leftarrow$''-part. Let $l=\ind(u)=m_1+\ldots+m_k$. Let $u_1,u_2,u_3,u_4$ be any elements in $S$ with $u_1u_2u_3u_4=u$. Let $u=r_1r_2\ldots r_l$ be an irreducible decomposition, where
\[
u_1=r_1\ldots r_{l_1}, \quad u_2=r_{l_1+1}\ldots r_{l_2},\quad u_3=r_{l_2+1}\ldots r_{l_3},\quad u_4=r_{l_3+1}\ldots r_l,
\]
with $0\leq l_1\leq l_2\leq l_3\leq l$.

Note that $\ind(u)=m_1+m_2+\ldots+m_k = \Omega(u)$. By Theorem \ref{thm_prime_multiplicity_from}, we have $\uuline{r_1r_2\ldots r_{j-1}}r_j\rlh p_j\uline{w_j}$ for some $w_j\in S$ and $p_j\in \cP$, for all $1\leq j\leq l$. Consider the $(l_2+1)$-th castling appeared above. Since $r_1r_2\ldots r_{l_2}$, $r_{l_2+1}$ are strongly castlable and $r_1r_2\ldots r_{l_2}=u_1u_2$, one obtains that $u_2,r_{l_2+1}$ are strongly castlable. We denote $\uuline{u_2^{(l_2)}}r_{l_2+1}\rlh \widehat{r_{l_2+1}}\uline{u_2^{(l_2+1)}}$ for some $\widehat{r_{l_2+1}}\in \cP$ and $u_2^{(l_2+1)}\in S$, where $u_2^{(l_2)}=u_2$. Next, consider the $(l_2+2)$-th castling. Since $r_1r_2\ldots r_{l_2+1}$, $r_{l_2+2}$ are strongly castlable and $r_1r_2\ldots r_{l_2+1}=u_1\widehat{r_{l_2+1}}u_2^{(l_2+1)}$, one concludes that $u_2^{(l_2+1)}, r_{l_2+2}$ are strongly castlable, which we denote $\uuline{u_2^{(l_2+1)}}r_{l_2+2}\rlh \widehat{r_{l_2+2}}\uline{u_2^{(l_2+2)}}$ for some $\widehat{r_{l_2+2}}\in \cP$ and $u_2^{(l_2+2)}\in S$. We repeat the above process for $l_2+1\leq j\leq l_3$. Since $r_1r_2\ldots r_{j-1}$, $r_j$ are strongly castlable and $r_1r_2\ldots r_{j-1}= u_1 \widehat{r_{l_2+1}}\ldots \widehat{r_{j-1}} u_2^{(j-1)}$, the elements $u_2^{(j-1)}$, $r_j$ are strongly castlable. We assume that $\uuline{u_2^{(j-1)}}r_j\rlh \widehat{r_j}\uline{u_2^{(j)}}$ for some $\widehat{r_j}\in \cP$ and $u_2^{(j)}\in S$. Now a composition of the above weak castlings shows that
\begin{align*}
\uline{u_2} u_3 &= \uline{u_2^{(l_2)}} q_{l_2+1}q_{l_2+2}\ldots q_{l_3} \rlh \widehat{q_{l_2+1}}\uline{u_2^{(l_2+1)}} q_{l_2+2}\ldots q_{l_3} \\
&\rlh \ldots \rlh \widehat{q_{l_2+1}}\ldots \widehat{q_{l_3-1}}\uline{u_2^{(l_3-1)}}q_{l_3} \rlh \widehat{q_{l_2+1}}\ldots \widehat{q_{l_3}} \uline{u_2^{(l_3)}}.
\end{align*}
So $u_2,u_3$ are weakly castlable. As a result, the element $u$ is fully castlable.

(\romannumeral2) Similar arguments as above work.

\end{proof}

\begin{corollary}
Let $S$ be a natural monoid. (\romannumeral1) Suppose that $u$ and $v$ are both fully castlable elements in $S$. Then so is $\lcm[u,v]$. (\romannumeral2) Suppose that $u$ and $v$ are both fully castlable elements in $S$ and $u,v\ddagger w$ for some $w\in S$. Then so is $\lcm_\ddagger[w;u,v]$.
\end{corollary}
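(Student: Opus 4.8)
The plan is to reduce the statement to Theorem \ref{thm_equivalence_fully_castlable}, which characterizes fully castlable elements as least common multiples (resp. least common co-multiples) of the prime powers associated to their prime divisors (resp. co-divisors). So for part (\romannumeral1), I would let $u$ and $v$ be fully castlable, write $w=\lcm[u,v]$, and show $w=\lcm[q_1^{n_1},\ldots,q_k^{n_k}]$ where $q_1,\ldots,q_k$ are the distinct prime divisors of $w$ and $n_1,\ldots,n_k$ their multiplicities. The key input is Corollary \ref{cor_gcd_lcm_of_two_lcms}(\romannumeral1): since $u=\lcm[q_1^{a_1},\ldots,q_k^{a_k}]$ and $v=\lcm[q_1^{b_1},\ldots,q_k^{b_k}]$ (after padding with zero exponents so both are written over the same prime set, which is legitimate because $\PD(\lcm[u,v])=\PD(u)\cup\PD(v)$ by Lemma \ref{lem_p_divides_lcm_implies_p_divides_one_of_them} and Lemma \ref{lem_numbertheoretic_def_lcm}), that corollary gives $\lcm[u,v]=\lcm[q_1^{\max\{a_1,b_1\}},\ldots,q_k^{\max\{a_k,b_k\}}]$. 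It then remains to check that $\max\{a_j,b_j\}$ is exactly the multiplicity $n_j$ of $q_j$ in $w$; this follows from Corollary \ref{cor_tau_lcm_p_m}(\romannumeral1), which says the divisors of such an $\lcm$ of coprime prime powers are precisely the $\lcm[q_1^{l_1},\ldots,q_k^{l_k}]$ with $0\le l_j\le\max\{a_j,b_j\}$, and from Corollary \ref{cor_prime_keeping_in_lcm}(\romannumeral1), which forbids any higher prime power from dividing $w$. Hence $w$ has the required form and is fully castlable by the ``$\Leftarrow$'' direction of Theorem \ref{thm_equivalence_fully_castlable}(\romannumeral1).

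For part (\romannumeral2), I would run the parallel argument on the co-divisor side. Let $u,v$ be fully castlable with $u,v\ddagger w$, and set $z=\lcm_\ddagger[w;u,v]$. By Theorem \ref{thm_equivalence_fully_castlable}(\romannumeral2), $u=\lcm_\ddagger[u;q_1^{a_1},\ldots,q_k^{a_k}]$ and $v=\lcm_\ddagger[v;q_1^{b_1},\ldots,q_k^{b_k}]$ for the relevant prime co-divisors; using Lemma \ref{lem_ddagger_upperbound_transfer} I can rewrite both ``up to $w$'' (since $u,v\ddagger w$), and then apply Corollary \ref{cor_gcd_lcm_of_two_lcms}(\romannumeral2) to get $z=\lcm_\ddagger[w;q_1^{\max\{a_1,b_1\}},\ldots,q_k^{\max\{a_k,b_k\}}]$. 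Then Corollary \ref{cor_tau_lcm_p_m}(\romannumeral2) and Corollary \ref{cor_prime_keeping_in_lcm}(\romannumeral2) identify the exponents $\max\{a_j,b_j\}$ with the multiplicities of the prime co-divisors of $z$, and the ``$\Leftarrow$'' direction of Theorem \ref{thm_equivalence_fully_castlable}(\romannumeral2) concludes that $z$ is fully castlable.

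The main obstacle I anticipate is bookkeeping rather than conceptual: one must be careful that the prime set over which $u$ and $v$ are expressed as $\lcm$s of prime powers is genuinely common, i.e. that $\PD(\lcm[u,v]) = \PD(u)\cup\PD(v)$ with the individual multiplicities behaving as maxima, and that Corollary \ref{cor_gcd_lcm_of_two_lcms} applies verbatim with zero exponents allowed (it does, since $q^0=1$ and $\lcm$ ignores trivial entries). In the co-divisor case there is the extra subtlety that ``fully castlable'' phrased via $\lcm_\ddagger$ depends on the chosen upper bound, so the invocation of Lemma \ref{lem_ddagger_upperbound_transfer} to normalize everything to the single bound $w$ must be done explicitly before Corollary \ref{cor_gcd_lcm_of_two_lcms}(\romannumeral2) can be used. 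Once these alignments are in place, the proof is a short chain of citations, so I would write it as a compact paragraph for each part rather than a long computation.

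\begin{proof}
(\romannumeral1) Suppose $u,v$ are fully castlable. Let $q_1,\ldots,q_k$ be the distinct primes dividing $\lcm[u,v]$; by Lemma \ref{lem_numbertheoretic_def_lcm}(\romannumeral1) and Lemma \ref{lem_p_divides_lcm_implies_p_divides_one_of_them}(\romannumeral1) these are exactly the primes dividing $u$ or $v$. By Theorem \ref{thm_equivalence_fully_castlable}(\romannumeral1) we may write $u=\lcm[q_1^{a_1},\ldots,q_k^{a_k}]$ and $v=\lcm[q_1^{b_1},\ldots,q_k^{b_k}]$ with $a_j,b_j\ge 0$ (allowing exponent $0$ when the corresponding prime divides only one of $u,v$). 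By Corollary \ref{cor_gcd_lcm_of_two_lcms}(\romannumeral1),
\[
\lcm[u,v]=\lcm\left[q_1^{\max\{a_1,b_1\}},\ldots,q_k^{\max\{a_k,b_k\}}\right].
\]
By Corollary \ref{cor_tau_lcm_p_m}(\romannumeral1) the divisors of the right-hand side are precisely $\lcm[q_1^{l_1},\ldots,q_k^{l_k}]$ with $0\le l_j\le \max\{a_j,b_j\}$, and by Corollary \ref{cor_prime_keeping_in_lcm}(\romannumeral1) no prime power $q_j^{m}$ with $m>\max\{a_j,b_j\}$ divides $\lcm[u,v]$. Hence $q_j$ has multiplicity exactly $\max\{a_j,b_j\}$ as a prime divisor of $\lcm[u,v]$, so $\lcm[u,v]$ is the least common multiple of the powers of its own prime divisors. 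By the ``$\Leftarrow$'' direction of Theorem \ref{thm_equivalence_fully_castlable}(\romannumeral1), $\lcm[u,v]$ is fully castlable.

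(\romannumeral2) Suppose $u,v$ are fully castlable and $u,v\ddagger w$. Put $z=\lcm_\ddagger[w;u,v]$. Let $q_1,\ldots,q_k$ be the distinct primes co-dividing $z$; arguing as above with Lemma \ref{lem_p_divides_lcm_implies_p_divides_one_of_them}(\romannumeral2), these are exactly the primes co-dividing $u$ or $v$. By Theorem \ref{thm_equivalence_fully_castlable}(\romannumeral2), $u=\lcm_\ddagger[u;q_1^{a_1},\ldots,q_k^{a_k}]$ and $v=\lcm_\ddagger[v;q_1^{b_1},\ldots,q_k^{b_k}]$ with $a_j,b_j\ge 0$. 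Since $u,v\ddagger w$, Lemma \ref{lem_ddagger_upperbound_transfer}(\romannumeral1) lets us rewrite both least common co-multiples with upper bound $w$, namely $u=\lcm_\ddagger[w;q_1^{a_1},\ldots,q_k^{a_k}]$ and $v=\lcm_\ddagger[w;q_1^{b_1},\ldots,q_k^{b_k}]$. By Corollary \ref{cor_gcd_lcm_of_two_lcms}(\romnamental{2}),
\[
z=\lcm_\ddagger\left[w;q_1^{\max\{a_1,b_1\}},\ldots,q_k^{\max\{a_k,b_k\}}\right].
\]
By Corollary \ref{cor_tau_lcm_p_m}(\romannumeral2) and Corollary \ref{cor_prime_keeping_in_lcm}(\romannumeral2), $q_j$ has multiplicity exactly $\max\{a_j,b_j\}$ as a prime co-divisor of $z$, so $z=\lcm_\ddagger[z;q_1^{\max\{a_1,b_1\}},\ldots,q_k^{\max\{a_k,b_k\}}]$ (again using Lemma \ref{lem_ddagger_upperbound_transfer}(\romannumeral1), as $z\ddagger w$). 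By the ``$\Leftarrow$'' direction of Theorem \ref{thm_equivalence_fully_castlable}(\romannumeral2), $z$ is fully castlable.
\end{proof}
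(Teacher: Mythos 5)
Your proof is correct and takes exactly the paper's approach: the paper's one-line argument ("Combining Corollary \ref{cor_gcd_lcm_of_two_lcms} and Theorem \ref{thm_equivalence_fully_castlable}, the corollary follows") is precisely the chain you spelled out, including the use of Lemma \ref{lem_ddagger_upperbound_transfer} to normalize upper bounds in the co-divisor case. One minor note: in part (\romannumeral2) you wrote \verb|\romnamental{2}| instead of \verb|\romannumeral2|, which would not compile.
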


\begin{proof}
Combining Corollary \ref{cor_gcd_lcm_of_two_lcms} and Theorem \ref{thm_equivalence_fully_castlable}, the corollary follows.
\end{proof}

\begin{theorem} \label{thm_Omega=Omega_ddagger}
For a fully castlable element $u$ in a natural monoid, we have that $\Omega(u)=\Omega_\ddagger(u)$.
\end{theorem}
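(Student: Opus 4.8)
The plan is to derive the equality $\Omega(u)=\Omega_\ddagger(u)$ directly from Theorem~\ref{thm_equivalence_fully_castlable} together with the duality Lemma~\ref{lem_lcm_prime_powers_equivalent_lcm_ddagger_prime_powers}. First I would write $u$ in terms of its prime divisors: by Theorem~\ref{thm_equivalence_fully_castlable}(\romannumeral1), since $u$ is fully castlable, we have $u=\lcm[q_1^{m_1},q_2^{m_2},\ldots,q_k^{m_k}]$, where $q_1,\ldots,q_k$ are the distinct prime divisors of $u$ with multiplicities $m_1,\ldots,m_k$. In particular $\Omega(u)=m_1+m_2+\cdots+m_k=\ind(u)$ (using Corollary~\ref{cor_tau_lcm_p_m} and the computation of $\Omega$ on such least common multiples recorded just after Corollary~\ref{cor_gcd_lcm_of_two_lcms}).

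Next I would apply Lemma~\ref{lem_lcm_prime_powers_equivalent_lcm_ddagger_prime_powers}: there exist distinct primes $r_1,\ldots,r_k$ such that $u=\lcm_\ddagger[u;r_1^{m_1},\ldots,r_k^{m_k}]$. This immediately gives $r_1,\ldots,r_k\ddagger u$ with multiplicities at least $m_1,\ldots,m_k$ respectively (by Lemma~\ref{lem_q_m_r_requires_q=r}, or simply because $r_j^{m_j}\ddagger u$). Then by the displayed formula after Corollary~\ref{cor_gcd_lcm_of_two_lcms}, $\Omega_\ddagger\left(\lcm_\ddagger[u;r_1^{m_1},\ldots,r_k^{m_k}]\right)=m_1+\cdots+m_k$, hence $\Omega_\ddagger(u)=m_1+\cdots+m_k=\Omega(u)$, which is exactly what we want. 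One small point to check is that $r_1,\ldots,r_k$ are precisely the prime co-divisors of $u$ and that no prime co-divisor is missed and no multiplicity is exceeded; this follows from Corollary~\ref{cor_prime_keeping_in_lcm}(\romannumeral2) applied to $u=\lcm_\ddagger[u;r_1^{m_1},\ldots,r_k^{m_k}]$, which guarantees that any $p^m\ddagger u$ forces $p=r_j$ and $m\le m_j$ for some $j$.

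I do not expect a serious obstacle here, since all the heavy lifting has already been done in Lemma~\ref{lem_lcm_prime_powers_equivalent_lcm_ddagger_prime_powers} and Theorem~\ref{thm_equivalence_fully_castlable}. The only mild subtlety is bookkeeping: making sure the equality $\ind(u)=\Omega(u)$ (which uses that $u$ is a least common multiple of prime powers, so its index equals the sum of the exponents) is invoked correctly, and that Theorem~\ref{thm_prime_multiplicity_from} is not secretly needed. An alternative, slightly more self-contained route would be to use Theorem~\ref{thm_prime_multiplicity_from} directly: both $\Omega(u)$ and $\Omega_\ddagger(u)$ are bounded above by $\ind(u)$, and for a fully castlable element each of the $\ind(u)$ letters in a prime decomposition contributes (via the strong-then-weak castling mechanism exploited in the proof of Theorem~\ref{thm_equivalence_fully_castlable}(\romannumeral1) $\Leftarrow$) exactly one prime divisor and one prime co-divisor, forcing $\Omega(u)=\ind(u)=\Omega_\ddagger(u)$. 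I would present the first, shorter argument as the main proof.

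\begin{proof}
Let $q_1,\ldots,q_k$ be the distinct prime divisors of $u$ with multiplicities $m_1,\ldots,m_k$. By Theorem~\ref{thm_equivalence_fully_castlable}(\romannumeral1), we have $u=\lcm[q_1^{m_1},\ldots,q_k^{m_k}]$, so that $\Omega(u)=m_1+\cdots+m_k$. By Lemma~\ref{lem_lcm_prime_powers_equivalent_lcm_ddagger_prime_powers}, there exist distinct primes $r_1,\ldots,r_k$ such that $u=\lcm_\ddagger[u;r_1^{m_1},\ldots,r_k^{m_k}]$. Then $r_j^{m_j}\ddagger u$ for each $j$, and by Corollary~\ref{cor_prime_keeping_in_lcm}(\romannumeral2), every prime co-divisor of $u$ equals some $r_j$ with multiplicity at most $m_j$. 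Hence the prime co-divisors of $u$ are exactly $r_1,\ldots,r_k$ with multiplicities $m_1,\ldots,m_k$, and therefore $\Omega_\ddagger(u)=m_1+\cdots+m_k=\Omega(u)$.
\end{proof}
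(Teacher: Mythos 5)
Your proof is correct, and in fact it is exactly the route the paper mentions in the first sentence of its own proof (``The theorem can be proved by combining Theorem~\ref{thm_equivalence_fully_castlable} and Lemma~\ref{lem_lcm_prime_powers_equivalent_lcm_ddagger_prime_powers}.'') and then deliberately sets aside in favor of a second, different argument. The paper's written proof proceeds by induction on $\ind(u)$: it writes $u=q_1u_0$, invokes the inductive hypothesis $\Omega(u_0)=\Omega_\ddagger(u_0)$, then uses Theorem~\ref{thm_prime_multiplicity_from} to identify $\PDM_\ddagger(u)=\PDM_\ddagger(u_0)\cup\{\cdot\}$, and finally tracks---via Axiom~\uppercase\expandafter{\romannumeral5}, Lemma~\ref{lem_q_m_r_requires_q=r}, and Lemma~\ref{lem_divisor_castling}---how the prime powers of $u_0$ absorb $q_1$ in the strong castling, to conclude $\Omega(u)=\Omega(u_0)+1=\Omega_\ddagger(u_0)+1=\Omega_\ddagger(u)$. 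Your version pushes all that casework into the already-established structural result $u=\lcm[q_1^{m_1},\ldots,q_k^{m_k}]$ (Theorem~\ref{thm_equivalence_fully_castlable}) and the duality $u=\lcm_\ddagger[u;r_1^{m_1},\ldots,r_k^{m_k}]$ (Lemma~\ref{lem_lcm_prime_powers_equivalent_lcm_ddagger_prime_powers}), then just reads off $\Omega$ and $\Omega_\ddagger$ from each side using Corollary~\ref{cor_prime_keeping_in_lcm} and the displayed formula after Corollary~\ref{cor_gcd_lcm_of_two_lcms}. Yours is the shorter and cleaner argument once those results are in hand; the paper's inductive proof has the pedagogical merit of showing Theorem~\ref{thm_prime_multiplicity_from} in action, but both are sound. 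Your secondary remark---that one could also count letters directly via Theorem~\ref{thm_prime_multiplicity_from} since $\Omega(u)=\ind(u)=\Omega_\ddagger(u)$ for fully castlable $u$---is essentially the paper's route compressed.
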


\begin{proof}
The theorem can be proved by combining Theorem \ref{thm_equivalence_fully_castlable} and Lemma \ref{lem_lcm_prime_powers_equivalent_lcm_ddagger_prime_powers}. In the following, we give another proof based on Theorem \ref{thm_prime_multiplicity_from}.

we use induction on $\ind(u)$. For $\ind(u)\leq 1$, the proof is trivial. Suppose that the theorem has been proved for $\ind(u)=m-1$ with some $m\geq 2$. Now we consider the case $\ind(u)=m$. Write $u=q_1u_0$ and $u_0=q_2q_3\ldots q_m$ for some $q_1,\ldots,q_m\in \cP$. Lemma \ref{lem_fully_ca_di_co_also_fully_ca} shows that $u_0$ is also fully castlable. By inductive hypothesis, we have $\Omega(u_0)=\Omega_\ddagger(u_0)$. Note that $\Omega_\ddagger(u_0)=\#\PDM_\ddagger(u_0)$ and $\Omega_\ddagger(u)=\#\PDM_\ddagger(u)$, where
\begin{align*}
&\PDM_\ddagger(u_0)=\bigcup\limits_{j=2}^{m} \left\{p\in \cP:  \uuline{q_j}q_{j+1}\ldots q_m \rlh z_j\uline{p } \text{ for some }z_j\in S \right\},\\
&\PDM_\ddagger(u)=\PDM_\ddagger(u_0)\cup \left\{p\in \cP:  \uuline{q_1}q_2q_3\ldots q_m \rlh z_r\uline{p } \text{ for some }z_1\in S \right\}
\end{align*}
(recalling that they are multi-sets) by Theorem \ref{thm_prime_multiplicity_from}.

Since $u$ is fully castlable, the elements $q_1,u_0$ are strongly castlable. Therefore $\Omega_\ddagger(u)=\Omega_\ddagger(u_0)+1$. Suppose that $r_1,\ldots,r_k$ are exactly all the distinct prime divisors of $u_0$, with multiplicities $l_1,\ldots,l_k$, respectively. Here $l_1,\ldots, l_k\geq 1$. Then $q_1,r_i^{l_i}$ are strongly castlable for all $1\leq i\leq k$. By Axiom \uppercase\expandafter{\romannumeral5}, we write $\uuline{q_1}r_i^{l_i}\rlh \widehat{r_i}^{l_i}\uline{\widehat{q_1}}$ for some $\widehat{r_1},\ldots,\widehat{r_k},\widehat{q_1}\in \cP$. It is apparent that $\widehat{r_1},\ldots,\widehat{r_k}$ are distinct, and $\widehat{r_i}^l$ are divisors of $u$.

Suppose that $q_1\notin \{r_1,\ldots, r_k\}$. Recalling Lemma \ref{lem_q_m_r_requires_q=r}, we have $q_1\notin \{\widehat{r_1},\ldots,\widehat{r_k}\}$. Then $\Omega(u)\geq 1+l_1+\ldots+l_k$. Suppose that $q_1=r_{i_0}$ for some $1\leq i_0\leq k$, one has $\widehat{r_{i_0}}=q_1$. Therefore $q_1^{l_{i_0}+1}|u$. We also have
\begin{align*}
\Omega(u)\geq l_1+\ldots+l_{i_0-1}+(l_{i_0}+1)+l_{i_0+1}+\ldots +l_k=l_1+\ldots+l_k+1.
\end{align*}
On the other hand, let us consider any prime power $p^h$ dividing $u$. If $p=q_1$, then $p^{h-1}|u_0$. Therefore $h\leq 1$ when $q_1\notin\{\widehat{r_1},\ldots,\widehat{r_k}\}$ and $h\leq l_{i_0}+1$ when $q_1=\widehat{r_{i_0}}$. If $p\neq q_1$, then we have $p^h|q_1u_0$ and $\gcd(p^h,q_1)=1$. Combining Lemma \ref{lem_divisor_castling} and Axiom \uppercase\expandafter{\romannumeral5}, there are some $\breve{p},\breve{q}\in \cP$ with $\breve{p}^h|u$ such that $\uuuline{p^h}\breve{q_1}\rlh q_1\uuuline{\breve{p}^h}$. It follows that $\breve{p}= r_j$ for some $1\leq j\leq k$ and $h\leq l_j$. As a result, we have $\Omega(u)\leq l_1+\ldots+l_k+1$. Therefore
\[
\Omega(u)=l_1+\ldots+l_k+1= \Omega(u_0)+1=\Omega_\ddagger(u_0)+1=\Omega_\ddagger(u).
\]
This completes the proof.
\end{proof}

Theorem \ref{thm_equivalence_fully_castlable} provide us with approaches to represent an element in $S$ in a unique way. For an element $u\in S$, suppose that $u$ has prime divisors $q_1,q_2,\ldots,q_k$ with multiplicities $m_1,m_2,\ldots,m_k$. Then $u_1=\lcm[q_1^{m_1},q_2^{m_2},\ldots,q_k^{m_k}]$ is a divisor of $u$. Indeed, it is the maximum one among all the fully castlable divisors of $u$. We call it the greatest fully castlable divisor of $u$, since any fully castlable divisor of $u$ divides $u_1$. Now write $u=u_1v_1$. Next, suppose that $v_1$ has prime divisors $r_1,r_2,\ldots,r_l$ with multiplicities $n_1,n_2,\ldots,n_l$. We pick $u_2=\lcm[r_1^{n_1},r_2^{n_2},\ldots,r_l^{n_l}]$ and write $u=u_1u_2v_2$. Iterating this process, each element $u$ can be uniquely written as $u=u_1u_2\ldots u_t$, where $u_j$ is the greatest fully castlable divisor of $(u_1u_2\ldots u_{j-1})^{-1}u$ $(1\leq j\leq t)$. Similarly, we can make use of prime co-divisors and least common co-multiples to uniquely represent $u\in S$ as $u=u_t\ldots u_2u_1$ such that $u_j$ is the greatest fully castlable co-divisor of $u(u_{j+1}\ldots u_2u_1)^{-1}$ $(1\leq j\leq t)$.

Next, we consider those natural monoids $S$ consisting of fully castlable elements.

\begin{definition}
If all elements of a castlable monoid $S$ are fully castlable, then we say that $S$ is fully castlable.
\end{definition}

In a fully castlable monoid, the definition of a strong castling and that of a weak castling coincide. The example given in \eqref{eq_example_Huang} is a fully castlable natural monoid.

Define $\beta:\cP\times \cP\rightarrow \cP$ by $\beta(p,r)=q$, where $\uline{p}q \rlh r\uline{t}$ for some $t\in \cP$. For $p\in \cP$, define $\beta_p:\cP\rightarrow \cP$ by $\beta_p(r)=\beta(p,r)$ $(r\in \cP)$. Indeed, we have
\begin{equation} \label{eq_beta_form}
\beta_p(r)=\beta(p,r)=
\begin{cases}
p^{-1}\lcm[p,r], \quad &\If p\neq r,\\
p,\quad &\If p=r.
\end{cases}
\end{equation}
The maps $\beta$ and $\beta_p$ are well-defined, and $\beta_p$ is injective for any $p\in \cP$. When $S$ is abelian, it is fully castlable and $\beta_p$ is the identity map on $\cP$ for any $p\in \cP$.

\begin{lemma} \label{lem_permutation_fully_castlable}
The natural monoid $S$ is fully castlable if and only if the map $\beta_p$ is a bijection for any $p\in \cP$.
\end{lemma}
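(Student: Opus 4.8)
The plan is to treat the two implications separately. Since it is already recorded just above that $\beta_p$ is a well-defined injective self-map of $\cP$ for every $p\in\cP$, the statement ``$\beta_p$ is a bijection for all $p$'' is equivalent to ``$\beta_p$ is surjective for all $p$'', so I shall prove that $S$ is fully castlable if and only if every $\beta_p$ is surjective.

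For the ``only if'' direction, suppose $S$ is fully castlable and fix $p,q\in\cP$. Then $pq\in S$ is fully castlable, so by Lemma \ref{lem_fully_castlable_equivalent_definition} its factors $p$ and $q$ are strongly castlable, hence in particular weakly castlable; thus $(p,q)\in\fC$ and $\uline{p}q\rlh\widetilde{q}\uline{\widetilde{p}}$ for suitable $\widetilde{q},\widetilde{p}\in S$. By Lemma \ref{lem_ind_invariant} we have $\ind(\widetilde{q})=\ind(q)=1$, so $\widetilde{q}\in\cP$, and by definition $\beta_p(\widetilde{q})=q$. As $q$ was arbitrary, $\beta_p$ is surjective.

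For the ``if'' direction, assume every $\beta_p$ is surjective. I will show by induction on $\ind(w_1)+\ind(w_2)$ that any $w_1,w_2\in S$ are strongly castlable; by Lemma \ref{lem_fully_castlable_equivalent_definition} this forces every element of $S$ to be fully castlable. If one of $w_1,w_2$ equals $1$ there is nothing to prove. The base case is $w_1=p$, $w_2=q$ with $p,q\in\cP$: surjectivity of $\beta_p$ yields $r\in\cP$ with $\uline{p}q\rlh r\uline{t}$, so $(p,q)\in\fC$; but a pair both of whose entries are irreducible cannot be produced by the composition rules \eqref{eq_C_castling_composition_1} and \eqref{eq_C_castling_composition_2} (these only yield pairs one of whose entries has index at least $2$), so $(p,q)$ must lie in $\fC_1$ or be of the form $(p,p)$, and in either case $(p,q)\in\fC_0$. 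For the inductive step, suppose $\ind(w_1)\ge 2$ (the case $\ind(w_1)=1$, i.e. $w_1$ prime, is dealt with below). Fixing a factorization $w_1=u_1u_2$ with $u_1,u_2\ne 1$, the induction hypothesis gives $(u_2,w_2)\in\fC_0$, hence a strong castling $\uuline{u_2}w_2\rlh\widehat{v}\uuline{\widehat{u_2}}$; since $\ind(u_1)+\ind(\widehat{v})=\ind(u_1)+\ind(w_2)<\ind(w_1)+\ind(w_2)$, the induction hypothesis also gives $(u_1,\widehat{v})\in\fC_0$, hence $\uuline{u_1}\widehat{v}\rlh\widetilde{v}\uuline{\widehat{u_1}}$; composing these weak castlings and invoking \eqref{eq_C_castling_composition_1} shows $(w_1,w_2)\in\fC$, so the ambient weak castling $\uline{w_1}w_2\rlh\widetilde{v}\uline{\widetilde{u}}$, with $\widetilde{u}=\widehat{u_1}\widehat{u_2}$, is available. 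Running the same construction over an arbitrary factorization $u_1u_2=w_1$ verifies clause (i) of the generating rule for $\fC_0$, while a parallel argument over an arbitrary factorization $v_1v_2=w_2$ — now using $(w_1,v_1)\in\fC_0$ (legitimate since $\ind(w_1)+\ind(v_1)<\ind(w_1)+\ind(w_2)$) followed by $(\widehat{u},v_2)\in\fC_0$ — verifies clause (ii); the uniqueness of $\eta$ in Axiom \uppercase\expandafter{\romannumeral4} guarantees the intermediate elements produced by different factorizations are consistent with the single pair $\eta((w_1,w_2))$. Hence $(w_1,w_2)\in\fC_0$. Finally, when $w_1=p$ is prime, clause (i) is vacuous, clause (ii) is checked exactly as above using the induction hypothesis on factorizations of $w_2$, and the primary castling exists by the base case (if $\ind(w_2)=1$) or by \eqref{eq_C_castling_composition_2}.

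The routine portion is the bookkeeping with the underline notation and the repeated verification that every pair invoked in the $\fC_0$-rules has strictly smaller index-sum. The point I expect to require the most care is managing the mutually recursive definitions of $\fC$ and $\fC_0$: one must produce the ambient membership $(w_1,w_2)\in\fC$ before one is entitled to speak of $\eta((w_1,w_2))$, one must verify both the ``decompose the left factor'' and the ``decompose the right factor'' clauses of the $\fC_0$-generating rule, and one must use the uniqueness built into Axiom \uppercase\expandafter{\romannumeral4} to see that the intermediate elements arising from different factorizations all fit together, so that the $\fC_0$-conditions genuinely hold for every factorization simultaneously.
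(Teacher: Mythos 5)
Your proof is correct, but it takes a noticeably longer route than the paper's. In both directions you route through Lemma \ref{lem_fully_castlable_equivalent_definition}, which recasts full castlability in terms of \emph{strong} castlability of factor pairs, whereas the paper works directly from Definition \ref{def_fully_castlable}, which only asks for \emph{weak} castlability. For ``only if'', the paper simply writes $pq=1\cdot p\cdot q\cdot 1$ and reads off from the definition that $p,q$ are weakly castlable; your detour through Lemma \ref{lem_fully_castlable_equivalent_definition} and $\ind$-counting arrives at the same conclusion but less directly. For ``if'', the paper only needs to show that \emph{every} pair $(u,v)$ is weakly castlable --- a straightforward double induction on $\ind(u),\ind(v)$ built on the closure rules \eqref{eq_C_castling_composition_1} and \eqref{eq_C_castling_composition_2} --- and then reads off full castlability from the definition. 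Your argument goes further and establishes that all pairs are \emph{strongly} castlable, which forces you to verify both $\fC_0$-generating clauses for arbitrary factorizations and to invoke the uniqueness built into Axiom \textsc{IV} to match the intermediate elements across factorizations. The bookkeeping is handled correctly, and the base case observation (that a pair of irreducibles in $\fC$ must lie in $\fC_1$ or be of diagonal form, hence in $\fC_0$) is a nice touch. But you are proving more than the statement requires: once universal weak castlability is in hand, full castlability is immediate from the definition, with no need for the strong-castlability machinery.
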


\begin{proof}
Suppose that $S$ is fully castlable. For any $p,q\in \cP$, one has $pq=1\cdot p\cdot q\cdot 1$. So $\uline{p}q\rlh r\uline{t}$ for some $r,t\in \cP$, which implies $\beta_p(r)=q$. Hence, the map $\beta_p$ is surjective. Since it is also injective, one concludes that $\beta_p$ is a bijection.

Suppose that $\beta_p$ is bijective for all $p\in \cP$. Then for any $p,q\in \cP$, it satisfies that $\uline{p}q\rlh \beta_p^{-1}(q)\uline{t}$ for some $t\in \cP$. So $p,q$ are weakly castlable. By induction on $\ind(u)$ and $\ind(v)$, it is not hard to prove that $u,v$ are weakly castlable for all $u,v\in S$. Now for $u=u_1u_2u_3u_4$ with $u,u_1,u_2,u_3,u_4\in S$, the elements $u_2,u_3$ are weakly castlable. As a result, any element $u$ in $S$ is fully castlable. The proof is completed.
\end{proof}

\subsection{Natural Monoids Containing Finitely Many Primes}
\label{subsection_natural_monoind_finite_primes}

The main purpose of this subsection is to show that a natural monoid with finitely many primes are fully castlable and the corresponding rational group is amenable.

\begin{theorem} \label{thm_finite_primes_implies_fully_castlable}
Suppose that $S$ is a natural monoid containing finitely many primes. Then $S$ is fully castlable. \end{theorem}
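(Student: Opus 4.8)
By Lemma~\ref{lem_permutation_fully_castlable}, it suffices to show that $\beta_p\colon \cP\to \cP$ is a bijection for every $p\in \cP$. Since $\cP$ is assumed finite, and $\beta_p$ is already known to be injective, bijectivity is automatic. So the real content is hidden one level down: I must verify that $\beta_p$ is actually a \emph{well-defined map on all of $\cP$}, i.e., that $p,q$ are weakly castlable for every pair $p,q\in\cP$ (a priori the definition of $\beta_p$ only covers those $q$ for which $\uline{p}q\rlh r\uline{t}$ exists in $\fC$). Equivalently, I need: for any two primes $p,q$, the pair $(p,q)$ lies in $\fC$.

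\textbf{Key steps.} First I would set up an induction showing that \emph{every} pair $(u,v)\in S\times S$ is weakly castlable, which is stronger than what is needed but cleaner to prove; the base case reduces to showing $(p,q)\in\fC$ for all $p,q\in\cP$, and the inductive step is then just the composition rules \eqref{eq_C_castling_composition_1}, \eqref{eq_C_castling_composition_2} built into the definition of $\fC$ (factor $u=u_1u_2$ and $v=v_1v_2$ into primes and compose). So the whole theorem collapses to the base case. Second, for the base case I would argue as follows. Fix $p\in\cP$ and consider the map $\cP\to S$ sending $q$ to $\lcm[p,q]$ (or to $p^{-1}\lcm[p,q]$, matching the intended value $\beta_p(q)$); the point is that if $\gcd(p,q)=1$ then $\uuuline{p}(p^{-1}\lcm[p,q])\rlh q\uuuline{\,\cdot\,}$ is a free castling, so $(p, p^{-1}\lcm[p,q])\in\fC_1\subseteq\fC$, whereas if $\gcd(p,q)\ne 1$ then $q=p$ and $(p,p)\in\fC$ by \eqref{eq_C_definition_p_p}. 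The issue is that this exhibits $(p,w)\in\fC$ for a \emph{particular} second coordinate $w$ determined by $q$, not $(p,q)$ itself. To get $(p,q)\in\fC$ directly I need the ``other direction'': for given $p,q$ I must produce $r,t\in\cP$ with $\uline{p}q\rlh r\uline{t}$. When $p=q$ this is \eqref{eq_C_definition_p_p}. When $p\ne q$: I would use that $\beta_p$ restricted to its current domain is injective with values in the finite set $\cP$, together with the fact (from the free-castling analysis above) that every prime $r\ne p$ \emph{arises} as a value $\beta_p(r')=$ (something) — more precisely, for each $r\in\cP$ with $r\ne p$, coprimality $\gcd(p,r)=1$ gives a free castling $\uuuline{r'}\,\widetilde{?}\rlh\cdots$ and by Lemma~\ref{lem_lcm_unique_from_eta_2=id} the ``input'' $r'$ such that $\uuuline{p}(\cdot)\rlh r'(\cdot)$ is uniquely determined; running over all $r\ne p$ and adding $p\mapsto p$, I get that the partial map $\beta_p$ is a bijection from its domain onto $\cP$, hence (as $\cP$ is finite) its domain is all of $\cP$. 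That is precisely the assertion that $(p,q)\in\fC$ for every $q$.

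\textbf{The main obstacle.} The delicate point is the counting/surjectivity argument in the base case: I must be careful that ``$r$ arises as a castling input'' is stated as a genuine map into $\cP$ and that injectivity plus finiteness of $\cP$ forces surjectivity onto the whole of $\cP$, and simultaneously forces the \emph{domain} of $\beta_p$ to be all of $\cP$. Concretely, the clean way is: define $F_p = \{\, q\in\cP : (p,q)\in\fC\,\}$; show $p\in F_p$; show that $q\mapsto \beta_p(q)$ maps $F_p$ injectively into $\cP$; show that the image is all of $\cP$ by producing, for each target $r\in\cP$, a preimage (using free castlings for $r\ne$ the fixed prime $p$, and $p\mapsto p$ otherwise, invoking Axiom~\uppercase\expandafter{\romannumeral5} / Lemma~\ref{lem_uniqueness_prime_power} to see the second coordinates behave); then $|F_p|=|\cP|<\infty$ forces $F_p=\cP$. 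Once $F_p=\cP$, Lemma~\ref{lem_permutation_fully_castlable} finishes. I expect the bookkeeping in identifying the preimage of a given $r$ — i.e.\ showing that the element $u$ with $\uuuline{p}u\rlh r(\cdot)$ is itself a single prime, so that it lands in $\cP$ — to be the step requiring the most care; this is where Lemma~\ref{lem_lcm_unique_from_eta_2=id}, Corollary~\ref{cor_C_1_ind_transfer} (index preservation), and the power-preserving Axiom~\uppercase\expandafter{\romannumeral5} get used together.
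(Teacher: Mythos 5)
Your proposal is correct in substance and reaches the same conclusion by the same counting argument the paper uses. But the paper's own proof is literally the two sentences that open your proposal — \emph{injectivity of $\beta_p$ plus finiteness of $\cP$ gives bijectivity, then invoke Lemma~\ref{lem_permutation_fully_castlable}} — and everything you do after that is verifying a claim the paper has already established, with some avoidable confusion.

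The confusion is between the domain and the image of $\beta_p$. In the paper, $\beta_p(r)=q$ is defined by the requirement $\uline{p}q\rlh r\uline{t}$; the input is the \emph{target} $r$, the output is the element $q$ that castles into it. The display \eqref{eq_beta_form} shows that $\beta_p$ is total on $\cP$: for $r\neq p$ we have $\gcd(p,r)=1$ (Corollary~\ref{cor_p_k_q_l_coprime}), so $q:=p^{-1}\lcm[p,r]$ gives a free castling $\uuuline{p}q\rlh r\uuuline{t}$ with $q\in\cP$ by index preservation, and for $r=p$ we have $(p,p)\mapsto(p,p)$. So the domain of $\beta_p$ is all of $\cP$ with no hypothesis on $|\cP|$ at all. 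The \emph{image} of $\beta_p$ is $\{q\in\cP:(p,q)\in\fC\}$, and the surjectivity onto $\cP$ — which is what the finiteness hypothesis buys — is precisely the statement that $(p,q)\in\fC$ for all $q$. Your sentence ``I must verify that $\beta_p$ is actually a well-defined map on all of $\cP$, i.e., that $p,q$ are weakly castlable for every pair'' therefore misidentifies the issue: well-definedness is free, and surjectivity is the whole point.

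Your ``clean version'' at the end is nevertheless valid, but notice that it silently replaces the paper's $\beta_p$ by its inverse: you work with $q\mapsto r$ on the domain $F_p=\{q:(p,q)\in\fC\}$, show that map is injective (this needs the gcd analysis and Lemma~\ref{lem_lcm_unique_from_eta_2=id}, not just Axiom~\uppercase\expandafter{\romannumeral4}), show the image is $\cP$ (free castlings), and conclude $F_p=\cP$ by counting. This is correct but strictly harder than the paper's version: injectivity of the paper's $r\mapsto q$ is immediate from Axiom~\uppercase\expandafter{\romannumeral4} (the map $\eta$ is single-valued), whereas injectivity of your $q\mapsto r$ requires the uniqueness argument you flag. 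Also, the ``bookkeeping'' you worry about — that the element $u$ with $\uuuline{p}u\rlh r(\cdot)$ is itself a prime — is just Corollary~\ref{cor_C_1_ind_transfer} ($\ind(u)=\ind(r)=1$); you do not need Axiom~\uppercase\expandafter{\romannumeral5} or Lemma~\ref{lem_uniqueness_prime_power} there. Finally, the ``induction showing every pair $(u,v)$ is weakly castlable'' that you propose as a first step is already carried out inside the proof of Lemma~\ref{lem_permutation_fully_castlable}; reproving it would be redundant.
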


\begin{proof}
For any $p\in \cP$, since $\beta_p:\cP\rightarrow \cP$ is injective and $|\cP|<\infty$, the map $\beta_p$ is also surjective. The theorem follows from Lemma \ref{lem_permutation_fully_castlable}.
\end{proof}

Let $T$ be a monoid or a group. If there is a F{\o}lner sequence $\{F_n\}_{n=1}^\infty$ with $F_n\subseteq F_{n+1}$ $(n\geq 1)$, $\bigcup\nolimits_{n=1}^\infty F_n =T$ such that
\[
\lim\limits_{n\rightarrow\infty} \frac{|u\cdot F_n \bigtriangleup F_n|}{|F_n|} =0
\]
for any given $u\in T$, then we say that $T$ is (left) amenable. Here $A\bigtriangleup B=(A\setminus B)\cup (B\setminus A)$ for two sets $A,B$. Equivalently, we have that $T$ is (left) amenable if and only if for any $\varepsilon>0$ and any finite set $E\subseteq T$, there is some finite set $F\subseteq T$ such that $\sup\nolimits_{u\in E}|u\cdot F \bigtriangleup F|/|F| < \varepsilon$.

Indeed, a monoid satisfying Axiom \uppercase\expandafter{\romannumeral2} is right reversible (see \cite[Page 194]{CP} for related definitions and results). In such situation, the structure of the monoid $S$ and that of the group $G$ are closely related. One may show that an integral monoid $S$ is (left) amenable if and only if its fractional group $G$ is amenable (see \cite{Gri} or \cite[(1.28)]{Pat} for example).

\begin{theorem} \label{thm_amenable_finitely_many_primes}
Let $G$ be a rational group with $S$ its natural monoid. Suppose that $S$ has finitely many primes. Then $G$ is amenable.
\end{theorem}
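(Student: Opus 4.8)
The plan is to build an explicit F\o lner sequence for $S$ directly and then invoke the stated equivalence between amenability of an integral monoid and amenability of its fractional group. By Theorem \ref{thm_finite_primes_implies_fully_castlable}, a natural monoid $S$ with $\cP=\{p_1,\ldots,p_N\}$ finite is fully castlable, so every pair of elements is weakly castlable and, by Lemma \ref{lem_fully_castlable_equivalent_definition}, every factorization $u=w_1w_2$ is a strong castling. I would first record the structural consequence that will do the real work: for each $p_i$ the map $\beta_{p_i}:\cP\to\cP$ of Lemma \ref{lem_permutation_fully_castlable} is a bijection, and more generally (via Lemma \ref{lem_for_amenable_proof} and composition of free castlings) for every $u\in S$ and every $p_i$ there is a $p_{\sigma_u(i)}\in\cP$ and $\widehat u\in S$ with $\uuline{u}\,p_i \rlh p_{\sigma_u(i)}\,\uuline{\widehat u}$, where $\widehat u$ depends on $u$ and $i$ but $\ind(\widehat u)=\ind(u)$; iterating, $\uuline{u}\,w \rlh \widetilde w\,\uuline{\widehat u}$ for every $w$, with $\ind(\widetilde w)=\ind(w)$ and $\ind(\widehat u)=\ind(u)$.

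Next I would take as candidate F\o lner sets the ``balls'' $F_n=\{w\in S:\ \ind(w)\le n\}$. These are finite: by Corollary \ref{coro_divisor_inequality}/the sub-multiplicativity $\tau(uv)\le\tau(u)\tau(v)$ and the bound $\tau(u)\le 2^{\ind(u)}$, together with the fact that each element of index $k$ is a product $q_{j_1}\cdots q_{j_k}$ with each $q_{j_t}\in\cP$ and $|\cP|=N$, one gets $|F_n|\le \sum_{k\le n}N^k<\infty$; and $F_n\subseteq F_{n+1}$ with $\bigcup_n F_n=S$ since every element has a finite prime decomposition (Lemma \ref{lem_irr_decomposition_of_any_element}) and $\ind$ counts its length (Lemma \ref{lem_ind_eq}). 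Fix $u\in S$ with $\ind(u)=d$. The key point is that left multiplication by $u$ interacts with $\ind$ in a controlled way: $\ind(uw)=\ind(u)+\ind(w)=d+\ind(w)$ by Corollary \ref{cor_ind_eq}. Hence $u\cdot F_n\subseteq F_{n+d}$, so $u\cdot F_n\setminus F_n\subseteq F_{n+d}\setminus F_n$. For the other direction, I claim $F_{n-d}\subseteq u\cdot F_n\cap F_n$: indeed if $\ind(v)\le n-d$ one must produce $w\in F_n$ with $uw=v$, which need not literally hold since $u$ may not divide $v$ — so instead I would use the dual/castling picture, writing $v$ via its greatest fully castlable decomposition or, more robustly, counting: the number of $v\in F_n$ \emph{not} of the form $uw$ is controlled because left-multiplication $w\mapsto uw$ is injective from $F_{n-d}$ into $F_n$, giving $|u\cdot F_n\cap F_n|\ge |u\cdot F_{n-d}|=|F_{n-d}|$. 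Therefore
\[
\frac{|u\cdot F_n\bigtriangleup F_n|}{|F_n|}\ \le\ \frac{|F_{n+d}|-|F_{n-d}|}{|F_n|}.
\]
So everything reduces to showing $|F_{n+d}|/|F_n|\to 1$ as $n\to\infty$ for each fixed $d$, i.e.\ that $\{|F_n|\}$ grows subexponentially — equivalently, that $S$ has subexponential growth with respect to the generating set $\cP$.

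The main obstacle, then, is precisely this growth estimate: a priori $|F_n|$ could grow like $c^n$ with $c>1$ (the crude bound $N^n$ allows it), in which case the ratio would not tend to $1$ and balls would fail to be F\o lner. To handle it I would exploit that $S$ has only $N$ primes together with the castling relations $p_j p_i=p_i\,\beta_{p_i}^{-1}(\cdot)$-type rewrites: these let one push every word into a normal form in which the generators appear with non-decreasing (or otherwise bounded) pattern, so that the number of distinct elements of index $n$ is bounded by a polynomial in $n$ of degree at most $N-1$ — the same mechanism that, in the two-prime Thompson case, yields the explicit normal form \eqref{eq_normal_form_introduction}. Concretely I expect to prove, by induction on $N$ using the composition/decomposition lemmas for castlings, that every $w$ with $\ind(w)=n$ can be written canonically as $q_{i_1}^{a_1}q_{i_2}^{a_2}\cdots q_{i_N}^{a_N}$ with $i_1<\cdots<i_N$ a fixed ordering and $\sum a_t=n$, whence $|F_n|=O(n^{N})$; this is polynomial, so $|F_{n+d}|/|F_n|\to1$ and the $F_n$ form a F\o lner sequence. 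Then $S$ is left amenable, and by the equivalence cited before the theorem (\cite{Gri}, \cite[(1.28)]{Pat}) the rational group $G$ is amenable, completing the proof. A fallback, should the clean normal form be awkward to establish in full generality, is to first prove $S$ is a quotient-friendly extension built from the $\beta_p$'s acting on the finite set $\cP$ — essentially an iterated semidirect-type structure over $\bZ^N_{\ge0}$ — and deduce amenability of $G$ from amenability of $\bZ^N$ via the Følner sets of $\bZ^N$ pulled back along $\ind$-type coordinates.
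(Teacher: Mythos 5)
Your overall skeleton is sound and closely parallels the paper: build an explicit F\o lner sequence in $S$ using full castlability (Theorem \ref{thm_finite_primes_implies_fully_castlable} and Lemma \ref{lem_permutation_fully_castlable}), then pass from left amenability of $S$ to amenability of $G$ via the cited equivalence. The counting reduction $|u\cdot F_n\bigtriangleup F_n|\leq |F_{n+d}|-|F_{n-d}|$ for your balls $F_n=\{w:\ind(w)\leq n\}$ is also correct, using $\ind(uw)=\ind(u)+\ind(w)$ and injectivity of left multiplication.

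However, the proposal has a concrete gap exactly where you acknowledge uncertainty: you \emph{assume} subexponential (indeed polynomial) growth of $|F_n|$ but do not prove it, and the normal form you propose to establish it with — a canonically ordered product $q_{i_1}^{a_1}q_{i_2}^{a_2}\cdots q_{i_N}^{a_N}$ — is not available. There is no reason a general natural monoid with $N$ primes admits such an ordered-word normal form, and nothing in the paper supplies one; your analogy with Thompson's normal form does not transfer because $\bS$ has infinitely many primes. The correct substitute is already in the paper: by Theorem \ref{thm_equivalence_fully_castlable}(\romannumeral1) applied in a fully castlable monoid, every $u\in S$ equals $\lcm[p_0^{m_0},\ldots,p_{N-1}^{m_{N-1}}]$ with $(m_0,\ldots,m_{N-1})$ the multiplicities of the prime divisors, and by Corollary \ref{cor_tau_lcm_p_m} this representation is a bijection $S\cong\bZ_{\geq 0}^N$ with $\ind(u)=\sum_j m_j$. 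Hence $|F_n|=\binom{n+N}{N}=O(n^N)$ and $|F_{n+d}|/|F_n|\to 1$, closing the gap. Your secondary ``fallback'' sketch (semidirect-type structure over $\bZ^N_{\geq 0}$) is essentially this observation in different clothing.

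For comparison: the paper does not take balls by index and does not use a growth estimate. It takes the lcm-boxes $F_n=\{\lcm[p_0^{m_0},\ldots,p_{k-1}^{m_{k-1}}]:0\leq m_j\leq n-1\}$ and computes the boundary directly, showing via Lemma \ref{lem_for_amenable_proof} and the bijectivity of each $\beta_{p_i}$ that $p_i\cdot\lcm[p_0^{m_0},\ldots,p_{k-1}^{m_{k-1}}]$ is again an lcm-box element with exponents permuted and one exponent incremented, from which the symmetric difference is of size $O(n^{k-1})$ against $|F_n|=n^k$. Your route trades that hands-on boundary computation for an appeal to subexponential growth; both ultimately lean on the same structure theorem (unique lcm representation), so once you replace your incorrect ordered-product normal form with the lcm normal form, your proof is a valid alternative of comparable length.
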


\begin{proof}
Let $k\geq 1$ and $\cP=\{p_0,p_1,\ldots,p_{k-1}\}$. For any $n\geq 0$, put
\[
F_n=\left\{\lcm[p_0^{m_0},p_1^{m_1},\ldots, p_{k-1}^{m_{k-1}}]:\,0\leq m_0,m_1,\ldots,m_{k-1}\leq n-1\right\}.
\]
We have $|F_n|=k^n$.

By theorem \ref{thm_finite_primes_implies_fully_castlable}, the monoid $S$ is fully castlable. Lemma \ref{lem_permutation_fully_castlable} shows that the map $\beta_{p_i}$ are bijective for $0\leq i\leq k-1$. Suppose that $\uline{p_i}p_j\rlh \beta_{p_i}^{-1}(p_j)\uline{t_{i,j,1}}$ for some $t_{i,j,1}\in \cP$. Applying Lemma \ref{lem_for_amenable_proof}, we obtain that
\[
\uline{p_i}p_j^{m_j}\rlh (\beta_{p_i}^{-1}(p_j))^{m_j}\uline{t_{i,j,m_j}}
\]
for some $t_{i,j,m_j}\in \cP$. Here $\beta_{p_i}^{-1}(p_i)=p_i$ and $t_{i,i,m_i}=p_i$. Then
\begin{align*}
p_i\cdot &\lcm[p_0^{m_0},p_1^{m_1},\ldots, p_{k-1}^{m_{k-1}}] =\lcm[p_ip_j^{m_j}:\, 0\leq j\leq k-1]\\
 &\quad \quad \quad =\lcm\left[(\beta_{p_i}^{-1}(p_j))^{m_j}t_{i,j,m_j}:\, 0\leq j\leq k-1\right].
\end{align*}
So the element
\[
\lcm\left[(\beta_{p_i}^{-1}(p_0))^{m_0},\ldots,(\beta_{p_{i}}^{-1}(p_{i-1}))^{m_i}, p_i^{m_i+1}, (\beta_{p_i}^{-1}(p_{i+1}))^{m_{i+1}},\ldots,(\beta_{p_{i}}^{-1}(p_{k-1}))^{m_{k-1}}\right]
\]
divides $p_i\cdot \lcm[p_0^{m_0},p_1^{m_1},\ldots, p_{k-1}^{m_{k-1}}]$. Moreover, both of these two elements have index $m_0+m_1+\ldots+m_{k-1}+1$. We conclude that
\begin{align*}
&p_i\cdot \lcm[p_0^{m_0},p_1^{m_1},\ldots, p_{k-1}^{m_{k-1}}] \\
&\quad =\lcm\left[(\beta_{p_i}^{-1}(p_0))^{m_0},\ldots,(\beta_{p_{i}}^{-1}(p_{i-1}))^{m_i}, p_i^{m_i+1}, (\beta_{p_i}^{-1}(p_{i+1}))^{m_{i+1}},\ldots,(\beta_{p_{i}}^{-1}(p_{k-1}))^{m_{k-1}}\right].
\end{align*}

Now we have
\begin{align*}
&F_n\setminus (p_i\cdot F_n) = \left\{\lcm[p_0^{m_0},p_1^{m_1},\ldots, p_{k-1}^{m_{k-1}}]\in F_n:\, m_i=0\right\},\\
&F_n\setminus (p_i^{-1}\cdot F_n) =\{\lcm[p_0^{m_0},p_1^{m_1},\ldots, p_{k-1}^{m_{k-1}}]\in F_n:\, m_i=n-1\}.
\end{align*}
Therefore
\begin{align*}
|(p_i\cdot F_n)\setminus F_n| = n^{k-1},\quad |F_n\setminus (p_i\cdot F_n)| = |(p_i^{-1}\cdot F_n)\setminus F_n|= n^{k-1}.
\end{align*}
Then, for $0\leq i\leq k$, we have
\[
\frac{\left|(p_i\cdot F_n)\bigtriangleup F_n\right|}{|F_n|} \leq \frac{2n^{k-1}}{n^{k-2}} = \frac{2}{n}\rightarrow 0,\quad (n\rightarrow \infty).
\]
It follows that $S$ is (left) amenable, and then $G$ is also amenable.
\end{proof}

For a homogenous monoid $S$ containing finitely many irreducible elements, is it (left) amenable?

\section{Construction of Castlings in Thompson's Monoid}
\label{section_Thomspon}

In this section, we will set up the system of castlings in Thompson's Monoid $\bS$. Indeed, the concrete constructions need different approaches. Let us forget those definitions and axioms appeared in Sections \ref{section_homogeneous_monoid}-\ref{section_natural_monoid} at this stage. We will define weak castlings, strong castlings and free castlings in another way. That is to say, we use same terminologies and notations, which are temporarily independent of those occurred previously. In Section \ref{subsection_verifying_axioms_thompson}, we will show that these new definitions of castlings in $\bS$ coincides with the previous ones and Axioms \uppercase\expandafter{\romannumeral4} and \uppercase\expandafter{\romannumeral5} are satisfied.

\subsection{Castling of Words}

We shall define castlings of words before we may define castlings of elements in $\bS$. Recall that when $u=q_1q_2\ldots q_k$ for some $q_1,q_2,\ldots, q_k\in \cP$, we call the right-hand side a word of $u$, and call each $q_t$ $(1\leq t\leq k)$ a letter of this word (we regard $q_t$ as a symbol instead of an element of $S$). For clarity, we will use capital letter to represent a word or its letters. In particular, we will always use $P_j$ to mean the only word of $p_j$ $(j=0,1,2,\ldots)$. If a word $Y$ consists of consecutive letters occurred in a word $X$, then we call $Y$ a subword of $X$. If $X=X_1X_2\ldots X_k$ with $X_j$ subwords of $X$ $(1\leq j\leq k)$, we say that $X_1X_2\ldots X_k$ is a subword-decomposition of $X$. The number of letters in a word $X$ is called the length of this word, which is denoted by $\ind(X)$. A subword-decomposition $X=X_1X_2\ldots X_k$ is said to be proper if $1<\ind(X_j)<\ind(X)$ for all $1\leq j\leq k$.

Applying the relation
\begin{equation} \label{eq_castle_relation_basic}
p_jp_i=p_ip_{j+1},\quad (0\leq i<j),
\end{equation}
it is possible to castle two letters to gain new letters and new words. We say an ordered pair of letters $P_i, P_j$ are castlable, or $P_i,P_j$ can be castled, or $P_i$ can be castled with $P_j$, when $i-j\neq -1$. When $P_i,P_j$ are castlable, we put
\[
\begin{cases}
\widetilde{j}=j, \,\widetilde{i}=i,\quad &\If i-j=0;\\
\widetilde{j}=j, \,\widetilde{i}=i+1,\quad &\If i-j\geq 1;\\
\widetilde{j}=j-1, \,\widetilde{i}=i,\quad &\If i-j\leq -2,
\end{cases}
\]
write $\underline{\underline{P_i}} P_j\rightleftharpoons P_{\widetilde{j}}\underline{\underline{P_{\widetilde{i}}}}$ and call it a castling of letters. For the empty word $\emptyset$ and any word $U$, both $\emptyset, U$ and $U,\emptyset$ are defined to be castlable, and $\underline{\underline{\emptyset}}U \rightleftharpoons U\underline{\underline{\emptyset}}$. Next, we define castling of an ordered pair of words $U,V$ by iteration according to the length of $U,V$. Suppose that castling has been defined for words $U,V$ of length $\ind(U)+\ind(V)\leq m-1$ for some $m\geq 3$. For non-empty words $U,V$ with $\ind(U)+\ind(V)=m$, at least one of $U,V$ has length no smaller than $2$, which ensures a proper subword-decomposition. We say $U,V$ are castlable, or $U,V$ can be castled, or $U$ can be castled with $V$, when at least one of the following two situations hold.

Type (\uppercase\expandafter{\romannumeral1}). If $U$ has a proper subword-decomposition $U=U_1U_2$ such that $\underline{\underline{U_2}} V \rightleftharpoons \widetilde{V}\underline{\underline{\widetilde{U_2}}}$ and $\underline{\underline{U_1}}\widetilde{V} \rightleftharpoons \widetilde{\widetilde{V}}\underline{\underline{\widetilde{U_1}}}$, then $U,V$ are castlable and we define the castling to be $\underline{\underline{U}}V \rightleftharpoons \widetilde{\widetilde{V}}\underline{\underline{\widetilde{U_1}\widetilde{U_2}}}$. For simplicity, we abbreviate the above expressions as
\[
\underline{\underline{U}}V = \underline{\underline{U_1U_2}}V \rlh \underline{\underline{U_1}} \widetilde{V} \underline{\underline{\widetilde{U_2}}} \rlh \widetilde{\widetilde{V}}\underline{\underline{\widetilde{U_1}\widetilde{U_2}}}.
\]

Type (\uppercase\expandafter{\romannumeral2}). If $V$ has a proper subword-decomposition $V=V_1V_2$ such that $\underline{\underline{U}}V_1 \rightleftharpoons \widetilde{V_1}\underline{\underline{\widetilde{U}}}$ and $\underline{\underline{\widetilde{U}}} V_2 \rightleftharpoons \widetilde{V_2}\underline{\underline{\widetilde{\widetilde{U}}}}$, then $U,V$ are also castlable and we define $\underline{\underline{U}}V \rightleftharpoons \widetilde{V_1}\widetilde{V_2}\underline{\underline{\widetilde{\widetilde{U}}}}$. For simplicity, we abbreviate the above expressions as
\[
\underline{\underline{U}}V = \underline{\underline{U}}V_1V_2 \rlh \widetilde{V_1}\underline{\underline{\widetilde{U}}}V_2 \rlh \widetilde{V_1}\widetilde{V_2}\underline{\underline{\widetilde{\widetilde{U}}}}.
\]

For words $U,U^\prime,V,V^\prime$, the expression $U=U^\prime$ means that they are the same words and $\underline{\underline{U}}V=\underline{\underline{U^\prime}}V^\prime$ means $U=U^\prime$ and $V=V^\prime$. The following lemma ensures that the notion of castling of words is well-defined, i,e., the definition of castling in two words $U, V$ does not depend on subword-decompositions, and does not depend on the castling type (\uppercase\expandafter{\romannumeral1}) or (\uppercase\expandafter{\romannumeral2}) either. Moreover, it is also shown that a castling of words can be decomposed into castlings of words according to any subword-decomposition.

\begin{lemma} \label{lem_word_trans_independent_of_word}
Suppose that $U,V$ are castlable, and $\underline{\underline{U}}V\rlh \widetilde{V}\underline{\underline{\widetilde{U}}}$ for some words $\widetilde{V},\widetilde{U}$. Then

(\romannumeral1) for any subword-decomposition $U=U_1^\prime U_2^\prime$, there are words $\widehat{U_1}^\prime,\widehat{U_2}^\prime,\widehat{V}^\prime$ with $\widehat{U_1}^\prime \widehat{U_2}^\prime=\widetilde{U}$ such that
\[
\underline{\underline{U_2^\prime}}V=\widehat{V}^\prime\underline{\underline{\widehat{U_2}^\prime}},\quad \underline{\underline{U_1^\prime}}\widehat{V}^\prime=\widetilde{V}\underline{\underline{\widehat{U_1}^\prime}};
\]

(\romannumeral2) for any subword-decomposition $V=V_1^\prime V_2^\prime$, there are words $\widehat{U}^\prime,\widehat{V_1}^\prime,\widehat{V_2}^\prime$ with $\widehat{V_1}^\prime \widehat{V_2}^\prime=\widetilde{V}$ such that
\[
\underline{\underline{U}}V_1^\prime=\widehat{V_1}^\prime\underline{\underline{\widehat{U}^\prime}},\quad \underline{\underline{\widehat{U}^\prime}}V_1^\prime=\widehat{V_2}^\prime\underline{\underline{\widetilde{U}}}.
\]
\end{lemma}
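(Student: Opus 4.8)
The plan is to prove the two assertions by a single simultaneous induction on $\ind(U)+\ind(V)$, following exactly the recursion that defines castling of words. The base case is $\ind(U)+\ind(V)\le 2$, where at least one of $U,V$ is empty or both are single letters; in the empty case the claim is vacuous (the only subword-decomposition is trivial), and in the two-letter case there is no proper subword-decomposition, so again nothing to check. For the inductive step, fix $U,V$ castlable with $\ind(U)+\ind(V)=m\ge 3$ and a castling $\underline{\underline{U}}V\rlh \widetilde V\underline{\underline{\widetilde U}}$. By definition this castling was produced by a Type~(\uppercase\expandafter{\romannumeral1}) step along some proper subword-decomposition $U=U_1U_2$, or by a Type~(\uppercase\expandafter{\romannumeral2}) step along some $V=V_1V_2$. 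The heart of the argument is to show that any \emph{other} proper subword-decomposition (of $U$ for part~(\romannumeral1), of $V$ for part~(\romannumeral2)) yields the same $\widetilde V,\widetilde U$, and that the castling decomposes compatibly along it.

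First I would handle the "same type" compatibility: given that the castling came from $U=U_1U_2$ of Type~(\uppercase\expandafter{\romannumeral1}), and we are handed another decomposition $U=U_1'U_2'$, I would compare the two by passing to the common refinement $U=A_1A_2A_3$ (writing each $U_i$, $U_i'$ as a concatenation of the blocks $A_1,A_2,A_3$), apply the inductive hypothesis to re-bracket the shorter castlings, and use associativity of concatenation together with uniqueness of the letter-castling rule. This is essentially the word-level analogue of Lemmas~\ref{lem_C1_subseteq_C_0} and~\ref{lem_composition_C_1}: a castling along $A_1A_2A_3$ can be computed as "castle $A_3$ through $V$, then $A_2$, then $A_1$", and the intermediate words are forced. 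The second, more delicate point is the "cross type" compatibility: I must show that a castling obtained via a Type~(\uppercase\expandafter{\romannumeral1}) split of $U$ and one obtained via a Type~(\uppercase\expandafter{\romannumeral2}) split of $V$ give the same result. Here I would again refine both $U=U_1U_2$ and $V=V_1V_2$, and reduce to a $2\times 2$ "diamond" of four sub-castlings, each of strictly smaller total length; the inductive hypothesis lets me fill in and commute the diamond, so the two orders of traversal agree. Once well-definedness is established, parts~(\romannumeral1) and~(\romannumeral2) of the lemma follow: for the decomposition that actually produced the castling the statement is immediate from the definition, and for any other decomposition it follows from the compatibility just proved together with the inductive hypothesis applied to the sub-castlings.

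The main obstacle I expect is the bookkeeping in the "diamond" argument for cross-type compatibility: one has to track four words obtained by castling the pieces $U_1,U_2$ past $V_1,V_2$ in two different orders and verify that the corners match, which requires invoking the inductive hypothesis several times at carefully chosen total lengths (each strictly below $m$) and checking that the refinements are genuine proper subword-decompositions — a subword of length $1$ sitting at the boundary can make a "proper" decomposition degenerate, so the edge cases $\ind(U_i)=1$ or $\ind(V_j)=1$ need separate, routine but fiddly, treatment. A clean way to organize this is to first prove the fully general "(de)composition-chain" statement for words (the exact analogue of Remark~\ref{remark_decomposition_chain}), from which both parts of Lemma~\ref{lem_word_trans_independent_of_word} and all well-definedness claims drop out as special cases; I would state that chain lemma first and let the present lemma be a corollary.
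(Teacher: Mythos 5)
Your proposal is correct and follows essentially the same route as the paper's proof: an induction on $\ind(U)+\ind(V)$, reducing WLOG to the case where the given castling came from a Type~(\uppercase\expandafter{\romannumeral1}) split $U=U_1U_2$, then treating same-type compatibility via a common refinement (the paper's $U_1=XY$, $U_2=Z$, $U_1'=X$, $U_2'=YZ$ is exactly your three-block refinement) and cross-type compatibility via a $2\times 2$ diamond of four shorter castlings commuted by the inductive hypothesis. The only organizational difference is that the paper proves parts (\romannumeral1) and (\romannumeral2) directly rather than isolating a general decomposition-chain statement first, but the underlying argument is identical.
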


\begin{proof}
We use induction on $m=\ind(U)+\ind(V)$. For the cases $m\leq 2$ or $\ind(U)=0$ or $\ind(V)=0$, the results follows immediately. In the following, we always assume that $\ind(U),\ind(V)\geq 1$. Suppose that the lemma holds for $m\leq M-1$ with some $M\geq 3$. Now we consider the case $m=M$.

The fact that $U,V$ are castlable results from either type (\uppercase\expandafter{\romannumeral1}) or type (\uppercase\expandafter{\romannumeral2}). Without loss of generality, we deal with type (\uppercase\expandafter{\romannumeral1}) here. For type (\uppercase\expandafter{\romannumeral2}), similar arguments work. That is to say, there is some proper subword decomposition $U=U_1U_2$ and words $\widehat{U_1},\widehat{U_2},\widehat{V}$ with $\widehat{U_1}\widehat{U_2}=\widetilde{U}$ such that
\[
\underline{\underline{U}}V = \underline{\underline{U_1U_2}}V \rlh \underline{\underline{U_1}} \widehat{V} \underline{\underline{\widehat{U_2}}} \rlh \widetilde{V}\underline{\underline{\widehat{U_1}\widehat{U_2}}}=\widetilde{V}\underline{\underline{\widetilde{U}}}.
\]

(\romannumeral1) Without loss of generality, we assume that $U=U_1^\prime U_2^\prime$ is a proper subword-decomposition, and let us suppose that $\ind(U_1)>\ind (U_1^\prime)$ and
\[
U_1=XY,\quad U_2=Z,\quad U_1^\prime=X,\quad U_2^\prime=YZ
\]
for some non-empty subwords $X,Y,Z$ of $U$. Note that $\ind(U_1)+\ind(\widehat{V})< M$. By inductive hypothesis, the castling of $U_1$ and $\widehat{V}$ does not depend on the subword-decomposition and can be decomposed according to any subword-decomposition. For the subword-decomposition $U_1=XY$, a decomposition of castling of words gives
\[
\underline{\underline{U_1}}\widehat{V} = \underline{\underline{XY}}\widehat{V} \rlh \underline{\underline{X}}\breve{V} \underline{\underline{\breve{Y}}} \rlh \widetilde{V}\underline{\underline{\breve{X}\breve{Y}}}= \widetilde{V}\underline{\underline{\widehat{U_1}}}
\]
for some words $\breve{X},\breve{Y}$ and $\breve{V}$. Note that $\ind(U_2^\prime)+\ind(V)<M$. By inductive hypothesis, a composition of castlings of words leads to
\[
\underline{\underline{U_2^\prime}}V = \underline{\underline{ Y U_2}} V  \rlh \underline{\underline{Y}}\widehat{V}\underline{\underline{\widehat{U_2}}}   \rightleftharpoons \breve{V} \underline{\underline{\breve{Y}\widehat{U_2}}},\quad \underline{\underline{U_1^\prime}} \breve{V} =\underline{\underline{X}} \breve{V}  \rightleftharpoons \widetilde{V}\underline{\underline{\breve{X}}}.
\]
Putting $\widehat{V}^\prime=\breve{V}$, $\widehat{U_2}^\prime=\breve{Y}\widehat{U_2}$ and $\widehat{U_1}^\prime=\breve{X}$, we obtain $\widehat{U_1}^\prime\widehat{U_2}^\prime=\breve{X}\breve{Y}\widehat{U_2}=\widehat{U_1}\widehat{U_2}=\widetilde{U}$.
For the case $\ind(U_1)<\ind(U_1^\prime)$, similar arguments also hold. For the case $\ind(U_1)=\ind(u_1^\prime)$, we have $U_1=U_1^\prime$, $U_2=U_2^\prime$ and the expected result also follows.

(\romannumeral2) Without loss of generality, we assume that $V=V_1^\prime V_2^\prime$ is a proper subword-decomposition. Note that $\ind(U_2)+\ind(V)<M$. By inductive hypothesis, the castling of $U_2$ and $V$ does not depend on specific subword-decomposition and can be decomposed according to any subword-decomposition. A decomposition of castling gives
\[
\underline{\underline{U_2}}V=\underline{\underline{U_2}}V_1^\prime V_2^\prime \rlh \dot{V_1} \underline{\underline{\dot{U_2}}}V_2^\prime \rlh \dot{V_1} \dot{V_2} \underline{\underline{\widehat{U_2}}}=\widehat{V}\underline{\underline{\widehat{U_2}}}.
\]
for some words $\dot{V_1},\dot{V_2},\dot{U_2}$. By inductive hypothesis again, the castling of $U_1$ and $\widehat{V}= \dot{V_1} \dot{V_2}$ does not depend on subword-decompositions. So there are words $\ddot{V_1},\ddot{V_2},\dot{U_1}$ such that
\[
\underline{\underline{U_1}}\widehat{V} = \underline{\underline{U_1}} \dot{V_1} \dot{V_2} \rlh \ddot{V_1}\underline{\underline{\dot{U_1}}}\dot{V_2} \rlh \ddot{V_1}\ddot{V_2}\underline{\underline{\widehat{U_1}}}=\widetilde{V}\underline{\underline{\widehat{U_1}}}.
\]
Since $\ind(U)+\ind(V_1)<M$ and $\ind(\dot{U_1} \dot{U_2})+\ind(V_2)<M$, we deduce by inductive hypothesis that
\[
\underline{\underline{U}} V_1^\prime=\underline{\underline{U_1U_2}} V_1^\prime \rlh \underline{\underline{U_1}} \dot{V_1} \underline{\underline{\dot{U_2}}} \rightleftharpoons \ddot{V_1}\underline{\underline{ \dot{U_1} \dot{U_2}}},\quad \underline{\underline{\dot{U_1}\dot{U_2}}}  V_2^\prime \rlh \underline{\underline{\dot{U_1}}}\dot{V_2} \underline{\underline{\widehat{U_2}}} \rightleftharpoons \ddot{V_2} \underline{\underline{\widehat{U_1}\widehat{U_2}}}=\ddot{V_2} \underline{\underline{\widetilde{U}}}.
\]
Putting $\widehat{V_1}^\prime=\ddot{V_1}$, $\widehat{U}^\prime=\dot{U_1}\dot{U_2}$, $\widehat{V_2}^\prime= \ddot{V_2}$, we have $\widehat{V_1}^\prime \widehat{V_2}^\prime = \ddot{V_1}\ddot{V_2}= \widetilde{V}$. The proof is completed.
\end{proof}

The above lemma ensures that the castling of two words $U,V$ is well-defined. Now for a castlable pair of words $U,V$ and a subword decomposition $U=U_1U_2\ldots U_k$, we write
\[
\underline{\underline{U}}V = \underline{\underline{U_1U_2\ldots U_k}}V \rlh \underline{\underline{U_1U_2\ldots U_{k-1}}} V_1 \underline{\underline{\widetilde{U_k}}} \rlh \ldots \rlh  \underline{\underline{U_1}} V_{k-1} \underline{\underline{\widetilde{U_2}\widetilde{U_3}\ldots \widetilde{U_k}}} \rlh  V_k\underline{\underline{\widetilde{U_1}\widetilde{U_2}\ldots \widetilde{U_k}}}
\]
for some words $\widetilde{U_1},\ldots, \widetilde{U}_k$ and $V_1,\ldots, V_k$. The double underline is used to recognize the final words from the initial words during the castling. We emphasis that the pair of words $U,V$ involved are ordered. A castling in $U,V$ does not ensure that $V,U$ can be castled. However, one can prove by induction that if $\underline{\underline{U}}V \rlh \widetilde{V} \underline{\underline{\widetilde{U}}}$, then $\underline{\underline{\widetilde{V}}} \widetilde{U} \rlh U\underline{\underline{V}}$. Indeed, the following four expressions are regarded same.
\[
\underline{\underline{U}}V \rlh \widetilde{V} \underline{\underline{\widetilde{U}}}, \quad U\underline{\underline{V}} \rlh  \underline{\underline{\widetilde{V}}}\widetilde{U},\quad \widetilde{V} \underline{\underline{\widetilde{U}}} \rlh \underline{\underline{U}}V,\quad \underline{\underline{\widetilde{V}}}\widetilde{U} \rlh U\underline{\underline{V}}.
\]
Moreover, it is easy to see by induction that $\ind(U)=\ind(\widetilde{U})$ and $\ind(V)=\ind(\widetilde{V})$.
We end this subsection with the following interesting example.

\begin{example}
Let $U=P_2P_3$ and $V=P_2P_4$. Then
\[
\underline{\underline{U}}V = \underline{\underline{P_2P_3}}P_2P_4 \rlh \underline{\underline{P_2}}P_2P_4 \underline{\underline{P_4}} \rlh P_2P_3\underline{\underline{P_2P_4}} = U\underline{\underline{V}}.
\]
\end{example}

\subsection{Order Preserving of Words in Castlings}

For $u\in \bS$, we use $\fW(u)$ to denote the set of all words of $u$. For two words $U,U^\prime$ of an element $u\in \bS$, they can be transformed into normal form of $u$ by castling a pair of adjacent castlable letters for finitely many times. So $U$ can also be transformed into $U^\prime$ by such castlings of adjacent letters. Now we establish a partial order on all words of a given element.

Let $u\in S$. If $U,U^\prime$ words of $u$ that are same, then we write $U=U^\prime$ as previous. Consider the situation that $U$ and $U^\prime$ differ from exactly one castling of a pair of adjacent castlable letters. Write $U=XP_iP_jY$ and $U^\prime=XP_{\widetilde{j}}P_{\widetilde{i}}Y$ with $\underline{\underline{P_i}}P_j \rlh P_{\widetilde{j}}\underline{\underline{P_{\widetilde{i}}}}$ for some $i,j\geq 0$. We define
\[
\begin{cases}
U=U^\prime,  \quad &\If i-j=0;\\
U \prec U^\prime,\quad &\If i-j\geq 1;\\
U \succ U^\prime,\quad &\If i-j\leq -2.
\end{cases}
\]
For a word $X=P_{j_1}P_{j_2}\ldots P_{j_k}$, let $\Sigma(X)=\sum\nolimits_{i=1}^k j_i$. Note that when $U\prec U^\prime$, one always has $\Sigma(U)<\Sigma(U^\prime)$. Now suppose that $U,U^\prime,U^{\prime\prime}$ are three words of $u$ satisfying $U\prec U^\prime$ and $U^\prime \prec U^{\prime\prime}$. Then $\Sigma(U)<\Sigma(U^{\prime\prime})$. The situations $U=U^{\prime\prime}$ and $U\succ U^{\prime\prime}$ never happen. So, it is reasonable to define $U\prec U^{\prime\prime}$ in this case. We write $X\preccurlyeq Y$ (and $X\succcurlyeq Y$) if either $X=Y$ or $X\prec Y$ (and $X\succ Y$, respectively). It is not hard to see that ``$\preccurlyeq$'' can be extended to a partial order on $\fW(u)$.

For $u\in \bS$, we use $U_\sharp$ to denote the word of $u$ in normal form. It is not hard to see that $U\preccurlyeq U_\sharp$ for any $U\in \fW(u)$. So we call $U_\sharp$ the maximum word of $u$.

\begin{example}
Let $u=p_2p_4p_6$. Words of $u$ are listed below.
\[
P_4P_3P_2 \quad \preccurlyeq \quad \left.
\begin{matrix}
P_3P_5P_2 & &\preccurlyeq &  & P_3P_2P_6 \\
P_4P_2P_4 & &\preccurlyeq &  & P_2P_5P_4
\end{matrix}
\right. \quad \preccurlyeq \quad P_2P_4P_6.
\]
\end{example}

Let $U,U^\prime$ be words of $u$ such that $U\preccurlyeq U^\prime$. Let $X,Y$ be words of $x,y$, respectively. It is not hard to see that $XU \preccurlyeq XU^\prime$ as words of $xu$, and $UY \preccurlyeq U^\prime Y$ as words of $uy$. This partial order is defined on all words of a given element. Whenever we write $U \preccurlyeq U^\prime$, we always mean that $U,U^\prime$ represent the same element. The following proposition is a key for constructing castlings in Thompson's monoid.

\begin{proposition} \label{prop_element_trans_independent_of_word}
Let $u,v\in S$. Suppose that $U$ is a word of $u$ and $V$ is a word of $v$ such that $\underline{\underline{U}} V \rightleftharpoons \widetilde{V} \underline{\underline{\widetilde{U}}}$ for some words $\widetilde{U},\widetilde{V}$. Then for any word $U^\prime$ of $u$ and any word $V^\prime$ of $v$ with $V^\prime \succcurlyeq V$, the words $U^\prime$ and $V^\prime$ are castlable.

Moreover, write $\underline{\underline{U^\prime}} V^\prime \rightleftharpoons  \widetilde{V}^\prime\underline{\underline{\widetilde{U}^\prime}}$ for some words $\widetilde{U}^\prime,\widetilde{V}^\prime$. Then $\widetilde{V}$ and $\widetilde{V}^\prime$ are words of the same element in $S$, and so are $\widetilde{U}$ and $\widetilde{U}^\prime$.

Furthermore, we have $\widetilde{V}^\prime \succcurlyeq \widetilde{V}$. We also have $\widetilde{U}^\prime \succcurlyeq \widetilde{U}$ if and only if $U^\prime \succcurlyeq U$, and $\widetilde{U}^\prime \preccurlyeq \widetilde{U}$ if and only if $U^\prime \preccurlyeq U$.
\end{proposition}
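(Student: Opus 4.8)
\textbf{Proof plan for Proposition \ref{prop_element_trans_independent_of_word}.}

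The plan is to proceed by induction on $\ind(U) + \ind(V)$, treating the three assertions (castlability of $U', V'$; that $\widetilde{V}, \widetilde{V}'$ and $\widetilde{U}, \widetilde{U}'$ are words of the same elements; the order assertions $\widetilde{V}' \succcurlyeq \widetilde{V}$ and the $\succcurlyeq/\preccurlyeq$ equivalences) simultaneously, since they reinforce each other at smaller parameters. The base cases $\ind(U) = 0$, $\ind(V) = 0$, or $\ind(U) + \ind(V) \leq 2$ are immediate: an empty word castles trivially with anything, and a single-letter/single-letter castling is governed directly by the explicit formula for $\underline{\underline{P_i}}P_j \rlh P_{\widetilde{j}}\underline{\underline{P_{\widetilde{i}}}}$, where one checks by hand that replacing $i$ by $i\pm 1$ (a covering step in $\preccurlyeq$) produces the expected monotone behaviour of $\widetilde i$ and leaves $\widetilde j$'s element unchanged, and similarly for the $V$-side.

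For the inductive step, first I would reduce to the case where $U' $ covers $U$ (or equals $U$) and $V'$ covers $V$ (or equals $V$) in the partial order ``$\preccurlyeq$'' — i.e., they differ by a single castling of an adjacent pair of letters — because a general $U' \in \fW(u)$, $V' \in \fW(v)$ with $V' \succcurlyeq V$ is reached from $(U,V)$ by a finite chain of such covering moves (on the $U$-side moves go in both directions since $U'$ is arbitrary in $\fW(u)$; on the $V$-side moves go only ``upward'' by hypothesis), and the three assertions compose along the chain — transitivity of $\preccurlyeq$ handles the order claims, and ``word of the same element'' is obviously transitive. So it suffices to handle one covering move. Next, using Lemma \ref{lem_word_trans_independent_of_word}, I would choose a proper subword-decomposition of $U$ (or $V$) that isolates the two adjacent letters being castled into a length-two subword, decompose the castling $\underline{\underline{U}}V \rlh \widetilde{V}\underline{\underline{\widetilde{U}}}$ accordingly, and localize the covering move to a single length-two factor. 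The covering move on the $V$-side then feeds through a sub-castling of the form $\underline{\underline{W}}(P_iP_j) \rlh (\ldots)\underline{\underline{\widetilde W}}$ where $P_iP_j$ is replaced by the adjacent $P_{j'}P_{i'}$; on the $U$-side, a covering move inside $U$ is similarly pushed through the castling. In each case, after localizing, I invoke the inductive hypothesis on the two shorter sub-castlings produced by Lemma \ref{lem_word_trans_independent_of_word}, then reassemble.

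The main obstacle I anticipate is the bookkeeping in the reassembly: after decomposing via Lemma \ref{lem_word_trans_independent_of_word} and applying the inductive hypothesis to the pieces, one must verify that the partial-order relations propagate correctly through a \emph{composition} of castlings rather than getting scrambled — in particular, that $\widetilde V' \succcurlyeq \widetilde V$ always holds (never $\preccurlyeq$) while the $\widetilde U$-side faithfully mirrors whatever happened on the $U$-side. The subtle point is the sign asymmetry: a castling $\underline{\underline{P_i}}P_j \rlh P_{\widetilde j}\underline{\underline{P_{\widetilde i}}}$ with $i - j \geq 1$ increases the left index-sum (pushing ``$\preccurlyeq$'' toward the right word) while $i - j \leq -2$ does the opposite, so one has to track how a covering move's ``direction'' ($\prec$ versus $\succ$) transforms when it passes from the $V$-slot through a length-two $U$-factor into the output $\widetilde V$-slot, and separately into the $\widetilde U$-slot. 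I expect this to come down to a finite case analysis on the relative sizes of the indices in the length-two factors, combined with the monotonicity of $i \mapsto \widetilde i$ and $j \mapsto \widetilde j$ in the letter-castling formula, together with the already-established fact (from the abbreviated composition/decomposition formalism and $\Sigma$-monotonicity) that $U \prec U'$ forces $\Sigma(U) < \Sigma(U')$, which rules out the ``$=$'' and wrong-direction outcomes. Once the one-covering-move case is pinned down, the chain argument and transitivity finish the proof.
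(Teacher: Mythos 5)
Your overall strategy mirrors the paper's: reduce to a single covering move, decompose the castling via Lemma \ref{lem_word_trans_independent_of_word} to isolate that move in a short factor, apply the inductive hypothesis to the pieces, and then reassemble while tracking the sign asymmetry of letter-castlings. You also correctly anticipate that the order bookkeeping ultimately rests on a finite case analysis over the relative indices $i,j$. So the skeleton of the argument is right.

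The genuine gap is in your base case. You claim the induction starts from $\ind(U)=0$, $\ind(V)=0$, or $\ind(U)+\ind(V)\leq 2$ (i.e.\ both single letters). But your inductive step requires a \emph{proper} subword-decomposition of the word that carries the covering move, isolating the two adjacent letters being castled into a length-two factor. When the covering move sits in a word of length exactly $2$, there is no proper decomposition that separates anything off — the length-two factor is the whole word — so the sub-castling $(Q_1Q_2,\,\check V)$ produced has the \emph{same} total length $2 + \ind(V)$ as the original pair, and the induction on $\ind(U)+\ind(V)$ stalls. Concretely, $(\ind(U),\ind(V)) = (2,1)$ with the covering move in $U$ cannot be split at all: neither $U$ (only improper decompositions) nor $V$ (a single letter) can be decomposed, and the sum is $3 > 2$, so it is outside your claimed base. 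The same problem arises at $(1,2)$ and $(2,2)$. The paper handles exactly these by hand in Lemmas \ref{lem_U_2_V_1}, \ref{lem_U_1_V_2}, and \ref{lem_U_2_V_2}, each an explicit enumeration over the index cases, and then structures the induction so that the step is only invoked for $\ind(u)\geq 3$ or $\ind(v)\geq 3$. So your plan needs those three small-length cases as explicit base lemmas (or an equivalent enlargement of the base to $\max(\ind(U),\ind(V))\leq 2$), together with a branch in the inductive step that decomposes the \emph{other} word whenever the covering-move word has length $2$ and the other has length $\geq 2$. Without that, the induction does not close.
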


We will prove Proposition \ref{prop_element_trans_independent_of_word} after several more lemmas.

\begin{lemma} \label{lem_U_2_V_1}
Proposition \ref{prop_element_trans_independent_of_word} is true when $\ind(u)=2$ and $\ind(v)=1$.
\end{lemma}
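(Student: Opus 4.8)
\textbf{Proof plan for Lemma \ref{lem_U_2_V_1}.}
The plan is to reduce everything to a completely explicit computation, since here $u=p_ap_b$ for some $a,b\ge 0$ and $v=p_c$ for a single $c\ge 0$, so there are only finitely many combinatorial cases. First I would record the basic reduction: a word $U$ of $u=p_ap_b$ is either the normal form $U_\sharp$ or (if a castling is possible) one single non-normal word, and similarly a word $U'$ of $u$ is one of these; since $v$ has only one letter, $V=V'=P_c$ and the condition $V'\succcurlyeq V$ is vacuous. So the statement to prove collapses to: for \emph{every} word $U$ of $u$ and the unique word $P_c$ of $v$, the pair $U,P_c$ is castlable; the resulting $\widetilde V$ (a single letter, hence the unique word of some $p_{c'}$) and $\widetilde U$ (a $2$-letter word of some element $\widetilde u$) do not depend, \emph{as elements}, on which word $U$ we started with; $\widetilde V\succcurlyeq \widetilde V$ trivially; and $\widetilde U'\succcurlyeq\widetilde U \iff U'\succcurlyeq U$, with the analogous $\preccurlyeq$ statement.

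Next I would carry out the case analysis on the pair $(a,b)$ determining the structure of $\fW(u)$. Writing $u$ in normal form $p_mp_n$ with $m\le n$: if $m=n$ there is only one word and there is nothing to prove beyond castlability of $P_m P_m$ with $P_c$, which follows from the letter-castling rules applied twice (Type (I) decomposition $U=P_m\cdot P_m$). If $m<n$, then $\fW(u)=\{P_mP_n,\ P_{n-1}P_m\}$ (the non-normal word coming from $p_np_m=p_mp_{n+1}$, read backwards), and I check directly that $P_mP_n\succ P_{n-1}P_m$ in the order ``$\preccurlyeq$'' as defined just before the lemma. For each of the two words I compute the castling with $P_c$ via the Type (I) rule: decompose the $2$-letter word $U=U_1U_2$ into its two letters, castle $\underline{\underline{U_2}}P_c\rlh P_{c'}\underline{\underline{\widetilde{U_2}}}$ using the letter rule (which requires $U_2$ and $P_c$ to be castlable, i.e. the relevant index difference is $\ne -1$), then castle $\underline{\underline{U_1}}P_{c'}\rlh P_{c''}\underline{\underline{\widetilde{U_1}}}$ again by the letter rule. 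I would tabulate the outcome $(c'',\widetilde{U_1}\widetilde{U_2})$ in the subcases $c<m$, $c=m$, $m<c<n$, $c=n$, $c>n$ (and whatever boundary subcases arise relative to $n-1$), and in each subcase verify: (a) castlability actually holds for both words of $u$ — the one place this could fail is if some index difference equals $-1$, and I would check that the definition of the order and the structure of the two words rule this out; (b) the two resulting $2$-letter words $\widetilde U,\widetilde U'$ represent the same element of $\bS$ (they will, by applying the defining relation $p_jp_i=p_ip_{j+1}$); and (c) the order relation between $\widetilde U'$ and $\widetilde U$ matches that between $U'$ and $U$, which is immediate once I see that castling a fixed letter past a $2$-letter word is, on the level of indices, an order-preserving bijection between $\fW(u)$ and $\fW(\widetilde u)$.

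The monotonicity claim ``$\widetilde V'\succcurlyeq\widetilde V$'' is trivial here because $\widetilde V,\widetilde V'$ are single letters representing the same element, so $\widetilde V=\widetilde V'$; the substantive monotonicity is entirely in the $\widetilde U$ part. I expect the main obstacle to be purely bookkeeping: getting the subcase boundaries right (how $c$ sits relative to both $m$ and $n$, and relative to $n-1$ which appears in the non-normal word) and confirming that the index difference $-1$ never occurs so that castlability is never lost when passing from one word of $u$ to another. This is where a careless enumeration would miss a case, so I would organize it as a short table with columns ``word of $u$'', ``range of $c$'', ``$\widetilde V$'', ``$\widetilde U$'', ``$\widetilde U$ in normal form'', and read off all three conclusions from the table. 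No deeper idea is needed; the lemma is the base case of the later induction, and its role is simply to verify by hand that the word-castling machinery behaves correctly on the smallest nontrivial shape.
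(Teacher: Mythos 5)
Your plan is essentially the paper's proof: write $u=p_ip_j$ in normal form, note $v=p_k$ has a single word so the condition $V'\succcurlyeq V$ is vacuous, and then enumerate all configurations of $k$ relative to $i,j$ and of the two possible words of $u$, reading off castlability, the resulting element, and the order relation from the resulting table. One slip to flag: you state that $u=p_mp_n$ has two words whenever $m<n$, but the correct criterion (and the one the paper uses) is $m\le n-2$; when $m=n-1$ the would-be second word $P_{n-1}P_m=P_{n-1}P_{n-1}$ is not a word of $u$ at all, so the case boundary $n-m\ge 2$ must be respected when you draw up the table. Also be aware that castlability is genuinely lost in two of the subcases ($k=j+1$ and $k=i+1$); the point of part (a) is not that the index difference $-1$ ``never occurs,'' but that when it occurs for one word of $u$ it occurs for the other as well, so the hypothesis of the proposition is never satisfied in those subcases and there is nothing to check.
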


\begin{proof}
Let $u=p_ip_j$ in normal form and $v=p_k$, where $i,j,k\geq 0$. Then $v$ has only one word $P_k$. Note that $u$ has two different words if and only if $i-j\leq -2$. Write $U=P_iP_j$ and $U^\prime= P_{j-1}P_i$. Then $U\succ U^\prime$. We list all the possibilities below.

\begin{itemize}
\item For $j-k=-1$, neither $U$ nor $U^\prime$ is castlable with $P_k$.

\item For $j-k=0$, one has $\underline{\underline{U}}P_k \rlh P_{k-1}\underline{\underline{P_iP_j}}$ and $\underline{\underline{U^\prime}} P_k  \rlh P_{k-1}\underline{\underline{P_{j-1}P_i}}$. Here $P_iP_j$ and $P_{j-1}P_i$ are words of the same element and $P_iP_j \succ P_{j-1}P_i$.

\item For $j-k\leq -2$, it satisfies $i-k\leq -4$. Then $\underline{\underline{U}} P_k \rlh P_{k-2}\underline{\underline{P_iP_j}}$ and $\underline{\underline{U^\prime}} P_k \rlh P_{k-2}\underline{\underline{P_{j-1}P_i}}$. Here $P_iP_j$ and $P_{j-1}P_i$ are words of a same element and $P_iP_j \succ P_{j-1}P_i$.

\item For $j-k\geq 1$ and $k=i+1$, neither $U$ nor $U^\prime$ is castlable with $P_k$.

\item For $j-k\geq 1$ and $k=i$, one has $\underline{\underline{U}} P_k \rlh P_i\underline{\underline{P_iP_{j+1}}}$ and $\underline{\underline{U^\prime}} P_k \rlh P_i\underline{\underline{P_jP_i}}$. Here $P_iP_{j+1}$ and $P_jP_i$ represent the same element and $P_iP_{j+1} \succ P_jP_i$.

\item For $j-k\geq 1$ and $k\leq i-1$, one has $\underline{\underline{U}} P_k \rlh P_k\underline{\underline{P_{i+1}P_{j+1}}}$ and $\underline{\underline{U^\prime}} P_k \rlh P_k \underline{\underline{P_jP_{i+1}}}$. Here $P_{i+1}P_{j+1}$ and $P_jP_{i+1}$ are words of a same element and $P_{i+1}P_{j+1} \succ P_j P_{i+1}$.

\item For $j-k\geq 1$ and $k\geq i+2$, one has $\underline{\underline{U}} P_k \rlh P_{k-1} \underline{\underline{P_iP_{j+1}}}$ and $\underline{\underline{U^\prime}} P_k \rlh P_{k-1}\underline{\underline{P_jP_i}}$. Here $P_iP_{j+1}$ are $P_jP_i$ represent the same element and $P_iP_{j+1} \succ P_j P_i$.
\end{itemize}
The lemma now follows.
\end{proof}

\begin{lemma} \label{lem_U_1_V_2}
Proposition \ref{prop_element_trans_independent_of_word} is true when $\ind(u)=1$ and $\ind(v)=2$.
\end{lemma}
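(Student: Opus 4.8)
The statement to prove is Lemma~\ref{lem_U_1_V_2}, the ``mirror image'' of Lemma~\ref{lem_U_2_V_1}: it asserts that Proposition~\ref{prop_element_trans_independent_of_word} holds in the base case $\ind(u)=1$, $\ind(v)=2$. So here $u=p_k$ has a unique word $P_k$, while $v$ has normal form $p_ip_j$ with $i,j\ge 0$, and $v$ has two distinct words precisely when $i-j\le -2$, namely $V=P_iP_j$ and $V'=P_{j-1}P_i$ with $V\succ V'$. Since $u$ has only one word, the hypothesis $U^\prime\succcurlyeq U$ is automatic (indeed $U'=U=P_k$), so the ``if and only if'' clauses about $\widetilde U^\prime$ versus $\widetilde U$ degenerate to the bare assertion $\widetilde U^\prime \succcurlyeq \widetilde U$ and $\widetilde U^\prime \preccurlyeq \widetilde U$, i.e.\ $\widetilde U = \widetilde U^\prime$; and what really must be checked is that the two words $V, V'$ are both castlable with $P_k$ (once we know $V$ is), that their castled images $\widetilde V,\widetilde V^\prime$ represent the same element, that $\widetilde U,\widetilde U^\prime$ represent the same element, and that $\widetilde V^\prime \succcurlyeq \widetilde V$.

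The plan is to argue by a direct case analysis on the relative sizes of $i,j,k$, exactly parallel to the proof of Lemma~\ref{lem_U_2_V_1} but reading the castling from the left. First I would unwind what $\uuline{P_k}V \rlh \widetilde V\uuline{\widetilde U}$ means via the Type (\uppercase\expandafter{\romannumeral2}) definition: since $V=V_1V_2$ with $V_1=P_i$, $V_2=P_j$, we have $\uuline{P_k}P_i\rlh \widetilde{V_1}\uuline{P_{k'}}$ followed by $\uuline{P_{k'}}P_j\rlh\widetilde{V_2}\uuline{P_{k''}}$, each step being a castling of letters governed by the displayed rule for $\widetilde i,\widetilde j$. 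The castlability of $P_k$ with $P_i$ requires $k-i\ne -1$, and then $k'$ is $k$, $k+1$, or $k-1$ according as $k-i=0$, $k-i\ge 1$, or $k-i\le -2$; similarly at the second step with $k'$ and $j$. I would then tabulate the finitely many regimes (the cases $k-i$ and $k-j$ each being $=0$, $\ge 1$, or $\le -2$, subject to the relevant non-$(-1)$ constraints, plus the split $i-j=0$ versus $i-j\le -2$ that distinguishes whether $v$ has one or two words), and in each regime write down explicitly both $\uuline{P_k}P_iP_j\rlh\cdots$ and $\uuline{P_k}P_{j-1}P_i\rlh\cdots$, verifying in each that the resulting $\widetilde U$'s coincide as letters $P_{k''}$, that the resulting $\widetilde V$'s are the two words of one element, and that $\widetilde V^\prime\succcurlyeq\widetilde V$ (typically because the map $v\mapsto$ (castled image) shifts indices monotonically, so it preserves or reflects the single ``$\prec$'' relation between $P_iP_j$ and $P_{j-1}P_i$). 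A clean way to organize this is to note that $\widetilde V$ is obtained from $V$ by incrementing the subscript of each letter that lies strictly to the right of the ``$k$-track'' as $P_k$ passes leftward through it and is stopped/passes, mirroring Lemma~\ref{lem_U_2_V_1}; I would state this as the mechanism and then just check the boundary cases where $P_k$ equals or is adjacent to one of $P_i,P_j$.

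The main obstacle I expect is purely bookkeeping: the ``$k'=k-1$'' branch (when $k-i\le -2$) interacts with the second step in a way that forces inequalities like $k-j\le -3$ or $-4$, and one must be careful that castlability is not accidentally violated at the second step, and that the two parallel computations (for $V$ and for $V'$) really do land on words of the same element rather than merely elements with the same index. There is no conceptual difficulty — everything is forced by the relation $p_jp_i=p_ip_{j+1}$ and the already-established well-definedness of word castlings (Lemma~\ref{lem_word_trans_independent_of_word}) — so I would present it compactly as an itemized run through the regimes, as was done for Lemma~\ref{lem_U_2_V_1}, and conclude that in every case $P_k$ is castlable with both words of $v$, the images agree up to the expected identifications, and the order relation $\widetilde V^\prime\succcurlyeq\widetilde V$ holds; the $\widetilde U$-clauses hold trivially since $u$ has a single word. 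This completes the verification of Proposition~\ref{prop_element_trans_independent_of_word} in the case $\ind(u)=1,\ \ind(v)=2$.
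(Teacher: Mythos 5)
Your plan is essentially the paper's proof: a direct, exhaustive case analysis on the relative sizes of the indices, checking in each regime that both words of $v$ castle with the single word of $u$ (when they do), that the resulting $\widetilde V,\widetilde V'$ are the two words of one element with the order preserved, and that the final $\widetilde U$-letters coincide. One small slip in your recollection of the letter-castling rule: when $\underline{\underline{P_k}}$ meets $P_i$ with $k-i\le -2$, the underlined letter keeps its subscript ($k'=k$) and the \emph{other} subscript drops by one ($P_i\mapsto P_{i-1}$), not $k'=k-1$ as you wrote; executing your tabulation carefully would catch this, but as stated it would derail the bookkeeping in that branch.
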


\begin{proof}
Let $v=p_jp_k$ in normal form and $u=p_i$ a prime. Then $u$ has only one word $P_i$. Note that  $v$ has two different words if and only if $j-k\leq -2$. Write $V=P_jP_k$ and $V^\prime= P_{k-1}P_j$. Then $V\succ V^\prime$. We list all the possibilities below.

\begin{itemize}
\item For $i-j=-1$, the letter $P_i$ is castlable with neither $V$ nor $V^\prime$.

\item For $i-j=0$ and $j-k=-2$, one has $\underline{\underline{P_i}} V \rlh P_jP_{k-1}\underline{\underline{P_i}}$, but $P_i$ is not castlable with $V^\prime$.

\item For $i-j=0$ and $j-k\leq -3$, one has $\underline{\underline{P_i}} V \rlh P_jP_{k-1}\underline{\underline{P_i}}$ and $\underline{\underline{P_i}}V^\prime \rlh  P_{k-2}P_j \underline{\underline{P_i}}$. Here $P_jP_{k-1}$ and $P_{k-2}P_j$ are words of a same element and $P_jP_{k-1} \succ P_{k-2}P_j$.

\item For $i-j \leq -2$, one has $i-k\leq -4$. Then $\underline{\underline{P_i}}V \rlh P_{j-1}P_{k-1}\underline{\underline{P_i}}$ and $\underline{\underline{P_i}}V^\prime \rlh P_{k-2}P_{j-1}\underline{\underline{P_i}}$. Here $P_{j-1}P_{k-1}$ and $P_{k-2}P_{j-1}$ are words of a same element and $P_{j-1}P_{k-1} \succ P_{k-2}P_{j-1}$.

\item For $i-j\geq 1$ and $i=k-2$, the letter $P_i$ is castlable with neither $V$ nor $V^\prime$.

\item For $i-j\geq 1$ and $i=k-1$, one has $\underline{\underline{P_i}}V \rlh P_jP_k \underline{\underline{P_{i+1}}}$ and $\underline{\underline{P_i}}V^\prime \rlh P_{k-1}P_j \underline{\underline{P_{i+1}}}$. Here $P_jP_k$ and $P_{k-1}P_j$ represent the same element and $P_jP_k \succ P_{k-1}P_j$.

\item For $i-j\geq 1$ and $i\leq k-3$, one has $j\leq k-4$. Then $\underline{\underline{P_i}}V \rlh P_jP_{k-1}\underline{\underline{P_{i+1}}}$ and $\underline{\underline{P_i}} V^\prime \rlh P_{k-2}P_j \underline{\underline{P_{i+1}}}$. Here $P_jP_{k-1}$ and $P_{k-2}P_j$ represent the same element and $P_jP_{k-1} \succ P_{k-2}P_j$.

\item For $i-j\geq 1$ and $i\geq k$, one has $\underline{\underline{P_i}} V \rlh P_jP_k \underline{\underline{P_{i+2}}}$ and $\underline{\underline{P_i}} V^\prime \rlh P_{k-1}P_j\underline{\underline{P_{i+2}}}$. Here $P_jP_k$ and $P_{k-1}P_j$ represent the same element and $P_jP_k \succ P_{k-1}P_j$.
\end{itemize}
The proof is completed.
\end{proof}

\begin{lemma} \label{lem_U_2_V_2}
Proposition \ref{prop_element_trans_independent_of_word} is true when $\ind(u)=\ind(v)=2$.
\end{lemma}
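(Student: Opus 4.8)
The statement to prove is Lemma~\ref{lem_U_2_V_2}, namely that Proposition~\ref{prop_element_trans_independent_of_word} holds in the case $\ind(u)=\ind(v)=2$, building on Lemmas~\ref{lem_U_2_V_1} and~\ref{lem_U_1_V_2} which treat $(\ind u,\ind v)=(2,1)$ and $(1,2)$. The plan is to reduce the length-$(2,2)$ case to repeated applications of the two already-established base cases by means of the decomposition and composition of castlings of words (Lemma~\ref{lem_word_trans_independent_of_word}). Concretely, write $V=V_1V_2$ with $\ind(V_1)=\ind(V_2)=1$. A castling $\underline{\underline{U}}V\rlh\widetilde V\underline{\underline{\widetilde U}}$ with $\ind(U)=2$ decomposes, via Lemma~\ref{lem_word_trans_independent_of_word}(ii), as
\[
\underline{\underline{U}}V=\underline{\underline{U}}V_1V_2\rlh \widehat{V_1}\underline{\underline{\widehat U}}V_2\rlh \widehat{V_1}\widehat{V_2}\underline{\underline{\widetilde U}}=\widetilde V\underline{\underline{\widetilde U}},
\]
where $\underline{\underline{U}}V_1\rlh\widehat{V_1}\underline{\underline{\widehat U}}$ and $\underline{\underline{\widehat U}}V_2\rlh\widehat{V_2}\underline{\underline{\widetilde U}}$ are each a castling of a length-$2$ word with a length-$1$ word, hence governed by Lemma~\ref{lem_U_2_V_1}.

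\textbf{Key steps.} First I would record the two subword-decompositions $U=U_1U_2$ (for the ``$U$'' side) and $V=V_1V_2$ (for the ``$V$'' side), each into single letters, and observe that every castling appearing in the decomposed diagram is a $(2,1)$- or $(1,2)$-castling. Second, given a second pair of words $U^\prime\in\fW(u)$, $V^\prime\in\fW(v)$ with $V^\prime\succcurlyeq V$, I would choose the single-letter decomposition $V^\prime=V_1^\prime V_2^\prime$ and use that $V^\prime\succcurlyeq V$ forces a compatible relation $V_1^\prime\succcurlyeq V_1$ (after possibly reordering the two letters of $v$, using $\Sigma$ as in the definition of $\preccurlyeq$); then Lemma~\ref{lem_U_2_V_1} applied to $\underline{\underline{U^\prime}}V_1^\prime$ shows these are castlable, produces $\widehat{V_1}^\prime\succcurlyeq\widehat{V_1}$ and $\widehat U^\prime$ a word of the same element as $\widehat U$, with $\widehat U^\prime\succcurlyeq\widehat U\iff U^\prime\succcurlyeq U$. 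Third, I would feed $\widehat U^\prime$ and $V_2^\prime$ into Lemma~\ref{lem_U_2_V_1} again to get the second castling $\underline{\underline{\widehat U^\prime}}V_2^\prime\rlh\widehat{V_2}^\prime\underline{\underline{\widetilde U^\prime}}$, with $\widehat{V_2}^\prime\succcurlyeq\widehat{V_2}$ and $\widetilde U^\prime$ a word of $u$ with $\widetilde U^\prime\succcurlyeq\widetilde U\iff\widehat U^\prime\succcurlyeq\widehat U$. Composing, $\underline{\underline{U^\prime}}V^\prime\rlh\widehat{V_1}^\prime\widehat{V_2}^\prime\underline{\underline{\widetilde U^\prime}}$; since $\widetilde V=\widehat{V_1}\widehat{V_2}$ and $\widetilde V^\prime=\widehat{V_1}^\prime\widehat{V_2}^\prime$ with each factor dominating and representing the same element, $\widetilde V^\prime$ is a word of the same element as $\widetilde V$ and $\widetilde V^\prime\succcurlyeq\widetilde V$; chaining the three ``$\iff$'' equivalences gives $\widetilde U^\prime\succcurlyeq\widetilde U\iff U^\prime\succcurlyeq U$, and likewise for $\preccurlyeq$. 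I would also note the symmetric route through $U=U_1U_2$ using Lemma~\ref{lem_U_1_V_2} gives the same conclusion, which is needed for the ``independent of word'' part of the statement (well-definedness of the castling of $u,v$ as elements, not just words).

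\textbf{Main obstacle.} The routine combinatorics — running through the finitely many residue cases for the slopes/subscripts — is already dispatched by Lemmas~\ref{lem_U_2_V_1} and~\ref{lem_U_1_V_2}, so the genuine difficulty here is bookkeeping the order relation $\preccurlyeq$ through the composition: I must check that when $V^\prime\succcurlyeq V$ the induced first-stage output $\widehat{V_1}^\prime$ really dominates $\widehat{V_1}$ \emph{and} that the intermediate word $\widehat U^\prime$ lands in $\fW(u)$ in a position compatible with the hypothesis relating $U^\prime$ and $U$ — the subtlety being that $\widehat U^\prime$ is a \emph{new} word obtained after a castling, and the monotonicity claims of Lemma~\ref{lem_U_2_V_1} must be invoked in exactly the right order so that the two equivalences ``$\widehat U^\prime\succcurlyeq\widehat U\iff U^\prime\succcurlyeq U$'' and ``$\widetilde U^\prime\succcurlyeq\widetilde U\iff\widehat U^\prime\succcurlyeq\widehat U$'' compose to the desired one. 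A secondary point to handle carefully is that $V^\prime\succcurlyeq V$ as words of $v$ (with $\ind v=2$) may require first normalizing which of the two single-letter orders one picks for $V$ and $V^\prime$; since $v$ has at most two words, this is a finite check but must be stated cleanly so the decomposition $V^\prime=V_1^\prime V_2^\prime$ is the ``right'' one relative to $V=V_1V_2$.
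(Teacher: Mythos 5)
Your plan has a genuine gap in the step where you change $V$ to $V'$. You decompose $V=V_1V_2$ into two letters and then try to pass from $(U,V)$ to $(U',V')$ by comparing $V_1$ with $V_1'$ under $\succcurlyeq$ and applying Lemma~\ref{lem_U_2_V_1} to $\underline{\underline{U'}}V_1'$. This fails for two reasons. First, the relation ``$V_1'\succcurlyeq V_1$'' is not meaningful: when $V'\succ V$ with $\ind(V)=2$, the two letters that get castled to pass from $V$ to $V'$ \emph{are} $V_1,V_2$, so $V_1$ and $V_1'$ are single letters representing \emph{different} primes, and $\preccurlyeq$ is only defined between words of the same element. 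Second, Lemma~\ref{lem_U_2_V_1} concerns $\ind(u)=2,\ \ind(v)=1$: the length-$1$ element has exactly one word, so the lemma varies only the $U$-side while the $V_1$-side is held fixed. It gives no comparison between castlings $\underline{\underline{U}}V_1$ and $\underline{\underline{U'}}V_1'$ when $V_1\neq V_1'$. So the first ``$\iff$'' you want to chain, $\widehat U'\succcurlyeq\widehat U\iff U'\succcurlyeq U$, is not delivered by the lemma you invoke, and the whole composition of order relations collapses.

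The way around this, which is what the paper actually does, is to use the \emph{opposite} decomposition for each of the two changes. To vary the word of $v$, decompose $U=U_1U_2$ into single letters: the decomposed diagram then consists of $(1,2)$-castlings $\underline{\underline{U_2}}V$ and $\underline{\underline{U_1}}\widehat{V}$, in which the length-$2$ word on the $V$-side is the thing that can change, and Lemma~\ref{lem_U_1_V_2} applies cleanly. Separately, to vary the word of $u$ with $V$ held fixed, decompose $V=V_1V_2$ and use $(2,1)$-castlings with Lemma~\ref{lem_U_2_V_1}. Finally, pass from $(U,V)$ to $(U,V')$ by the first mechanism, then from $(U,V')$ to $(U',V')$ by the second, and track $\succcurlyeq$ through the two stages; in each stage the base lemma gives the required ``same element'' and monotonicity claims. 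Your closing remark about the ``symmetric route through $U=U_1U_2$'' is exactly the route you need for the $V$-change, not merely a cross-check: without it the argument is incomplete.
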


\begin{proof}
When $u,v$ both have only one word respectively, the proof is trivial. Suppose that $v$ has two different words $V,V^\prime$ with $V^\prime \succcurlyeq V$. Let $U$ be a word of $u$ such that $U,V$ are castlable. Write $U=U_1U_2$, where $U_1$, $U_2$ are both letters. Then
\[
\underline{\underline{U}}V = \underline{\underline{U_1U_2}}V \rlh \underline{\underline{U_1}}\widehat{V}\underline{\underline{\widehat{U_2}}} \rlh \widehat{\widehat{V}}\underline{\underline{\widehat{U_1}\widehat{U_2}}}
\]
for some words $\widehat{U_1},\widehat{U_2},\widehat{V}$ and $\widehat{\widehat{V}}$. Since $V^\prime \succcurlyeq V$, one deduces by Lemma \ref{lem_U_1_V_2} that $U_2$ and $V^\prime$ are also castlable. Write $\underline{\underline{U_2}}V^\prime \rlh \widehat{V}^\prime \underline{\underline{\widehat{U_2}}}$ for some word $\widehat{V}^\prime$. Here $\widehat{V}, \widehat{V}^\prime$ represent a same element and $\widehat{V}^\prime \succcurlyeq \widehat{V}$. Since $U_1$ and $\widehat{V}$ are castlable, one also deduce that $U_1$ and $\widehat{V}^\prime$ are castlable by Lemma \ref{lem_U_1_V_2}. Write $\underline{\underline{U_1}}\widehat{V}^\prime \rlh \widehat{\widehat{V}}^\prime \underline{\underline{\widehat{U_1}}}$ for some word $\widehat{\widehat{V}}^\prime$. Here $\widehat{\widehat{V}}, \widehat{\widehat{V}}^\prime$ represent a same element and $\widehat{\widehat{V}}^\prime\succcurlyeq \widehat{\widehat{V}}$. Now we conclude that
\[
\underline{\underline{U}}V^\prime = \underline{\underline{U_1U_2}}V^\prime \rlh \underline{\underline{U_1}}\widehat{V}^\prime\underline{\underline{\widehat{U_2}}} \rlh \widehat{\widehat{V}}^\prime\underline{\underline{\widehat{U_1}\widehat{U_2}}},
\]
where $\widehat{\widehat{V}}, \widehat{\widehat{V}}^\prime$ represent a same element and $\widehat{\widehat{V}}^\prime\succcurlyeq \widehat{\widehat{V}}$.

Next, by applying Lemma \ref{lem_U_2_V_1} and similar arguments as above, the following conclusion holds. Suppose that $u$ has two different words $U,U^\prime$ and $V$ is a word of $v$ such that $U,V$ are castlable with $\underline{\underline{U}}V \rlh \widehat{V}\underline{\underline{\widehat{U}}}$. Then $U^\prime$ and $V$ are also castlable, which we denote $\underline{\underline{U^\prime}}V \rlh \widehat{V}\underline{\underline{\widehat{U}^\prime}}$ for some $\widehat{U}^\prime$. Moreover, the words $\widehat{U},\widehat{U}^\prime$ represent a same element. We have $\widehat{U} \preccurlyeq \widehat{U}^\prime$ if and only if $U \preccurlyeq U^\prime$, and $\widehat{U} \succcurlyeq \widehat{U}^\prime$ if and only if $U \succcurlyeq U^\prime$.

Finally, let us suppose that $U,U^\prime$ are two different words of $u$, and $V,V^\prime$ are two different words of $v$ with $V^\prime \succcurlyeq V$, and also suppose that $\underline{\underline{U}}V \rlh \widehat{V}\underline{\underline{\widehat{U}}}$. Without loss of generality, we assume that $U^\prime \succcurlyeq U$. By above discussions in the first paragraph, the words $U$ and $V^\prime$ are castlable. Write $\underline{\underline{U}}V^\prime \rlh \widehat{V}^\prime\underline{\underline{\widehat{U}}}$, where $\widehat{V},\widehat{V}^\prime$ are words of a same element and $\widehat{V}^\prime \succcurlyeq \widehat{V}$. By above discussions in the second paragraph, the words $U^\prime$ and $V^\prime$ are castlable. Write $\underline{\underline{U^\prime}}V^\prime \rlh \widehat{V}^\prime\underline{\underline{\widehat{U}^\prime}}$, where $\widehat{U},\widehat{U}^\prime$ are words of a same element and $\widehat{U}^\prime \succcurlyeq \widehat{U}$. This completes the proof.

\end{proof}

Now we are ready to prove Lemma \ref{prop_element_trans_independent_of_word}.

\begin{proof} [Proof of Proposition \ref{prop_element_trans_independent_of_word}]
For $\ind(u)=0$ or $\ind(v)=0$, the proof is trivial. For $\ind(u),\ind(v)\leq 2$, the result follows from Lemmas \ref{lem_U_2_V_1}, \ref{lem_U_1_V_2} and \ref{lem_U_2_V_2}.  In the following, we always assume that either $\ind(u)\geq 3$, $\ind(v)\geq 1$, or $\ind(u)\geq 1$, $\ind(v)\geq 3$.

Suppose that the lemma has been proved for $\ind(u)\leq m-1$ and $\ind(v)\leq n$ with some $m\geq 3, \, n\geq 1$. We proceed with $\ind(u)=m$ and $\ind(v)\leq n$. It is sufficient to deal with the condition that $U^\prime$ differ from $U$ by exactly one castling of a pair of adjacent castlable letters.

\textsc{Case 1.} We consider the case that the adjacent letters that are castled are the first two letters of $U$. Write $U=Q_1Q_2U_1$ and $U^\prime = Q_2^\prime Q_1^\prime U_1$, where $\ind(U_1)=\ind(U)-2$ and $Q_1,Q_2,Q_1^\prime,Q_2^\prime$ are letters such that $\underline{\underline{Q_1}} Q_2\rlh Q_2^\prime \underline{\underline{Q_1^\prime}}$. Without loss of generality, we assume that $ Q_2^\prime Q_1^\prime\succcurlyeq Q_1Q_2$ (for $ Q_2^\prime Q_1^\prime\preccurlyeq Q_1Q_2$, similar arguments hold as well) and $U^\prime \succcurlyeq U$. Inserting the subword-decomposition $U=Q_1Q_2U_1$ into the castling of $U,V$, we obtain
\[
\underline{\underline{U}}V=\underline{\underline{(Q_1Q_2)U_1}}V \rlh \underline{\underline{Q_1Q_2}}\check{V} \underline{\underline{\check{U_1}}} \rlh \widetilde{V}\underline{\underline{\check{Q_1}\check{Q_2}\check{U_1}}} = \widetilde{V}\underline{\underline{\widetilde{U}}}
\]
for some words $\check{Q_1},\check{Q_2},\check{U_1},\check{V}$. Note that $1\leq \ind(U_1)<m$, the inductive hypothesis says that the castling does not depend on the words chosen. So for $V^\prime \succcurlyeq V$, we also have $\underline{\underline{U_1}}V^\prime \rlh \check{V}^\prime \underline{\underline{\check{U_1}}}$ for some words $\check{V}^\prime$. Here $\check{V}$ and $\check{V}^\prime$ are words of the same element and $\check{V}^\prime\succcurlyeq \check{V}$. Note that $\ind(Q_1Q_2)=2<m$ and $Q_1Q_2, Q_2^\prime Q_1^\prime$ are words of a same element. By inductive hypothesis again, we deduce that $Q_2^\prime Q_1^\prime$ and $\check{V}^\prime$ are also castlable. Write $\underline{\underline{Q_2^\prime Q_1^\prime}} \check{V}^\prime \rlh \widetilde{V}^\prime \underline{\underline{\check{Q_2}^\prime\check{Q_1}^\prime}}$, where $\widetilde{V}$ and $\widetilde{V}^\prime$ represent the same element and so does $\check{Q_1}\check{Q_2}$ and $\check{Q_1}^\prime\check{Q_2}^\prime$. Moreover, one has $\widetilde{V}^\prime \succcurlyeq \widetilde{V}$ and $\check{Q_2}^\prime\check{Q_1}^\prime \succcurlyeq Q_1Q_2$. To sum up, we have
\[
\underline{\underline{U^\prime}} V^\prime = \underline{\underline{(Q_2^\prime Q_1^\prime) U_1}}V^\prime \rlh \underline{\underline{Q_2^\prime Q_1^\prime}} \check{V}^\prime \underline{\underline{\check{U_1}}} \rlh \widetilde{V}^\prime \underline{\underline{\check{Q_2}^\prime\check{Q_1}^\prime \check{U_1}}} = \widetilde{V}^\prime \underline{\underline{\widetilde{U}^\prime}},
\]
where $\widetilde{V}$ and $\widetilde{V}^\prime$ represent the same element, and so does $\check{Q_1}\check{Q_2}\check{U_1}$ and $\check{Q_2}^\prime\check{Q_1}^\prime\check{U_1}$. Moreover, one has $\widetilde{V}^\prime \succcurlyeq \widetilde{V}$ and $\widetilde{U}^\prime=\check{Q_2}^\prime\check{Q_1}^\prime\check{U_1} \succcurlyeq Q_1Q_2\check{U_1}=\widetilde{U}$.

Second, we consider the case that the adjacent letters that are castled do not involve the first letter of $U$. Write the subword-decomposition $U=QU_2$ with $\ind(Q)=1$ and $U^\prime=QU_2^\prime$ after castling the adjacent letters. Note that $U_2$ and $U_2^\prime$ represent the same element. Without loss of generality, we still assume that $U^\prime \succcurlyeq U$, i.e., $U_2^\prime \succcurlyeq U_2$. By the castling of $U,V$, one obtains
\[
\underline{\underline{U}}V = \underline{\underline{QU_2}}V \rlh \underline{\underline{Q}} \breve{V} \underline{\underline{\breve{U_2}}} \rlh \widetilde{V}\underline{\underline{\breve{Q}\breve{U_2}}} = \widetilde{V}\underline{\underline{\widetilde{U}}}
\]
for some words $\breve{Q},\breve{U_2},\breve{V}$. Note that $\ind(U_2)<m$. For $V^\prime \succcurlyeq V$, it follows from inductive hypothesis that $U_2^\prime$ and $V^\prime$ are castlable. Write $\underline{\underline{U_2^\prime}}  V^\prime \rlh \breve{V}^\prime \underline{\underline{\breve{U_2}^\prime}}$, where $\breve{V}$ and $\breve{V}^\prime$ are words of same element and so are $\breve{U_2}$ and $\breve{U_2}^\prime$. Moreover, one has $\breve{V}^\prime \succcurlyeq \breve{V}$ and $\breve{U_2}^\prime \succcurlyeq \breve{U_2}$. By inductive hypothesis again, one deduces that $Q$ and $\breve{V}^\prime$ are also castlable. Write $\underline{\underline{Q}}\breve{V}^\prime \rlh  \widetilde{{V}}^\prime\underline{\underline{\breve{Q}}}$, where $\widetilde{{V}}$ and $\widetilde{{V}}^\prime$ represent the same element and $\widetilde{{V}}^\prime\succcurlyeq \widetilde{{V}}$. Now we have
\[
\underline{\underline{U^\prime}}V^\prime = \underline{\underline{QU_2^\prime}}V^\prime \rlh \underline{\underline{Q}} \breve{V}^\prime \underline{\underline{\breve{U_2}^\prime}} \rlh \widetilde{{V}}^\prime\underline{\underline{\breve{Q}\breve{U_2}^\prime}}=\widetilde{{V}}^\prime\underline{\underline{\widetilde{U}^\prime}}
\]
where $\widetilde{{V}}$ and $\widetilde{{V}}^\prime$ represent the same element and so are $\breve{Q}\breve{U_2}$ and $\breve{Q}\breve{U_2}^\prime$. Moreover, one has $\widetilde{{V}}^\prime\succcurlyeq \widetilde{{V}}$ and $\widetilde{U}^\prime=\breve{Q}\breve{U_2}^\prime\succcurlyeq\breve{Q}\breve{U_2}=\widetilde{U}$.

One the other hand, suppose that the lemma has been proved for $\ind(u)\leq m$ and $\ind(v)\leq n-1$ with some $m\geq 1, \, n\geq 3$. We proceed with $\ind(u)\leq m$ and $\ind(v)= n$. Similarly, it is sufficient to deal with the condition that $V^\prime$ differ from $V$ by exactly one castling of a pair of adjacent castlable letters and $V^\prime \succcurlyeq V$. Similar arguments as above also work.

By induction, the proposition follows.
\end{proof}

\subsection{Existence of a Minimum Word}

We say a word $U$ of $u$ minimal, if $U^\prime \preccurlyeq U$ implies $U^\prime=U$ for any $U^\prime\in \fW(u)$. We say a word $U$ of $u$ minimum if $U^\prime \succcurlyeq U$ for all $U^\prime \in \fW(u)$. Since $\fW(u)$ is a finite set, a minimal word always exists. If a minimum word exists, then it is minimal.

For a minimal word, one can verify the following lemma immediately.

\begin{lemma} \label{lem_condition_for_minimal}
Let $U=P_{j_1}P_{j_2}\ldots P_{j_k}$ be word of $u$. Then $U$ is a minimal word if and only if $j_r-j_{r+1}\geq -1$ for all $1\leq r\leq k-1$.
\end{lemma}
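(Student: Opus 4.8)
The statement to prove is a clean, purely combinatorial characterization of minimal words of a given element $u\in\bS$: the word $U=P_{j_1}P_{j_2}\ldots P_{j_k}$ is minimal precisely when no descent in the subscript sequence drops by more than one, i.e.\ $j_r-j_{r+1}\geq -1$ for all $1\leq r\leq k-1$. The plan is to prove both implications by unwinding the definitions of ``$\preccurlyeq$'' and of castlings of adjacent letters, since a minimal word is by definition one that cannot be strictly decreased by a single castling of a pair of adjacent castlable letters (and, inductively, not by any chain of such castlings, because $\Sigma$ strictly decreases along $\prec$).

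First I would prove the contrapositive of the ``only if'' direction: suppose some $j_r-j_{r+1}\leq -2$. Then the pair of adjacent letters $P_{j_r},P_{j_{r+1}}$ is castlable (the obstruction $i-j=-1$ does not occur), and the rule for castling letters in the case $i-j\leq -2$ gives $\underline{\underline{P_{j_r}}}P_{j_{r+1}}\rlh P_{j_{r+1}-1}\underline{\underline{P_{j_r}}}$, hence the new word $U'=P_{j_1}\ldots P_{j_{r-1}}P_{j_{r+1}-1}P_{j_r}P_{j_{r+2}}\ldots P_{j_k}$ is a word of $u$ with $U'\prec U$ (this is exactly the defining case $i-j\leq -2$ of the local order, transported to the whole word by the fact, noted in the text, that $X U Y\preccurlyeq XU'Y$ when $U\preccurlyeq U'$). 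Therefore $U$ is not minimal. Conversely, for the ``if'' direction, assume $j_r-j_{r+1}\geq -1$ for all $r$; I must show no single elementary castling can strictly lower $U$. An elementary castling acts on two adjacent letters $P_{j_r},P_{j_{r+1}}$, and it lowers the word only in the case $j_r-j_{r+1}\leq -2$ (the cases $j_r-j_{r+1}=0$ give an equal word, and $j_r-j_{r+1}\geq 1$ give a larger word, by the definition of ``$\prec$''). Since the hypothesis rules out $j_r-j_{r+1}\leq -2$ for every adjacent pair, every elementary castling either fixes $U$ or strictly increases it; hence there is no $U'\in\fW(u)$ with $U'\prec U$, so $U$ is minimal.

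The one point that needs slightly more care — and which I expect to be the only real (if mild) obstacle — is passing from ``no \emph{single} elementary castling lowers $U$'' to ``no word $U'$ satisfies $U'\prec U$''. This is handled by the structure already set up before the lemma: the relation $\prec$ on $\fW(u)$ was defined so that $U'\prec U$ forces $\Sigma(U')<\Sigma(U)$, and ``$\preccurlyeq$'' is the partial order generated by the elementary relations; so any chain witnessing $U'\prec U$ must contain at least one elementary step that strictly decreases $\Sigma$, i.e.\ an elementary castling of type $i-j\leq -2$ somewhere along the way. But the very first elementary step out of $U$ can only be applied to an adjacent castlable pair of $U$, and by hypothesis none of those has $j_r-j_{r+1}\leq -2$; an equal step ($i-j=0$) changes nothing, and an increasing step ($i-j\geq 1$) makes $\Sigma$ strictly larger, after which one can never return below $\Sigma(U)$ since every elementary step changes $\Sigma$ monotonically in its own direction and $\Sigma$ is $\bZ$-valued. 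A short induction on the length of the chain, tracking $\Sigma$, closes this gap. I would also record the elementary observation, used implicitly, that the set of words reachable from $U$ by castlings of adjacent letters is exactly $\fW(u)$ (already asserted in the text), so that ``minimal word'' as defined via $\fW(u)$ agrees with ``no lowering elementary castling''. With these pieces in place the proof is a few lines and essentially a bookkeeping exercise on the three cases of the letter-castling rule.
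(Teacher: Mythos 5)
The paper offers no proof for this lemma, remarking only that it ``can be verified immediately''; your write-up supplies exactly the expected verification, and both directions correctly reduce to the three-case rule for castling a pair of adjacent letters. There is, however, a small conceptual slip in your treatment of the ``if'' direction. You guard against a hypothetical chain from $U$ that first goes up (or stays level) and later dips below $\Sigma(U)$, and you appeal to a monotonicity of $\Sigma$ to rule it out; but $\Sigma$ is not monotone along an arbitrary castling path (it can oscillate), so that auxiliary argument does not work as phrased. Fortunately it is also not needed. By the paper's definition, $\prec$ is the transitive closure of the elementary relation, so $U'\prec U$ means there is a chain of elementary $\prec$-steps terminating at $U$. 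Looking only at the final such step, the pair of adjacent letters in $U$ that it touches must have subscript gap $\leq -2$: in the $i-j\geq 1$ case of the rule the result $P_jP_{i+1}$ has $j-(i+1)\leq -2$, and in the $i-j\leq -2$ case the pair $P_iP_j$ already satisfies it. Either way $U$ contains an adjacent pair $P_{j_r}P_{j_{r+1}}$ with $j_r-j_{r+1}\leq -2$, contradicting the hypothesis. So the clean formulation of the observation you want is simply: $U$ is minimal if and only if $U$ is not the larger side of any single elementary $\prec$-relation, and the hypothesis excludes every such relation. With this repair your proof is complete and is essentially the argument the paper has in mind.
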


\begin{corollary}
Let $U=U_1U_2$. If $U$ is minimal, then $U_1,U_2$ are also minimal.
\end{corollary}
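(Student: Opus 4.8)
The statement to prove is: if $U = U_1 U_2$ is a minimal word of an element $u \in \bS$, then both $U_1$ and $U_2$ are minimal words (of the elements they represent). The natural tool is the characterization just established in Lemma \ref{lem_condition_for_minimal}: a word $P_{j_1} P_{j_2} \ldots P_{j_k}$ is minimal if and only if $j_r - j_{r+1} \geq -1$ for all $1 \leq r \leq k-1$, i.e.\ consecutive subscripts never drop by $2$ or more. So the plan is simply to read off the condition from $U$ and restrict it to the two subwords.

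First I would write $U = P_{j_1} P_{j_2} \ldots P_{j_k}$ and suppose the subword-decomposition splits after the $\ell$-th letter, so that $U_1 = P_{j_1} \ldots P_{j_\ell}$ and $U_2 = P_{j_{\ell+1}} \ldots P_{j_k}$ (the cases $\ell = 0$ or $\ell = k$ are trivial since one of the factors is the empty word, which is vacuously minimal). Since $U$ is minimal, Lemma \ref{lem_condition_for_minimal} gives $j_r - j_{r+1} \geq -1$ for all $1 \leq r \leq k-1$. The consecutive-pair conditions for $U_1$ are exactly those with $1 \leq r \leq \ell - 1$, and the consecutive-pair conditions for $U_2$ are exactly those with $\ell+1 \leq r \leq k-1$; both families are sub-collections of the conditions already known to hold for $U$. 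Applying Lemma \ref{lem_condition_for_minimal} in the reverse direction to each of $U_1$ and $U_2$ then yields that each is minimal, which is the claim.

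I do not expect a genuine obstacle here — the corollary is an immediate consequence of the local (pairwise-adjacent) nature of the criterion in Lemma \ref{lem_condition_for_minimal}. The only mild subtlety worth a sentence is making sure the subscripts of $U_1$ and $U_2$ are literally the subscripts of $U$ in the same positions (which holds because a subword-decomposition just partitions the letter sequence without altering any letter), so that the inequalities transfer verbatim; and handling the degenerate endpoints $\ell \in \{0, k\}$ separately. No new lemmas are needed, and the argument is symmetric in $U_1$ and $U_2$.
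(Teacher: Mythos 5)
Your proof is correct and is exactly the argument the paper intends; the paper states this as an immediate corollary of Lemma \ref{lem_condition_for_minimal} without writing out a proof, and your observation that the pairwise-adjacency criterion is local (so the conditions for $U_1$ and $U_2$ are a sub-family of those for $U$) is the whole content.
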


\begin{lemma} \label{lem_minimal_word_castling}
Let $t\geq 2$. Suppose that $P_{i_1}\ldots P_{i_{t-1}}P_{i_t}$ is a minimal word. And suppose that $\underline{\underline{P_{i_1}\ldots P_{i_{t-1}}}}P_{i_t} \rlh P_k \underline{\underline{Y}}$ for some word $Y$. Then $i_1\geq i_t=k$.
\end{lemma}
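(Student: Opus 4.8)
The plan is to unfold the castling $\underline{\underline{P_{i_1}\ldots P_{i_{t-1}}}}P_{i_t}$ letter by letter and track the single moving letter. Write $U=P_{i_1}P_{i_2}\cdots P_{i_{t-1}}$ and $V=P_{i_t}$. Since $U,V$ are castlable, Lemma \ref{lem_word_trans_independent_of_word} applied to the decomposition of $U$ into its individual letters produces letters $W_1,\ldots,W_{t-1}$ and words $V^{(t)}=P_{i_t},V^{(t-1)},\ldots,V^{(1)}$ with $\underline{\underline{P_{i_r}}}V^{(r+1)}\rlh V^{(r)}\underline{\underline{W_r}}$ for $1\le r\le t-1$, and with $Y=W_1W_2\cdots W_{t-1}$ and $P_k=V^{(1)}$. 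Because castling of words preserves index, each $V^{(r)}$ has index $1$, so $V^{(r)}=P_{m_r}$ for some $m_r\ge 0$; in particular the castlability of $P_{i_r}$ with $V^{(r+1)}=P_{m_{r+1}}$ forces $i_r-m_{r+1}\ne -1$.

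The core step is the downward induction on $r$ (from $t-1$ to $1$) of the assertion ``$m_r=i_t$ and $i_r\ge i_t$''. For the inductive step one assumes $m_{r+1}=i_t$ (the case $r=t-1$ being identical with the trivial input $m_t=i_t$). Minimality of $P_{i_1}\ldots P_{i_{t-1}}P_{i_t}$ gives, by Lemma \ref{lem_condition_for_minimal}, that $i_r-i_{r+1}\ge -1$, hence $i_r\ge i_{r+1}-1\ge i_t-1$ using $i_{r+1}\ge i_t$. On the other hand $P_{i_r}$ castles with $V^{(r+1)}=P_{i_t}$, so $i_r-i_t\ne -1$; together these force $i_r\ge i_t$. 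Feeding this into the explicit rule for castling two letters: in $\underline{\underline{P_{i_r}}}P_{i_t}\rlh P_{\widetilde j}\underline{\underline{P_{\widetilde i}}}$ one reads off $\widetilde j=i_t$ in the case $i_r-i_t=0$ and also in the case $i_r-i_t\ge 1$; hence $V^{(r)}=P_{\widetilde j}=P_{i_t}$, i.e.\ $m_r=i_t$. This closes the induction. Taking $r=1$ gives $P_k=V^{(1)}=P_{i_t}$, that is $k=i_t$, together with $i_1\ge i_t=k$, which is the claim.

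The point that needs care, and where the two hypotheses must be combined, is the inequality $i_r\ge i_t$ at each stage: minimality alone only yields $i_r\ge i_t-(t-r)$, which is too weak, and if ever $i_r\le i_t-2$ the letter-castling rule would drop the moving letter from $P_{i_t}$ to $P_{i_t-1}$, destroying the invariant. The induction circumvents this by using the already-established $i_{r+1}\ge i_t$ to confine $i_r$ to $\{i_t-1,i_t,i_t+1,\ldots\}$ via minimality, after which the global hypothesis that the word-castling exists --- transported down to the single-letter step through Lemma \ref{lem_word_trans_independent_of_word} --- rules out the value $i_r=i_t-1$. So the main obstacle is essentially bookkeeping: justifying that the letter-by-letter unfolding is legitimate and that castlability at each single-letter step is a genuine consequence of the global hypothesis, rather than anything computationally hard.
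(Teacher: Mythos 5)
Your proof is correct and follows essentially the same path as the paper's: both unfold the castling letter-by-letter from the right end of $U$, track the single moving letter, and show it stays equal to $P_{i_t}$ because minimality ($i_r - i_{r+1}\geq -1$) together with castlability at each single-letter step ($i_r - i_t\neq -1$) forces $i_r\geq i_t$. The paper packages this as a forward induction on the word length $t$ (repeatedly peeling off the rightmost letter of $U$ and verifying the shortened word is still minimal), whereas you make the full chain explicit via Lemma \ref{lem_word_trans_independent_of_word} and run a downward induction on $r$ with the invariant $m_r=i_t$, $i_r\geq i_t$; these are two presentations of the same argument.
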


\begin{proof}
By Lemma \ref{lem_condition_for_minimal}, one has $i_r-i_{r+1}\geq -1$ for $1\leq r\leq t-1$. We use induction on $t$. When $t=2$, the castling of $P_{i_1}$ and $P_{i_2}$ shows that either $i_1=i_2$ or $i_1>i_2$. In the former case, one has $\underline{\underline{P_{i_1}}}P_{i_1}\rlh P_{i_1}\underline{\underline{P_{i_1}}}$, which leads to $i_1=i_2=k$. In the latter case, one has $\underline{\underline{P_{i_1}}}P_{i_2}\rlh P_{i_2}\underline{\underline{P_{i_1+1}}}$, which results in $i_1>i_2=k$. As a result, we have $i_1\geq i_2=k$.

Assume that the lemma has been proved for $t\leq T-1$ for some $T\leq 3$. For $t=T$, it follows from $\underline{\underline{P_{i_1}\ldots P_{i_{T-1}}}}P_{i_T} \rlh P_k \underline{\underline{Y}}$ that the letters $P_{i_{T-1}},P_{i_T}$ are castlable. Write $\underline{\underline{P_{i_{T-1}}}}P_{i_T} \rlh
P_{k^\prime}\underline{\underline{Y^\prime}}$ for some words $P_{k^\prime}$ and $Y^\prime$. Since $P_{i_{T-1}}P_{i_T}$ is a minimal word, the inductive hypothesis shows that $i_{T-1}\geq i_T =k^\prime$. Now one has $i_{T-2} - k^\prime \geq i_{T-2}-i_{T-1}\geq -1$. So $P_{i_1}\ldots P_{i_{T-2}}P_{k^\prime}$ is also a minimal word. Write
\[
\underline{\underline{(P_{i_1}P_{i_2}\ldots P_{i_{T-2}})P_{i_{T-1}}}}P_{i_T} \rlh
\underline{\underline{P_{i_1}P_{i_2}\ldots P_{i_{T-2}}}}P_{k^\prime}\underline{\underline{Y^\prime}} \rlh P_k \underline{\underline{Y^{\prime\prime}Y^\prime}} = P_k \underline{\underline{Y}}
\]
for some word $Y^{\prime\prime}$. By applying inductive hypothesis, we obtain that $i_1\geq k^\prime = k$. So $i_1\geq i_T = k$. The lemma follows by induction.
\end{proof}

\begin{lemma} \label{lem_divisor_shift_is_castling}
Let $x\in\bS$ and $p\in \cP$. Suppose that $p|x$. Write $P$ for the only word of $p$. Then for any word $X$ of $x$, there exists a subword-decomposition $X=YQZ$ such that $Q$ is a letter and $\underline{\underline{Y}}Q \rlh P \underline{\underline{\widetilde{Y}}}$ for some word $\widetilde{Y}$.
\end{lemma}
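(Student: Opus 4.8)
The plan is to fix $x$ and $p$ with $p\mid x$, write $x=pv$, and start from the observation that the single word $X_0=PV_0$ (for any word $V_0$ of $v$) trivially has the desired form, with the subword-decomposition $Y=\emptyset$, $Q=P$, $Z=V_0$, since $\underline{\underline{\emptyset}}P\rlh P\underline{\underline{\emptyset}}$ by definition. Because any two words of $x$ are joined by a finite chain of elementary castlings of adjacent castlable letters, and such a move is symmetric, it is enough to show that the property

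\smallskip
\centerline{``there are subwords $Y,Z$ and a single letter $Q$ with $X=YQZ$ and $\underline{\underline{Y}}Q\rlh P\underline{\underline{\widetilde Y}}$ for some word $\widetilde Y$''}
\smallskip

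is preserved when $X$ is transformed into $X'$ by one elementary castling of an adjacent castlable pair of letters. Then, starting from $X_0$, the property propagates to every word of $x$.

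So I would assume $X=YQZ$ has the property and split into cases by the position of the castled pair relative to the factorization $Y\mid Q\mid Z$. If the pair lies entirely inside $Z$, keep the same $Y$ and $Q$. If it lies entirely inside $Y$, then $Y$ becomes a word $Y'$ of the same element, and here I invoke Proposition \ref{prop_element_trans_independent_of_word} with the $V$-side equal to the single letter $Q$ (which is simultaneously the minimum and maximum word of its element, so $Q\succcurlyeq Q$): it says $Y'$ and $Q$ are again castlable and $\underline{\underline{Y'}}Q\rlh \widetilde V'\underline{\underline{\widetilde{Y'}}}$ with $\widetilde V'$ a word of the same element as $P$; since $p\in\cP$ has the unique word $P$, this forces $\widetilde V'=P$, so we keep $Q$ with the new prefix $Y'$.

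The two remaining cases are the straddling ones. If the castled pair is the last letter of $Y$ together with $Q$: the castling $\underline{\underline{Y}}Q\rlh P\underline{\underline{\widetilde Y}}$ has, as its very first elementary step, exactly the castling of $Q$ with the last letter of $Y$, so $X'$ is literally $X$ with that step already carried out; hence I take the new $Q$ to be the single letter into which the old $Q$ has just turned, the new prefix to be $Y$ with its last letter deleted, and reuse the remainder of the original castling as the new one. If the castled pair is $Q$ together with the first letter $R$ of $Z$, say $\underline{\underline{Q}}R\rlh R'\underline{\underline{Q'}}$, then $X'=YR'Q'Z''$ where $Z=RZ''$; I take the new letter to be $Q'$ and the new prefix to be $YR'$, and use Lemma \ref{lem_word_trans_independent_of_word} to decompose $\underline{\underline{YR'}}Q'$ as (castle $Q'$ past $R'$)\,then\,(castle the result through $Y$). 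A short computation with the three letter-castling rules shows that bubbling $Q'$ past $R'$ returns the tracked letter to precisely the value $Q$ it had when it entered $Y$ in the original castling, after which $\underline{\underline{Y}}Q\rlh P\underline{\underline{\widetilde Y}}$ finishes the job; the degenerate alignments (e.g.\ $Q$ and $R$ having equal index) give $X'=X$, and the non-castlable alignments cannot occur.

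I expect the only genuine labor to be this last finite case check in the two straddling cases — verifying that the index of the ``bubbling'' letter lands correctly after the perturbation, using the explicit formulas for $\underline{\underline{P_i}}P_j$ — which is routine. The two conceptual ingredients that make the argument work are: reducing everything to invariance under a single elementary castling (for which a base word starting with $P$ is free), and the fact that a prime of $\bS$ has a unique word, which is exactly what upgrades ``$\widetilde V'$ is a word of the same element as $P$'' in Proposition \ref{prop_element_trans_independent_of_word} to the equality $\widetilde V'=P$.
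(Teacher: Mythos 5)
Your proposal is correct and follows essentially the same strategy as the paper's proof: reduce to invariance under a single elementary castling starting from a base word $X_0 = PV_0$, and handle the four cases according to whether the castled pair lies in $Y$, in $Z$, or straddles one of the two boundaries; the paper's Cases 1--4 match your four cases line by line, including the appeal to Proposition \ref{prop_element_trans_independent_of_word} when the pair lies in $Y$ and the decomposition/composition of letter castlings in the two straddling cases. Your explicit remark that the uniqueness of the word of a prime upgrades ``$\widetilde V'$ is a word of the same element as $P$'' to ``$\widetilde V'=P$'' makes precise something the paper uses silently in its Case 1, and your ``short computation'' in the second straddling case is in fact immediate from the reversibility of letter castlings ($\underline{\underline{R'}}Q'\rlh Q\underline{\underline{R}}$ is the same relation as $Q\underline{\underline{R}}\rlh\underline{\underline{R'}}Q'$ read backwards), so it does not require the case-by-case index check you anticipate.
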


\begin{proof}
Write $x=pv$, let $V$ be any word of $v$ and put $X_0=PV$. Notice that $X_0$ can be transformed into $X$ by castling a pair of adjacent castlable letters for finitely many times. Write $Y_0=\emptyset$, $Q_0=P$ and $Z_0=V$. Then $X_0=Y_0Q_0Z_0$ and $\underline{\underline{\emptyset}} P \rlh P \underline{\underline{\emptyset}}$. We use iterations on number of castlings of letters involved. Suppose the lemma gives similar results after $k$ times of castlings with some $k\geq 1$, i.e., one obtains a word $X_k=Y_kQ_kZ_k$ of $x$ with $\underline{\underline{Y_k}}Q_k \rlh P \underline{\underline{\widetilde{Y_k}}}$ for some word $\widetilde{Y_k}$. Now we apply another castling of a pair of adjacent letters.

\textsc{Case 1.} If the two letters castled are letters of $Y_k$, then we get a new word $Y_{k+1}$ that represents the same element as $Y_k$ does, and $X_{k+1}=Y_{k+1}Q_kZ_k$. Set $Q_{k+1}=Q_k$ and $Z_{k+1}=Z_k$. By Lemma \ref{prop_element_trans_independent_of_word}, we have $\underline{\underline{Y_{k+1}}}Q_{k+1} \rlh P \underline{\underline{\widetilde{Y_{k+1}}}}$ for some word $\widetilde{Y_{k+1}}$ that represent the same element as $\widetilde{Y_{k}}$ does.

\textsc{Case 2.} If the two letters castled are letters of $Z_k$, then we get a new word $Z_{k+1}$ that represents the same element as $Z_k$ does, and $X_{k+1}=Y_kQ_kZ_{k+1}$. Set $Q_{k+1}=Q_k$ and $Y_{k+1}=Y_k$. The conclusion also follows.

\textsc{Case 3.} If the two letters castled are the last letter of $Y_k$ and $Q_k$. Write $Y_k=Y_k^\prime R$ with $\ind(R)=1$ and $\underline{\underline{R}}Q_k \rlh \widetilde{Q}\underline{\underline{\widetilde{R}}}$ for some letters $\widetilde{Q}$, $\widetilde{R}$. Then $X_{k+1}=Y_k^\prime \widetilde{Q}\widetilde{R}Z_k$. It follows that $Y_k^\prime$ is castlable with $\widetilde{Q}$, and $\underline{\underline{Y_k^\prime}}\widetilde{Q}\rlh P\underline{\underline{\widetilde{Y_k}^{\prime}}}$ with $\widetilde{Y_k}= \widetilde{Y_k}^\prime \widetilde{R}$. Now we set $Y_{k+1}=Y_k^\prime$, $Q_{k+1}= \widetilde{Q}$ and $Z_{k+1}=\widetilde{R}Z_k$. The conclusion follows.

\textsc{Case 4.} If the two letters castled are $Q_k$ and the first letter of $Z_k$. Write $Z_k=R Z_k^\prime$ with $\ind(R)=1$ and $Q_k\underline{\underline{R}} \rlh \underline{\underline{\widetilde{R}}}\widetilde{Q}$ for some letters $\widetilde{Q}$, $\widetilde{R}$. Then $X_{k+1}=Y_k\widetilde{R}\widetilde{Q}Z_k^\prime$. Now we set $Y_{k+1}=Y_k \widetilde{R}$, $Q_{k+1}= \widetilde{Q}$, $Z_{k+1}=Z_k^\prime$. Then
\[
\underline{\underline{Y_{k+1}}}Q_{k+1} = \underline{\underline{Y_k\widetilde{R}}}\widetilde{Q} \rlh \underline{\underline{Y_k}} Q_k \underline{\underline{R}} \rlh P \underline{\underline{\widetilde{Y_k}R}}.
\]

Suppose that $X_0$ is transformed to $X$ at the step $K$. We put $Y=Y_K$, $Q=Q_K$ and $Z=Z_K$. Then the lemma follows.
\end{proof}

\begin{theorem}
Any element $u$ in $S$ has a minimum word $U_\flat$.
\end{theorem}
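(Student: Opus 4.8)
The plan is to show that the poset $(\fW(u),\preccurlyeq)$ has a minimum element, mirroring the (already established) existence of the maximum word $U_\sharp$ but working in the opposite direction. The central observation is Lemma~\ref{lem_divisor_shift_is_castling}, which says: for any prime divisor $p$ of $u$ and \emph{any} word $X$ of $u$, there is a subword-decomposition $X=YQZ$ with $Q$ a single letter and $\underline{\underline{Y}}Q\rlh P\underline{\underline{\widetilde{Y}}}$. Reading this through the lens of the order, castling the letter $Q$ leftward past $Y$ produces a word $PX'$ of $u$ (for an appropriate word $X'$ of $p^{-1}u$) which is $\preccurlyeq X$, since each elementary step in the leftward migration of $Q$ has the form $\underline{\underline{P_a}}P_b\rlh P_{\widetilde b}\underline{\underline{P_{\widetilde a}}}$ with $a\ge b$ (because $P$ is the smallest-subscript letter that can emerge; more precisely Lemma~\ref{lem_minimal_word_castling} forces the emerging letter to have subscript $\le$ the letters it jumps over), hence is either an equality or a $\succ$-to-$\prec$ step. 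So: \emph{every} word of $u$ dominates (in $\preccurlyeq$) some word beginning with $P$ for a suitable prime divisor $p$.

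Next I would make this into an induction on $\ind(u)$. For $\ind(u)\le 1$ there is nothing to prove. For the inductive step, let $U^{(1)},U^{(2)},\dots$ enumerate the minimal words of $u$ (a finite nonempty set). Each minimal word $U^{(m)}$ is, by Lemma~\ref{lem_condition_for_minimal}, of the form $P_{i_1}P_{i_2}\cdots P_{i_k}$ with $i_r-i_{r+1}\ge -1$ throughout; in particular its first letter $P_{i_1}$ is a genuine prime divisor $q^{(m)}:=p_{i_1}$ of $u$, and $U^{(m)}=P_{i_1}W^{(m)}$ with $W^{(m)}$ a minimal word of $(q^{(m)})^{-1}u$. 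The key claim is that all the minimal words share the same leading prime, i.e.\ $q^{(m)}$ is independent of $m$, and moreover that this common leading letter is the $\preccurlyeq$-smallest possible first letter among all words of $u$. To see the first point: if $U^{(m)}$ and $U^{(m')}$ are two minimal words with distinct leading letters $P_{i_1}\ne P_{i_1'}$, apply Lemma~\ref{lem_divisor_shift_is_castling} to the prime $p_{i_1'}$ and the word $U^{(m)}$; this yields a word of $u$ beginning with $P_{i_1'}$ that is $\preccurlyeq U^{(m)}$, hence (by minimality of $U^{(m)}$) \emph{equal} to $U^{(m)}$ — but $U^{(m)}$ begins with $P_{i_1}\ne P_{i_1'}$, a contradiction. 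Thus there is a unique leading letter $P_{i_0}$ common to all minimal words of $u$, and writing $q=p_{i_0}$ we get that every minimal word has the form $PW$ with $W\in\fW(q^{-1}u)$ minimal. Since moreover every word of $u$ dominates some minimal word, every word of $u$ dominates one beginning with $P$.

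Now I would invoke the inductive hypothesis on $q^{-1}u$: it has a minimum word $W_\flat$. Set $U_\flat:=PW_\flat$. I claim $U_\flat\preccurlyeq X$ for every $X\in\fW(u)$. Given such $X$, first pass (via Lemma~\ref{lem_divisor_shift_is_castling} applied to the prime $q$) to a word $PX'\preccurlyeq X$ with $X'\in\fW(q^{-1}u)$; here I need that the leftward migration of a $P$-letter to the front is $\preccurlyeq$-decreasing, which follows because at each elementary step the letter being jumped has subscript $\ge i_0$ (Lemma~\ref{lem_minimal_word_castling} again, applied to the relevant minimal-word situation, or directly from $i_0$ being the minimal available leading subscript). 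Then by induction $W_\flat\preccurlyeq X'$ in $\fW(q^{-1}u)$, and prepending the common letter $P$ preserves the order (this is the already-noted fact that $U'\preccurlyeq U''$ implies $XU'\preccurlyeq XU''$), so $U_\flat=PW_\flat\preccurlyeq PX'\preccurlyeq X$. Hence $U_\flat$ is the minimum word of $u$.

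\textbf{Main obstacle.} The delicate point is the second paragraph's claim that all minimal words of $u$ begin with the same letter, and the accompanying monotonicity assertion that migrating a prime-letter to the front is order-decreasing. Both hinge on using Lemma~\ref{lem_divisor_shift_is_castling} together with Lemma~\ref{lem_minimal_word_castling} to control which letter can ``surface'' at the front, and on carefully checking that the elementary castling steps in that migration never go $\prec$-to-$\succ$. I expect the bookkeeping here — tracking subscripts through a sequence of adjacent-letter castlings and confirming the sign of each step via Lemma~\ref{lem_condition_for_minimal} — to be where the real work lies; everything else is a clean induction on $\ind(u)$ dual to the construction of the maximum word $U_\sharp$.
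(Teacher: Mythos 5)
Your overall architecture (show every minimal word shares the same leading letter, then induct on the rest) is exactly the paper's, but the mechanism you use for the uniqueness claim has a sign error that breaks the argument. You assert that migrating the emerging letter $Q$ leftward through $Y$ via $\underline{\underline{Y}}Q\rlh P\underline{\underline{\widetilde Y}}$ is $\preccurlyeq$-decreasing, so that the resulting word is $\preccurlyeq U^{(m)}$ and hence, by minimality, equal to $U^{(m)}$. In fact the opposite holds when $YQ$ is a prefix of a minimal word. If $YQ=P_{i_1}\cdots P_{i_{t-1}}P_{i_t}$ is minimal, then the minimality condition $i_r-i_{r+1}\ge -1$ combined with the castling rule (for the castling to be possible at all, each $i_r$ must differ from the current subscript of $Q$ by something other than $-1$) forces the migrating letter's subscript to stay fixed at $i_t$ and forces $i_r\ge i_t$ for all $r<t$; this is precisely the content of Lemma~\ref{lem_minimal_word_castling}. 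Consequently each elementary step is $\underline{\underline{P_a}}P_c$ with $a-c\ge 0$, which by the definition of $\prec$ makes the new word \emph{larger} or equal, never smaller. So you get $P\widetilde Y Z\succcurlyeq U^{(m)}$, and the "hence equal by minimality" step collapses. A concrete illustration: for $u=p_0p_2p_4$ with minimal word $P_2P_1P_0$, applying Lemma~\ref{lem_divisor_shift_is_castling} to the prime $p_1$ produces $P_1P_3P_0\succ P_2P_1P_0$, not $\preccurlyeq$.

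The fix, and the paper's actual mechanism, avoids any $\preccurlyeq$-comparison: choose $k=\max\{l:\, p_l\mid u\}$ and apply Lemma~\ref{lem_divisor_shift_is_castling} to $p_k$. If a minimal word $U$ has first letter $P_{i_1}$ with $i_1<k$, the resulting decomposition $U=YQZ$ must have $Y$ nonempty (else $P_{i_1}=P_k$), and Lemma~\ref{lem_minimal_word_castling} applied to the minimal prefix $YQ$ gives $i_1\ge k$ directly --- a flat subscript contradiction, no monotonicity needed. Similarly, your later claim that for an arbitrary word $X$ one can pass to $PX'\preccurlyeq X$ is unjustified (the elementary steps are a mix of $\prec$ and $\succ$ in general); the paper sidesteps this by observing that since $\fW(u)$ is finite, every word dominates some minimal word, so it suffices to prove $U_\flat\preccurlyeq U$ for minimal $U$, where the subscript argument applies. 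Finally, your phrase "the $\preccurlyeq$-smallest possible first letter" should be the prime divisor of $u$ with the \emph{largest} subscript, $p_k$.
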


\begin{proof}
We use induction on $\ind(u)$. For $\ind(u)\leq 1$, the proof is trivial. Suppose the theorem has been proved for $\ind(u)\leq m-1$ with some $m\geq 2$. Now we consider the case $\ind(u)=m$. Let
\begin{equation} \label{eq_min_subscript_prime_co-divisor}
k= \max\{l:\, p_l | u\}.
\end{equation}
Write $u=p_k w$. By inductive hypothesis, there is a minimum word $W_\flat=P_{j_2}P_{j_3}\ldots P_{j_{m}}$ of $w$. By Lemma \ref{lem_condition_for_minimal}, one deduces that $j_{r}-j_{r+1}\geq -1$ for $2\leq r\leq m-1$. We consider the word $U_\flat=P_k W_\flat$. If $k-j_{2}\leq -2$, then we have a word $P_{j_2-1}P_kP_{j_3}\ldots P_{j_{m}}$ of $u$ with $j_2-1 > k$, which contradicts \eqref{eq_min_subscript_prime_co-divisor}. So it satisfies $k-j_{2}\geq -1$. Now we conclude that $U_\flat$ is a minimal word of $u$.

It is sufficient to prove that $U_\flat\preccurlyeq U$ for any minimal word $U$. Suppose that $U=P_{i_1}P_{i_2}\ldots P_{i_m}$ is a minimal word of $u$. One has $i_r-i_{r+1}\geq -1$ $(1\leq r\leq m-1)$. If $i_1 = k$, then $P_{i_2}\ldots P_{i_m}$ represents the same word as $W_\flat$ does. Since $W_\flat$ is a minimum word, one sees that $W_\flat \preccurlyeq P_{i_2}\ldots P_{i_m}$. Therefore $U_\flat \preccurlyeq U$.

In the following, we assume that $i_1 < k$. Note that $p_k|u$. By Lemma \ref{lem_divisor_shift_is_castling}, there exists a subword-decomposition $U=YQZ$ such that $Q$ is a letter and $\underline{\underline{Y}}Q \rlh P_k \underline{\underline{\widetilde{Y}}}$ for some word $\widetilde{Y}$. Assume that $Y=P_{i_1}\ldots P_{i_{t-1}}$ and $Q=P_{i_t}$ for some $2\leq t\leq m$. Then $\underline{\underline{P_{i_1}\ldots P_{i_{t-1}}}}P_{i_t} \rlh P_k \underline{\underline{\widetilde{Y}}}$. Notice that $P_{i_1}\ldots P_{i_{t-1}}P_{i_t}$ is a minimal word. Lemma \ref{lem_minimal_word_castling} shows that $i_1\geq i_t=k$. Now a contradiction appears. This completes the proof.
\end{proof}

\begin{lemma} \label{lem_max_min_preserving}
Suppose that $u,v$ have words $U,V$, maximum words $U_\sharp,V_\sharp$ and minimum words $U_\flat,V_\flat$, respectively.

(\romannumeral1) If $\underline{\underline{U_\sharp}} V \rlh \widetilde{V} \underline{\underline{\widetilde{U}}}$ for some words $\widetilde{V}, \widetilde{U}$. Then $\widetilde{U}$ is also a maximum word.

(\romannumeral2) If $\underline{\underline{U}} V_\sharp \rlh \widetilde{V} \underline{\underline{\widetilde{U}}}$ for some words $\widetilde{V}, \widetilde{U}$. Then $\widetilde{V}$ is also a maximum word.

(\romannumeral3) If $\underline{\underline{U}} V_\flat \rlh \widetilde{V} \underline{\underline{\widetilde{U}}}$ for some words $\widetilde{V}, \widetilde{U}$. Then $\widetilde{V}$ is also a minimum word.
\end{lemma}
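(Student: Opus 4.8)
The plan is to derive all three assertions from Proposition~\ref{prop_element_trans_independent_of_word}, together with the fact that the castling of a fixed ordered pair of castlable words is well-defined (Lemma~\ref{lem_word_trans_independent_of_word}). I will use repeatedly that, under the four-form convention, the hypothesis $\underline{\underline{U_\sharp}}V\rlh\widetilde V\underline{\underline{\widetilde U}}$ (respectively $\underline{\underline{U}}V_\sharp\rlh\widetilde V\underline{\underline{\widetilde U}}$ and $\underline{\underline{U}}V_\flat\rlh\widetilde V\underline{\underline{\widetilde U}}$) is the same as the reversed castling $\underline{\underline{\widetilde V}}\widetilde U\rlh\underline{\underline{U_\sharp}}V$ (respectively with $U$ and $V_\sharp$, $V_\flat$), and I write $\widetilde u,\widetilde v$ for the elements represented by $\widetilde U,\widetilde V$. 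The key structural point is that Proposition~\ref{prop_element_trans_independent_of_word} leaves the \emph{first} input slot of a castling unconstrained while forcing the second to move up in the order; the three parts differ only in which slot the word of interest ends up occupying after reversal.

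For parts (\romannumeral2) and (\romannumeral3) I would argue directly, quantifying over words of $\widetilde v$. Fix an arbitrary word $W\in\fW(\widetilde v)$. In the reversed castling $\underline{\underline{\widetilde V}}\widetilde U\rlh U\underline{\underline{V_\sharp}}$, replace the first slot $\widetilde V$ by $W$ (allowed, the first slot being unconstrained) and keep the second slot $\widetilde U$; Proposition~\ref{prop_element_trans_independent_of_word} then yields a castling $\underline{\underline{W}}\widetilde U\rlh\widetilde V'\underline{\underline{\widetilde U'}}$ in which, by its ``Moreover'' clause, $\widetilde U'$ is a word of $v$, and by its last clause $\widetilde U'\preccurlyeq V_\sharp$ if and only if $W\preccurlyeq\widetilde V$. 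Since $V_\sharp$ is the maximum word of $v$ and $\widetilde U'\in\fW(v)$, the inequality $\widetilde U'\preccurlyeq V_\sharp$ holds automatically, so $W\preccurlyeq\widetilde V$; as $W$ was arbitrary, $\widetilde V$ is the maximum word of $\widetilde v$. Running the same computation with $V_\flat$ in place of $V_\sharp$ gives $\widetilde U'\succcurlyeq V_\flat$ if and only if $W\succcurlyeq\widetilde V$, and now $\widetilde U'\succcurlyeq V_\flat$ is automatic because $V_\flat$ is the minimum word of $v$; hence $W\succcurlyeq\widetilde V$ for all $W$, and $\widetilde V$ is the minimum word of $\widetilde v$.

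For part (\romannumeral1) the word $\widetilde U$ lands in the \emph{constrained} input slot of the reversed castling $\underline{\underline{\widetilde V}}\widetilde U\rlh U_\sharp\underline{\underline{V}}$, so a short detour is needed. Let $\widetilde U_\sharp$ be the maximum word of $\widetilde u$ (it exists, being the normal-form word); then $\widetilde U_\sharp\succcurlyeq\widetilde U$, so Proposition~\ref{prop_element_trans_independent_of_word} permits enlarging the second slot from $\widetilde U$ to $\widetilde U_\sharp$ while keeping the first slot $\widetilde V$, producing $\underline{\underline{\widetilde V}}\widetilde U_\sharp\rlh X\underline{\underline{Y}}$ for some words $X,Y$. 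By the ``Moreover'' clause $X$ is a word of $u$, and by the displayed inequality $X\succcurlyeq U_\sharp$; since $U_\sharp$ is the maximum word of $u$, this forces $X=U_\sharp$. Likewise $Y$ is a word of $v$, and because the first slot was not changed (so $U'=U$ in the Proposition) one gets both $Y\preccurlyeq V$ and $Y\succcurlyeq V$, hence $Y=V$. Thus $\underline{\underline{\widetilde V}}\widetilde U_\sharp\rlh U_\sharp\underline{\underline{V}}$, equivalently $\underline{\underline{U_\sharp}}V\rlh\widetilde V\underline{\underline{\widetilde U_\sharp}}$; comparing with the hypothesis $\underline{\underline{U_\sharp}}V\rlh\widetilde V\underline{\underline{\widetilde U}}$ and invoking that the castling of the ordered pair $(U_\sharp,V)$ is well-defined (Lemma~\ref{lem_word_trans_independent_of_word}), I conclude $\widetilde U=\widetilde U_\sharp$, so $\widetilde U$ is the maximum word of $\widetilde u$.

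I do not expect a genuine obstacle here: the only care needed is the bookkeeping of which slot each word occupies after reversal, and a vigilant use of the ``Moreover'' clause of Proposition~\ref{prop_element_trans_independent_of_word}, which is exactly what guarantees that the auxiliary words $\widetilde U'$, $X$, $Y$ produced above really are words of the intended elements $v$, $u$, $v$. The mild asymmetry in the argument — a one-line quantifier argument for (\romannumeral2), (\romannumeral3) versus the ``enlarge, reverse back, and invoke well-definedness'' manoeuvre for (\romannumeral1) — is forced by the fact that the Proposition constrains only the second input slot, and I would simply flag this rather than try to unify the three cases artificially.
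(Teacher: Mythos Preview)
Your argument is correct. For part (\romannumeral1) you follow essentially the paper's own route: pass to the maximum word $\widetilde U_\sharp$ on the right, push it through the reversed castling, force the left side back to $U_\sharp$ by maximality, and then invoke well-definedness of the castling of $(U_\sharp,V)$ to conclude $\widetilde U=\widetilde U_\sharp$.

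For parts (\romannumeral2) and (\romannumeral3), however, you take a genuinely different and somewhat cleaner path than the paper. The paper treats (\romannumeral2) as ``similar to (\romannumeral1)'' and for (\romannumeral3) again picks the extremal word $\widetilde V_\flat$, runs the reversed castling, and argues that the resulting $V'$ must equal $V_\flat$, then compares back. Your approach instead quantifies directly over \emph{all} words $W\in\fW(\widetilde v)$: plugging $W$ into the free first slot of the reversed castling and using the iff clause of Proposition~\ref{prop_element_trans_independent_of_word}, you read off $W\preccurlyeq\widetilde V$ (resp.\ $W\succcurlyeq\widetilde V$) from the automatic inequality $\widetilde U'\preccurlyeq V_\sharp$ (resp.\ $\widetilde U'\succcurlyeq V_\flat$). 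This avoids the ``enlarge, reverse back, and invoke well-definedness'' detour entirely and makes the asymmetry you flag unnecessary for (\romannumeral2) and (\romannumeral3); the detour is only forced in (\romannumeral1), where $\widetilde U$ sits in the constrained slot. Both approaches rest on the same Proposition, but yours exploits the iff clause more directly.
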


\begin{proof}
(\romannumeral1) Let $\widetilde{U}_\sharp$ be the maximum word of $\widetilde{u}$. Then $\widetilde{U}_\sharp\succcurlyeq \widetilde{U}$. By Proposition \ref{prop_element_trans_independent_of_word}, the words $\widetilde{V}, \widetilde{U_\sharp}$ are castlable. Write $\widetilde{V} \underline{\underline{\widetilde{U_\sharp}}} \rlh \underline{\underline{U^\prime}} V$ for some $U^\prime\in \fW(u)$. Then we have $U^\prime \succcurlyeq U_\sharp$. It follows that $U^\prime = U_\sharp$, which leads to $\widetilde{U}=\widetilde{U}_\sharp$.

(\romannumeral2) The proof is similar as in (\romannumeral1).

(\romannumeral3) Let $\widetilde{V}_\flat$ be the minimum word of the element having word $\widetilde{V}$. Then $\widetilde{V}_\flat\preccurlyeq \widetilde{V}$. By Proposition \ref{prop_element_trans_independent_of_word}, the words $\widetilde{V}_\flat, \widetilde{U}$ are castlable. Write $\widetilde{V_\flat} \underline{\underline{\widetilde{U}}} \rlh \underline{\underline{U}} V^\prime$ for some word $V^\prime\in \fW(v)$. Then we have $V^\prime\preccurlyeq V_\flat$. It follows that $V^\prime = V_\flat$ and then $\widetilde{V}=\widetilde{V}_\flat$.
\end{proof}

\begin{remark}
Note that $\underline{\underline{P_0P_1}}P_0\rlh P_0\underline{\underline{P_0P_2}}$. Here $P_0P_1$ is the minimum word of $p_0p_1$, while $P_0P_2$ is not the minimum word of $p_0p_2$, since $P_0P_2\succ P_1P_0$.
\end{remark}

\subsection{Castling of Elements}

\begin{definition}
Let $u,v$ be two elements in $S$ with maximum words $U_\sharp,V_\sharp$ and minimum words $U_\flat,V_\flat$, respectively.

(\romannumeral1) If the words $U_\sharp,V_\sharp$ are castlable, then we say that the elements $u,v$ are weakly castlable, or $u$ is weakly castlable with $v$. Suppose that the castling of words is given by $\underline{\underline{U_\sharp}}V_\sharp \rightleftharpoons \widetilde{V}\underline{\underline{\widetilde{U}}}$ for some words $\widetilde{V},\widetilde{U}$. Let $\widetilde{v}, \widetilde{u}$ be the elements in $\bS$ having words $\widetilde{V}, \widetilde{U}$, respectively.  Then we denote the weak castling of words by $\uline{u}v \rightleftharpoons \widetilde{v} \uline{\widetilde{u}}$. And we write $\fC^\prime=\{(u,v)\in \bS\times \bS:\, u,v \text{ are weakly castlable}\}$ and $\Gamma^\prime=\{((u,v),(\widetilde{v},\widetilde{u}))\in \fC\times \fC:\, \uline{u}v \rightleftharpoons \widetilde{v} \uline{\widetilde{u}}\}$.

(\romannumeral2) If the words $U_\flat,V_\flat$ are castlable, then we say that the elements $u,v$ are strongly castlable, or $u$ is strongly castlable with $v$. Suppose that the castling of words is given by $\underline{\underline{U_\flat}}V_\flat \rightleftharpoons \widetilde{V}\underline{\underline{\widetilde{U}}}$ for some words $\widetilde{V},\widetilde{U}$. Let $\widetilde{v}, \widetilde{u}$ be the elements in $\bS$ having words $\widetilde{V}, \widetilde{U}$, respectively.  Then we denote the strong castling of elements by $\uuline{u}v \rightleftharpoons \widetilde{v} \uline{\widetilde{u}}$. And we write $\fC_0^\prime=\{(u,v)\in \fC:\, u,v \text{ are strongly castlable}\}$ and $\Gamma_0^\prime=\{((u,v),(\widetilde{v},\widetilde{u}))\in \Gamma^\prime:\, \uuline{u}v \rightleftharpoons \widetilde{v} \uuline{\widetilde{u}}\}$.
\end{definition}

\begin{remark} \label{remark_element_castling}
By Proposition \ref{prop_element_trans_independent_of_word}, we have the following equivalences between statements.

The elements $u,v$ are weakly castlable, if and only if there exist some words $U\in \fW(u)$ and $V\in \fW(v)$ such that $U,V$ can be castled, if and only if there is some word $V\in \fW(v)$ such that any word $U\in \fW(u)$ can be castled with $V\in \fW(v)$.

Similarly, the elements $u,v$ are strongly castlable if and only if any word $U\in \fW(u)$ and any word $V\in \fW(v)$ can be castled, if and only if there is some $U\in \fW(u)$ such that $U$ can be castled with any word $V\in \fW(v)$. In particular, a strong castling implies a weak castling.

Moreover, it follows from Lemma \ref{lem_max_min_preserving} that $\uline{u}v\rlh \widetilde{v}\uline{\widetilde{u}}$ if and only if $\uline{\widetilde{v}}\widetilde{u}\rlh u\uline{v}$. And one sees that $\ind(u)=\ind(\widetilde{u})$ and $\ind(v)=\ind(\widetilde{v})$, where $\ind$ is the homomorphism shown in Section \ref{subsection_example_of_intergral_monoid}. By the definitions, we have $\fC_0^\prime \subseteq\fC^\prime$ and $\Gamma_0^\prime \subseteq \Gamma^\prime$.
\end{remark}

\begin{lemma} \label{lem_union_and_decomposition_for_element_weak_castling}
(\romannumeral1) Let $u,v,\widetilde{u},\widetilde{v}$ be elements in $\bS$ such that $\underline{u}v \rightleftharpoons \widetilde{v}\underline{\widetilde{u}}$. Then for any $u_1,u_2\in \bS$ with $u_1u_2=u$, we have that $\underline{u_2}v \rightleftharpoons \widehat{v}\underline{\widehat{u_2}}$ for some elements $\widehat{u_2},\widehat{v}$, and $\underline{u_1}\widehat{v} \rightleftharpoons \widetilde{v}\underline{\widehat{u_1}}$ for some elements $\widehat{u_1}$, where $\widetilde{u}=\widehat{u_1}\widehat{u_2}$.

(\romannumeral2) Let $u_1,u_2,v$ be elements in $\bS$. If $\underline{u_2}v \rightleftharpoons \widetilde{v}\underline{\widetilde{u_2}}$ for some elements $\widetilde{u_2},\widetilde{v}$, and $\underline{u_1}\widetilde{v} \rightleftharpoons \widetilde{\widetilde{v}}\underline{\widetilde{u_1}}$ for some elements $\widetilde{u_1}, \widetilde{\widetilde{v}}$, then $\underline{u_1u_2}v \rightleftharpoons \widetilde{\widetilde{v}}\underline{\widetilde{u_1}\widetilde{u_2}}$.

(\romannumeral3) Let $u,v_1,v_2$ be elements in $\bS$. If $\underline{u}v_1 \rightleftharpoons \widetilde{v_1}\underline{\widetilde{u}}$ for some elements $\widetilde{u},\widetilde{v_1}$, and $\underline{\widetilde{u}}v_2 \rightleftharpoons \widetilde{v_2}\underline{\widetilde{\widetilde{u}}}$ for some elements $\widetilde{v_2}, \widetilde{\widetilde{u}}$, then $\underline{u}v_1v_2 \rightleftharpoons \widetilde{v_1}\widetilde{v_2}\underline{\widetilde{\widetilde{u}}}$.
\end{lemma}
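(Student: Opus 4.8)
\textbf{Proof proposal for Lemma \ref{lem_union_and_decomposition_for_element_weak_castling}.}

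The plan is to reduce everything to the corresponding statements at the level of \emph{words}, which have already been established in Lemma \ref{lem_word_trans_independent_of_word} and Proposition \ref{prop_element_trans_independent_of_word}, and then lift back up to elements using the fact (Remark \ref{remark_element_castling}) that weak castlability of $u,v$ is detected by castlability of the maximum words $U_\sharp, V_\sharp$, together with the independence of the castling outcome from the chosen words. The three parts are the ``decomposition'', ``composition in the left variable'', and ``composition in the right variable'' of weak castlings, mirroring exactly the word-level Type (I)/(II) structure, so I expect the arguments to be close parallels of one another once the dictionary between elements and words is set up.

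For part (\romannumeral1), I would start from $\underline{u}v \rlh \widetilde{v}\underline{\widetilde{u}}$, which by definition means $\underline{\underline{U_\sharp}}V_\sharp \rlh \widetilde{V}\underline{\underline{\widetilde{U}}}$ for the maximum words, with $\widetilde u, \widetilde v$ the elements carrying $\widetilde U, \widetilde V$. Given a factorization $u = u_1 u_2$, pick any words $U^{(1)} \in \fW(u_1)$, $U^{(2)} \in \fW(u_2)$; then $U^{(1)}U^{(2)} \in \fW(u)$, so by Proposition \ref{prop_element_trans_independent_of_word} the word $U^{(1)}U^{(2)}$ is also castlable with $V_\sharp$ and produces words representing the same elements $\widetilde u, \widetilde v$. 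Now apply Lemma \ref{lem_word_trans_independent_of_word}(\romannumeral1) with the subword-decomposition $U^{(1)}U^{(2)}$: this yields words $\widehat{U_1}, \widehat{U_2}, \widehat{V}$ with $\widehat{U_1}\widehat{U_2} = \widetilde U$ (up to representing the same element) such that $\underline{\underline{U^{(2)}}}V_\sharp \rlh \widehat{V}\underline{\underline{\widehat{U_2}}}$ and $\underline{\underline{U^{(1)}}}\widehat{V} \rlh \widetilde{V}\underline{\underline{\widehat{U_1}}}$. Letting $\widehat{v}, \widehat{u_1}, \widehat{u_2}$ be the elements carrying $\widehat V, \widehat{U_1}, \widehat{U_2}$, the first word-castling witnesses $\underline{u_2}v \rlh \widehat v\underline{\widehat{u_2}}$ and the second witnesses $\underline{u_1}\widehat v \rlh \widetilde v\underline{\widehat{u_1}}$; and $\widehat{u_1}\widehat{u_2} = \widetilde u$ follows since $\widehat{U_1}\widehat{U_2}$ is a word of $\widetilde u$. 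One point requiring care: $V_\sharp$ need not be the maximum word of $v$ relative to the intermediate castling, but since weak castlability is defined via maximum words and Proposition \ref{prop_element_trans_independent_of_word} guarantees that castling with any $V' \succcurlyeq V$ again succeeds and gives the same elements, the witnessed castlings are genuine \emph{weak} castlings of elements in the sense of the definition; I would spell this normalization out once and invoke it for all three parts.

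Parts (\romannumeral2) and (\romannumeral3) are the converse/composition direction and I would handle them by concatenating word-castlings. For (\romannumeral2): from $\underline{u_2}v \rlh \widetilde v\underline{\widetilde{u_2}}$ and $\underline{u_1}\widetilde v \rlh \widetilde{\widetilde v}\underline{\widetilde{u_1}}$, choose a single word $V^\ast$ of $v$ large enough (in the $\preccurlyeq$ order — e.g.\ $V_\sharp$) that both castlings can be realized with $V^\ast$ and with a common word $\widetilde V^\ast$ of $\widetilde v$; this is exactly what Proposition \ref{prop_element_trans_independent_of_word} permits. Then the word-level composition built into the abbreviated notation after Lemma \ref{lem_word_trans_independent_of_word} (Type (I) applied to $U^{(1)}U^{(2)}$) gives $\underline{\underline{U^{(1)}U^{(2)}}}V^\ast \rlh \widetilde{\widetilde V}^\ast \underline{\underline{\widehat{U_1}\widehat{U_2}}}$, and reading this back at the element level — using $\ind$ invariance and the uniqueness of the resulting elements — yields $\underline{u_1u_2}v \rlh \widetilde{\widetilde v}\underline{\widetilde{u_1}\widetilde{u_2}}$. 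Part (\romannumeral3) is the same argument with the roles of the two variables swapped, using Type (II) composition and Lemma \ref{lem_word_trans_independent_of_word}(\romannumeral2). The main obstacle I anticipate is the bookkeeping in this normalization step: making sure that the intermediate element $\widehat v$ (resp.\ $\widetilde v$) produced by the first castling is represented by a word that is simultaneously usable as input to the second castling, so that the word-level composition lemmas apply verbatim; once that compatibility is secured via Proposition \ref{prop_element_trans_independent_of_word}, everything else is routine transfer between the word picture and the element picture.
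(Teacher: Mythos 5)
Your proposal is correct and follows essentially the same route as the paper: reduce to the word-level decomposition/composition lemma (Lemma \ref{lem_word_trans_independent_of_word}), use Proposition \ref{prop_element_trans_independent_of_word} together with Remark \ref{remark_element_castling} to pass between arbitrary representing words and the defining maximum words, and read the result back at the element level. The only cosmetic difference is that the paper instantiates the decomposition with the maximum words $U_{1\sharp}U_{2\sharp}$ so that Lemma \ref{lem_max_min_preserving} makes the intermediate outcomes automatically maximal, whereas you allow arbitrary words $U^{(1)}U^{(2)}$ and lean slightly more heavily on Proposition \ref{prop_element_trans_independent_of_word} for the word-independence; both are sound.
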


\begin{proof}
(\romannumeral1) Let $U_\sharp, U_{1\sharp},{U_2}_\sharp,V_\sharp,\widetilde{V}_\sharp,\widetilde{U}_\sharp$ be the maximum words of $u,u_1,u_2,v,\widetilde{v},\widetilde{u}$, respectively. Since $\uline{u}v\rlh \widetilde{v}\uline{\widetilde{u}}$, one has $\underline{\underline{U_\sharp}}V_\sharp \rlh \widetilde{V}_\sharp\underline{\underline{\widetilde{U}_\sharp}}$ by Lemma \ref{lem_max_min_preserving}(\romannumeral1,\romannumeral2). Thanks to $U_{1\sharp}U_{2\sharp}\in \fW(u)$, it follows from Proposition \ref{prop_element_trans_independent_of_word} that $U_{1\sharp}U_{2\sharp}$ can be castled with $V_\sharp$. Suppose that $\underline{\underline{U_{1\sharp}U_{2\sharp}}}V_\sharp \rlh \widetilde{V}_\sharp\underline{\underline{\widetilde{U}}}$, where $\widetilde{U}\in \fW(\widetilde{u})$. Now a decomposition of castlings of words leads to
\[
\underline{\underline{U_{1\sharp}U_{2\sharp}}}V_\sharp \rlh \underline{\underline{U_{1\sharp}}}\widehat{V_\sharp}\underline{\underline{\widehat{U_{2\sharp}}}} \rlh  \widetilde{V_\sharp}\underline{\underline{\widehat{U_{1\sharp}}\widehat{U_{2\sharp}}}}
\]
for some words $\widehat{V}_\sharp, \widehat{U_1}_\sharp, \widehat{U_{2}}_{\sharp}$ with $\widehat{U_1}_\sharp \widehat{U_{2}}_{\sharp}\in \fW(\widetilde{u})$. Indeed, the words $\widehat{V}_\sharp, \widehat{U_1}_\sharp, \widehat{U_{2}}_{\sharp}$ are all maximum by Lemma \ref{lem_max_min_preserving}(\romannumeral1,\romannumeral2). Denote by  $\widehat{v}, \widehat{u_1}, \widehat{u_2}$ the elements in $\bS$ having words $\widehat{V}_\sharp, \widehat{U_1}_\sharp, \widehat{U_{2}}_{\sharp}$, respectively. It follows that $\underline{u_1u_2}v \rightleftharpoons \underline{u_1}\widehat{v}\underline{\widehat{u_2}}\rlh \widetilde{v}\underline{\widehat{u_1}\widetilde{u_2}}=\widetilde{v}\underline{\widetilde{u}}$. The proof is completed.

(\romannumeral2) Let $U_{1\sharp},{U_2}_\sharp,V_\sharp,\widetilde{V}_\sharp,\widetilde{\widetilde{V}}_\sharp$ be the maximum words of $u_1,u_2,v,\widetilde{v},\widetilde{\widetilde{v}}$, respectively. One has $\underline{\underline{U_{2\sharp}}}V_\sharp\rlh \widetilde{V}_\sharp \underline{\underline{\widetilde{U_{2}}_{\sharp}}}$ and $\underline{\underline{U_{1\sharp}}}\widetilde{V}_\sharp\rlh \widetilde{\widetilde{V}}_\sharp \underline{\underline{\widetilde{U_1}_{\sharp}}}$. A composition of the above castlings shows that
\[
\underline{\underline{U_{1\sharp}U_{2\sharp}}}V_\sharp \rlh \underline{\underline{U_{1\sharp}}}\widetilde{V}_\sharp \underline{\underline{\widetilde{U_{2\sharp}}}}\rlh \widetilde{\widetilde{V_\sharp}}\underline{\underline{\widetilde{U_{1\sharp}}\widetilde{U_{2\sharp}}}}. \]
The conclusion now follows by applying Remark \ref{remark_element_castling}.

(\romannumeral3) The proof is similar to that of (\romannumeral2).
\end{proof}

\begin{lemma} \label{lem_union_and_decomposition_for_element_castling}
(\romannumeral1) Let $u,v,\widetilde{v},\widetilde{u}$ be elements in $\bS$ such that $\underline{\underline{u}}v \rightleftharpoons \widetilde{v}\underline{\widetilde{u}}$. Then for any $u_1,u_2\in \bS$ with $u_1u_2=u$, we have that $\underline{\underline{u_2}}v \rightleftharpoons \widehat{v}\underline{\widehat{u_2}}$ for some elements $\widehat{u_2},\widehat{v}$, and $\underline{\underline{u_1}}\widehat{v} \rightleftharpoons \widetilde{v}\underline{\widehat{u_1}}$ for some elements $\widehat{u_1}$, where $\widetilde{u}=\widehat{u_1}\widehat{u_2}$.

(\romannumeral2) Let $u_1,u_2,v$ be elements in $\bS$. If $\underline{\underline{u_2}}v \rightleftharpoons \widetilde{v}\underline{\widetilde{u_2}}$ for some elements $\widetilde{u_2},\widetilde{v}$, and $\underline{\underline{u_1}}\widetilde{v} \rightleftharpoons \widetilde{\widetilde{v}}\underline{\widetilde{u_1}}$ for some elements $\widetilde{u_1}, \widetilde{\widetilde{v}}$, then $\underline{\underline{u_1u_2}}v \rightleftharpoons \widetilde{\widetilde{v}}\underline{\widetilde{u_1}\widetilde{u_2}}$.

(\romannumeral3) Let $u,v,\widetilde{v},\widetilde{u}$ be elements in $\bS$ such that $\underline{\underline{u}}v \rightleftharpoons \widetilde{v}\underline{\widetilde{u}}$. Then for any $v_1,v_2\in \bS$ with $v_1v_2=v$, we have that $\underline{\underline{u}}v_1 \rightleftharpoons \widehat{v_1}\underline{\widehat{u}}$ for some elements $\widehat{u},\widehat{v_1}$, and $\underline{\underline{\widehat{u}}}v_2 \rightleftharpoons \widehat{v_2}\underline{\widetilde{u}}$ for some elements $\widehat{v_2}$, where $\widetilde{v}=\widehat{v_1}\widehat{v_2}$.
\end{lemma}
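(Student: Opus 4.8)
The plan is to mirror, step for step, the proof of Lemma~\ref{lem_union_and_decomposition_for_element_weak_castling}, replacing maximum words by \emph{minimum} words throughout, since a strong castling of elements is by definition the castling of the minimum words of the two elements. The four ingredients I would use repeatedly are: the equivalent descriptions of strong castlability in Remark~\ref{remark_element_castling} (in particular, ``$u,v$ are strongly castlable iff some word of $u$ can be castled with every word of $v$''); the order-preservation Proposition~\ref{prop_element_trans_independent_of_word}, which says that if $U$ is castlable with $V$ then $U$ is castlable with every $V'\succcurlyeq V$, and that the resulting words represent the same elements; the decomposition and composition of castlings of words (Lemma~\ref{lem_word_trans_independent_of_word}); and Lemma~\ref{lem_max_min_preserving}(\romannumeral3), which guarantees that castling any word over a minimum word yields a minimum word on the $\widetilde v$-side.

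For part (\romannumeral1): given $\underline{\underline{u}}v\rightleftharpoons\widetilde v\,\underline{\widetilde u}$ and $u_1u_2=u$ (we may assume $u_1,u_2\neq 1$), I would take the word $U_{1\flat}U_{2\flat}\in\fW(u)$ — it is generally \emph{not} the minimum word of $u$, but that does no harm. Since $u,v$ are strongly castlable, $U_{1\flat}U_{2\flat}$ is castlable with $V_\flat$, so by Lemma~\ref{lem_word_trans_independent_of_word} the castling decomposes as $\underline{\underline{U_{1\flat}U_{2\flat}}}V_\flat\rightleftharpoons\underline{\underline{U_{1\flat}}}\widehat V\underline{\underline{\widehat{U_2}}}\rightleftharpoons\widetilde V'\underline{\underline{\widehat{U_1}\widehat{U_2}}}$, where $\widehat{U_1}\widehat{U_2}$ is a word of $\widetilde u$ and $\widetilde V'$ a word of $\widetilde v$ (the latter by the ``same element'' clause of Proposition~\ref{prop_element_trans_independent_of_word}, comparing with the original castling of $U_\flat$ over $V_\flat$). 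By Lemma~\ref{lem_max_min_preserving}(\romannumeral3), $\widehat V$ is the minimum word of the element $\widehat v$ it represents, so the first intermediate step is literally the strong element castling $\underline{\underline{u_2}}v\rightleftharpoons\widehat v\,\underline{\widehat{u_2}}$, and the second, being the castling of the minimum words $U_{1\flat}$ and $\widehat V$, is the strong element castling $\underline{\underline{u_1}}\widehat v\rightleftharpoons\widetilde v\,\underline{\widehat{u_1}}$, with $\widehat{u_1}\widehat{u_2}=\widetilde u$ as required. (That $u_1,\widehat v$ really are strongly castlable follows from Remark~\ref{remark_element_castling} together with Proposition~\ref{prop_element_trans_independent_of_word}: $U_{1\flat}$ castles with the minimum word of $\widehat v$, hence with every word of $\widehat v$.) Part (\romannumeral3) is the same argument with the roles of $u$ and $v$ interchanged: form $V_{1\flat}V_{2\flat}\in\fW(v)$, decompose $\underline{\underline{U_\flat}}V_{1\flat}V_{2\flat}$, use Lemma~\ref{lem_max_min_preserving}(\romannumeral3) to see the first output $\widehat{V_1}$ is minimal, and use Proposition~\ref{prop_element_trans_independent_of_word} to upgrade ``$\widehat U$ castles with $V_{2\flat}$'' to ``$\widehat u$ is strongly castlable with $v_2$''.

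For part (\romannumeral2): from $\underline{\underline{u_2}}v\rightleftharpoons\widetilde v\,\underline{\widetilde{u_2}}$ I obtain the word castling $\underline{\underline{U_{2\flat}}}V_\flat\rightleftharpoons\widetilde V_\flat\,\underline{\underline{\widetilde{U_2}}}$ (with $\widetilde V_\flat$ the minimum word of $\widetilde v$, by Lemma~\ref{lem_max_min_preserving}(\romannumeral3)), and from $\underline{\underline{u_1}}\widetilde v\rightleftharpoons\widetilde{\widetilde v}\,\underline{\widetilde{u_1}}$ I obtain $\underline{\underline{U_{1\flat}}}\widetilde V_\flat\rightleftharpoons\widetilde{\widetilde V}\,\underline{\underline{\widetilde{U_1}}}$. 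These compose, via Type~(\uppercase\expandafter{\romannumeral1}) of the definition of the castling of words, to $\underline{\underline{U_{1\flat}U_{2\flat}}}V_\flat\rightleftharpoons\widetilde{\widetilde V}\,\underline{\underline{\widetilde{U_1}\widetilde{U_2}}}$. Now $U_{1\flat}U_{2\flat}$ is a word of $u_1u_2$ that is castlable with the minimum word $V_\flat$ of $v$, hence castlable with every word of $v$ by Proposition~\ref{prop_element_trans_independent_of_word}, so $u_1u_2$ and $v$ are strongly castlable; and comparing the castling of $U_{1\flat}U_{2\flat}$ over $V_\flat$ with that of the minimum word of $u_1u_2$ over $V_\flat$, Proposition~\ref{prop_element_trans_independent_of_word} shows the two produce the same elements, which gives exactly $\underline{\underline{u_1u_2}}v\rightleftharpoons\widetilde{\widetilde v}\,\underline{\widetilde{u_1}\widetilde{u_2}}$.

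The one genuinely delicate point — where I expect to spend the most care — is the asymmetry of a strong castling: Lemma~\ref{lem_max_min_preserving} only keeps the $\widetilde v$-side minimal after castling over a minimum word, \emph{not} the $\widetilde u$-side; indeed $\underline{\underline{P_0P_1}}P_0\rightleftharpoons P_0\,\underline{\underline{P_0P_2}}$ with $P_0P_2$ not minimum. So one cannot simply ``stay with minimum words'' when iterating on the $u$-side; each time the $u$-side word changes one must fall back on the ``same element'' clause of Proposition~\ref{prop_element_trans_independent_of_word} together with the criterion of Remark~\ref{remark_element_castling} to re-establish strong castlability and to verify the bookkeeping identities $\widehat{u_1}\widehat{u_2}=\widetilde u$ and $\widehat{v_1}\widehat{v_2}=\widetilde v$. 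Once this pattern is in place for part (\romannumeral1), parts (\romannumeral2) and (\romannumeral3) are routine variations on it.
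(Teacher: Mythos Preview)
Your proposal is correct and follows essentially the same approach as the paper: in each part you pass to the appropriate minimum words ($U_{1\flat}U_{2\flat}$ with $V_\flat$ in (\romannumeral1) and (\romannumeral2), $U_\flat$ with $V_{1\flat}V_{2\flat}$ in (\romannumeral3)), apply the decomposition/composition of word castlings, use Lemma~\ref{lem_max_min_preserving}(\romannumeral3) to keep the intermediate $\widehat V$ minimum, and invoke Remark~\ref{remark_element_castling} together with Proposition~\ref{prop_element_trans_independent_of_word} to conclude at the element level. Your explicit identification of the asymmetry on the $\widetilde u$-side and how Proposition~\ref{prop_element_trans_independent_of_word} handles it is exactly the point the paper leaves implicit under ``Combining Remark~\ref{remark_element_castling}''.
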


\begin{proof}
(\romannumeral1) Let $U_\flat,U_{1\flat},U_{2\flat},V_\flat,\widetilde{V}_\flat$ be the minimum words of $u,u_1,u_2,v,\widetilde{v}$, respectively. Since $U_{1\flat}U_{2\flat}$ is a word of $u$ and the words $U_\flat,V_\flat$ are castlable, we have that the words $U_{1\flat}U_{2\flat},V_\flat$ are also castlable. Let us denote $\underline{\underline{U_{1\flat}U_{2\flat}}}V_{\flat}\rlh \widetilde{V}\underline{\underline{U}}$, where $\widetilde{V}\in \fW(v)$ and $\widetilde{U}\in \fW(u)$. Now a decomposition of such a castling implies that
\[
\underline{\underline{U_{1\flat}U_{2\flat}}}V_\flat \rightleftharpoons \underline{\underline{U_{1\flat}}} \widehat{V_\flat} \underline{\underline{\widehat{U_2}}} \rlh \widetilde{V} \underline{\underline{\widehat{U_1} \widehat{U_2}}}
\]
for some words $\widehat{U_1}$, $\widehat{U_2}$ and $\widehat{V_\flat}$. By Lemma \ref{lem_max_min_preserving}(\romannumeral3), one deduces that $\widehat{V_\flat}$ is a minimum word here. Let $\widehat{u_1}$, $\widehat{u_2}$ and $\widehat{v_\flat}$ be elements in $\bS$ having words $\widehat{U_1}$, $\widehat{U_2}$ and $\widehat{V_\flat}$, respectively. Combining Remark \ref{remark_element_castling}, we conclude that $\underline{\underline{u_2}}v \rightleftharpoons \widehat{v}\underline{\widehat{u_2}}$ and $\underline{\underline{u_1}}\widehat{v} \rightleftharpoons \widetilde{v}\underline{\widehat{u_1}}$.

(\romannumeral2) Let $U_{1\flat},{U_2}_\flat,V_\flat,\widetilde{V}_\flat$ be the minimum words of $u_1,u_2,v,\widetilde{v}$, respectively. Combining the castlings $\uuline{u_2}v \rightleftharpoons \widetilde{v}\underline{\widetilde{u_2}}$,  and $\uuline{u_1}\widetilde{v} \rightleftharpoons \widetilde{\widetilde{v}}\underline{\widetilde{u_1}}$ and Lemma \ref{lem_max_min_preserving}(\romannumeral3), we have  $\underline{\underline{U_{2\flat}}}V_\flat \rightleftharpoons \widetilde{V}_\flat \underline{\underline{\widetilde{U_2}}}$ for some $\widetilde{U_2}\in \fW(\widetilde{u_2})$, and $\underline{\underline{U_{1\flat}}}\widetilde{V}_\flat \rightleftharpoons \widetilde{\widetilde{V}}\underline{\underline{\widetilde{U_1}}}$ for some $\widetilde{\widetilde{V}}\in \fW(\widetilde{\widetilde{v}})$ and $\widetilde{U_1}\in \fW(\widetilde{u_1})$. So $\underline{\underline{U_{1\flat}U_{2\flat}}}V_\flat \rightleftharpoons \widetilde{\widetilde{V}}\underline{\underline{\widetilde{U_1}\widetilde{U_2}}}$, where $U_{1\flat}U_{2\flat}\in \fW(u)$. Combining Remark \ref{remark_element_castling}, we conclude that $\underline{\underline{u_1u_2}}v \rightleftharpoons \widetilde{\widetilde{v}}\underline{\widetilde{u_1}\widetilde{u_2}}$.

(\romannumeral3) Let $U_\flat,V_\flat,V_{1\flat},V_{2\flat}$ be the minimum words of $u,v,v_1,v_2$, respectively. Since $V_{1\flat}V_{2\flat}\succcurlyeq V_\flat$ and $U_\flat,V_\flat$ are castlable, the words $U_\flat,V_{1\flat}V_{2\flat}$ are also castlable. Suppose that $\underline{\underline{U_\flat}}V_{1\flat}V_{2\flat} \rlh \widetilde{V}\underline{\underline{\widetilde{U}}}$ for some $\widetilde{V}\in \fW(\widetilde{v})$ and $\widetilde{U}\in \fW(\widetilde{u})$. A decomposition of such a castling shows that
\[
\underline{\underline{U_\flat}}V_{1\flat}V_{2\flat} \rightleftharpoons \widehat{V_1}\underline{\underline{\widehat{U}}}V_{2\flat} \rlh \widehat{V_1}\widehat{V_2}\underline{\underline{\widetilde{U}}}=\widetilde{V}\underline{\underline{\widetilde{U}}}
\]
for some words $\widehat{U}, \widehat{V_1}, \widehat{V_2}$. Let $\widehat{u}, \widehat{v_1}, \widehat{v_2}$ be elements in $\bS$ having words $\widehat{U}, \widehat{V_1}, \widehat{V_2}$, respectively. Combining Remark \ref{remark_element_castling}, we conclude that $\underline{\underline{u}}v_1 \rightleftharpoons \widehat{v_1}\underline{\widehat{u}}$ and $\underline{\underline{\widehat{u}}}v_2 \rightleftharpoons \widehat{v_2}\underline{\widetilde{u}}$.
\end{proof}

\subsection{Free Castlings and Fundamental Lemma for Arithmetic}

In this subsection, we will define free castlings of elements, and prove the fundamental lemma for arithmetic in the context of castlings defined in this section.

\begin{lemma} \label{lem_not_castlable_implies_divides}
Let $w,u,\widetilde{u},\widetilde{w}$ be elements in $\bS$ satisfying $\underline{w}u\rlh \widetilde{u}\underline{\widetilde{w}}$. Suppose that $w,u$ are not strongly castlable. Then $\gcd(w,\widetilde{u})\neq 1$, and $\gcd_\ddagger(wu; u,\widetilde{w}) \neq 1$.
\end{lemma}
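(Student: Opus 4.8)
The plan is to repackage the two inequalities into a single one and then establish that one by induction, using only the word machinery of this section together with Section~\ref{section_integral_monoids}. First I will record the data coming from the weak castling $\uline{w}u\rlh\widetilde u\uline{\widetilde w}$: by Lemma~\ref{lem_max_min_preserving} and Remark~\ref{remark_element_castling} one has $wu=\widetilde u\,\widetilde w$ in $\bS$ and $\ind(w)=\ind(\widetilde w)$, $\ind(u)=\ind(\widetilde u)$, where $\ind$ is the homomorphism $\bG\to(\bZ,+)$ of Section~\ref{subsection_example_of_intergral_monoid}. In particular every nontrivial element of $\bS$ has positive index, so a divisor (or co-divisor) relation between elements of equal index is an equality.

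Next I will show that the two assertions are equivalent. Since $wu\cdot u^{-1}=w$ and, by $wu=\widetilde u\,\widetilde w$, $wu\cdot\widetilde w^{-1}=\widetilde u$, the defining formulas \eqref{eq_lcm_ddagger_definition} and \eqref{eq_gcd_ddagger_definition} specialise to
\[
\lcm_\ddagger[wu;u,\widetilde w]=\bigl(\gcd(w,\widetilde u)\bigr)^{-1}wu,\qquad
\gcd_\ddagger(wu;u,\widetilde w)=\bigl(\lcm[w,\widetilde u]\bigr)^{-1}wu .
\]
From the second formula, $\gcd_\ddagger(wu;u,\widetilde w)=1$ iff $\lcm[w,\widetilde u]=wu$; and the index identity of Lemma~\ref{lem_gcd_lcm_u_v}\,(i) gives $\ind(\gcd(w,\widetilde u))+\ind(\lcm[w,\widetilde u])=\ind(w)+\ind(u)=\ind(wu)$, so (using that $\lcm[w,\widetilde u]\mid wu$ with equal index forces equality) $\lcm[w,\widetilde u]=wu$ iff $\gcd(w,\widetilde u)=1$. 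Hence $\gcd(w,\widetilde u)=1\Leftrightarrow\lcm[w,\widetilde u]=wu\Leftrightarrow\gcd_\ddagger(wu;u,\widetilde w)=1$, and it remains to prove the contrapositive of the first statement: if $\gcd(w,\widetilde u)=1$ then $w,u$ are strongly castlable.

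For this I will induct on $\ind(u)$, with $w$ arbitrary. If $\ind(u)\le 1$ the element $u$ has a single word; the weak castling provides one castlable pair of words, so Proposition~\ref{prop_element_trans_independent_of_word} forces every word of $w$ (in particular $W_\flat$) to be castlable with that unique word of $u$ (which is then $U_\flat$), and the weak castling is automatically strong — coprimality is not even needed here. For the inductive step I peel off the last letter: write $u=u_1p$ with $p\in\cP$, $\ind(u_1)=\ind(u)-1$, and decompose the weak castling along the word $(u_1)_\sharp\,P$ (Lemma~\ref{lem_word_trans_independent_of_word} at the word level, transferred by Proposition~\ref{prop_element_trans_independent_of_word} and Remark~\ref{remark_element_castling}) as $\uline{w}u_1\rlh\widehat{u_1}\uline{\widehat w}$ followed by $\uline{\widehat w}p\rlh\widehat p\uline{\widetilde w}$, with $\widetilde u=\widehat{u_1}\widehat p$. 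Since $\widehat{u_1}\mid\widetilde u$ we get $\gcd(w,\widehat{u_1})\mid\gcd(w,\widetilde u)=1$, so the inductive hypothesis applies to $\uline{w}u_1$ and yields that $w,u_1$ are strongly castlable; and $\widehat w,p$, being weakly castlable with $\ind(p)=1$, are strongly castlable by the base case. By Proposition~\ref{prop_element_trans_independent_of_word} it now suffices to exhibit one castlable pair of words for $(w,u)$, and $W_\flat$ with the word $(u_1)_\flat\,P$ of $u$ does: decomposing along $(u_1)_\flat\cdot P$, castlability of $W_\flat$ with $(u_1)_\flat$ comes from strong castlability of $w,u_1$, and castlability of the resulting word of $\widehat w$ with $P$ from strong castlability of $\widehat w,p$. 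This closes the induction, and with the reduction above the lemma follows.

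The steps I expect to need the most care are the following. First, the reduction: verifying that the non‑abelian pair of conditions collapses to $\gcd(w,\widetilde u)=1$ rests on the index identity of Lemma~\ref{lem_gcd_lcm_u_v} being available for $\bS$ — it follows from the index homomorphism of Section~\ref{subsection_example_of_intergral_monoid}, and is in any case recovered when Axiom~\uppercase\expandafter{\romannumeral4} is verified for $\bS$ in Section~\ref{subsection_verifying_axioms_thompson}. Second, and more essentially, the bookkeeping in the inductive step: that the weak castling really decomposes on its second slot, and that the promotion ``weak $+$ second factor of index $\le1$ $\Rightarrow$ strong'' is precisely what makes the peeling argument run. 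The genuinely delicate phenomenon underneath is that a castling modifies the letters it passes through, so the statement that failure of the minimum words to castle forces a true common prime divisor of $w$ and $\widetilde u$ has to be extracted via Lemma~\ref{lem_divisor_shift_is_castling} and Lemma~\ref{lem_minimal_word_castling} rather than by inspection; in the argument above this is hidden inside the inductive use of Proposition~\ref{prop_element_trans_independent_of_word}, and I expect the main effort to be in making that hidden step fully rigorous.
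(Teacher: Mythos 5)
Your proposal has a genuine gap in the inductive step, and the problem is structural: you are inducting on the wrong slot of the castling.

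You peel off a letter from $u$ (the second slot) and argue via the decomposition $\uline{w}u_1\rlh\widehat{u_1}\uline{\widehat w}$, $\uline{\widehat w}p\rlh\widehat p\uline{\widetilde w}$. For this decomposition to be available at all, the letter $P$ you peel off must be the last letter of the \emph{maximum} word $U_\sharp=(u_1)_\sharp P$, since the weak castling is defined on $W_\sharp,U_\sharp$ and Proposition~\ref{prop_element_trans_independent_of_word} only propagates castlability \emph{upward} in the second coordinate ($V'\succcurlyeq V$). But strong castlability of $w,u$ is the statement that $W_\flat$ is castlable with $U_\flat$ (equivalently, by Remark~\ref{remark_element_castling}, that some word of $w$ is castlable with \emph{every} word of $u$), and $U_\flat$ is the \emph{minimum} word. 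At the end of your inductive step you produce the castlable pair $W_\flat,\,(u_1)_\flat P$; but $(u_1)_\flat P$ is in general $\succcurlyeq U_\flat$ and $\neq U_\flat$, so Proposition~\ref{prop_element_trans_independent_of_word} never lets you reach $U_\flat$. The sentence ``it now suffices to exhibit one castlable pair of words for $(w,u)$'' is exactly the definition of \emph{weak} castlability, which you already had; it does not yield strong castlability. A concrete instance: $u=p_0p_2$ has $U_\sharp=P_0P_2$ and $U_\flat=P_1P_0$, so peeling the max word gives $p=p_2$, $u_1=p_0$, and $(u_1)_\flat P=P_0P_2\neq U_\flat$; for $w=p_0$ the pair $W_\flat=P_0$ is castlable with $P_0P_2$ but \emph{not} with $P_1P_0$, which is precisely the phenomenon the lemma is about. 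The missing tool — a ``composition of strong castlings in the second slot'' — is not proved in the paper, and is itself subtle for the same reason (the two minimum words $V_{1\flat}V_{2\flat}$ need not concatenate to $V_\flat$).

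The paper's own proof avoids this by inducting on $\ind(w)$, the \emph{first} slot, where the machinery is symmetric to what you need: weak castlings decompose on the first slot (Lemma~\ref{lem_union_and_decomposition_for_element_weak_castling}(i)) and strong castlings compose on the first slot (Lemma~\ref{lem_union_and_decomposition_for_element_castling}(ii)), so ``$w,u$ not strongly castlable'' cleanly reduces to ``$p_k,u$ not strongly castlable or $w_1,\widehat u$ not strongly castlable.'' The base case $w\in\cP$ is then the real work: one locates, along a $\succcurlyeq$-chain from $U_\sharp$ to $U_\flat$, the first word at which castlability with $P_k$ fails, and a case analysis on the failing adjacent pair exhibits a common prime of $w$ and $\widetilde u$ and a common prime co-divisor of $u$ and $\widetilde w$ directly. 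Your base case (single word $\Rightarrow$ weak$=$strong) is correct but is only the case $\ind(u)\le 1$ of the second slot, and does not substitute for this.

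A secondary concern: your reduction of the two conclusions to a single one rests on $\ind(\gcd(w,\widetilde u))+\ind(\lcm[w,\widetilde u])=\ind(w)+\ind(\widetilde u)$, i.e.\ on Lemma~\ref{lem_gcd_lcm_u_v}(i). That identity is proved under Axiom~IV$'$, which for $\bS$ is established only later in Section~\ref{subsection_verifying_axioms_thompson}, using the very lemma at hand (via Lemma~\ref{lem_divisor_castling_Thompson_Group} and the identification $\fC_1=\fC_1'$). Appealing to the index homomorphism alone does not obviously recover it, so as written the reduction is in danger of being circular; the paper sidesteps this by proving both inequalities simultaneously inside the induction rather than reducing one to the other.
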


\begin{proof}
We use induction on $\ind(w)$. For $\ind(w)=0$, the proof is trivial.

Now we deal with the case $w=p_k$ for some $k\geq 0$. Note that $P_k$ is castlable with $U_\sharp$, but is not castlable with $U_\flat$. And there is a totally ordered chain, with respect to $\succcurlyeq$, between $U_\sharp$ and $U_\flat$. So there exist two distinct words $U$ and $U^\prime$ of $u$, differing from each other by exactly one castling of a pair of adjacent letters, such that $P_k$ is castlable with $U$ but not castlable with $U^\prime$. Here $U\succ U^\prime$. Write $U=U_0P_iP_j U_1$ and $U^\prime= U_0P_{j-1} P_i U_1$, where $i-j\leq -2$. It follows from the castling of $P_k$ and $U$ that
\[
\underline{\underline{P_k}}U = \underline{\underline{P_k}}U_0P_iP_jU_1\rlh U_0^\prime\underline{\underline{P_{k^\prime}}}P_iP_j U_1 \rlh U_0^\prime P_{i^\prime}P_{j^\prime} \underline{\underline{P_{k^{\prime\prime}}}}U_1 \rlh U_0^\prime P_{i^\prime}P_{j^\prime} U_1^\prime \underline{\underline{P_{k^{\prime\prime\prime}}}}
\]
for some words $U_0^\prime, U_1^\prime$ and letters $P_{i^\prime},P_{j^\prime}, P_{k^\prime},P_{k^{\prime\prime}}, P_{k^{\prime\prime\prime}}$. Here $U_0^\prime P_{i^\prime}P_{j^\prime} U_1^\prime$ is a word of $\widetilde{u}$ and $P_{k^{\prime\prime\prime}}$ is the letter of $\widetilde{w}$.

If $P_{k^\prime}$ is castlable with $P_{j-1}P_{i}$, then $\underline{\underline{P_{k^\prime}}}P_{j-1}P_i \rlh Z\underline{\underline{P_{k^{\prime\prime}}}}$ for some word $Z$ by Proposition \ref{prop_element_trans_independent_of_word}. It follows that
\[
\underline{\underline{P_k}}U^\prime = \underline{\underline{P_k}}U_0P_{j-1}P_iU_1\rlh U_0^\prime\underline{\underline{P_{k^\prime}}}P_{j-1}P_i U_1 \rlh U_0^\prime Z \underline{\underline{P_{k^{\prime\prime}}}}U_1 \rlh U_0^\prime Z U_1^\prime \underline{\underline{P_{k^{\prime\prime\prime}}}},
\]
which contradicts the fact that $P_k$, $U^\prime$ are not castlable.

Thus, the letter $P_{k^\prime}$ is castlable with $P_iP_j$, but not castlable with $P_{j-1}P_i$. By the proof of Lemma \ref{lem_U_1_V_2}, one sees that $k^\prime = i = j-2$. It follows that $\underline{\underline{P_i}}P_iP_{i+2}\rlh P_iP_{i+1}\underline{\underline{P_i}}$, i.e., $i^\prime=i, j^\prime=i+1, k^{\prime\prime}=i$. Now $U_0^\prime P_iP_{i+1}U_1^\prime=U_0^\prime P_{i^\prime}P_{j^\prime} U_1^\prime$, and it is a word of $\widetilde{u}$. Combing $\underline{\underline{P_k}}U_0 \rlh U_0^\prime\underline{\underline{P_{k^\prime}}}=U_0^\prime\underline{\underline{P_i}}$, we deduce that $P_kU_0P_{i+1}U_1^\prime$ is also a word of $\widetilde{u}$, which implies that $p_k|\widetilde{u}$. Similarly, since $U_0P_{i+1}P_iU_1=U_0P_{j-1}P_iU_1\in \fW(u)$ and $\underline{\underline{P_i}}U_1=\underline{\underline{P_{k^{\prime\prime}}}}U_1\rlh U_1^\prime\underline{\underline{P_{k^{\prime\prime\prime}}}}$, one deduces that $U_0P_{i+1}U_1^\prime P_{k^{\prime\prime\prime}}$ is also a word of $u$. Thus $\widetilde{w}\ddagger u$. We conclude that $\gcd(w,\widetilde{u})\neq 1$ and $\gcd_\ddagger(wu;u,\widetilde{w})\neq 1$.

Next, let us suppose that the lemma has been proved for $\ind(w)\leq m-1$ with some $m\geq 2$. Now we handle the case $\ind(w)=m$. Let $p_k$ be a prime co-divisor of $w$ and write $w=w_1p_k$. By Lemma \ref{lem_union_and_decomposition_for_element_weak_castling}(\romannumeral1), we have $\underline{p_k}u\rlh \widehat{u}\underline{q}$ for some $\widehat{u},q\in \bS$ and $\underline{w_1}\widehat{u}\rlh \widetilde{u}\underline{\widehat{w_1}}$ for some $\widehat{w_1}$, where $\widetilde{w}=\widehat{w_1}q$. By Lemma \ref{lem_union_and_decomposition_for_element_castling}(\romannumeral2) and the fact the $w,u$ are not strongly castlable, one deduces that either $p_k,u$ are not strongly castlable, or $w_1,\widehat{u}$ are not strongly castlable.

\textsc{Case 1.} If $p_k,u$ are not strongly castlable, then we have $q\ddagger u$ by inductive hypothesis, which leads to $\gcd(wu; u, \widetilde{w})\neq 1$. Now we apply similar arguments (replace $\widetilde{u}$ by $\widehat{u}$ and $\widetilde{w}$ by $q$) as previous to obtain
\[
\underline{\underline{P_k}}U = \underline{\underline{P_k}}U_0P_iP_jU_1\rlh U_0^\prime\underline{\underline{P_i}}P_iP_{i+2} U_1 \rlh U_0^\prime P_iP_{i+1}\underline{\underline{P_i}} U_1 \rlh U_0^\prime P_iP_{i+1} U_1^\prime \underline{\underline{P_{k^{\prime\prime\prime}}}},
\]
where $U_0^\prime P_iP_{i+1} U_1^\prime$ is a word of $\widehat{u}$ and $P_{k^{\prime\prime\prime}}$ is the letter of $q$. Denote $\widehat{U}=P_kU_0P_{i+1}U_1^\prime$, which is also a word of $\widehat{u}$. So it is castlable with $P_{k^{\prime\prime\prime}}$ by Proposition \ref{prop_element_trans_independent_of_word}. Write $\underline{\underline{P_k}}\dot{U}\rlh \widehat{U}\underline{\underline{P_{k^{\prime\prime\prime}}}}$ for some word $\dot{U}$ of $u$. Let $W_1$ be any word of $w_1$. If $W_1$ and $\widehat{U}$ are not castlable, then $w_1$ and $\widehat{u}$ are not strongly castlable, which is a case we will handle latter. Now we suppose that $\underline{\underline{W_1}}\widehat{U} \rlh \widehat{\widehat{U}}\underline{\underline{\widehat{W_1}}}$ for some word $\widehat{\widehat{U}}$ and $\widehat{W_1}$. Note that $\underline{\underline{W_1P_k}}\dot{U} \rlh \widehat{\widehat{U}}\underline{\underline{\widehat{W_1}P_{k^{\prime\prime\prime}}}}$. So $\widehat{\widehat{U}}$ is a word of $\widetilde{u}$. Furthermore, we can write
\[
\underline{\underline{W_1}}\widehat{U}= \underline{\underline{W_1}}  P_kU_0P_{i+1} U_1^\prime \rlh \widehat{P_k}\underline{\underline{\grave{W_1}}}  U_0P_{i+1} U_1^\prime \rlh \widehat{P_k} \widehat{U_0}\widehat{P_{i+1}} \widehat{U_1}^\prime \underline{\underline{\widehat{W_1}}}=  \widehat{\widehat{U}}\underline{\underline{\widehat{W_1}}}
\]
for some words $\widehat{P_k},\grave{W_1},\widehat{U_0},\widehat{P_{i+1}},\widehat{U_1}^\prime$. Let $t$ be the element having letter $\widehat{P_k}$. It follows from $\widehat{P_k} \widehat{U_0}\widehat{P_{i+1}} \widehat{U_1}^\prime=\widehat{\widehat{U}}$ that $t|\widetilde{u}$. Moreover, both $\widehat{P_k}\grave{W_1}$ and $W_1  P_k$ are words of $w$, so $t|w$. Now we have $t|\gcd(w,\widetilde{u})$.

\textsc{Case 2.} If $w_1,\widehat{u}$ is not strongly castlable, then inductive hypothesis ensures that $\gcd(w_1,\widetilde{u})\neq 1$ and $\gcd_\ddagger(w_1\widehat{u}; \widehat{u},\widehat{w_1})\neq 1$. So $\gcd(w,\widetilde{u})\neq 1$. Let $r$ be an element such that $r\in \cP$ and $r\ddagger \gcd_\ddagger(w_1\widehat{u}; \widehat{u},\widehat{w_1})$. We write $\widehat{u}=\breve{u}r$ and $\widehat{w_1}=\breve{w_1}r$. It follows from $\underline{p_k}u\rlh \widehat{u}\underline{q}$ and Lemma \ref{lem_union_and_decomposition_for_element_weak_castling}(\romannumeral2) that
\[
\widehat{u}\underline{q} = \breve{u}r\underline{q} \rlh \breve{u} \underline{\breve{q}} \breve{r} \rlh \underline{p_k} \breve{\breve{u}}\breve{r}=\underline{p_k}u
\]
for some $\breve{q},\breve{r},\breve{\breve{u}}\in \bS$. One sees that $\breve{r}\ddagger u$ and $rq=\breve{q}\breve{r}$. Then $\widetilde{w}=\widehat{w_1}q=\breve{w_1}rq=\breve{w_1}\breve{q}\breve{r}$, which implies that $\breve{r}\ddagger \widetilde{w}$. Now $\breve{r}\ddagger \gcd_\ddagger(wu;u,\widetilde{w})$.

The proof is completed by induction.
\end{proof}

\begin{remark} \label{remark_gcd_and_strong_castling}
Note that we also have $\widetilde{u}\underline{\widetilde{w}}\rlh \underline{w}u$. If $\widetilde{u},\widetilde{w}$ are not strongly castlable, then $\gcd(w,\widetilde{u})\neq 1$ and $\gcd_\ddagger(wu; u, \widetilde{w})\neq 1$ still hold.
\end{remark}

\begin{definition}
If $\uline{u}v \rightleftharpoons \widehat{v} \underline{\widehat{u}}$ and $\gcd(u,\widehat{v})=1$, then we say that $u,v$ are castled-free. We denote $\uuuline{u}v \rightleftharpoons \widehat{v} \uuuline{\widehat{u}}$, and call it a free castling. We also put $\fC_1^\prime=\{(u,v)\in \fC:\, u,v \text{ are castled-free}\}$ and $\Gamma_1^\prime=\{((u,v),(\widetilde{v},\widetilde{u}))\in \Gamma^\prime:\, \uuuline{u}v \rightleftharpoons \widetilde{v} \uuuline{\widetilde{u}}\}$.
\end{definition}

By Lemma \ref{lem_not_castlable_implies_divides}, we have $\fC_1^\prime\subseteq \fC_0^\prime$ and $\Gamma_1^\prime \subseteq \Gamma_0^\prime$.

\begin{lemma}  \label{lem_divisor_castling_Thompson_Group}
Let $u,v\in \bS$. Suppose that $w$ is a divisor of $uv$ satisfying $\gcd(w,u)=1$. Then there exist some $v_1|v$ and $\widetilde{u}\in \bS$ such that $\uuuline{w}\widetilde{u} \rlh u\uuuline{v_1}$.
\end{lemma}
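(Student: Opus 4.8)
The plan is to prove this "fundamental lemma for arithmetic" in the concrete setting of Thompson's monoid by the same combinatorial method used throughout this section, namely by tracking words and their partial order, and in particular by using Lemma~\ref{lem_divisor_shift_is_castling}. First I would fix a word $W$ of $w$; since $w\mid uv$, the element $w$ is a divisor of $uv$, and Lemma~\ref{lem_divisor_shift_is_castling} (applied letter by letter, i.e.\ iterated over an irreducible decomposition of $w$) lets me realize $w$ as a "left-shiftable" part of a suitable word of $uv$. Concretely, starting from the word $UV$ (where $U$ is the maximum word of $u$ and $V$ the maximum word of $v$), I would push each letter of $W$ to the very front: by repeated use of Lemma~\ref{lem_divisor_shift_is_castling} and the composition rules for castlings of words (Lemma~\ref{lem_word_trans_independent_of_word} and Lemma~\ref{lem_union_and_decomposition_for_element_weak_castling}), I can arrange $\underline{\underline{W}}\,X \rlh \widetilde{X}\,\underline{\underline{\widehat{W}}}$-type moves so that $w$ becomes a genuine divisor sitting at the front of a word of $uv$; equivalently there is a castling $\underline{w}\,x \rlh \widetilde{x}\,\underline{\widetilde{w}}$ with $wx = uv$, $x$ something canonical, obtained by castling $w$ "through" $u$ first.

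The crucial point is the hypothesis $\gcd(w,u)=1$. I would castle $w$ through $u$: write $\underline{w}\,u' \rlh \widetilde{u}\,\underline{w'}$ for the appropriate companion $u'$ (coming from the decomposition $uv$), so that after moving $w$ past $u$ we are left needing $w$ (now transformed) to be absorbed into the $v$-part. The coprimality $\gcd(w,u)=1$ is exactly what makes this castling a \emph{free} castling: by Lemma~\ref{lem_not_castlable_implies_divides} (or rather its contrapositive, via Remark~\ref{remark_gcd_and_strong_castling} and the definition of $\fC_1'$), if $\gcd(w,u)=1$ then in the castling $\underline{w}\,(\text{rest}) \rlh \dots$ the relevant $\gcd$ of $w$ with the castled image cannot be nontrivial on the $u$-side, forcing a free castling $\uuuline{w}\,\widetilde{u} \rlh u\,\uuuline{v_1}$ with $\widetilde{u}$ and $v_1$ determined. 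Then $v_1$ is by construction a divisor of $v$: since $w \mid uv$ and $u\mid uv$, and $u v_1 = \lcm[u,w]$ (this is what the free castling encodes — cf.\ the argument in Lemma~\ref{lem_divisor_castling}(i)), we get $u v_1 = \lcm[u,w] \mid uv$, hence $v_1 \mid v$.

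In more detail, the order I would carry out the steps: (1) set up the word-level picture, realizing $w$ as a front-divisor of a word of $uv$ via Lemma~\ref{lem_divisor_shift_is_castling} applied to each irreducible factor of $w$, composing the resulting castlings with Lemma~\ref{lem_union_and_decomposition_for_element_weak_castling}(ii)--(iii); (2) from this, extract a weak castling $\underline{w}\,\widetilde{u} \rlh u\,\underline{v_1}$ for suitable $\widetilde{u},v_1\in\bS$, i.e.\ realize $w$ as a castled divisor of $v$ over $u$ in the sense of the Definition in this section — here I use that $w\mid uv$ together with the decomposition structure; (3) invoke $\gcd(w,u)=1$ together with Lemma~\ref{lem_not_castlable_implies_divides}/Remark~\ref{remark_gcd_and_strong_castling} to upgrade this to a \emph{free} castling $\uuuline{w}\,\widetilde{u} \rlh u\,\uuuline{v_1}$ (coprimality rules out the nontrivial-$\gcd$ alternative, so $(w,\widetilde{u})\in\fC_1'$); (4) check $v_1 \mid v$ by the $\lcm$ computation $u v_1 = \lcm[u,w]\mid uv$, exactly as in the proof of Lemma~\ref{lem_divisor_castling}(i) but now with the word-based castling apparatus in place.

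I expect the main obstacle to be step~(2): cleanly showing that "$w\mid uv$" forces the existence of a castling $\underline{w}\,\widetilde{u}\rlh u\,\underline{v_1}$ where the companion $v_1$ is honestly a subword/divisor coming out of $v$ rather than getting entangled with $u$. The subtlety is that $w$, as a divisor of $uv$, may need to "cross" the boundary between $u$ and $v$ in a word of $uv$, and one must argue — using Lemma~\ref{lem_minimal_word_castling}, Lemma~\ref{lem_max_min_preserving} and the order-preservation in Proposition~\ref{prop_element_trans_independent_of_word} — that the part of $w$ that genuinely crosses into $u$'s territory is precisely $\gcd(w,u)$, which is trivial here, so in fact $w$ passes through $u$ untouched (as a castled divisor) and lands entirely inside $v$. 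Once that bookkeeping is done carefully, steps (3) and (4) are routine given the earlier lemmas. (Uniqueness of $w$, the "moreover" part in the abstract Lemma~\ref{lem_divisor_castling}, is not asked for here, so I would not pursue it.)
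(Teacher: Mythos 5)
Your plan follows the paper's own route at a high level: the base case $\ind(w)=1$ via Lemma~\ref{lem_divisor_shift_is_castling}, upgrading the resulting weak castling to a free one by invoking $\gcd(w,u)=1$ together with Lemma~\ref{lem_not_castlable_implies_divides}, and then iterating over an irreducible decomposition of $w$ by composing castlings. The paper also runs an induction on $\ind(w)$, and for the base case it too splits on whether the letter found by Lemma~\ref{lem_divisor_shift_is_castling} lies in the $U$-part (ruled out by $\gcd(w,u)=1$) or the $V$-part of a word of $uv$.

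However there is a genuine gap in your step (1)--(2), precisely at the point you characterize as "bookkeeping." When you peel off a prime $p$ co-dividing $w$ (so $w=pw_1$) and castle it freely, $\uuuline{p}\widetilde{u}\rlh u\uuuline{q}$ with $q\mid v$, you then have $w_1\mid \widetilde{u}v'$ where $v=qv'$. To apply your iteration (or the inductive hypothesis) to $w_1$ with $\widetilde{u}$ in the role of $u$, you need $\gcd(w_1,\widetilde{u})=1$, and this does \emph{not} follow trivially from $\gcd(w,u)=1$ because $\widetilde{u}$ is a different element from $u$ (they merely have the same index). The paper devotes a full argument to this: assuming $r\mid\gcd(w_1,\widetilde{u})$, it decomposes the strong castling $\uuline{p}\widetilde{u}$ along the factorization $\widetilde{u}=r\widetilde{u}'$ via Lemma~\ref{lem_union_and_decomposition_for_element_castling} to produce a prime $\widetilde{r}$ with $\widetilde{r}\mid u$ and $\widetilde{r}\mid pr\mid w$, contradicting $\gcd(w,u)=1$. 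Without this step your composition of castlings may silently leave $w_1$ entangled with $\widetilde{u}$, and the claimed free castling $\uuuline{w}\widetilde{u}\rlh u\uuuline{v_1}$ would not be obtainable. Your description of the obstacle ("the part of $w$ that crosses into $u$'s territory is $\gcd(w,u)$") is morally right but does not by itself resolve this; the crossing argument must be re-checked at every inductive stage because each peel replaces $u$ by a new $\widetilde{u}$.

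One further minor correction: in step (4) you appeal to $uv_1=\lcm[u,w]$, but Lemma~\ref{lem_divisor_castling} from Section~\ref{section_homogeneous_monoid} is not available here — the whole point of Section~\ref{section_Thomspon} is to reconstruct the castling theory from scratch for $\bS$ and only afterwards identify it with the abstract one. The paper instead checks $v_1\mid v$ directly from the word decomposition and the coprimality (and separately, in Lemma~\ref{lem_C_1_=_C_1prime}, verifies the $\lcm$ interpretation). So $v_1\mid v$ must also be derived internally, which the paper does in the base case by the $YQ_0Z$ split and in the inductive step by $v_1=qv_2$ with $v_2\mid v'$.
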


\begin{proof}
When $\ind(u)=0$ or $\ind(v)=0$ or $\ind(w)=0$, the proof is trivial. We always assume that $\ind(u),\ind(v),\ind(w)\geq 1$. In the following, induction on $\ind(w)$ is applied.

We first deal with the case $w\in \cP$. Let $W,U,V$ be the words of $w,u,v$, respectively. Note that $w|uv$. By Lemma \ref{lem_divisor_shift_is_castling}, we have a subword-decomposition $UV=YQ_0Z$ with $Q_0$ a letter and $\underline{\underline{Y}}Q_0 \rlh W\underline{\underline{\widetilde{Y}}}$ for some word $\widetilde{Y}$.

\textsc{Case 1.} If $\ind(Y)< \ind(U)$, then $YQ_0$ is a subword of $U$ that starts at the beginning. Write $u_1$ for the element having word $YQ_0$. One has $u_1|u$. Note that $W\widetilde{Y}$ is also a word of $u_1$. It follows that $w|u_1$, which contradicts the fact that $\gcd(w,u)=1$.

\textsc{Case 2.} Now we suppose that $\ind(Y)\geq \ind(U)$. Write $Y=UY_0$. It follows from the castling of $Y$ and $Q_0$ that
\[
\underline{\underline{Y}}Q_0 = \underline{\underline{UY_0}}Q_0 \rlh \underline{\underline{U}} Q \underline{\underline{\widetilde{Y_0}}} \rlh W \underline{\underline{\widetilde{U}\widetilde{Y_0}}} = W \underline{\underline{\widetilde{Y}}}
\]
for some letter $Q$ and some words $\widetilde{U},\widetilde{Y_0}$. Let $\widetilde{u}, q$ be the elements having words $\widetilde{U}, Q$, respectively. One deduces that $\underline{w}\widetilde{u} \rlh u\underline{q}$. Combining Lemma \ref{lem_not_castlable_implies_divides} and the fact $\gcd(w,u)=1$, one concludes that $\uuuline{w}\widetilde{u} \rlh u\uuuline{q}$.

Suppose that the lemma has been proved for $\ind(w)\leq m-1$ with some $2\leq m\leq \ind(v)$. Now we consider the situation $\ind(w)=m$. Write $w=pw_1$ with $p\in \cP$. Then $p|uv$ and $p\nmid u$. By inductive hypothesis, there is a prime $q|v$ and an element $\widetilde{u}\in S$ such that $\uuuline{p}\widetilde{u} \rlh u\uuuline{q}$. Let $v=qv^\prime$. Since $p\widetilde{u}=uq$, one has $uv= uqv^\prime = p\widetilde{u}v^\prime$. It follows that $w_1|\widetilde{u}v^\prime$.

Assume that $r|\gcd(w_1,\widetilde{u})$ for some $r\in \cP$. We write $\widetilde{u}=r\widetilde{u}^\prime$. Then, it follows from the strong castling in $p, \widetilde{u}$ and Lemma \ref{lem_union_and_decomposition_for_element_castling}(\romannumeral3) that
\[
\uuline{p}r \rlh \widetilde{r} \uline{\widetilde{p}},\quad \uuline{\widetilde{p}}\widetilde{u}^\prime \rlh  \widetilde{\widetilde{u}}^\prime\uline{q}
\]
for some primes $\widetilde{r},  \widetilde{p}$ and elements $\widetilde{\widetilde{u}}^\prime$ with $u=\widetilde{r}\widetilde{\widetilde{u}}^\prime$. 
Now $\widetilde{r}|\widetilde{r} \widetilde{\widetilde{u}^\prime}=u$, $\widetilde{r}| \widetilde{r} \widetilde{p}=pr$ and $pr|pw_1=w$, which contradicts the fact that $\gcd(w,u)=1$.

Now we conclude that $\gcd(w_1,\widetilde{u})=1$. Recall that $w_1|\widetilde{u}v^\prime$. By inductive hypothesis, there are an element $v_2|v^\prime$ and some $\widetilde{\widetilde{u}}\in \bS$ such that $\uuuline{w_1}\widetilde{\widetilde{u}} \rlh \widetilde{u}\uuuline{v_2}$. Recall that $\uuuline{p}\widetilde{u} \rlh u\uuuline{q}$. One 
sees that $\uline{w}\widetilde{\widetilde{u}}=\uline{p w_1}\widetilde{\widetilde{u}} \rlh  u\uline{qv_2}$, where $qv_2|v$. Noting that $\gcd(w,u)\neq 1$, we conclude that $\uuuline{w}\widetilde{\widetilde{u}}  \rlh u\uuuline{qv_2}$. The proof is completed.
\end{proof}

\subsection{Verifying the Axioms}
\label{subsection_verifying_axioms_thompson}

In this subsection, we will show that the definition of $\fC^\prime,\fC_0^\prime,\fC_1^\prime$ coincides with $\fC,\fC_0,\fC_1$ defined in previous sections, respectively, and Axioms \uppercase\expandafter{\romannumeral4} and \uppercase\expandafter{\romannumeral5} are satisfied. The underlines occurred in this subsection are still the notations defined in this section.

\begin{lemma} \label{lem_castling_to_lcm_ind_leq}
Let $u,v$ be elements in $\bS$ and $xy^{-1}$ be the fraction of $v^{-1}u$ in lowest terms with numerator $x$ and denominator $y$. Then $\ind(x)\leq \ind(u)$ and $\ind(y)\leq \ind(v)$.
\end{lemma}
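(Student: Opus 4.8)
The plan is to connect the fraction in lowest terms of $v^{-1}u$ with the least common multiple of $u$ and $v$, and then to invoke the index identities already established for homogeneous monoids. Recall from Lemma~\ref{lem_lcm_may_be_defined} (and its proof) that if $xy^{-1}$ is the fraction of $v^{-1}u$ in lowest terms with numerator $x$ and denominator $y$, then $w:=uy=vx$ is precisely $\lcm[u,v]$. Thus $u\mid \lcm[u,v]$ with $\lcm[u,v]=uy$, and $v\mid \lcm[u,v]$ with $\lcm[u,v]=vx$. Since $\bS$ is a homogeneous monoid (indeed Thompson's monoid is a natural monoid, verified in this section via the concrete construction), Corollary~\ref{cor_ind_eq} gives $\ind$ additive on products, and Lemma~\ref{lem_gcd_lcm_u_v}(i) gives
\[
\ind(\gcd(u,v))+\ind(\lcm[u,v]) = \ind(u)+\ind(v).
\]

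From $\lcm[u,v]=uy$ and additivity of $\ind$ we get $\ind(y)=\ind(\lcm[u,v])-\ind(u)$; combining with the displayed identity yields $\ind(y)=\ind(v)-\ind(\gcd(u,v))\leq \ind(v)$, since $\ind(\gcd(u,v))\geq 0$. Symmetrically, $\ind(x)=\ind(\lcm[u,v])-\ind(v)=\ind(u)-\ind(\gcd(u,v))\leq \ind(u)$. This gives both inequalities at once.

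The only subtlety worth checking is that the lemma is being applied in a context where $\bS$ is already known to be a natural (hence homogeneous) monoid, so that Lemma~\ref{lem_gcd_lcm_u_v} and Corollary~\ref{cor_ind_eq} are legitimately available; since the surrounding subsection is devoted to verifying exactly that, one should either cite the part of Section~\ref{section_Thomspon} that establishes homogeneity of $\bS$, or — to avoid any circularity — give a direct argument using the explicit homomorphism $\ind:\bG\to(\bZ,+)$ from Section~\ref{subsection_example_of_intergral_monoid}. In fact the direct route is cleanest: $\ind$ extends to a group homomorphism on $\bG$, so from $u=x y^{-1}\cdot\,$(nothing) — more precisely from $v^{-1}u=xy^{-1}$, i.e. $u=vxy^{-1}$, hence $uy=vx$ in $\bS$ — we get $\ind(u)+\ind(y)=\ind(v)+\ind(x)$ by applying the homomorphism. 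Writing $uy=vx=\lcm[u,v]$ and noting $u\mid \lcm[u,v]$, $v\mid\lcm[u,v]$ forces $\ind(u)\le\ind(\lcm[u,v])$ and $\ind(v)\le\ind(\lcm[u,v])$, whence $\ind(y)=\ind(\lcm[u,v])-\ind(u)\le \ind(\lcm[u,v])-\ind(v) = \ind(x)$ is not quite what we want directly; instead use $\ind(x)=\ind(\lcm[u,v])-\ind(v)\le\ind(u)$ because $\lcm[u,v]=uy$ has index $\ind(u)+\ind(y)$ and $\ind(v)\ge 0$ together with $\ind(\gcd(u,v))\ge 0$. The main (very minor) obstacle is thus purely bookkeeping: making sure one uses only the group homomorphism $\ind$ on $\bG$ and the divisibility relations $u,v\mid\lcm[u,v]=uy=vx$, so that no appeal to the not-yet-verified axioms is needed, and then reading off $\ind(x)\le\ind(u)$, $\ind(y)\le\ind(v)$ from $\ind(u)+\ind(y)=\ind(v)+\ind(x)=\ind(\lcm[u,v])$ with $\ind(\gcd(u,v))=\ind(u)+\ind(v)-\ind(\lcm[u,v])\ge 0$.
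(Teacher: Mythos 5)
Your instinct to reduce the claim to $\ind(\gcd(u,v))+\ind(\lcm[u,v])=\ind(u)+\ind(v)$ is natural, but the circularity you flag is not a minor bookkeeping issue — it is fatal to this route, and the "fix" you sketch does not escape it. From the group homomorphism $\ind:\bG\to(\bZ,+)$ alone you get the single equation $\ind(u)+\ind(y)=\ind(v)+\ind(x)=\ind(\lcm[u,v])$, which is perfectly symmetric in the two desired inequalities: $\ind(x)\le\ind(u)$, $\ind(y)\le\ind(v)$, and $\ind(\lcm[u,v])\le\ind(u)+\ind(v)$ are all equivalent to each other given that equation, so none can be derived from it. To break the symmetry you invoke $\ind(\gcd(u,v))=\ind(u)+\ind(v)-\ind(\lcm[u,v])\ge 0$, but the equality there is exactly Lemma~\ref{lem_gcd_lcm_u_v}(i), whose proof uses Lemma~\ref{lem_lcm_unique_from_eta_2=id} — i.e.\ Axiom~IV$'$ — precisely the property that Lemma~\ref{lem_castling_to_lcm_ind_leq} is being proved in order to establish for $\bS$. (The weaker, elementary facts $\ind(\gcd(u,v))\le\ind(u)$ and $\ind(\gcd(u,v))\le\ind(v)$ from $\gcd(u,v)\mid u,v$ do not help here.) So nothing in the Axiom~I--III toolbox together with the group homomorphism yields $\ind(\lcm[u,v])\le\ind(u)+\ind(v)$ directly.

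The paper avoids this entirely by an induction on $\ind(u)+\ind(v)$ that uses only Axiom~II and additivity of $\ind$: write $u=u_1u_2$ with $u_1,u_2\neq 1$, let $x_1y_1^{-1}$ be the fraction of $v^{-1}u_1$ in lowest terms (so $\ind(x_1)\le\ind(u_1)$, $\ind(y_1)\le\ind(v)$ by hypothesis), then let $x_2y_2^{-1}$ be the fraction of $y_1^{-1}u_2$ in lowest terms (applicable since $\ind(y_1)+\ind(u_2)\le\ind(v)+\ind(u)-1$), observe $v^{-1}u=(x_1x_2)y_2^{-1}$ is a fraction, hence $x_1x_2=xc$, $y_2=yc$ for some $c\in\bS$, and read off $\ind(x)\le\ind(x_1)+\ind(x_2)\le\ind(u)$ and $\ind(y)\le\ind(y_2)\le\ind(y_1)\le\ind(v)$. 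You should redo the argument along these lines; the gcd--lcm identity cannot be used at this stage.
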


\begin{proof}
We prove by induction on $\ind(u),\ind(v)$. For $\ind(u)=0$ or $\ind(v)=0$, the proof is trivial. We assume below $\ind(u),\ind(v)\geq 1$. For $\ind(u)=\ind(v)=1$, we have that $x=y=1$ when $u=v$, or $x,y$ given by $\lcm[u,v]=uy=vx$. In both cases, one has $\ind(x)\leq \ind(u)$ and $\ind(y)\leq \ind(v)$.

Suppose that the lemma has been proved with $\ind(u)+\ind(v)\leq m-1$ with some $m\geq 3$. Now we consider the case $\ind(u)+\ind(v)=m$. Without loss of generality, we assume that $\ind(u)\geq 2$. Let $u=u_1u_2$, where $u_1u_2\neq 1$. Let $x_1y_1^{-1}$ be the fraction of $v^{-1}u_1$ in lowest terms with numerator $x_1$ and denominator $y_1$. By inductive hypothesis, we have $\ind(x_1)\leq \ind(u_1)$ and $\ind(y_1)\leq \ind(v)$. Let $x_2y_2^{-1}$ be the fraction of $y_1^{-1}u_2$ in lowest terms with numerator $x_2$ and denominator $y_2$. By inductive hypothesis, we have $\ind(x_2)\leq \ind(u_2)$ and $\ind(y_2)\leq \ind(y_1)$. Note that $v^{-1}u=v^{-1}u_1u_2=x_1y_1^{-1}u_2=(x_1x_2)y_2^{-1}$. It is also a fraction of $v^{-1}u$. So there is some $c\in \bS$ such that $x_1x_2=xc$ and $y_2=yc$. One concludes that
\begin{align*}
&\ind(x)\leq \ind(x_1)+\ind(x_2)\leq \ind(u_1)+\ind(u_2)=\ind(u),\\
&\ind(y)\leq \ind(y_2)\leq \ind(y_1)\leq \ind(v).
\end{align*}
This completes the proof.
\end{proof}

\begin{lemma} \label{lem_C_1_=_C_1prime}
We have $\fC_1=\fC_1^\prime$ and $\Gamma_1=\Gamma_1^\prime$.
\end{lemma}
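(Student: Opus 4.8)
The statement to prove is $\fC_1 = \fC_1'$ and $\Gamma_1 = \Gamma_1'$, i.e.\ the abstract set of castled-free pairs from Section~\ref{section_homogeneous_monoid} coincides with the one built from words in Thompson's monoid. Recall that $\fC_1 = \{(u, u^{-1}\lcm[u,v]) : u,v\in\bS,\ \gcd(u,v)=1\}$ with $\eta((u,u^{-1}\lcm[u,v])) = (v, v^{-1}\lcm[u,v])$, while $\fC_1'$ consists of pairs $(u,v)$ that are weakly castlable (in the word sense) with $\gcd(u,\widetilde v)=1$ where $\underline{u}v\rlh\widetilde v\,\underline{\widetilde u}$. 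The plan is to prove the two inclusions of $\fC_1\subseteq\fC_1'$ and $\fC_1'\subseteq\fC_1$ separately, each time checking that $\eta$ and the word-castling map agree on the overlap, which gives the corresponding statement for the graphs $\Gamma_1,\Gamma_1'$ at once.

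\textbf{Inclusion $\fC_1'\subseteq\fC_1$.} Suppose $(u,v)\in\fC_1'$, so $\uuuline{u}v\rlh\widetilde v\,\uuuline{\widetilde u}$ in the word sense, meaning $uv=\widetilde v\widetilde u$, $\ind(u)=\ind(\widetilde u)$, $\ind(v)=\ind(\widetilde v)$ (Remark~\ref{remark_element_castling}), and $\gcd(u,\widetilde v)=1$. Since $uv=\widetilde v\widetilde u$ is a common multiple of $u$ and $\widetilde v$, we get $\lcm[u,\widetilde v]\mid uv$; by Lemma~\ref{lem_gcd_lcm_u_v}(i), $\ind(\lcm[u,\widetilde v])=\ind(u)+\ind(\widetilde v)=\ind(u)+\ind(v)=\ind(uv)$, so $\lcm[u,\widetilde v]=uv=uv=\widetilde v\widetilde u$. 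Hence $(u,v)=(u,u^{-1}\lcm[u,\widetilde v])$ with $\gcd(u,\widetilde v)=1$, which is exactly a member of $\fC_1$, and $\eta$ sends it to $(\widetilde v,\widetilde v^{-1}\lcm[u,\widetilde v])=(\widetilde v,\widetilde u)$, matching the word-castling output. This gives $\fC_1'\subseteq\fC_1$ and $\Gamma_1'\subseteq\Gamma_1$.

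\textbf{Inclusion $\fC_1\subseteq\fC_1'$.} Take $(u, w)\in\fC_1$, so $w = u^{-1}\lcm[u,v]$ for some $v$ with $\gcd(u,v)=1$; write $\lcm[u,v]=uw=vx$. We must produce a word-castling $\uuuline{u}w\rlh\cdot\,\uuuline{\cdot}$. The natural tool is Lemma~\ref{lem_divisor_castling_Thompson_Group}: since $v\mid uw$ (as $v\mid\lcm[u,v]=uw$) and $\gcd(v,u)=1$, there exist $w_1\mid w$ and $\widetilde u\in\bS$ with $\uuuline{v}\widetilde u\rlh u\,\uuuline{w_1}$ in the word sense. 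Then $v\widetilde u=uw_1$ is a common multiple of $u,v$ dividing $uw$, so by the index/lcm argument above $v\widetilde u=\lcm[u,v]=uw$, forcing $w_1=w$ and $\widetilde u = u^{-1}v x = v^{-1}\lcm[u,v]\cdot$(as elements); thus $\uuuline{v}\widetilde u\rlh u\,\uuuline{w}$, and reversing (Remark~\ref{remark_element_castling}: $\uline u w\rlh \widetilde v\,\uline{\widetilde u}$ iff $\uline{\widetilde v}\widetilde u\rlh u\,\uline w$ — here read in the other direction) yields $\uuuline{u}w\rlh\widetilde u\,\uuuline{v}$ in the word sense with $\gcd(u,\widetilde u)$: we need $\gcd(u,\text{numerator})=1$, which holds because the numerator equals the $v$-part and $\gcd(u,v)=1$. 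Hence $(u,w)\in\fC_1'$ and the maps agree, giving $\fC_1\subseteq\fC_1'$ and $\Gamma_1\subseteq\Gamma_1'$.

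\textbf{Main obstacle.} The delicate point is bookkeeping the difference between "divisor/co-divisor" and "numerator/denominator" roles and making sure the element produced by Lemma~\ref{lem_divisor_castling_Thompson_Group} is genuinely the lowest-terms data defining $\fC_1$, rather than a castled version carrying extra cancellation; the index-additivity Lemmas~\ref{lem_gcd_lcm_u_v} and the uniqueness in Lemma~\ref{lem_lcm_unique_from_eta_2=id} (once we know $\bS$ is homogeneous, which must be invoked carefully since at this stage we are re-deriving the word-theoretic side) are what pin it down. One should also double-check that the word-castling $\gcd$-condition $\gcd(u,\widetilde v)=1$ is equivalent to the abstract $\gcd(u,v)=1$ using $\eta^2=\mathrm{id}$ and $\ind(v)=\ind(\widetilde v)$, so that the correspondence of the two $\gcd$-constraints is exact; this is the step where a sign-error in which side carries the coprimality hypothesis would be easy to make.
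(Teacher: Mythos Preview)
Your argument for $\fC_1\subseteq\fC_1'$ is essentially the paper's: apply Lemma~\ref{lem_divisor_castling_Thompson_Group} to $v\mid uw$ with $\gcd(u,v)=1$, then use that $\lcm[u,v]\mid uw_1$ forces $w_1=w$. Apart from a notational slip in the ``reversing'' step (you wrote $\uuuline{u}w\rlh\widetilde u\,\uuuline{v}$; the correct reversal of $\uline{v}\widetilde u\rlh u\uline{w}$ is $\uline{u}w\rlh v\uline{\widetilde u}$), this direction is fine.

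The genuine gap is in your $\fC_1'\subseteq\fC_1$ direction. You invoke Lemma~\ref{lem_gcd_lcm_u_v} to get $\ind(\lcm[u,\widetilde v])=\ind(u)+\ind(\widetilde v)$ from $\gcd(u,\widetilde v)=1$, but that lemma lives in Section~\ref{section_homogeneous_monoid} and its proof uses Lemma~\ref{lem_lcm_unique_from_eta_2=id}, i.e.\ Axiom~\uppercase\expandafter{\romannumeral4}'. At this point in Section~\ref{section_Thomspon} no axiom from Sections~\ref{section_homogeneous_monoid}--\ref{section_natural_monoid} has been verified for $\bS$; the whole purpose of Lemma~\ref{lem_C_1_=_C_1prime} is precisely to begin that verification. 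You flag this circularity in your ``Main obstacle'' paragraph but do not resolve it.

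The paper avoids the problem by replacing Lemma~\ref{lem_gcd_lcm_u_v} with the Thompson-specific Lemma~\ref{lem_castling_to_lcm_ind_leq} (proved from Axiom~\uppercase\expandafter{\romannumeral2} and additivity of $\ind$ alone), which gives $\ind(b)\leq\ind(v)$ for $\lcm[u,v]=ub$; a \emph{second} application of Lemma~\ref{lem_divisor_castling_Thompson_Group} to $v\mid ub$ then gives the reverse inequality $\ind(b)\geq\ind(v)$ via the index-preservation of word castlings. Alternatively, you could salvage your own argument by swapping the order: once $\fC_1\subseteq\fC_1'$ and $\Gamma_1\subseteq\Gamma_1'$ are established, the fact that $\Gamma_1'$ is the graph of a well-defined map (word castling) forces $\Gamma_1$ to be a graph as well, i.e.\ Axiom~\uppercase\expandafter{\romannumeral4}' holds, and then Lemma~\ref{lem_gcd_lcm_u_v} becomes legitimately available for the reverse inclusion.
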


\begin{proof}
For any $(u,y)\in \fC_1$, there exists some $v,x\in \bS$ such that $\gcd(u,v)=1$ and $\lcm[u,v]=uy=vx$. Then $v|uy$. By Lemma \ref{lem_divisor_castling_Thompson_Group}, there are elements $\widetilde{u}\in \bS$ and $y_1|y$ such that $\uuuline{v}\widetilde{u}\rlh u \uuuline{y_1}$. So $v\widetilde{u}=uy_1$. One sees that $uy_1$ is a common multiply of $u$ and $v$. Hence $uy=\lcm[u,v]|uy_1$, which leads to $y=y_1$. Now $\uuuline{v}\widetilde{u}\rlh u \uuuline{y}$, i.e., $(u,y)\in \fC_1^\prime$. As a result, we have $\fC_1\subseteq \fC_1^\prime$.

On the other hand, suppose that $(u,y)\in \fC_1^\prime$. Let $\uuuline{u}y\rlh v \uuuline{x}$ for some $v,x\in \bS$. Then $\gcd(u,v)=1$. Let $ab^{-1}$ be the fraction of $v^{-1}u$ in lowest terms with numerator $a$ and denominator $b$. Recalling the definition of least common multiple, we have $\lcm[u,v]=ub=va$. By Lemma \ref{lem_castling_to_lcm_ind_leq}, we have  $\ind(a)\leq \ind(u)$ and $\ind(b)\leq \ind(v)$. Moreover, since $v|ub$ and $\gcd(u,v)=1$, one has $\uuuline{v}\widetilde{u}\rlh u \uuuline{b_1}$ for some $b_1|b$ and $\widetilde{u}\in \bS$. It follows that $\ind(b)\geq \ind(b_1)=\ind(v)$. Now we have $\ind(b)=\ind(v)=\ind(y)$. Furthermore, note that $uy=vx$, which is a common multiple of $u,v$. So $ub=\lcm[u,v]|uy$, which shows that $b|y$. One concludes that $b=y$ and then $a=x$. Now we have $\gcd(u,v)=1$ and $\lcm[u,v]=uy=vx$, i.e., $(u,v)\in \fC_1$.

We conclude that $\fC_1=\fC_1^\prime$ and then $\Gamma_1=\Gamma_1^\prime$.
\end{proof}

\begin{lemma} \label{lem_C=C_prime}
We have $\fC=\fC^\prime$ and $\Gamma=\Gamma^\prime$. Axiom \uppercase\expandafter{\romannumeral4} holds for Thompson's monoid $\bS$.
\end{lemma}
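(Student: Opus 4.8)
The plan is to prove $\fC = \fC'$ and $\Gamma = \Gamma'$ by a double induction on $\tau(u)+\tau(v)$ (equivalently, one may use $\ind(u)+\ind(v)$ since the generating rules for $\fC$ are phrased by composition and the atomic data $\fC_1$ and the pairs $(p,p)$), and then to deduce Axiom~\uppercase\expandafter{\romannumeral4} as a formal consequence. The backbone is that both $\fC$ and $\fC'$ are built up from the \emph{same} seeds and by the \emph{same} two composition rules: Lemma~\ref{lem_C_1_=_C_1prime} already gives $\fC_1=\fC_1'$ and $\Gamma_1=\Gamma_1'$, so the base data agree; and for the trivial seeds $(p,p)$ with $p\in\cP$, one checks directly from the definition of a castling of words that the (unique) word $P$ of $p$ satisfies $\underline{\underline{P}}\,P\rlh P\,\underline{\underline{P}}$, so $(p,p)\in\fC'$ with $\eta'((p,p))=(p,p)$, matching \eqref{eq_C_definition_p_p}. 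What remains is to show the generating rules \eqref{eq_C_castling_composition_1} and \eqref{eq_C_castling_composition_2} are \emph{respected} by $\fC'$ (so $\fC\subseteq\fC'$) and, conversely, that every element of $\fC'$ is reachable by those rules from the seeds (so $\fC'\subseteq\fC$).

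For the inclusion $\fC\subseteq\fC'$: the two composition rules for $\fC'$ are exactly Lemma~\ref{lem_union_and_decomposition_for_element_weak_castling}(\romannumeral2) and (\romannumeral3) — these say that weak castlings of elements (as defined in this section) compose in the way demanded by \eqref{eq_C_castling_composition_1} and \eqref{eq_C_castling_composition_2}. So by induction on the generation of $\fC$, every $(u,v)\in\fC$ lies in $\fC'$ and the maps agree on it. For the reverse inclusion $\fC'\subseteq\fC$: given $(u,v)\in\fC'$, take the maximum words $U_\sharp,V_\sharp$; the castling $\underline{\underline{U_\sharp}}\,V_\sharp\rlh\widetilde V\,\underline{\underline{\widetilde U}}$ was \emph{itself} produced, by the recursive definition of castling of words, from a castling of letters by finitely many applications of Type~(\uppercase\expandafter{\romannumeral1}) and Type~(\uppercase\expandafter{\romannumeral2}) subword-decompositions. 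Each atomic castling of a pair of letters $\underline{\underline{P_i}}\,P_j\rlh P_{\widetilde j}\,\underline{\underline{P_{\widetilde i}}}$ corresponds either to a $(p,p)$ seed (when $i=j$) or to a free castling $\uuuline{p_i}\,p_j\rlh p_{\widetilde j}\,\uuuline{p_{\widetilde i}}$ of distinct primes (when $i\ne j$, where $\gcd(p_i,p_{\widetilde j})=1$), hence lies in $\fC_1=\fC_1'\subseteq\fC$; and the two recursion steps of the word-castling definition are precisely the composition rules \eqref{eq_C_castling_composition_1}, \eqref{eq_C_castling_composition_2} read off at the level of the underlying elements. One subtlety to handle carefully here: the recursion for words is stated in terms of \emph{subword}-decompositions of the chosen words $U_\sharp, V_\sharp$, not arbitrary element factorizations, and the auxiliary words appearing along the way need not be maximum words; Remark~\ref{remark_element_castling} together with Proposition~\ref{prop_element_trans_independent_of_word} and Lemma~\ref{lem_max_min_preserving} is exactly what lets us pass freely between ``some pair of words can be castled'' and the canonical maximum-word picture, so that the induction on $\ind(u)+\ind(v)$ closes.

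Once $\fC=\fC'$ and $\Gamma=\Gamma'$ are established, Axiom~\uppercase\expandafter{\romannumeral4} is immediate: by Proposition~\ref{prop_element_trans_independent_of_word}, for any $(u,v)\in\fC'$ the castling $\underline{\underline{U_\sharp}}\,V_\sharp\rlh\widetilde V\,\underline{\underline{\widetilde U}}$ is independent of the words chosen, hence the pair $(\widetilde v,\widetilde u)$ of elements is uniquely determined by $(u,v)$; that is precisely the statement that $\Gamma'$ is the graph of a map $\eta\colon\fC'\to\fC'$. So $\Gamma=\Gamma'$ is the graph of a map, which is Axiom~\uppercase\expandafter{\romannumeral4} for $\bS$.

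I expect the main obstacle to be the bookkeeping in the $\fC'\subseteq\fC$ direction — specifically, matching the subword-decomposition recursion of the word-castling definition against the element-factorization form of rules \eqref{eq_C_castling_composition_1}--\eqref{eq_C_castling_composition_2}, while keeping track that intermediate words may not be maximum/minimum words. The tools to defeat this are already in place (Proposition~\ref{prop_element_trans_independent_of_word}, Lemma~\ref{lem_max_min_preserving}, Remark~\ref{remark_element_castling}, and the decomposition/composition lemmas \ref{lem_union_and_decomposition_for_element_weak_castling} and \ref{lem_union_and_decomposition_for_element_castling}), so the argument is a careful but routine induction; the real content was front-loaded into those lemmas and into Lemma~\ref{lem_C_1_=_C_1prime}. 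A secondary point worth spelling out is why the word-castling of letters with $i=j$ matches the $(p,p)$ seed while $i\ne j$ matches a \emph{free} castling rather than just a weak one — this is needed to make the correspondence land inside the minimal generating process for $\fC$, and it follows from the explicit formula for $\widetilde i,\widetilde j$ together with the already-proven $\fC_1=\fC_1'$.
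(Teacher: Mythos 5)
Your proof is correct and follows essentially the same route as the paper: base cases from $\fC_1=\fC_1'$ and the $(p,p)$ seeds, the inclusion $\fC\subseteq\fC'$ via Lemma~\ref{lem_union_and_decomposition_for_element_weak_castling}(\romannumeral2,\romannumeral3), the reverse inclusion by unwinding the Type~(\uppercase\expandafter{\romannumeral1})/(\uppercase\expandafter{\romannumeral2}) recursion for word-castlings down to letter-castlings, and Axiom~\uppercase\expandafter{\romannumeral4} from Proposition~\ref{prop_element_trans_independent_of_word}. The subtlety you flag --- that intermediate words in the recursion need not be maximum words and that subword-decompositions must be reconciled with element factorizations --- is real and is silently absorbed in the paper's proof; your resolution via Remark~\ref{remark_element_castling}, Proposition~\ref{prop_element_trans_independent_of_word}, and Lemma~\ref{lem_max_min_preserving} is exactly the right way to close the induction.
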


\begin{proof}
By Lemma \ref{lem_C_1_=_C_1prime}, one gets $\fC^\prime\supseteq \fC_0$. And it is not hard to see that $(p,p)\in \fC^\prime$ for all $p\in \cP$. Similarly, the set $\Gamma^\prime$ contains $\Gamma_0$ and the elements $((p,p),(p,p))$ with $p\in \cP$. Besides, recall that $\fC$ is constructed by \eqref{eq_C_castling_composition_1} and \eqref{eq_C_castling_composition_2}. We deduce by Lemma \ref{lem_union_and_decomposition_for_element_weak_castling}(\romannumeral2,\romannumeral3) that $\fC^\prime \supseteq \fC$.

For any $(u,v)\in \fC^\prime$, there is some $\widetilde{u},\widetilde{v}\in \bS$ such that $\uline{u}v\rlh \widetilde{v}\uline{\widetilde{u}}$. Let $U_\sharp,V_\sharp,\widetilde{V}_\sharp,\widetilde{U_\sharp}$ be the maximum words of $u,v,\widetilde{v},\widetilde{u}$, respectively. Then $\underline{\underline{U_\sharp}}V_\sharp\rlh \widetilde{V_\sharp}\underline{\underline{\widetilde{U_\sharp}}}$. Recall that the castling of words are defined by induction of index of the corresponding words. When $\ind(U_\sharp)\geq 2$ or $\ind(V_\sharp)\geq 2$, the castlability of $U_\sharp,V_\sharp$ comes from either type (\uppercase\expandafter{\romannumeral1}) or type (\uppercase\expandafter{\romannumeral2}). Without loss of generality, we deal with type (\uppercase\expandafter{\romannumeral1}) here. That is to say, there are words $U_1,U_2,\widehat{V},\widehat{U_1},\widehat{U_2}$ with $U_1U_2=U_\sharp$ and $\widehat{U_1}\widehat{U_2}=\widetilde{U_\sharp}$ such that
\[
\underline{\underline{U_2}}V_\sharp\rlh \widehat{V}\underline{\underline{\widehat{U_2}}}, \quad \underline{\underline{U_1}}\widehat{V}\rlh \widetilde{V_\sharp}\underline{\underline{\widehat{U_2}}}.
\]
Let $u_1,u_2,\widehat{v},\widetilde{u_1},\widetilde{u_2}$ be elements in $\bS$ having words $U_1,U_2,\widehat{V},\widehat{U_1},\widehat{U_2}$, respectively. Then
\[
\uline{u_2}v\rlh \widehat{v}\uline{\widehat{u_2}}, \quad \uline{u_1}\widehat{v}\rlh \widetilde{v_\sharp}\uline{\widehat{u_2}}.
\]
Here $u_1u_2=u$ and $\widehat{u_1}\widehat{u_2}=\widetilde{u}$. That is to say, the fact that $u,v$ are weakly castlable follows from \eqref{eq_C_castling_composition_1} or \eqref{eq_C_castling_composition_2}. Hence $\fC=\fC^\prime$ and then $\Gamma=\Gamma^\prime$.

By Proposition \ref{prop_element_trans_independent_of_word}, one can deduce that the set $\Gamma$ is a graph of a map $\eta:\fC\rightarrow \fC$. So Axiom \uppercase\expandafter{\romannumeral4} holds.
\end{proof}

Now, the meaning of underlines ``$\uline{\,\,}$" and triple underlines ``$\uuuline{\,\,}$" occurred in this section and that occurred in previous sections coincide, and we do not distinguish them any more. Next, we handle the double underlines ``$\uuline{\,\,}$''.

\begin{lemma}
We have $\fC_0=\fC_0^\prime$ and $\Gamma_0=\Gamma_0^\prime$.
\end{lemma}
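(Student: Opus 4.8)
The goal is to show the two ``strongly castlable'' sets coincide: $\fC_0 = \fC_0^\prime$ and $\Gamma_0 = \Gamma_0^\prime$, where $\fC_0^\prime$ was defined via minimum words and $\fC_0$ was defined inductively via the decomposition conditions \eqref{eq_C0_castling_decomposition_1} and \eqref{eq_C0_castling_decomposition_2}. The plan is to prove inclusion both ways, using the already-established facts $\fC_1 = \fC_1^\prime$, $\Gamma_1 = \Gamma_1^\prime$, $\fC = \fC^\prime$, $\Gamma = \Gamma^\prime$ (Lemmas \ref{lem_C_1_=_C_1prime}, \ref{lem_C=C_prime}), together with the decomposition/composition lemmas for castlings of elements, namely Lemma \ref{lem_union_and_decomposition_for_element_castling}.

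First I would show $\fC_0 \subseteq \fC_0^\prime$, proceeding by induction on $\ind(u)+\ind(v)$ (the same quantity on which $\fC_0$ was generated). The base cases are the generators of $\fC_0$: for $(u,v)\in\fC_1 = \fC_1^\prime$ we have $\fC_1^\prime\subseteq\fC_0^\prime$ by Lemma \ref{lem_not_castlable_implies_divides} (recorded right after its statement), and for $(p,p)$ with $p\in\cP$ one checks directly that $P\,P$ — the minimum (indeed only) word of each factor — is castlable, so $(p,p)\in\fC_0^\prime$. For the inductive step, a pair $(u,v)$ enters $\fC_0$ because some weak castling $\uline{u}v\rlh\widetilde v\uline{\widetilde u}$ satisfies the arbitrariness conditions: for every factorization $u=u_1u_2$ with $u_1,u_2\neq 1$ one has $(u_2,v),(u_1,\widehat v)\in\fC_0$ together with \eqref{eq_C0_castling_decomposition_1}, and dually for every $v=v_1v_2$. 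By inductive hypothesis each such $(u_2,v),(u_1,\widehat v),(u,v_1),(\widehat u,v_2)$ lies in $\fC_0^\prime$, i.e.\ is strongly castlable in the word sense. I then need: if $u_1,u_2$ are both nontrivial and $(u_2,v)$, $(u_1,\widehat v)$ are strongly castlable with the compatible castlings, and the analogous statement holds for all $v$-factorizations, then the minimum words $U_\flat,V_\flat$ are castlable. This is where Remark \ref{remark_element_castling} and Lemma \ref{lem_union_and_decomposition_for_element_castling}(\romannumeral2) do the work: strong castlability of $u_2$ with $v$ and of $u_1$ with $\widehat v$ composes (working with minimum words, via Lemma \ref{lem_max_min_preserving}(\romannumeral3)) to strong castlability of $u_1u_2 = u$ with $v$; one must verify the composed castling agrees with the given $\widetilde v\uline{\widetilde u}$, which follows from uniqueness of castlings of elements (Axiom \uppercase\expandafter{\romannumeral4}, now known to hold by Lemma \ref{lem_C=C_prime}). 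Hence $(u,v)\in\fC_0^\prime$.

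Next I would show $\fC_0^\prime \subseteq \fC_0$. Here one takes $(u,v)\in\fC_0^\prime$, so $U_\flat,V_\flat$ are castlable, and proves $(u,v)\in\fC_0$ by checking the two arbitrariness conditions in the definition of $\fC_0$, again inducting on $\ind(u)+\ind(v)$. Given any factorization $u = u_1u_2$ with $u_1,u_2\neq 1$, Lemma \ref{lem_union_and_decomposition_for_element_castling}(\romannumeral1) yields $\uuline{u_2}v\rlh\widehat v\uline{\widehat u_2}$ and $\uuline{u_1}\widehat v\rlh\widetilde v\uline{\widehat u_1}$ with $\widehat u_1\widehat u_2 = \widetilde u$, and — crucially — these are strong castlings of elements, so by definition $(u_2,v)$ and $(u_1,\widehat v)$ are again in $\fC_0^\prime$. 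Since $\ind(u_2)+\ind(v) < \ind(u)+\ind(v)$ and $\ind(u_1)+\ind(\widehat v) = \ind(u_1)+\ind(v) < \ind(u)+\ind(v)$, the inductive hypothesis places $(u_2,v),(u_1,\widehat v)\in\fC_0$, which is exactly condition \eqref{eq_C0_castling_decomposition_1}. The dual factorizations $v = v_1v_2$ are handled by Lemma \ref{lem_union_and_decomposition_for_element_castling}(\romannumeral3) in the same way, giving condition \eqref{eq_C0_castling_decomposition_2}. Therefore $(u,v)$ meets all the generating requirements of $\fC_0$, and since the base cases ($\fC_1^\prime = \fC_1\subseteq\fC_0$ and the pairs $(p,p)$) are already in $\fC_0$, the induction closes. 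Finally $\Gamma_0 = \Gamma\cap(\fC_0\times\fC_0) = \Gamma^\prime\cap(\fC_0^\prime\times\fC_0^\prime) = \Gamma_0^\prime$, using $\Gamma = \Gamma^\prime$ and the two set equalities just proved.

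The main obstacle I anticipate is the bookkeeping in the forward direction $\fC_0\subseteq\fC_0^\prime$: one must pass from the inductive $\fC_0$-data (which asserts things for \emph{every} two-part factorization of $u$ and of $v$) to castlability of a \emph{single} pair of minimum words $U_\flat,V_\flat$, and then check that the resulting element-level castling is the prescribed one. This requires carefully threading Lemma \ref{lem_max_min_preserving} (to stay with minimum words throughout the composition), Lemma \ref{lem_union_and_decomposition_for_element_castling}, and the well-definedness guaranteed by Proposition \ref{prop_element_trans_independent_of_word}; the uniqueness needed to match the two castlings is supplied by Axiom \uppercase\expandafter{\romannumeral4} via Lemma \ref{lem_C=C_prime}. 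The reverse direction is more mechanical because Lemma \ref{lem_union_and_decomposition_for_element_castling} already hands us exactly the decomposed strong castlings the definition of $\fC_0$ demands.
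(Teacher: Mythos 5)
Your proposal has the right architecture — two inclusions, induction on $\ind(u)+\ind(v)$, base cases supplied by $\fC_1=\fC_1^\prime$ and $(p,p)$, and the reverse inclusion via Lemma \ref{lem_union_and_decomposition_for_element_castling}(\romannumeral1,\romannumeral3) — and the reverse direction matches the paper's (very terse) argument exactly, filling it out properly. The forward direction differs in a way worth flagging. You compose on the $u$-side at the element level via Lemma \ref{lem_union_and_decomposition_for_element_castling}(\romannumeral2): take any proper factorization $u=u_1u_2$, invoke \eqref{eq_C0_castling_decomposition_1} plus the inductive hypothesis to upgrade $\uline{u_2}v\rlh\widehat{v}\uline{\widehat{u_2}}$ and $\uline{u_1}\widehat{v}\rlh\widetilde{v}\uline{\widehat{u_1}}$ to strong castlings, then compose. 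This is actually cleaner than the paper where it applies — Lemma \ref{lem_union_and_decomposition_for_element_castling}(\romannumeral2) hands you $\uuline{u_1u_2}v\rlh\widetilde{v}\uline{\widehat{u_1}\widehat{u_2}}$ directly, so the ``uniqueness via Axiom \uppercase\expandafter{\romannumeral4}'' step you mention is unnecessary: the output already coincides with $\widetilde{v}\uline{\widetilde{u}}$ by construction. The paper instead decomposes the \emph{minimum word of $v$} and composes at the word level using Remark \ref{remark_element_castling} and Lemma \ref{lem_max_min_preserving}(\romannumeral3).

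The one genuine gap is the case $\ind(u)\leq 1$, $\ind(v)\geq 2$, which your forward argument as written does not cover: there is no nontrivial factorization $u=u_1u_2$, so Lemma \ref{lem_union_and_decomposition_for_element_castling}(\romannumeral2) is inapplicable, and you would want to compose on the $v$-side instead. But inspect the toolbox: Lemma \ref{lem_union_and_decomposition_for_element_castling} provides $v$-side \emph{decomposition} (part (\romannumeral3)) but not $v$-side \emph{composition} of strong castlings, and Lemma \ref{lem_union_and_decomposition_for_element_weak_castling}(\romannumeral3) composes on the $v$-side only at the weak level, so it does not return a double underline on the left. This is precisely why the paper drops to the word level for the $v$-side: decompose $V_\flat=V_1V_2$, observe $V_1,V_2$ are minimum words of $v_1,v_2$, get from the inductive hypothesis that any word of $u$ castles with $V_1$ and any word of $\widehat{u}$ castles with $V_2$ (Remark \ref{remark_element_castling}), and then compose the \emph{word} castlings to obtain $\underline{\underline{U}}V_\flat$ castlable, which is exactly strong castlability by definition. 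Your ``and the analogous statement holds for all $v$-factorizations'' gestures in this direction but does not note the missing element-level lemma nor spell out the word-level workaround. Once you add this branch (or simply mirror the paper and use the $v$-side word argument throughout, running the $u$-side analogously when $\ind(u)\geq 2$), the proof is complete.
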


\begin{proof}
Combining $\fC_1^\prime \subseteq \fC_0^\prime$ and Lemma \ref{lem_C_1_=_C_1prime}, one deduces that $\fC_0^\prime$ contains $\fC_1$. Since an element $p$ in $\cP$ has only one word, we have $\uuline{p}p\rlh p\uuline{p}$ for $p\in \cP$, i.e., $(p,p)\in \fC_0^\prime$. For any $(u,v)\in \fC_0$, we have that $\uline{u}v\rlh \widetilde{v}\uline{\widetilde{u}}$ and \eqref{eq_C0_castling_decomposition_1}, \eqref{eq_C0_castling_decomposition_2} holds. To prove that $\fC_0^\prime\supseteq \fC_0$, we shall show that $\uuline{u} v \rlh \widetilde{v}\uline{\widetilde{u}}$. Here the double underline is the notation defined in this section.

We use induction on $\ind(u)+\ind(v)$. For $\ind(u)\leq 1$ or $\ind(v)\leq 1$, the proof is trivial. Suppose that the case $\ind(u)+\ind(v)\leq m-1$ with some $m\geq 3$. Now we deal with the case $\ind(u)+\ind(v)=m$. Consider the situation that $\ind(v)\geq 2$. Let $V_\flat$ be the minimum word of $v$. Use an arbitrary proper decomposition $V_\flat=V_1V_2$. Let $v_1,v_2$ be elements in $\bS$ with words $V_1,V_2$, respectively. Then $V_1,V_2$ are actually the minimum words of $v_1,v_2$, respectively. By  \eqref{eq_C0_castling_decomposition_2}, we have $(u,v_1),(\widehat{u},v_2)\in \fC_0$ and
\[
\uline{u} v_1 \rlh \widehat{v_1}\uline{\widehat{u}},\quad \uline{\widehat{u}} v_2  = \widehat{v_2} \uline{\widetilde{u}}
\]
for some $\widehat{v_1},\widehat{v_2},\widehat{u}\in S$ with $\widehat{v_1}\widehat{v_2}=\widetilde{v}$. By inductive hypothesis, we have $(u,v_1),(\widehat{u},v_2)\in \fC_0^\prime$. Therefore,
\[
\uuline{u} v_1 \rlh \widehat{v_1}\uline{\widehat{u}},\quad \uuline{\widehat{u}} v_2  = \widehat{v_2} \uline{\widetilde{u}}.
\]
Recall that $V_1,V_2$ are minimum words. So any word of $u$ can be castled with $V_1$ and any word of $\widehat{u}$ can be castled with $V_2$ by Remark \ref{remark_element_castling}. Let $U,\widehat{U},\widehat{V_1},\widehat{V_2},\widetilde{U}$ be words of $u,\widetilde{u},\widehat{v_1},\widehat{v_2},\widetilde{u}$, respectively, such that
\[
\underline{\underline{U}} V_1 \rlh \widehat{V_1}\underline{\underline{\widehat{U}}},\quad \underline{\underline{\widehat{U}}} V_2  = \widehat{V_2} \underline{\underline{\widetilde{U}}}.
\]
Then $\underline{\underline{U}} V_\flat=\underline{\underline{U}} V_1V_2 \rlh \widehat{V_1}\widehat{V_2}\underline{\underline{\widetilde{U}}}$, which implies that $\uuline{u}v \rlh \widetilde{v}\uline{\widetilde{u}}$, i.e., $(u,v)\in \fC_0^\prime$. For the case $\ind(u)\geq 2$, similar conclusion follows by applying \eqref{eq_C0_castling_decomposition_1}.

On the other hand, combining \ref{lem_union_and_decomposition_for_element_castling}(\romannumeral1,\romannumeral3), one can verify that there are no more elements in $\fC_0^\prime$, i.e., $\fC_0=\fC_0^\prime$. It follows that $\Gamma_0=\Gamma_0^\prime$.
\end{proof}

Finally, we shall prove Axiom \uppercase\expandafter{\romannumeral5} for Thompson's monoid $\bS$.

\begin{lemma}
Let $k,l\geq 1$ and $p,q$ be elements in $\cP$ such that $p^k,q^l$ are weakly castlable. Then $\uline{p^k}q^l \rlh r^l \uline{t^k}$ for some $r,t\in \cP$.
\end{lemma}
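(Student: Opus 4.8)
The plan is to reduce this to an explicit computation with castlings of words. Write $p=p_i$ and $q=p_j$. First I would observe that $p^k$ has a unique word, namely $P_i^k$: a word of $p_i^k$ is a product $P_{j_1}\cdots P_{j_k}$ of prime letters equal, as an element, to $p_i^k$, and by Lemma~\ref{lem_uniqueness_prime_power} (applicable since Axiom IV holds for $\bS$ by Lemma~\ref{lem_C=C_prime}) every $j_t$ equals $i$; equivalently, no elementary move $P_aP_b\leftrightarrow P_bP_{a+1}$ with $b<a$ applies to $P_i^k$. The same holds for $q^l$. Hence the maximum words of $p^k$ and $q^l$ are $P_i^k$ and $P_j^l$, and by the definition of weak castling of elements it suffices to show that whenever $P_i^k$ and $P_j^l$ are castlable, $\underline{\underline{P_i^k}}\,P_j^l\rlh P_m^l\,\underline{\underline{P_n^k}}$ for suitable subscripts $m,n$; then $\uline{p^k}q^l\rlh p_m^l\,\uline{p_n^k}$ and we take $r=p_m$, $t=p_n$.

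Second, I would record the one-letter computation, obtained by iterating the letter-castling rule $l$ times: $\underline{\underline{P_i}}\,P_j^l\rlh P_j^l\,\underline{\underline{P_i}}$ when $i=j$; $\underline{\underline{P_i}}\,P_j^l\rlh P_j^l\,\underline{\underline{P_{i+l}}}$ when $i>j$; and $\underline{\underline{P_i}}\,P_j^l\rlh P_{j-1}^l\,\underline{\underline{P_i}}$ when $i\le j-2$; while if $i=j-1$ then $P_i$ is not castlable with $P_j$, so $P_i^k,P_j^l$ are not castlable at all. Thus in every castlable case the one-letter output already has the shape $P_m^l\,\underline{\underline{P_n}}$.

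Third, I would induct on $k$, taking as inductive claim exactly the three-case statement: for castlable $P_i^k,P_j^l$, the castling equals $P_j^l\,\underline{\underline{P_j^k}}$ if $i=j$, equals $P_j^l\,\underline{\underline{P_{i+l}^k}}$ if $i>j$, and equals $P_{j-k}^l\,\underline{\underline{P_i^k}}$ if $i<j$ (in which case castlability forces $i+k\le j-1$, so $j-k\ge 1$ is a genuine subscript). For $k\ge 2$ I would decompose $P_i^k=P_i^{k-1}\cdot P_i$ and apply Lemma~\ref{lem_word_trans_independent_of_word}(i): castlability of the pair descends to the factors, so the castling factors as the one-letter step $\underline{\underline{P_i}}\,P_j^l\rlh P_m^l\,\underline{\underline{P_n}}$ followed by $\underline{\underline{P_i^{k-1}}}\,P_m^l$, which is again castlable. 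The key bookkeeping is that the comparison between the moving index $i$ and the passive index is preserved: $i=j$ gives $m=j=i$; $i>j$ gives $m=j$ with $i>j$ still; $i<j$ gives $m=j-1$ with $i\le j-2<j-1$. In each case the inductive hypothesis applied to $\underline{\underline{P_i^{k-1}}}\,P_m^l$ returns $P_{m'}^l\,\underline{\underline{P_n^{k-1}}}$ with exactly the $n$ produced by the one-letter step, so $\underline{\underline{P_i^k}}\,P_j^l\rlh P_{m'}^l\,\underline{\underline{P_n^{k-1}P_n}}=P_{m'}^l\,\underline{\underline{P_n^k}}$, as required.

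The main obstacle is precisely this matching: one must ensure the $k-1$ leftover letters reassemble into a single prime power $P_n^{k}$ rather than a genuinely mixed word, i.e. that the prime emitted by the one-letter step coincides with the prime the inductive hypothesis pins on the remaining block. Organizing the induction around the three cases $i=j$, $i>j$, $i<j$ makes this automatic, since each comparison is stable under the operation; everything else — the explicit letter-castling rules, the descent of castlability through a subword-decomposition, and the identification of weak castling of elements with castling of maximum words (Remark~\ref{remark_element_castling}) — is already in place.
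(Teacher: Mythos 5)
Your proof is correct and follows the same route as the paper, namely an explicit three-case computation with $p=p_i$, $q=p_j$ according to whether $i=j$, $i>j$, or $i<j$, yielding $r^l\uline{t^k}$ with $(r,t)=(p_j,p_j)$, $(p_j,p_{i+l})$, $(p_{j-k},p_i)$ respectively. The paper simply asserts the resulting formulae; you additionally supply the missing justification (uniqueness of the word $P_i^k$, the one-letter step, and the induction on $k$ via Lemma~\ref{lem_word_trans_independent_of_word}), which is a welcome elaboration rather than a different method.
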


\begin{proof}
Let $p=p_i$ and $q=p_j$. For $i=j$, we have $\uline{p_i^k}p_i^l \rlh p_i^l \uline{p_i^k}$. For $i>j$, we have $\uline{p_i^k}p_j^l \rlh p_j^l \uline{p_{i+l}^k}$. For $i<j$, the ordered pair  $p^k,q^l$ are weakly castlable if and only if $i<j-k$. When they are weakly castlable, we have $\uline{p_i^k}p_j^l \rlh p_{j-k}^l \uline{p_i^k}$. The proof is completed.
\end{proof}

Till now, it has been shown that $\bS$ is a natural monoid. We end this section by showing some properties and examples of arithmetics for Thompson's monoid $\bS$.

\begin{lemma} \label{lem_u_trans_p_iff_condition}
Let $k\geq 1$ and $a_0,a_1,\ldots,a_{k-1} \geq 0$ and $u=p_0^{a_0}\ldots p_{k-1}^{a_{k-1}}$. Then $u_k, p_k$ are weakly castlable if and only if
\begin{equation} \label{eq_condition_k-r_geq_sum_+1}
r\geq a_{k-r}+a_{k-r+1}+\ldots a_{k-1}+1,\quad (1\leq r\leq k),
\end{equation}
When they are castlable, we have
\[
\uline{u} p_k  \rightleftharpoons  p_{k-\ind(u)}\, \uline{u}.
\]
\end{lemma}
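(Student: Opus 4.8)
The statement concerns $u = p_0^{a_0}\cdots p_{k-1}^{a_{k-1}}$ and asks when this element is weakly castlable with $p_k$, together with the value of the castling. Since $u$ is already written in normal form (the subscripts are strictly increasing as they occur), and $p_k$ has only the one letter $P_k$, the plan is to work directly at the level of words, using the machinery from Section~\ref{section_Thomspon}: by Remark~\ref{remark_element_castling}, $u, p_k$ are weakly castlable iff the maximum word $U_\sharp$ of $u$ can be castled with $P_k$, and the normal form $P_0^{a_0}\cdots P_{k-1}^{a_{k-1}}$ is exactly this maximum word $U_\sharp$. So the whole question reduces to: can one push the single letter $P_k$ leftward through $P_0^{a_0}\cdots P_{k-1}^{a_{k-1}}$ using the relation $p_j p_i = p_i p_{j+1}$ ($i<j$)?

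First I would track what happens to the ``travelling'' letter. Starting from $P_k$ at the far right, we attempt to move it left past the block $P_{k-1}^{a_{k-1}}$: since $k > k-1$, the letter $P_k$ can be castled with each $P_{k-1}$, and after passing all $a_{k-1}$ copies it becomes $P_{k+a_{k-1}}$ --- wait, more carefully: castling $\uuuline{P_{k-1}}P_k$ is an $i-j \le -2$ situation only if $k-1 - k \le -2$, which is false; the relevant direction is $\underline{\underline{P_{k-1}}}P_k \rlh ?$. I need to be careful: we are moving $P_k$ to the \emph{left}, i.e.\ computing $\underline{\underline{P_{k-1}^{a_{k-1}}}}P_k \rlh \widetilde{P_k}\underline{\underline{\cdots}}$. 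Using $p_k p_{k-1} = p_{k-1} p_{k+1}$, pushing $P_k$ left past one $P_{k-1}$ turns it into $P_{k+1}$ on the right but the one that emerges on the left is $P_{k-1}$ again; so actually after passing the whole block $P_{k-1}^{a_{k-1}}$ the emerging letter on the left is still the subscript-$(k-1)$ shaped piece and the index keeps growing on the trailing side. The clean bookkeeping is: after passing $P_{j}^{a_j}$, the travelling index increases by $a_j$. Tracking from right to left, after clearing the suffix $P_{k-r}^{a_{k-r}}\cdots P_{k-1}^{a_{k-1}}$ the travelling letter has index $k + a_{k-r+1} + \cdots + a_{k-1}$ (indices accumulated from blocks already passed, not counting the block currently being entered), and it can continue into the next block $P_{k-r}^{a_{k-r}}$ iff its current index strictly exceeds $k-r$, i.e.\ iff $k + (a_{k-r+1}+\cdots+a_{k-1}) > k-r$, equivalently $r > a_{k-r+1} + \cdots + a_{k-1}$, i.e.\ $r \ge a_{k-r+1}+\cdots+a_{k-1}+1$. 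Re-indexing so the block being entered is included gives precisely \eqref{eq_condition_k-r_geq_sum_+1}. I would make this precise by an induction on $r$ (from $r=1$ up to $r=k$), at each step invoking Lemma~\ref{lem_word_trans_independent_of_word} to decompose the castling block-by-block and Proposition~\ref{prop_element_trans_independent_of_word} to ensure the computation is independent of the word chosen.

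Once castlability is established, the value of the castling follows from $\ind$ being a homomorphism (Corollary~\ref{cor_ind_eq}, or the explicit homomorphism $\ind$ on $\bG$ from Section~\ref{subsection_example_of_intergral_monoid}): in $\uline{u}p_k \rlh \widetilde{p_k}\,\uline{\widetilde{u}}$ we have $\ind(\widetilde{u}) = \ind(u)$ and $\ind(\widetilde{p_k}) = 1$, so $\widetilde{p_k} = p_m$ for a single prime $p_m$. To identify $m$, I would compute $u p_k$ two ways: directly it equals $p_k \cdot \widetilde{u}$... no, it equals $\widetilde{p_k}\widetilde{u} = p_m \widetilde{u}$, and reading off the leftmost letter after the full castling, the travelling $P_k$ emerges on the left having its index decreased by exactly the number of letters it passed, namely $\ind(u) = a_0 + \cdots + a_{k-1}$; hence $m = k - \ind(u)$. (One should check $k - \ind(u) \ge 0$, which is forced by \eqref{eq_condition_k-r_geq_sum_+1} with $r = k$.) Also $\widetilde{u} = u$ as an element of $\bS$, since the block structure is restored: each $P_j$ that the travelling letter displaced gets replaced by a $P_{j+1}$, but re-examining, the net effect on the word $P_0^{a_0}\cdots P_{k-1}^{a_{k-1}}$ of sending $p_k$ through is that it returns to the same element --- this is because $p_k$ commutes past each $p_i$ by converting $p_i$ on the far side and leaving $p_i$ on the near side, so the remaining word, read in order, is again $p_0^{a_0}\cdots p_{k-1}^{a_{k-1}}$; I would verify this directly by the same induction.

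\textbf{Main obstacle.} The bookkeeping of indices in the inductive step is the delicate point: one must correctly distinguish the index of the travelling letter \emph{as it emerges on the left of a block} versus \emph{as it sits on the right having crossed a block}, and confirm that the obstruction to entering the next block is $(\text{current index}) = k-r$ rather than a strict inequality giving an off-by-one error in \eqref{eq_condition_k-r_geq_sum_+1}. A secondary subtlety is checking that the condition derived from ``the maximum word $U_\sharp$ is castlable with $P_k$'' really is both necessary and sufficient --- necessity needs Proposition~\ref{prop_element_trans_independent_of_word} (if $P_k$ fails to castle with $U_\sharp$ then it fails with every word of $u$, since $U_\sharp$ is the maximum), and I would spell that out rather than leave it implicit.
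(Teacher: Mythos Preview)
Your overall strategy matches the paper's --- set $u_r = p_{k-r}^{a_{k-r}}\cdots p_{k-1}^{a_{k-1}}$ and induct on $r$, pushing the travelling letter leftward through one block at a time --- but the core bookkeeping is backwards, and this is not just the off-by-one issue you flag. You write ``after passing $P_j^{a_j}$, the travelling index increases by $a_j$'' and then derive the obstruction $k + (a_{k-r+1}+\cdots+a_{k-1}) > k-r$. That inequality is \emph{always} true for $r\ge 1$, so it cannot be the real condition; the ``re-indexing'' you then invoke to reach \eqref{eq_condition_k-r_geq_sum_+1} is papering over a sign error, not a shift.

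The correct direction: in $\underline{\underline{P_i}}P_j \rlh P_{j-1}\underline{\underline{P_i}}$ (valid when $i-j\le -2$), the travelling letter $P_j$ emerges on the left with index \emph{decreased} by one, while the block letter $P_i$ is unchanged. Hence $\underline{\underline{P_i^{a}}}P_j \rlh P_{j-a}\underline{\underline{P_i^{a}}}$, possible iff $j-i \ge a+1$. In particular the very first step already forces $a_{k-1}=0$, since $P_{k-1},P_k$ are \emph{not} castlable (your text asserts they are). Inductively, after clearing $u_R$ the travelling index is $k-\ind(u_R)$, and entering the next block $P_{k-R-1}^{a_{k-R-1}}$ requires $(k-\ind(u_R))-(k-R-1)\ge a_{k-R-1}+1$, i.e.\ $R+1\ge \ind(u_{R+1})+1$, which is \eqref{eq_condition_k-r_geq_sum_+1} for $r=R+1$. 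This also shows $\widetilde u = u$ directly, since each block passes to the right unchanged --- your claim that ``each $P_j$ \ldots gets replaced by a $P_{j+1}$'' is again the wrong sign. Your final line that the emerging index is $k-\ind(u)$ is right, but it contradicts your own earlier ``increases by $a_j$'' claim.
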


\begin{proof}
Put $u_r=p_{k-r}^{a_{k-r}}\ldots p_{k-1}^{a_{k-1}}$ for $1\leq r\leq k$. Here $u=u_k$. Then $\ind(u_r)=a_{k-r}+\ldots +a_{k-1}$. Noting that $p_k$ has only one word, we have that $u_R,p_k$ are weakly castlable if and only if $u_R,p_k$ are strongly castlable. For $r=1$, it is not hard to see that $u_1=p_{k-1}^{a_{k-1}}$ and $p_k$ is castlable if and only if $a_{k-1}=0$ if and only if $r\geq a_{k-1}+1$. When they are castlable, one has $\uline{1}p_k \rightleftharpoons p_{k-\ind(u_1)} \uline{1}$. Now we use induction. For some $1\leq R\leq k-1$, suppose that we have proved that $u_R$ is castlable with $p_k$ if and only if $r\geq \ind(u_r)+1$ for all $1\leq r\leq R$. And when they are castlable, we have $\uline{u_R} p_k \rightleftharpoons p_{k-\ind(u_R)}\uline{u_R}$. Now $u_{R+1}$ is castlable with $p_k$ if and only if we additionally have the condition that $p_{k-R-1}^{a_{k-R-1}}$ is castlable with $p_{k-\ind(u_R)}$. The latter condition holds if and only if $(k-\ind(u_R)) - (k-R-1)\geq a_{k-R-1}+1$, which is equivalent to $R+1 \geq  \ind(u_{R+1})+ 1$. If they are castlable, then
\[
\uline{p_{k-R-1}^{a_{k-R-1}}} p_{k-\ind(u_R)}  \rightleftharpoons p_{k-\ind(u_{R})-a_{k-R-1}}\uline{p_{k-R-1}^{a_{k-R-1}}},
\]
which implies
\[
\uline{u_{R+1}} p_k \rightleftharpoons  p_{k-\ind(U_{R+1})}\uline{u_{R+1}}.
\]
By induction, the lemma follows.
\end{proof}

Now we turn back to give an examples about Theorem \ref{thm_prime_multiplicity_from} for $\bS$.

\begin{example}
Consider $u=p_0p_3^2p_5$ in $\bS$. All words of $u$ are listed below with ``$\rightarrow$'' being the partial order ``$\preccurlyeq$''.
\[
\xymatrix@R=0.5cm{
                &         P_2^2P_4P_0  \ar[r]^{} &P_2^2P_0P_5   \ar[dr]^{}  & &  \\
  P_2P_3P_2P_0 \ar[ur]^{} \ar[dr]_{}     & &   & P_2P_0P_3P_5 \ar[dr]_{}&     \\
                &         P_2P_3P_0P_3  \ar[r]^{} &P_2P_0P_4P_3 \ar[ur]^{} \ar[dr]_{} \ar@{-}[ur]^{}& & P_0P_3^2P_5 \\
               & & & P_0P_3P_4P_3 \ar[ur]^{}&
                 }
\]
We have $\PDM(u)=\{p_0,p_2,p_2\}$ and $\PDM_\ddagger(u)=\{p_0,p_3,p_5\}$.

Consider the irreducible representation $u=p_2p_0p_4p_3$, we have
\[
\uline{p_2}1\rlh 1\uuline{p_2},\quad \uline{p_0}p_1\rlh p_2\uuline{p_0},\quad \uline{p_2}p_2p_0p_5\rlh p_2p_0p_4\uuline{p_3}
\]
and $p_2p_0$, $p_4$ are not strongly castlable. We obtain the prime divisors with multiplicities in this way. Notice that the element $p_2p_0p_5$ has another word $P_3P_2P_0$, and $P_2, P_3P_2P_0$ are not castlable. So $p_2, p_2p_0p_5$ are not strongly castlable. One can not change a simple underline to double underlines in Theorem \ref{thm_prime_multiplicity_from}.

Moreover, we also have
\[
\uuline{p_3}1\rlh 1\uline{p_3},\quad \uuline{p_4}p_3\rlh p_3\uline{p_5},\quad \uuline{p_0}p_4p_3\rlh p_3p_2\uline{p_0}
\]
and $p_2,p_0p_4p_3$ are not strongly castlable. We obtain prime co-divisors with multiplicities in this way. Note that $p_0p_4p_3$ has a word $P_2P_0P_5$ and
\[
\underline{\underline{P_2}}P_2P_0P_5\rlh P_2P_0P_4\underline{\underline{P_3}}.
\]
So $p_2,p_0p_4p_3$ are weakly castlable and $\uline{p_2}p_0p_4p_3\rlh p_2p_0p_4\uline{p_3}$. This example shows that the double underlines can not be replaced by a simple underline in Theorem \ref{thm_prime_multiplicity_from}.
\end{example}

The next example explains the condition ``fully castlable'' in Theorem \ref{thm_Omega=Omega_ddagger}.

\begin{example}
Consider the element $u=p_0p_1^2$. It has only one word and is not fully castlable. We have $\PDM(u)=\{p_0\}$ and $\PDM_\ddagger(u)=\{p_1,p_1\}$. Hence $\Omega(u)=1$ and $\Omega_\ddagger(u)=2$.
\end{example}

\section{Complexity for Castlings}

\label{section_complexity}

\subsection{General Properties}

In this section, we assume that $S$ is a homogeneous monoid. Then $\tau(uv)\leq \tau(u)\tau(v)$ for all $u,v\in S$ by Theorem \ref{thm_divisor_submul}.
It follows that
\[
\tau(u^{m+n}) \leq \tau(u^m)\tau(u^n),\quad (u\in S,\, m,n\geq 0).
\]
The sequence $\{\log \tau(u^n)\}_{n=1}^\infty$ is sub-additive. Thus, the limit $\lim\nolimits_{n\rightarrow \infty} \frac{1}{n}\log \tau(u^n)$ exists.

\begin{definition}
For $u\in S$, define
\[
\tau_0(u)=\lim\limits_{n\rightarrow \infty} \left(\tau(u^n)\right)^{1/n}.
\]
Define
\[
\c{C}(u) = \lim\limits_{n\rightarrow \infty} \left(\frac{\tau(u^n)}{\tau^n(u)}\right)^n = \frac{\tau_0(u)}{\tau(u)}.
\]
Also define
\[
\c{C}(S)=\sup\limits_{1\neq u\in S}\c{C}(u).
\]
We call $\c{C}(S)$ the complexity for castlings in $S$.
\end{definition}

For any $u\in S$, it satisfies that $1\leq \tau_0(u)\leq \tau(u)$. For $p\in \cP$, one has $\tau(p)=2$ and then $\c{C}(p)\geq 1/2$. As a result, we always have $1/2\leq \c{C}(S)\leq 1$. The quantity $\c{C}(S)$ describes the complexity for castlings in $S$. The larger is $\c{C}(S)$, the more divisors are provided during the castlings of elements in $S$. Now we deduce some basic properties below.

\begin{lemma} \label{lem_complexity_1}
For any $u\in S$ and $k\geq 1$, we have $\tau_0(u^k)= (\tau_0(u))^k$.
\end{lemma}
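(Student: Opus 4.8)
The claim $\tau_0(u^k) = (\tau_0(u))^k$ should follow immediately by extracting a subsequence in the definition of $\tau_0$. The plan is to start from the definition
\[
\tau_0(u^k) = \lim\limits_{n\rightarrow \infty} \left(\tau((u^k)^n)\right)^{1/n} = \lim\limits_{n\rightarrow \infty} \left(\tau(u^{kn})\right)^{1/n}.
\]
Since $\tau_0(u) = \lim\nolimits_{m\rightarrow \infty} (\tau(u^m))^{1/m}$ exists (the sequence $\{\log \tau(u^m)\}$ being sub-additive, as noted just before the statement), every subsequence of $\{(\tau(u^m))^{1/m}\}_{m=1}^\infty$ converges to the same limit $\tau_0(u)$. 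In particular, taking the subsequence indexed by $m = kn$, we get
\[
\lim\limits_{n\rightarrow \infty} \left(\tau(u^{kn})\right)^{1/(kn)} = \tau_0(u).
\]
Raising to the $k$-th power (which is continuous on $[0,\infty)$) yields $\lim\nolimits_{n\rightarrow \infty} (\tau(u^{kn}))^{1/n} = (\tau_0(u))^k$, which is exactly $\tau_0(u^k)$.

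Concretely, the key steps in order are: first, unfold both sides to reduce the identity to a statement about the behavior of $(\tau(u^m))^{1/m}$ along the arithmetic progression $m \in \{k, 2k, 3k, \ldots\}$; second, invoke the existence of the full limit $\tau_0(u)$ (guaranteed by Fekete's subadditivity lemma applied to $\log \tau(u^n)$, which the paper has already established) to conclude the subsequential limit equals $\tau_0(u)$; third, use continuity of $t \mapsto t^k$ to pass the $k$-th power through the limit. No appeal to the sub-multiplicativity inequality $\tau(uv)\le\tau(u)\tau(v)$ beyond what was needed to know $\tau_0(u)$ exists is required.

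I do not expect any genuine obstacle here; the only point demanding a word of care is the bookkeeping of exponents ($1/n$ versus $1/(kn)$), i.e. making sure that $(\tau(u^{kn}))^{1/n} = \big((\tau(u^{kn}))^{1/(kn)}\big)^k$ and that the inner quantity is a subsequence of the defining sequence for $\tau_0(u)$. Once that is written out, the proof is a two-line argument.
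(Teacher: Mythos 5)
Your proposal is correct and coincides with the paper's argument: the paper also writes $\left(\tau\left((u^k)^m\right)\right)^{1/m} = \left(\tau(u^{km})\right)^{\frac{1}{km}\cdot k}$ and lets $m\to\infty$, which is exactly your subsequence-plus-continuity observation, just stated more tersely.
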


\begin{proof}
Note that, for any $m\geq 1$, we have
\[
\left(\tau\left(\left(u^k\right)^m\right)\right)^{\frac{1}{m}} =  \left(\tau(u^{km})\right)^{\frac{1}{km}\cdot k}.
\]
Letting $m\rightarrow \infty$, then $\tau_0(u^k)= (\tau_0(u))^k$.
\end{proof}

As a result, for $u\in S$ and $k\geq 0$, one has
\[
\c{C}(u^k) = \frac{\tau_0(u^k)}{\tau(u^k)} \geq \frac{(\tau_0(u))^k}{(\tau(u))^k} = (\c{C}(u))^k.
\]

\begin{lemma} \label{lem_tau_0_uv=vu}
For any $u,v\in S$, we have $\tau_0(uv)=\tau_0(vu)$.
\end{lemma}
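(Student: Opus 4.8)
The plan is to show that $(uv)^{n+1}$ and $(vu)^{n+1}$ are ``one multiplication apart'' from $u(vu)^n$ and $v(uv)^n$ respectively, so that the sub-multiplicativity of $\tau$ (Theorem \ref{thm_divisor_submul}) squeezes the two limits together. Concretely, observe the elementary identity
\[
(uv)^{n+1}=u(vu)^n v,\qquad (vu)^{n+1}=v(uv)^n u,\qquad (n\geq 0).
\]
First I would apply Theorem \ref{thm_divisor_submul} twice to the first identity to get
\[
\tau\!\left((uv)^{n+1}\right)=\tau\!\left(u(vu)^n v\right)\leq \tau(u)\,\tau\!\left((vu)^n\right)\,\tau(v),
\]
and symmetrically $\tau\!\left((vu)^{n+1}\right)\leq \tau(v)\,\tau\!\left((uv)^n\right)\,\tau(u)$. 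Taking $(n+1)$-th roots and letting $n\to\infty$, the factors $\tau(u)^{1/(n+1)}$ and $\tau(v)^{1/(n+1)}$ tend to $1$, while $\left(\tau((vu)^n)\right)^{1/(n+1)}=\left(\tau((vu)^n)\right)^{1/n\cdot n/(n+1)}\to \tau_0(vu)$; this yields $\tau_0(uv)\leq \tau_0(vu)$. The reverse inequality follows by exchanging the roles of $u$ and $v$, giving $\tau_0(uv)=\tau_0(vu)$.

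The only slightly delicate point is the bookkeeping with exponents when passing to the limit: one must check that $\left(\tau((vu)^n)\right)^{1/(n+1)}$ really converges to $\tau_0(vu)$ and not to some other value. This is routine since $\left(\tau((vu)^n)\right)^{1/n}\to \tau_0(vu)$ by definition and $\tfrac{n}{n+1}\to 1$, and $\tau_0(vu)\geq 1$ so there is no issue at $0$; I would spell this out in one line. I do not expect any genuine obstacle here — the statement is essentially the ``trace-like'' symmetry $\tau_0(uv)=\tau_0(vu)$, analogous to $\rho(AB)=\rho(BA)$ for spectral radii, and the proof is the monoid analogue of that standard argument. No appeal to the castling machinery of Sections \ref{section_homogeneous_monoid}--\ref{section_natural_monoid} is needed beyond the sub-multiplicative inequality already established.
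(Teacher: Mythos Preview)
Your proposal is correct and is essentially the same argument as the paper's: both use the identity $(uv)^k=u(vu)^{k-1}v$, apply the sub-multiplicativity $\tau(xyz)\leq\tau(x)\tau(y)\tau(z)$ from Theorem~\ref{thm_divisor_submul}, take $k$-th roots, and let $k\to\infty$ to obtain $\tau_0(uv)\leq\tau_0(vu)$, then conclude by symmetry. The paper's write-up is slightly terser but the content is identical.
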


\begin{proof}
Note that
\[
\left(\tau\left((uv)^k\right))\right)^{1/k} = \left(\tau(u(vu)^{k-1}v)\right)^{1/k}\leq \left(\tau(u)\right)^{1/k}\left(\tau\left((vu)^{k-1}\right)\right)^{1/k} \left( \tau(v)\right)^{1/k}.
\]
Letting $k\rightarrow\infty$, one obtains $\tau_0(uv)\leq \tau_0(vu)$. Similarly, we can deduce that $\tau_0(vu)\leq \tau_0(uv)$.
\end{proof}

\begin{lemma}
For $i=1,2$, let $G_i$ be an fractional group with its integral monoid $S_i$. Then $G_1\times G_2$ is also an fractional group with $S_1\times S_2$ its integral group. Moreover, if both $S_1,S_2$ are homogeneous, then so is $S_1\times S_2$.
\end{lemma}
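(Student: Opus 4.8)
The plan is to prove the three assertions in turn, verifying the defining axioms for the product. Throughout I would identify $S_1\times S_2$ with the obvious submonoid of $G_1\times G_2$, with identity $(1,1)$, and use coordinatewise operations: $(x_1,x_2)(y_1,y_2)=(x_1y_1,x_2y_2)$, and $(x_1,x_2)^{-1}=(x_1^{-1},x_2^{-1})$ computed in $G_1\times G_2$.

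First I would check Axiom I. Since $(x_1,x_2)\in (S_1\times S_2)\cap(S_1\times S_2)^{-1}$ forces $x_i\in S_i\cap S_i^{-1}=\{1\}$ for $i=1,2$, we get $(S_1\times S_2)\cap(S_1\times S_2)^{-1}=\{(1,1)\}$. Next, Axiom II: given $(u_1,u_2)\in G_1\times G_2$, pick the fractions $u_i=x_iy_i^{-1}$ in lowest terms in $G_i$; then $(x_1,x_2)(y_1,y_2)^{-1}=(u_1,u_2)$, and if $(u_1,u_2)=(z_1,z_2)(w_1,w_2)^{-1}$ with $z_i,w_i\in S_i$, then $u_i=z_iw_i^{-1}$, so by Axiom II for $S_i$ there is $c_i\in S_i$ with $z_i=x_ic_i$, $w_i=y_ic_i$; taking $c=(c_1,c_2)$ gives $(z_1,z_2)=(x_1,x_2)c$ and $(w_1,w_2)=(y_1,y_2)c$. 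For Axiom III, a factorization $(u_1,u_2)=(v_1,v_2)(w_1,w_2)$ is exactly a pair of factorizations $u_i=v_iw_i$ in $S_i$, so $\#\{(v,w):\,vw=(u_1,u_2)\}=\prod_{i}\#\{(v_i,w_i):\,v_iw_i=u_i\}<+\infty$ by Axiom III for $S_1$ and $S_2$. Thus $S_1\times S_2$ is an integral monoid with fractional group $G_1\times G_2$.

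For the homogeneity claim I would first record the elementary but important fact that divisibility, $\mathrm{lcm}$ and $\gcd$ in $S_1\times S_2$ are computed coordinatewise: $(u_1,u_2)\mid(w_1,w_2)$ iff $u_i\mid w_i$ for both $i$; $\mathrm{lcm}[(u_1,u_2),(v_1,v_2)]=(\mathrm{lcm}[u_1,v_1],\mathrm{lcm}[u_2,v_2])$ (which follows from $(u_1S_1\times u_2S_2)\cap(v_1S_1\times v_2S_2)=(u_1S_1\cap v_1S_1)\times(u_2S_2\cap v_2S_2)$ and Lemma~\ref{lem_lcm_may_be_defined}); and likewise $\gcd((u_1,u_2),(v_1,v_2))=(\gcd(u_1,v_1),\gcd(u_2,v_2))$. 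In particular $\gcd((u_1,u_2),(v_1,v_2))=(1,1)$ iff $\gcd(u_1,v_1)=1$ and $\gcd(u_2,v_2)=1$. Then I would verify Axiom IV$'$ via the characterization in Lemma~\ref{lem_lcm_unique_from_eta_2=id}(ii): suppose $(u_1,u_2),(v_1,v_2),(w_1,w_2)$ satisfy $\mathrm{lcm}[(w_1,w_2),(u_1,u_2)]=\mathrm{lcm}[(w_1,w_2),(v_1,v_2)]$ and $\gcd((w_1,w_2),(u_1,u_2))=\gcd((w_1,w_2),(v_1,v_2))=(1,1)$. Reading off coordinates, $\mathrm{lcm}[w_i,u_i]=\mathrm{lcm}[w_i,v_i]$ and $\gcd(w_i,u_i)=\gcd(w_i,v_i)=1$ for $i=1,2$; since $S_i$ is homogeneous, Lemma~\ref{lem_lcm_unique_from_eta_2=id}(ii) gives $u_i=v_i$, hence $(u_1,u_2)=(v_1,v_2)$. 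By Lemma~\ref{lem_lcm_unique_from_eta_2=id} this is equivalent to Axiom IV$'$ holding for $S_1\times S_2$, so $S_1\times S_2$ is homogeneous.

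I do not expect a serious obstacle here; the whole proof is a matter of checking that everything factors through the coordinates, and the one nontrivial input — that $\mathrm{lcm}$ and $\gcd$ behave coordinatewise — reduces cleanly to Lemma~\ref{lem_lcm_may_be_defined} and the definition of $\gcd$ together with Lemma~\ref{lem_antisymmetry} for uniqueness. The mildest care needed is to state the coordinatewise $\mathrm{lcm}/\gcd$ lemma and its proof before invoking the criterion of Lemma~\ref{lem_lcm_unique_from_eta_2=id}(ii), since that criterion is what makes the homogeneity verification painless; an alternative, slightly longer route would be to construct the map $\eta$ on $\fC_1(S_1\times S_2)$ directly from $\eta_1,\eta_2$ and check $\eta^2=\mathrm{id}$, but the Lemma~\ref{lem_lcm_unique_from_eta_2=id} route is cleaner and I would use it.
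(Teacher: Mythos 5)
Your proof is correct and follows essentially the same route as the paper: checking Axioms I, II, III coordinatewise, then observing that $\lcm$ and $\gcd$ decompose coordinatewise and invoking the uniqueness criterion from Lemma~\ref{lem_lcm_unique_from_eta_2=id} to verify Axiom IV$'$. The only cosmetic difference is that you apply part~(ii) of that lemma (with the $\gcd=1$ hypothesis) while the paper applies the equivalent part~(iii); both are fine.
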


\begin{proof}
Suppose that $S_1,S_2$ satisfy Axioms \uppercase\expandafter{\romannumeral1}, \uppercase\expandafter{\romannumeral2} and \uppercase\expandafter{\romannumeral3}. It is not hard to see that $S_1\times S_2$ is a monoid with identity $1=(1,1)$. If $(u,v)\in S_1\times S_2$ and $(u,v)\in (S_1\times S_2)^{-1} = S_1^{-1}\times S_2^{-1}$, then $u\in S_1\cap S_1^{-1}=\{1\}$ and $v\in S_2\cap S_2^{-1}=\{1\}$.

For any $(w_1,w_2)\in G_1\times G_2$, let $x_iy_i^{-1}$ be the simplest fraction of $w_i$ with numerator $x_i$ and denominator $y_i$ $(i=1,2)$, respectively. Then $(w_1,w_2)=(x_1,x_2)\cdot (y_1,y_2)^{-1}$. Moreover, suppose that $(w_1,w_2)=(\widetilde{x_1},\widetilde{x_2})\cdot (\widetilde{y_1},\widetilde{y_2})^{-1}$ for some $(\widetilde{x_1},\widetilde{x_2}), (\widetilde{y_1},\widetilde{y_2})\in S_1\times S_2$. Then $w_i=\widetilde{x_i}\widetilde{y_i}^{-1}$ $(i=1,2)$. Since $G_i$ is a fractional group with $S_i$ its integral monoid, one deduces that $\widetilde{x_i}=x_i c_i$, $\widetilde{y_i}=y_i c_i$ for some $c_i\in S_i$ $(i=1,2)$. Now $(\widetilde{x_1},\widetilde{x_2}) = (x_1,x_2)\cdot (c_1,c_2)$ and $(\widetilde{y_1},\widetilde{y_2}) = (y_1,y_2)\cdot (c_1,c_2)$ for $(c_1,c_2)\in S_1\times S_2$.

Moreover, it is not hard to see that, for any given $(w_1,w_2)\in S_1\times S_2$, one has
\begin{align*}
&\{((u_1,u_2),(v_1,v_2))\in (S_1\times S_2)^2:\, (w_1,w_2)=(u_1,u_2)(v_1,v_2)= (u_1v_1,u_2,v_2)\}\\
\quad \quad \quad &= \{(u_1,v_1)\in S_1:\, u_1v_1=w_1\}\times\{(u_2,v_2)\in S_2:\, u_2v_2=w_2\}.
\end{align*}
So Axiom \uppercase\expandafter{\romannumeral3} holds for $S_1\times S_2$.

Now suppose that $S_1,S_2$ also satisfy Axiom \uppercase\expandafter{\romannumeral4}'. Suppose that the elements $(w_1,w_2),(u_1,u_2),(v_1,v_2)\in S_1\times S_2$ satisfy
\begin{align*}
&\lcm[(w_1,w_2),(u_1,u_2)]=\lcm[(w_1,w_2),(v_1,v_2)],\\ &\gcd((w_1,w_2),(u_1,u_2))=\gcd((w_1,w_2),(v_1,v_2)).
\end{align*}
Note that
\begin{align*}
\lcm[(w_1,w_2),(u_1,u_2)] = \left(\lcm[w_1,u_1],\,\lcm[w_2,u_2]\right),\\
\gcd((w_1,w_2),(u_1,u_2)) = \left(\gcd(w_1,u_1),\,\gcd(w_2,u_2)\right).
\end{align*}
One has
\begin{align*}
\lcm[w_1,u_1]=\lcm[w_1,v_1],\quad \gcd(w_1,u_1)=\gcd(w_1,v_1),\\
\lcm[w_2,u_2]=\lcm[w_2,v_2],\quad \gcd(w_2,u_2)=\gcd(w_2,v_2).
\end{align*}
It follows that $u_1=v_1$ and $u_2=v_2$, i.e., $(u_1,v_1)=(u_2,v_2)$. As a result, the monoid $S_1\times S_2$ satisfies Axiom \uppercase\expandafter{\romannumeral4}'.
\end{proof}

For simplicity, we will not use different notations for the divisor functions on different monoids. The notation $\tau$ and $\tau_0$ are always used. From the above, we see that $\tau((u_1,u_2)) = \tau(u_1)\tau(u_2)$ for $(u_1,u_2)\in S_1\times S_2$. In particular, one has \[
\tau\left((u_1,u_2)^k\right)= \tau\left((u_1^k,u_2^k)\right)=\tau(u_1^k)\cdot \tau(u_2^k)
\]
for any $k\geq 1$. Now we state the following result.

\begin{proposition} \label{prop_complexity_2}
Let $S_1,S_2$ be homogeneous monoids. Then for any $(u_1,u_2)\in S_1\times S_2$, we have
\[
\tau_0((u_1,u_2)) = \tau_0(u_1)\cdot \tau_0(u_2),\quad \c{C}((u_1,u_2)) = \c{C}(u_1)\cdot \c{C}(u_2).
\]
\end{proposition}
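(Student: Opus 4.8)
\textbf{Proof plan for Proposition \ref{prop_complexity_2}.}
The plan is to reduce everything to the factorization $\tau((u_1,u_2)^k)=\tau(u_1^k)\,\tau(u_2^k)$, which was established in the discussion preceding the statement. First I would take $k$-th roots and pass to the limit: since $\bigl(\tau((u_1,u_2)^k)\bigr)^{1/k}=\bigl(\tau(u_1^k)\bigr)^{1/k}\bigl(\tau(u_2^k)\bigr)^{1/k}$, and each of the three limits $\tau_0((u_1,u_2))$, $\tau_0(u_1)$, $\tau_0(u_2)$ exists (by the sub-additivity of $\{\log\tau(u^n)\}$, which holds in any homogeneous monoid by Theorem \ref{thm_divisor_submul}, and the fact that $S_1\times S_2$ is homogeneous by the preceding lemma), the limit of a product is the product of the limits. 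This gives $\tau_0((u_1,u_2))=\tau_0(u_1)\tau_0(u_2)$ immediately, with no delicate estimate needed — the existence of all the limits is what makes the interchange legitimate.

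For the second identity, I would simply divide. By definition $\c{C}((u_1,u_2))=\tau_0((u_1,u_2))/\tau((u_1,u_2))$. Using the first identity in the numerator and $\tau((u_1,u_2))=\tau(u_1)\tau(u_2)$ in the denominator (again from the preceding discussion), this equals
\[
\frac{\tau_0(u_1)\tau_0(u_2)}{\tau(u_1)\tau(u_2)}=\frac{\tau_0(u_1)}{\tau(u_1)}\cdot\frac{\tau_0(u_2)}{\tau(u_2)}=\c{C}(u_1)\c{C}(u_2),
\]
which is exactly what is claimed. One should note $\tau(u_i)\geq 1$ so no division by zero occurs.

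There is essentially no main obstacle here: the substantive content — that $S_1\times S_2$ is homogeneous (so that $\tau_0$ is even defined on it) and that $\tau$ is multiplicative across the direct product — has already been set up in the lemma and remarks immediately before the proposition. The only point requiring a word of care is justifying the limit interchange, i.e. that all three sequences $\bigl(\tau(u_1^k)\bigr)^{1/k}$, $\bigl(\tau(u_2^k)\bigr)^{1/k}$, $\bigl(\tau((u_1,u_2)^k)\bigr)^{1/k}$ converge; once this is recorded, the proof is two lines. I would therefore write the proof as: (1) invoke homogeneity of $S_1\times S_2$ and existence of the three limits; (2) take $k$-th roots of the multiplicativity identity and let $k\to\infty$ to get the $\tau_0$ product formula; (3) divide by $\tau((u_1,u_2))=\tau(u_1)\tau(u_2)$ to get the $\c{C}$ product formula.
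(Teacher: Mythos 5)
Your proof is correct and matches what the paper intends: the paper itself gives no explicit proof for this proposition, merely stating it immediately after establishing $\tau((u_1,u_2)^k)=\tau(u_1^k)\tau(u_2^k)$, so the intended argument is exactly the two-step one you give (take $k$-th roots and pass to the limit for $\tau_0$, then divide by $\tau((u_1,u_2))=\tau(u_1)\tau(u_2)$ for $\c{C}$). Your added remark that the existence of all three limits, guaranteed by sub-additivity in homogeneous monoids, is what legitimizes taking the limit of the product is a reasonable point of care and is consistent with the paper's setup.
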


It follows that
\[
\c{C}(S_1\times S_2)=\sup\limits_{(u,v)\in S_1\times S_2\atop (u,v)\neq (1,1)} \c{C}(u)\c{C}(v) = \max\{\c{C}(S_1),\c{C}(S_2)\}.
\]
The properites in Lemma \ref{lem_complexity_1} and Proposition \ref{prop_complexity_2} are similar to that of entropy of a dynamical system. And the property in Lemma \ref{lem_tau_0_uv=vu} is similar to that of spectral radius of a bounded operator. Next, we consider complexity for natural monoids.

\begin{theorem}\label{thm_tau_0_finitely_many_primes}
Suppose that $S$ is a natural monoid containing finitely many primes. Then $\tau_0(u)=1$ for all $u\in S$. In particular, we have $\c{C}(S)=1/2$.
\end{theorem}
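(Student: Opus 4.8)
The plan is to show that when $S$ is a natural monoid with $\cP=\{p_0,p_1,\ldots,p_{k-1}\}$ finite, the divisor count $\tau(u^n)$ grows only polynomially in $n$, which forces $\tau_0(u)=\lim_{n\to\infty}(\tau(u^n))^{1/n}=1$. Since $\tau(u)\geq 1$ always and $\tau(u)\geq 2$ for $u\neq 1$, this immediately gives $\c{C}(u)=\tau_0(u)/\tau(u)\leq 1/2$ for every $1\neq u\in S$, while $\c{C}(p)\geq 1/2$ for a prime $p$ (as noted before the theorem); hence $\c{C}(S)=1/2$.

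The key step is the polynomial bound on $\tau(u^n)$. By Theorem~\ref{thm_finite_primes_implies_fully_castlable}, $S$ is fully castlable, so by Theorem~\ref{thm_equivalence_fully_castlable} every element is a least common multiple of prime powers. In particular, for fixed $u$ with $\ind(u)=d$, the element $u^n$ has $\ind(u^n)=nd$ by Corollary~\ref{cor_ind_eq}, and every divisor $w$ of $u^n$ satisfies $\Omega(w)=\ind(w)\leq nd$ by Theorem~\ref{thm_prime_multiplicity_from}(\romannumeral1) (using $\Omega(w)\le \ind(w)$, together with $\ind(w)\le\ind(u^n)$). So each divisor of $u^n$ is determined by its multiset of prime divisors, which is a function $\cP\to\{0,1,\ldots,nd\}$; there are at most $(nd+1)^k$ such functions. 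Therefore $\tau(u^n)\leq (nd+1)^k$, and $(\tau(u^n))^{1/n}\leq (nd+1)^{k/n}\to 1$ as $n\to\infty$. Combined with $\tau(u^n)\geq 1$, this yields $\tau_0(u)=1$.

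With $\tau_0(u)=1$ for all $u$ in hand, I would finish as follows. For $1\neq u\in S$ we have $\tau(u)\geq 2$ (since $u$ has at least the divisors $1$ and $u$, and more precisely $\tau$ is a positive integer with $\tau(u)=2$ exactly when $u$ is irreducible), so $\c{C}(u)=\tau_0(u)/\tau(u)=1/\tau(u)\leq 1/2$. Taking the supremum over $1\neq u\in S$ gives $\c{C}(S)\leq 1/2$. For the reverse inequality, $S$ is non-trivial (as $G\neq\{1\}$), so by Lemma~\ref{lem_prime_divisor_exsits} there exists $p\in\cP$; then $\c{C}(p)=\tau_0(p)/\tau(p)=1/2$, so $\c{C}(S)\geq 1/2$. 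Hence $\c{C}(S)=1/2$.

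The main obstacle is making the counting argument for $\tau(u^n)$ fully rigorous: specifically, justifying that a divisor of a fully castlable element is uniquely determined by its multiset of prime divisors. This should follow from Theorem~\ref{thm_equivalence_fully_castlable}(\romannumeral1) applied to the divisor (each divisor of a fully castlable element is itself fully castlable by Lemma~\ref{lem_fully_ca_di_co_also_fully_ca}, hence equals $\lcm[q_1^{m_1},\ldots,q_k^{m_k}]$ for its prime divisors $q_j$ with multiplicities $m_j$), so the bound $\tau(u^n)\le(nd+1)^k$ is the crux and everything else is bookkeeping with already-established index and multiplicativity facts.
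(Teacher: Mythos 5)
Your argument is correct and follows essentially the same route as the paper's proof: both reduce to full castlability via Theorem~\ref{thm_finite_primes_implies_fully_castlable}, represent $u^n$ (and its divisors) as $\lcm$'s of prime powers via Theorem~\ref{thm_equivalence_fully_castlable}, use $\ind(u^n)=n\,\ind(u)$, and deduce a polynomial bound on $\tau(u^n)$ giving $\tau_0(u)=1$. The one place you differ is the bookkeeping for that polynomial bound: the paper writes $\tau(u^n)=\prod_{i}(m_{i,n}+1)$ exactly (via Corollary~\ref{cor_tau_lcm_p_m}) and then applies the AM--GM inequality to the factors, whereas you count divisors directly as multiplicity vectors in $\{0,1,\ldots,nd\}^{k}$, giving the same order-of-magnitude bound $(nd+1)^k$; these are two ways of phrasing the same parametrization of divisors of a fully castlable element by their prime multiplicities, and both are complete.
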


\begin{proof}
Suppose that $\cP=\{p_0,p_1,\ldots,p_{k-1}\}$. By Theorem \ref{thm_finite_primes_implies_fully_castlable}, the monoid $S$ is fully castlable. Recalling Theorem \ref{thm_equivalence_fully_castlable}, we can write
\[
u^n=\lcm\left[p_0^{m_{0,n}},p_1^{m_{1,n}},\ldots ,p_{k-1}^{m_{k-1,n}}\right]
\]
for any $u\in S$ and any $n\geq 1$. Here $m_{0,n},m_{1,n},\ldots,m_{k-1,n}$ are non-negative integers. In particular, one has
\[
\ind(u^n)=m_{0,n}+m_{1,n}+\ldots+m_{k-1,n}=n\cdot \ind(u).
\]
It follows that
\begin{align*}
\tau(u^n)&= (m_{0,n}+1)(m_{1,n}+1)\ldots (m_{k-1,n}+1) \\
&\leq \left(\frac{1}{k}\left(m_{0,n}+m_{1,n}+\ldots+m_{k-1,n}+k\right)\right)^k \leq (n\cdot\ind(u)+1)^k.
\end{align*}
Hence
\[
\tau_0(u)=\lim\limits_{n\rightarrow \infty}(\tau(u^n))^{1/n} = \lim\limits_{n\rightarrow \infty} (n\cdot \ind(u)+1)^{k/n} =1.
\]
In particular, for $u\neq 1$, one has $\c{C}(u)=\frac{\tau_0(u)}{\tau(u)}\leq 1/2$. The conclusion follows.
\end{proof}

If $S$ is an abelian monoid, then similar arguments as above shows that $\tau_0(u)=1$ $(u\in S)$ and $\c{C}(S)=1/2$. For a homogenous monoid $S$ with finitely many irreducible elements, does $\c{C}(S)=1/2$ also hold?

\subsection{Complexity for castlings in Thompson's Monoid}

In this subsection, we will calculate $\tau_0(u)$ for certain kinds of elements $u$ in Thompson's monoid $\bS$, and calculate $\c{C}(\bS)$. Define $\iota$ to be the conjugation on $\bG$ induced by $p_0$, i.e.,
\[
\iota(u) = p_0^{-1} u p_0,\quad (u\in \bG).
\]
It is an automorphism on $G$. Moreover, it satisfies that $\iota(\bS)\subseteq \bS$. In particular, for $m\geq 0$, one has
\[
\iota^m (p_0)=p_0,\quad \iota^m(p_r)=p_{r+m},\,\, (r\geq 1).
\]

\begin{lemma} \label{lem_iota_inverse}
Let $u=p_0^{m_0}p_1^{m_1}\ldots p_k^{m_k}$ for some $k\geq 1$ and $m_0,m_1,\ldots,m_k\geq 0$. Then $u\in \iota(\bS)$ if and only if $m_1=0$.
\end{lemma}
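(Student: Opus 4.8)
The plan is to characterize the image $\iota(\bS)$ inside $\bS$ directly in terms of normal forms, and then read off the statement. Recall that $\iota$ fixes $p_0$ and sends $p_r$ to $p_{r+1}$ for $r\ge 1$; hence for any element $v=p_0^{n_0}p_1^{n_1}\ldots p_\ell^{n_\ell}$ in $\bS$ (written in normal form), we have $\iota(v)=p_0^{n_0}p_2^{n_1}p_3^{n_2}\ldots p_{\ell+1}^{n_\ell}$, and one must check this is already the normal form of $\iota(v)$. Since $\ind$ is a homomorphism and $\iota$ is an automorphism preserving $\ind$, and since the displayed word for $\iota(v)$ has non-decreasing subscripts $0<2<3<\cdots<\ell+1$ (so it is a minimal word), the only thing to verify is that it satisfies the normal-form condition (ii) recalled in the introduction; but condition (ii) is about mixed positive/negative powers, which does not arise here since $\iota(v)\in\bS$. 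So $\iota(v)=p_0^{n_0}p_2^{n_1}\ldots p_{\ell+1}^{n_\ell}$ in normal form, and in particular the exponent of $p_1$ in $\iota(v)$ is $0$.

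First I would prove the ``only if'' direction: if $u\in\iota(\bS)$, write $u=\iota(v)$ with $v\in\bS$; by the computation above the normal form of $u$ has zero exponent on $p_1$, so $m_1=0$. Next I would prove the ``if'' direction: suppose $u=p_0^{m_0}p_1^{m_1}\ldots p_k^{m_k}$ with $m_1=0$, i.e. $u=p_0^{m_0}p_2^{m_2}p_3^{m_3}\ldots p_k^{m_k}$. Set $v=p_0^{m_0}p_1^{m_2}p_2^{m_3}\ldots p_{k-1}^{m_k}$, which is an element of $\bS$ (its subscripts are non-decreasing, so it is a legitimate word, and since each exponent-carrying subscript $0<1<2<\cdots<k-1$ is strictly increasing this is already its normal form). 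Then $\iota(v)=p_0^{m_0}p_2^{m_2}p_3^{m_3}\ldots p_k^{m_k}=u$, so $u\in\iota(\bS)$.

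The only slightly delicate point — and the one I would treat most carefully — is the claim that the naive substitution $p_r\mapsto p_{r+1}$ ($r\ge1$), $p_0\mapsto p_0$ applied term-by-term to a normal-form word actually yields the normal-form word of the image, rather than merely some word of it. The safest route is to invoke the uniqueness of the normal form (Corollary-Definition 2.7 of \cite{CFP96}, recalled in the introduction): the candidate word $p_0^{n_0}p_2^{n_1}\ldots p_{\ell+1}^{n_\ell}$ has strictly increasing subscripts among the nontrivial factors and has no negative part, hence trivially satisfies both defining conditions (i) and (ii) of a normal form, so by uniqueness it \emph{is} the normal form of $\iota(v)$. There is no genuine obstacle here; it is a bookkeeping verification. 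One should also note that $k\ge 1$ is assumed precisely so that a $p_1$-slot exists in the normal form and the statement ``$m_1=0$'' is meaningful — for $k=0$ the element is a power of $p_0$ and lies in $\iota(\bS)$ vacuously, consistent with there being no $m_1$ constraint.
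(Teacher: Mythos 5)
Your proposal is correct and takes essentially the same route as the paper: in both directions you apply $\iota$ to a normal-form word term-by-term and read off the exponent on $p_1$, constructing the explicit preimage $v=p_0^{m_0}p_1^{m_2}\ldots p_{k-1}^{m_k}$ for the ``if'' direction. Your extra care in verifying that the image word is already in normal form is fine but not strictly needed; the paper leaves this implicit.
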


\begin{proof}
When $m_1=0$, we have
\[
\iota(p_0^{m_0} p_1^{m_2}p_2^{m_3}\ldots p_{k-1}^{m_k}) = p_0^{m_0}p_2^{m_2}p_3^{m_3}\ldots p_k^{m_k}=u.
\]
So $u\in \iota(S)$. On the other hand, suppose that $u=\iota(v)$ for some $v\in \bS$. Let $v=p_0^{n_0}p_1^{n_1}\ldots p_k^{n_k}$ for some $k,n_0,n_1,\ldots,n_k\geq 0$. We then have $u=\iota(v)=p_0^{n_0}p_2^{n_1}p_3^{n_2}\ldots p_k^{n_{k-1}}$. One sees that $m_0=n_0$, $m_r=n_{r-1}$ $(2\leq r\leq k)$, and $m_1=0$. The lemma then follows.
\end{proof}

\begin{corollary} \label{cor_tau_u_and_tau_iota_u}
(\romannumeral1) For any $u\in \bS$, we have that $\tau(u)\leq \tau(\iota(u))$.

(\romannumeral2) Let $u=p_1^{m_1}p_2^{m_2}\ldots p_k^{m_k}$. Then $\tau(u)=\tau(\iota(u))$.
\end{corollary}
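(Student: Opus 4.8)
\textbf{Plan for the proof of Corollary \ref{cor_tau_u_and_tau_iota_u}.}

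The plan is to exploit the fact that $\iota$ is an injective monoid homomorphism $\bS \to \bS$, so that it induces an order-preserving injection of the divisor posets, and then to analyze the image of this injection in the two cases. For part (\romannumeral1), I would first observe that $\iota$ restricts to a map $\bS \to \bS$ (stated just before the corollary) that is injective (being the restriction of an automorphism of $\bG$) and respects multiplication, hence respects divisibility: if $d \mid u$, writing $u = dv$ gives $\iota(u) = \iota(d)\iota(v)$ with $\iota(d), \iota(v) \in \bS$, so $\iota(d) \mid \iota(u)$. Moreover this correspondence $d \mapsto \iota(d)$ is injective on the set of divisors of $u$, because $\iota$ is injective on all of $\bS$. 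Therefore $\#\{d \in \bS : d \mid u\} \le \#\{e \in \bS : e \mid \iota(u)\}$, which is exactly $\tau(u) \le \tau(\iota(u))$.

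For part (\romannumeral2), I would show that when $u = p_1^{m_1}p_2^{m_2}\cdots p_k^{m_k}$ the injection $d \mapsto \iota(d)$ is in fact a bijection onto the divisors of $\iota(u)$, which forces equality of the cardinalities. The key point is that every divisor $e$ of $\iota(u) = p_2^{m_1}p_3^{m_2}\cdots p_{k+1}^{m_k}$ already lies in the image $\iota(\bS)$, and its $\iota$-preimage divides $u$. To see that $e \in \iota(\bS)$: write the normal form of $e$ as $p_0^{a_0}p_1^{a_1}\cdots p_n^{a_n}$; since $e \mid \iota(u)$ and (by Lemma \ref{lem_ind_eq}, or directly from the Thompson normal form) a divisor's normal form involves only generators $p_j$ with $j$ at most the largest subscript occurring in $\iota(u)$, and crucially the smallest is at least $2$, I would argue $a_0 = a_1 = 0$. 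Concretely, $\iota(u) \in \iota(\bS)$ has $a_1$-exponent zero by Lemma \ref{lem_iota_inverse}; I would need the analogous statement that a divisor of such an element also has vanishing $p_0$- and $p_1$-exponents. This follows because in any word for $\iota(u)$ every letter is $p_j$ with $j \ge 2$, and a divisor of $u'$ (for $u' \in \bS$) corresponds to a prefix of some word of $u'$ (by the subword-decomposition analysis of Section \ref{section_Thomspon}, e.g.\ Lemma \ref{lem_divisor_shift_is_castling} iterated), and prefixes of such words again use only letters $p_j$ with $j \ge 2$ after taking normal form --- hence $e \in \iota(\bS)$. Then $\iota^{-1}(e) \in \bS$ and $\iota^{-1}(e) \mid u$, giving surjectivity of $d \mapsto \iota(d)$, hence $\tau(u) = \tau(\iota(u))$.

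The main obstacle I anticipate is the claim in part (\romannumeral2) that a divisor $e$ of $\iota(u)$ necessarily lies in $\iota(\bS)$, i.e.\ has no $p_0$ or $p_1$ in its normal form. The cleanest route is probably to avoid word-combinatorics and instead argue inside $\bG$: if $e \mid \iota(u)$ then $e^{-1}\iota(u) \in \bS$, and since $\iota$ is an automorphism of $\bG$ with $\iota^{-1}(\bS) \cap \bS \ni$ exactly the elements whose normal form starts at $p_2$ or higher... actually one must check that $\iota^{-1}(e) \in \bS$, not merely in $\bG$. Here I would use that $\iota(\bS)$ is precisely the submonoid of $\bS$ generated by $\{p_0\} \cup \{p_j : j \ge 2\}$ (from the relations $p_j p_0 = p_0 p_{j+1}$, one checks $\iota(\bS)$ is a submonoid and is order-closed downward inside $\bS$: if $e \mid f$ with $f \in \iota(\bS)$ then $e \in \iota(\bS)$). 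Proving this downward-closedness is the real content; I expect it follows by induction on $\ind(f)$ using the normal-form description \eqref{eq_normal_form_introduction} together with Lemma \ref{lem_divisor_shift_is_castling}, noting that when $f$'s normal form omits $p_1$, castling a letter $p_j$ ($j\ge 2$) leftward past letters $p_i$ ($i \ge 2$) can never produce a $p_1$ or $p_0$. Once downward-closedness of $\iota(\bS)$ is in hand, part (\romannumeral2) is immediate, and part (\romannumeral1) is the elementary injectivity argument above.
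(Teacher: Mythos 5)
Your part (\romannumeral1) argument is exactly the paper's: $\iota$ is an injective multiplicative map $\bS\to\bS$, so $w\mapsto\iota(w)$ sends divisors of $u$ injectively to divisors of $\iota(u)$.

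For part (\romannumeral2) your overall strategy --- show that $d\mapsto\iota(d)$ is a bijection onto the divisors of $\iota(u)$, which reduces to showing that every divisor $w$ of $\iota(u)$ lies in $\iota(\bS)$ --- is also the paper's. Where you diverge is in how you establish that inclusion, and your route is considerably heavier than it needs to be. You invoke word-level reasoning (minimum-subscript invariance of words, prefixes via Lemma \ref{lem_divisor_shift_is_castling}) or alternatively propose an induction showing that $\iota(\bS)$ is downward-closed in $\bS$; these do work (the minimum subscript appearing in a word is indeed invariant under the relations $p_jp_i=p_ip_{j+1}$), but neither is needed. The paper's argument is elementary: since $\iota(u)=p_2^{m_1}p_3^{m_2}\cdots p_{k+1}^{m_k}$ has normal form with vanishing $p_0$- and $p_1$-exponents, neither $p_0$ nor $p_1$ divides $\iota(u)$; by transitivity of divisibility, neither divides any divisor $w$ of $\iota(u)$; hence $w$'s normal form is $p_2^{n_2}\cdots p_l^{n_l}$, so $w\in\iota(\bS)$ by Lemma \ref{lem_iota_inverse}, and $\iota^{-1}(w)\mid u$. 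This avoids the word-combinatorial machinery of Section \ref{section_Thomspon} entirely --- which matters because that machinery would create a circularity concern (the corollary sits in Section \ref{section_complexity} and ideally rests only on what is already established, not on the delicate word-order analysis) and, more practically, because it replaces several paragraphs by two lines. You should also note that your suggestion ``a divisor's normal form involves only generators $p_j$ with $j$ at most the largest subscript occurring in $\iota(u)$'' is the wrong bound for what you need (you need a \emph{lower} bound on the smallest subscript, which is what the $p_0,p_1\nmid w$ argument delivers directly).
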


\begin{proof}
(\romannumeral1) The conclusion follows by noting that $\iota(w)|\iota(u)$ whenever $w|u$.

(\romannumeral2) Note that $\iota(u)=p_2^{m_1}p_3^{m_2}\ldots p_{k+1}^{m_k}$. Since $p_0,p_1$ are not prime divisors of $\iota(u)$, it does not divides any divisor $w$ of $\iota(u)$ either. Then $w=p_2^{n_2}p_3^{n_3}\ldots p_l^{n_l}$ for some $n_2,n_3,\ldots,n_l\geq 0$. By Lemma \ref{lem_iota_inverse}, we have $\iota^{-1}(w)\in \bS$. And $\iota^{-1}(w)|u$. It follows that $\tau(u)\geq \tau(\iota(u))$. This completes the proof.
\end{proof}

\begin{lemma} \label{lem_tau_0_leq_tau_0_cdot_tau}
Let $0\leq k\leq l$ and $m_0,m_1,\ldots,m_l\geq 0$. Let $u=p_0^{m_0}p_1^{m_1}\ldots p_k^{m_k}$ and $v=p_{k+1}^{m_{k+1}}p_{k+2}^{m_{k+2}}\ldots p_l^{m_l}$. Then
\[
\tau_0(u)\leq \tau_0(uv) \leq \tau_0(u)\tau(v).
\]
\end{lemma}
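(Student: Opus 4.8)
The key to both inequalities is a single commutation identity. Every letter $p_i$ occurring in $u$ has index $i\le k$ while every letter occurring in $v$ has index $\ge k+1$, so the relation $p_jp_i=p_ip_{j+1}$ ($0\le i<j$) in $\bG$ lets one slide a letter of $u$ leftward past all of $v$, raising each index of $v$ by one; since $\iota$ fixes $p_0$, sends $p_r\mapsto p_{r+1}$ for $r\ge1$, and all letters of $v$ have index $\ge k+1\ge1$, this says $v\,p_i=p_i\,\iota(v)$. Iterating over the $\ind(u)$ letters of $u$ gives $vu=u\,\iota^{\ind(u)}(v)$, and the same computation gives $wu=u\,\iota^{\ind(u)}(w)$ for any $w$ that is a product of conjugates $\iota^{a}(v)$ ($a\ge0$). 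Feeding this into $(uv)^n=(uv)^{n-1}(uv)=u^{n-1}w_{n-1}uv=u^n\bigl(\iota^{\ind(u)}(w_{n-1})\,v\bigr)$, an induction on $n$ yields
\[
(uv)^n=u^n\,w_n,\qquad w_n=\iota^{(n-1)s}(v)\,\iota^{(n-2)s}(v)\cdots\iota^{s}(v)\,v,\quad s=\ind(u),
\]
where each factor of $w_n$ is again a product of powers of $p_{k+1},p_{k+2},\dots$; in particular $w_n\in\bS$ and $u^n\mid(uv)^n$.

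Both inequalities then follow by taking $n$-th roots of estimates for $\tau\bigl((uv)^n\bigr)$. For the lower bound: since $u^n\mid(uv)^n$ (write $u^n=de$, so $(uv)^n=d(ew_n)$), every divisor of $u^n$ is a divisor of $(uv)^n$ and distinct divisors stay distinct, whence $\tau(u^n)\le\tau\bigl((uv)^n\bigr)$; letting $n\to\infty$ in $\tau(u^n)^{1/n}\le\tau((uv)^n)^{1/n}$ (both limits exist, $\log\tau$ being sub-additive along powers) gives $\tau_0(u)\le\tau_0(uv)$. For the upper bound: the sub-multiplicativity $\tau(ab)\le\tau(a)\tau(b)$ of Theorem \ref{thm_divisor_submul}, applied to $(uv)^n=u^nw_n$ and then to the factorization of $w_n$, gives
\[
\tau\bigl((uv)^n\bigr)\le\tau(u^n)\,\tau(w_n)\le\tau(u^n)\prod_{j=0}^{n-1}\tau\bigl(\iota^{js}(v)\bigr).
\]
Since $v=p_{k+1}^{m_{k+1}}\cdots p_l^{m_l}$ contains no $p_0$ (as $k+1\ge1$), and neither does any $\iota^{js}(v)$, Corollary \ref{cor_tau_u_and_tau_iota_u}(ii) applied in succession gives $\tau\bigl(\iota^{js}(v)\bigr)=\tau(v)$ for every $j$; hence $\tau\bigl((uv)^n\bigr)\le\tau(u^n)\tau(v)^n$, and letting $n\to\infty$ yields $\tau_0(uv)\le\tau_0(u)\,\tau(v)$.

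I expect the main work to be the first step: justifying the commutation $v\,p_i=p_i\,\iota(v)$ and the inductive shape of $w_n$, and in particular making visible that the index separation between the letters of $u$ (all $\le k$) and of $v$ (all $\ge k+1$) is genuinely used, so that the conjugates $\iota^{a}(v)$ never acquire a $p_0$ and hence remain in the range where Corollary \ref{cor_tau_u_and_tau_iota_u}(ii) applies. The degenerate case $u=1$ (so $s=0$, $w_n=v^n$, $(uv)^n=v^n$) should be recorded separately, though both inequalities are then trivial; a similarly routine point is that Corollary \ref{cor_tau_u_and_tau_iota_u}(ii) covers products of powers of $p_1,p_2,\dots$ in which $p_1$ need not occur, which is clear from its proof since only the absence of $p_0$ is used.
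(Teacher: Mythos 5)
Your proof is correct and follows the same route as the paper: establish $(uv)^n = u^n\,\iota^{(n-1)s}(v)\cdots\iota^s(v)v$ with $s=\ind(u)$, then squeeze $\tau((uv)^n)$ between $\tau(u^n)$ and $\tau(u^n)\tau(v)^n$ via sub-multiplicativity and Corollary~\ref{cor_tau_u_and_tau_iota_u}, and take $n$-th roots. The only difference is that you spell out the commutation $v\,p_i = p_i\,\iota(v)$ and the induction on $n$, which the paper asserts without detail; the substance and the invoked results are identical.
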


\begin{proof}
Denote $L=\ind(u)$. For $n\geq 1$, one obtains by induction that
\[
(uv)^n = u^n \cdot \iota^{(n-1)L}(v)\iota^{(n-2)L}(v)\ldots \iota^L(v)v.
\]
Combining Corollary \ref{cor_tau_u_and_tau_iota_u}, we have
\[
\tau\left(u^n\right)\leq\tau\left((uv)^n\right) \leq \tau\left(u^n\right) \prod\limits_{r=0}^{n-1}\tau\left(\iota^{rL}(v)\right) =\tau\left(u^n\right)\tau(v)^n.
\]
Taking $n$-th roots on both sides and letting $n\rightarrow\infty$, we conclude that
\[
\tau_0(u)\leq \tau_0(uv) \leq \tau_0(u)\tau(v).
\]
\end{proof}

\begin{example} \label{example_tau_0_primepower}
Consider $u=q^l$ for some $l\geq 0$ and $q\in \cP$. Since $\tau(u^n)=\tau(q^{nl})=nl+1$, we have
\[
\tau_0(q^l) = \lim\limits_{n\rightarrow \infty} (nl+1)^{1/n} =1.
\]
Then $\c{C}(q^l)=\frac{1}{l+1}$.
\end{example}

\begin{example} \label{example_tau_0_two_prime_powers}
Consider $u=p_i^kp_j^l$, where $k,l\geq 1$ and $i\neq j$. By Lemma \ref{lem_tau_0_uv=vu}, one has $\tau_0(p_i^kp_j^l)=\tau_0(p_j^lp_i^k)$. We can suppose without loss of generality that $i<j$. Combining Lemma \ref{lem_tau_0_leq_tau_0_cdot_tau} and Example \ref{example_tau_0_primepower}, one gets
\[
\tau_0(u)\leq \tau_0(p_i^k)\tau(p_j^l) = 1\cdot (l+1)=l+1.
\]
In the following, we show that the equality holds. Note that
\begin{align*}
u^n &= (p_i^k p_j^l)^n = p_i^{kn} \cdot p_{j+(n-1)k}^l p_{j+(n-2)k}^l \ldots p_{j+k}^l p_j^l\\
&= p_i^{kn} \cdot \lcm\left[p_{j+(n-1)k}^l,p_{j+(n-2)k}^l,\ldots,p_{j+k}^l, p_j^l\right].
\end{align*}
Recalling Corollary \ref{cor_tau_lcm_p_m}, one obtains
\[
\tau(u^n)\geq \tau\left(\lcm\left[p_{j+(n-1)k}^l,p_{j+(n-2)k}^l,\ldots,p_{j+k}^l, p_j^l\right]\right) = (l+1)^n.
\]
So
\[
\tau_0(u)\geq \lim\limits_{n\rightarrow \infty} \left((l+1)^n\right)^{1/n}=l+1.
\]
We conclude that $\tau_0(p_i^kp_j^l)=l+1$, where $i<j$.
\end{example}

\begin{example}
Let $k\geq 1$ and consider $u=p_0^k p_1p_3\ldots p_{2k-1}$. By induction, one obtains $u^n = p_0^{nk}p_1p_3\ldots p_{2nk-1}$. Then
\[
\tau(u^n)\geq \tau(p_1p_3\ldots p_{2nk-1}) = \tau(\lcm[p_1,p_2,\ldots,p_{nk}])=2^{nk},
\]
and
\[
\tau(u^n)\leq \tau(p_0^{nk})\tau(p_1p_3\ldots p_{2nk-1}) = (nk+1)2^{nk}.
\]
It follows that $\tau_0(u)=2^k$.
\end{example}

\begin{example}
Consider the element $u=p_0p_1\ldots p_{l-1}$, where $l\geq 2$. One has $\tau(u)=l+1$. By induction, one can verify that
\begin{align*}
(p_0p_1\ldots p_{l-1})^k &= p_0^k\cdot (p_kp_{k+1}\ldots p_{k+l-2})\cdot (p_{k-1}p_k\ldots p_{k+l-3})\cdot \ldots \cdot (p_1p_2\ldots p_{l-1}).
\end{align*}
Put
\begin{align*}
&X_j=P_{j}P_{j+1}\ldots P_{j+l-2},\,\, (1\leq j\leq k),\\
&Y_j=(P_kP_{k+1}\ldots P_{k+l-2})(P_{k-1}P_k\ldots P_{k+l-3})\ldots (P_{j+1}P_{j+2}\ldots P_{j+l-1}), \,\, (0\leq j\leq k-1),\\
&Z_j=(P_{k+l-1}P_{k+l}\ldots P_{k+2l-3})(P_{k+l-2}P_{k+l-1}\ldots P_{k+2l-4}) \ldots  (P_{j+l}P_{j+l+1}\ldots P_{j+2l-2}), \,\, (1\leq j\leq k-1),
\end{align*}
and $x_j,y_j,z_j$ be the corresponding elements in $\bS$, respectively. In the following, we always assume that $1\leq j\leq k-1$. By calculation, we obtain $\underline{\underline{X_j}}Z_j \rlh Y_j\underline{\underline{X_j}}$. Since $X_j,Y_j,Z_j$ are all minimum words, one gets $\uuline{x_j}z_j\rlh y_j\uuline{x_j}$. Moreover, the only prime divisor of $x_j$ is $p_j$, while prime divisors of $y_j$ are exactly $p_k,p_{k-1},\ldots,p_{j+1}$. We deduce that $\gcd(x_j,y_j)=1$. So $\uuuline{x_j}z_j\rlh y_j\uuuline{x_j}$. Combining $y_jx_j=y_{j-1}$ and Theorem \ref{thm_divisor_submul}, we obtain $\tau(y_{j-1})=\tau(y_j)\tau(x_j)$. Note that $y_{k-1}=x_k$. Then
\[
\tau(y_0)=\tau(y_1)\tau(x_1)=\tau(y_2)\tau(x_2)\tau(x_1)= \ldots =\tau(x_k)\ldots\tau(x_2)\tau(x_1) = l^k.
\]
Now
\[
\tau\left((p_0p_1\ldots p_{l-1})^k\right) =\tau(p_0^k\cdot y_0) \geq \tau(y_0) = l^k,
\]
and
\[
\tau\left((p_0p_1\ldots p_{l-1})^k\right) =\tau(p_0^k\cdot y_0) \leq \tau(p_0^k)\tau(y_0) = (k+1)l^k.
\]
Taking $k$-th root and letting $k\rightarrow \infty$ on both sides, one deduces that $\tau(u_l)=l$, and then $\c{C}(u_l)=\frac{l}{l+1}$.
\end{example}

From the above example, we obtain the complexity for castlings of Thompson's monoid $\bS$ immediately.

\begin{theorem}
For $\bS$, we have $\c{C}(\bS)=1$.
\end{theorem}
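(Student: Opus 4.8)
The plan is to establish $\c{C}(\bS)=1$ by producing a family of elements whose castling complexity $\c{C}(u)$ approaches $1$, since we already know $\c{C}(\bS)\leq 1$ for any homogeneous monoid. The natural candidates are the elements $u_l=p_0p_1\ldots p_{l-1}$ for $l\geq 2$, and the claim is that $\c{C}(u_l)=l/(l+1)$, which tends to $1$ as $l\to\infty$. So the whole theorem reduces to computing $\tau_0(u_l)$, and the previous example in the paper already carries this out: $\tau_0(u_l)=l$, hence $\c{C}(u_l)=\tau_0(u_l)/\tau(u_l)=l/(l+1)$.

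The steps I would carry out are as follows. First, recall from the preceding example the explicit description of the $k$-th power, $(p_0p_1\ldots p_{l-1})^k = p_0^k\cdot y_0$ where $y_0=(p_kp_{k+1}\ldots p_{k+l-2})(p_{k-1}p_k\ldots p_{k+l-3})\cdots(p_1p_2\ldots p_{l-1})$, obtained by repeatedly moving $p_0$'s to the left using \eqref{eq_Thompson_relation_expand}. Second, invoke the chain of free castlings $\uuuline{x_j}z_j\rlh y_j\uuuline{x_j}$ established there, together with Theorem \ref{thm_divisor_submul}, to get $\tau(y_{j-1})=\tau(y_j)\tau(x_j)=\tau(y_j)\cdot l$ (since $x_j=p_j\ldots p_{j+l-2}$ has index $l-1$ and is in fact a product of distinct irreducible elements, so $\tau(x_j)=l$), and telescope to conclude $\tau(y_0)=l^k$. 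Third, sandwich: $\tau(u_l^k)=\tau(p_0^k\cdot y_0)\geq \tau(y_0)=l^k$ by divisibility, and $\tau(u_l^k)\leq \tau(p_0^k)\tau(y_0)=(k+1)l^k$ by the sub-multiplicativity of $\tau$ from Theorem \ref{thm_divisor_submul}. Fourth, take $k$-th roots and let $k\to\infty$: both bounds give $\tau_0(u_l)=l$. Fifth, since $\tau(u_l)=\ind(u_l)+1=l+1$ (the word $P_0P_1\ldots P_{l-1}$ being minimal, hence the divisors are exactly the $l+1$ prefixes — or just note $u_l$ is fully castlable as it equals $\lcm$ of distinct primes), we get $\c{C}(u_l)=l/(l+1)$. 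Finally, conclude $\c{C}(\bS)=\sup_{1\neq u}\c{C}(u)\geq \sup_l l/(l+1)=1$, and combined with $\c{C}(\bS)\leq 1$ this yields $\c{C}(\bS)=1$.

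Since the example immediately preceding the theorem statement already does essentially all of this computation, the theorem itself is a one-line corollary; the only thing to check is that the supremum over the family $\{u_l\}$ indeed reaches $1$, which is elementary. If one instead wanted a self-contained argument, the main obstacle would be the combinatorial verification that $\underline{\underline{X_j}}Z_j \rlh Y_j\underline{\underline{X_j}}$ holds as a castling of words (and that $X_j,Y_j,Z_j$ are minimum words, so this lifts to a strong, then free, castling of elements), since this requires tracking how a block of consecutive $p$'s with consecutive indices slides past another such block under \eqref{eq_castle_relation_basic}; but this has already been established in the preceding example, so I would simply cite it. Thus the proof is:

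\begin{proof}
By the example preceding this theorem, $\tau_0(u_l)=l$ and $\tau(u_l)=l+1$ for $u_l=p_0p_1\ldots p_{l-1}$, so that $\c{C}(u_l)=\tau_0(u_l)/\tau(u_l)=l/(l+1)$. Hence
\[
\c{C}(\bS)=\sup\limits_{1\neq u\in\bS}\c{C}(u)\geq \sup\limits_{l\geq 2}\frac{l}{l+1}=1.
\]
Since $\c{C}(\bS)\leq 1$ always holds, we conclude $\c{C}(\bS)=1$.
\end{proof}
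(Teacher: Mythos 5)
Your proof is correct and takes essentially the same approach as the paper: both cite the preceding example computing $\tau_0(p_0p_1\ldots p_{l-1})=l$ to get $\c{C}(u_l)=l/(l+1)$, then take the supremum over $l$ and combine with the universal bound $\c{C}(\bS)\leq 1$. (The paper's displayed formula writes $p_0p_1\ldots p_l$ rather than $p_0p_1\ldots p_{l-1}$, an inconsequential reindexing.)
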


\begin{proof}
The conclusion follows from
\[
1\geq \c{C}(S) \geq \sup\limits_{l\geq 2} \c{C}(p_0p_1\ldots p_l) = \sup\limits_{l\geq 2} \frac{l}{l+1} =1.
\]
\end{proof}

Unlike natural monoids with only finitely many primes, for most of the elements $u$ in $\bS$, the quantity $\tau_0(u)$ is strictly larger than $1$.

\begin{theorem} \label{thm_tau_0_geq_2}
Let $u\in \bS$. Then $\tau_0(u)=1$ if and only if $u=q^m$ for some $q\in \cP$ and $m\geq 0$. In particular, if $u$ is not a prime power, then $\tau_0(u)\geq 2$.
\end{theorem}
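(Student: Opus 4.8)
The plan is to prove the two implications separately. For the easy direction, if $u=q^m$ for some $q\in\cP$ and $m\geq 0$, then by Lemma \ref{lem_uniqueness_prime_power} one has $\tau(u^n)=\tau(q^{mn})=mn+1$, so $\tau_0(u)=\lim_{n\to\infty}(mn+1)^{1/n}=1$ (this is already recorded in Example \ref{example_tau_0_primepower}). The substance of the theorem is the converse: if $u$ is \emph{not} a prime power, then $\tau_0(u)\geq 2$. By Lemma \ref{lem_tau_0_uv=vu} we may work with any cyclic conjugate of a word of $u$, and by Lemma \ref{lem_tau_0_leq_tau_0_cdot_tau} (applied in its lower-bound form, or more directly by picking out a suitable sub-factor) it suffices to exhibit, inside a high power $u^n$, a divisor that is an $\lcm$ of prime powers $\lcm[q_1^{a_1},\dots,q_t^{a_t}]$ with $t\geq n$ (or with $\sum a_i$ growing linearly while each $a_i\geq 1$), since Corollary \ref{cor_tau_lcm_p_m} then gives $\tau(u^n)\geq 2^n$ and hence $\tau_0(u)\geq 2$.

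First I would reduce to the normal form: write $u=p_0^{a_0}p_1^{a_1}\ldots p_m^{a_m}$. The hypothesis that $u$ is not a prime power means that either $u$ involves at least two distinct $p_j$'s with nonzero exponent, or it is $p_j^a$ with... no — actually in Thompson's monoid the irreducible elements are exactly the $p_j$, so "$u$ is not a prime power" means precisely that the normal form involves at least two distinct indices among $p_0,p_1,\dots$, OR it is a single $p_j^a$ with $a\geq 2$ which \emph{is} a prime power; so the relevant case is: the support of $u$ (the set of $j$ with $a_j>0$) has at least two elements. I would split into two cases according to whether $p_0$ is in the support or not. If $p_0$ is \emph{not} in the support, then $u=p_{i_1}^{a_{i_1}}\ldots p_{i_r}^{a_{i_r}}$ with $1\leq i_1<\dots<i_r$ and $r\geq 2$ (or $r=1$ with the single index $\geq 1$, but that's a prime power, excluded); here the key computation is that $u^n=\iota^{?}$-shifts multiply out, and as in Example \ref{example_tau_0_two_prime_powers} the conjugation $\iota$ by $p_0$ shifts every index up by one, so the $n$ blocks of $u^n$ have pairwise disjoint supports of indices once $n$ is large, giving $u^n$ divisible by an $\lcm$ of $\geq n$ prime powers, whence $\tau(u^n)\geq 2^n$ (in fact $\geq (a+1)^n$ where $a=\max a_j\geq 1$). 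If $p_0$ \emph{is} in the support, then since the support has $\geq 2$ elements there is some $p_j$ with $j\geq 1$ and $a_j\geq 1$; I would write $u=p_0^{a_0}v$ with $v$ supported on indices $\geq 1$ and $v\neq 1$, and apply Lemma \ref{lem_tau_0_leq_tau_0_cdot_tau} together with the previous case to $v$: that lemma gives $\tau_0(u)=\tau_0(p_0^{a_0}v)\geq \tau_0(v)\geq 2$, reducing to the already-handled situation. (One must take a little care that $v$ is genuinely not a prime power when the support of $u$ minus $\{0\}$ is a single index — but then $u=p_0^{a_0}p_j^{a_j}$ with $a_0,a_j\geq 1$ and $j\geq 1$, which is exactly Example \ref{example_tau_0_two_prime_powers}, giving $\tau_0(u)=a_j+1\geq 2$ directly.)

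The main obstacle I anticipate is making the "disjoint supports after shifting" argument fully rigorous: one needs to verify that in $u^n=u\cdot u\cdots u$, after using the relations \eqref{eq_Thompson_relation_expand} to bring everything to normal form, the indices appearing in the $k$-th copy of $u$ get shifted by $(n-k)\cdot\ind(u)$ (this is the identity $(uv)^n=u^n\iota^{(n-1)L}(v)\cdots\iota^L(v)v$ style computation from the proof of Lemma \ref{lem_tau_0_leq_tau_0_cdot_tau}, specialized to $u$ itself), so that picking one prime power from each copy — say $p_{i_1+(n-k)L}^{a_{i_1}}$ from the $k$-th copy for $L=\ind(u)\geq 2$ — yields $n$ distinct primes, pairwise coprime by Corollary \ref{cor_p_k_q_l_coprime}, whose $\lcm$ divides $u^n$. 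Then Corollary \ref{cor_tau_lcm_p_m} gives $\tau(u^n)\geq (a_{i_1}+1)^n\geq 2^n$, so $\tau_0(u)=\lim_n(\tau(u^n))^{1/n}\geq 2$. Combining the two directions completes the proof; the final sentence ("if $u$ is not a prime power then $\tau_0(u)\geq 2$") is then immediate, and in fact the argument shows the sharper $\tau_0(u)\geq 1+\max_j a_j$ when the support avoids $0$.
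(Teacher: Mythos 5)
Your easy direction is fine, and your observation that ``not a prime power'' means the normal form of $u$ involves at least two distinct indices is correct. The converse direction, however, has two genuine gaps, and neither case of your case split actually closes.

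\textbf{Gap 1 (your Case B).} You invoke Lemma \ref{lem_tau_0_leq_tau_0_cdot_tau} to claim $\tau_0(p_0^{a_0}v)\geq\tau_0(v)$. The lemma does not say this. In its notation $u'=p_0^{a_0}$ plays the role of the small-index factor and $v'=v$ the large-index factor, and the lower bound it actually provides is $\tau_0(u'v')\geq\tau_0(u')=\tau_0(p_0^{a_0})=1$, which is vacuous. The lemma is asymmetric: it bounds $\tau_0$ of the product from below only by the $\tau_0$ of the \emph{small-index} factor, never by that of the large-index one. Appealing to Lemma \ref{lem_tau_0_uv=vu} does not rescue this either, since normalizing $vp_0^{a_0}$ just returns $p_0^{a_0}\iota^{a_0}(v)$.

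\textbf{Gap 2 (your Case A).} Your divisibility claim is false. Take $u=p_1p_2$ and $n=2$: the normal form is $u^2=p_1^2p_2p_4$, and $\lcm[p_1,p_3]=p_1p_4$ does \emph{not} divide $u^2$; indeed $\PD(u^2)=\{p_1\}$, since $P_1P_2$ is not castlable and every word of $u^2$ begins with $P_1$. The object that actually carries the exponential divisor count is the element $\lcm\bigl[p_{j+(n-1)k}^l,\ldots,p_j^l\bigr]$ of Example \ref{example_tau_0_two_prime_powers}: it is a \emph{co-divisor} of $u^n$, not a divisor, and the exponent that survives is $l$, the exponent of the \emph{higher-index} prime, not the lower one. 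Consequently your claimed sharper bound $\tau_0(u)\geq 1+\max_j a_j$ is also false: for $u=p_1^3p_2$, Example \ref{example_tau_0_two_prime_powers} gives $\tau_0(u)=1+1=2$, not $\geq 4$.

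The paper's argument needs no case split on whether $p_0$ lies in the support. Write $u=p_{j_1}^{m_1}p_{j_2}^{m_2}\cdots p_{j_k}^{m_k}$ in normal form with $k\geq 2$ and all $m_i\geq 1$, and apply the lower-bound half of Lemma \ref{lem_tau_0_leq_tau_0_cdot_tau} with small-index factor $p_{j_1}^{m_1}p_{j_2}^{m_2}$ and large-index factor the rest: this gives $\tau_0(u)\geq\tau_0(p_{j_1}^{m_1}p_{j_2}^{m_2})=m_2+1\geq 2$ by Example \ref{example_tau_0_two_prime_powers}. You already identified all the right tools; they simply needed to be aimed at the first \emph{two} prime-power blocks rather than at $p_0^{a_0}$ alone, and at a co-divisor rather than at a divisor that is not there.
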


\begin{proof}
It has been shown in Example \ref{example_tau_0_primepower} that $\tau_0(p^m)=1$. On the other hand, suppose that $u\neq 1$ and $u$ is not a prime power. Let $u=p_{j_1}^{m_1}p_{j_2}^{m_2}\ldots p_{j_k}^{m_k}$, where $k\geq 2$, $0\leq j_1<j_2<\ldots <j_k$ and $m_1,m_2,\ldots,k\geq 1$. By Lemma \ref{lem_tau_0_leq_tau_0_cdot_tau}, we conclude that $\tau_0(u)\geq \tau_0(p_{j_1}^{m_1}p_{j_2}^{m_2})$. By Example \ref{example_tau_0_two_prime_powers}, one has $\tau_0(p_{j_1}^{m_1}p_{j_2}^{m_2})\geq m_2+1\geq 2$. The proof is completed.
\end{proof}

We end this section with some questions. Theorem \ref{thm_tau_0_geq_2} shows that, for an element $u$ in Thompson's monoid, either $\tau_0(u)=1$ or $\tau_0(u)\geq 2$. How large is such a gap in a general homogeneous group $S$? Can $\tau_0(u)$ take non-integer value? Or, does $\tau_0(S)$ contain an interval? Moreover, do we have $\tau_0(uv)\geq \tau_0(u)\tau_0(v)$ for all $u,v\in S$? Do we have $\tau_0(uv)\leq \tau(u)\tau_0(v)$, or $\tau_0(uv)\leq \tau_0(u)\tau(v)$ for $u,v\in S$? Are there examples of $S$ with $\c{C}(S)=\alpha$ for any $1/2<\alpha<1$? For a natural monoid, both $\tau_0$ and amenability are related to how a prime is castled with elements in $S$. Do these two properties have connections with each other? Under which kind of conditions we may have ``$\c{C}(S)=1/2$ if and only if $S$ is amenable''? By Theorems \ref{thm_amenable_finitely_many_primes} and \ref{thm_tau_0_finitely_many_primes}, the condition that $S$ is a natural monoid with finitely many primes works. For a natural group $S$ with $\c{C}(S)=1$, it is non-amenable?

\section{Further Remarks}

It would be interesting to make the axioms more elegant, or more general. It is possible to write the axioms with a cancellative monoid $S$, without a group $G$. 
The functions $\beta_p$ defined in \eqref{eq_beta_form} may be helpful to rewrite Axioms \uppercase\expandafter{\romannumeral4}', \uppercase\expandafter{\romannumeral4}, or \uppercase\expandafter{\romannumeral5}, as well as to classify the natural monoids. For Thompson's monoid, it has the property that $\#\cP\setminus \beta_p(\cP)<+\infty$ for any $p\in \cP$. Such condition might give an important class of natural monoids.

It would be interesting to look for concrete examples of homogeneous monoids that are not castlable, and castlable monoids that are not natural. Furthermore, the non-commutative arithmetics in this paper only live on a semigroup $S$. It there an example of a non-abelian $S$ that also admits an addition with $S+S\subseteq S$? To fulfill this, one may weaken Axiom \uppercase\expandafter{\romannumeral2} by only requiring the existence of least common multiples up to some given upper bound, just as the definition of least common co-multiple in this paper. Moreover, if there is a suitable total order on $S$ which well characterize the structure of $S$, one may also consider the problem that counting primes up to some level with respect to this total order.

For two primes $p_i$ and $p_j$ in $\bS$, we have $p_i,p_j$ are castlable if and only of $i-j\neq -1$. This looks similar to the residue in complex analysis. Does Thompson's group $\bG$ has a presentation based on complex functions? Moreover, do certain arithmetic functions on $\bS$ give information on the elements in $\bS$ as a function on $[0,1]$ (such as number of breakpoints, different slopes, etc.) or as a dynamics from $[0,1]$ to itself?

For a castlable monoid, there may be some structures which are weaker than that of tensor products. We wonder whether non-commutative arithmetics has relation with tanglement in theoretical physics. Moreover, it is explained in Section \ref{subsection_fully_castlable_elements} that an element $u$ in a natural monoid can be uniquely written as $u=u_1u_2\ldots u_t$, where $u_j$ is the greatest fully castlable divisor of $(u_1u_2\ldots u_{j-1})^{-1}u$ $(1\leq j\leq t)$. An example in $\bS$ is shown below. Consider
\[
u=\lcm[p_1^2,p_3,p_4^3]\cdot \lcm[p_2,p_3,p_6^2]:= u_1\cdot u_2,
\]
where we regard $u_1$, $u_2$ as a chain of nuclei. When a particle $p_1$ collides with $u$, we have $p_1\cdot u_1=\lcm[p_1^3,p_2,p_3^3]$, and $p_1$ is absorbed into $u_1$. When a particle $p_0$ collides with $u$, we have $p_0\cdot u_1=\lcm[p_0,p_2,p_3^3]\cdot p_1^2$. That is to say, the nucleus $u_1$ becomes $\lcm[p_0,p_2,p_3^3]$, and two particles $p_1^2$ will collides with the second nucleus $u_2$. This seems interesting.

\bigskip

\textbf{Acknowledgements.} The author is especially grateful to Professor Liming Ge, without whom this paper would never appear. The author would like to thank Linzhe Huang, who shares his ideas and checks the proofs. The author also thanks Weichen Gu, Minghui Ma and Dongsheng Wu for helpful discussions. This work is supported by National Natural Science Foundation of China (Grant No. 11701549).




\begin{thebibliography}{99}


\bibitem{Bom06} E. Bombieri, The Riemann hypothesis, The millennium prize problems, 107-124, Clay Math. Inst., Cambridge, MA, 2006.

\bibitem{Bri-Squ85} M.G. Brin and C.C. Squier, Groups of piecewise linear homeomorphisms of the real line, Invert. Math. 79(1985), 485-498.

\bibitem{Bro-Geo84} K.S. Brown and R. Geoghegan, an infinite-dimensional torsion-free $FP_\infty$ group, Invent. math. 77(1984), 367-381.

\bibitem{CFP96} J.W. Cannon, W.J. Floyd and W.R. Parry, Introductory notes on Richard Thompson¡¯s groups, Enseign. Math., (2) 42(3-4), 215-256 (1996).

\bibitem{CP} A. H. Clifford and G. B. Preston, algebraic theory of semigroups, Vol. 2, Amer. Math. Soc., Providence, R. 1., 1967.

\bibitem{DHX18} Aiju Dong, Linzhe Huang and Boqing Xue, Operator algebras associated with multiplicative convolutions of arithmetic functions, Science China Mathematics, 61(2018), 1665¨C1676.

\bibitem{Ge-Xue18} Liming Ge and Boqing Xue, Do mathematically interesting zero-value solutions of the Riemann zeta function all have the form $1=2 + \i t$? (in Chinese), Chin Sci Bull, 63(2018), 141-147.

\bibitem{Gri} R. I. Grigorchuk, Growth and amenability of a semigroup and its group of quotients. Proceedings of the International Symposium on the Semigroup Theory and its Related Fields (Kyoto, 1990), 103-108, Shimane Univ., Matsue, 1990.

\bibitem{Hua19} L. Huang, personal communication.

\bibitem{Jon17} V. Jones, Some unitary representations of Thompson's groups F and T, J. Comb. Algebra 1 (2017), no. 1, 1-44.

\bibitem{Kad-Rin}  Kadison R, Ringrose J. Fundamentals of the theory of operator algebras. Vol. I. Elementary theory. Reprint of the 1983 original. Graduate Studies in Mathematics, 15. American Mathematical Society, Providence, RI, 1997

\bibitem{McK-Tho73} R. McKenzie and R. J. Thompson, An elementary construction of unsolvable word problems in group theory, Word Problems (W.W. Boonie, F.B.Cannonito, and R.C. Lyndon, eds.), Studies in Logic and the Foundations of Mathematics, vol 71, North-Holland, Amsterdam, 1973, 457-478.

\bibitem{Moo13} J.T. Moore, Fast growth in the F{\o}lner function for Thompson's group F, Groups Geom. Dyn. 7 (2013), no. 3, 633-651.

\bibitem{Nat} M.B. Nathanson, Elementary methods in number theory, Graduate Texts in Mathematics, Springer, New York, NY

\bibitem{Pat} A. L. T. Paterson, Amenability, Math. Surveys Monogr. 29, Amer. Math. Soc., Providence, RI, 1988.

\bibitem{Rie59} Riemann B. Ueber die Anzahl der Primzahlen unter einer gegebenen Gr\"{o}sse. Monatsber Berlin Akad, 1859, 671-680.


\bibitem{Vin58} Vinogradov I M. A new estimate of the function $\zeta(1 + \i t)$. Izv Akad Nauk SSSR Ser Mat, 22(1958), 161-164.

\bibitem{Wu-Che11} Y. Wu and X. Chen, On finite decomposition complexity of Thompson group, J. Funct. Anal. 261 (2011), no. 4, 981-998.









\end{thebibliography}
\end{document}